\definecolor{darkgreen}{rgb}{0.0, 0.6, 0.13}
\newtheorem{thm}{Theorem}[section]
 \newtheorem{cor}[thm]{Corollary}
 \newtheorem{lem}[thm]{Lemma}
 \newtheorem{prop}[thm]{Proposition}
 \theoremstyle{definition}
 \newtheorem{df}[thm]{Definition}
 \theoremstyle{remark}
 \newtheorem{rem}[thm]{Remark}
 \numberwithin{equation}{section}
 \newcommand{\les}{\lesssim}
\newcommand{\Ab}{\mathbb A}
\newcommand{\Bb}{\mathbb B}
\newcommand{\Cb}{\mathbb C}
\newcommand{\Db}{\mathbb D}
\newcommand{\Eb}{\mathbb E}
\newcommand{\Hb}{\mathbb H}
\newcommand{\Mb}{\mathbb M}
\newcommand{\Nb}{\mathbb N}
\newcommand{\Rb}{\mathbb R}
\newcommand{\Tb}{\mathbb T}
\newcommand{\Ub}{\mathbb U}
\newcommand{\Vb}{\mathbb V}
\newcommand{\Zb}{\mathbb Z}
\newcommand{\Ac}{\mathcal A}
\newcommand{\Bc}{\mathcal B}
\newcommand{\Cc}{\mathcal C}
\newcommand{\Dc}{\mathcal D}
\newcommand{\Ec}{\mathcal E}
\newcommand{\Fc}{\mathcal F}
\newcommand{\Gc}{\mathcal{G}}
\newcommand{\Ic}{\mathcal I}
\newcommand {\Jc}{\mathcal{J}}
\newcommand{\Kc}{\mathcal K}
\newcommand{\Lc}{\mathcal L}
\renewcommand{\Mc}{\mathcal M}
\newcommand{\Nc}{\mathcal N}
\newcommand{\Oc}{\mathcal O}
\newcommand{\Qc}{\mathcal Q}
\newcommand{\Rc}{\mathcal R}
\newcommand{\Tc}{\mathcal T}
\newcommand{\Uc}{\mathcal U}
\newcommand{\Vc}{\mathcal V}
\newcommand{\Wc}{\mathcal W}
\newcommand{\Xc}{\mathcal X}
\newcommand{\Yc}{\mathcal Y}
\newcommand{\Zc}{\mathcal Z}
\newcommand{\As}{\mathscr A}
\newcommand{\Bs}{\mathscr B}
\newcommand{\Cs}{\mathscr C}
\newcommand{\Ds}{\mathscr D}
\newcommand{\Es}{\mathscr E}
\newcommand{\Ks}{\mathscr K}
\newcommand{\Ls}{\mathscr L}
\newcommand{\Ms}{\mathscr M}
\newcommand{\Ps}{\mathscr P}
\newcommand{\Qs}{\mathscr Q}
\newcommand{\Rs}{\mathscr R}
\newcommand{\Ts}{\mathscr T}
\newcommand{\Us}{\mathscr U}
\newcommand{\Vs}{\mathscr V}
\newcommand{\Ws}{\mathscr W}
\newcommand{\Xs}{\mathscr X}
\newcommand{\Ys}{\mathscr Y}
\newcommand{\Af}{\mathfrak A}
\newcommand{\Cf}{\mathfrak C}
\newcommand{\Df}{\mathfrak D}
\newcommand{\Ff}{\mathfrak F}
\newcommand{\Pf}{\mathfrak P}
\newcommand{\Sf}{\mathfrak S}
\newcommand{\Xf}{\mathfrak X}
\newcommand{\ff}{\mathfrak f}
\newcommand{\lf}{\mathfrak l}
\newcommand{\mf}{\mathfrak m}
\newcommand{\nf}{\mathfrak n}
\newcommand{\rf}{\mathfrak r}
\newcommand{\uf}{\mathfrak u}
\newcommand{\wf}{\mathfrak w}
\newcommand{\dirac}{\boldsymbol{\delta}}
\begin{document}
\title{Derivation of the wave kinetic equation: Full range of scaling laws}
\author{Yu Deng and Zaher Hani}
\maketitle

\begin{abstract}
This paper completes the program started in \cite{DH21, DH21-2} aiming at providing a full rigorous justification of the wave kinetic theory for the nonlinear Schr\"odinger (NLS) equation. Here, we cover the full range of scaling laws for the NLS on an arbitrary periodic rectangular box, and derive the wave kinetic equation up to small multiples of the kinetic time.

The proof is based on a diagrammatic expansion and a deep analysis of the resulting Feynman diagrams. The main novelties of this work are three-fold: (1) we present a robust way to identify arbitrarily large ``bad" diagrams which obstruct the convergence of the Feynman diagram expansion, (2) we systematically uncover intricate cancellations among these large ``bad" diagrams, and (3) we present a new robust algorithm to bound all remaining diagrams and prove convergence of the expansion. These ingredients are highly robust, and constitute a powerful new approach in the general mathematical study of Feynman diagrams.

\end{abstract}
\tableofcontents
\section{Introduction}
\subsection{Setup and the main result}\label{intro-nls} In this paper we derive the wave kinetic equation, from the continuum cubic nonlinear Schr\"{o}dinger equation and at the kinetic time scale, for the \emph{full range of scaling laws} between the large box and weak nonlinearity limits. This completes the program initiated in \cite{DH19,DH21}, aiming at providing rigorous mathematical foundation for the wave turbulence theory.
\smallskip

In dimension $d\geq 3$, consider the cubic nonlinear Schr\"{o}dinger equation
\begin{equation}\label{nls}\tag{NLS}
\left\{
\begin{split}&(i\partial_t-\Delta)u+\alpha|u|^2u=0,\quad x\in \Tb_L^d=[0,L]^d,\\
&u(0,x)=u_{\mathrm{in}}(x)
\end{split}
\right.
\end{equation} 
on the square torus $\Tb_L^d=[0,L]^d$ of size $L$ (all results and proofs extend without change to arbitrary rectangular tori). Here $\alpha$ is a parameter indicating the strength of the nonlinearity, and $\Delta:=\frac{1}{2\pi}(\partial_{x_1}^2+\cdots +\partial_{x_d}^2)$ is the normalized Laplacian. We also set the space Fourier transform as
\begin{equation}\label{fourier}
\widehat u(t, k) =\frac{1}{L^{d/2}}\int_{\Tb^d_L} u(t, x) e^{-2\pi i k\cdot x} \, dx, \qquad u(t,x) =\frac{1}{L^{d/2}}\sum_{k\in\Zb_L^d}\widehat{u}(k)e^{2\pi ik\cdot x},
\end{equation} where $\Zb_L^d:=(L^{-1}\Zb)^d$. Note that this convention is different from (but equivalent to) the one in \cite{DH21,DH21-2}; the parameters $\lambda$ in \cite{DH21,DH21-2} and $\alpha$ in the current paper are related by $\alpha=\lambda^2L^{-d}$.

Assume the initial data of (\ref{nls}) is given by
\begin{equation}
\label{data}\tag{DAT}u_{\mathrm{in}}(x)=\frac{1}{L^{d/2}}\sum_{k\in\Zb_L^d}\widehat{u_{\mathrm{in}}}(k)e^{2\pi ik\cdot x},\quad \widehat{u_{\mathrm{in}}}(k)=\sqrt{n_{\mathrm{in}}(k)}g_k(\omega),
\end{equation}
where $n_{\mathrm{in}}:\Rb^d\to[0,\infty)$ is a given Schwartz function, and $\{g_k(\omega)\}$ is a collection of i.i.d. random variables. For concreteness, we will assume each $g_k$ is a standard normalized Gaussian.

Define the \emph{kinetic (or Van Hove) time}
\[T_{\mathrm{kin}}:=\frac{1}{2\alpha^2}.\] For a fixed value $\gamma\in(0,1)$, we will assume that the \emph{scaling law} between $L$ and $\alpha$ is $\alpha=L^{-\gamma}$, so we have $T_{\mathrm{kin}}=\frac{1}{2} L^{2\gamma}$.
\subsubsection{The wave kinetic equation} The wave kinetic equation is given by:

\begin{equation}\label{wke}\tag{WKE}
\left\{
\begin{split}&\partial_t n(t,k)=\Kc(n(t),n(t),n(t))(k),\\
&n(0,k)=n_{\mathrm{in}}(k),
\end{split}
\right.
\end{equation} where $n_{\mathrm{in}}$ is as in Section \ref{intro-nls}, and the nonlinearity $\Kc$ is given by
\begin{multline}\label{wke2}\tag{COL}
\Kc(\phi_1,\phi_2,\phi_3)(k)=\int_{(\Rb^d)^3}\big\{\phi_1(k_1)\phi_2(k_2)\phi_3(k_3)-\phi_1(k)\phi_2(k_2)\phi_3(k_3)+\phi_1(k_1)\phi_2(k)\phi_3(k_3)\\-\phi_1(k_1)\phi_2(k_2)\phi_3(k)\big\}\times\dirac(k_1-k_2+k_3-k)\cdot\dirac(|k_1|^2-|k_2|^2+|k_3|^2-|k|^2)\,\mathrm{d}k_1\mathrm{d}k_2\mathrm{d}k_3.
\end{multline} Here and below $\dirac$ denotes the Dirac delta, and we define
\[|k|^2:=\langle k,k\rangle,\quad \langle k,\ell\rangle:=k^1 \ell^1+\cdots +k^d\ell^d,\] where $k=(k^1,\cdots,k^d)$ and $\ell=(\ell^1,\cdots,\ell^d)$ are $\Zb_L^d$ or $\Rb^d$ vectors. 

Given any Schwartz initial data $n_{\mathrm{in}}(k)$, the equation \eqref{wke} has a unique local solution $n=n(t,k)$ on some short time interval depending on $n_{\mathrm{in}}$.
\subsubsection{The main result} The main result is stated as follows.
\begin{thm}\label{main} Fix $d\geq 3$ and $\gamma\in(0,1)$. Fix a Schwartz function $n_{\mathrm{in}}\geq 0$, and fix $\delta\ll 1$ depending only on $(d,\gamma,n_{\mathrm{in}})$. Consider the equation (\ref{nls}) with random initial data (\ref{data}), and assume $\alpha=L^{-\gamma}$ so that $T_{\mathrm{kin}}=\frac{1}{2} L^{2\gamma}$.

Then, for sufficiently large $L$ (depending on $\delta$), the equation has a smooth solution up to time \[T=\delta\cdot\frac{L^{2\gamma}}{2}=\delta\cdot T_{\mathrm{kin}},\] with probability $\geq 1-e^{-(\log L)^2}$. Moreover we have
\begin{equation}\label{limit}\lim_{L\to\infty}\sup_{t\in[0,T]}\sup_{k\in\Zb_L^d}\left|\Eb\,|\widehat{u}(t,k)|^2-n\bigg(\frac{t}{T_{\mathrm{kin}}},k\bigg)\right|=0,
\end{equation} where $\widehat{u}$ is as in \eqref{fourier}, and $n(\tau,k)$ is the solution to (\ref{wke}).  In (\ref{limit}) and below we understand that the expectation $\Eb$ is taken under the assumption that (\ref{nls}) has a smooth solution on $[0,T]$, which is an event with overwhelming probability.
\end{thm}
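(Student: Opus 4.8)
The plan is to expand the solution of \eqref{nls} in a Duhamel (Picard) series, reduce the computation of $\Eb|\widehat u(t,k)|^2$ to a sum over Feynman diagrams via Wick's theorem, and then analyze these diagrams in the scaling regime $\alpha=L^{-\gamma}$, $T_{\mathrm{kin}}=\tfrac12 L^{2\gamma}$. First I would pass to the interaction picture, setting $a_k(t)=e^{-2\pi it|k|^2}\widehat u(t,k)$, so that \eqref{nls} (after gauging away a resonant correction) becomes an ODE system of the schematic form
\[
\partial_t a_k(t)=i\alpha\sum_{k_1-k_2+k_3=k}e^{2\pi it\,\Omega}\,a_{k_1}(t)\,\overline{a_{k_2}(t)}\,a_{k_3}(t),\qquad \Omega=|k_1|^2-|k_2|^2+|k_3|^2-|k|^2,
\]
with random Gaussian data $a_k(0)=\sqrt{n_{\mathrm{in}}(k)}\,g_k$. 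Iterating Duhamel's formula $N=N(\delta)$ times expresses $a_k(t)$ as a finite sum, over ternary trees $\Tc$ with Gaussian leaves, of explicit multilinear expressions $\Jc_{\Tc}(t)$ in the $g_k$'s, plus a remainder $b_k^{(N)}(t)$. Since the number of trees grows like $C^N$, the truncation order $N$ must be taken large but finite depending on $\delta,\gamma,d$.

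Next I would compute $\Eb|\widehat u(t,k)|^2=\Eb|a_k(t)|^2$ by multiplying the tree expansion for $a_k$ by its conjugate and applying Wick's theorem: this pairs the Gaussian leaves of the two trees, turning each pair of trees (a \emph{couple}) into a Feynman diagram whose value is a sum over internal momenta $\ell\in\Zb_L^d$ of products of the propagators $n_{\mathrm{in}}$, the factors $\alpha$, and oscillatory phases $e^{2\pi it\,\Omega}$ that, after integrating in time, become resonant-denominator or resonant-support factors. A scaling heuristic assigns to each diagram a size $L^{-c}$, where $c$ counts free momentum summations against the number of independent momentum- and energy-type delta constraints together with the powers of $\alpha$ and of $t\le\delta T_{\mathrm{kin}}$; one then identifies the specific ``ladder'' family of diagrams which, in the limit $L\to\infty$, reproduces exactly the Picard iterates of \eqref{wke}, so that the leading contribution sums precisely to $n(t/T_{\mathrm{kin}},k)+o(1)$ — the factor $2$ in $T_{\mathrm{kin}}$ and the structure of \eqref{wke2} being matched by this computation. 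Everything else must be shown to be $o(1)$ uniformly in $t\in[0,T]$ and $k\in\Zb_L^d$.

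The main obstacle — and the core of the argument, as advertised — is that the naive power count fails: there exist arbitrarily large ``bad'' diagrams for which the heuristic gain is $O(1)$ rather than a negative power of $L$. I would handle this in three stages. (i) Give a combinatorial/graph-theoretic criterion, formulated in terms of degenerate substructures of the couple (repeated resonant ``bubbles'', tangled or nested pairings forcing coincidences among internal momenta), that isolates \emph{precisely} the diagrams that are not controlled by power counting. (ii) Organize these bad diagrams into finite families and exhibit exact algebraic cancellations within each family: summing over the internal structure collapses the oscillatory sums and produces genuine smallness, reflecting an identity at the level of the correlation functions that is invisible diagram-by-diagram. (iii) Run a robust recursive estimation scheme on all remaining (``good'') diagrams — at each step removing a vertex or a momentum variable while bookkeeping the accumulated gain from momentum/energy constraints and from the $\alpha^2 t\lesssim\delta$ factors — to prove that the full sum $\sum_{\Tc,\Tc'}$ over non-ladder, good diagrams converges and is $o(1)$ as $L\to\infty$. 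I expect stage (ii), the systematic identification of the cancellation mechanism among all bad diagrams simultaneously, to be the hardest part.

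Finally I would control the Picard remainder $b_k^{(N)}$ and upgrade to the stated almost-sure existence. Using the equation satisfied by $b^{(N)}$ together with large-deviation and hypercontractivity bounds for Gaussian polynomial chaoses — which is where the probability $1-e^{-(\log L)^2}$ enters — a bootstrap/continuity argument shows that on the interval $[0,T]=[0,\delta T_{\mathrm{kin}}]$ and on a high-probability event, the solution of \eqref{nls} exists and is smooth, with $\sup_{k\in\Zb_L^d}\Eb|b_k^{(N)}(t)|^2\to 0$. Combining the convergence of the ladder term to $n(t/T_{\mathrm{kin}},k)$, the cancellation of the bad diagrams, the smallness of the remaining good diagrams, and the smallness of the truncation remainder then yields \eqref{limit}, and taking $\delta$ small (depending on $d,\gamma,n_{\mathrm{in}}$) guarantees the local solution $n$ of \eqref{wke} persists on $[0,\delta]$, closing the argument.
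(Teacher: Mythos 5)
Your proposal captures the paper's overall strategy faithfully: Duhamel/tree expansion, Wick pairing into couples, identification of a leading family of diagrams reproducing the WKE iterates (the paper calls these \emph{regular couples}, reserving the word ``ladder'' for a specific molecular substructure appearing in the bad-diagram taxonomy), then the three-part plan of isolating bad diagrams, exhibiting cancellations within finite families of them, and bounding the remaining diagrams by a recursive algorithmic scheme, together with remainder control via a parametrix plus a hypercontractivity/bootstrap argument. This is essentially the same approach as the paper's; the only substantive content your blind sketch leaves unspecified is the concrete machinery (molecules, the cutting operation, the eight vine families, and the twist-pairing that realizes the cancellation) that the paper introduces to make stages (i)--(ii) of your plan actually work at all scaling laws $\gamma\in(0,1)$.
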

A few comments about this result are in order.
\begin{itemize}
\item The nonlinear Schr\"{o}dinger equation (\ref{nls}) is studied here and in \cite{DH19,DH21,DH21-2}, as a representative model in nonlinear wave theory. In fact, it is the \emph{universal} Hamiltonian nonlinear dispersive equation, in the sense that any such equation gives (NLS) in a suitable limiting regime \cite{Sul99}. The methods we develop here also apply to other dispersive models modulo technical differences.
\item Theorem \ref{main} holds for the rectangular torus $\Tb_{L,\boldsymbol{\lambda}}^d=[0,\lambda_1L]\times\cdots [0,\lambda_dL]$ for \emph{any} $\lambda_j>0$ (rational or irrational) without genericity assumption. Here we only present the proof for the square torus, but the general case can be treated by the same arguments.
\item In the same way as \cite{DH21}, {the assumption that $n_{\mathrm{in}}(k)$ is Schwartz is unnecessary. In fact it suffices to assume that its first $40d$ derivatives decay like $\langle k\rangle^{-40d}$. Moreover} the error term defined in (\ref{limit}) enjoys the explicit decay rate $L^{-c}$, which is uniform in $t$ and $k$, for some absolute constant $c>0$. The value of $c$ we get, though, is likely non-optimal.
\item The exceptional probability $e^{-(\log L)^2}$ in Theorem \ref{main} is better than \cite{DH21}, but this is just due to the choice of the order $N$ of the expansion (see Section \ref{norms}). In fact the same bound also holds in the setting of \cite{DH21}, as is already demonstrated in \cite{DH21-2}.
\item The main results in \cite{DH21-2} (evolution of higher order moments, propagation of chaos, law evolution for non-Gaussian data, derivation of wave kinetic hierarchy) also extend to the current setting. In particular, we can replace the i.i.d. Gaussians $g_k(\omega)$ by any centered-normalized i.i.d. random variables $\eta_k(\omega)$ whose law is rotationally symmetric and has exponential tails. This is easily shown by combining the arguments in this paper and in \cite{DH21-2} with obvious modifications.
\end{itemize}
\subsection{Background and literature}\label{intro-back} The theory of wave turbulence describes the non-equilibrium statistical behavior of systems of interacting waves, in the thermodynamic limit where the number of degrees of freedom goes to infinity. It is the wave analog of the classical kinetic theory of Boltzmann for particles, and its rigorous justification corresponds to the Hilbert's sixth problem for nonlinear waves.

The basic setup of the theory is as follows. Start with a nonlinear dispersive equation as the microscopic system of nonlinear waves. This is (\ref{nls}) in our case, but can also be replaced by other equations. Such system is studied in a \emph{large box} $\Tb_L^d$ with a \emph{weak nonlinearity} $\alpha|u|^2u$, where $L\to\infty$ (so the number of degrees of freedom diverges as $\sim L^d$) and $\alpha\to 0$ in the limit. Assume the initial data is random and \emph{well-prepared} as in (\ref{data}), i.e. the different Fourier modes $\widehat{u}(k)$ are independent and satisfy a random phase (RP) condition. Then, among other things, the following kinetic description is expected \emph{at the kinetic time} $T_{\mathrm{kin}}$:
\begin{itemize}
\item Propagation of chaos: different Fourier modes should remain independent in the limit;
\item The wave kinetic equation: the evolution of energy density $|\widehat{u}(k)|^2$ should be governed by (\ref{wke}) in the limit.
\end{itemize}

In the physics literature, the very first kinetic description for waves appeared in Peierls \cite{Pei29} in the study of anharmonic crystals, leading to the so-called phonon Boltzmann equation. Since then, the kinetic theory has been developed for various models, and has become a systematic paradigm starting in the 1960s, with immense applications in various fields of physics and science \cite{BS66,BN69,Dav72,Has62,Has63,Jan08,Naz11,Spo06,Spo08,Ved67,WMO98,ZS67}. The name \emph{wave turbulence theory} comes from the spectral energy dynamics and cascades that the wave kinetic equation predicts for nonlinear wave systems, which yields similar conclusions to Kolmogorov spectra in hydrodynamic turbulence; this connection was is a major contribution of Zakharov \cite{Zak65,ZLF92}.

On the other hand, the rigorous mathematical treatment of wave turbulence had to wait until much later for the appropriate conceptual and technical ingredients to be invented. While it was clear in the theoretical physics community that a Feynman diagram expansion is the right approach to the problem, the main mathematical issue here was to prove the convergence of such an expansion. Naturally, the progress started in a linear setting (e.g. electron moving through random impurities), namely with the work of Spohn \cite{Spohn1977} for short kinetic times. This was later extended to much longer times in the celebrated works of Erd\"{o}s-Yau \cite{EY00} and Erd\"{o}s-Salmhofer-Yau \cite{ESY08}. Obviously, the next level of progress is to advance this understanding to the nonlinear setting, where the randomness is only coming from the initial distribution of the data as explained in \cite{Spohn1994}. The first breakthrough proving the convergence of the diagrammatic expansion in a nonlinear setting was that of Lukkarinen-Spohn \cite{LS11}, which considered the lattice (NLS) and studied the time correlations of the invariant Gibbs measure in the thermodynamic limit. Even though the above works only dealt with linear or equilibrium settings, they managed to draw substantial interest to this field from the mathematical community, and inspired subsequent research. In the last decade, partial results have been proved regarding the derivation of (\ref{wke}) in the nonlinear out-of-equilibrium setting, starting with works that addressed certain aspects of the problem (second-order expansions, near-equilibrium dynamics, shorter time scales etc.), see \cite{BGHS19,CG19,CG20,DH19,DK19,DK19-2,Fa18} and references therein. In particular, the authors' earlier work \cite{DH19}, as well as Collot-Germain \cite{CG19,CG20}, provides the justification of (\ref{wke}) up to the almost sharp time scale $T_{\mathrm{kin}}^{1-\varepsilon}$ for any $\varepsilon>0$.

In April 2021, the authors \cite{DH21} completed the first rigorous derivation of (\ref{wke}) up to time $T_{\mathrm{kin}}$ for scaling laws $\gamma=1$ or close to $1$. Subsequently, propagation of chaos and other predictions of wave turbulence theory were proved in \cite{DH21-2}. This includes the asymptotics of higher order correlations, derivation of the wave kinetic hierarchy, limit equations for the law of $\widehat u(t,k)$ when the initial distribution is not necessarily Gaussian, and propagation of Gaussianity in the case the initial distribution is Gaussian. 

We should mention that, after the work \cite{DH21}, some other results in a similar vein were also obtained, but for equations with special time-dependent random forcing. In \cite{ST21}, Staffilani and Tran derived the wave kinetic equation for the Zakharov-Kuznetsov equation in the presence of a time-dependent noise that {provides an additional randomization effect for angles in Fourier space.} Recently they extended their result to the spatial inhomogeneous setting with a different noise, in joint work with Hannani and Rosenzweig \cite{HRST22}. At this time, \cite{DH21,DH21-2,HRST22,ST21} are the only results that reach the kinetic time $T_{\mathrm{kin}}$ in the non-equilibrium setting. Some more recent results that cover shorter time scales, but do not include forcing, can be found in \cite{ACG21,Ma22}.

In addition to the derivation of (\ref{wke}), there are also many works devoted to the study of the behavior of solutions to wave kinetic equations like (\ref{wke}), see for example \cite{CDG22,EV15,EV15-2,GIT20, RST21,SoT18}. This is another very important question, but is less related to the focus of this paper, so we will not elaborate on its state of art here.
\subsection{The scaling laws}\label{intro-scale} Note that the kinetic description of wave turbulence theory involves the two limits $L\to\infty$ and $\alpha\to 0$. In fact, it is very important to specify the exact manner in which these two limits are taken. The most general form of such limits would be 
\[\alpha= L^{-\gamma}\] for some $\gamma\in[0,\infty]$, which is called a \emph{scaling law}. Note that, the endpoint case $\gamma=0$ is understood as the iterated limit where first $L\to\infty$ with $\alpha$ fixed and then $\alpha\to 0$; the case $\gamma=\infty$ is the opposite. The purpose of this section is to explain the necessary conditions on the scaling laws for a kinetic theory to hold. This will justify why $\gamma\in(0, 1)$ is the full range of scaling laws for (\ref{nls}) on the {square torus}.

To the best of our knowledge, the role of the scaling law in wave turbulence theory has not been adequately clarified in the physics literature, prior to the recent rigorous mathematical studies. In fact, this was one of the contribution of the authors' recent works, and is explained clearly in the expository paper \cite{DH22}. For completeness of the discussion, we elaborate on this here as well.

 First of all, not all scaling laws\footnote{A common knowledge in physical literature is that the limit $\alpha\to \infty$ should not be taken before the limit $L\to 0$ {(see Remark \ref{scaling0})}, which excludes the scaling law $\gamma=\infty$. This may lead to some mistaken belief that the only other option is $\gamma=0$, i.e. to take $L\to \infty $ first followed by $\alpha\to 0$. In fact, as we shall see in Section \ref{intro-endpoint}, the latter is also not compatible with equations on continuum domains, {so in continuum setting one has to restrict to scaling laws $0<\gamma<\infty$}. In the discrete setting, the scaling law $\gamma=0$ is compatible.} $\alpha=L^{-\gamma}$ allow for the kinetic description in Section \ref{intro-back}. To see this, consider the equation (\ref{nls}) with initial data (\ref{data}), but with a general dispersion relation $\omega(\nabla/i)$ instead of $-\Delta$. Then $\Eb|\widehat{u}(t,k)|^2$ admits an expansion with the first term being $n_{\mathrm{in}}(k)$, and (part of) the second term being
\begin{equation}\label{intro-duhamel}\alpha^2t\cdot L^{-2d}\sum_{k_1-k_2+k_3=k}n_{\mathrm{in}}(k_1)n_{\mathrm{in}}(k_2)n_{\mathrm{in}}(k_3)\cdot t\bigg|\frac{\sin(\pi\Omega t)}{\pi\Omega t}\bigg|^2;\quad \Omega:=\omega(k_1)-\omega(k_2)+\omega(k_3)-\omega(k),\end{equation} where $k_j\in (L^{-1}\mathbb{Z})^d$, due to a calculation of Duhamel iterations. At time $|t|\sim T_{\mathrm{kin}}\sim \alpha^{-2}$, and when $L\to\infty$ and $\alpha\to 0$, this expression formally matches one of the terms in the second iteration of (\ref{wke}) (cf. the first term in (\ref{wke2})), using the fact that $t|\sin(\pi \Omega t)/(\pi\Omega t)|^2\to\dirac(\Omega)$ as $t\to\infty$.

In order for this formal approximation to be legitamite, the one and only restriction is that {\bf the values of $\Omega$, as $k_j$ range over the lattice $\Zb^d_L=(L^{-1}\Zb)^d$, must be equidistributed at scale $T_{\mathrm{kin}}^{-1}\sim\alpha^2$.} In fact, suppose $|\alpha^2t|\sim 1$, then the convergence of (\ref{intro-duhamel}) is intimately tied to the bound
\begin{equation}\label{intro-res}
\begin{aligned}&\qquad\qquad\qquad\qquad\frac{\#(A\cap (L^{-1}\mathbb{Z})^{2d})}{L^{2d}}\sim |t|^{-1}\sim \mathrm{Vol}(A),\mathrm{\ where}\\&A=\{(k_1,k_2)\in\Rb^{2d}:|k_1|,|k_2|\lesssim 1,\,\,|\omega(k_1)-\omega(k_2)+\omega(k+k_2-k_1)-\omega(k)|\lesssim |t|^{-1}\}.
\end{aligned}\end{equation}
The bound \eqref{intro-res} follows from the convergence of \eqref{intro-duhamel}, if we replace the $L^1$ function $|\sin x/x|^2$ by a cutoff function, and similarly for $n_{\mathrm{in}}$. This means that the probability of a lattice point in $\Zb^{2d}_L$ falling into the set $A$---the level set of the function $\Omega$---is proportional to the volume of $A$, which is exactly equidistribution of $\Omega$.

Another implication of the equidistribution property (\ref{intro-res}) is that 
\begin{equation}\label{intro-res2}\#(A_0\cap(L^{-1}\Zb)^d)\lesssim L^{2d}|t|^{-1}\end{equation}where $A_0$ is defined as $A$ above but with $\Omega=0$. In fact, the sets $A$ and $A_0$ are referred to as sets of \emph{quasi resonance} and \emph{exact resonance} by physicists, and the latter inequality just states that {\bf the contribution of exact resonances should be dominated by volume-counting estimates of quasi resonances in \eqref{intro-res}}. This is certainly necessary for the kinetic formalism to hold, and is consistent with the discussions in the physical literature.
\subsubsection{Admissible scaling laws}\label{intro-admis} We say a scaling law $\gamma\in[0,\infty]$ is \emph{admissible}, if the above equidistribution property holds for $\alpha=L^{-\gamma}$ (equivalently $t=\alpha^{-2}\sim L^{2\gamma}$ in \eqref{intro-res}). Clearly, the range of admissibility depends on the precise properties of the dispersion relation $\omega$. Note that a \emph{sufficient} condition is given by
\[\frac{1}{L}\bigg|\frac{\partial \omega}{\partial k}\bigg|\lesssim \alpha^2\] which corresponds to $\gamma\leq 1/2$ \cite{Naz11, GBE22}: in this range, the equidistribution property holds for \emph{any} reasonably behaved dispersion relation $\omega$ without the need for any number theoretic arguments.

However, for a given (or a class of) dispersion relation $\omega$, the above sufficient condition is usually \emph{not necessary}. Specifying to the Schr\"{o}dinger case (\ref{nls}), one can see in multiple ways that the admissible range is in fact \fbox{$\gamma<1$} for arbitrary (including square) tori, and \fbox{$\gamma<d/2$} for tori satisfying a genericity assumption. For example, for the square torus one has $L^2\Omega\in\Zb$, so $\Omega$ cannot be equidistributed at scales $\ll L^{-2}$. Alternatively, the cardinality of the exact resonance set $A_0$ can be shown to be $\sim L^{2d-2}$ for square tori and $\sim L^{d}$ under genericity assumption, which leads (using \eqref{intro-res2}) to the same range of $\gamma$. In fact, we shall see that different values of $\gamma\in(0,1)$ or $(0,d/2)$ represent a range of different physical and mathematical phenomena; see Section \ref{twoscaling} for two special cases.

For $d\geq 3$, the results of the authors' earlier work \cite{DH21} covers the range of scaling laws $\gamma\in(1-c,1)$ for arbitrary tori, where $c$ is a small dimensional constant, as well as $\gamma=1$ under a genericity assumption. The goal of the current work, as stated in Theorem \ref{main}, is to extend the results to the full range $\gamma\in(0,1)$ (we discuss the endpoint $\gamma=0$ in Section \ref{intro-endpoint}).
\subsubsection{Two important scaling laws}\label{twoscaling} For the Schr\"{o}dinger equation (\ref{nls}), there are two scaling laws of particular mathematical and physical interest. The first one is $\gamma=1$, so that $\alpha =L^{-1}$ and $T_{\mathrm{kin}}\sim L^2$. This is consistent with the natural parabolic scaling for (\ref{nls}), by which solutions to (\ref{nls}) on torus of size $L$ and at time scale $\sim L^2$ can be rescaled to solutions on the unit torus and at time $O(1)$; namely, if $u$ solves (\ref{nls}) and $v(t,x)=L^{1/2}u(L^2t,Lx)$ with $x\in\Tb^d$, then $v$ solves the equation $(i\partial_t-\Delta)v+|v|^2v=0$. 
This means that, the predictions of wave turbulence theory under this scaling law, can be translated into conclusions on the unit torus. In three dimensions, this is closely related to the famous Gibbs measure invariance problem for cubic NLS (i.e. invariance of the $\Phi_3^4$ measure under the Schr\"{o}dinger dynamics), which is the only Gibbs measure invariance problem that still remains open after the works \cite{Bou94,Bou96,BDNY22,DNY19,OT20,Zhi94}. In addition, energy cascade behavior for NLS can also be observed at the level of (\ref{wke}) \cite{EV15-2,Naz11}, and proving such cascade dynamics for the NLS equation on the unit torus is a problem of great interest \cite{Bou00}.

\medskip

Another important scaling law is $\gamma=\frac12$ for which $T_{\mathrm{kin}}=L$. We may call this the \emph{ballistic scaling law} because it equates the kinetic timescale with the ballistic timescale needed for a wave packet at frequency $O(1)$ to traverse the domain $\Tb^d_L$. In some sense, this is analogous to the Boltzmann-Grad scaling law adopted in Lanford's theorem justifying the Boltzmann equation, in which the so-called mean-free path is also equated to the transport length scale.

It should be pointed out that such wave packet considerations are more relevant in the \emph{inhomogeneous} setting of the problem, where the initial field is not homogeneous in space as in \eqref{data}. An example of such data is when one sets \eqref{nls} on $\Rb^d$ with random data $u_{\mathrm{in}}(x)$ whose Wigner transform \begin{equation}\label{wigner}\mathbb{E}\bigg(\int_{\mathbb{R}^d}e^{iLy\cdot\eta}\,\overline{\widehat{u_{\mathrm{in}}}\big(\xi-\frac{\eta}{2}\big)}\widehat{u_{\mathrm{in}}}\big(\xi+\frac{\eta}{2}\big)\,\mathrm{d}\eta\bigg)\to W_0(y,\xi)\quad (\mathrm{as\ }L\to\infty),
\end{equation} possibly in a weak sense, where $W_0=W_0(x,\xi):\mathbb{R}^d\times\mathbb{R}^d\to\mathbb{R}_{\geq 0}$ decays rapidly in $\xi$ and $x$.
This is achieved, for example, by setting the random data as
\begin{equation}\label{data0}
u_{\mathrm{in}}(x)=L^{-\frac{d}{2}}\sum_{k\in(L^{-1}\mathbb{Z})^d}\psi\bigg(\frac{x}{L},k\bigg)\cdot g_k\cdot e^{ik\cdot x};\quad \psi(y,k)=\sqrt{W_0(y,k)},
\end{equation} which can be viewed as an inhomogeneous generalization of that in \eqref{data}. Then, the solution to (\ref{nls}) has the form
\begin{equation}\label{solution}
u(t,x)=L^{-\frac{d}{2}}\sum_{k\in(L^{-1}\mathbb{Z})^d}A\bigg(t,\frac{x}{L},k\bigg)\cdot e^{ik\cdot x}.
\end{equation} Denoting $N(t,y,k):=\mathbb{E}|A(t,y,k)|^2$, which corresponds to the Wigner transform of $u(t)$, and performing a formal expansion, we find that $N$ satisfies
\begin{equation}\label{wke1}\partial_tN+\frac{1}{L}(k\cdot\nabla_y)N\approx\alpha^2 \mathcal{C}(N,N,N), \qquad N(0,y,k)=W_0(y,k).
\end{equation}
This gives the inhomogeneous wave kinetic equation provided one equates the transport timescale $L$ with the kinetic timescale $\alpha^{-2}$, which is the scaling law $\gamma=\frac12$ with $T_{\mathrm{kin}}=L$. 

Note that, if one wants to view the homogeneous WKE as a limit of the inhomogeneous one, then one has to introduce an additional parameter to the data in \eqref{data0}, namely one measuring the scale of the inhomogeneity. This can be done by rescaling $W_0$, or equivalently by replacing $\psi(\frac{x}{L},k)$ with $\psi(\frac{x}{M},k)$ in \eqref{data0}, where $M$ is the new inhomogeneity scale. This leads to the flexibility of scaling laws in the homogeneous setting; in fact all the admissible scaling laws $\gamma \in (0, 1)$ described above arise as suitable limits with $L\to\infty$ and $M/L\to \infty$.
\subsubsection{The scaling law $\gamma=0$}\label{intro-endpoint} Note that Theorem \ref{main} covers the full range of scaling laws $\gamma\in(0,1)$, except the endpoint $\gamma=0$. This endpoint does not seem to be compatible with the continuum setting; indeed, formally taking the $L\to\infty$ first will lead to (\ref{nls}) on $\Rb^d$ with initial data
\[u(0,x)=u_{\mathrm{in}}^\infty(x);\quad \Eb(u_{\mathrm{in}}^\infty(x)\overline{u_{\mathrm{in}}^\infty(y)})=(\Fc^{-1}n_{\mathrm{in}})(x-y),\] which is a Gaussian random field with covariance operator $n_{\mathrm{in}}(\nabla/i)$ \emph{that has uniform strength at every point of $\mathbb{R}^d$}. In particular, this initial data, and any possible remainder term that may occur, belongs only to $L^\infty(\Rb^d)$ (with logarithmic growth at infinity). However, for $L^\infty$ data,  there is no known solution theory to (\ref{nls}) (or even the linear Schr\"odinger equation) in any function space, due to infinite speed of propagation and the unboundedness of the linear propagator $e^{it\Delta}$.

Nevertheless, in the \emph{discrete} setting where $\Delta$ is replaced by a discrete difference operator, it is completely plausible to solve (\ref{nls}) in (weighted) $L^\infty$, so in this case $\gamma=0$ is a compatible scaling law, and the corresponding justification of (\ref{wke}) for $\gamma=0$ may be possible \cite{LV22}.
\begin{rem}\label{scaling0} In some early physical literature, the limiting procedure was described as ``the $L\to\infty$ limit should be taken before the $\alpha\to 0$ limit, and not after". This should not be understood as these two limits being taken independently; rather, it simply means that the rate $L\to\infty$ should not be slower than that of $\alpha\to 0$. In other words, we must have $\alpha L^{\gamma_0}\to\infty\Leftrightarrow \alpha\gg L^{-\gamma_0}$, or $\gamma<\gamma_0$ in the context of scaling laws, where $\gamma_0$ is a constant depending on the setting of the problem. This is clearly consistent with all the above discussions.
\end{rem}
\subsection{Ingredients of the proof} We briefly describe here the main difficulties and new ingredients in the proof of Theorem \ref{main}; see Section \ref{overview} for a more substantial description, as that requires the notations set up in Section \ref{setup}.

While the general methodology here follows that in \cite{DH21} which dealt with the scaling law $\gamma=1$, fundamentally new structures and ideas appear for scaling laws $\gamma<2/3$ as we shall explain below. The analysis of these new structures requires introducing new ideas to isolate, analyze, and uncover novel cancellations between some of them. Moreover, it requires upgrading our previous combinatorial algorithm to a much more robust and streamlined apparatus. 

The first steps of the proof of Theorem \ref{main} are essentially the same as in \cite{DH21}: one expands the solution $u$ to (\ref{nls}) into terms indexed by ternary trees, which allows to express the correlations of these terms using \emph{couples}. These couples (which are the Feynman diagrams in this game) are pairs of trees whose leafs are completely paired to each other. The analysis of such couples goes through parallel analytical and combinatorial approaches. The leading couples, which we call \emph{regular couples}, are studied and computed analytically to isolate from them the iterates of (\ref{wke}). It then suffices to show that the contribution of non-regular couples is of lower order. Here, the novel idea of \emph{molecules} was introduced in \cite{DH21} to study the combinatorial problems associated with non-regular couples. This molecular picture will prove to be even more indispensable in this paper. 

The same algorithm used in \cite{DH21} to analyze these molecules breaks down, as soon as $\gamma<2/3$. On a superficial technical level, this is due to the failure of a particular two-vector counting estimate (namely the $q=2$ case of (\ref{atomiccount})). However, this break down is much more fundamental and cannot be saved by simply modifying the algorithm. Indeed, when $\gamma<2/3$, the molecule may contain new bad structures (in fact \emph{multiple families} of them) other than those already observed in \cite{DH21}. {\bf Such bad structures are harmless at scaling laws $\gamma>2/3$, but can overtake the leading terms for $\gamma<2/3$.} Note that this difficulty is of very different nature from that of \cite{DH21}, which mainly revolves around overcoming the factorial divergence caused by \emph{generic} molecules. While this is still a problem here, the extra difficulty imposed by these \emph{special} bad structures requires substantially new ideas beyond the proof in \cite{DH21}.

The strategy here is to first (i) identify all the possible bad structures---there are \emph{eight} families of them that we call \emph{vines} (Figure \ref{fig:vines}), then (ii) recover a good estimate for any molecule absent of these bad structures (in the form of a \emph{rigidity theorem} similar to Proposition 9.10 of \cite{DH21}), and finally (iii) control the contribution of these bad structures.

Parts (i)--(ii) can in fact be done together at the level of the molecule picture, by introducing a powerful new operation that is absent in the algorithm of \cite{DH21}, called the \emph{cutting} operation (Figure \ref{fig:cutintro}). This seemingly simple operation allows us to \emph{isolate} all the possible bad scenarios into ``local" post-surgery connected components, and locate only finitely many families of connected components that are problematic. It is precisely this small addition that leads to the complete classification of bad structures in this paper, namely the vines. We believe that this is the one missing piece in the algorithm of \cite{DH21} that makes it much more robust. We also notice that the combinatorial difficulty caused by vines is not specific to the (NLS) case, but is actually universal (at least in the $4$-wave setting) independent of dispersion relation and multilinear multipliers. As such, we believe that the algorithm in \cite{DH21}, equipped with the cutting operation, should be directly applicable in many other settings.

Part (iii) of this plan, which is another main novelty of this work, relies on extremely delicate, and somewhat miraculous, cancellations observed between the bad structures identified in (i). Indeed, starting from these bad structures identified at the level of the molecule, one can reconstruct the various possibilities of couples that have this same molecular structure. These couples, which may have arbitrarily large size, can be grouped into pairs defined as \emph{twists} of each other (Figures \ref{fig:vinescancel} and \ref{fig:twist_dec}). The cancellation structure is then found by studying expressions associated with couples that are twists of each other. It is worth mentioning that this cancellation is so involved and intricate that there is little-to-no chance of uncovering it if one only looks at the couple picture, and does not turn to the molecule picture (cf. Figure \ref{fig:flowchart}). This strongly suggests that the molecules introduced in \cite{DH21} are fundamental objects, and not mere technical tools.
 
To the best of our knowledge, the cancellation identified in this paper has not appeared in earlier mathematical or physical literature (such structures only become significant in higher oder terms, so it's not surprising that they do not play a role in the formal derivation of physicists that only involve second order expansions). Therefore, we believe that the ingredients of this paper and \cite{DH21}---including cancellations of vine structures and the algorithm in \cite{DH21} with cutting---constitute {\bf the next major step beyond \cite{EY00,ESY08,LS11} in the study of Feynman diagrams. This development allows us to effectively estimate diagrams of much higher order than those in \cite{EY00,ESY08,LS11},} which results in the proof of Theorem \ref{main} in the non-equilibrium setting and without noise.

Finally, we remark that, this new Feynman diagram analysis is robust enough to be applicable in a wide range of semilinear dispersive equations. The only major difference for other dispersion relations $\omega$ would be the equidistribution property (\ref{intro-res}), which may restrict the range of scaling laws $\gamma$ depending on the fine number theoretic properties of $\omega$. However, these number theoretic ingredients are only needed when $\gamma>\frac12$; for $\gamma\leq 1/2$, we expect that results like Theorem \ref{main} should hold for arbitrary $\omega$, as demonstrated in Section \ref{intro-admis}.
\subsection{Future Horizons} We conclude this introduction by listing, what we believe to be some of the next major frontiers in this line of research, after the resolution (here and in \cite{DH21, DH21-2}) of the first fundamental question that is the rigorous justification of the wave kinetic theory.

\medskip 

(1) \emph{Longer times}: the obvious question after Theorem \ref{main} would be whether the same result can be extended to time $|t|\leq C\cdot T_{\mathrm{kin}}$ for constants $C\gg1$. This is a tremendous open problem, and its resolution is unknown not only in the wave turbulence setting, but also in the classical particle setting of Lanford's theorem justifying Boltzmann's equation. Note that (\ref{wke}) may have finite time blowup (which is even expected to be generic, see \cite{EV15,EV15-2}), so the best one can hope for, in terms of  the approximation (\ref{limit}), would be the following conjecture:
\begin{itemize}
\item Suppose the solution to (\ref{wke}) stays smooth up to time $\tau$, then the approximation (\ref{limit}) holds for all time $|t|\leq\tau\cdot T_{\mathrm{kin}}$.
\end{itemize}

Answering this conjecture is highly challenging, and would require ideas and techniques completely different from the current and earlier works. Moreover, a positive answer would have profound implications on the study of long-time dynamics of (\ref{nls}), especially on energy cascades.

(2) \emph{Post-singularity dynamics}: Suppose that the conjecture in (1) has been proved or is assumed to be true. Moreover, suppose a specific solution to (\ref{wke}) exhibits a $\dirac$ singularity at a particular time $\tau_0$. The analysis in \cite{EV15, EV15-2} suggests that such singularity formation is somewhat generic (formation of condensate). Then we may ask the following question: what is the asymptotic behavior of
\[\Eb|\widehat{u}(\tau_0\cdot T_{\mathrm{kin}},0)|^2?\] 
In other words, can one prove rigorously the dynamical formation of condensate for (\ref{nls})? More interestingly, for $\tau>\tau_0$, can one still track the macroscopic behavior of $\Eb|\widehat{u}(\tau\cdot T_{\mathrm{kin}},k)|^2$? Does it converge to a finite limit? If so, can it be defined as a weak solution to (\ref{wke}) in some sense? If not, then should we somehow modify (\ref{nls}) (and/or the wave kinetic equation) beyond the time $\tau_0\cdot T_{\mathrm{kin}}$, in view of the condensate formed for (\ref{wke}) at $\tau_0$? These questions may be even more challenging than the conjecture in (1), but their resolution would bring new insights, both physical and mathematical, to the study of (\ref{nls}) and its condensates.

(3) \emph{Properties of solutions to (\ref{wke})}: turning now to the solution theory to (\ref{wke}), an important question is to describe more precisely the formation of condensate \cite{EV15,EV15-2}, and perhaps justify its genericity, for sufficiently strong classes of solutions. Another venue of immense physical interest, is to rigorously study solutions that may asymptote to (or resemble in some meaningful sense) the Zakharov spectra, see for example \cite{CDG22} for a step in this direction. These specific solutions, when combined with possible results in (1) and (2), may lead to the discovery of very interesting behavior of solutions to (\ref{nls}).

One may also consider the inhomogeneous version of (\ref{wke}), whose derivation is expected to be similar to (\ref{wke}) with only technical differences. However, solutions to the inhomogeneous (\ref{wke}) may behave quite differently, for the transport term may prevent blowup. If these solution exhibit diffusive behavior for long times, this may lead to a nonlinear version of the quantum diffusion behavior described in Erd\"{o}s-Salmhofer-Yau \cite{ESY08} in the linear setting.

\subsection{Acknowledgements} The first author is supported in part by NSF grant DMS-1900251 and a Sloan Fellowship. The second author is supported in part by NSF grant DMS-1654692 and a Simons Collaboration Grant on Wave Turbulence. The authors would like to thank Herbert Spohn for several discussions, that explained the importance of other scaling laws (particularly $\gamma=\frac12$). This was a major drive to study the full range of scaling laws in this paper. The authors also thank Jani Lukkarinen for explaining the work \cite{LS11} and several other illuminating discussions.

\section{Preparations}\label{setup}
\subsection{Preliminary reductions}Start from the equation (\ref{nls}), let $u$ be a solution, and recall $\alpha=L^{-\gamma}$. Let $M=\fint|u|^2$ be the conserved mass of $u$ (where $\fint$ takes the average on $\Tb_L^d$), and define $v:=e^{-2iL^{-\gamma} Mt}\cdot u$, then $v$ satisfies the Wick ordered equation
\begin{equation}(i\partial_t-\Delta)v+L^{-\gamma}\bigg(|v|^2v-2\fint|v|^2\cdot v\bigg)=0.
\end{equation} By switching to Fourier space, rescaling in time and reverting the linear Schr\"{o}dinger flow, we define
\begin{equation}a_k(t)=e^{-\pi i\cdot\delta L^{2\gamma}|k|^2t}\cdot\widehat{v}( \delta T_{\mathrm{kin}}\cdot t,k)
\end{equation} with $\widehat{v}$ as in (\ref{fourier}), then $\textit{\textbf{a}}:=a_k(t)$ will satisfy the equation
 \begin{equation}\label{akeqn}
 \left\{
\begin{aligned}
\partial_ta_k &= \Cc_+(\textit{\textbf{a}},\overline{\textit{\textbf{a}}},\textit{\textbf{a}})_k(t),\\
a_k(0) &=(a_k)_{\mathrm{in}}=\sqrt{n_{\mathrm{in}}(k)}g_k(\omega),
\end{aligned}
\right.
\end{equation} with the nonlinearity
\begin{equation}\label{akeqn2} \Cc_\zeta(\textit{\textbf{f}},\textit{\textbf{g}},\textit{\textbf{h}})_k(t):=\frac{\delta}{2L^{d-\gamma}}\cdot(i\zeta)\sum_{k_1-k_2+k_3=k}\epsilon_{k_1k_2k_3}
e^{\zeta\pi i\cdot\delta L^{2\gamma}\Omega(k_1,k_2,k_3,k)t}f_{k_1}(t)g_{k_2}(t)h_{k_3}(t).
\end{equation}for $\zeta\in\{\pm\}$. Here in (\ref{akeqn2}) and below, the summation is taken over $(k_1,k_2,k_3)\in(\Zb_L^d)^3$, and \begin{equation}\label{defcoef0}\epsilon_{k_1k_2k_3}=
\left\{
\begin{aligned}+&1,&&\mathrm{if\ }k_2\not\in\{k_1,k_3\};\\
-&1,&&\mathrm{if\ }k_1=k_2=k_3;\\
&0,&&\mathrm{otherwise},
\end{aligned}
\right.\end{equation} and the resonance factor
\begin{equation}\label{res}
\Omega=\Omega(k_1,k_2,k_3,k):=|k_1|^2-|k_2|^2+|k_3|^2-|k|^2=2\langle k_1-k,k-k_3\rangle.\end{equation} Note that $\epsilon_{k_1k_2k_3}$ is always supported in the set \begin{equation}\label{defset}\Sf:=\big\{(k_1,k_2,k_3):\mathrm{\ either\ }k_2\not\in\{k_1,k_3\},\mathrm{\ or\ }k_1=k_2=k_3\big\}.\end{equation}

The rest of this paper is focused on the system (\ref{akeqn})--(\ref{akeqn2}) for $\textit{\textbf{a}}$, with the relevant terms defined in (\ref{defcoef0})--(\ref{res}), in the time interval $t\in[0,1]$.
\subsection{Parameters, notations and norms}\label{norms} In this subsection we list some notations and fix some parameters that will be useful below. Recall that $d\geq 3$ and $0<\gamma<1$, and Schwartz data $n_{\mathrm{in}}$ are fixed. Define \begin{equation}\label{othergamma}\gamma_0:=\min(\gamma,1-\gamma),\quad \gamma_1:=\min(2\gamma,1,2(d-1)(1-\gamma)).\end{equation} Fix $\eta$ as a small absolute constant such that $\eta\ll_{d,\gamma}1$, and let $C$ be any large constant depending on $(d,\gamma)$ and $\eta$. Let also $C^+$ be any large constant depending on $C$ and $n_{\mathrm{in}}$, and fix $\delta$ as a small constant such that $\delta\ll_{C^+}1$. Unless otherwise stated, the implicit constants in $\lesssim$ symbols may depend on $C^+$, but those in $O(\cdot)$ symbols depend only on $C$. Let $L$ be large enough depending on $\delta$, and define $N=\lfloor(\log L)^4\rfloor$.

Let $\chi_0=\chi_0(z)\in C^\infty(\Rb\to\Rb_{\geq 0})$ be such that $\chi_0=1$ for $|z|\leq 1/2$ and $\chi_0=0$ for $|z|\geq 1$; define $\chi_0(z^1,\cdots,z^d)=\chi_0(z^1)\cdots \chi_0(z^d)$ and $\chi_\infty=1-\chi_0$, where $z^j$ are coordinates of vectors $z\in\Rb^d$ (we use this notation throughout). By abusing notation, sometimes we may also use $\chi_0$ to denote other cutoff functions with slightly different supports. These functions, as well as the other cutoff functions, will be in Gevrey class $2$ (i.e. the $k$-th order derivatives are bounded by $(2k)!$). For a multi-index $\rho=(\rho_1,\cdots,\rho_m)$, we adopt the usual notations $|\rho|=\rho_1+\cdots+\rho_m$ and $\rho!=(\rho_1)!\cdots(\rho_m)!$, etc. For an index set $A$, we use the vector notation $\alpha[A]=(\alpha_j)_{j\in A}$ and $\mathrm{d}\alpha[A]=\prod_{j\in A}\mathrm{d}\alpha_j$, etc.

Denote $z^+=z$ for a complex number $z$, and $z^-=\overline{z}$. In the rest of this paper, we will not use the space Fourier transform notation as in (\ref{fourier}). We will use $\widehat{\cdot}$ only for the time Fourier transform, which is defined as \[\widehat{u}(\lambda)=\int_\Rb u(t) e^{-2\pi i\lambda t}\,\mathrm{d}t,\quad u(t)=\int_\Rb \widehat{u}(\lambda)e^{2\pi i\lambda t}\,\mathrm{d}\lambda,\] and similarly for higher dimensional versions. If a function $F=F(t_j,k_j)$ depends on several time variables $t_j$ and several vector variables $k_j$, we shall define its $X^{\theta,\beta}$ norm by \[\|F\|_{X^{\theta,\beta}}=\int\big(\max_j\langle\lambda_j\rangle\big)^{\theta}\cdot\bigg[\sup_{k_j}\big(\max_j\langle k_j\rangle\big)^{\beta}|\widehat{F}(\lambda_j,k_j)|\bigg]\,\prod_j\mathrm{d}\lambda_j,\]
If $F=F(t_j)$ does not depend on any $k_j$, the norms are modified accordingly; they do not depend on $\beta$ so we call it $X^\theta$. Define the localized version $X_{\mathrm{loc}}^{\theta,\beta}$, and associated auxiliary $Y_{\mathrm{loc}}^\theta$ norm, by 
\[\|F\|_{X_{\mathrm{loc}}^{\theta,\beta}}=\inf\big\{\|\widetilde{F}\|_{X^{\theta,\beta}}:\widetilde{F}=F\mathrm{\ for\ }0\leq t_j\leq 1\big\},\quad\|F\|_{Y_{\mathrm{loc}}^\theta}:=\sup_{(k_j^0)}\|F\cdot\mathbf{1}_{|k_j-k_j^0|\leq 1\,(\forall j)}\|_{X_{\mathrm{loc}}^{\theta,0}}.\] If we will only use the value of $F$ in some subset (for example $\{t_1>t_2\}$, see the second part of Proposition \ref{regcpltreeasymp}), then in the above definition we may only require $\widetilde{F}=F$ in this set. Finally, define the $Z$ norm for function $a=a_k(t)$,
\begin{equation}\label{defznorm}\|a\|_Z^2=\sup_{0\leq t\leq 1}L^{-d}\sum_{k\in\Zb_L^d}\langle k\rangle^{10d}|a_k(t)|^2
\end{equation}All these norms are readily extended to Banach space valued functions.
\subsection{Trees, couples and decorations} Recall the notions of trees, couples and decorations, which are defined in \cite{DH21}.
\begin{df}[Trees]\label{deftree} A \emph{ternary tree} $\Tc$ (we will simply say a \emph{tree} below) is a rooted tree where each non-leaf (or \emph{branching}) node has exactly three children nodes, which we shall distinguish as the \emph{left}, \emph{mid} and \emph{right} ones. A node $\mf$ is a \emph{descendant} of a node $\nf$, or $\nf$ is an \emph{ancestor} of $\mf$, if $\mf$ belongs to the subtree rooted at $\nf$ (we allow $\mf=\nf$). We say $\Tc$ is \emph{trivial} (and write $\Tc=\bullet$) if it has only the root, in which case this root is also viewed as a leaf.

We denote generic nodes by $\nf$, generic leaves by $\lf$, the root by $\rf$, the set of leaves by $\Lc$ and the set of branching nodes by $\Nc$. The \emph{order} of a tree $\Tc$ is defined by $n(\Tc)=|\Nc|$ (this is called scale in \cite{DH21}), so if $n(\Tc)=n$ then $|\Lc|=2n+1$ and $|\Tc|=3n+1$.

A tree $\Tc$ may have sign $+$ or $-$. If its sign is fixed then we decide the signs of its nodes as follows: the root $\rf$ has the same sign as $\Tc$, and for any branching node $\nf\in\Nc$, the signs of the three children nodes of $\nf$ from left to right are $(\zeta,-\zeta,\zeta)$ if $\nf$ has sign $\zeta\in\{\pm\}$. Once the sign of $\Tc$ is fixed, we will denote the sign of $\nf\in\Tc$ by $\zeta_\nf$. Define $\zeta(\Tc)=\prod_{\nf\in\Nc}(i\zeta_\nf)$. We also define the conjugate $\overline{\Tc}$ of a tree $\Tc$ to be the same tree but with opposite sign.
\end{df}
\begin{df}[Couples]\label{defcouple} A \emph{couple} $\Qc$ is an unordered pair $\{\Tc^+,\Tc^-\}$ of two trees $\Tc^\pm$ with signs $+$ and $-$ respectively, together with a partition $\Ps$ of the set $\Lc^+\cup\Lc^-$ into $(n+1)$ pairwise disjoint two-element subsets, where $\Lc^\pm$ is the set of leaves for $\Tc^\pm$, and $n=n^++n^-$ where $n^\pm$ is the order of $\Tc^\pm$. This $n$ is also called the \emph{order} of $\Qc$, denoted by $n(\Qc)$. The subsets $\{\lf,\lf'\}\in\Ps$ are referred to as \emph{pairs}, and we require that $\zeta_{\lf'}=-\zeta_\lf$, i.e. the signs of paired leaves must be opposite. If both $\Tc^\pm$ are trivial, we call $\Qc$ the \emph{trivial couple} (and write $\Qc=\times$).

For a couple $\Qc=\{\Tc^+,\Tc^-,\Ps\}$ we denote the set of branching nodes by $\Nc=\Nc^+\cup\Nc^-$, and the set of leaves by $\Lc=\Lc^+\cup\Lc^-$; for simplicity we will abuse notation and write $\Qc=\Tc^+\cup\Tc^-$. Define $\zeta(\Qc)=\prod_{\nf\in\Nc}(i\zeta_\nf)$. We also define a \emph{paired tree} to be a tree where \emph{some} leaves are paired to each other, according to the same pairing rule for couples. We say a paired tree is \emph{saturated} if there is only one unpaired leaf (called the \emph{lone leaf}). In this case the tree forms a couple with the trivial tree $\bullet$. Finally, we define the conjugate of a couple $\Qc=\{\Tc^+,\Tc^-\}$ as $\overline{\Qc}=\{\overline{\Tc^-},\overline{\Tc^+}\}$ with the same pairings; for a paired tree $\Tc$ we also define its conjugate as $\overline{\Tc}$ with the same pairings, where $\overline{\Tc}$ is as in Definition \ref{deftree}.
\end{df}
\begin{df}[Decorations]\label{defdec} A \emph{decoration} $\Ds$ of a tree $\Tc$ is a set of vectors $(k_\nf)_{\nf\in\Tc}$, such that $k_\nf\in\Zb_L^d$ for each node $\nf$, and that \[k_\nf=k_{\nf_1}-k_{\nf_2}+k_{\nf_3},\quad \mathrm{or\ equivalently}\quad \zeta_\nf k_\nf=\zeta_{\nf_1}k_{\nf_1}+\zeta_{\nf_2}k_{\nf_2}+\zeta_{\nf_3}k_{\nf_3},\] for each branching node $\nf\in\Nc$, where $\zeta_\nf$ is the sign of $\nf$ as in Definition \ref{deftree}, and $\nf_1,\nf_2,\nf_3$ are the three children nodes of $\nf$ from left to right. Clearly a decoration $\Ds$ is uniquely determined by the values of $(k_\lf)_{\lf\in\Lc}$. For $k\in\Zb_L^d$, we say $\Ds$ is a $k$-decoration if $k_\rf=k$ for the root $\rf$.

Given a decoration $\Ds$, we define the coefficient
\begin{equation}\label{defcoef}\epsilon_\Ds:=\prod_{\nf\in\Nc}\epsilon_{k_{\nf_1}k_{\nf_2}k_{\nf_3}}\end{equation} where $\epsilon_{k_1k_2k_3}$ is as in (\ref{defcoef0}). Note that in the support of $\epsilon_\Ds$ we have that $(k_{\nf_1},k_{\nf_2},k_{\nf_3})\in\Sf$ for each $\nf\in\Nc$. We also define the resonance factor $\Omega_\nf$ for each $\nf\in\Nc$ by
\begin{equation}\label{defres}\Omega_\nf=\Omega(k_{\nf_1},k_{\nf_2},k_{\nf_3},k_\nf)=|k_{\nf_1}|^2-|k_{\nf_2}|^2+|k_{\nf_3}|^2-|k_\nf|^2.\end{equation}

A decoration $\Es$ of a couple $\Qc=\{\Tc^+,\Tc^-,\Ps\}$, is a set of vectors $(k_\nf)_{\nf\in\Qc}$, such that $\Ds^\pm:=(k_\nf)_{\nf\in\Tc^\pm}$ is a decoration of $\Tc^\pm$, and moreover $k_\lf=k_{\lf'}$ for each pair $\{\lf,\lf'\}\in\Ps$. We define $\epsilon_\Es:=\epsilon_{\Ds^+}\epsilon_{\Ds^-}$, and define the resonance factors $\Omega_\nf$ for $\nf\in\Nc$ as in (\ref{defres}). Note that we must have $k_{\rf^+}=k_{\rf^-}$ where $\rf^\pm$ is the root of $\Tc^\pm$; again we say $\Es$ is a $k$-decoration if $k_{\rf^+}=k_{\rf^-}=k$. We also define decorations $\Ds$ of paired trees, as well as $\epsilon_\Ds$ and $\Omega_\nf$ etc., similar to the above (except that we don't pair all leaves).
\end{df}
\subsection{The ansatz and main estimates} We now state the ansatz for the solution $\textit{\textbf{a}}$ to the system (\ref{akeqn})--(\ref{akeqn2}), as well as the main estimates.
\subsubsection{The expressions $\Jc_\Tc$ and $\Kc_\Qc$} For any tree $\Tc$ of order $n$, define the expression
\begin{equation}\label{defjt}(\Jc_\Tc)_k(t)=\bigg(\frac{\delta}{2L^{d-\gamma}}\bigg)^n\zeta(\Tc)\sum_\Ds\epsilon_\Ds\cdot\int_\Dc\prod_{\nf\in\Nc}e^{\zeta_\nf\pi i\cdot\delta L^{2\gamma}\Omega_\nf t_\nf}\,\mathrm{d}t_\nf\cdot\prod_{\lf\in\Lc}\sqrt{n_{\mathrm{in}}(k_\lf)}\eta_{k_\lf}^{\zeta_\lf}(\omega)
\end{equation} where the sum is taken over all $k$-decorations $\Ds$ of $\Tc$, and the domain
\begin{equation}\label{defdomaind}\Dc=\big\{t[\Nc]:0<t_{\nf'}<t_\nf<t\mathrm{\ whenever\ }\nf'\mathrm{\ is\ a\ child\ node\ of\ }\nf\big\}.
\end{equation} For any couple $\Qc$ of order $n$, define the expression
\begin{equation}\label{defkq}\Kc_\Qc(t,s,k)=\bigg(\frac{\delta}{2L^{d-\gamma}}\bigg)^n\zeta(\Qc)\sum_\Es\epsilon_\Es\cdot\int_\Ec\prod_{\nf\in\Nc}e^{\zeta_\nf\pi i\cdot\delta L^{2\gamma}\Omega_\nf t_\nf}\,\mathrm{d}t_\nf\cdot\prod_{\lf\in\Lc}^{(+)}n_{\mathrm{in}}(k_\lf),
\end{equation} where the sum is taken over all $k$-decorations $\Es$ of $\Qc$, the product $\prod_{\lf\in\Lc}^{(+)}$ is taken over all leaves $\lf\in\Lc$ with $+$ sign, and the domain
\begin{multline}\label{defdomaine}\Ec=\big\{t[\Nc]:0<t_{\nf'}<t_\nf\mathrm{\ whenever\ }\nf'\mathrm{\ is\ a\ child\ node\ of\ }\nf;\\t_\nf<t\mathrm{\ whenever\ }\nf\in\Nc^+\mathrm{\ and\ }t_\nf<s\mathrm{\ whenever\ }\nf\in\Nc^-\big\}.
\end{multline}
\subsubsection{The ansatz for $a_k(t)$} Let
\begin{equation}\label{defjn}(\Jc_n)_k(t)=\sum_{n(\Tc^+)=n}(\Jc_{\Tc^+})_k(t)
\end{equation} where the sum is taken over all trees $\Tc^+$ of order $n$ and sign $+$, and define $\textit{\textbf{b}}=b_k(t)$ by
\begin{equation}\label{defb}a_k(t)=\sum_{0\leq n\leq N}(\Jc_n)_k(t)+b_k(t),
\end{equation} where $N=\lfloor(\log L)^4\rfloor$ as defined in Section \ref{norms}. Then $\textit{\textbf{b}}$ satisfies an equation of form
\begin{equation}\label{eqnbk}\textit{\textbf{b}}=\Rc+\Ls \textit{\textbf{b}}+\Ls_2(\textit{\textbf{b}},\textit{\textbf{b}})+\Ls_3(\textit{\textbf{b}},\textit{\textbf{b}},\textit{\textbf{b}}),
\end{equation} or equivalently 
\begin{equation}\label{eqnbk2}\textit{\textbf{b}}=(1-\Ls)^{-1}(\Rc+\Ls_2(\textit{\textbf{b}},\textit{\textbf{b}})+\Ls_3(\textit{\textbf{b}},\textit{\textbf{b}},\textit{\textbf{b}})),
\end{equation}where the relevant terms are defined as
\begin{equation}\label{eqnbk1.5}\Rc=\sum_{(0)}\Ic\Cc_+(\textit{\textbf{u}},\overline{\textit{\textbf{v}}},\textit{\textbf{w}}),\quad\Ls \textit{\textbf{b}}=\sum_{(1)}\Ic\Cc_+(\textit{\textbf{u}},\overline{\textit{\textbf{v}}},\textit{\textbf{w}}),\quad \Ls_2(\textit{\textbf{b}},\textit{\textbf{b}})=\sum_{(2)}\Ic\Cc_+(\textit{\textbf{u}},\overline{\textit{\textbf{v}}},\textit{\textbf{w}}),\end{equation} and $\Ls_3(\textit{\textbf{b}},\textit{\textbf{b}},\textit{\textbf{b}})=\Ic\Cc_+(\textit{\textbf{b}},\overline{\textit{\textbf{b}}},\textit{\textbf{b}})$. The sums in (\ref{eqnbk1.5}) are taken over $(\textit{\textbf{u}},\textit{\textbf{v}},\textit{\textbf{w}})$, each of which being either $\textit{\textbf{b}}$ or $\Jc_n$ for some $0\leq n\leq N$. In the sum $\sum_{(j)}$ for $0\leq j\leq 2$, exactly $j$ inputs in $(\textit{\textbf{u}},\textit{\textbf{v}},\textit{\textbf{w}})$ equals $\textit{\textbf{b}}$, and in the sum $\sum_{(0)}$ we require that $(\textit{\textbf{u}},\textit{\textbf{v}},\textit{\textbf{w}})=(\Jc_{n_1},\Jc_{n_2},\Jc_{n_3})$ with $n_1+n_2+n_3\geq N$. Note that $\Ls$, $\Ls_2$ and $\Ls_3$ are $\Rb$-linear, $\Rb$-bilinear and $\Rb$-trilinear operators respectively.
\subsubsection{Correlations, and expansion of (\ref{wke})} For any $n_1,n_2\geq 0$, by using Isserlis' theorem as in Section 2.2.3 of \cite{DH21}, we have that
\begin{equation}\label{correlation}\Eb\big((\Jc_{n_1})_{k}(t)\overline{(\Jc_{n_2})_k(t)}\big)=\sum_{\Qc}\Kc_\Qc(t,t,k),\end{equation} where the summation is taken over all couples $\Qc=\{\Tc^+,\Tc^-\}$ such that $n(\Tc^+)=n_1$ and $n(\Tc^-)=n_2$ (the partition $\Ps$ can be arbitrary).

Now consider the equation (\ref{wke}). Due to the smallness of $\delta$, the solution $n=n(t,k)$ to (\ref{wke}) has the Taylor expansion
\begin{equation}\label{wketaylor}n(\delta t,k)=\sum_{n=0}^\infty\Mc_n(t,k),
\end{equation} where $\Mc_n(t,k)$ is defined such that
\begin{equation}\label{wketaylor2}\Mc_0(t,k)=n_{\mathrm{in}}(k),\quad \Mc_{n}(t,k)=\delta\sum_{n_1+n_2+n_3=n-1}\int_0^t\Kc(\Mc_{n_1}(t'),\Mc_{n_2}(t'),\Mc_{n_3}(t'))(k)\,\mathrm{d}t'.
\end{equation} It is easy to see that $|\Mc_n(t,k)|\lesssim\langle k\rangle^{-20d}(C^+\delta)^n$ uniformly in $(t,k)$.
\subsubsection{The main estimates}
The followings are the main estimates of this paper. Their proofs will occupy up to Section \ref{linoper1}; once they are proved, Theorem \ref{main} will then be proved in Section \ref{linoper2}, similar to Section 12 of \cite{DH21}.
\begin{prop}\label{mainprop1}Let $\Kc_\Qc$ be defined in (\ref{defkq}). Then for each $0\leq n\leq N^2$, $k\in\Zb_L^d$ and $t\in[0,1]$ we have
\begin{equation}\label{mainest1}\bigg|\sum_{\Qc}\Kc_\Qc(t,t,k)\bigg|\lesssim\langle k\rangle^{-20d}(C^+\sqrt{\delta})^n,
\end{equation} where the summation is taken over all couples $\Qc=\{\Tc^+,\Tc^-\}$ such that $n(\Tc^+)=n(\Tc^-)=n$.
\end{prop}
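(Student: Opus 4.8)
The plan is to estimate the sum $\sum_\Qc \Kc_\Qc(t,t,k)$ over all couples $\Qc$ of order $2n$ by combining an analytical evaluation of the leading ``regular'' couples with a combinatorial-analytical bound on the remainder. First I would split the couples into \emph{regular couples}, which are built by iterating a fixed local operation (pairing a branching node of $\Tc^+$ with one of $\Tc^-$ in the prescribed resonant way), and \emph{non-regular couples}. The regular couples are exactly the ones producing the Taylor iterates $\Mc_{n_1}\cdots$ of \eqref{wke}; by the asymptotics of the type referred to in Proposition~\ref{regcpltreeasymp}, their total contribution is $O(\langle k\rangle^{-20d}(C^+\delta)^n)$, which is far better than the claimed bound (the $\sqrt\delta$ slack is for the non-regular part). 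So the real content is: \emph{the contribution of non-regular couples of order $2n$ is $\lesssim \langle k\rangle^{-20d}(C^+\sqrt\delta)^n$}.

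For that, I would convert each $\Kc_\Qc$ into its \emph{molecule} $\Mb(\Qc)$ following \cite{DH21}: atoms correspond to branching nodes, bonds to the vector variables $k_\nf$, and the time integral over $\Ec$ together with the resonance factors $\Omega_\nf$ is estimated after integrating in the $\lambda$-variables dual to the $t_\nf$. This reduces the bound on $|\Kc_\Qc|$ to a product of (i) a volume/counting estimate on the number of decorations $\Es$ consistent with the molecule — governed by counting lattice points in the quasi-resonant sets of \eqref{intro-res} — and (ii) a time-integral estimate. The next step is the new \emph{cutting operation}: applying it to $\Mb(\Qc)$ decomposes the molecule into post-surgery connected components, of which only finitely many families — the eight \emph{vine} families of Figure~\ref{fig:vines} — fail the good counting estimate (the failure being precisely the breakdown of the $q=2$ case of \eqref{atomiccount} once $\gamma<2/3$). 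For any molecule with no vine, I would invoke the rigidity theorem (the analogue of Proposition~9.10 of \cite{DH21}) to get a uniform power-saving bound $\lesssim (C\delta)^{n}$ per component, and then sum over the at-most-$C^n$ combinatorial choices of such molecules, which gives the desired $(C^+\sqrt\delta)^n$ with room to spare.

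The genuinely hard step — and the one I expect to be the main obstacle — is handling the couples whose molecules \emph{do} contain vines, since a naive bound on each such couple is too large (the bad structures can ``overtake the leading terms''). Here the plan is to not bound couples individually but to group them: for each vine-containing couple one reconstructs, at the molecule level, all couples sharing that molecular skeleton, organizes them into pairs that are \emph{twists} of each other (Figures~\ref{fig:vinescancel}, \ref{fig:twist_dec}), and exploits a cancellation between $\Kc_\Qc$ and $\Kc_{\Qc'}$ for $\Qc,\Qc'$ twist-related. The cancellation must be extracted from the explicit oscillatory integrals: pairing up the two terms produces a difference of the form $e^{i\theta}-e^{i\theta'}$ (or a derivative thereof) whose phase difference is controlled by the vine geometry, converting an otherwise borderline estimate into a power-saving one. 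One then iterates: after accounting for all vines via twisting, the residual molecule is vine-free and the rigidity theorem applies. The bookkeeping — ensuring the twisting pairs partition the relevant couples, that the cancellation survives summation over decorations, and that iterating the procedure over nested/disjoint vines does not lose the gains — is where essentially all the difficulty lies, and is presumably the bulk of the paper between here and Section~\ref{linoper1}.

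Once the non-regular bound and the regular-couple asymptotics are in hand, \eqref{mainest1} follows by adding the two contributions, uniformly for $0\le n\le N^2$, $t\in[0,1]$ and $k\in\Zb_L^d$.
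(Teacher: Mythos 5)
Your high-level plan matches the paper's: split into regular vs.\ non-regular couples, handle the regular ones via Proposition~\ref{regcpltreeasymp} and Proposition~\ref{regcpltreesum}, and treat the non-regular ones through the molecule picture with the cutting operation, the classification into vines, the twist cancellation for bad vines, and a rigidity theorem for what remains. The paper organizes the non-regular part by first grouping couples into \emph{congruence classes} (full twists along all the relevant (CL) vines) and reducing to Proposition~\ref{kqmainest1}, which encapsulates the cancellation and the rigidity estimate simultaneously, but this is bookkeeping, not a different route.

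There is, however, one genuine gap in your accounting of the vine-free case. You write that after the rigidity theorem gives a ``power-saving bound per component'' one sums over ``at-most-$C^n$ combinatorial choices of such molecules.'' This count is wrong, and the error is exactly the central difficulty. The number of non-regular couples of order $n$ with a given complexity parameter $\rho$ (roughly, the number of atoms outside ladders plus the number of maximal ladders in the skeleton molecule) is of size $(C\rho)!\,C^n$, and $\rho$ can be as large as $n$; since $n$ runs up to $N^2=(\log L)^8$, this factorial is super-polynomial in $L$ and is not beaten by any fixed power $L^{-c}$ or by $(C\delta)^n$ alone. The rigidity theorem must therefore provide a gain $L^{-\eta^7\rho}$ that \emph{scales linearly with} $\rho$, so that $(C\rho)!\,L^{-\eta^7\rho}\lesssim L^{-\eta^8}$ when $\rho\leq(\log L)^8$; the $(C^+\sqrt\delta)^n$ factor in \eqref{mainest1} comes separately from the time-oscillation estimates, not from the rigidity theorem. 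In the paper this is exactly what Proposition~\ref{kqmainest1} records (properties (ii) and (iii) there), and this interplay between the factorial count and the $L^{-\eta^7\rho}$ gain is the heart of the argument — treating the count as exponential, as your proposal does, would make the non-regular sum diverge.
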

\begin{prop}\label{mainprop4} Let $\Mc_n(t,k)$ be defined as in (\ref{wketaylor2}). Then for each $0\leq n\leq N^2$, $k\in\Zb_L^d$ and $t\in[0,1]$, we have that
\begin{equation}\label{mainest1.5}\bigg|\sum_{n(\Qc)=2n}\Kc_\Qc(t,t,k)-\Mc_n(t,k)\bigg|\lesssim\langle k\rangle^{-20d} (C^+\sqrt{\delta})^n L^{-\eta^8},\end{equation} where the summation is taken over all couples $\Qc$ of order $2n$. If $2n$ is replaced by $2n+1$, then the same result holds with $\Mc_n(t,k)$ replaced by $0$.
\end{prop}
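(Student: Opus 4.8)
The plan is to split the couples of order $2n$ into the \emph{regular} ones---the leading diagrams, built by the canonical pairing operations of \cite{DH21}, along which the resonance factors $\Omega_\nf$ at paired branching nodes cancel pairwise---and the \emph{irregular} ones, and to show that the regular couples reassemble into $\Mc_n$ while the irregular ones are negligible by a power of $L$. By \eqref{correlation} we have $\sum_{n(\Qc)=2n}\Kc_\Qc(t,t,k)=\sum_{n_1+n_2=2n}\Eb\big((\Jc_{n_1})_k(t)\,\overline{(\Jc_{n_2})_k(t)}\big)$, so it suffices to establish \emph{(i)} $\sum_{\Qc\text{ regular},\,n(\Qc)=2n}\Kc_\Qc(t,t,k)=\Mc_n(t,k)+O\big(\langle k\rangle^{-20d}(C^+\sqrt{\delta})^nL^{-c}\big)$ for some constant $c=c(d,\gamma)>0$, and \emph{(ii)} $\big|\sum_{\Qc\text{ irregular},\,n(\Qc)=2n}\Kc_\Qc(t,t,k)\big|\lesssim\langle k\rangle^{-20d}(C^+\sqrt{\delta})^nL^{-\eta^8}$. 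Since regular couples all have even order, the odd case $2n+1$ will follow from \emph{(ii)} alone.

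For \emph{(i)}, for each fixed regular couple $\Qc$ I would evaluate $\Kc_\Qc(t,t,k)$ by means of the regular-couple asymptotics (Proposition~\ref{regcpltreeasymp}). After eliminating the paired leaf frequencies, the decoration sum in \eqref{defkq} factorizes along the successive ``layers'' of $\Qc$; each layer contributes one constrained frequency sum together with a time integral over $\Ec$ which, after a change of variables, produces a factor of the form $t\,\big|\sin(\pi\delta L^{2\gamma}\Omega_\nf t_\nf)/(\pi\delta L^{2\gamma}\Omega_\nf t_\nf)\big|^2$. Using the equidistribution property \eqref{intro-res}, valid throughout $0<\gamma<1$, the rescaled lattice sums converge to the continuum integrals against $\dirac(\Omega_\nf)$, the coefficients $\epsilon_\Es$ collapse into the four-term structure of \eqref{wke2}, and the nested time integrations reproduce the iterated $\int_0^t(\cdots)\,\mathrm{d}t'$. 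Summing over all regular couples of order $2n$, and matching the resulting combinatorial sum, together with its multiplicities, against the recursion \eqref{wketaylor2}, then gives \emph{(i)}. This step demands careful bookkeeping of pairings, signs, and the $\epsilon$-coefficients so as to reproduce \eqref{wke2} exactly, but is otherwise parallel to \cite{DH21}.

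For \emph{(ii)}, I would use the refinement of Proposition~\ref{mainprop1} to couples of arbitrary orders $(n_1,n_2)$ with $n_1+n_2=2n$, together with the observation that an \emph{irregular} couple satisfies an improved bound. Whereas a generic couple of order $2n$ is controlled by $\langle k\rangle^{-20d}(C^+\sqrt{\delta})^n$ once the molecule of $\Qc$ has been analyzed---via the cutting operation, the rigidity theorem for molecules free of vines, and the cancellations among vines---an irregular couple picks up the extra factor $L^{-\eta^8}$, and the sum over all irregular couples of order $2n$ still converges by the same summation algorithm. Choosing $\eta$ small enough that $\eta^8<c$ and combining \emph{(i)} with \emph{(ii)} yields \eqref{mainest1.5}. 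Finally, a couple of order $2n+1$ has odd order, hence is never regular, so $\sum_{n(\Qc)=2n+1}\Kc_\Qc(t,t,k)$ is a sum over irregular couples only; \emph{(ii)} then applies verbatim and gives the claimed bound with $\Mc_n$ replaced by $0$.

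The hard part will be \emph{(ii)}: extracting the gain $L^{-\eta^8}$ \emph{uniformly over all irregular couples of every order} $n\le N^2$, while still defeating the factorial growth (of order $C^n(2n)!$) in the number of such couples. This is precisely the role of the new apparatus developed in the paper---isolating the obstructive vine substructures through the cutting operation, proving the rigidity estimate for vine-free molecules, and exhibiting the twist cancellations among vines---and it is where essentially all of the difficulty lies; the regular-couple evaluation in \emph{(i)}, by contrast, is close to its counterpart in \cite{DH21}.
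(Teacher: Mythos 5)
Your proposal matches the paper's proof. The paper splits $\sum_{n(\Qc)=2n}\Kc_\Qc(t,t,k)$ into regular and non-regular couples exactly as you do: the regular part equals $\Mc_n(t,k)$ up to an error $\lesssim(C^+\delta)^nL^{-\gamma_1+2\eta}$ by Propositions \ref{regcpltreeasymp} and \ref{regcpltreesum}, and the non-regular part is handled by grouping couples into congruence classes and applying Proposition \ref{kqmainest1}, where the gain $L^{-\eta^7\rho(\Qc)}$ defeats the $(C\rho)!$ combinatorial count and yields the overall $L^{-\eta^8}$; the odd-order case follows because regular couples have even order, so the sum is over non-regular couples only. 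Your heuristic account of how regular couples reproduce $\Mc_n$ (sinc kernels, equidistribution of $\Omega$) is a fair description of the mechanism behind the cited regular-couple propositions, but in the paper this is packaged once in Section \ref{regular} rather than re-derived here.
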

\begin{prop}\label{mainprop2} For any $\Rb$-linear operator $\Ks$ define its kernel $\Ks_{kk'}^\zeta(t,s)$ (which we assume is supported in $t,s\in[0,1]$ and $t>s$), where $\zeta\in\{\pm\}$, such that \begin{equation}\label{parametrix}(\Ks\textit{\textbf{b}})_k(t)=\sum_{\zeta\in\{\pm\}}\sum_{k'}\int_0^t \Ks_{kk'}^\zeta(t,s) b_{k'}(s)^\zeta\,\mathrm{d}s;\quad \textit{\textbf{b}}=(b_{k'}(s)).\end{equation} Now let $\Ls$ be defined as in (\ref{eqnbk1.5}). Then there exists an $\Rb$-linear operator $\Xs$, and $\Ys=(1-\Ls)\Xs$ and $\Ws=\Xs(1-\Ls)$, such that
\begin{equation}\label{kernelexp}\Xs=1+\sum_{m=1}^N\Xs_m,\quad \Ys=1+\sum_{m=N+1}^{3N+1}\Ys_m,\quad \Ws=1+\sum_{m=N+1}^{3N+1}\Ws_m\end{equation} The kernels of $\Xs_m$, $\Ys_m$ and $\Ws_m$ are Weiner chaos of order $2m$, and they satisfy that
\begin{equation}\label{mainest2}\Eb|(\Xs_m)_{kk'}^{\zeta}(t,s)|^2\lesssim \langle k-k'\rangle^{-20d}(C^+\sqrt{\delta})^mL^{40d}\end{equation} for any $1\leq m\leq N$ and $k,k'\in\Zb^d_L$ and $t,s\in[0,1]$ with $t>s$. The same holds for $\Ys_m$ and $\Ws_m$ and any $N+1\leq m\leq 3N+1$.
\end{prop}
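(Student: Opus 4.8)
The plan is to construct $\Xs$ as the Neumann series inverse of $1-\Ls$, truncated appropriately, and then estimate each term using the couple/diagram technology. First I would unfold the definition: since $\Ls$ is an $\Rb$-linear operator whose kernel is built from a single Duhamel integration $\Ic\Cc_+$ applied with one input slot being $\textit{\textbf{b}}$ and the remaining two being some $\Jc_{n_i}$ with $0\le n_i\le N$, the formal inverse $(1-\Ls)^{-1}=\sum_{j\ge 0}\Ls^j$ has $j$-th term which, upon substituting the tree expansion $\Jc_{n}=\sum_{n(\Tc)=n}\Jc_\Tc$, becomes a sum over certain \emph{paired trees} (or more precisely over pairs consisting of a couple-like structure with one distinguished ``input leaf'' and one distinguished ``output'' root). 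The point is that a composition of $j$ copies of $\Ls$, each carrying $\le N$ non-$\textit{\textbf{b}}$ inputs expanded into trees, produces a structure of total order $m\le jN+j$; collecting all these by total order $m$ and defining $\Xs_m$ to be the order-$m$ part (for $1\le m\le N$, to match the truncation $N=\lfloor(\log L)^4\rfloor$) gives the decomposition $\Xs=1+\sum_{m=1}^N\Xs_m$. Then $\Ys=(1-\Ls)\Xs$ and $\Ws=\Xs(1-\Ls)$ telescope: on the nose $(1-\Ls)\sum_{j=0}^{J}\Ls^j=1-\Ls^{J+1}$, but because $\Ls$ carries $\le N$ inputs per application, after regrouping by order the cancellation is only exact up to the truncation, leaving $\Ys-1$ and $\Ws-1$ supported in orders $m$ with $N+1\le m\le 3N+1$ (the upper bound $3N+1$ being the largest order a single leftover factor $\Ls\cdot\Xs_N$ or $\Xs_N\cdot\Ls$ can reach after the bookkeeping, roughly $N$ from $\Xs_N$ plus up to $2N+1$ from the tree sizes attached in the extra $\Ls$).

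Next I would verify that the kernels of $\Xs_m$, $\Ys_m$, $\Ws_m$ are Wiener chaos of order exactly $2m$: each $\Jc_\Tc$ of order $n$ is a homogeneous polynomial of degree $2n+1$ in the Gaussians $g_k$ (from its $2n+1$ leaves), and pairing up the $\Ls$-compositions to form the kernel of an operator acting on $\textit{\textbf{b}}$ leaves exactly $2m$ unpaired Gaussian factors when the total tree order is $m$ — this is the same counting as in \cite{DH21}, so the chaos order claim follows from the structure of \eqref{defjt} together with the hypercontractive/Isserlis bookkeeping. The heart of the matter is the quantitative bound \eqref{mainest2}: $\Eb|(\Xs_m)_{kk'}^\zeta(t,s)|^2$ is, after expanding the square and using Isserlis, a sum over couples $\Qc$ of order $2m$ (formed by gluing a tree of order $\le m$ coming from $\Xs_m$ with the conjugate of another such tree), of the corresponding $\Kc_\Qc$-type integrals, but now with two \emph{external} frequencies $k,k'$ fixed rather than being summed. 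I would therefore reduce \eqref{mainest2} to the couple estimates that Proposition \ref{mainprop1} is built on, tracking carefully the extra powers of $L$: the two external legs and the lack of one $L^{-d}$ averaging contribute the $L^{40d}$ loss, while the gain $(C^+\sqrt\delta)^m$ comes from the same diagram analysis — the counting estimates, the molecule/vine machinery, and the cancellations — that yields \eqref{mainest1}. In practice this means re-running the proof of Proposition \ref{mainprop1} with the modification that the root frequencies of $\Tc^+$ and $\Tc^-$ are not identified with each other but are the two free parameters $k$, $k'$ of the kernel, and that the $\langle k-k'\rangle^{-20d}$ decay is extracted from the Schwartz decay of $n_{\mathrm{in}}$ at the leaves propagated along the tree, exactly as the $\langle k\rangle^{-20d}$ decay is in \eqref{mainest1}.

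The main obstacle, as in \cite{DH21}, is the last step: proving the $L^{40d}(C^+\sqrt\delta)^m$ bound uniformly for $m$ up to $N\sim(\log L)^4$, which requires that the diagrammatic estimates beat the factorial proliferation of couples of order $2m$. This is precisely where the new ingredients of the paper enter — the cutting operation, the classification of bad structures into the eight vine families, the rigidity theorem for vine-free molecules, and the twist cancellations — and the proof of \eqref{mainest2} must invoke these in the same way the proof of \eqref{mainest1} does. A secondary technical point is handling the operator composition and the $(1-\Ls)^{-1}$ construction at the level of kernels while respecting the $X^{\theta,\beta}_{\mathrm{loc}}$, $Z$, and $Y^\theta_{\mathrm{loc}}$ norms: one needs that composing kernels supported in $t>s$ with the Duhamel operator $\Ic$ stays within the functional framework and that the time-truncation to $[0,1]$ interacts correctly with the Neumann series, which is routine given the algebra of these norms but must be stated. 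I expect the construction of $\Xs,\Ys,\Ws$ and the chaos-order count to be essentially bookkeeping, the reduction of \eqref{mainest2} to couple sums to be a direct adaptation, and the genuinely hard content to be entirely inherited from the couple estimates proved earlier in the paper.
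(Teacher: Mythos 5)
Your construction coincides with the paper's: collecting the Neumann series by total tree order $m$ is exactly the ``flower tree'' parametrization the paper uses (the kernel of $\Xs_m$ is $\sum_\Tc\widetilde{\Jc}_\Tc$ over flower trees of order $m$), and your range $N+1\le m\le 3N+1$ for $\Ys,\Ws$ and the $2m$ chaos-order count are both right. What you do not explicitly note — and what the paper emphasizes in its overview and must re-derive — is that the naive $\Xs_j=\Ls^j$ truncation used in \cite{DH21} would \emph{not} work here, because the twist operation that produces the vine cancellation only preserves the total order of a couple, not the power of $\Ls$; your ``reorganize by order'' choice is therefore not merely bookkeeping but the thing that keeps cancellation partners inside the same $\Xs_m$. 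Two concrete technical issues remain that ``re-run the proof of Proposition~\ref{mainprop1}'' glosses over and that the paper treats separately. First, the full twist must also respect the constraints defining $\Ys_m$ and $\Ws_m$ (the sizes $n_1,n_2,n_1',n_2'$ of the subtrees near the root and near the flower); this holds unless the vine contains one of about ten ``special'' nodes (root, children of root, flower, parent of flower), and for those finitely many exceptional vines one must absorb an $L^{O(1)}$ loss into the $L^{40d}$ prefactor. Second, because the leaf decorated by $k'$ replaces $n_{\mathrm{in}}(k_\ff)$ with $\mathbf 1_{k_\ff=k'}$, the stem decorations are shifted by a (possibly large) fixed vector $k'$; this breaks the translation invariance used in the ladder $L^1$ estimate (\ref{1stsumbound}) (the $\Omega_{\nf_1}\pm\Omega_{\nf_2}=r_j\cdot(k_\mf\pm k_{\mf'})$ step), and the paper needs a separate argument distinguishing LP--LP from LP--PC double bonds and summing over the $O(L^{20d})$ possible shift values with a harmless $\log$ loss. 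Both are resolvable along the lines you sketch, but they are not purely routine carry-overs of the couple estimates and would have to be worked out.
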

\begin{rem}\label{nonresrem}
Recall that the support of the coefficient $\epsilon_{k_1k_2k_3}$ in (\ref{defcoef0}), which is the set $\Sf$ in (\ref{defset}), allows for the \emph{degenerate} case $k_2\in\{k_1,k_3\}$. However in this case we must have $k_1=k_2=k_3$, and such case is always easily treated (see for example Section 9.3.1 of \cite{DH21} which deals with degenerate atoms).

For simplicity of presentation, we will neglect the degenerate case for most of this paper and assume that $\epsilon_{k_1k_2k_3}$ is always supported in the set where $k_2\not\in\{k_1,k_3\}$. In Section \ref{extradegen} after the main proof, we briefly discuss how to treat degenerate cases, which only requires minor modifications.
\end{rem}
\section{Overview of the proof}\label{overview}
\subsection{Previous strategy, and the main difficulty} Let us start from the proof of Propositions \ref{mainprop1} and \ref{mainprop4}, which requires us to analyze the quantity $\Kc_\Qc$ . The early steps in the analysis are essentially the same as in \cite{DH21} (which deals with the case $\gamma\approx 1$). First one identifies and analyzes the leading couples, called \emph{regular couples}, which are ones built by concatenating specific building blocks called mini couples (cf. Definition \ref{defmini} and Figure \ref{fig:oper}). The analysis of those couples was done in \cite{DH21} and is basically independent of the chosen scaling law. As such, the heart of the matter is showing that the contribution of non-regular couples is lower order. The analysis of non-regular couples proceeds by applying a structure theorem (Proposition \ref{skeleton}) to reduce to prime couples $\Qc$ (Section \ref{primered}), which are couples that contain no regular sub-couples inside them. Then, one uses the almost $L^1$ bound for the time integral in (\ref{bigformula}) (see for example (\ref{1stsumbound})) to reduce the estimates on such prime couples to a counting problem for decorations $(k_\nf)$ of $\Qc$, which has the form
\begin{equation}\label{cplcount0}
\left\{
\begin{aligned}&k_{\nf_1}-k_{\nf_2}+k_{\nf_3}=k_\nf;\quad |k_{\nf_1}|,\,|k_{\nf_2}|,\,|k_{\nf_3}|,\,|k_{\nf}|\leq 1,\\
&|k_{\nf_1}|^2-|k_{\nf_2}|^2+|k_{\nf_3}|^2-|k_\nf|^2=(\mathrm{Const})+O(L^{-2\gamma}),
\end{aligned}
\right.\quad\forall \mathrm{\ branching\ nodes\ }\nf
\end{equation}
in the notions of Definition \ref{defdec}.

To study the counting problem, we then introduce the notion of \emph{molecules} $\Mb$ (Definition \ref{defmole}), which is of fundamental importance in \cite{DH21} and even more so in the current paper. Basically, a molecule is a directed graph formed by \emph{atoms} which are $4$-element subsets $\{\nf,\nf_1,\nf_2,\nf_3\}$ as in (\ref{cplcount0}), and \emph{bonds} which are common elements of these subsets under the given pairing structure; such a molecule coming from a couple will have all degree $4$ atoms except for exactly two degree $3$ atoms, and moreover contains no triple bond if the couple is prime. The system (\ref{cplcount0}) is then reduced to a system for decorations this molecule (Definition \ref{decmole}), where each bond is decorated by a vector $k_\ell$ and each atom gives an equation of form (\ref{cplcount0}) with variables corresponding to the bonds at this atom. As a central component of the proof, we need to establish a suitable \emph{rigidity theorem}, which (schematically) states that
\begin{itemize}
\item Apart from some explicitly defined special structures, the counting problem provides sufficient control for $\Kc_\Qc$, with an additional power gain\footnote{This extra gain is needed to cancel the factorial divergence coming from the number of generic couples and molecules, which is the major difficulty in \cite{DH21}. Exactly the same gain is needed also in the current work.} $L^{-cn}$ proportional to the size $n$ of the molecule $\Mb$.
\end{itemize} It is at this point that the arguments for $\gamma\approx 1$ and $\gamma\in(0,1)$ start to differ, and the new structures and ideas start to emerge.
\subsubsection{The $\gamma=1$ case} When $\gamma=1$ (and similarly when $\gamma\approx 1$), the counting problem for the molecule is solved by designing a \emph{reduction algorithm} (Section 9.4 of \cite{DH21}), where in each step we remove one or two atoms and their bonds, and reduce to the counting problem for a smaller molecule. Each such operation is either \emph{good} and favorable for counting (i.e. the desired bound for the smaller molecule implies strictly better than desired bound for the original molecule) or \emph{normal} and neutral for counting (i.e. the desired bound for the smaller molecule implies precisely the desired bound for the original molecule). Note that there is no bad operations, and the proof goes by a careful analysis of the algorithm, using suitable invariants and monotonic quantities, that shows at least a fixed small portion of all operations are good, apart from two special structures called type I and II molecular chains.

As for the special structures, type I chains are formed by double bonds only; type II chains are formed by single and double bonds (the lower right corner of Figure \ref{fig:vines}), and in the current paper we will refer to them as \emph{ladders}. These ladders are neutral for counting, and their presence does not help or harm anything, so we will mostly ignore them for the rest of this section. In comparison, type I chains cause a logarithmic loss when $\gamma=1$, but this is compensated by a delicate \emph{cancellation} between different type I chains (which come from \emph{irregular chains} in the corresponding couples), see Section 8.3 of \cite{DH21}. Such cancellation was previously unknown in either mathematical or physical literature, and is another key component of the proof in \cite{DH21}.
\subsubsection{The $\gamma=1/2$ case: Main difficulties} We now turn to the general range $\gamma\in(0,1)$. In fact, the most typical case, and in some sense the hardest case, is the ballistic scaling $\gamma=1/2$. For simplicity, we will assume this scaling for the rest of this section.

Recall that in \cite{DH21}, when $\gamma=1$, the key quality of the operations in the algorithm---namely \emph{good} and \emph{normal}---relies on what we may call the \emph{atomic} counting estimates, which involve one or two atoms (i.e. systems similar to (\ref{cplcount0})), and between $2$ and $5$ bonds (i.e. unknown vectors). For example, favorable $2$- and $3$-vector counting estimates take the form
\begin{equation}\label{atomiccount}\#\big\{(k_1,\cdots,k_q)\in \Zb_L^d\cap[0,1]^d:k_1-\cdots +k_q=k_*,|k_1|^2-\cdots+|k_q|^2=O(L^{-2\gamma})\big\}\lesssim L^{(q-1)(d-\gamma)}
\end{equation} for $q\in\{2,3\}$ and $k_*$ fixed; the $4$- and $5$-vector counting estimates involve two systems, see Lemma \ref{basiccount} (3) and (4).

Now, when $\gamma=1/2$, we can prove the same (good and normal) bounds for the $3$-, $4$- and $5$-vector counting problems just as in the $\gamma=1$ case; however, the \emph{two} vector counting bound, namely (\ref{atomiccount}) with $q=2$, breaks down. Indeed, in the worst case scenario $|k_*|\sim L^{-1}$, the left hand side of (\ref{atomiccount}) is of order $L^{d}$ which loses $L^{1/2}$ compared to (\ref{atomiccount}).

This, being the only difference between the $\gamma=1/2$ and $\gamma=1$ cases, seems to be a minor issue at first sight; however it turns out the have a huge effect on the analysis of molecules. First of all, this leads to the presence of \emph{bad} operations in the algorithm (say when one removes one atom of degree $2$), where the desired bound for the smaller molecule does \emph{not} imply the desired bound for the original molecule, so the approach in \cite{DH21} breaks down; in fact it breaks down in a much more essential way, due to the occurrence of new bad special structures.

Of course, the double bond chain---called type I chain in \cite{DH21}---are now quite bad in terms of counting, but as in \cite{DH21} they are compensated by cancellation, albeit in a slightly more subtle manner. Nevertheless, there are families of much more complicated structures, which are favorable for counting when $\gamma=1$, but becomes bad when $\gamma=1/2$, for example the one shown in Figure \ref{fig:vinesintro}. Moreover, there are also \emph{six more families} of structures which are favorable for counting when $\gamma=1$, but become neutral when $\gamma=1/2$. These are depicted in Figure \ref{fig:vines}, including the one in Figure \ref{fig:vinesintro}, and are collectively referred to as \emph{vines}.
\begin{figure}[h!]
\includegraphics[scale=0.14]{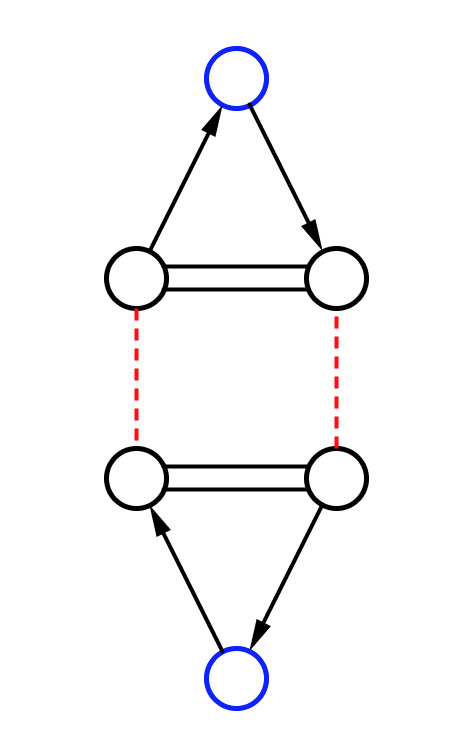}
\caption{An example of a new bad structure for $\gamma=1/2$ (see also Figure \ref{fig:vines}), called \emph{vine (II)}; here the red bonds may be replaced by a ladder.}
\label{fig:vinesintro}
\end{figure}

This means that, even in the statement of the rigidity theorem, one has to exclude, in addition to double bond chains and ladders, all the different vines as well as chains formed by them. Dealing with these new structures is the major challenge (which we discuss further in Section \ref{introcancel}), but even if we assume absence of these vines, it is not at all clear why the rigidity theorem would hold. In particular, the vines occurring in Figure \ref{fig:vines} seem sporadic and unrelated to each other, so how can we identify them from all the other structures and show they are the only bad ones?

All these considerations have led to an important modification to the algorithm, namely the addition of a new operation called \emph{cutting}. This not only makes the algorithm in \cite{DH21} much more robust, but also allows one to naturally see the occurrence of all the vines in Figure \ref{fig:vines}, which would otherwise seem to be coming from nowhere. We shall elaborate on on this further below.

\subsection{Vines, derived from cutting}\label{introcut} Now we discuss the main new ingredient in our modification to the algorithm in \cite{DH21}, namely the new operation called cutting.

Recall that the only \emph{bad} operation that can ever occur in the algorithm is $2$-vector counting, and this typically come from removing a degree $2$ atom $v$, which has two bonds $(\ell_1,\ell_2)$ of opposite directions, such that $k_{\ell_1}-k_{\ell_2}=k_*$ as in (\ref{atomiccount}) with $|k_*|\lesssim L^{-1}$. Suppose $v$ has degree $4$ \emph{before any operation} (i.e. in the original molecule), then due to the first equation in (\ref{cplcount0}), the other two bonds $(\ell_3,\ell_4)$ at $v$ must also satisfy $|k_{\ell_3}-k_{\ell_4}|\lesssim L^{-1}$. We then call such atoms $v$ a \emph{small gap}, or SG atom. In comparison, assume (under a small simplification) that all other atoms $v'$ with bonds $\ell_j'\,(1\leq j\leq 4)$ must have that $|k_{\ell_i'}-k_{\ell_j'}|\sim 1$ for any pair $(\ell_i',\ell_j')$ of opposite directions, and call them \emph{large gap} or LG atoms.

Therefore, it is the SG atoms that will cause bad operations to occur, and such bad operations are hard to control once they enter the main algorithm. To isolate the difficulties, the natural idea is then to get rid of these potentially bad atoms \emph{in the first place}, before entering the algorithmic phase. The precise operation we perform in this pre-processing stage is cutting. As its name suggests, at each step we choose a degree $4$ SG atom $v$, and split it into two atoms of degree $2$, as in Figure \ref{fig:cutintro} below (see also Figure \ref{fig:cut}).
\begin{figure}[h!]
\includegraphics[scale=0.45]{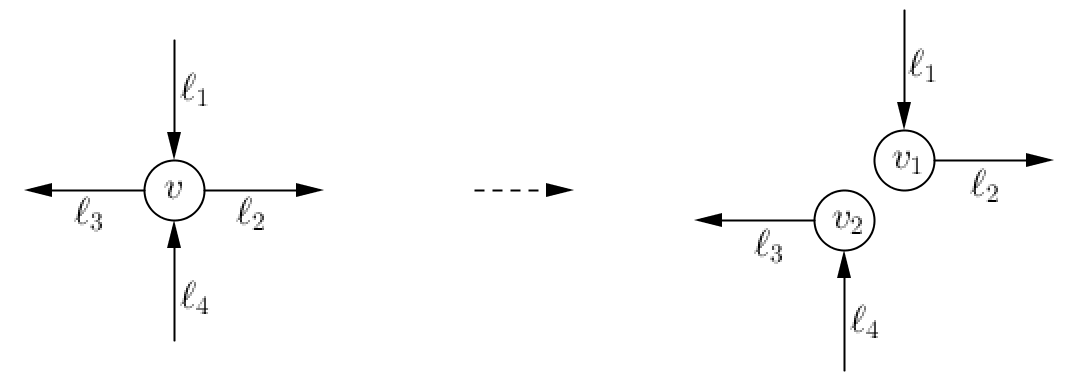}
\caption{Cutting a degree $4$ atom $v$ (Definition \ref{defcut}).}
\label{fig:cutintro}
\end{figure}

There are two cases of cutting: when this operation does not create a new connected component (which we call $\alpha$-cutting, see Definition \ref{defcut}), or when it creates a new connected component (which we call $\beta$-cutting). It turns out that $\alpha$ cuttings are favorable for counting in a certain sense, and leads to power gains instead of losses, so below we will focus on $\beta$-cuttings.
\subsubsection{Local rigidity theorems} Suppose we have done all possible cuttings (say they are all $\beta$-cuttings), then the resulting graph is composed of finitely many connected components. A typical component will contain several degree $2$ atoms that result from cutting, as well as degree $4$ atoms\footnote{There is only one component with two degree $3$ atoms, which enjoys much better estimates compared to other components, and will be neglected in the discussions below.}; the point here is that all the degree $4$ atoms must have \emph{large gap}, and so will not be involved in any bad operations. Moreover, since the number $q$ of degree $2$ atoms and the number $p$ of components satisfy $q=2p-2$, we expect that a typical component will contain exactly two degree $2$ atoms. We will fix such a component $\Mb_0$ below.

Before getting to the counting problem, we shall make one more reduction to this component. For each degree $2$ atom $v$, suppose it has two bonds $(\ell_1,\ell_2)$, then we may remove $v$ and these two bonds, and replace them by a single bond $\ell$ connecting their two other endpoints. If this operation introduces a new triple bond, then we may further remove the two endpoint atoms of this triple bond and add one new bond between the two other atoms connected to these two endpoints, and keep doing so until no more new triple bonds appear. The combined effect of this sequence of operations, which we may call the \emph{(Y) sequence}, is shown in Figure \ref{fig:yoperintro}. Let the graph resulting from this sequence be $\Mb_1$. Note that this sequence essentially corresponds to removing a degree $2$ atom together with a ladder attached to it, and recall that ladders are neutral objects for counting. In fact, this sequence is also neutral for counting, in the sense that each decoration of $\Mb_0$ provides also a decoration of $\Mb_1$, and the desired bound for the counting problem for $\Mb_1$ implies precisely the desired bound for the counting problem for $\Mb_0$.
\begin{figure}[h!]
\includegraphics[scale=0.39]{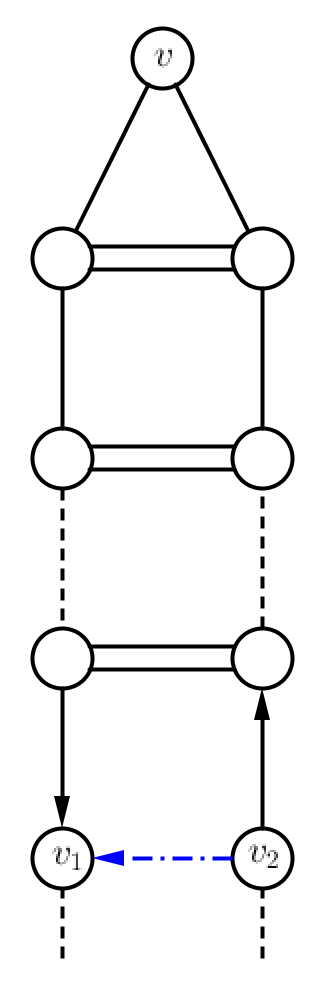}
\caption{The (Y) sequence of operations starting from a degree $2$ atom $v$ (compare also with a similar related sequence (Y2) in Figure \ref{fig:yoper}). The combined effect is removing all atoms and bonds above $v_1$ and $v_2$, and adding a new bond (colored in blue) between $v_1$ and $v_2$.}
\label{fig:yoperintro}
\end{figure}

Now the goal is to study the counting problem associated with $\Mb_1$; let the number of solutions to this counting problem be $\Cf$, and let $\chi:=E-V+F$ be the characteristics of $\Mb_1$ (where $E$ and $V$ are number of bonds and atoms in $\Mb_1$, and $F$ is the number of components which is $1$ for now). We would like to compare $\Cf$ with the quantity $L^{(d-1/2)\chi}$, and thus define $\Af:=\Cf \cdot L^{-(d-1/2)\chi}$. Then, the main result for $\Mb$ can be stated in the form of a ``local" rigidity theorem (Proposition \ref{lgmolect}, see also Proposition \ref{moleprop4}), as follows:
\begin{itemize}
\item If $\Mb$ does not equal to one of the finitely many explicitly defined ``bad" graphs, then, apart from ladders, we have $\Af\lesssim L^{-c\cdot V}$ for some constant $c>0$.
\end{itemize}

The proof of the local rigidity theorem, as well as the arguments leading to the bad graphs, relies on the exact algorithm described in Section 9.4 of \cite{DH21}; of course this is owing to the fact that we no longer have any SG atoms in $\Mb_1$. However there is also price to pay, namely that now a typical component $\Mb_1$ contains only degree $4$ atoms, as opposed having two degree $3$ atoms before doing all the cuttings. Therefore, the first step in the algorithm necessarily have to be removing a degree $4$ atom, which corresponds to a counting problem of form (\ref{atomiccount}) but with $q=4$ (note this is different from the $4$-vector counting in Lemma \ref{basiccount} (2)). This is one---but the \emph{only} one---bad operation in the algorithm, which loses power $L^{1/2}$, in the sense that the $\Af$ value before the operation is only bounded by $L^{1/2}$ times the $\Af$ value after the operation.

After this first bad operation, we no longer need to consider (\ref{atomiccount}) with $q=4$, so the remaining operations are all good or normal, due to absence of SG molecules; in summary, we have one bad operation per component (as opposed to potentially many bad operations due to SG atoms, had we not done the cuttings in the first place). Moreover, it can be shown that each good operation gains power $L^{-1/2}$ (opposite to the above, so that the $\Af$ value before the operation is bounded by $L^{-1/2}$ times the $\Af$ value after the operation). A subsequent discussion, in the same spirit as Section 9.5 of \cite{DH21}, then allows us to bound the number of good operations from below. More precisely, apart from ladders, there is only one case for $\Mb_1$ in which there is no good operation (so $\Af=L^{1/2}$), namely when $\Mb_1$ is a quadruple bond; there is also only one case for $\Mb_1$ in which there is exactly one good operation (so $\Af=1$), namely then $\Mb_1$ is a triangle of three double bonds. These are shown in Figure \ref{fig:badmol}. In \emph{all the other} cases, the number of good operations is at least two, so we have $\Af\lesssim L^{-1/2}$. If we choose $c$ small enough, this already proves the local rigidity theorem when $V\leq 100$; if $V>100$, then we can repeat the proof in Section 9.5 of \cite{DH21}---almost word by word---to get that the power gain is at least proportional to the size $V$ of the graph, which makes the $1/2$ loss in the only bad operation negligible. In the end, this allows us to prove the local rigidity theorem.
\begin{figure}[h!]
\includegraphics[scale=0.43]{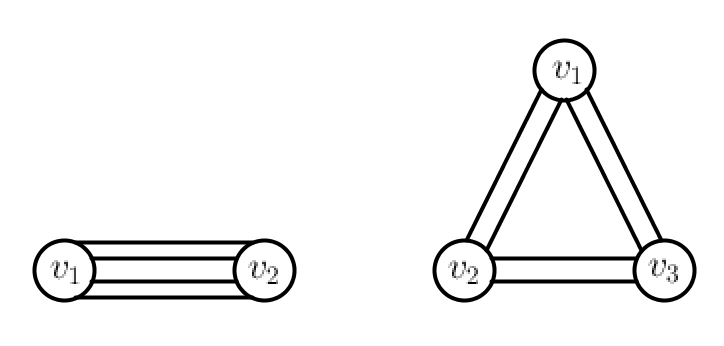}
\caption{The two bad graphs: the quadruple bond (left) and the triangle of three double bonds.}
\label{fig:badmol}
\end{figure}

\subsubsection{Vines} With the local rigidity theorem proved, we only need to check all the possibilities for components with exactly two degree $2$ atoms, which lead to the two bad cases---a quadruple bond and a triangle of three double bonds---after at most two (Y) sequences described above.  These possibilities can be found by enumeration, and exactly correspond to \emph{the families of vines (II)--(VIII)} (except vines (I) which are double bonds) in Figure \ref{fig:vines}; in fact, this is exactly how these vines are discovered. The following Figure \ref{fig:vinesintro2} shows an example of how a specific case of vine (VI) is reduced to a triangle of three double bonds after two (Y) sequences; the other cases can be shown similarly (see the proof of Lemma \ref{ylem1}).

It is now easy to get the main rigidity theorem, namely Proposition \ref{kqmainest2}, by combining the local rigidity estimates for all components after cutting; however, we still need to analyze the vines. In fact, vines (III)--(VIII) in Figure \ref{fig:vines} are neutral for counting, and do not cause any gain or loss in powers; nevertheless, each individual vine (I) or (II) would cause a serious $L^{1/2}$ power loss. These will be controlled, by some surprisingly delicate cancellations, which we discuss next. 
\begin{figure}[h!]
\includegraphics[scale=0.26]{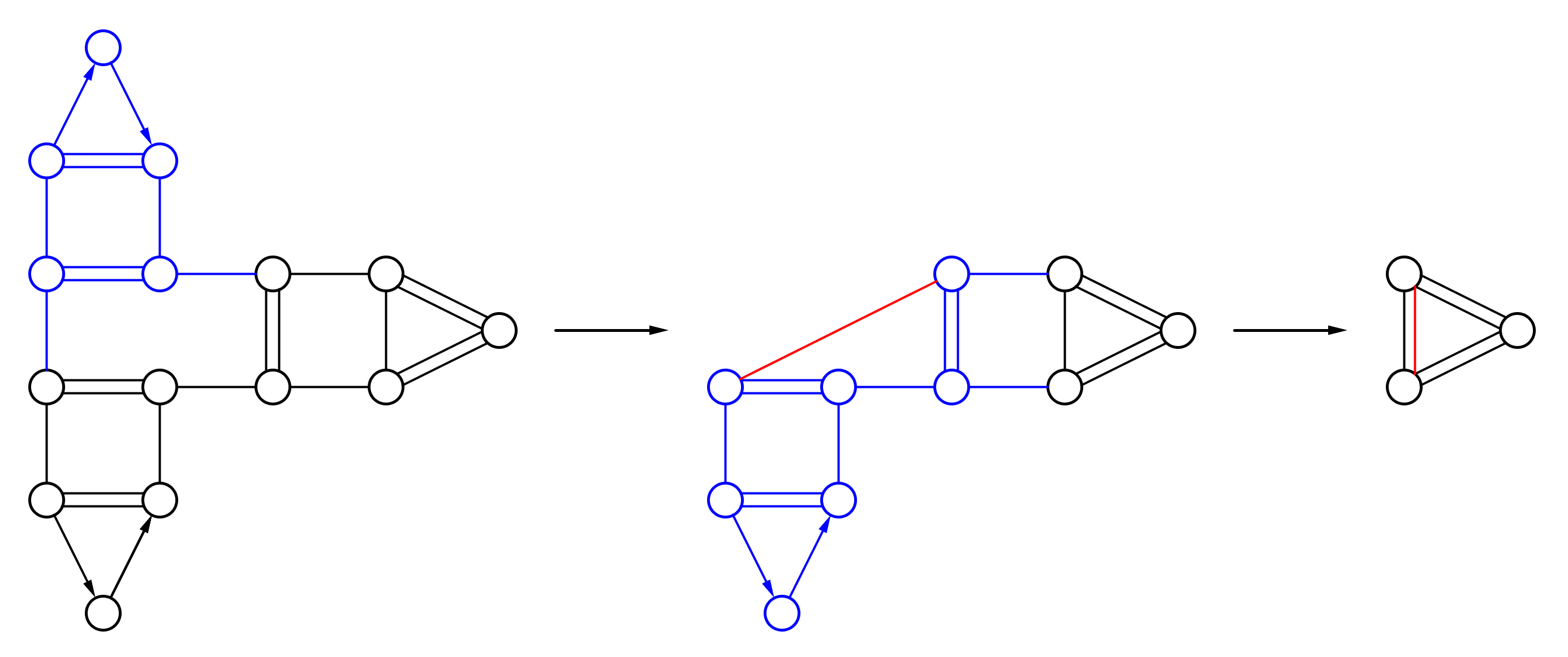}
\caption{The process of reducing a vine (VI) to a triangle of three triple bonds, using two (Y) sequences. Each time, the blue objects are removed in each sequence, and the red objects are added in the previous sequence.}
\label{fig:vinesintro2}
\end{figure}
\subsection{The miraculous cancellation}\label{introcancel} As described above, we are now left with the analysis of vines (I) and (II) (called \emph{bad vines}) in Figure \ref{fig:vines}. Since double bonds are essentially the same as in \cite{DH21}, we will focus on vines (II) in this subsection.

To exhibit the cancellation, the idea is to go back to the couple picture and enumerate all the possible (parts of) couples that correspond to a given vine (II), in the same way that chains of double bonds are shown to come from irregular chains in \cite{DH21}. In fact, it will suffice to consider only a triangle (of one double and two single bonds, see the triangle at the top of Figure \ref{fig:vinesintro}) at one end of the vine instead of the full length vine. By definition of molecule, each bond either corresponds to a branching node (\emph{parent-child} or PC bonds) that belongs to two different subsets of form $\{\nf,\nf_1,\nf_2,\nf_3\}$ as in (\ref{cplcount0}), or corresponds to a pair of leaves (\emph{leaf-pair} or LP bonds, see Definition \ref{defmole}). By considering all possibilities of each pair being either PC or LP, we get five different couple structures corresponding to a given vine (II), namely vines (II-a)--(II-e) as in Proposition \ref{molecpl} (Figure \ref{fig:block_mole}). Among the five structures (II-a)--(II-e), it turns out that vines (II-a) can be uniquely paired with vines (II-b), and vines (II-c) with vines (II-d), to give desired cancellations (vines (II-e) entails a cancellation structure in itself). Such pairs of couple structures are called \emph{twists} of each other, see Definition \ref{twist} (Figure \ref{fig:twist_dec}).
\begin{figure}[h!]
\includegraphics[scale=0.16]{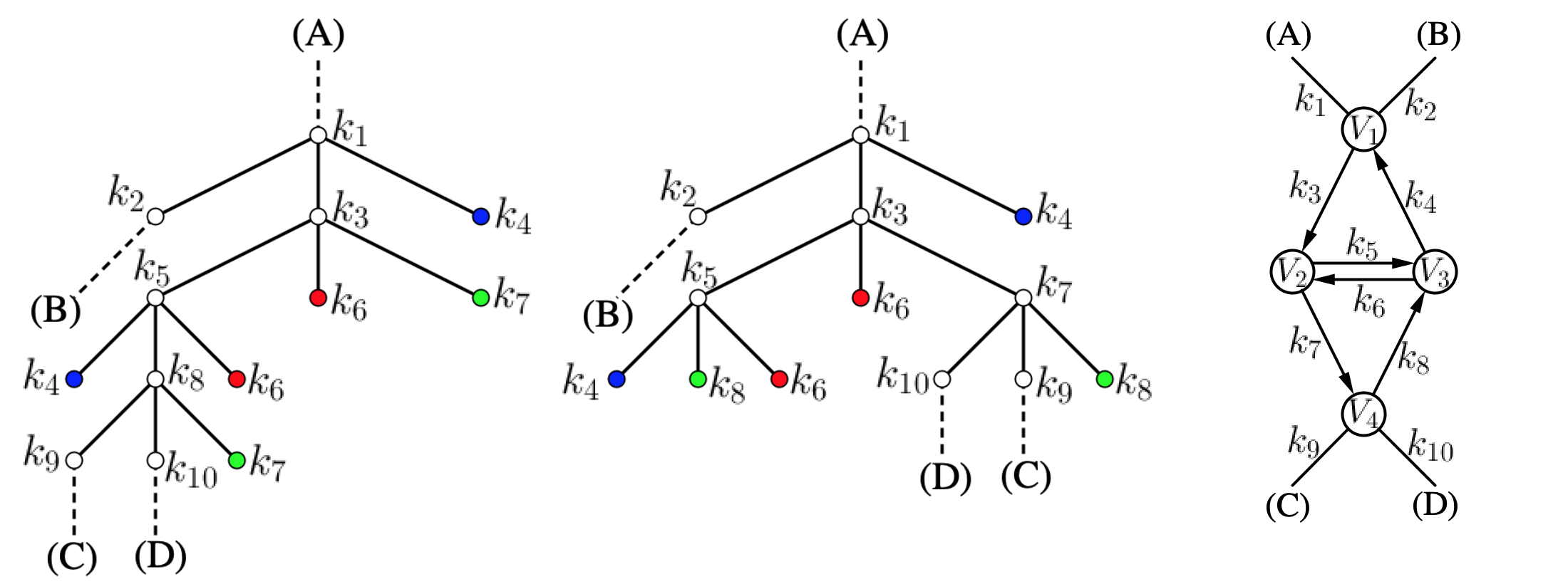}
\caption{Two couple structures, namely vines (II-c) and (II-d), that are twists of each other. Here (A)--(D) represent the remaining parts of the couples, which are the same in both cases. The corresponding molecule (which coincide in both cases) and decorations (which are in one-to-one correspondence) are also illustrated.}
\label{fig:vinescancel}
\end{figure}

In Figure \ref{fig:vinescancel}, we show one example of vine (II-c) and its twist, which is vine (II-d), which exhibit cancellation on the couple level. Note that these two structures are \emph{not} isomorphic as couples (which may be naturally defined by isomorphism of ternary trees), however, their corresponding molecules are the same. Therefore, it is reasonable to say that, compared to the notion of ternary trees and couples, which are immediately associated with the Duhamel evolution (\ref{akeqn}), it is really the notion of molecules that captures the essence of the hidden cancellation structure associated with the problem. This can also be compared to the simple cancellation of irregular chains in \cite{DH21}, shown in Figure \ref{fig:vinescancel2}, in which case the couple structure and its twist are actually isomorphic as couples.
\begin{figure}[h!]
\includegraphics[scale=0.34]{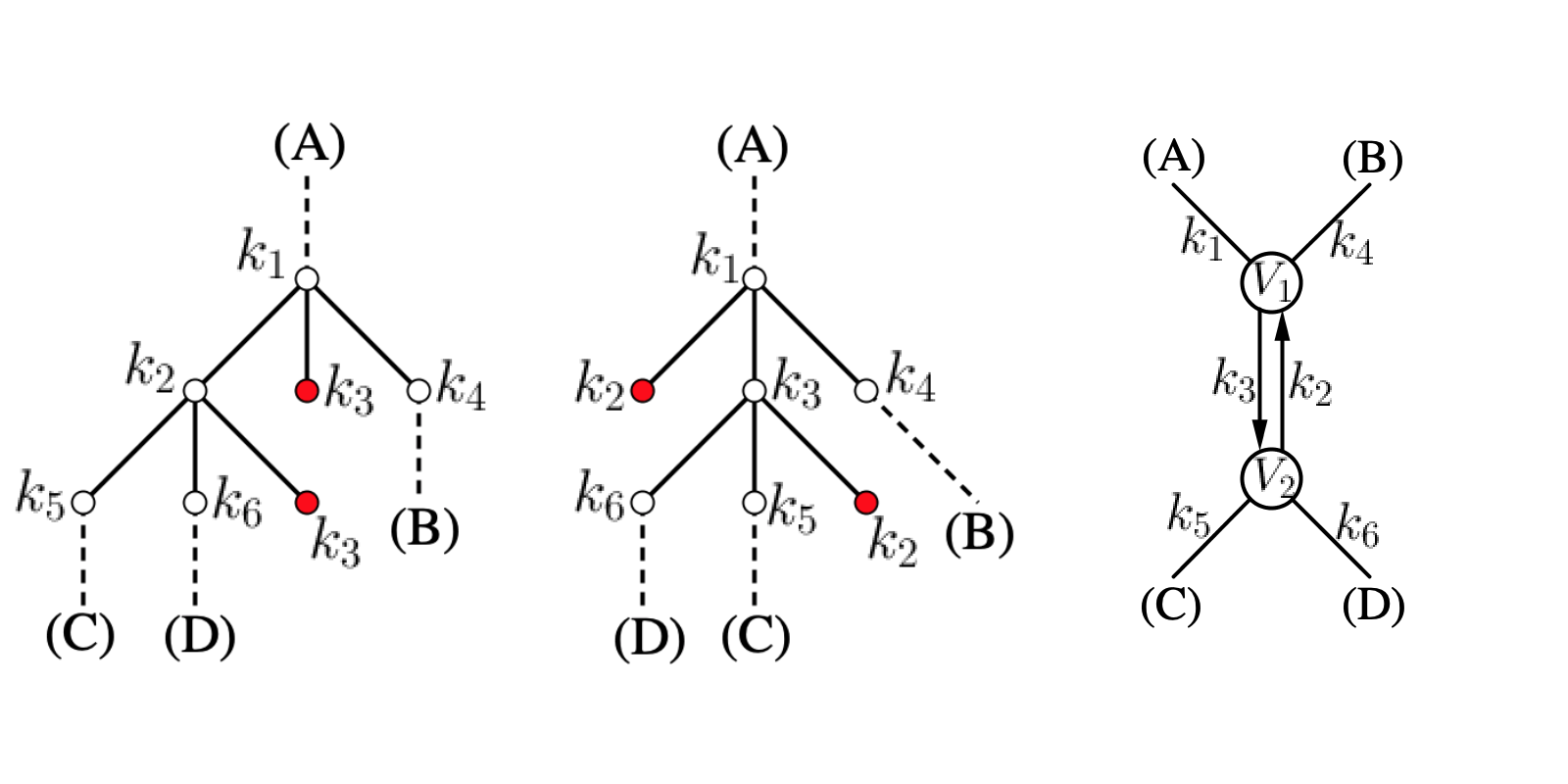}
\caption{Two couple structures corresponding to irregular chains in \cite{DH21} (or vines (I-a) and (I-b) as in Proposition \ref{molecpl}), that are twists of each other; notations are the same as in Figure \ref{fig:vinescancel}.}
\label{fig:vinescancel2}
\end{figure}

We now explain how cancellation takes place between vines (II-c) and (II-d), as shown in Figure \ref{fig:vinescancel}. Recall the expression $\Kc_\Qc$ defined in (\ref{defkq}). By examining the decorations as shown in Figure \ref{fig:vinescancel}, it is easy to see for the two corresponding couples that (i) the signs $\zeta(\Qc)$ are the opposite, (ii) the integrands $\exp(\zeta_\nf \pi i\cdot \delta L^{2\gamma}\Omega_\nf t_\nf)$ are exactly the same. The only differences are that (iii) the initial data $n_{\mathrm{in}}$ factors are
\begin{equation}\label{diffnin}n_{\mathrm{in}}(k_4)n_{\mathrm{in}}(k_6)n_{\mathrm{in}}(k_7)\mathrm{\ \ for\ the\ left\ couple},\qquad n_{\mathrm{in}}(k_4)n_{\mathrm{in}}(k_6)n_{\mathrm{in}}(k_8)\mathrm{\ \ for\ the\ right\ couple},\end{equation} and (iv) the (part of) domain $\Ec$ of time integration are
\begin{equation}\label{diffint}t_1>t_2>t_3>t_4\mathrm{\ \ for\ the\ left\ couple},\qquad t_1>t_2>\max(t_3,t_4)\mathrm{\ \ for\ the\ right\ couple}.\end{equation} Here in (iv) the time variables $t_j$ correspond to the atom $V_j$ in the molecule, which also correspond to the branching nodes decorated by $k_1,k_3,k_5,k_8$ (for the left couple) or $k_1,k_3,k_5,k_7$ (for the right couple).

We may assume the atoms $V_1$ and $V_4$ have SG (otherwise this vine would not correspond to a bad operation), which implies that $|k_7-k_8|\lesssim L^{-1}$, and thus $n_{\mathrm{in}}(k_7)\approx n_{\mathrm{in}}(k_8)$ up to negligible error. Therefore, we only need to treat the difference in the time integration in (\ref{defkq}) caused by (\ref{diffint}), which leads to the domain $t_2>t_4>t_3$. Now consider the $\zeta_\nf\Omega_\nf$ factors for the branching nodes $\nf$ decorated by $k_3$ and $k_5$, and denote them by $\Gamma_2$ and $\Gamma_3$, then from Figure \ref{fig:vinescancel} we see that
\[\Gamma_2=|k_3|^2-|k_5|^2+|k_6|^2-|k_7|^2=2(k_3-k_5)\cdot (k_3-k_7),\quad \Gamma_2+\Gamma_3=|k_3|^2-|k_4|^2+|k_8|^2-|k_7|^2=O(L^{-1}),\] noticing that $|k_3-k_4|=|k_7-k_8|\lesssim L^{-1}$. We may then assume $\Gamma_2+\Gamma_3=0$, so the expression (\ref{defkq}) will involve a part that essentially has the form
\[\int_{t_2>t_4>t_3}\sum_{x,y} e^{2\pi iL^{2\gamma}(t_2-t_3)(x\cdot y)}W(x,y)\,\mathrm{d}t_2\mathrm{d}t_3\mathrm{d}t_4,\] where $(x,y)=(k_3-k_5,k_3-k_7)$ and $W$ is a well-behaved function. The sum in $(x,y)$ can be calculated similar to regular couples in \cite{DH21}, however the leading term \emph{vanishes precisely because $t_2>t_4>t_3$}, due to the time integral having zero average as a function of $x\cdot y$, see Lemma \ref{sumintest1}. This cancellation then provides enough decay and allows us to control the contribution of vines.
\begin{rem}Such cancellation for vines, as described above, seems to be new in both the mathematical and physical literature. It seems quite miraculous, and it might have some physical interpretation, or be part of a more general cancellation mechanism for Feynman diagrams. However, such interpretation is still unclear at this point.
\end{rem}
We summarize the proof of Propositions \ref{mainprop1} and \ref{mainprop4} in the following flowchart:
\begin{figure}[h!]
\includegraphics[scale=0.34]{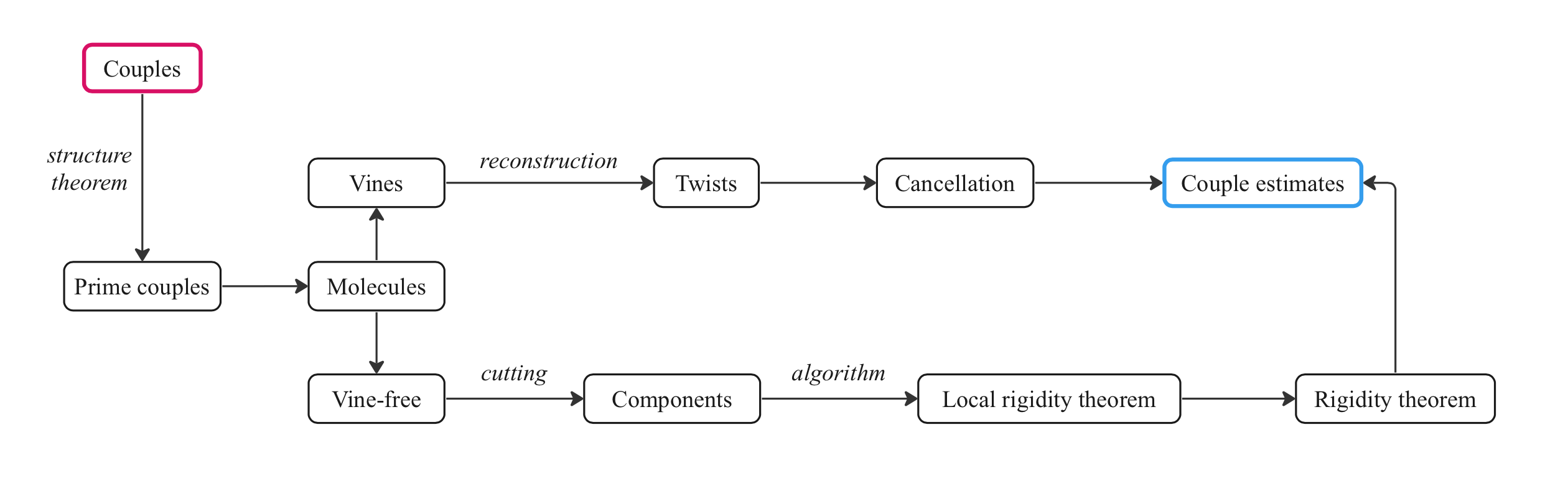}
\caption{A flowchart explaining the process of the proof of Propositions \ref{mainprop1} and \ref{mainprop4}. After the reduction to prime couples, one runs parallel arguments to (a) bound the vines contained in the couple, using cancellation arguments if needed, and (b) perform the combinatorial analysis of the couples absent the problematic vines. The latter proceeds via the cutting operation followed by a local rigidity theorem in the spirit of that in \cite{DH21}, resulting in a global rigidity theorem that gives the needed gain from such couples.}
\label{fig:flowchart}
\end{figure}
\subsection{Construction of a parametrix} Finally we turn to Proposition \ref{mainprop2}. Recall the equation (\ref{eqnbk2}) satisfied by $\textit{\textbf{b}}$. As pointed out in \cite{DH21}, we do not need to bound the norm of $\Ls$ in any function space in order to solve this equation, but only need to invert the operator $1-\Ls$. This then requires to construct a parametrix $\Xs$ to $1-\Ls$, as stated in Proposition \ref{mainprop2}. In \cite{DH21}, this parametrix is simply defined, using Neumann series, as $\Xs=1+\Ls+\cdots +\Ls^N$ for large $N$; but such construction would run into a problem here, because it is not compatible with the vine cancellation structure.

The solution is to use the notion of \emph{flower trees} and \emph{flower couples} introduced in \cite{DH21}; this is natural, as these structures are already used to obtain bounds for powers of $\Ls$ in \cite{DH21}. Here, instead of sticking to powers of $\Ls$, we construct $\Xs$ by using these structures directly, which allows us to group the flower couples that occur, in the precise way that allows for all the needed cancellations. Apart from these, the proof of Proposition \ref{mainprop2} relies on the same arguments as in the proof of Propositions \ref{mainprop1} and \ref{mainprop4}, with only minor modifications. See Section \ref{linoper1} for details.
\subsection{Plan of this paper} In Section \ref{molecules} we define and study the structure of molecules. In Section \ref{secvine} we introduce the key new objects called \emph{vines}. In Section \ref{regular} we study expressions associated with regular couples and regular trees, and prove estimates which are the same as in \cite{DH21} but with more precise error bounds and a new cancellation structure. In Section \ref{funcgroup} we study similar expressions associated wth vines, and prove two key estimates exploiting the cancellation between vines.

With these preparations, we present the proof of Propositions \ref{mainprop1} and \ref{mainprop4} in Sections \ref{reduct1}--\ref{lgmole}: in Section \ref{reduct1} (stage 1) we reduce them to Proposition \ref{kqmainest1} and then \ref{kqmainest2}, in Section \ref{reduct2} (stage 2) we further reduce them to Proposition \ref{lgmolect}, and in Section \ref{lgmole} we prove Proposition \ref{lgmolect}. Finally, in Section \ref{linoper1} we prove Proposition \ref{mainprop2}, and in Section \ref{linoper2} we prove Theorem \ref{main}. The various auxiliary results used in this paper are listed and proved in Appendix \ref{aux}.
\section{Couples and molecules}\label{molecules}
\subsection{Definition of molecules} We start by defining molecules and related notions as in \cite{DH21}.
\begin{df}[Molecules]\label{defmole0} A \emph{molecule} $\Mb$ is a directed graph, formed by vertices (called \emph{atoms}) and edges (called \emph{bonds}), where multiple bonds are allowed\footnote{We do not allow self-connecting bonds here; see Remark \ref{moleremark} and Section \ref{extradegen}.}, and each atom has out-degree at most $2$ and in-degree at most $2$. We write $v\in \Mb$ and $\ell\in\Mb$ for atoms $v$ and bonds $\ell$ of $\Mb$, and write $\ell\sim v$ if $v$ is an endpoint of $\ell$. We further require that $\Mb$ does not have any connected component with only degree $4$ atoms (we call such components \emph{saturated}), where connectivity is always understood in terms of undirected graphs. For distinction, if a directed graph is otherwise like molecules but may contain saturated components, we will call it a \emph{pseudomolecule}.

An \emph{atomic group} in a molecule is a subset of atoms, together with all bonds between these atoms. Some particular atomic groups, or families of atomic groups, will play important roles in our proof (such as the \emph{vines} (I)--(VIII) defined in Section \ref{subsecvine}). Given any molecule $\Mb$, we define $V$ to be the number of atoms, $E$ the number of bonds, and $F$ the number of connected components. Define the characteristics $\chi:=E-V+F$.
\end{df}
\begin{df}[Molecule of couples]\label{defmole} Let $\Qc$ be a nontrivial couple, we will define a directed graph $\Mb=\Mb(\Qc)$ associated with $\Qc$ as follows. The atoms are all the branching nodes $\nf\in\Nc$ of $\Qc$. For any two atoms $\nf_1$ and $\nf_2$, we connect them by a bond if either (i) one of them is the parent of the other, or (ii) a child of $\nf_1$ is paired to a child of $\nf_2$ as leaves. In case (i) we label this bond by PC, and place a label P at the parent atom, and place a label C at the child atom in case (ii) we label this bond by LP. Note that one atom $v$ may have multiple P and C labels coming from different bonds $\ell\sim v$.

We fix the direction of each bond as follows. Any LP bond should go from the atom whose paired child has $-$ sign to the one whose paired child has $+$ sign. Any PC bond should go from the P atom to the C atom if the C atom has $-$ sign as a branching node in $\Qc$, and should go from the C atom to the P atom if the C atom has $+$ sign.

For any atom $v\in\Mb(\Qc)$, let $\nf=\nf(v)$ be the corresponding branching node in $\Qc$. For any bond $\ell\sim v$, define also $\mf=\mf(v,\ell)$ such that (i) if $\ell$ is PC with $v$ labeled C, then $\mf=\nf$; (ii) if $\ell$ is PC with $v$ labeled P, then $\mf$ is the branching node corresponding to the other endpoint of $\ell$ (which is a child of $\nf$); (iii) if $\ell$ is LP then $\mf$ is the leaf in the leaf pair defining $\ell$ that is a child of $\nf$.
\end{df}
\begin{rem}\label{remlable} The molecule $\Mb(\Qc)$ is actually a \emph{labeled molecule} because of the labels LP and PC on bonds (and P and C on atoms). This feature is specific to molecules coming from a couple. Below we will not be too strict in distinguishing a molecule (which is just a direct graph) with a labeled molecule, but this difference does sometimes occur (see e.g. Remark \ref{twistexplain} (a)).
\end{rem}
\begin{prop}\label{moleproperty} For any nontrivial couple $\Qc$ with order $n$, the directed graph $\Mb=\Mb(\Qc)$ defined in Definition \ref{defmole} is a molecule. It has $n$ atoms and $2n-1$ bonds, in particular it is connected and has either two atoms of degree $3$ or one atom of degree $2$, with the remaining atoms all having degree $4$.

For any atom $v$, let $\nf=\nf(v)$, then the values of $\mf(v,\ell)$ where $\ell\sim v$ form a subset of $\{\nf,\nf_1,\nf_2,\nf_3\}$ where $\nf_j$ are children of $\nf$. When $v$ has degree $4$ the equality holds, and when $v$ has degree $2$ or $3$, some of the nodes in $\{\nf,\nf_1,\nf_2,\nf_3\}$ will not correspond to a bond $\ell$.
\end{prop}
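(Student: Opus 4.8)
The plan is to verify the claimed properties of $\Mb(\Qc)$ directly from the combinatorial bookkeeping of Definition \ref{defcouple} (couples) and Definition \ref{defmole} (the associated directed graph), proceeding in the order: (a) atom count, (b) bond count and degree distribution, (c) connectedness, (d) the fact that $\Mb(\Qc)$ is a genuine molecule, and (e) the local description of the $\mf(v,\ell)$'s at each atom. For (a), the atoms are by definition the branching nodes of $\Qc = \Tc^+\cup\Tc^-$, so $V = |\Nc| = |\Nc^+|+|\Nc^-| = n^++n^- = n$ by Definition \ref{deftree} and \ref{defcouple}. For (e), I would go node-by-node: fix a branching node $\nf$ with children $\nf_1,\nf_2,\nf_3$, and observe that each bond $\ell\sim v$ is generated either by a parent-child relation among $\{\nf,\nf_1,\nf_2,\nf_3\}$ (contributing $\mf=\nf$ if $\nf$ is the child, i.e.\ the PC bond to $\nf$'s parent, or $\mf=\nf_j$ if $\nf_j$ is the child involved) or by a leaf-pairing of one of the leaves among $\{\nf_1,\nf_2,\nf_3\}$ when that child is itself a leaf (contributing $\mf$ equal to that leaf). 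In all cases $\mf(v,\ell)\in\{\nf,\nf_1,\nf_2,\nf_3\}$, and distinct bonds at $v$ give distinct $\mf$'s because each of $\nf,\nf_1,\nf_2,\nf_3$ can participate in at most one such relation (a node has one parent; a leaf lies in exactly one pair). This gives $\deg(v)\le 4$ always, with equality exactly when $\nf$ has a parent in $\Qc$ and all three children $\nf_1,\nf_2,\nf_3$ are ``used'' — i.e.\ each is either a branching node or a paired leaf.

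For (b), I would count degree deficiencies. The node $\nf$ fails to contribute the PC bond to its parent precisely when $\nf$ is a root $\rf^\pm$; there are two roots, each of degree deficiency $1$ from this source. A child $\nf_j$ fails to contribute a bond only if $\nf_j$ is an unpaired leaf; but in a couple \emph{every} leaf is paired (Definition \ref{defcouple}), so there is no deficiency from this source. Hence $\sum_{v}\deg(v) = 4n - 2$, giving $E = 2n-1$ bonds. Moreover the two units of total deficiency can be distributed as either two atoms of degree $3$ (when the two roots are distinct branching nodes) or one atom of degree $2$ (when a single atom carries both deficiencies — this happens exactly when one of the trees $\Tc^\pm$ is trivial, so its "root" is a leaf and the other root's atom absorbs... — actually more carefully: degree $2$ arises when the two roots coincide as an atom, which cannot happen since $\rf^+\ne\rf^-$; the degree-$2$ case instead arises when one tree is trivial, contributing no atom, and then... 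I will need to trace through: if $\Tc^-=\bullet$ then $\Qc$ is a saturated paired tree $\Tc^+$ with one lone leaf, and the branching node whose child is that lone leaf — wait, the lone leaf is paired... ). The clean way: argue that the multiset of degree deficiencies is two $1$'s located at the two root-atoms, \emph{unless} the two root-atoms happen to be forced to coincide, which I will identify; otherwise just note both distributions $\{3,3\}$ and $\{2\}+\text{all }4$ are the only possibilities consistent with total deficiency $2$ and $\deg\le 4$. I would borrow the precise argument from \cite{DH21} here, as it is stated there.

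For (c), connectedness, I would induct on $n$: a couple of order $n\ge 1$ has a branching node $\nf$ all of whose children are leaves or can be reached — more robustly, use the tree structure: within each $\Tc^\pm$ the branching nodes form a connected subgraph via PC bonds (parent-child edges of the tree restricted to branching nodes form a tree), and the two trees are joined because the pairing partition $\Ps$ must pair at least one leaf of $\Tc^+$ with a leaf of $\Tc^-$ whenever both are nontrivial (else $\Qc$ would not be a valid couple with a single partition) — actually if $\Tc^-=\bullet$ its only leaf is the lone leaf paired into $\Tc^+$, still giving a connection, and the whole thing reduces to connectedness of branching nodes of $\Tc^+$ which again follows from the tree being connected. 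This handles connectedness, and in particular $\Mb(\Qc)$ has no saturated component (it is a single connected component containing a degree-$2$ or degree-$3$ atom), so it is a molecule in the sense of Definition \ref{defmole0}; the no-self-loop condition holds because a PC bond joins a node to its (distinct) parent and an LP bond joins two nodes lying in different parts of the tree or at least corresponding to two \emph{distinct} branching nodes (a leaf and its pair cannot be children of the same branching node since... — I'd verify the pairing rule forbids this, or cite Remark \ref{moleremark}). Finally, the out-degree/in-degree $\le 2$ claim follows from the direction rules in Definition \ref{defmole} combined with the sign rule: at any atom $v=\nf$, incoming PC/LP bonds are those associated with $+$-signed children or with $v$ itself having $+$ sign, etc.; since $\nf$ has exactly three children with sign pattern $(\zeta,-\zeta,\zeta)$ and one parent, a short case check on $\zeta_\nf$ shows at most two of the (up to) four incident bonds point in and at most two point out.

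The main obstacle I anticipate is (b), specifically the careful identification of when the degree distribution is $\{3,3\}$ versus $\{2\}$: this requires correctly handling the degenerate/trivial-tree cases and matching the direction-and-sign conventions, and is exactly the kind of bookkeeping where the conventions of Definition \ref{deftree}–\ref{defmole} must be applied with care. I would lean on the corresponding statement and proof in \cite{DH21} for this part, since the construction of $\Mb(\Qc)$ is stated there to be the same, and only indicate the modifications (if any) needed under the current sign conventions.
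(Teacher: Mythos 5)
Your overall strategy matches the paper's: the paper's own proof of this proposition is just a pointer to Proposition~9.4 of \cite{DH21} for connectivity together with the remark that ``the rest follows directly from definitions,'' so spelling it out is reasonable, and most of what you write (the atom count, the sign case-check giving in/out-degree $\leq 2$, the tree-plus-parity argument for connectivity, and the local description of $\mf(v,\ell)$) is sound.

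There is, however, a logical slip in your step~(b) that, as written, contradicts the degree-$2$ alternative the proposition allows. You assert that the only source of degree deficiency is the missing PC bond at a root, and that ``in a couple every leaf is paired, so there is no deficiency'' from children. This is false when one tree, say $\Tc^-$, is trivial: then $\Tc^-$ has no branching nodes and contributes no atom (so $\rf^-$ is not a root \emph{atom}), and its sole leaf $\rf^-$ is paired with the lone leaf $\lf_*$ of $\Tc^+$; since $\rf^-$ has no parent, this pair generates no LP bond, so the atom that is the parent of $\lf_*$ loses one unit of degree. In that situation the two units of degree deficiency are one at $\rf^+$ (no parent) and one at the parent of $\lf_*$ (a child contributing no bond), and the degree-$2$ atom arises exactly when these two atoms coincide, i.e.\ when $\lf_*$ is a child of $\rf^+$. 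Your total $\sum_v \deg(v) = 4n - 2$, hence $E = 2n - 1$, is still correct, but the stated accounting (``two roots, each with deficiency $1$'') would force the degree distribution to always be $\{3,3\}$ and never $\{2\}$, contrary to the proposition, and it makes the bond count look like a coincidence rather than a consequence. Once that case is repaired the rest of your argument goes through; alternatively you can simply defer to \cite{DH21} as the paper itself does.
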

\begin{proof} For connectivity see Proposition 9.4 of \cite{DH21}. The rest follows directly from definitions.
\end{proof}
\begin{prop}\label{recover} Given a molecule $\Mb$ with $n$ atoms as in Definition \ref{defmole0}, the number of couples $\Qc$ (if any) such that $\Mb(\Qc)=\Mb$ is at most $C^n$.
\end{prop}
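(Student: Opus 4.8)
The plan is to show that a \emph{labeled} molecule (labeled as in Remark \ref{remlable}) determines at most finitely many choices per atom, and that the number of labelings of $\Mb$ is itself at most $C^n$; combining these two bounds yields the claim (and if $\Mb$ admits no labeling coming from a couple, there is nothing to prove). For the first reduction: if $\Mb(\Qc)=\Mb$, then $\Qc$ equips $\Mb$ with a label PC or LP on each bond together with, on each PC bond, a designation of which endpoint carries the label P. Since each atom has degree at most $4$ we have $2E=\sum_v\deg v\le 4n$, hence $E\le 2n$, so there are at most $2^{2E}\le 16^{n}$ such labelings. It therefore suffices to bound the number of couples $\Qc$ compatible with a fixed labeled molecule.

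So fix a labeled molecule. The P/C markings on the PC bonds determine a parent function on the atoms: the parent of $\nf(v)$ is the atom at the P-end of the unique PC bond on which $v$ carries the label C. This recovers the parent--child structure of the branching nodes of each tree; since no PC bond joins a branching node of $\Tc^+$ to one of $\Tc^-$, the atoms split into the (at most two) connected components of this forest, one per nontrivial tree, and the choice of which component is $\Tc^+$ costs at most a factor $2$. For each atom $v$, the number of PC bonds on which $v$ is labeled P equals the number of branching-node children of $\nf(v)$, so the number of leaf children of $\nf(v)$ is also determined. It then remains only to choose, at each atom $v$, a bijection from the set of child half-bonds at $v$ onto $\{\text{left},\text{mid},\text{right}\}$. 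Such a bijection simultaneously fixes the left/mid/right ordering of the children of $\nf(v)$, identifies each LP half-bond at $v$ with a specific leaf child (and hence, via the pairing of LP bonds, reconstructs the partition $\Ps$), and identifies each downward PC half-bond with a specific branching-node child. Consequently, once all these bijections are chosen, the ordered ternary trees $\Tc^\pm$, the pairing of leaves, and therefore $\Qc$ itself are completely determined (the signs of all nodes being forced by Definition \ref{deftree}, and any choice failing a consistency requirement simply contributing no couple). There are at most $3!=6$ bijections per atom, hence at most $6^{n}$ couples per labeled molecule, and altogether at most $2\cdot 16^{n}\cdot 6^{n}\le C^{n}$ couples, as claimed.

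The only point requiring care is the bookkeeping for the degenerate atoms --- the two degree-$3$ atoms (the roots), or the single degree-$2$ atom that occurs when one of $\Tc^\pm$ is trivial --- and for the trivial-tree case itself. In each of these cases an atom has fewer than three child half-bonds, so the relevant object is an injection rather than a bijection into $\{\text{left},\text{mid},\text{right}\}$; one must check that this injection still correctly accounts for the ``missing'' child slots (there are still at most $3\cdot 2=6$ such injections), and that the pairing of the unmatched leaf with the root of the trivial tree is unique. These verifications are routine and, crucially, leave the per-atom bound of $6$ intact, so they do not affect the exponential-in-$n$ conclusion, which is all that is asserted.
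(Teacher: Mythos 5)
Your proof is correct. The paper itself gives no argument, deferring to Proposition 9.6 of \cite{DH21}, but your two-step reduction --- (a) bound the number of PC/LP labelings (with P/C endpoint designations) of the fixed directed graph $\Mb$ by $C^n$, then (b) bound the number of couples compatible with a fixed labeled molecule by a per-atom choice of how the at most three child slots of $\nf(v)$ are matched to the bonds at $v$ --- is the natural argument, and as far as I can tell it is essentially the cited one. Two minor remarks, neither affecting correctness: the extra factor of $2$ for ``which component is $\Tc^+$'' is not actually needed, since by Definition \ref{defmole} the bond \emph{directions} determine the absolute signs of all branching nodes (a C-atom's sign is read off the direction of its incident PC bond) and of all paired leaves (from LP bond directions), hence also the sign of each tree's root and the sign of a trivial tree if present; and, for the same reason, the per-atom bound of $3!=6$ could be tightened to $2$ once the sign pattern $(\zeta,-\zeta,\zeta)$ is enforced --- but $6^n$ is of course already of the required form, and the overcounts are harmless.
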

\begin{proof} See Proposition 9.6 of \cite{DH21}.
\end{proof}
\begin{df}[Decoration of molecules]\label{decmole} Given a molecule or pseudomolecule $\Mb$ (Definition \ref{defmole}), suppose we also fix the vectors $c_v\in\Zb_L^d$ for each $v\in\Mb$ such that $c_v=0$ when $v$ has degree $4$, then we can define a $(c_v)$-\emph{decoration} (or just a decoration) of $\Mb$ to be a set of vectors $(k_\ell)$ for all bonds $\ell\in\Mb$, such that $k_\ell\in\Zb_L^d$ and
\begin{equation}\label{decmole1}\sum_{\ell\sim v}\zeta_{v,\ell}k_\ell=c_v
\end{equation} for each atom $v\in\Mb$. Here the sum is taken over all bonds $\ell\sim v$, and $\zeta_{v,\ell}$ equals $1$ if $\ell$ is outgoing from $v$, and equals $-1$ otherwise. For each such decoration and each atom $v$, define also that
\begin{equation}\label{defomegadec}\Gamma_v=\sum_{\ell\sim v}\zeta_{v,\ell}|k_\ell|^2.
\end{equation}

Suppose $\Mb=\Mb(\Qc)$ comes from a nontrivial couple $\Qc$, and for $k\in\Zb_L^d$, we define a $k$-decoration of $\Mb$ to be a $(c_v)$-decoration where $(c_v)$ is given by 
\begin{equation}\label{molecv}
c_v=\left\{
\begin{aligned}&0,&\textrm{if }&v\textrm{ has degree }2\textrm{ or }4,\\
+&k,&\textrm{if }&v\textrm{ has out-degree }2\textrm{ and in-degree }1,\\
-&k,&\textrm{if }&v\textrm{ has out-degree }1\textrm{ and in-degree }2. 
\end{aligned}
\right.
\end{equation} Given any $k$-decoration of $\Qc$ in the sense of Definition \ref{defdec}, define a $k$-decoration of $\Mb(\Qc)$ such that $k_\ell=k_{\mf(v,\ell)}$ for an endpoint $v$ of $\ell$. It is easy to verify that this $k_\ell$ is well-defined  (i.e. does not depend on the choice of $v$), and it gives a one-to-one correspondence between $k$-decorations of $\Qc$ and $k$-decoration of $\Mb(\Qc)$. Moreover for such decorations we have
\begin{equation}\label{molegammav}
\Gamma_v=\left\{
\begin{aligned}&0,&\textrm{if }&v\textrm{ has degree }2,\\
-&\zeta_{\nf(v)}\Omega_{\nf(v)},&\textrm{if }&v\textrm{ has degree }4,\\
-&\zeta_{\nf(v)}\Omega_{\nf(v)}+|k|^2,&\textrm{if }&v\textrm{ has out-degree }2\textrm{ and in-degree }1,\\
-&\zeta_{\nf(v)}\Omega_{\nf(v)}-|k|^2,&\textrm{if }&v\textrm{ has out-degree }1\textrm{ and in-degree }2. 
\end{aligned}
\right.
\end{equation}

Finally, given $\beta_v\in\Rb$ for each $v\in\Mb$ and $k_\ell^0\in\Zb_L^d$ for each $\ell\in\Mb$, we define a decoration $(k_\ell)$ to be \emph{restricted by} $(\beta_v)$ and/or $(k_\ell^0)$, if we have $|\Gamma_v-\beta_v|\leq \delta^{-1}L^{-2\gamma}$ for each $v$ and/or $|k_\ell-k_\ell^0|\leq 1$ for each $\ell$.
\end{df}
\begin{rem}\label{moleremark} As stated in Remark \ref{nonresrem}, for simplification, we will assume there is no degenerate case (i.e. $k_2\in\{k_1,k_3\})$ in any $k$-decoration of any couple $\Qc$ we are considering. In particular, we may assume that no two siblings are paired as leaves in $\Qc$, and more generally there are no two sibling nodes such that the leaves of the subtrees rooted at them are completely paired. In particular, there is no degree $2$ atom or self-connecting bond in the associated molecule $\Mb=\Mb(\Qc)$, as well as the skeleton $\Qc_{\mathrm{sk}}$ defined in Proposition \ref{skeleton} below (so they must have two atoms of degree $3$ with the rest atoms having degree $4$, due to Proposition \ref{moleproperty}). These may be violated if there is degenerate case, but the latter is easily addressed, see Section \ref{extradegen}.
\end{rem}
\subsection{Regular couples and regular trees}
Recall the following definitions of regular couples and regular trees in \cite{DH21}.
\begin{df}\label{defmini} A \emph{$(1,1)$-mini couple} is a couple formed by two trees of order $1$ with no siblings paired. A \emph{mini tree} is a saturated tree of order $2$, again with no siblings paired; see Figure \ref{fig:mini}. Note that if $\Qc$ is a $(1,1)$-mini couple or a couple formed by a mini tree with the trivial tree (called a $(2,0)$-mini couple in \cite{DH21}), then $\Mb(\Qc)$ is exactly  one triple bond.

For any couple $\Qc$ we can define two operations: operation $A$ where a leaf pair is replaced by a $(1,1)$-mini couple, and operation $B$ where a node is replaced by a mini tree, see Figure \ref{fig:oper}. Then, we define a couple $\Qc$ to be \emph{regular} if it can be formed, starting from the trivial couple $\times$, by operations $A$ and $B$. We also define a saturated paired tree $\Tc$ to be a \emph{regular tree}, if $\Tc$ forms a regular couple with the trivial tree $\bullet$. Clearly the order of any regular couple and regular tree must be even.
\end{df}
\begin{figure}[h!]
\includegraphics[scale=0.2]{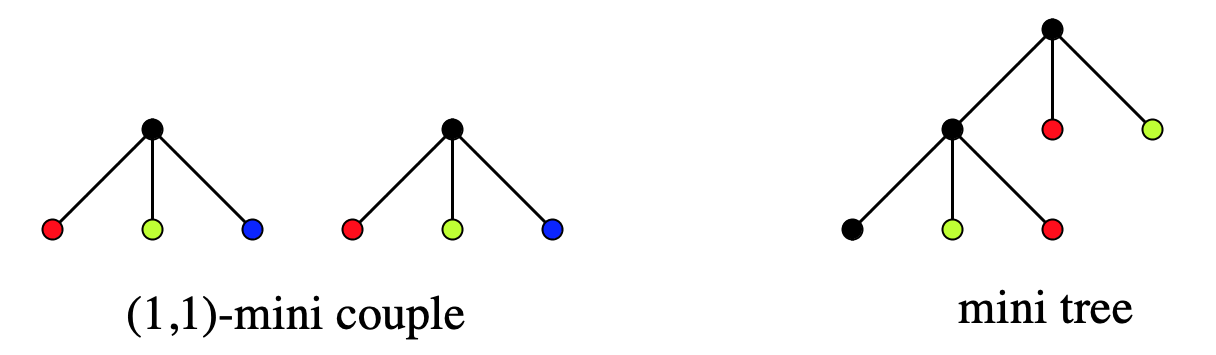}
\caption[$(1,1)$-mini couple and mini tree]{Example of a $(1,1)$-mini couple and a mini tree defined in Definition \ref{defmini}; the exact positions of nodes and pairings may vary (for example the coloring of the three leaves at the right branching node in the $(1,1)$-mini couple might be blue, green, red instead of red, green, blue, etc.).}
\label{fig:mini}
\end{figure}
\begin{figure}[h!]
\includegraphics[scale=0.4]{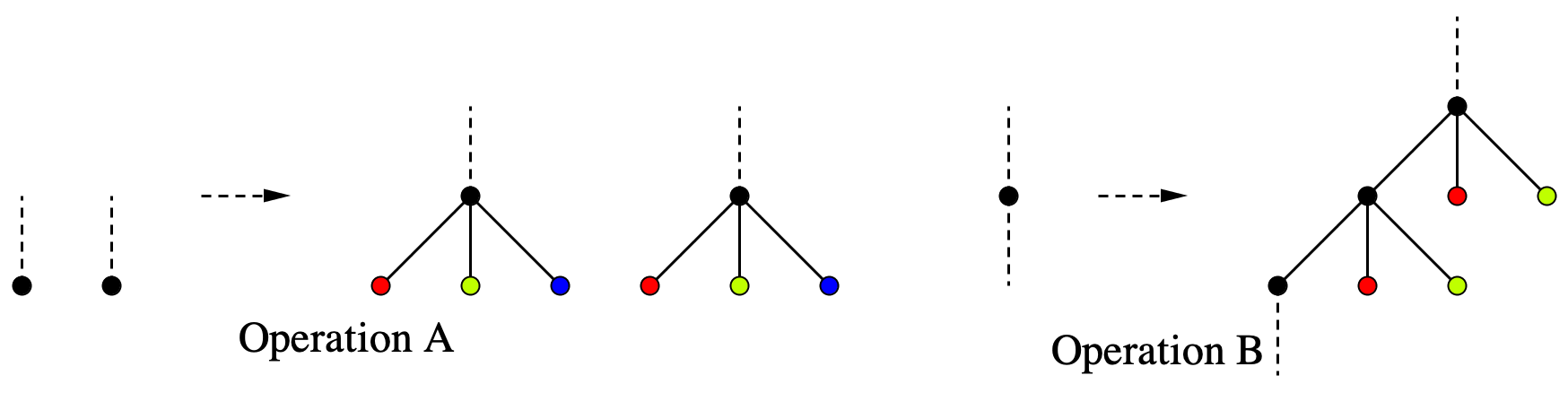}
\caption[Operations $A$ and $B$]{Example of operations $A$ and $B$ defined in Definition \ref{defmini}. The exact positions of nodes may vary.}
\label{fig:oper}
\end{figure}
\begin{prop}\label{skeleton} For any couple $\Qc$ there is a unique couple $\Qc_{\mathrm{sk}}$, which is \emph{prime} in the sense that it does not contain any $(1,1)$-mini couple or mini tree, such that $\Qc$ is constructed from $\Qc_{\mathrm{sk}}$ in a unique way, by replacing each leaf pair with a regular couple, and each branching node with a regular tree. This $\Qc_{\mathrm{sk}}$ is called the \emph{skeleton} of $\Qc$;  see Figure \ref{fig:skeleton} for an illustration. The molecule $\Mb(\Qc_{\mathrm{sk}})$ does not contain a triple bond, and $\Qc$ is regular if and only if $\Qc_{\mathrm{sk}}$ is trivial. Moreover, the number of couples $\Qc$ with order $n$ and fixed skeleton $\Qc_{\mathrm{sk}}$ is at most $C^n$.

More generally, let $\Qc_0$ be any couple (not necessarily prime), one may form a couple $\Qc$ by replacing each leaf pair $(\lf,\lf')$ with a regular couple $\Qc^{(\lf,\lf')}$ and each branching node $\mf$ with a regular tree $\Tc^{(\mf)}$. We shall denote the the collection of all these $\Qc^{(\lf,\lf')}$ and $\Tc^{(\mf)}$ by $\As$, and write $\Qc\sim(\Qc_0,\As)$. Define $n(\As)$ to be the total order of regular couples and regular trees $\As$; we may use $\Bs$ etc. to denote suitable sub-collections if $\As$, and $n(\Bs)$ etc. are defined similarly.
\begin{figure}[h!]
\includegraphics[scale=0.4]{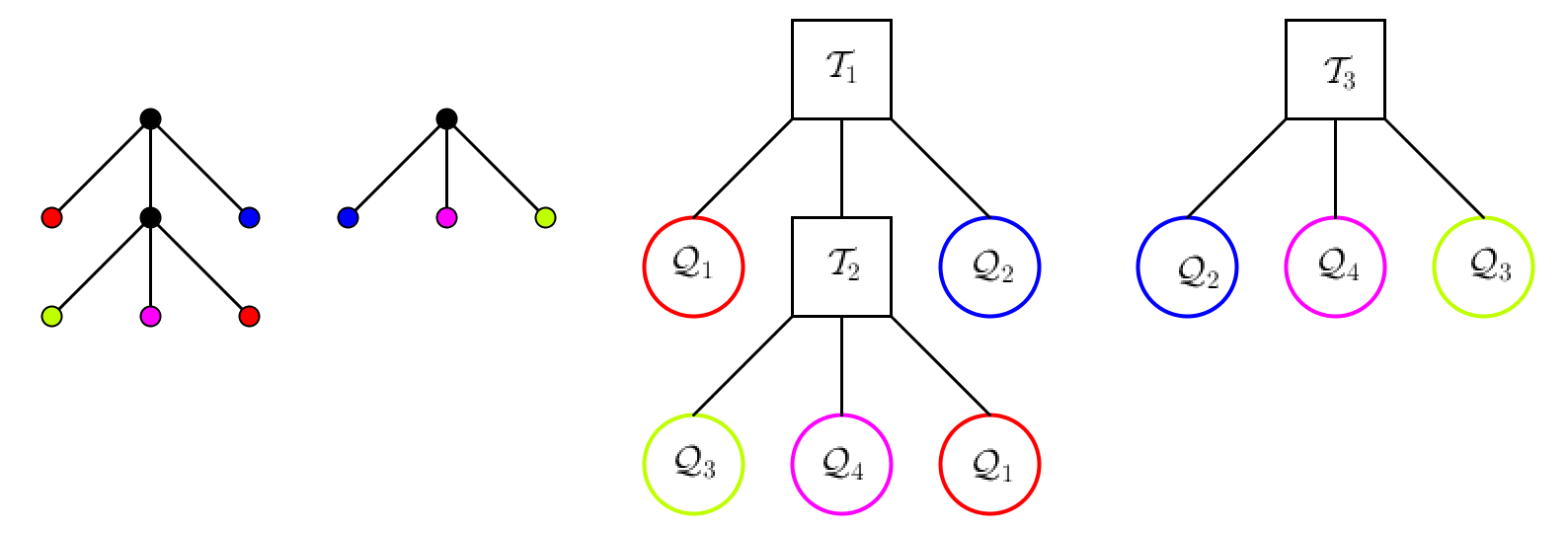}
\caption[A couple with given skeleton]{A couple $\Qc$ (on the right) together with its skeleton $\Qc_{\mathrm{sk}}$ (on the left), which is a prime couple. The structure of $\Qc$ is as in Proposition \ref{skeleton}, where each $\Tc_j$ (drawn as a black box) represents a regular tree, and each $\Qc_j$ (drawn as two circles of same color) represents a regular couple.}
\label{fig:skeleton}
\end{figure}
\end{prop}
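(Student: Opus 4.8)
The plan is to establish the structure theorem by a terminating \emph{reduction procedure} together with a confluence argument, and then read off the remaining assertions. Introduce the inverses of the operations $A$ and $B$ of Definition \ref{defmini}: given a couple $\Qc$, if $\Qc$ contains a sub-structure that is a $(1,1)$-mini couple --- two branching nodes $\nf^+,\nf^-$ whose children are all leaves, with the three children of $\nf^+$ paired to the three children of $\nf^-$ and no siblings paired --- then operation $A^{-1}$ collapses it to a single leaf pair; if $\Qc$ contains a mini tree --- a saturated order-$2$ subtree with no siblings paired --- then operation $B^{-1}$ contracts it to a single branching node. Each $A^{-1}$ lowers $n(\Qc)$ by $2$ and each $B^{-1}$ lowers it by $1$, so any chain of such operations terminates, and a terminal couple $\Qc_{\mathrm{sk}}$ is by construction prime. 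Tracking, for each collapsed mini-substructure, the full history of the nested collapses that took place ``inside'' it, one recovers from $\Qc$ exactly a regular couple sitting at each leaf pair of $\Qc_{\mathrm{sk}}$ and a regular tree sitting at each branching node of $\Qc_{\mathrm{sk}}$ (here one uses that operations $A,B$ performed inside an already-substituted regular piece stay regular, which is immediate from Definition \ref{defmini}); reading the procedure backwards is precisely the substitution in the statement. This gives existence, with $n(\Qc)=n(\Qc_{\mathrm{sk}})+n(\As)$.

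For uniqueness I would invoke Newman's lemma: the rewriting system is terminating, so it suffices to prove \emph{local confluence}, i.e. that whenever two distinct inverse operations apply to a couple the two results have a common descendant. When the two mini-substructures are disjoint this is trivial, so the content is a finite case check of how two mini-substructures can overlap --- two $(1,1)$-mini couples sharing a branching node, a mini tree whose lower branching node heads another mini tree or whose leaves feed a $(1,1)$-mini couple, and so on --- verifying in each case that the operations either commute or reconverge after one or two further steps. This yields well-definedness of $\Qc_{\mathrm{sk}}$, and hence of $\As$, since the substitution map (skeleton plus inserted pieces) $\mapsto$ couple is injective. (One can instead avoid the case analysis by giving an intrinsic description of the skeleton nodes --- those branching nodes of $\Qc$ lying in no regular sub-tree and no regular sub-couple --- but proving that these form a sub-couple with prime induced structure carries comparable cost.) I expect this confluence step to be the main technical obstacle; the remaining parts are essentially bookkeeping.

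That $\Mb(\Qc_{\mathrm{sk}})$ contains no triple bond follows from primality. A triple bond joins two atoms $v,v'$ by three bonds; since a node has at most one parent, at most one of them is a PC bond, and a node has only three children, so four or more parallel bonds and two PC bonds are impossible. If all three are LP bonds, then $\nf(v)$ and $\nf(v')$ each have exactly three children, all leaves, paired entirely across each other --- a $(1,1)$-mini couple, contradicting primality. If one is PC, say $\nf(v')$ is a child of $\nf(v)$, then the two LP bonds pair two leaf children of $\nf(v')$ with the two non-$\nf(v')$ children of $\nf(v)$, so the subtree rooted at $\nf(v)$ with these two internal pairings is a mini tree (lone leaf: the third child of $\nf(v')$), again contradicting primality. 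Consequently, if $\Qc$ is regular --- built from $\times$ by operations $A,B$ --- then its last operation is reversible, since the mini-substructure it inserted is still present in $\Qc$; thus $\Qc$ reduces in one step to a couple built by one fewer operation, and by induction the reduction procedure reaches $\times$, so $\Qc_{\mathrm{sk}}=\times$ by uniqueness. Conversely, if $\Qc_{\mathrm{sk}}=\times$, then $\Qc$ is obtained by substituting a single regular couple into the unique leaf pair of $\times$, i.e. $\Qc$ is itself that regular couple.

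For the counting bound, encode each regular couple or regular tree of order $r$ by the rooted ``construction tree'' recording its sequence of operations $A,B$; such trees have $O(r)$ nodes with $O(1)$ data at each node, so there are at most $C^r$ regular couples and at most $C^r$ regular trees of order $r$. Now fix $\Qc_{\mathrm{sk}}$ of order $m$, which has $m$ branching nodes and $m+1$ leaf pairs, hence $2m+1$ substitution slots. A couple $\Qc$ of order $n$ with that skeleton is determined by a composition $n-m=\sum_i r_i$ of $n-m$ into these $2m+1\le 3n$ nonnegative even parts --- at most $\binom{4n}{3n}\le C^n$ choices --- together with one regular structure of the prescribed order in each slot, contributing a further factor $\prod_i C^{r_i}=C^{\,n-m}\le C^n$. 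Multiplying gives the asserted bound $C^n$. Finally, the general statement $\Qc\sim(\Qc_0,\As)$ for an arbitrary (not necessarily prime) couple $\Qc_0$ requires no new argument: the substitution of a regular couple into each leaf pair and a regular tree into each branching node is defined verbatim for any $\Qc_0$, and $n(\As)$ is by definition the total order of the inserted pieces.
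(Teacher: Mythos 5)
The paper itself offers no proof here beyond a citation to Proposition~4.14 of \cite{DH21}, so I will assess your proposal on its own merits. The overall strategy --- turn operations $A$ and $B$ into a terminating rewriting system and invoke Newman's lemma for uniqueness of the normal form --- is sound and works; it is a clean way to package what is otherwise a fiddly inductive characterization.

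Two points worth flagging. First, a small slip: $B^{-1}$ lowers $n(\Qc)$ by $2$, not by $1$. A mini tree is a saturated paired tree of order $2$ (Definition~\ref{defmini}), so inserting or contracting one changes the number of branching nodes by $2$; both operations $A$ and $B$ add a structure of order exactly $2$, which is consistent with regular couples always having even order. This does not affect termination.

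Second, and more usefully, your worry that ``confluence is the main technical obstacle'' is overstated. Translating to the molecule picture, a $(1,1)$-mini couple or mini tree in $\Qc$ is exactly a triple bond in $\Mb(\Qc)$, and two distinct triple bonds are automatically \emph{vertex-disjoint} because each atom has degree at most $4$ (a shared atom would need $3+3=6$ bonds). Back in couple space this forces the two mini-substructures to have disjoint sets of branching nodes, and since a $(1,1)$-mini couple is internally saturated (all six leaves paired among themselves) the only possible interactions between two mini-substructures run through the lone leaf of a mini tree: either the lone leaf of $\Tc_1$ is paired with the lone leaf of $\Tc_2$, or the lone leaf of $\Tc_1$ is the upper branching node of $\Tc_2$. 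In both cases the two contractions commute outright. So the case check is just these two lines, not a broad sweep. The ``tracking the history of nested collapses'' step by which you recover the decomposition $\As$ is written loosely; the cleaner route you mention in passing --- intrinsically identifying skeleton branching nodes as those lying in no regular sub-tree or sub-couple of $\Qc$ --- is likely closer to what \cite{DH21} actually does, but either can be made precise, and the rest of your argument (the triple-bond characterization, regular $\Leftrightarrow$ trivial skeleton, and the $C^n$ count via compositions and bounded-branching construction trees) is correct.
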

\begin{proof} See Proposition 4.14 and Remark 4.15 of \cite{DH21}. The molecule $\Mb(\Qc_{\mathrm{sk}})$ does not have triple bond, because $\Qc_{\mathrm{sk}}$ is a prime couple.
\end{proof}
\subsection{Blocks}\label{deffuncgroup} We next define the notion of blocks (and hyper-blocks), which is an important class of atomic groups that occur in our proof.
\begin{df}[Blocks]\label{defblock}Given a molecule $\Mb$, an atomic group $\Bb\subset\Mb$ is called a \emph{block}, if all atoms in $\Bb$ have degree $4$ within $\Bb$, except for exactly two atoms $v_1$ and $v_2$ (called \emph{joints} of the block), each of which having out-degree $1$ and in-degree $1$ (hence total degree $2$) within $\Bb$, see Figure \ref{fig:block}. Define $\sigma(\Bb)$ as the number of bonds between $v_1$ and $v_2$. Note that $\sigma(\Bb)\in\{0,1,2\}$, and $\sigma(\Bb)=2$ if and only if $\Bb$ is a double bond. Moreover, we define a \emph{hyper-block} $\Hb$ to be the atomic group formed by adding one bond between the two joints $v_1$ and $v_2$ of a block $\Bb$ (we shall call this $\Hb$ the \emph{adjoint} of $\Bb$), and define $\sigma(\Hb)=\sigma(\Bb)+1$.

If two blocks share one common joint and no other common atom, then their union (or concatenation) is either a block or a hyper-block (depending on whether the two other joints of the two blocks are connected by a bond), see Figure \ref{fig:blockconc}. Note that a hyper-block cannot be concatenated with another block or hyper-block in this way. In general any finitely many (at least two) blocks can be concatenated to form a new block $\Bb$, or a new hyper-block $\Hb$, in which case we must have $\sigma(\Bb)=0$ and $\sigma(\Hb)=1$.
\end{df}
  \begin{figure}[h!]
  \includegraphics[scale=.35]{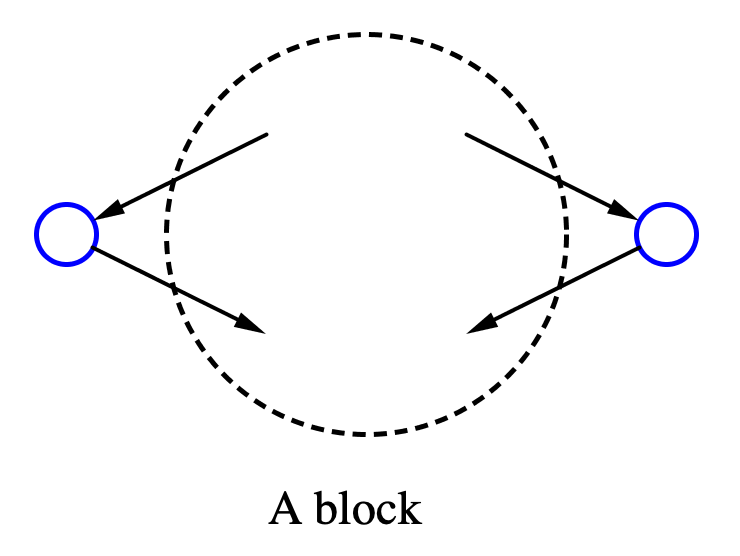}
  \caption[Block]{A block as in Definition \ref{defblock}. Here the two joint atoms are colored blue, and all atoms in the circle have degree $4$.}
  \label{fig:block}
\end{figure} 
  \begin{figure}[h!]
  \includegraphics[scale=.35]{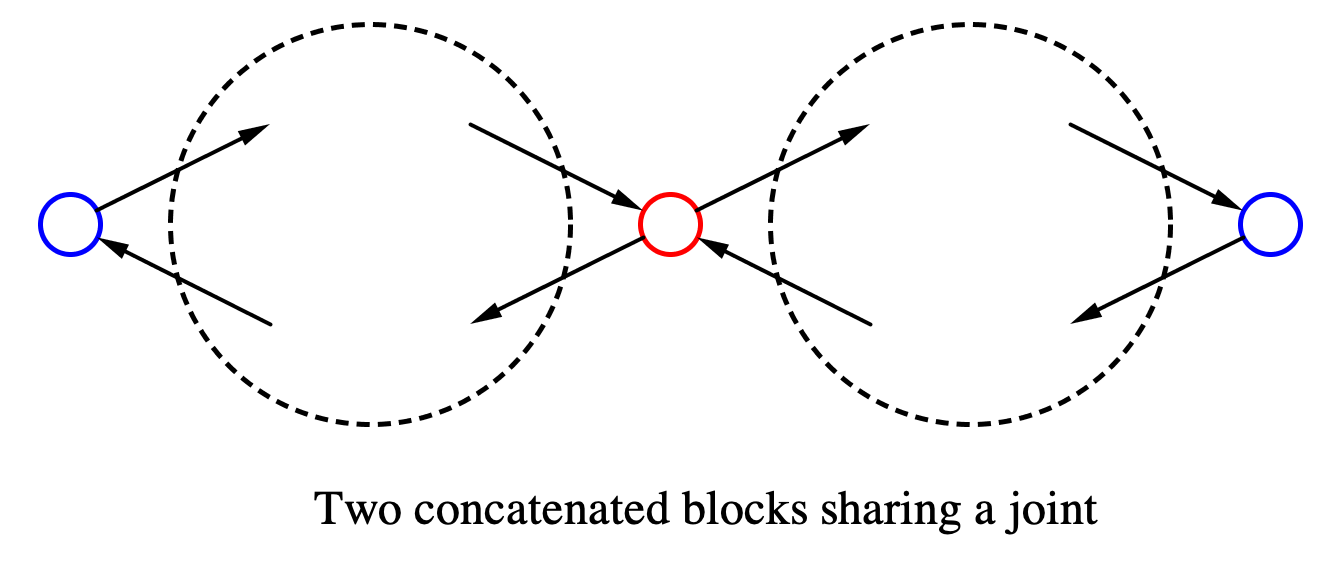}
  \caption[Chain of two blocks]{A chain of two blocks as in Definition \ref{defblock}. Here the two joint atoms at the end are colored blue, the one common joint atom colored red, and all atoms in the circles have degree $4$. Clearly longer chains can be constructed similarly.}
  \label{fig:blockconc}
\end{figure} 
\begin{lem}\label{disjointlem} Let $\Mb$ be a molecule. Suppose $\Ab,\Bb\subset\Mb$, each of them is a block or a hyper-block, and $\Ab\not\subset\Bb$, $\Bb\not\subset\Ab$ and $\Ab\cap\Bb\neq\varnothing$.

Let $a_1$ and $a_2$ be the joints of $\Ab$, and $b_1$ and $b_2$ be the joints of $\Bb$. Suppose further that (i) $\Bb\backslash\{b_1,b_2\}$ is connected, and (ii) for any $v\in\Bb\backslash\{b_1,b_2\}$, the subset $\Bb\backslash\{v\}$ is either connected, or has two connected components containing $b_1$ and $b_2$ respectively, and (iii) the same holds for $\Ab$. 

Then $\Ab$ and $\Bb$ are both blocks, and exactly one of the three following scenarios happens: (a) $\Ab$ and $\Bb$ share two common joints and no other common atom, and $\sigma(\Ab)=\sigma(\Bb)=1$, (b) $\Ab$ and $\Bb$ share one common joint and no other common atom, and can be concatenated like in Definition \ref{defblock}; (c) $\Ab$ is formed by concatenating two blocks $\Cb_0$ and $\Cb_1$, and $\Bb$ is formed by concatenating $\Cb_1$ with another block $\Cb_2$ (where $\Cb_0\cap\Cb_2=\varnothing$).
\end{lem}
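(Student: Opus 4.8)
The plan is to analyze the atomic group $\Sc:=\Ab\cap\Bb$ directly, using repeatedly one elementary fact: if an atom $v$ has degree $4$ within a block, then, since every atom of the molecule $\Mb$ has degree at most $4$, \emph{all} bonds of $\Mb$ at $v$ lie in that block. Hence if $v\in\Sc$ has a bond of $\Mb$ not contained in $\Sc$, then $v$ fails to be interior to one of $\Ab,\Bb$, so $v\in\{a_1,a_2,b_1,b_2\}$; moreover every bond from $\Sc$ to $\Ab\setminus\Bb$ has its $\Sc$-endpoint in $\{b_1,b_2\}$ (its far endpoint leaves $\Bb$) and every bond from $\Sc$ to $\Bb\setminus\Ab$ has its $\Sc$-endpoint in $\{a_1,a_2\}$. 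Since $\Ab$ is connected and $\Ab\setminus\Bb\neq\varnothing$ (because $\Ab\not\subset\Bb$), such a bond exists, so $\{b_1,b_2\}\cap\Ab\neq\varnothing$, and symmetrically $\{a_1,a_2\}\cap\Bb\neq\varnothing$.

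Next I would split into cases according to $p:=\#(\{a_1,a_2\}\cap\Bb)$ and $q:=\#(\{b_1,b_2\}\cap\Ab)$, both in $\{1,2\}$, and within each according to which joints coincide. The driving mechanism is the following. If an atom $w\in\Sc$ is a joint of (say) $\Bb$ but a non-joint of $\Ab$, and all bonds from $\Sc$ to $\Ab\setminus\Bb$ meet $w$, then deleting $w$ disconnects $\Sc\setminus\{w\}$ from $\Ab\setminus\Bb$; condition (iii) then forces $\Ab\setminus\{w\}$ to have exactly two components, the ``$a_1$-side'' and the ``$a_2$-side'', which pins down that $\Sc\setminus\{w\}$ is connected, that $\Ab\setminus\Bb$ is connected, and on which side each $a_i$ sits. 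Combining this with the symmetric statement from condition (ii) applied to $\Bb$, one gets that $\Sc$ stays connected after removing any of its atoms that is a joint of $\Ab$ or of $\Bb$; a direct bond count — interior-to-both atoms of $\Sc$ have all four $\Mb$-bonds inside $\Sc$, whereas an atom equal to some $a_i$ or $b_j$ contributes to $\Sc$ exactly its two ``internal'' bonds (one incoming, one outgoing), the remaining ones pointing into $\Ab\setminus\Bb$ or $\Bb\setminus\Ab$ — then shows that $\Sc$ has the degree profile of a block, hence is a block, save in the degenerate situations where $\Sc$ collapses to a single common joint (case (b)) or to the two common joints carrying one bond within $\Ab$ and one within $\Bb$ (case (a), $\sigma(\Ab)=\sigma(\Bb)=1$). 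In the remaining situation, set $\Cb_1:=\Sc$, $\Cb_0:=(\Ab\setminus\Bb)\cup(\{b_1,b_2\}\cap\Ab)$ and $\Cb_2:=(\Bb\setminus\Ab)\cup(\{a_1,a_2\}\cap\Bb)$; the same bond count shows $\Cb_0,\Cb_2$ are blocks, that $\Cb_0\cap\Cb_2=\varnothing$, and that $\Ab=\Cb_0\cup\Cb_1$, $\Bb=\Cb_1\cup\Cb_2$ are concatenations as in Definition \ref{defblock}, i.e.\ case (c). Finally, the degree bound $\le 4$ in $\Mb$ excludes hyper-blocks: a joint of a hyper-block has degree $3$ within it, with one of these bonds already forced outward in a prescribed direction, so a shared atom would exceed degree $4$, or the extra bond of the hyper-block would be forced into a position contradicting the localization of the first step — a short check in each branch. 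Likewise one verifies the mutual exclusivity of (a)--(c) by the size and internal bond structure of $\Sc$.

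The step I expect to be the genuine obstacle is the case $p=q=2$, where $\Ab$ contains both $b_1,b_2$ and $\Bb$ contains both $a_1,a_2$. Here the ``delete one joint'' move no longer disconnects $\Ab$, since $\Ab\setminus\Bb$ may remain attached to $\Sc$ through the second joint; one must combine conditions (ii) and (iii) more carefully, exploiting the fact that in each such deletion $a_1$ and $a_2$ are forced onto opposite sides, to rule out ``genuinely parallel'' attachments of $\Ab\setminus\Bb$ at $\{b_1,b_2\}$ and of $\Bb\setminus\Ab$ at $\{a_1,a_2\}$, thereby collapsing everything to $\Sc=\{a_1,a_2\}=\{b_1,b_2\}$ with $\sigma(\Ab)=\sigma(\Bb)=1$, i.e.\ case (a). A secondary, bookkeeping-type difficulty is to organize the coincidence patterns among $a_1,a_2,b_1,b_2$ and the small shapes of $\Sc$ (a single atom, a double bond, two atoms with no internal bond) so that exactly one of (a), (b), (c) is produced in every branch; this is routine but must be done carefully to avoid gaps or overlaps.
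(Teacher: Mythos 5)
Your high-level strategy — analyze $\Sc:=\Ab\cap\Bb$ via a degree count, locate the atoms of $\Sc$ with outward bonds among $\{a_1,a_2,b_1,b_2\}$, and use conditions (ii)--(iii) to pin down connectivity — is in the same spirit as the paper's, which instead organizes the case split by the number of \emph{coincident} joints (zero, one, or two). Both bookkeepings cover the same configurations, and your observation that a bond from $\Sc$ to $\Ab\setminus\Bb$ must have its $\Sc$-endpoint in $\{b_1,b_2\}$ is the right starting point.

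There is, however, a genuine gap. Your ``elementary fact'' uses only the degree bound $\leq 4$, and you never invoke the other defining property of a molecule in Definition~\ref{defmole0}: $\Mb$ contains no \emph{saturated component}, i.e.\ no connected component all of whose atoms have degree~$4$. It is this property — not the degree bound, and not conditions (ii)--(iii) — that kills the configuration in which $a_1,a_2$ are both interior atoms of $\Bb$ and $b_1,b_2$ are both interior atoms of $\Ab$ (the heart of your $p=q=2$ case). In that configuration every atom of $\Ab\cup\Bb$ has all four of its $\Mb$-bonds inside $\Ab\cup\Bb$, so $\Ab\cup\Bb$ would be a saturated component, which is forbidden; the paper makes precisely this observation and is done. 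Your substitute, ``exploiting the fact that in each such deletion $a_1$ and $a_2$ are forced onto opposite sides,'' does not close the case: with both $b_1,b_2\in\Ab$, removing one of them need not separate $\Sc$ from $\Ab\setminus\Bb$, since bonds can still run through the other $b$-joint, so condition~(iii) produces no contradiction. The same omission also breaks your bond count there — it would report an $\Sc$ with \emph{four} degree-$2$ atoms, which is neither a block nor one of your listed degenerate shapes, and you offer no mechanism to dismiss it. The no-saturated-component property is likewise what the paper uses to force $\sigma(\Ab)=\sigma(\Bb)=1$ in scenario~(a) (ruling out $\sigma=0$) and to exclude the hyper-block possibilities; the degree bound alone is not sufficient for those branches either. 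Once this ingredient is added, the rest of your outline can be made to go through and becomes essentially the paper's argument.
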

\begin{proof} (1) Suppose $\Ab$ and $\Bb$ share two common joints, say $a_1=b_1$ and $a_2=b_2$. If a third atom $u\in\Ab\cap\Bb$, since $\Ab\not\subset\Bb$, there must exist another atom $v\in\Ab\backslash\Bb$. Since $\Ab\backslash\{a_1,a_2\}$ is connected by assumption (i), we can find a path from $u$ to $v$ that remains in $\Ab$ but does not include either $a_1$ or $a_2$. However we have $u\in\Bb$ and $v\not\in\Bb$, so any path from $u$ to $v$ must include either $b_1$ or $b_2$, contradiction. This tells us that $\Ab\cap\Bb=\{a_1,a_2\}$. In this case there must be one (and exactly one) bond between $a_1$ and $a_2$, so $\sigma(\Ab)=\sigma(\Bb)=1$ and we are in scenario (a). In fact, if $\sigma(\Ab)=\sigma(\Bb)=0$, then $a_1$ has two bonds connecting to atoms in $\Ab$ and two other bonds connecting to atoms in $\Bb$, and same for $a_2$.  Therefore \emph{every} atom in $\Ab\cup\Bb$ will have degree $4$ (including $a_1$ and $a_2$), which contradicts the definition of molecule. The other cases are treated similarly.

(2) Suppose $\Ab$ and $\Bb$ share no common joint. Choose $u\in\Ab\cap\Bb$ and $v\in\Ab\backslash\Bb$, the same argument in (1) implies that either $b_1$ or $b_2$ must be an \emph{interior} (i.e. non-joint) atom of $\Ab$. Similarly, either $a_1$ or $a_2$ must be an interior atom of $\Bb$. However, these four atoms cannot be all interior atoms because otherwise every atom in $\Ab\cup\Bb$ will again have degree $4$. By symmetry, we may assume that $a_1$ is an interior atom of $\Bb$, $b_1$ is an interior atom of $\Ab$, and $b_2\not\in\Ab$.

Now consider the atomic group $\Ab\backslash\{b_1\}$, which is the disjoint union of $\Ab\backslash\Bb$ and $(\Ab\cap\Bb)\backslash\{b_1\}$. If two atoms $u$ and $v$ from these two subsets are connected by a path in $\Ab\backslash\{b_1\}$, then we again have a contradiction because this path cannot include either $b_1$ or $b_2$. Using also assumption (ii), we know that $\Ab\backslash\Bb$ and $(\Ab\cap\Bb)\backslash\{b_1\}$ are two connected components of $\Ab\backslash\{b_1\}$, and $a_2\in\Ab\backslash\Bb$. It is now easy to see that $\Cb_0:=(\Ab\backslash\Bb)\cup\{b_1\}$ and $\Cb_1:=\Ab\cap\Bb$ are two blocks that are concatenated at the common joint $b_1$ to form $\Ab$. Now by switching $\Ab$ and $\Bb$ and arguing similarly, we can see that $\Cb_2=(\Bb\backslash\Ab)\cup\{a_1\}$ is also a block, and is concatenated with $\Cb_1$ at the common joint $a_1$ to form $\Bb$. Therefore, we are in scenario (c).

(3) Finally, suppose $\Ab$ and $\Bb$ share only one common joint, say $a_1=b_1$. If $\Ab\cap\Bb=\{a_1\}$, then clearly we are in scenario (b). If not, then there is a second atom $u\in\Ab\cap\Bb$. By repeating the arguments in (1) and (2), we know that $a_2$ is an interior atom of $\Bb$ and $b_2$ is an interior atom of $\Ab$. Then all atoms in $\Ab\cup\Bb$ except $a_1$ will have degree $4$, thus $a_1$ can only have degree $2$ (in-degree $1$ and out-degree $1$, as total in-degree must equal total out-degree), which means that $a_1$ has two bonds connecting to atoms in $\Ab\cap\Bb$. Now we can apply the same argument in (2) and conclude that $\Ab\backslash\Bb$ and $(\Ab\cap\Bb)\backslash\{b_2\}$ are two connected components of $\Ab\backslash\{b_2\}$, and $a_2\in\Ab\backslash\Bb$. But we already know $a_2$ is an interior atom of $\Bb$, which is impossible. This contradiction completes the proof.
\end{proof}
\subsubsection{Blocks in a couple} We now discuss the relative position of a block $\Bb\subset\Mb(\Qc)$ in a couple $\Qc$.
\begin{prop}\label{block_clcn} Let $\Qc$ be a couple and $\Bb\subset\Mb(\Qc)$ be a block with two joints $v_1$ and $v_2$, and let $\uf_j=\nf(v_j)$.
\begin{enumerate}[{(1)}]
\item Then (up to symmetry) exactly one of the following two scenarios happens.
\begin{itemize}
\item  \emph{\bf{(CL) or ``cancellation" blocks:}} There is a child $\uf_{11}$ of $\uf_1$ and two children $\uf_{21},\uf_{22}$ of $\uf_2$, such that (i) $\uf_{11}$ has the same sign as $\uf_1$, $\uf_{21}$ has sign $+$ and $\uf_{22}$ has sign $-$, (ii) $\uf_2$ is a descendant $\uf_1$ but not of $\uf_{11}$, and (iii) all the leaves in the set $\Qc[\Bb]$ are completely paired, where $\Qc[\Bb]$ denotes all nodes that are descendants of $\uf_1$ but not of $\uf_{11},\uf_{21}$ or $\uf_{22}$ (in particular $\uf_1\in\Qc[\Bb]$ and $\uf_{11},\uf_{21},\uf_{22}\not\in\Qc[\Bb]$). See Figure \ref{fig:couples_cl}.
\item \emph{\bf{(CN) or ``connectivity" blocks:}} There is a child $\uf_{11}$ of $\uf_1$ and $\uf_{21}$ of $\uf_2$, such that (i) $\uf_{11}$ has the same sign as $\uf_1$ and $\uf_{21}$ has the same sign as $\uf_2$, (ii) $\uf_2$ is either a descendant of $\uf_{11}$ or not a descendant of $\uf_1$ (similar for $\uf_1$), and (iii) all the leaves in the set $\Qc[\Bb]$ are completely paired, where $\Qc[\Bb]$ denotes all the nodes that are descendants of $\uf_1$ but not of $\uf_{11}$, and all the nodes that are descendants of $\uf_2$ but not of $\uf_{21}$ (in particular $\uf_1,\uf_2\in\Qc[\Bb]$ and $\uf_{11},\uf_{21}\not\in\Qc[\Bb]$). See Figure \ref{fig:couples_cn}.
\end{itemize}
\item For (CL) blocks we can define a new couple $\Qc^{\mathrm{sp}}$ by removing all nodes $\mf\in\Qc[\Bb]\backslash\{ \uf_1\}$, and turning $\uf_{11},\uf_{21}$ and $\uf_{22}$ into the three new children of $\uf_1$ with corresponding subtrees attached; here the position of $\uf_{11}$ as a child of $\uf_1$ remains the same as in $\Qc$, and the positions of $\uf_{21}$ and $\uf_{22}$ as children of $\uf_1$ are determined by their signs. Then, the molecule $\Mb^{\mathrm{sp}}=\Mb(\Qc^{\mathrm{sp}})$ is formed from $\Mb$ by merging all the atoms in $\Bb$ (including two joints) into one single atom. We call this operation going from $\Qc$ to $\Qc^{\mathrm{sp}}$ \emph{splicing}.
\item For (CN) blocks, if we remove from $\Mb(\Qc)$ \emph{any set of disjoint (CN) blocks} in $\Qc$, where by removing a block we mean removing all bonds $\ell\in\Bb$, then the resulting molecule is still \emph{connected} (though it no longer comes from a couple). This remains true if we remove also a (CL) block, provided \emph{that both joints of this (CL) block have degree $3$. Note that there is at most one such block due to Proposition \ref{moleproperty}; for simplicity we will call it a \emph{root block}.}
\end{enumerate}
\end{prop}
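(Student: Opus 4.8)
The substance of the proposition is in part (1); parts (2) and (3) then follow by bookkeeping on the tree structure. For part (1), the plan is to first do a purely local analysis at a joint $v_j$, rephrased in terms of $\uf_j=\nf(v_j)$ and its four ``slots'' (its parent-edge and its three child-edges, in the sense of Definition~\ref{defmole}). From the orientation rules of Definition~\ref{defmole} — the parent-slot at $\uf_j$ is outgoing iff $\zeta_{\uf_j}=+$, and a child-slot, whether PC or LP, is outgoing iff that child has sign $-$ — and from the fact that the children of $\uf_j$ carry signs $(\zeta_{\uf_j},-\zeta_{\uf_j},\zeta_{\uf_j})$, one reads that the out-slots of $\uf_j$ are $\{\text{parent},\ \text{mid-child}\}$ and its in-slots are $\{\text{the two like-sign children}\}$ (or the reverse, according to $\zeta_{\uf_j}$). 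A joint has in-degree $1$ and out-degree $1$ \emph{within} $\Bb$; so for a degree-$4$ joint exactly one out-slot and one in-slot must leave $\Bb$, which forces its two external slots to be either $\{\text{parent},\ \text{one like-sign child}\}$ or $\{\text{mid-child},\ \text{one like-sign child}\}$, while a degree-$3$ joint (necessarily a tree root) has a single external slot, a like-sign child. The global structure then comes from a ``highest node'' argument: walking up from any atom of $\Bb$ towards the root of its tree and recording the last atom still in $\Bb$ yields an atom with external parent-slot (or a root), hence a joint of the first of the two types above; by connectedness of $\Bb$ there are at most two such tops, and they must be $v_1,v_2$. If there are two tops, I would set $\uf_{j1}$ to be the external like-sign child of $\uf_j$, take $\Qc[\Bb]$ as in the (CN) clause, and verify by a maximality argument that the branching-node set of $\Qc[\Bb]$ is exactly $\Bb$ and all leaves of $\Qc[\Bb]$ are paired among themselves — any branching node of $\Qc[\Bb]$ outside $\Bb$, or any leaf of $\Qc[\Bb]$ paired outside it, would create a third boundary atom, i.e. a third joint, contradiction. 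The alternative for the position of $\uf_2$ in (CN) is exactly the statement that the walk-up from $\uf_2$ leaves $\Bb$ immediately. If instead there is a single top $\uf_1$, then $\uf_2$ is a descendant of $\uf_1$ whose path up to $\uf_1$ stays in $\Bb$, so $\uf_2$ is of the second type, with two external children $\uf_{21}$ (sign $+$) and $\uf_{22}$ (sign $-$); the same maximality argument then produces the region $\Qc[\Bb]$ of the (CL) clause with all leaves internally paired. The sign conditions in both clauses are precisely the orientation facts established locally. The root block (both joints of degree $3$, hence the two tree roots) is a boundary case I would treat by direct inspection.

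For part (2), splicing is well-defined as a couple operation: the deleted nodes $\Qc[\Bb]\setminus\{\uf_1\}$ carry only leaves paired among themselves (by part (1)), so deleting them keeps the pairing consistent on the surviving leaves; the three re-attached children $\uf_{11},\uf_{21},\uf_{22}$ carry signs $\zeta_{\uf_1},+,-$, which — after placing whichever of $\uf_{21},\uf_{22}$ has sign $-\zeta_{\uf_1}$ in the mid slot — are exactly the signs a branching node of sign $\zeta_{\uf_1}$ must have, and they are pairwise non-descendant so no degenerate pairing is introduced; hence $\Qc^{\mathrm{sp}}$ is a couple. For the molecular statement, note that after splicing the four bonds incident to the new atom $\uf_1$ are its parent-bond and its child-bonds to $\uf_{11},\uf_{21},\uf_{22}$, which are precisely the four external bonds of $\Bb$, whereas every atom and bond outside $\Bb$ is unchanged; therefore $\Mb(\Qc^{\mathrm{sp}})$ is $\Mb(\Qc)$ with all atoms of $\Bb$ contracted to one atom.

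For part (3), removing the internal bonds of one (CN) block $\Bb$ isolates its interior atoms (which are then irrelevant), so it suffices to show that $v_1,v_2$, together with the atoms outside $\Bb$, their mutual bonds, and the four external bonds of $\Bb$, form a connected graph; iterating over a disjoint family is then immediate, since disjointness keeps each block's four external bonds available while another is removed. Let $A_1,B_1$ and $A_2,B_2$ be the far endpoints of the two external bonds at $v_1$ and $v_2$ respectively. Since $\Mb(\Qc)$ is connected, every component of $\Mb(\Qc)\setminus\Bb$ meets one of $A_1,B_1,A_2,B_2$, and $v_1$ identifies the components of $A_1,B_1$ while $v_2$ identifies those of $A_2,B_2$; so it is enough to show that $\Mb(\Qc)\setminus\Bb$ has at most three components and that these two identifications connect them all. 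I would verify this from the structure found in part (1), by cases on the position of $\uf_1,\uf_2$: if $\uf_2$ is a descendant of $\uf_{11}$, then $A_2=\text{parent}(\uf_2)$ is joined to $B_1=\uf_{11}$ by a chain of tree-edges lying outside $\Bb$, cutting the count to the three components of $A_1$, of $B_1=A_2$, and of $B_2$; if $\uf_1,\uf_2$ are incomparable inside one tree, then $A_1,A_2$ both lie in the common ``upper part''; and if $\uf_1,\uf_2$ lie in opposite trees, one uses that connectedness of $\Mb(\Qc)$ forces a cross-tree leaf-pairing which, since all leaves of $\Qc[\Bb]$ are internally paired, is in fact a bond of $\Mb(\Qc)\setminus\Bb$ and therefore links the two tree-sides. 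The same component-merging argument covers the root (CL) block, whose two degree-$3$ joints are the tree roots.

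The main obstacle is the combinatorial core of part (1): establishing that the region $\Qc[\Bb]$ read off from the two joints has branching-node set exactly $\Bb$ with all its leaves paired internally. This is where the (CL)/(CN) dichotomy and all the sign statements genuinely originate, and it requires combining connectedness of $\Bb$, the orientation rules, and the ``no third joint'' principle carefully; the delicate point is to exclude holes and outward pairings in $\Qc[\Bb]$ uniformly in all configurations, including those where $\Bb$ spans both trees. The secondary difficulty is the component count in part (3), in particular ruling out a disconnecting cross-tree (CN) block, for which the internal-pairing property from part (1) is exactly what is needed.
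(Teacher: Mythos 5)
Your treatment of parts (1) and (2) is essentially the paper's own argument, just phrased in terms of ``slots'' and ``tops'' rather than the paper's cases (a)/(b) at each joint; the content (walk-up to a root forces it to hit a joint; joints with an internal parent-bond versus not; identifying $\Qc[\Bb]$ as exactly the branching nodes of $\Bb$ and using that to conclude the leaves are internally paired) is the same, and the slot/orientation bookkeeping is correct. For part (3), however, you take a genuinely different route — merging the at most four boundary components of $\Mb(\Qc)\setminus\Bb$ at the two joints — where the paper instead passes to a \emph{generalized couple} (each branching node having one or three children) and shows its molecule is connected via PC-paths to the roots plus at least one cross-tree LP bond, the existence of which is exactly the point that each generalized tree has an odd number of leaves.

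This is where your argument has a real gap. In the cross-tree case you say ``connectedness of $\Mb(\Qc)$ forces a cross-tree leaf-pairing which, since all leaves of $\Qc[\Bb]$ are internally paired, is in fact a bond of $\Mb(\Qc)\setminus\Bb$.'' That inference does not go through: the cross-tree LP bond supplied by connectedness of $\Mb(\Qc)$ could perfectly well be \emph{inside} $\Bb$ — that $\Qc[\Bb]$'s leaves pair internally only says such a pair stays within $\Bb$, not that another pair must exist outside it. What is actually needed is the parity count: after deleting $\Qc[\Bb]\setminus\{\uf_1,\uf_2\}$ each residual tree is a rooted tree with branching of degree $1$ or $3$ and hence has an odd number of leaves, so its leaves cannot be paired entirely among themselves. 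Your ``iterating over a disjoint family is then immediate'' is also too quick: the PC-paths you use to merge components (e.g.\ from $\mathrm{parent}(\uf_2)$ up to $\uf_{11}$) may pass through a second disjoint (CN) block that is also being removed, so the single-block argument does not simply apply one at a time; the paper's generalized-couple formulation sidesteps this by handling all removed blocks simultaneously. Finally, your description of a root (CL) block as one ``whose two degree-$3$ joints are the tree roots'' is incorrect: in the (CL) case $\uf_2$ is a descendant of $\uf_1$, so both cannot be roots; as the paper notes, $\uf_1$ is a root and $\uf_2$ is the node whose child is paired (as a leaf) with the root of the \emph{trivial} other tree, so $\uf_2$ is degree $3$ for a different reason. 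This changes the component picture you would have to analyze for that case.
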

\begin{figure}[h!]
\includegraphics[scale=0.4]{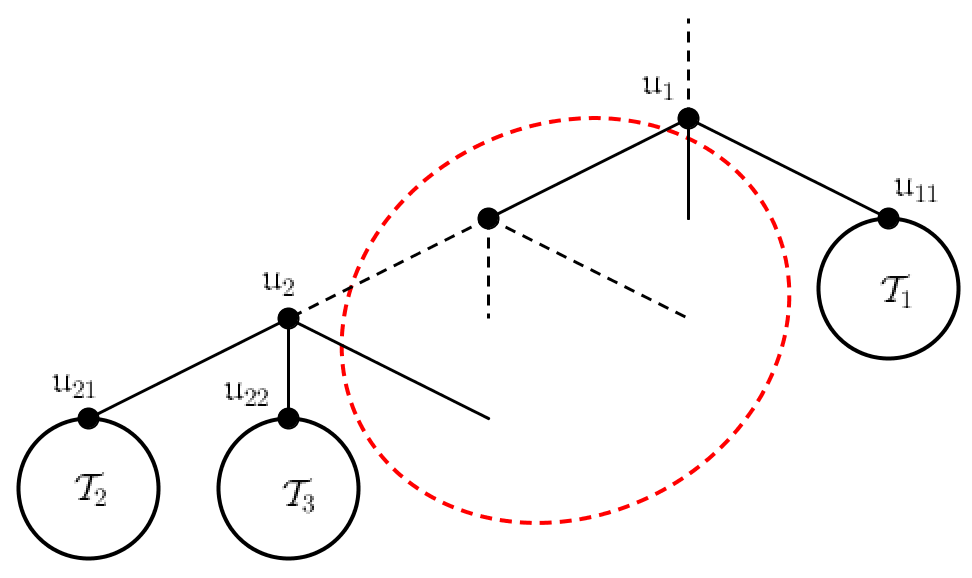}
\caption[Block of type (CL)]{A block of type (CL), as in Proposition \ref{block_clcn}, viewed in the couple. Here $\uf_{11}$ is the right (or left) child of $\uf_1$, $\uf_{21}$ and $\uf_{22}$ have signs $+$ and $-$ respectivly}, and all the leaves in the red circle are completely paired.
\label{fig:couples_cl}
\end{figure}
\begin{figure}[h!]
\includegraphics[scale=0.4]{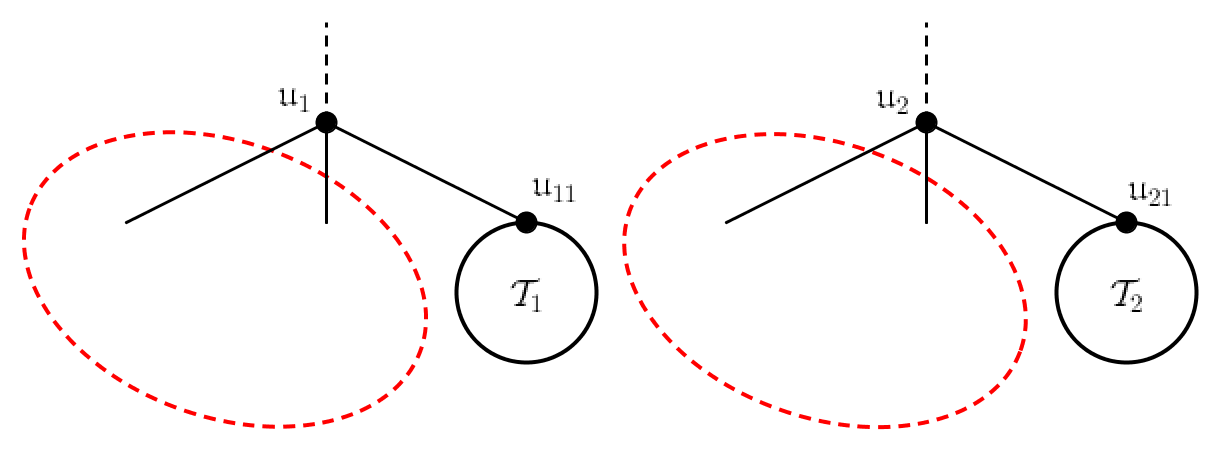}
\caption[Block of type (CN)]{A block of type (CN), as in Proposition \ref{block_clcn}, viewed in the couple. Here $\uf_{11}$ is the right (or left) child of $\uf_1$, $\uf_{21}$ is the right (or left) child of $\uf_2$, and all the leaves in the two red circles are completely paired.}
\label{fig:couples_cn}
\end{figure}
\begin{proof} For any $v\in\Bb\backslash\{v_1,v_2\}$, there is a unique bond $\ell\sim v$ such that $\mf(v,\ell)=v$; let $v^+$ be the other endpoint of $\ell$, then $\nf(v^+)$ is just the parent of $\nf(v)$ in $\Qc$. Consider the path $v\to v^+\to v^{++}\to\cdots$, then it either stays in $\Bb$ or reaches the joints $v_1$ or $v_2$ at some point, due to the structure of $\Bb$. However, if it stays in $\Bb$ then eventually it will reach one of the roots of $\Qc$, which is impossible because the roots have degree $3$ as atoms. Therefore it must reach $v_1$ or $v_2$, which means that for any atom $v\in\Bb$, $\nf(v)$ must be \emph{a descendant of either $\uf_1$ or $\uf_2$}.

Now consider $v_1$, there are two possibilities: (a) there is an atom $v_1^+\in\Bb$ such that $\nf(v_1^+)$ is the parent of $\uf_1$ (as explained above), or (b) there is no such atom $v_1^+\in\Bb$. For $v_2$ there are similarly these two possibilities. If case (a) holds for $v_2$ then by the same proof above, we know that $\uf_2$ is a descendant of $\uf_1$; since $\uf_1$ and $\uf_2$ cannot be a descendant of each other, by symmetry we have only two cases: either (a) holds for $v_2$ and (b) holds for $v_1$, or (b) holds for both $v_1$ and $v_2$. Below we define $\ell_1$ and $\ell_2$ as the two bonds connecting $v_1$ to atoms in $\Bb$, and similarly define $\ell_3$ and $\ell_4$ corresponding to $v_2$.

(1) Suppose (a) holds for $v_2$ and (b) holds for $v_1$. In particular $\uf_2$ is a descendant of $\uf_1$, and $\mf(v_1,\ell_j)\,(j\in\{1,2\})$ are two children of $\uf_1$; let $\uf_{11}$ be the other child of $\uf_1$. Similarly, $\mf(v_2,\ell_j)\,(j\in\{3,4\})$ are $\uf_2$ and one child of $\uf_2$, let $\uf_{21}$ and $\uf_{22}$ be the two other children of $\uf_2$. Clearly $\uf_{11}$ must have the same sign as $\uf_1$ and $\uf_{21}$ must have opposite sign with $\uf_{22}$, because $\ell_1$ and $\ell_2$ have opposite directions, and the same for $\ell_3$ and $\ell_4$. We will assume $\uf_{21}$ has sign $+$ and $\uf_{22}$ has sign $-$.

Now we claim that $v\in\Bb\backslash\{v_1\}$ if and only if $\nf(v)\in\Qc[\Bb]\backslash\{\uf_1\}$. In fact, if $v\in\Bb\backslash\{v_1\}$ then first $\nf(v)$ is a descendant of $\uf_1$ as shown above; second, consider the path $\nf(v)\to \nf(v^+)\to \nf(v^{++})\to\cdots\to \uf_1$, then the node immediately before $\uf_1$ must be $\nf(w)$ for some $w\in\Bb\backslash\{v_1\}$ and thus cannot be $\uf_{11}$ by definition, hence $\nf(v)$ is not a descendant of $\uf_{11}$; third, if the above path contains $\uf_2$, then the node immediately before $\uf_2$ must be $\nf(w)$ for some $w\in\Bb\backslash\{v_1,v_2\}$ and thus cannot be $\uf_{21}$ or $\uf_{22}$ by definition, hence $\nf(v)$ is not a descendant of $\uf_{21}$ or $\uf_{22}$ either.

Conversely, if $\nf(v)\neq \uf_1$ is a descendant of $\uf_1$ but not of $\uf_{11},\uf_{21}$ or $\uf_{22}$, then the path $\nf(v)\to \nf(v^+)\to \nf(v^{++})\to\cdots$ must end at $\uf_1$, and the node immediately before $\uf_1$ must not be $\uf_{11}$. Thus this node must be $\nf(w)$ for some $w\in\Bb\backslash\{v_1\}$, and the $v^{+\cdots +}$ atoms involved in this path must all be in $\Bb$ unless this path contains $\uf_2$. But if $\uf_2$ belongs to this path, then the node immediately before it must not be $\uf_{21}$ or $\uf_{22}$, so it must also be $\nf(w)$ for some $w\in\Bb\backslash\{v_1,v_2\}$, and again all the $v^{+\cdots +}$ atoms involved in this path must be in $\Bb$. In any case we have $v\in\Bb$, so our claim is true.

Now with the above claim, it is easy to see that all the leaves in $\Qc[\Bb]$ must be completely paired, and these leaf pairs exactly correspond to all LP bonds in $\Bb$. It is also clear that, merging $\Bb$ to a single atom corresponds to removing all nodes $\mf\in\Qc[\Bb]\backslash\{ \uf_1\}$, and the resulting molecule is exactly $\Mb(\Qc^{\mathrm{sp}})$ for the resulting couple $\Qc^{\mathrm{sp}}$.

(2) Suppose (b) holds for both $v_1$ and $v_2$. In particular $\mf(v_1,\ell_j)\,(j\in\{1,2\})$ are two children of $\uf_1$, let the other child of $\uf_1$ be $\uf_{11}$. Similarly define $\uf_{21}$, then $\uf_{11}$ must have the same sign as $\uf_1$ and $\uf_{21}$ has the same sign as $\uf_2$, again due to the directions of the bonds $\ell_j$. Moreover, if $\uf_2$ is a descendant of $\uf_1$, then in the path $\nf(v_2)\to\nf(v_2^+)\to\cdots\to \uf_1$, the second node does not belong to $\Bb$, and neither does any subsequent terms; thus the node immediately before $\uf_1$ \emph{cannot} be $\nf(w)$ for any $w\in\Bb$, so it must be $\uf_{11}$, which means that $\uf_2$ is a descendant of $\uf_{11}$ (actually $\uf_2$ also cannot equal $\uf_{11}$ because otherwise we would have an extra bond between $v_1$ and $v_2$, turning $\Bb$ into a hyper-block). Now, by arguing similarly as in (1) we can show that $v\in\Bb\backslash\{v_1,v_2\}$ if and only if $\nf(v)\in\Qc[\Bb]\backslash\{\uf_1,\uf_2\}$. This easily implies that all the leaves in $\Qc[\Bb]$ are completely paired.

Next we prove the preservation of connectivity after removing any set of disjoint type (CN) blocks. In fact, we may define the molecule $\Mb(\widetilde{\Qc})$ for generalized couples $\widetilde{\Qc}$ formed by two \emph{arbitrary trees} which are not necessarily ternary trees (plus that we only keep the pairing structure but ignore the signs of nodes and directions of bonds), similar to Definition \ref{defmole}.

In this regard, removing a type (CN) block amounts to removing all nodes $\mf\in\Qc[\Bb]\backslash\{\uf_1,\uf_2\}$. What remains is a generalized couple formed by two trees in which $\uf_{11}$ is the only child of $\uf_1$ and $\uf_{21}$ is the only child of $\uf_2$ (the fact that $\uf_1$ and $\uf_2$ each has only one child in the new generalized couple, corresponds to the fact that $v_1$ and $v_2$ each has degree $2$ in the new molecule). This can be extended to the removal of multiple disjoint type (CN) blocks, and the resulting molecule is $\Mb(\widetilde{\Qc})$ where $\widetilde{\Qc}$ is a generalized couple formed by two trees such that each branching node has either $1$ or $3$ children. However, the corresponding molecule $\Mb(\widetilde{\Qc})$ is still connected, because each node can be connected to the root of its tree by using PC bonds, and there exists at least one LP bond between the two trees since the number of leaves in each tree is odd. This completes the proof.

Finally, let $\Bb$ be a root (CL) block, i.e. both joints of $\Bb$ has degree $3$. Then, in the notation of Proposition \ref{block_clcn}, we must have (up to symmetry) that $\uf_1$ is the root of one tree in $\Qc$, and a child of $\uf_2$ (say $\uf_{21}$) is paired with the root of the other tree as leaves. Thus we have a couple $\widetilde{\Qc}$ rooted at $u_{11}$ and $u_{22}$, and removing $\Bb$ reduces $\Mb(\Qc)$ to a molecule $\widetilde{\Mb}$ which equals $\Mb(\widetilde{\Qc})$ plus two extra single bonds. Clearly $\widetilde{\Mb}$ is connected, as is the result of removing from it any number of (CN) blocks in $\Mb(\widetilde{\Qc})$. This completes the proof.
\end{proof}
\begin{cor}\label{blockchainprop} Let $\Qc$ be a couple and $\Bb\subset\Mb(\Qc)$ be a block or hyper-block that is concatenated by at least two blocks $\Bb_j\,(1\leq j\leq m)$ as in Definition \ref{defblock}, where $m\geq 2$. Then at most one $\Bb_j$ can be a (CN) block. If $\Bb$ is a block and all $\Bb_j$ are (CL) blocks, then $\Bb$ is a (CL) block. If $\Bb$ is a block and there is one (CN) block $\Bb_j$, then after doing splicing at all other (CL) blocks, this $\Bb$ becomes a single (CN) block $\Bb_j$.
\end{cor}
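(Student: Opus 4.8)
The plan is to record the position of the chain inside $\Qc$ by a single ``direction word'' over $\{L,R\}$, and then read off all three assertions from the dichotomy of Proposition \ref{block_clcn}(1) together with the fact, established in the proof of that proposition, that a block can never have ``case (a)'' simultaneously at both of its joints. Write the concatenation as $w_0-\Bb_1-w_1-\Bb_2-\cdots-\Bb_m-w_m$, so that $\Bb_j$ has joints $w_{j-1},w_j$; the atoms $w_1,\dots,w_{m-1}$ are interior to $\Bb$, hence have degree $4$ in $\Mb(\Qc)$, so each $w_i$ has exactly two bonds in $\Bb_i$ and two in $\Bb_{i+1}$; and $w_0,w_m$ are the joints of $\Bb$ (with one extra bond $w_0w_m$ in the hyper-block case, which plays no role since claims (2)--(3) assume $\Bb$ is a block). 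For an interior joint $w_i$, the unique bond $\ell\sim w_i$ with $\mf(w_i,\ell)=\nf(w_i)$ --- the one corresponding to the parent of $\uf_i:=\nf(w_i)$ in $\Qc$ --- lies in $\Bb_i$ or in $\Bb_{i+1}$, and I set $d_i=L$ in the former case and $d_i=R$ in the latter; at the ends I set $d_0=R$ (resp.\ $d_m=L$) precisely when ``case (a)'' of Proposition \ref{block_clcn} holds for $w_0$ in $\Bb_1$ (resp.\ for $w_m$ in $\Bb_m$), i.e.\ when the parent of $\uf_0$ (resp.\ $\uf_m$) exists and its atom lies in $\Bb_1$ (resp.\ $\Bb_m$), and $d_0=L$ (resp.\ $d_m=R$) otherwise. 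With this dictionary, ``case (a) for $w_{j-1}$ in $\Bb_j$'' means $d_{j-1}=R$ and ``case (a) for $w_j$ in $\Bb_j$'' means $d_j=L$.

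Next I translate Proposition \ref{block_clcn}(1): each $\Bb_j$ is of type (CL) or (CN); it is (CN) exactly when case (b) holds at both joints, i.e.\ when $(d_{j-1},d_j)=(L,R)$; and the impossibility of case (a) at both joints of a block forbids $(d_{j-1},d_j)=(R,L)$, so $\Bb_j$ is (CL) exactly when $(d_{j-1},d_j)\in\{(L,L),(R,R)\}$. Therefore the word $d_0d_1\cdots d_m$ contains no factor $RL$, hence equals $L^aR^b$ with $a+b=m+1$. Since a (CN) sub-block corresponds to an $L\to R$ transition and $L^aR^b$ has at most one, claim (1) follows. If all $\Bb_j$ are (CL) the word is constant; if it is $L^{m+1}$, then at the left end case (b) holds for $w_0$ in $\Bb_1$, and since $\Bb$ is a block the two bonds of $w_0$ in $\Bb$ are exactly those in $\Bb_1$, so case (b) also holds for $w_0$ in $\Bb$; at the right end case (a) holds for $w_m$ in $\Bb_m\subset\Bb$, hence for $w_m$ in $\Bb$. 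A block with case (a) at exactly one of its joints is (CL) by Proposition \ref{block_clcn}(1), so $\Bb$ is (CL); the case $R^{m+1}$ is symmetric. This is claim (2).

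For claim (3), let $\Bb_{j_0}$ be the unique (CN) sub-block, so the direction word is $L^{j_0}R^{m+1-j_0}$ and every other $\Bb_j$ is (CL). Splice the (CL) sub-blocks $\Bb_1,\dots,\Bb_{j_0-1}$ in turn from the left and $\Bb_m,\dots,\Bb_{j_0+1}$ in turn from the right. By Proposition \ref{block_clcn}(2), splicing a (CL) block merges all its atoms --- both joints included --- into a single atom; thus the left splicings merge $w_0,\dots,w_{j_0-1}$ together with the interiors of $\Bb_1,\dots,\Bb_{j_0-1}$ into one atom, which becomes a joint of the surviving part of $\Bb$, and symmetrically on the right. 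What survives of $\Bb$ is then exactly the atomic group $\Bb_{j_0}$, now appearing in the spliced couple as a (CN) block.

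The step I expect to be the main obstacle is precisely this last one: one must verify that splicing a (CL) sub-block adjacent to another sub-block keeps the latter a genuine block of the updated couple, and preserves its (CL)/(CN) type, so that the iteration is legitimate and $\Bb_{j_0}$ is still (CN) at the end. The combinatorial ``collapse'' picture above makes this believable --- the node-removal in Proposition \ref{block_clcn}(2) only reattaches subtrees at the single surviving node of the spliced block and leaves the tree-structure near the joints of the other sub-blocks untouched --- but matching that picture carefully against the node-level definition of splicing is the genuinely fiddly part. (A minor point to keep in mind throughout is the harmless possibility that $w_0$ or $w_m$ is a degree-$3$ root atom, which is why the ends are handled above through the ``case (a)/(b)'' formulation rather than through a parent atom.)
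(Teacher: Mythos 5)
Your argument is correct and takes essentially the same approach as the paper's: both rest on the (a)/(b) dichotomy at a joint established in the proof of Proposition \ref{block_clcn}, together with the fact that case (a) cannot hold at both joints of a block, propagated along the chain; your $\{L,R\}$ word simply packages the paper's step-by-step chase of (a)/(b) (outward from the (CN) block, or from an arbitrary $\Bb_j$ in the all-(CL) case) into a single forbidden-factor statement $d_0\cdots d_m = L^aR^b$. The splicing step in claim (3) is asserted without further elaboration in the paper as well, so the ``fiddly'' point you flag is not a divergence from the paper's own level of detail; your heuristic — that splicing $\Bb_i$ only collapses nodes inside $\Qc[\Bb_i]$ into its surviving $\uf_1$ node while leaving $\Bb_{j_0}$ intact as an atomic group with a relabeled joint, hence still (CN) — is exactly what the paper takes for granted.
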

\begin{proof} Let the joints of $\Bb_j$ be $v_j$ and $v_{j+1}$ for $1\leq j\leq m$. Recall the possibilities (a) and (b) defined in the proof of Proposition \ref{block_clcn}, which are stated for any joint $v$ of any block $\Bb_j\subset\Mb(\Qc)$. If some $\Bb_j$ is a (CN) block, then as in the proof of Proposition \ref{block_clcn}, (b) must happen for both joints $v_j$ and $v_{j+1}$ relative to the block $\Bb_j$. Thus (a) must happen for the joint $v_j$ relative to $\Bb_{j-1}$, and $\Bb_{j-1}$ is a (CL) block. Moreover, (b) must happen for $v_{j-1}$ relative to $\Bb_{j-1}$, and hence (a) must happen for $v_{j-1}$ relative to $\Bb_{j-2}$, and so on. Of course we can also start with $v_{j+1}$ and proceed with $\Bb_{j+1}$ etc., and altogether we know that all blocks other than $\Bb_j$ must be (CL) blocks. If $\Bb$ is a block, then after we splice at all the other (CL) blocks, this $\Bb_j$ should remain unperturbed, as a (CN) block.

If $\Bb$ is a block and all $\Bb_j$ are (CL) blocks, then for each block $\Bb_j$, (a) must happen at one of its joints, say $v_j$ (if it is $v_{j+1}$ then the proof is the same by going in the other direction). Then (b) must happen for the joint $v_j$ relative to $\Bb_{j-1}$, and (a) must happen for $v_{j-1}$ relative to $\Bb_{j-1}$ and so on. In the end (a) must happen for $v_1$ relative to $\Bb_1$ (and hence relative to $\Bb$), so $\Bb$ is a (CL) block.
\end{proof}
\section{Vines and twists}\label{secvine}
\subsection{Vines}\label{subsecvine} We are now ready to introduce the notion of \emph{vines} which are the special kind of blocks that are of fundamental importance in our proof.
\begin{df}[Vines]\label{defvine}\emph{Vines}\footnote{The nomenclature comes from the shapes of the blocks drawn in Figure \ref{fig:vines}.} are defined as the blocks (I)--(VIII) drawn in Figure \ref{fig:vines}. We also define the notion of \emph{ladders} as drawn in Figure \ref{fig:vines}. For each ladder we also require that each pair of two parallel single bonds must have opposite directions, and define its \emph{length} to be the number of double bonds in it minus one. We refer to vines (I)--(II) as \emph{bad vines}, vines (III)--(VIII) as \emph{normal vines}. Note that $\sigma(\Vb)=0$ for all vines $\Vb$ except vines (V) and vines (I) (see Definition \ref{defblock}), for which $\sigma(\Vb)=1$ and $\sigma(\Vb)=2$ respectively.

Define \emph{hyper-vines} (or HV for short) to be the hyper-blocks that are adjoints of vines, as in Definition \ref{defblock}. We also define \emph{vine-chains} (or VC), resp. \emph{hyper-vine-chains} (or HVC), to be the blocks, resp. hyper-blocks, that are formed by concatenating finitely many vines as in Definition \ref{defblock} (these vines are called \emph{ingredients}). Note that a single vine is viewed as a VC, but an HV is not viewed as an HVC. It is easy to verify that assumptions (i) and (ii) in Lemma \ref{disjointlem} hold for any HV, VC or HVC. For simplicity, we will refer to any HV, VC or HVC as \emph{vine-like objects}.

Note that, if the molecule $\Mb=\Mb(\Qc)$ comes from a couple, then any vine could be a (CL) or (CN) vine depending on whether it is a (CL) or (CN) block.
\end{df}
  \begin{figure}[h!]
  \includegraphics[scale=.13]{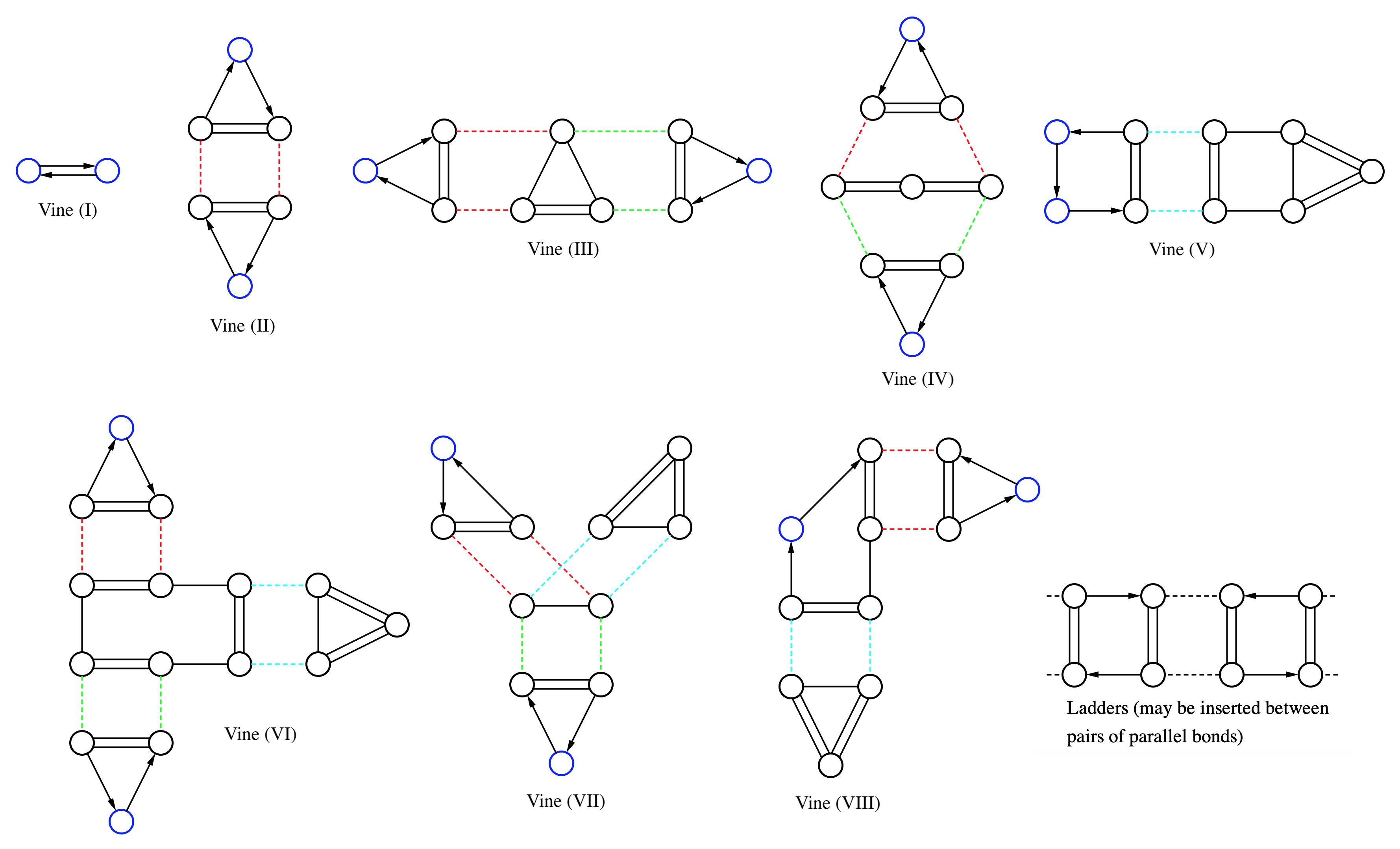}
  \caption[Block families (I)--(V)]{Vines (I)--(VIII). For conventions in this illustration see Remark \ref{remilia}.}
  \label{fig:vines}
\end{figure} 
\begin{rem}\label{remilia} We make a few remarks about the illustrations in Figure \ref{fig:vines}:
\begin{enumerate}[{(a)}]
\item In each vine, the two joints are colored blue; the bonds at the joints are drawn with directions, indicating that the two bonds at each joint must have opposite directions.
\item The other bonds are drawn without directions, meaning they are arbitrary, as long as requirements of a molecule are met (each non-joint atom has in-degree $2$ and out-degree $2$).
\item In each vine, we may insert a ladder between each pair of parallel bonds that are drawn as dashed lines (distinguished by different colors).
\item In the ladder, some bonds are drawn with directions, indicating that each pair of two parallel single bonds must have opposite directions.
\end{enumerate}
\end{rem}
\subsubsection{Bad (CL) vines in a couple}\label{propfamily} We need to study the relative position of (CL) vines (I), and the part of (CL) vines (II) near one of its joints, in a couple.
\begin{prop}\label{molecpl} Consider a (CL) vine $\Vb\subset\Mb(\Qc)$ with joints $v_1$ and $v_2$. Let $\uf_j=\nf(v_j)$, by Proposition \ref{block_clcn} we may assume $\uf_2$ is a descendant of $\uf_1$, and also specify two children $\uf_{21}$ and $\uf_{22}$ of $\uf_2$ that have signs $+$ and $-$ respectively; let $\uf_{23}$ be the other child of $\uf_2$, note that $\uf_{23}$ has the same sign as $\uf_2$.
\begin{enumerate}[{(1)}]
\item If $\Vb$ is vine (II), then $v_2$ is connected to two atoms $v_3$ and $v_4$ by single bonds, while $v_3$ and $v_4$ are connected by a double bond. Let $\uf_j=\nf(v_j)$, then (up to symmetry) exactly one of the following five scenarios happens. See Figure \ref{fig:block_mole}.
\begin{enumerate}[{(a)}]
\item Vine (II-a): $\uf_2$ is a child of $\uf_4$, and $\uf_{23}$ is paired with one child $\uf_0$ of $\uf_3$ as leaves, and the other two children of $\uf_4$ are paired with the other two children of $\uf_3$ as leaves. Here neither $\uf_3$ nor $\uf_4$ is a descendant of the other, but they have a common ancestor, namely $\uf_1$.

\item Vine (II-b): $\uf_2$ is a child of $\uf_3$, and $\uf_{23}$ is paired with one child $\uf_0$ of $\uf_4$ as leaves, and the other two children of $\uf_3$ are paired with the other two children of $\uf_4$ as leaves. Here neither $\uf_4$ nor $\uf_3$ is a descendant of the other, but they have a common ancestor, namely $\uf_1$.

\item Vine (II-c): $\uf_4$ is a child of $\uf_3$ and $\uf_2$ is a child of $\uf_4$. One of the the other two children of $\uf_3$ is paired with one of the other children of $\uf_4$ as leaves, and the remaining child $\uf_0$ of $\uf_3$ is paired with $\uf_{23}$ as leaves. Here $\uf_3$ is a descendant of $\uf_1$.

\item Vine (II-d): $\uf_2$ and $\uf_4$ are two children of $\uf_3$, and $\uf_{23}$ is paired with one child $\uf_0$ of $\uf_4$ as leaves, and the remaining child of $\uf_3$ is paired with another child of $\uf_4$ as leaves. Here $\uf_3$ is a descendant of $\uf_1$.

\item Vine (II-e): $\uf_2$ is a child of $\uf_3$, and $\uf_4=\uf_{23}$. The other two children of $\uf_3$ are paired with two of the children of $\uf_4$ as leaves. Here $\uf_3$ is a descendant of $\uf_1$.
\end{enumerate}

\item If $\Vb$ is vine (I), then exactly one of the following two scenarios happens. See Figure \ref{fig:block_mole}.
\begin{enumerate}[{(a)}]
\item Vine (I-a): $\uf_2$ is the left or right child of $\uf_1$, and $\uf_{23}$ is paired to the middle child $\uf_0$ of $\uf_1$ as leaves.
\item Vine (I-b): $\uf_2$ is the middle of $\uf_1$, and $\uf_{23}$ is paired to the left or right child $\uf_0$ of $\uf_1$ as leaves.
\end{enumerate}
\end{enumerate}
\begin{figure}[h!]
\includegraphics[scale=0.2]{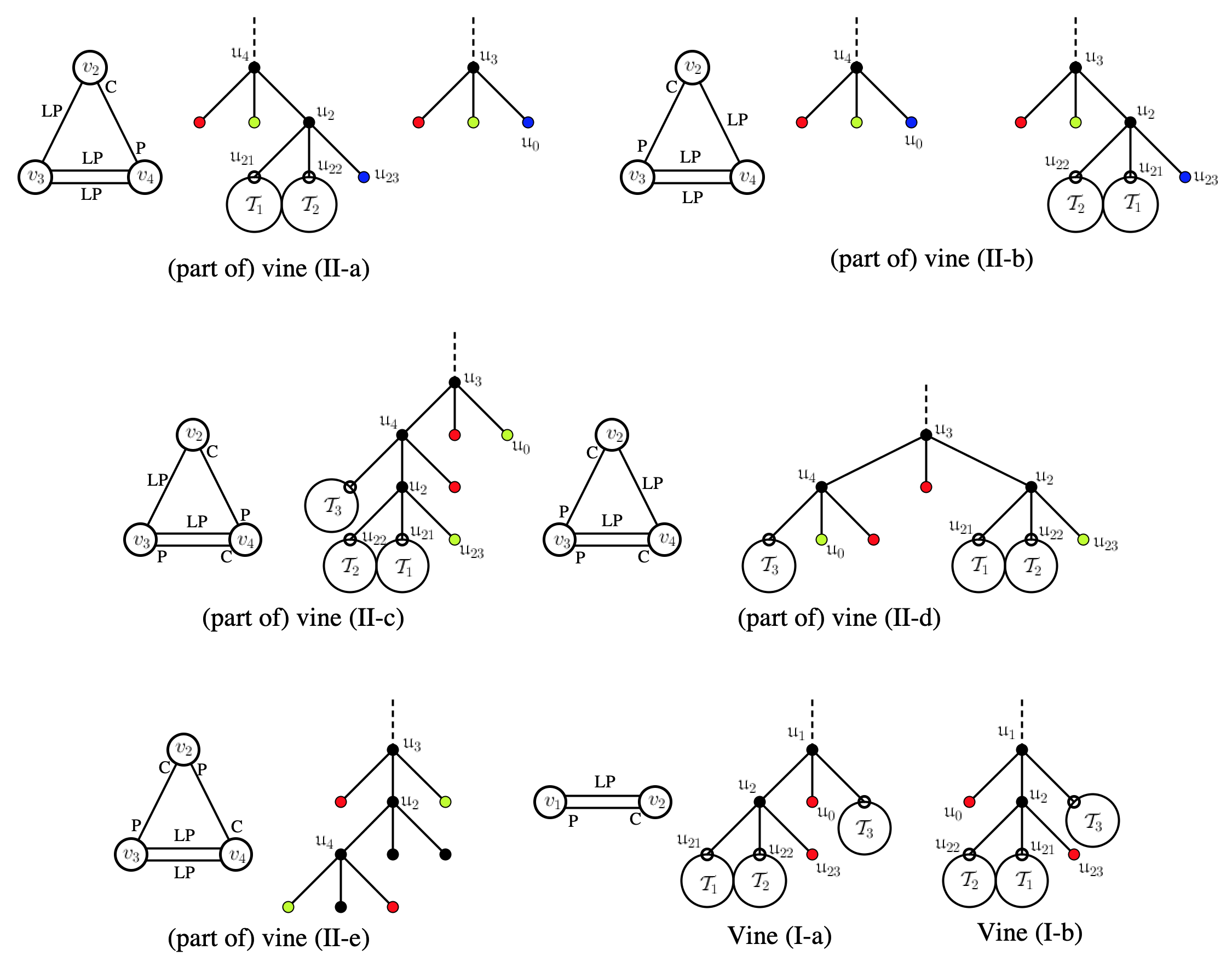}
\caption[Block families]{Vines (II-a)--(II-e) and (I-a)--(I-b) in couples. For conventions in this illustration see Remark \ref{molecplrem}.}
\label{fig:block_mole}
\end{figure}
For simplicity, below we will call a (CL) vine $\Vb$ \emph{core} if it is bad and not vine (II-e), and \emph{non-core} if it is normal or vine (II-e).
\end{prop}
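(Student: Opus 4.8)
The plan is to classify by a direct combinatorial analysis inside the couple $\Qc$, taking Proposition \ref{block_clcn} (the (CL) case) as the point of departure. The first step is to extract the local picture at the joint $v_2$. Since $\Vb$ is a (CL) block and $\uf_2$ is a descendant of $\uf_1$, possibility (a) in the proof of Proposition \ref{block_clcn} holds at $v_2$: of the two bonds of $v_2$ lying in $\Vb$, one is the PC bond joining $\uf_2$ to its parent (with $v_2$ labeled C), and the other, which I will call $\ell^{*}$, satisfies $\mf(v_2,\ell^{*})=\uf_{23}$, where $\uf_{23}$ is the child of $\uf_2$ not among $\uf_{21},\uf_{22}$; moreover all leaves of $\Qc[\Vb]$ are completely paired. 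Consequently $\ell^{*}$ is either a PC bond with $v_2$ labeled P, so that $\nf(v')=\uf_{23}$ for the other endpoint $v'$ of $\ell^{*}$ (and $\uf_{23}$ is a branching node of $\Qc$), or an LP bond, so that $\uf_{23}$ is a leaf paired with a leaf child of $\nf(v')$. Throughout I would repeatedly use the rigid accounting coming from Proposition \ref{moleproperty}: at any degree-$4$ atom $v\in\Vb$, the images $\mf(v,\ell)$ over the four bonds $\ell\sim v$ inside $\Vb$ exhaust the set consisting of $\nf(v)$ and the three children of $\nf(v)$.

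For vine (I), which is the double bond on $\{v_1,v_2\}$, the parent atom provided by the first step must be $v_1$, so $\uf_1$ is the parent of $\uf_2$; one of the two bonds between $v_1$ and $v_2$ is that PC bond and, since $\uf_1$ cannot also be a child of $\uf_2$ and a pair of atoms admits at most one PC bond, the other is an LP bond pairing the leaf $\uf_{23}$ with a leaf child $\uf_0$ of $\uf_1$. Because $\uf_{23}$ has the same sign as $\uf_2$, the pairing rule forces $\uf_0$ to be the middle child of $\uf_1$ when $\uf_2$ is a side child (scenario (I-a)) and a side child when $\uf_2$ is the middle child (scenario (I-b)); exactly one of these holds since $\uf_2$ is fixed, the only remaining freedom being a left/right relabeling.

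For vine (II) I would first treat the case with no ladders inserted (Figure \ref{fig:vines}), so that besides the triangle — $v_2$ joined to $v_3,v_4$ by single bonds, $v_3,v_4$ joined by a double bond — the atom $v_1$ is joined to $v_3$ and to $v_4$ by single bonds. Exactly one of $v_3,v_4$, say $v_a$, carries the PC-to-parent bond at $v_2$, so $\uf_a$ is the parent of $\uf_2$ and $\ell^{*}=v_2v_b$ with $\{a,b\}=\{3,4\}$. Then: (i) if $\ell^{*}$ is a PC bond, then $\uf_b=\uf_{23}$ is a child of $\uf_2$, and exhausting the remaining three bonds at $v_a$ forces $\uf_1$ to be the parent of $\uf_a$ and the other two children of $\uf_a$ to be leaves paired with leaf children of $\uf_b$ — scenario (II-e); (ii) if $\ell^{*}$ is an LP bond, then $\uf_{23}$ is a leaf paired with a leaf child $\uf_0$ of $\uf_b$, and I split further according to the double bond on $\{v_3,v_4\}$, which as a pair of bonds between two atoms of a couple is either one PC plus one LP bond, or two LP bonds. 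In the first alternative one of $\uf_a,\uf_b$ is the parent of the other: if $\uf_a$ is the parent of $\uf_b$, accounting at $v_a$ gives $\uf_2$ and $\uf_b$ both children of $\uf_a$, with the third child of $\uf_a$ a leaf paired with a leaf child of $\uf_b$ and $\uf_1$ the parent of $\uf_a$ — scenario (II-d); if $\uf_b$ is the parent of $\uf_a$, accounting at $v_b$ gives the chain $\uf_b\to\uf_a\to\uf_2$, with the third child of $\uf_b$ a leaf paired with a leaf child of $\uf_a$ and $\uf_1$ the parent of $\uf_b$ — scenario (II-c). In the second alternative, counting the four bonds at each of $v_a,v_b$ forces $\uf_1$ to be a common parent of $\uf_a$ and $\uf_b$ and the two remaining children of $\uf_a$ to be paired as leaves with the two remaining children of $\uf_b$; according to which of $v_3,v_4$ is named $v_a$ this is scenario (II-b) or its mirror (II-a). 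In every sub-case the complete-pairing constraint on $\Qc[\Vb]$, together with the sign rule, is exactly what pins down the stated pairings, and the scenarios are mutually exclusive because they are distinguished by the ancestor relations among $\uf_2,\uf_3,\uf_4$ and by whether $\uf_{23}$ is a leaf. Finally, reinstating ladders changes nothing essential: a ladder inserted between a pair of dashed parallel bonds only lengthens the corresponding portion of the molecule by further degree-$4$ atoms whose extra leaves are again paired within $\Qc[\Vb]$, leaving the relations among $\uf_2,\uf_3,\uf_4$ and the pairings above unchanged while replacing the single bonds toward $v_1$ by the two rails of a ladder (equivalently, pushing $\uf_1$ from the immediate parent of $\uf_3$ or $\uf_4$ to a higher ancestor, which is how the scenarios are phrased).

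The step I expect to be the main obstacle is the bookkeeping: making the enumeration of where the four bonds at $v_3$ and $v_4$ can land, and of which of $\uf_2,\uf_3,\uf_4$ can be the parent of another, genuinely exhaustive, so that no placement of $\Vb$ inside $\Qc$ escapes the list of scenarios. Keeping this under control rests on the exhaustion identity of Proposition \ref{moleproperty}, on the no-cycle property of trees (no two PC bonds between a single pair of atoms, and no ancestor occurring as a child), and on the absence of degenerate pairings in the couples under consideration (Remark \ref{moleremark}).
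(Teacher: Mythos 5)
Your argument is correct and follows essentially the same route as the paper: both start from the (CL) structure at the joint $v_2$ supplied by Proposition~\ref{block_clcn}, then enumerate the PC/LP labels of the bonds between $v_2,v_3,v_4$ (noting the impossible configurations such as a double PC bond) and use the exhaustion of $\mf$-values at degree-$4$ atoms from Proposition~\ref{moleproperty} to pin down the couple structure in each case, with the ladder observation handled exactly as you describe. The paper compresses the enumeration to one sentence plus a single worked example, while you spell out all sub-cases explicitly, but the method and the case breakdown are the same.
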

\begin{proof} We examine the labels of each bond between $(v_2,v_3,v_4)$ using Definition \ref{defmole}. Note that, since $\Vb$ is a (CL) vine, one of the bonds connecting $v_2$ to $v_3$ and $v_4$ must be labeled PC with $v_2$ labeled by C; moreover, some configurations are not possible (for example, the two bonds between $v_3$ and $v_4$ cannot both be labeled PC) due to basic properties of the couple $\Qc$ (for example two branching nodes cannot be the parent of each other). By a simple enumeration and symmetry, we find that there is only one case for vine (I) and five cases for vine (II), as drawn in Figure \ref{fig:block_mole}.

For each case of labels, one can apply the definition of the label to deduce that the relation between $(\uf_2,\uf_3,\uf_4)$ has to be as in the corresponding figures. For example, for vine (II-a), by the PC bond between $v_2$ and $v_4$, we know that $\uf_2$ must be a child of $\uf_4$. By the two LP bonds between $v_3$ and $v_4$, we know that the two other children of $\uf_4$ must be paired with two children of $\uf_3$ as leaves. By the LP bond between $v_2$ and $v_3$, we know that the remaining child of $\uf_3$ must be paired to a child of $\uf_2$ as leaves, and this child of $\uf_2$ must be $\uf_{23}$, which is drawn in blue. Clearly neither $\uf_3$ nor $\uf_4$ can be descendant of the other, but the have a common ancestor $\uf_1$, so we arrive at the illustration of Vine (II-a) in Figure \ref{fig:block_mole}. The other cases are treated similarly.
\end{proof}
\begin{rem}\label{molecplrem} We make a few remarks about the illustrations in Figure \ref{fig:block_mole}.
\begin{enumerate}[{(a)}]
\item The labels of the bonds in the molecule are indicated as in Definition \ref{defmole}. Note that cases (II-a) and (II-b) are actually symmetric, but we prefer to state them as two cases for convenience of showing the cancellation between them, see Definition \ref{twist}.
\item The relevant positions of nodes may vary (for example, for vines (II-a) and (II-b) in Figure \ref{fig:block_mole}, $\uf_2$ might be the middle child of $\uf_3$ or $\uf_4$ instead of the right child), but the descriptions in Proposition \ref{molecpl} must be met (for example the blue child of $\uf_2$, which is $\uf_{23}$, must have the same sign as $\uf_2$).
\item The nodes represented by hollow dots instead of solid dots are called \emph{free children}. These include $\uf_{21}$ and $\uf_{22}$ (\emph{except} vine (II-e)), and either a child of $\uf_4$ for vines (II-c) and (II-d), or a child of $\uf_1$ for vines (I-a) and (I-b).
\item For each free child we also draw the subtree rooted at it, and indicate it by suitable $\Tc_j$, as drawn in Figure \ref{fig:block_mole}. These are used in Definition \ref{twist} below.
\item Note that the distinction between vines (I)--(VIII) only involves the structure of the molecule $\Mb$ (as a directed graph), but the distinction between (CL) and (CN) vines, as well as families (II-a)--(II-e) etc., is intrinsic to the structure of the \emph{couple} $\Qc$; for instance, it does not make sense to talk about (CL) vines or vines (II-e) if $\Mb$ does not have the form $\Mb(\Qc)$.
\end{enumerate}
\end{rem}
\subsection{Twists} By exploiting the structure of bad (CL) vines in a couple, as described in Proposition \ref{molecpl}, we can define the operation of \emph{twisting}, which captures the cancellation between such vines.
\begin{df}[Twists]\label{twist} Let $\Qc$ be a given couple with the corresponding molecule $\Mb(\Qc)$.
\begin{enumerate}[{(1)}]
\item Given a core (CL) vine $\Vb\subset\Mb(\Qc)$ as in Proposition \ref{molecpl}, let $v_j$ and $\uf_j$ for $1\leq j\leq 4$ be as in that proposition. Then, we shall define a new couple $\Qc'$, which we call a \emph{unit twist} of $\Qc$, as follows.

First, in $\Qc'$, let any possible parent-child relation, as well as any possible children pairings, between $\uf_3$ and $\uf_4$, be exactly the same as in $\Qc$. Next, let the structure of $\Qc'$ \emph{excluding the subtrees rooted at $\uf_3$ and $\uf_4$ (or $\uf_1$ for vine (I))}, be exactly the same as $\Qc$. Moreover, consider the \emph{free children} in Figure \ref{fig:block_mole} (as in Remark \ref{molecplrem}); we require that the positions of the two free children $\uf_{21}$ and $\uf_{22}$ (as children of $\uf_2$), as well as the positions of the two subtrees (namely $\Tc_1$ and $\Tc_2$) rooted at them, be \emph{switched}\footnote{They are switched because the sign of $\uf_2$ is changed (see Remark \ref{twistexplain}); if we locate $\uf_{21}$ as the child of $\uf_{2}$ other than $\uf_{23}$ that has sign $+$ (and same for $\uf_{22}$), then this remains the same for both couples.} in $\Qc'$ compared to $\Qc$. For the other free child (if it exists), we require that its position (as a child of $\uf_1$ or $\uf_4$) and the subtree (namely $\Tc_3$) rooted at it, be exactly the same in $\Qc'$ as in $\Qc$. Then, it is easy to see that there are exactly two options to insert $\uf_2$, one as a child of $\uf_3$, and the other as a child of $\uf_4$ (for vine (I), the two options are children of $\uf_1$ that has the same or opposite sign with $\uf_1$). One of these two choices leads to $\Qc$, and we define the couple given by the other choice as $\Qc'$. Clearly $\Qc'$ is prime iff $\Qc$ is.
\item In the same way, start with a collection of (CL) vines $\Vb_j\subset\Mb(\Qc)\,(0\leq j\leq q-1)$, such that any two are either disjoint or only share one common joint and no other common atom (i.e. the union of all $\Vb_j$ equals the disjoint union of some VC and HVC). Then, we call any block $\Qc'$ a \emph{twist} of $\Qc$, if $\Qc'$ can be obtained from $\Qc$ by performing the unit twist operation at a subset of these blocks, which only contains core vines. In particular, for any given $\Qc$ and $\Vb_j$, the number of possible twists is a power of two, and at most $2^q$.
\end{enumerate}
\end{df}
Since the notion of twisting is of vital importance in our proof (especially in Section \ref{reduct1}), we will make several remarks below explaining Definition \ref{twist} in more detail.
\begin{rem}\label{explaintwist} We discuss an example of the (unit) twist operation in Definition \ref{twist}. Suppose $\Vb$ is vine (II-c) or (II-d) in Figure \ref{fig:block_mole}. Then we have that:
\begin{enumerate}[{(a)}]
\item The node $\uf_4$ is the left child of $\uf_3$, and the middle child of $\uf_3$ is paired with the right child of $\uf_4$ as leaves.
\item The left child of $\uf_4$ is a free child with subtree $\Tc_3$. The left and middle children of $\uf_2$ are the two free children $\uf_{21}$ and $\uf_{22}$ (or $\uf_{22}$ and $\uf_{21}$), with the subtrees rooted at $\uf_{21}$ and $\uf_{22}$ being $\Tc_1$ and $\Tc_2$ respectively.
\item $\uf_2$ is a child of $\uf_3$ (or $\uf_4$), and the right child $\uf_{23}$ of $\uf_2$ is paired to a child of $\uf_4$ (or $\uf_3$) as leaves.
\end{enumerate}

Now by Definition \ref{twist}, all these properties must hold in both $\Qc$ and $\Qc'$; also the structure of $\Qc$ and $\Qc'$, excluding the subtree rooted at $\uf_3$, must be the same. This leaves only two possibilities: either $\uf_2$ is \emph{the middle child} of $\uf_4$ and $\uf_{23}$ is paired to \emph{the right child} of $\uf_3$ as leaves, or $\uf_2$ is \emph{the right child} of $\uf_3$ and $\uf_{23}$ is paired to \emph{the middle child} of $\uf_4$ as leaves. These are exactly vines (II-c) and (II-d) in in Proposition \ref{molecpl}. Note that for vines (II-c) $\uf_{22}$ is the left child of $\uf_2$ and $\uf_{21}$ is the middle child, while for vines (II-d) $\uf_{21}$ is the left child and $\uf_{22}$ is the middle child, which is consistent with the description in Definition \ref{twist}.

In the same way, we can see that performing one unit twist operation \emph{exactly switches vines (I-a), (II-a), (II-c) vines with vines (I-b), (II-b), (II-d) vines}, respectively.
\end{rem}
\begin{rem}\label{twistexplain} Throughout the proof below, for any fixed (CL) vine $\Vb$, we always adopt the notations $(\uf_1,\uf_2,\uf_{11},\uf_{21},\uf_{22})$ as in Proposition \ref{block_clcn}; for bad (CL) vines we also adopt the notations $(\uf_3,\uf_4,\uf_{23},\uf_0)$ as in Proposition \ref{molecpl}, whenever applicable. The following useful facts are easily verified from Definition \ref{twist}. They are stated for unit twists but can be extended to general twists.
\begin{enumerate}[{(a)}]
\item Let $\Qc$ and $\Qc'$ be unit twists of each other at a bad (CL) vine $\Vb\subset\Mb(\Qc)$, then $\Mb(\Qc)$ and $\Mb(\Qc')$ are the same as directed graphs. They also have the same labelings of bonds, except at the atom $v_2$ (see Figure  \ref{fig:block_mole}).
\item Continuing (a), the only difference at $v_2$ is that the labels of the two bonds connecting $v_2$ to atoms in $\Vb$ are switched (one label is PC with $v_2$ labeled C and the other label is LP).
\item Moreover, if we do splicing (as defined in Proposition \ref{block_clcn}) for $\Qc$ and the (CL) vine $\Vb$, or for $\Qc'$ and the same (CL) vine $\Vb$ (as shown in (a) above), then the two resulting \emph{couples}, defined as $\Qc^{\mathrm{sp}}$ and $(\Qc')^{\mathrm{sp}}$, are exactly the same.
\item Finally, the values of $\zeta_{\uf_j}$ for any branching node $\uf_j\,(j\neq 2)$ are the same for $\Qc$ and $\Qc'$, while the values of $\zeta_{\uf_2}$ are the opposite for $\Qc$ and $\Qc'$.
\end{enumerate}
\end{rem}
\begin{figure}[h!]
\includegraphics[scale=0.4]{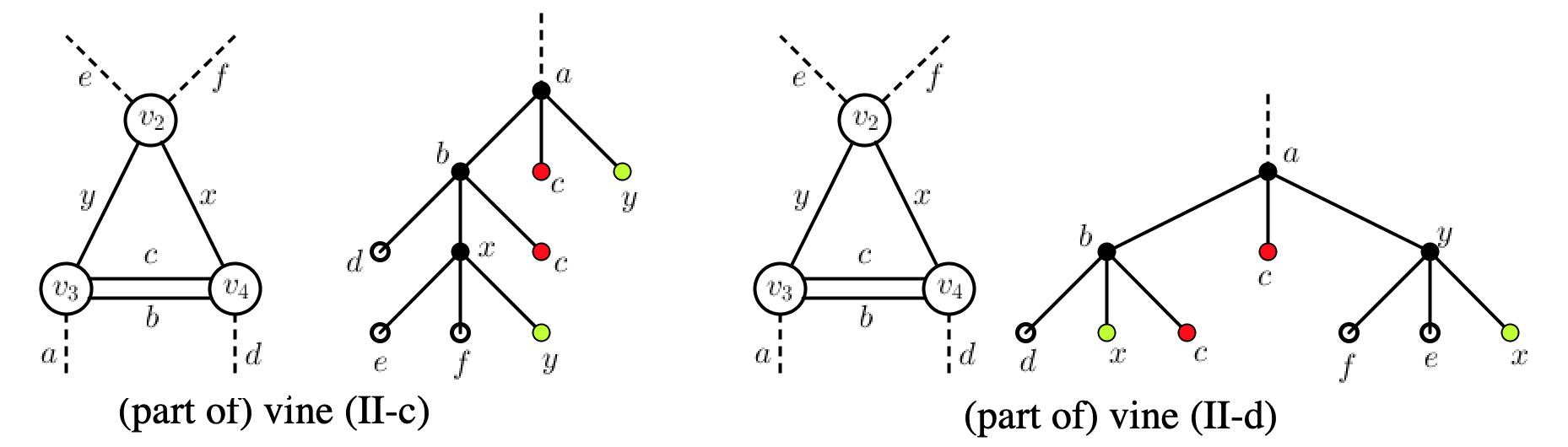}
\caption{Decorations for two couples with vine (II-c) and vine (II-d) which are unit twists of each other, see Remark \ref{dectwist}. Note that compared to Figure \ref{fig:block_mole}, the values of $k_{\mf}$ are the same for each $\mf$, except that the values of $k_{\uf_2}$ and $k_{\uf_{23}}$ are switched.}
\label{fig:twist_dec}
\end{figure}
\begin{rem}
\label{dectwist} We make another simple observation regarding decorations of couples and their twists. Let $\Qc$ be a couple and $\Qc'$ be formed from $\Qc$ by a unit twist (this is readily generalized to arbitrary twists). Then, the $k$-decorations of $\Qc$ are in one-to-one correspondence with $k$-decorations of $\Qc'$, where the values of $k_\mf$ for any branching node or leaf $\mf$ are the same in both cases, but one \emph{switches} the values of $k_{\uf_2}$ and $k_{\uf_{23}}$ in both cases, see Figure \ref{fig:twist_dec}.

Let $\Qc^{\mathrm{sp}}=(\Qc')^{\mathrm{sp}}$ be the couple formed from $\Qc$ (or $\Qc'$) by doing splicing, then for any $k$-decoration of $\Qc$ and the corresponding $k$-decoration of $\Qc'$ defined above, the decorations of $\Qc^{\mathrm{sp}}$ \emph{inherited} from them are the same; here inheriting means that the value of $k_\mf$ is kept the same for any $\mf$, whether it is viewed as a node of $\Qc$ or $\Qc^{\mathrm{sp}}$. This notion also applies to any subset of couples or molecules, or when a couple or molecule is reduced by repeated splicing (or merging) of vines, and will be used frequently below.
\end{rem}
\begin{df}\label{twistrep} Note that the unit twist operation in Definition \ref{twist} only changes the structure of the set $\Qc[\Vb]$ defined in Proposition \ref{block_clcn} (which is part of a ternary tree, obtained by taking the subtree rooted at $\uf_1$ and removing the subtrees rooted at $\uf_{11}$, $\uf_{21}$ and $\uf_{22}$), and does not affect the rest of the couple $\Qc$, so we can view it as a unit twist for $\Qc[\Vb]$.

We may define another operation on $\Qc[\Vb]$ which we call \emph{flipping}, where we flip the signs of $\uf_{1}$ (and $\uf_{11}$), and switch the two subtrees rooted at the two other children nodes of $\uf_1$, together with leaf pairings, see Figure \ref{fig:flip}. Note that unit twisting is an operation on $\Qc[\Vb]$, defined only for core (CL) vines $\Vb$ (Proposition \ref{molecpl}), that can be canonically extended to the couple. Flipping, however, is an operation on $\Qc[\Vb]$, defined for all (CL) vines $\Vb$, that in general cannot be canonically extended to the couple. However, \emph{if $\Vb$ is concatenated with another (CL) vine $\Vb_1$ above it}, then flipping at $\Vb$ can be extended to a couple operation, which is \emph{unit twisting} at the vine $\Vb_1$.
\begin{figure}[h!]
\includegraphics[scale=0.4]{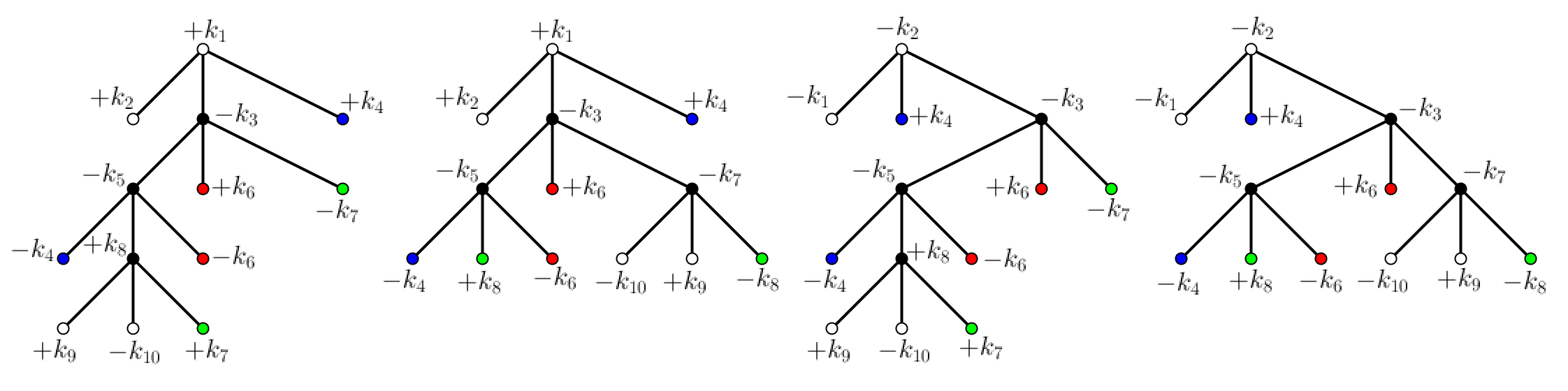}
\caption{Starting from the left couple structure in Figure \ref{fig:vinescancel}, from left to right: original, twisted, flipped, twisted and flipped (Definition \ref{twistrep}). The signs of nodes and corresponding decorations are included; note that the values of $k_{\uf_1}$ and $k_{\uf_{11}}$ are switched after flipping. the nodes $\uf_{1},\uf_{11},\uf_{21},\uf_{22}$ are indicated by hollow dots. The values of $(\mathtt{sgn},\mathtt{ind})$ are $(+,+)$, $(+,-)$, $(-,+)$ and $(-,-)$.}
\label{fig:flip}
\end{figure}

Now, given a (CL) vine $\Vb$, there are $O(C^n)$ possible structures for $\Qc[\Vb]$, where $n$ is the number of branching nodes in $\Qc[\Vb]\backslash\{\uf_1\}$. We define two of them to be equivalent, if one can be formed from the other by flipping, and (if $\Vb$ is a core (CL) vine) unit twisting. We call each equivalence class a \emph{code}, denoted by $\mathtt{cod}$. Note that for a core (CL) vine $\Vb$, each code contains exactly four elements, as shown in Figure \ref{fig:flip}; they are uniquely determined by the signs $\mathtt{sgn}:=\zeta_{\uf_1}$ and $\mathtt{ind}:=\zeta_{\uf_2}$. For other (CL) vines $\Vb$, each code contains exactly two elements; they are uniquely determined by $\mathtt{sgn}$, and the value of $\mathtt{ind}$ is determined by the code.

Let $\Qc$ be a couple with a core (CL) vine $\Vb\subset\Mb(\Qc)$, then by Definition \ref{twist} and the above discussions, we know that $\Qc$ is in one-to-one correspondence with the quadruple $(\Qc^{\mathrm{sp}}, \mathtt{cod}, \nf, \mathtt{ind})$, where $\mathtt{cod}$ and $\mathtt{ind}$ are as above, and $\nf$ is the branching node in $\Qc^{\mathrm{sp}}$ that corresponds to the $\uf_1$ node of $\Qc[\Vb]$ in $\Qc$ (note that $\mathtt{sgn}$ is determined by $\Qc_{\mathrm{sp}}$ and $\nf$). We shall write this as $\Qc\leftrightarrow (\Qc^{\mathrm{sp}}, \mathtt{cod}, \nf, \mathtt{ind})$, and note that making a unit twist for $\Qc$ corresponds to changing the value of $\mathtt{ind}$ only. If $\Vb$ is non-core then $\Qc$ is still uniquely determined by the above quadruple, except that $\mathtt{ind}$ is now determined by $\mathtt{cod}$.
\end{df}
\subsection{Full twists} Finally, we extend the notion of twists to general couples by adding back the regular couples $\Qc^{(\lf,\lf')}$ and regular trees $\Tc^{(\mf)}$ in Proposition \ref{skeleton}.
\begin{df}[Full twists]\label{twistgen} Let $\Qc$ be a couple, $\Qc_{\mathrm{sk}}$ be its skeleton, and write $\Qc\sim (\Qc_{\mathrm{sk}},\As)$ as in Proposition \ref{skeleton}.
\begin{enumerate}[{(1)}]
\item Fix one core (CL) vine $\Vb\subset\Mb(\Qc_{\mathrm{sk}})$. Now consider another couple $\Qc'$, such that $(\Qc')_{\mathrm{sk}}$ either equals $\Qc_{\mathrm{sk}}$, or is a unit twist of it as in Definition \ref{twist}. In the unit twist case, consider the branching node $\uf_2$ (in the notation of Proposition \ref{molecpl}), which occurs in both $\Qc_{\mathrm{sk}}$ and $(\Qc')_{\mathrm{sk}}$. Consider also the leaf pair $(\uf_{23},\uf_0)$ involving the non-free child $\uf_{23}$ of $\uf_2$, which again occurs in both $\Qc_{\mathrm{sk}}$ and $(\Qc')_{\mathrm{sk}}$, see Figure \ref{fig:block_mole}. Apart from these, any other branching node and leaf pair is exactly the same in the two couples $\Qc_{\mathrm{sk}}$ and $(\Qc')_{\mathrm{sk}}$.

Now, if $(\Qc')_{\mathrm{sk}}$ is a unit twist of $\Qc_{\mathrm{sk}}$, we say $\Qc'\sim((\Qc')_{\mathrm{sk}},\As')$ is a \emph{full unit twist} of $\Qc$, if (i) for any branching node $\mf\neq\uf_2$ (or any leaf pair $(\lf,\lf')\neq(\uf_{23},\uf_0)$), the regular trees $\Tc^{(\mf)}\in \As$ and $(\Tc')^{(\mf)}\in\As'$ (or the regular couples $\Qc^{(\lf,\lf')}\in\As$ and $(\Qc')^{(\lf,\lf')}\in\As'$) are the same, and (ii) the regular trees $\Tc^{(\uf_2)}\in\As$ and $(\Tc')^{(\uf_2)}\in\As'$, and the regular couples $\Qc^{(\uf_{23},\uf_0)}\in\As$ and $(\Qc')^{(\uf_{23},\uf_0)}\in\As'$, satisfy that $n(\Tc^{(\uf_2)})+n(\Qc^{(\uf_{23},\uf_0)})=n((\Tc')^{(\uf_2)})+n((\Qc')^{(\uf_{23},\uf_0)})$. If $(\Qc')_{\mathrm{sk}}=\Qc_{\mathrm{sk}}$, the definition is modified in the obvious way.
\item In general, for any set of (CL) vines $\Vb_j\subset\Mb(\Qc_{\mathrm{sk}})$ as in Definition \ref{twist}, such that any two are either disjoint or only share one common joint and no other common atom, we define $\Qc'$ to be a \emph{full twist} of $\Qc$, if $\Qc'$ can be constructed from $\Qc$ by performing some full unit twist at each $\Vb_j$ that is core.
\end{enumerate}
\end{df}
\begin{rem}\label{fulltwistrep} Let $\Qc_0$ be a couple and $\As$ be a collection of regular couples and regular trees as in Proposition \ref{skeleton}. Suppose $\Vb\subset\Mb(\Qc_{0})$ is a (CL) vine, let $\Qc^{\mathrm{sp}}$ be the result of splicing $\Qc_{0}$ at $\Vb$, then we have $\Qc_{0}\leftrightarrow(\Qc^{\mathrm{sp}}, \texttt{cod},\mathfrak{n},\texttt{ind})$, with notations as in Definition \ref{twistrep}. Therefore, we also have the one-to-one correspondence $(\Qc_{0},\As)\leftrightarrow(\Qc^{\mathrm{sp}}, \texttt{cod},\mathfrak{n},\texttt{ind},\Bs,\As^{\mathrm{sp}})$, where $\Bs$ is the sub-collection of $\As$ that involves $\Qc^{(\lf,\lf')}$ and $\Tc^{(\mf)}$ at nodes $\lf,\mf\in\Qc_{0}[\Vb]\backslash\{\uf_1\}$, and $\As^{\mathrm{sp}}$ is the corresponding $\As$ collection for $\Qc^{\mathrm{sp}}$, which is the same as the sub-collection $\As\backslash\Bs$ that involves $\Qc^{(\lf,\lf')}$ and $\Tc^{(\mf)}$ at nodes $\lf,\mf\not\in\Qc_{0}[\Vb]\backslash\{\uf_1\}$.

If $\Qc$ is a couple with skeleton $\Qc_{\mathrm{sk}}$, the we have $\Qc\sim(\Qc_{\mathrm{sk}},\As)$ by Proposition \ref{skeleton}, and $(\Qc_{\mathrm{sk}},\As)$ is also in one-to-one correspondence with the sextuple as above. If $\Qc$ runs over all full unit twists of a given couple at a core (CL) vine $\Vb$, then only $\mathtt{ind}$ and $\Bs$ in this sextuple may vary; in fact $\mathtt{ind}$ takes values in $\{\pm\}$, while $\Bs$ runs over the collections of $\Qc^{(\lf,\lf')}$ and $\Tc^{(\mf)}$ for $\lf,\mf\in\Qc_{\mathrm{sk}}[\Vb]\backslash\{\uf_1\}$, such that $\Qc^{(\lf,\lf')}$ and $\Tc^{(\mf)}$ are fixed when $\mf\neq\uf_2$ and $(\lf,\lf')\neq (\uf_{23},\uf_0)$, and that $n(\Qc^{(\uf_{23},\uf_0)})+n(\Tc^{(\uf_2)})$ takes a fixed value.
\end{rem}

\section{$\Kc_\Qc$ estimates for regular couples and regular trees}\label{regular}
\subsection{Asymptotics and cancellations for $\Kc_\Qc$} In this subsection we study expressions $\Kc_\Qc$ associated with regular couples $\Qc$ (as well as similar expressions for regular trees). The main results are stated as follows.
\begin{prop}\label{regcpltreeasymp} Let $\Qc$ be a regular couple of order $2n$, and $\Kc_\Qc(t,s,k)$ be defined as in (\ref{defkq}). Then, $\Kc_\Qc(t,s,k)$ extends as a smooth function in $k$ and admits  the decomposition $\Kc_\Qc=(\Kc_\Qc)_{\mathrm{app}}+\Rs$, where the remainder $\Rs$ satisfies the bound
\begin{equation}\label{remainderbd}\sup_{|\rho|\leq 40d}\|\partial_k^\rho \Rs\|_{X_{\mathrm{loc}}^{\eta,40d}}\lesssim(C^+\delta)^nL^{-\gamma_1+2\eta},
\end{equation} where $\gamma_1$ is defined as in (\ref{othergamma}). The main term $(\Kc_\Qc)_{\mathrm{app}}$ equals the sum of at most $2^n$ terms of form
\begin{equation}\label{kqterms}(\Kc_\Qc)_{\mathrm{app}}(t,s,k)=\sum \delta^n\cdot\Jc(t,s)\cdot\Mc(k),\end{equation} where each of these terms satisfies the estimate
\begin{equation}\label{kqtermsest}\|\Jc\|_{X_{\mathrm{loc}}^{1-\eta}}\leq (C^+)^n,\quad \sup_{|\rho|\leq 40d}|\partial_k^\rho \Mc(k)|\lesssim (C^+)^n\langle k\rangle^{-40d}.
\end{equation} In particular we also have
\begin{equation}\label{kqbd}\sup_{|\rho|\leq 40d}\|\partial_k^\rho\Kc_\Qc\|_{X_{\mathrm{loc}}^{\eta,40d}}\lesssim(C^+\delta)^n.
\end{equation}

Now let $\Tc$ be a regular tree of order $2n$. For $0\leq s<t\leq 1$, define $\Kc_\Tc^*=\Kc_\Tc^*(t,s,k)$ in the same way as (\ref{defkq}) but with a few differences: the $k$-decoration $\Es$ is replaced by $\Ds$, the product $\prod_{\lf}$ is only taken over leaves $\lf$ of $+$ sign \emph{different} from the lone leaf, and the domain $\Ec$ is replaced by
\begin{equation}\label{defdomaind*}\Dc^*=\big\{t[\Nc]:t_{(\lf_*)^p}>s;\,\,0<t_{\nf'}<t_\nf<t,\mathrm{\ whenever\ }\nf'\mathrm{\ is\ a\ child\ node\ of\ }\nf\big\},
\end{equation} where $(\lf_*)^p$ is the parent node of the lone leaf $\lf_*$. Then we have the decomposition $\Kc_\Tc^*=(\Kc_\Tc^*)_{\mathrm{app}}+\Rs^*$, and the main term $(\Kc_\Tc^*)_{\mathrm{app}}$ equals the sum of at most $2^n$ terms of form \begin{equation}\label{kt*terms}(\Kc_\Tc^*)_{\mathrm{app}}(t,s,k)=\sum \delta^n\cdot\Jc^*(t,s)\cdot\Mc^*(k).\end{equation} The bounds satisfied by $\Rs^*$, $\Jc^*$, $\Mc^*$ and $\Kc_\Tc^*$ are the same as in (\ref{remainderbd}), (\ref{kqtermsest}) and (\ref{kqbd}) above, except that the norm $X_{\mathrm{loc}}^{\eta,40d}$ is replaced by $X_{\mathrm{loc}}^{\eta,0}$, and the factor $\langle k\rangle^{-40d}$ on the right hand side of (\ref{kqtermsest}) is replaced by $1$. Finally, we have the simple identities (also for the $(\cdots)_{\mathrm{app}}$ variants)
\begin{equation}\label{conjugatekq}
\Kc_{\overline{\Qc}}(t,s,k)=\overline{\Kc_\Qc(s,t,k)},\quad\Kc_{\overline{\Tc}}^* (t,s,k)=\overline{\Kc_\Tc^*(t,s,k)}.
\end{equation}
\end{prop}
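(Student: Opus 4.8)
\textbf{Proof proposal for Proposition \ref{regcpltreeasymp}.}

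The plan is to proceed by induction on the order $n$ of the regular couple $\Qc$ (and, in parallel, the regular tree $\Tc$), exploiting the recursive structure from Definition \ref{defmini}: every regular couple is built from the trivial couple $\times$ by the operations $A$ (insert a $(1,1)$-mini couple at a leaf pair) and $B$ (replace a node by a mini tree). Thus it suffices to (i) establish the base case $n=0$ trivially, and (ii) show that attaching one mini couple or mini tree to an object of order $2(n-1)$ produces an object of order $2n$ whose $\Kc_\Qc$ still has the stated decomposition, with the bookkeeping of the $2^n$ terms, the $\delta^n$ prefactor, the $(C^+)^n$ growth, and the error bound $(C^+\delta)^n L^{-\gamma_1 + 2\eta}$ propagating correctly. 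The key analytic input is the explicit evaluation of the contribution of a single mini couple / mini tree: this is where the resonance factors $\Omega_\nf$ (equation (\ref{defres})), the oscillatory time integrals over the domains $\Dc$, $\Ec$ (equations (\ref{defdomaind}), (\ref{defdomaine})), and the lattice sums over $k$-decorations all concentrate into a localized computation involving only $3$--$5$ vectors. One computes the lattice sum $\sum_{k_1-k_2+k_3=k} n_{\mathrm{in}}(k_1)n_{\mathrm{in}}(k_2)n_{\mathrm{in}}(k_3) \, e^{\zeta \pi i \delta L^{2\gamma}\Omega t}$ times the time integral, splits it into a ``resonant'' main term (where $\Omega \approx 0$, giving the $\Mc(k)$ factor, essentially a collision-type integral) plus a non-resonant remainder controlled by the equidistribution / volume-counting estimates alluded to in (\ref{intro-res}), which is where the gain $L^{-\gamma_1}$ comes from. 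The factor $2$ (hence $2^n$ after iterating) comes from the two choices in splitting each $\frac{\sin(\pi \Omega t)}{\pi \Omega t}$-type factor into its mean-value part and its oscillatory part, and the norm bounds $X_{\mathrm{loc}}^{1-\eta}$ for $\Jc$ reflect that each time integration against a bounded oscillatory kernel costs essentially one power of $t$ (hence regularity $1-\eta$), while the $X_{\mathrm{loc}}^{\eta,40d}$ bound for $\Rs$ reflects that the remainder retains only $\eta$-regularity in time but gains the $L^{-\gamma_1}$ smallness.

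Concretely, the steps I would carry out are: First, set up the factorization of $\Kc_\Qc$ according to the outermost mini couple / mini tree in the construction sequence — write $\Qc = $ (object of order $2n-2$) with a mini couple grafted at a designated leaf pair (or a mini tree grafted at a designated node), and correspondingly factor the lattice sum, the product $\prod_\lf^{(+)} n_{\mathrm{in}}(k_\lf)$, and the time-integration domain. Second, perform the single-block computation: evaluate the $3$--$5$ vector lattice sum against the time kernel, decompose into main term $+$ remainder using a smooth cutoff $\chi_0$ at the scale $|\Omega| \sim L^{-2\gamma}$ (this is where the Gevrey-$2$ cutoffs of Section \ref{norms} enter), and verify the bounds (\ref{kqtermsest}) and the analogue of (\ref{remainderbd}) for one block, invoking the counting estimate that gives $L^{-\gamma_1}$. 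Third, propagate through the induction: the main term of $\Qc$ is obtained by convolving (in the appropriate time variables) the main term of the smaller object with the main term of the new block, giving a product $\delta^n \cdot \Jc(t,s)\cdot \Mc(k)$, and the number of summands multiplies by $2$; the remainder of $\Qc$ is a sum of cross terms (smaller-main $\times$ block-remainder, smaller-remainder $\times$ block-main, smaller-remainder $\times$ block-remainder), each of which, by the product rule for the $X_{\mathrm{loc}}$ norms and the fact that $n_{\mathrm{in}}$ is Schwartz, is bounded by $(C^+\delta)^n L^{-\gamma_1 + 2\eta}$ — here one needs that attaching a block with a main-term factor multiplies the $X^{1-\eta}$-type norm by at most $C^+$ and the error by at most $C^+\delta$, so that the single $L^{-\gamma_1+2\eta}$ from the one remainder factor is never amplified. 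Fourth, handle the regular-tree statement in exactly the same way, noting the only differences: the decoration is $\Ds$ not $\Es$ (one unpaired lone leaf, hence the product over $+$-leaves excludes $\lf_*$, and the $\langle k\rangle^{-40d}$ weight is replaced by $1$ since $k_{\lf_*}$ is not integrated against $n_{\mathrm{in}}$), and the domain $\Dc^*$ imposes $t_{(\lf_*)^p} > s$ instead of the two-sided constraint in $\Ec$ — this changes only the time-integral bookkeeping, not the lattice-counting part. Fifth, the conjugation identities (\ref{conjugatekq}) follow directly from Definitions \ref{deftree}, \ref{defcouple}, \ref{defdec}: conjugating a couple flips all signs $\zeta_\nf \mapsto -\zeta_\nf$, hence $\zeta(\overline\Qc) = \overline{\zeta(\Qc)}$, flips all $\Omega_\nf \mapsto -\Omega_\nf$ so the oscillatory factors get conjugated, swaps the roles of $\Nc^+$ and $\Nc^-$ so that the domain $\Ec(t,s)$ becomes $\Ec(s,t)$, while $\epsilon_\Es$ and $n_{\mathrm{in}}(k_\lf)$ are real and unchanged; reading off (\ref{defkq}) gives $\Kc_{\overline\Qc}(t,s,k) = \overline{\Kc_\Qc(s,t,k)}$ termwise, and likewise for $\Kc_\Tc^*$ (here $s<t$ is fixed so no swap is needed, only the conjugation of the oscillatory phases and $\zeta(\Tc)$).

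The main obstacle I anticipate is the single-block computation and, more precisely, extracting the sharp gain $L^{-\gamma_1}$ with $\gamma_1 = \min(2\gamma, 1, 2(d-1)(1-\gamma))$ uniformly across the whole range $\gamma \in (0,1)$. The three terms in the minimum correspond to genuinely different regimes — the $2\gamma$ comes from the scale $L^{-2\gamma}$ at which $\delta L^{2\gamma}\Omega$ becomes $O(1)$ (time-integration gain), the $1$ from the lattice spacing in the $|k|^2$-equation on the square torus (the constraint $L^2\Omega \in \Zb$), and $2(d-1)(1-\gamma)$ from the volume of the quasi-resonant set $A$ in (\ref{intro-res}) intersected with the lattice — and one must check that the smooth-cutoff decomposition is arranged so that the \emph{worst} of these is always what survives, never something smaller. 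A secondary technical point is keeping the constants honest through the induction: one must verify that each grafting step multiplies the relevant norms by at most the universal $C^+$ (absorbing the combinatorial factor $2$, the $\delta$-power, and the Gevrey losses from differentiating cutoffs up to order $40d$) so that after $n$ steps one gets $(C^+)^n$ and not, say, $(C^+)^n n!$ — this is routine but must be done with care, and it is presumably handled by the same bookkeeping as in Section 5 of \cite{DH21}, to which one would appeal for the parts of the argument (regular-couple structure, basic counting Lemma \ref{basiccount}) that are unchanged from the $\gamma \approx 1$ case; the only genuinely new content here, relative to \cite{DH21}, is the explicit identification of the $L^{-\gamma_1+2\eta}$ error rate and the $2^n$-term structure of $(\Kc_\Qc)_{\mathrm{app}}$, both of which are needed downstream for the vine cancellation in Section \ref{funcgroup}.
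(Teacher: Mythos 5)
Your proposal takes a genuinely different route from the paper: you sketch an induction that grafts one mini couple/mini tree at a time and performs a ``single-block computation'' at each step, whereas the paper's proof (for everything other than the improved error rate) consists essentially of a citation of the structural machinery in Sections 5 and 7 of \cite{DH21}---Proposition 5.1 there provides the $2^n$ decomposition $\widetilde{\Bc}_\Qc = \sum_{Z\subset\Nc^{ch}}(\cdots)$, Propositions 7.4--7.7 provide the $\Jc(t,s)\Mc(k)$ product structure for the main term, and the only part re-done in this paper is the approximation-of-sum-by-integral step, for which the new Lemmas \ref{NTSP}, \ref{lem:minorarcs}, \ref{NTintlem} replace Lemmas 6.2, 6.4, 6.5 of \cite{DH21} and yield the improved $L^{-\gamma_1+2\eta}$ gain. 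Your account of the conjugation identities \eqref{conjugatekq} is correct, and your diagnosis of where the three terms in $\gamma_1=\min(2\gamma,1,2(d-1)(1-\gamma))$ come from is accurate.

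However, there are two concrete gaps in what you describe. First, you never invoke the key structural fact about regular couples that underlies the whole argument: the branching nodes pair up into $n$ pairs $\{\nf,\nf'\}$ with $\zeta_{\nf'}\Omega_{\nf'}=-\zeta_\nf\Omega_\nf$ for every decoration, which defines $\Nc^{ch}$ (one representative per pair) and is precisely why the time-integral decomposition produces $2^n$ and not $2^{2n}$ terms. Your explanation of the factor $2^n$ via ``mean-value vs.~oscillatory'' splitting of $\sin(\pi\Omega t)/(\pi\Omega t)$ factors is gesturing at the $\chi_\infty(\alpha_\nf)/\alpha_\nf$ vs.~integrated-out dichotomy in \eqref{maincoef1}, but without the $\Nc^{ch}$ pairing it is not clear why this binary choice runs over exactly $n$ nodes. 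Second, the $X_{\mathrm{loc}}^{1-\eta}$ bound on $\Jc$ in \eqref{kqtermsest}---which is an improvement over the $X^{1/9}$ bound of \cite{DH21} and is explicitly needed downstream in Propositions \ref{estbadvine}--\ref{estnormalvine}---is not a consequence of a general ``each time integration costs one power of $t$'' principle as you suggest. It relies on the observation that $(\Kc_\Qc)_{\mathrm{app}}$ vanishes unless $\Qc$ is a \emph{dominant} couple, in which case $\Jc\widetilde{\Bc}_\Qc(t,s)$ is an explicit homogeneous polynomial in $t$, $s$, $\min(t,s)$ (see the recursions \eqref{recurtype1}--\eqref{recurtype2}), whose $X^{1-\eta}$ norm can then be bounded directly. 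Your grafting induction would need to reproduce this dominant-couple vanishing and the explicit polynomial form at each step to obtain the stated bound, which you do not address; the factorization $\Jc(t,s)\Mc(k)$ after grafting a mini couple also requires disentangling the nested time-ordering constraints from the lattice sum, which is not immediate block-by-block.
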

\begin{prop}\label{regcpltreesum} For any regular couple $\Qc$, let $(\Kc_\Qc)_{\mathrm{app}}$ be defined as in (\ref{kqterms}). Then for any $0\leq t\leq 1$, we have
\begin{equation}\label{matchn}\sum_{n(\Qc)=2n}(\Kc_\Qc)_{\mathrm{app}}(t,t,k)=\Mc_n(t,k),
\end{equation} where the summation is taken over all regular couples $\Qc$ of order $2n$, and the right hand side is defined in (\ref{wketaylor2}).
\end{prop}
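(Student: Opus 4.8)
\textbf{Proof plan for Proposition \ref{regcpltreesum}.}

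The plan is to prove \eqref{matchn} by induction on $n$, matching the recursive structure of regular couples (built via operations $A$ and $B$ from Definition \ref{defmini}) against the recursive structure \eqref{wketaylor2} of the Taylor coefficients $\Mc_n$. The base case $n=0$ is immediate: the only regular couple of order $0$ is the trivial couple $\times$, for which $(\Kc_\times)_{\mathrm{app}}(t,t,k)=n_{\mathrm{in}}(k)=\Mc_0(t,k)$.

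For the inductive step, the key observation is that a nontrivial regular couple $\Qc$ of order $2n$ is obtained from the trivial couple by a sequence of $A$ and $B$ operations, and one can organize this by peeling off the operation applied at the root. More precisely, by the skeleton structure in Proposition \ref{skeleton} (with trivial skeleton, since $\Qc$ is regular), $\Qc$ is determined by specifying, at the root pairing, a regular couple, and at each of the three root children, a regular tree; equivalently, the topmost branching structure of $\Qc$ consists of the two roots $\rf^\pm$, their pairing via leaves descends through three sub-regular-trees $\Tc_1, \Tc_2, \Tc_3$ hanging off $\rf^+$ and similarly off $\rf^-$, with the leaves completely paired. The decomposition of $(\Kc_\Qc)_{\mathrm{app}}$ from Proposition \ref{regcpltreeasymp}, specifically the product form \eqref{kqterms} with $\Jc(t,s)\cdot\Mc(k)$, is designed precisely so that at $s=t$ the time-integral factor $\Jc(t,t)$ produces exactly the $\delta\int_0^t(\cdots)\,\mathrm{d}t'$ structure of \eqref{wketaylor2}, and the momentum factor $\Mc(k)$ factors as a convolution-type integral over the decorations of the sub-trees. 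I would then sum over all regular couples $\Qc$ of order $2n$ by first summing over the choices of the three sub-structures (of total order $2n-2$ among the non-root branching nodes, distributed as $2n_1, 2n_2, 2n_3$ with appropriate bookkeeping for which are trees vs. couples), applying the inductive hypothesis to each, and checking that the resulting sum reproduces $\delta\sum_{n_1+n_2+n_3=n-1}\int_0^t \Kc(\Mc_{n_1}(t'),\Mc_{n_2}(t'),\Mc_{n_3}(t'))(k)\,\mathrm{d}t'$. The four terms in the collision operator \eqref{wke2} should arise from the $2^n$ sign/labeling choices underlying the at-most-$2^n$ terms in \eqref{kqterms}, together with the combinatorics of which leaves of $\Tc^+$ pair with which leaves of $\Tc^-$; the $\epsilon_{k_1k_2k_3}$ coefficients and the signs $\zeta(\Qc)$ must be tracked to produce the precise signs $+,-,+,-$ of \eqref{wke2}.

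The main obstacle I anticipate is the bookkeeping needed to show that the \emph{explicit} main term $(\Kc_\Qc)_{\mathrm{app}}$ — which is only characterized up to the somewhat loose ``sum of at most $2^n$ terms of form \eqref{kqterms}'' description — actually assembles, after summation over $\Qc$, into the clean collision-integral recursion. This requires going back into the proof of Proposition \ref{regcpltreeasymp} to extract what $\Jc$ and $\Mc$ actually are (not just their norms), and in particular understanding how the time integral over the domain $\Ec$ (or $\Dc^*$) at $s=t$ degenerates: the nested time-ordering constraints must collapse so that only the single outermost integration $\int_0^t\,\mathrm{d}t'$ survives with coefficient $\delta$, all inner integrations having been absorbed into the recursively-defined $\Mc_{n_j}$ via the inductive hypothesis applied to sub-couples. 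A secondary technical point is handling the regular-tree expressions $\Kc_\Tc^*$ and their identity \eqref{conjugatekq}, which feed into the sub-tree factors; the conjugation identities and the product structure \eqref{kt*terms} are what guarantee that the momentum factor $\Mc(k)$ is a genuine product of $n_{\mathrm{in}}$'s integrated against the resonance delta functions, matching $\Kc(\Mc_{n_1},\Mc_{n_2},\Mc_{n_3})$. I would expect the bulk of the work to be this matching of explicit formulas, with the inductive skeleton itself being routine once the $s=t$ specialization of \eqref{kqterms} is pinned down.
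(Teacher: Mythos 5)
The paper does not prove this proposition in-line; it simply cites Subsection 7.4 of \cite{DH21}, where the computation is carried out using the dominant-couple structure theorem and the explicit recursions \eqref{recurtype1}--\eqref{recurtype2}. Your plan is in the right spirit (an induction matching recursive tree structures against the Taylor recursion \eqref{wketaylor2}), but there are two concrete gaps that would prevent the argument from closing as written.

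First, the structural decomposition you invoke is not the right one. Saying that ``by Proposition \ref{skeleton} with trivial skeleton, $\Qc$ is determined by specifying a regular couple at the root pairing and a regular tree at each of the three root children'' is circular: the trivial couple $\times$ has no branching nodes and exactly one leaf pair, so that proposition just says ``$\Qc$ is obtained from $\times$ by replacing its leaf pair with the regular couple $\Qc$.'' The actual decomposition you need is the one recalled in the proof of Proposition \ref{regcpltreecancel}: for every nontrivial regular couple $\Qc$ there is a unique $\Qc_0$ which is either (type 1) a $(1,1)$-mini couple or (type 2) a regular double chain, with $\Qc$ obtained by filling in its leaf pairs with regular couples. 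Your picture of ``three sub-trees hanging off $\rf^\pm$'' is exactly the type 1 case; it simply fails to describe any type 2 couple (for example the $(2,0)$-mini couple, where one tree is trivial and the other carries all the branching).

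Second, and more seriously, the claimed ``collapse'' of the nested time integrals to a single outermost $\delta\int_0^t(\cdots)\,\mathrm{d}t'$ is only automatic for type 1: in that case \eqref{recurtype1} literally is one integral over $\tau\in(0,t)$ with a product of three recursively-smaller factors, matching \eqref{wketaylor2}. For type 2 the relevant formula is \eqref{recurtype2}, a genuinely multi-dimensional ordered integral over a double chain of arbitrary length; at $s=t$ it does not trivially reduce to a single collision integral, and you must show, by a separate combinatorial identity, that summing over all type 2 dominant couples of order $2n$ (plus the remaining type 1 ones) reproduces the one-step recursion. That accounting over chain lengths $m^++m^-$, over the six mini-tree shapes in Figure \ref{fig:minitree} (which supply the three subtracted terms in \eqref{wke2}), and over the sub-couples $\Qc_{j,\epsilon,\iota}$ and $\Qc_{lp}$, is where all the real work lives; it is not absorbed by an induction on a root collision integral. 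You should also note explicitly that $(\Kc_\Qc)_{\mathrm{app}}\equiv 0$ unless $\Qc$ is dominant (Proposition 7.4 of \cite{DH21}, restated in the proof of Proposition \ref{regcpltreecancel}), so the sum in \eqref{matchn} is effectively only over dominant couples — a reduction your argument implicitly relies on but never states.
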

\begin{prop}\label{regcpltreecancel} Recall $(\Kc_\Qc)_{\mathrm{app}}$ defined in (\ref{kqterms}) and $(\Kc_\Tc^*)_{\mathrm{app}}$ defined in (\ref{kt*terms}). Then for any $0\leq s<t\leq 1$, we have that
\begin{equation}\label{realcancel}\sum_{n(\Qc)+n(\Tc)=2n}(\Kc_\Qc)_{\mathrm{app}}(t,s,k)\cdot\overline{(\Kc_\Tc^*)_{\mathrm{app}}(t,s,k)},
\end{equation} where the sum is taken over all regular couples $\Qc$ and regular trees $\Tc$ with $+$ sign that have total order $2n$, is a \emph{real valued} function of $(t,s,k)$.
\end{prop}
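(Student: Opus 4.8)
The plan is to show that the sum in \eqref{realcancel} is invariant under complex conjugation, using the conjugation identities \eqref{conjugatekq} together with a sign-reversing involution on the set of pairs $(\Qc,\Tc)$ being summed. First I would conjugate the whole sum: since $n$ and $k$ are fixed and real, $\overline{\eqref{realcancel}}$ equals $\sum_{n(\Qc)+n(\Tc)=2n}\overline{(\Kc_\Qc)_{\mathrm{app}}(t,s,k)}\cdot(\Kc_\Tc^*)_{\mathrm{app}}(t,s,k)$. By the first identity in \eqref{conjugatekq} (in its $(\cdots)_{\mathrm{app}}$ form), $\overline{(\Kc_\Qc)_{\mathrm{app}}(t,s,k)}=(\Kc_{\overline\Qc})_{\mathrm{app}}(s,t,k)$, which is however evaluated at the swapped time arguments $(s,t)$ rather than $(t,s)$; so a direct application of \eqref{conjugatekq} is not quite enough, and I will need the version of the statement that keeps $s<t$ fixed. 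The correct tool is the \emph{second} identity in \eqref{conjugatekq}, $\overline{(\Kc_\Tc^*)_{\mathrm{app}}(t,s,k)}=(\Kc_{\overline\Tc}^*)_{\mathrm{app}}(t,s,k)$, which is at the same time arguments, so conjugating \eqref{realcancel} gives exactly
\[
\overline{\eqref{realcancel}}=\sum_{n(\Qc)+n(\Tc)=2n}\overline{(\Kc_\Qc)_{\mathrm{app}}(t,s,k)}\cdot(\Kc_{\overline{\overline\Tc}}^*)_{\mathrm{app}}(t,s,k).
\]
So the real difficulty reduces to handling the first factor: I need $\overline{(\Kc_\Qc)_{\mathrm{app}}(t,s,k)}$ to again be of the form $(\Kc_{\Qc'})_{\mathrm{app}}(t,s,k)$ for some regular couple $\Qc'$ of the same order, with the same $(t,s)$ arguments.

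The key step, then, is to establish an identity of the form $\overline{(\Kc_\Qc)_{\mathrm{app}}(t,s,k)}=(\Kc_{\widetilde\Qc})_{\mathrm{app}}(t,s,k)$ where $\widetilde\Qc$ is obtained from $\Qc$ by a symmetry operation — swapping the roles of $\Tc^+$ and $\Tc^-$ while conjugating, i.e. $\widetilde{\Qc}=\overline{\Qc}$ but then using the $t\leftrightarrow s$ symmetry built into the definition of $\Ec$ in \eqref{defdomaine}. Concretely, \eqref{defdomaine} treats $\Nc^+$ with cutoff $t$ and $\Nc^-$ with cutoff $s$; the map $\Qc\mapsto\overline\Qc$ interchanges $\Tc^+$ and $\Tc^-$, so $\Kc_{\overline\Qc}(t,s,k)$ naturally carries $t$ on the (formerly minus) component — this is the content of \eqref{conjugatekq}. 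The approximation $(\Kc_\Qc)_{\mathrm{app}}$, being the leading term extracted in Proposition \ref{regcpltreeasymp}, must respect this same symmetry, and that should be checked by inspecting the explicit decomposition \eqref{kqterms}: each piece $\delta^n\cdot\Jc(t,s)\cdot\Mc(k)$ should map under conjugation to a piece with $\overline{\Jc(t,s)}$ and $\overline{\Mc(k)}$, and one verifies from the construction in the proof of Proposition \ref{regcpltreeasymp} that $\overline{\Jc(t,s)}=\Jc'(s,t)$ and $\overline{\Mc(k)}=\Mc(k)$ (the $k$-factor is real because $n_{\mathrm{in}}\geq0$ and the decoration-counting is conjugation symmetric), for the corresponding term of $\overline\Qc$. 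Combining this with the $\Tc$-side identity gives that conjugating the sum \eqref{realcancel} reproduces the same sum up to the bijections $\Qc\leftrightarrow\overline\Qc$ on regular couples and $\Tc\leftrightarrow\overline\Tc$ on regular $+$-sign trees — but note $\overline\Tc$ has $-$ sign, so here one must instead use that $\Qc$ already ranges over \emph{all} regular couples (both of whose halves are unconstrained up to the order condition) so the bijection $(\Qc,\Tc)\mapsto(\overline\Qc,\cdot)$ stays within the index set after accounting for how the $\Tc$-conjugation feeds back.

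Let me reststructure the last point, since it is the main obstacle. The cleanest route is: conjugate \eqref{realcancel}, apply the second equation of \eqref{conjugatekq} to turn $\overline{(\Kc_\Tc^*)_{\mathrm{app}}}$ into $(\Kc_{\overline\Tc}^*)_{\mathrm{app}}$, and separately prove the missing companion identity $\overline{(\Kc_\Qc)_{\mathrm{app}}(t,s,k)}=(\Kc_{\Qc}^{\dagger})_{\mathrm{app}}(t,s,k)$ where $\Qc^\dagger$ denotes $\Qc$ with its two trees swapped (an operation that, unlike $\overline{\phantom{\Qc}}$, preserves the signs because it also flips the sign of every node of both trees, so $\Qc^\dagger$ is again a bona fide couple with a $+$ and a $-$ tree). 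This identity follows because swapping the two trees while conjugating the integrand leaves $\Ec$ and the $n_{\mathrm{in}}$-product invariant (the product over $+$-leaves becomes the product over the new $+$-leaves, and for a couple the multiset of leaf-momenta is conjugation-symmetric), and $\zeta(\Qc^\dagger)=\overline{\zeta(\Qc)}$; then $(\cdots)_{\mathrm{app}}$ inherits this since it is the leading asymptotics. Now $\Qc\mapsto\Qc^\dagger$ is an involution on the set of regular couples of order $2n$ (regularity is preserved as operations $A,B$ are symmetric in the two trees), and $\Tc\mapsto\overline\Tc$ sends regular $+$-trees to regular $-$-trees — but in the $\Kc_\Tc^*$ definition one may equally well have used $-$-trees with the obvious modifications, and the family $\{\overline\Tc\}$ is exactly that family; invoking the second identity in \eqref{conjugatekq} once more (now read backwards) returns us to $+$-trees. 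After carrying out these bijections term by term, $\overline{\eqref{realcancel}}$ becomes literally $\eqref{realcancel}$, proving the sum is real. The step I expect to be delicate is verifying that the \emph{approximate} terms $(\Kc_\Qc)_{\mathrm{app}}$ and $(\Kc_\Tc^*)_{\mathrm{app}}$ — not just the exact $\Kc_\Qc,\Kc_\Tc^*$ — transform correctly under these conjugation/swap operations, since that requires tracking the explicit form of the decomposition produced in Proposition \ref{regcpltreeasymp} and checking that the error-versus-main-term split commutes with conjugation; I would handle this by building the symmetry into the construction of $(\Kc_\Qc)_{\mathrm{app}}$ in the proof of Proposition \ref{regcpltreeasymp} itself, so that \eqref{conjugatekq} holds for the $(\cdots)_{\mathrm{app}}$ variants by construction, as is already asserted parenthetically in that proposition.
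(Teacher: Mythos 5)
The central step of your argument, and the one that breaks down, is the proposed identity
$\overline{(\Kc_\Qc)_{\mathrm{app}}(t,s,k)}=(\Kc_{\Qc^\dagger})_{\mathrm{app}}(t,s,k)$ at the \emph{same} time arguments $(t,s)$. You notice early on that the first identity in \eqref{conjugatekq} only gives $\overline{\Kc_\Qc(t,s,k)}=\Kc_{\overline\Qc}(s,t,k)$, with the arguments swapped, and you try to repair this by introducing $\Qc^\dagger$: ``$\Qc$ with its two trees swapped, flipping the sign of every node of both trees.'' But that operation \emph{is} $\overline\Qc$ by Definition \ref{defcouple}, and the time-swap it causes is not incidental — it is forced by the structure of the domain $\Ec$ in \eqref{defdomaine}. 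The domain constrains $t_\nf<t$ for $\nf\in\Nc^+$ and $t_\nf<s$ for $\nf\in\Nc^-$; conjugating the integrand $e^{\zeta_\nf\pi i\cdot\delta L^{2\gamma}\Omega_\nf t_\nf}$ requires flipping all node signs, which relabels $\Nc^+\leftrightarrow\Nc^-$, and the only way to preserve the domain is then to swap $t\leftrightarrow s$. When $s<t$, these are genuinely different domains, so your claim that ``swapping the two trees while conjugating the integrand leaves $\Ec$ invariant'' is false, and there is no couple operation $\Qc\mapsto\Qc'$ achieving the identity you need. Similarly, on the tree side, $\Tc\mapsto\overline\Tc$ sends a $+$-sign regular tree outside the index set, and your attempt to ``invoke the second identity in \eqref{conjugatekq} once more (now read backwards)'' just undoes the first application rather than closing the bijection.

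The conclusion you are trying to reach is therefore not a consequence of a conjugation involution at all — and this is precisely why the paper's proof is so much more elaborate. The paper reduces the sum to dominant couples and dominant trees, expands each $\Kc_\Qc$ and $\Kc_\Tc^*$ over special subsets $Z,Z^*$ (so that the imaginary factors $\prod_{\nf\in Z}\tfrac{1}{\zeta_\nf\pi i}$ are isolated), groups the contributions by equivalence classes of enhanced dominant structures so that the $k$-dependent factor $\Mc(k)$ is constant (and real) on each group, and then shows that the time-integral sum over each group, e.g.\ $\Gc_{\boldsymbol\Af}$ in \eqref{realcancel3}, vanishes unless $Z\cup Z^*=\varnothing$ via the identity $\prod_j\big(1+(-1)^{\mathtt{I}_j}\big)=0$ when some $\mathtt{I}_j=1$. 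Reality follows only after this cancellation has killed every term carrying an imaginary unit; it is not a symmetry of the individual terms. Your proposal would need to be abandoned in favor of this (or an equivalent) cancellation mechanism.
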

\subsection{Proof of Propositions \ref{regcpltreeasymp} and \ref{regcpltreecancel}} In this subsection we will prove Propositions \ref{regcpltreeasymp} and \ref{regcpltreecancel}. Note that Proposition \ref{regcpltreesum} follows from the exact same calculations in Subsection 7.4 of \cite{DH21}, so we don't repeat it here.
\subsubsection{Proof of Proposition \ref{regcpltreeasymp}} First, (\ref{conjugatekq}) is obvious by definition in (\ref{defkq}). The proof of other results goes along the same lines as the parallel results in \cite{DH21} (Propositions 6.7, 7.4--7.7). In fact, the only difference between the results here and in \cite{DH21} is an improved bound for the remainder in \eqref{remainderbd}, as well as the bound for an improved norm $\|\Jc(t,s)\|_{X^{1-\eta}}$ in \eqref{kqtermsest} compared to the estimate on the $X^{\frac19}$ norm in \cite{DH21} (the results for $\Kc_\Tc^*$ are similar). We shall explain below how these estimates follow by combining the analysis in \cite{DH21} with new estimates improving Section 6 of \cite{DH21}. We remark that the analysis in Sections 5 and 7 of \cite{DH21} is independent of the chosen scaling law, and as such carries over to the current setting. The analysis in Section 6 of \cite{DH21} is dependent on the scaling law, and we will provide the corresponding alternative bounds here, which carries over to the full range of scaling laws and provides the improved estimate on the remainder. 

As such, we only need to prove \eqref{remainderbd} and the first part of \eqref{kqtermsest}. The latter follows from the fact that $(\Kc_\Qc)_{\mathrm{app}}$ is zero unless $\Qc$ is a dominant couple (cf. Proposition 7.4 of \cite{DH21}), and in which case $\Jc_\Qc(t,s)$ is an explicit homogeneous polynomial in the variables $t,s$ and $\min(t,s)$. The desired $X^{1-\eta}$ bound then follows. We now focus on the remainder estimate in \eqref{remainderbd}.

We start by recalling some notation for regular couples: From equation \eqref{defkq}, we may write
$$\Kc_\Qc(t,s,k)=\bigg(\frac{\delta}{2L^{d-\gamma}}\bigg)^n\zeta(\Qc)\sum_\Es\epsilon_\Es\cdot\Bc_\Qc(t, s, \delta L^{2\gamma}\Omega[\Nc])\cdot\prod_{\lf\in\Lc}^{(+)}n_{\mathrm{in}}(k_\lf),
$$
where  
\begin{equation}\label{defcoefb2}\Bc_\Qc(t,s,\alpha[\Nc])=\int_\Ec\prod_{\nf\in\Nc}e^{\zeta_\nf \pi i\alpha_\nf t_\nf}\,\mathrm{d}t_\nf,
\end{equation}
Here, $\Ec$ is defined in \eqref{defdomaine}. Given a regular couple $\Qc$, a natural pairing exists between the branching nodes in $\Nc$ (cf. Proposition 4.3 and 4.8 in \cite{DH21}). We shall fix a choice of $\Nc^{ch}\subset \Nc$ (here $ch$ for ``choice''), which contains exactly one branching node in each pair. As such, for any decoration $\Es$ of $\Qc$, we must have $\zeta_{\nf'}\Omega_{\nf'}=-\zeta_\nf\Omega_\nf$ for any pair $\{\nf,\nf'\}$ of branching nodes. This allows us to define $\widetilde{\Bc}_\Qc=\widetilde{\Bc}_\Qc(t,s,\alpha[\Nc^{ch}])$ by
\begin{equation}\label{deftildeb}\widetilde{\Bc}_\Qc(t,s,\alpha[\Nc^{ch}])=\Bc_\Qc(t,s,\alpha[\Nc]),\end{equation} assuming that $\alpha[\Nc\backslash\Nc^{ch}]$ is defined such that $\zeta_{\nf'}\alpha_{\nf'}=-\zeta_\nf\alpha_\nf$ for each pair $\{\nf,\nf'\}$.

Now Proposition 5.1 of \cite{DH21} shows that, if $\Qc$ is a regular couple of order $2n$, then the function $\widetilde{\Bc}_\Qc\big(t,s,\alpha[\Nc^{ch}]\big)$ is the sum of at most $2^{n}$ terms. For each term there exists a subset $Z\subset\Nc^{ch}$, such that this term has form
\begin{equation}\label{maincoef1}
\prod_{\nf\in Z}\frac{\chi_\infty(\alpha_\nf)}{\zeta_\nf\pi i\alpha_\nf}\cdot\int_{\Rb^2}\Cc\big(\lambda_1,\lambda_2,\alpha[\Nc^{ch}\backslash Z]\big)e^{\pi i(\lambda_1t+\lambda_2s)}\,\mathrm{d}\lambda_1\mathrm{d}\lambda_2
\end{equation} for $t,s\in[0,1]$, where $\chi_\infty$ is as in Section \ref{norms}.
In (\ref{maincoef1}) the function $\Cc$ satisfies the estimate
\begin{equation}\label{maincoef2}\int \langle \max (\lambda_1,\lambda_2)\rangle^{1-\frac{\eta}{8}}\big|\partial_\alpha^\rho\Cc\big(\lambda_1,\lambda_2,\alpha[\Nc^{ch}\backslash Z]\big)\big|\,\mathrm{d}\alpha[\Nc^{ch}\backslash Z]\mathrm{d}\lambda_1\mathrm{d}\lambda_2\leq C^n(2|\rho|)!
\end{equation} for any multi-index $\rho$, as well as
\begin{equation}\label{maincoef2.5}\int \langle \max (\lambda_1,\lambda_2)\rangle^{\frac{\eta}{4}}\cdot\max_{\nf\in \Nc^{ch}\backslash Z}\langle \alpha_\nf\rangle^{1-\frac{\eta}{2}}\big|\Cc\big(\lambda_1,\lambda_2,\alpha[\Nc^{ch}\backslash Z]\big)\big|\,\mathrm{d}\alpha[\Nc^{ch}\backslash Z]\mathrm{d}\lambda_1\mathrm{d}\lambda_2\leq C^n.
\end{equation}We will denote the $(\lambda_1,\lambda_2)$ integral in (\ref{maincoef1}) by $\widetilde{\Bc}_{\Qc,Z}=\widetilde{\Bc}_{\Qc,Z}(t,s,\alpha[\Nc^{ch}\backslash Z])$, so we have \begin{equation}\label{maincoef3}\widetilde{\Bc}_\Qc(t,s,\alpha[\Nc^{ch}])=\sum_{Z\subset\Nc^{ch}}\prod_{\nf\in Z}\frac{\chi_\infty(\alpha_\nf)}{\zeta_\nf\pi i\alpha_\nf}\cdot \widetilde{\Bc}_{\Qc,Z}(t,s,\alpha[\Nc^{ch}\backslash Z]).\end{equation}
We should remark here that the powers on the weights $\langle \max (\lambda_1,\lambda_2)\rangle$ and $\max_{\nf\in \Nc^{ch}\backslash Z}\langle \alpha_\nf\rangle$ are stated differently in Proposition 5.1 of \cite{DH21}, but a careful inspection of the proof in Section 5 of \cite{DH21} shows that the bounds in \eqref{maincoef2} and \eqref{maincoef2.5} actually hold as well. In fact, this relies on the fact that the corresponding integrals appearing in the proof of Proposition 5.1 of \cite{DH21} have a form like
\[\begin{split}&\int_\Rb \langle \xi\rangle^{p}\bigg|\frac{\chi_\infty(\zeta_1+\epsilon\alpha_{1})}{\zeta_1+\epsilon\alpha_{1}}\bigg|\cdot\bigg|\frac{\chi_\infty(\zeta_2+\epsilon\alpha_{1})}{\zeta_2+\epsilon\alpha_{1}}\bigg|\,\mathrm{d}\alpha_{1}\qquad\textrm{or}\\ 
&\int_{\Rb^2}\langle \xi\rangle^p \bigg|\frac{\chi_\infty(\zeta_1+\epsilon_2\alpha_{2}-\epsilon_1\alpha_1)}{\zeta_1+\epsilon_2\alpha_{2}-\epsilon_1\alpha_1}\bigg|\cdot\bigg|\frac{\chi_\infty(\zeta_2+\epsilon_1\alpha_1)}{\zeta_2+\epsilon_1\alpha_1}\bigg|\cdot\bigg|\frac{\chi_\infty(\zeta_3+\epsilon_2\alpha_{2})}{\zeta_3+\epsilon_2\alpha_{2}}\bigg|\,\mathrm{d}\alpha_1\mathrm{d}\alpha_{2}
\end{split}
\]
which are bounded \emph{for all} $0<p<1$; here $\epsilon,\epsilon_j\in\{\pm1\}$ and $\xi$ is one of the denominators ($\zeta_1+\epsilon\alpha_{1}$ or $\zeta_2+\epsilon\alpha_{1}$ in the first expression, and $\zeta_1+\epsilon_2\alpha_{2}-\epsilon_1\alpha_1$ or $\zeta_2+\epsilon_1\alpha_1$ or $\zeta_3+\epsilon_2\alpha_{2}$ in the second).

As a consequence, we can write
\begin{equation}\label{KQZdef}
\begin{split}
\Kc_{\Qc}(t, s, k)&=\sum_{Z\subset \Nc^{ch}} \Kc_{\Qc, Z}(t, s, k),\\
\Kc_{\Qc, Z}(t, s, k)&=\bigg(\frac{\delta}{2L^{d-\gamma}}\bigg)^n\zeta(\Qc)\sum_\Es\epsilon_\Es\cdot\prod_{\nf\in Z}\frac{\chi_\infty(\delta L^{2\gamma}\Omega_\nf)}{\zeta_\nf\pi i\cdot\delta L^{2\gamma}\Omega_\nf}\cdot \widetilde{\Bc}_{\Qc,Z}(t,s,\delta L^{2\gamma}\Omega[\Nc^{ch}\backslash Z])\cdot\prod_{\lf\in\Lc}^{(+)}n_{\mathrm{in}}(k_\lf),
\end{split}
\end{equation} The remainder $\Rs$ appears upon approximating the sum in $\Kc_{\Qc, Z}(t, s, k)$ with an integral. This approximation, along with the estimates on the remainder $\Rs$, is done in Section 6 of \cite{DH21} (See Propositions 6.1 and 6.7). It is here that the argument becomes dependent on the chosen scaling law. Indeed, we will provide below the alternative propositions that will allow to prove the claimed bound for $\Rs$ in \eqref{remainderbd}.

\begin{prop}\label{approxnt} Consider the following expression
\begin{equation}\label{defi}I:=\sum_{(x_1,\cdots,x_n)}\sum_{(y_1,\cdots,y_n)}W(x_1,\cdots,x_n,y_1,\cdots,y_n)\cdot\Psi(L^{2\gamma} \delta\langle x_1,y_1\rangle,\cdots,L^{2\gamma} \delta\langle x_n,y_n\rangle)\end{equation} where $(x_1,\cdots,x_n,y_1,\cdots,y_n)\in(\Zb_L^d)^{2n}$. Assume there is a (strict) partial ordering $\prec$ on $\{1,\cdots,n\}$, and that the followings hold for the functions $W$ and $\Psi$:

(1) The function $W$ satisfies the bound (here $\widehat{W}$ denotes the Fourier transform in $(\Rb^d)^{2n}$)
\begin{equation}\label{propertyw1}\|\widehat{W}\|_{L^1}+\|\widehat{\partial W}\|_{L^1}\leq (C^+)^n.
\end{equation}

(2) This $W$ is supported in the set\begin{equation}\label{propertyw2}
E:=\big\{(x_1,\cdots,x_n,y_1,\cdots,y_n):|\widetilde{x_j}-a_j|,\,|\widetilde{y_j}-b_j|\leq \lambda_j,\,\forall 1\leq j\leq n\big\},
\end{equation} where $1\leq\lambda_j\leq (\log L)^4$ are constants, $a_j$ and $b_j$ are constant vectors. Each $\widetilde{x_j}$ is a linear function that equals either $x_j$, or $x_j\pm x_{j'}$ or $x_j\pm y_{j'}$ for some $j'\prec j$, similarly each $\widetilde{y_j}$ equals either $y_j$, or $y_j\pm x_{j''}$ or $y_j\pm y_{j''}$ for some $j''\prec j$.

(3) For some set $J\subset\{1,\cdots,n\}$, the function $\Psi$ has the expression
\begin{equation}\label{propertypsi1}\Psi(\Omega_1,\cdots,\Omega_n)=\prod_{j\in J}\frac{\chi_\infty(\Omega_j)}{\Omega_j}\cdot\Psi_1(\Omega[J^c]),
\end{equation} where $\chi_\infty$ is as in Section \ref{norms}, and for any $\rho$ we have
\begin{equation}\label{propertypsi2}
\|\partial^\rho\Psi_1\|_{L^1}\leq C^n(4|\rho|)!,\,\, \big\|\max_{j\in J^c}\langle \Omega_j\rangle^{1-\frac{\eta}{2}}\cdot\Psi_1\big\|_{L^1}\leq C^n.
\end{equation}

Assume $n\leq (\log L)^3$. Then we have
\begin{multline}\label{conclusion1}\bigg|I-L^{2dn}\int_{(\Rb^d)^{2n}}W(x_1,\cdots,x_n,y_1,\cdots,y_n)\cdot\Psi(L^{2\gamma}  \delta \langle x_1,y_1\rangle_\beta,\cdots,L^{2\gamma}  \delta\langle x_n,y_n\rangle_\beta)\,\mathrm{d}x_1\cdots\mathrm{d}x_n\mathrm{d}y_1\cdots\mathrm{d}y_n\bigg|\\\leq (\lambda_1\cdots\lambda_n)^C(C^+L^{2d-2\gamma}\delta^{-1})^{n} L^{-\gamma_1+\eta}.
\end{multline} Moreover, defining
\begin{multline}\label{intappr}I_{\mathrm{app}}=(L^{2d-2\gamma}\delta^{-1})^{n}\int\Psi_1\mathrm{d}\Omega[J^c]\cdot\int_{(\Rb^d)^{2n}}W(x_1,\cdots,x_n,y_1,\cdots,y_n)\\\times\prod_{j\in J}\frac{1}{\langle x_j,y_j\rangle_\beta}\prod_{j\not\in J}\dirac(\langle x_j,y_j\rangle_\beta)\mathrm{d}x_1\cdots\mathrm{d}x_n\mathrm{d}y_1\cdots\mathrm{d}y_n,
\end{multline} where the singularities $1/\langle x_j,y_j\rangle_\beta$ are treated using the Cauchy principal value, we have
\begin{equation}\label{conclusion2}|I_{\mathrm{app}}|\leq (\lambda_1\cdots\lambda_n)^C(C^+L^{2d-2\gamma}\delta^{-1})^{n},\quad |I-I_{\mathrm{app}}|\leq(\lambda_1\cdots\lambda_n)^C(C^+L^{2d-2\gamma}\delta^{-1})^{n} L^{-\gamma_1+\eta}.
\end{equation}
\end{prop}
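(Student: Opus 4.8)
\textbf{Proof strategy for Proposition \ref{approxnt}.}

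The plan is to prove the sum-to-integral comparison \eqref{conclusion1} by a Poisson-summation / Fourier-analytic argument, exploiting hypothesis (1) which gives us an $L^1$ bound on $\widehat W$ and $\widehat{\partial W}$, and hypothesis (3) which packages $\Psi$ into a product of the explicit singular factors $\chi_\infty(\Omega_j)/\Omega_j$ over $j\in J$ and a rapidly-decaying smooth factor $\Psi_1$ over $J^c$. First I would insert the Fourier representation $W(x,y)=\int \widehat W(\xi,\eta)\,e^{2\pi i(\xi\cdot x+\eta\cdot y)}\,\mathrm d\xi\,\mathrm d\eta$ and the analogous representation of $\Psi_1$ in the variables $\Omega[J^c]$, so that the whole expression becomes an integral (against finite measures, by (1) and \eqref{propertypsi2}) of a ``model'' sum in which $W$ is replaced by a pure exponential and $\Psi_1$ by a pure exponential in the quadratic forms $\langle x_j,y_j\rangle$. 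The gain is that the model sum factorizes over the partial order $\prec$: one sums out the $\prec$-maximal indices first, reducing to a lower-dimensional sum of the same shape. This is exactly the mechanism already used in Section 6 of \cite{DH21}; the only thing that changes with the scaling law is the error produced at each layer.

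The core estimate is therefore a \emph{single-node} (or single-pair) comparison: for fixed $j$, compare $\sum_{x_j,y_j}$ (over the lattice, restricted to the box $E$ of size $\lambda_j$) of $e^{2\pi i(\xi_j\cdot x_j+\eta_j\cdot y_j)}\,\Phi(L^{2\gamma}\delta\langle x_j,y_j\rangle)$ with the corresponding integral, where $\Phi$ is either $\chi_\infty(\cdot)/(\cdot)$ (for $j\in J$) or a smooth rapidly-decaying function with the weight gain $\langle\cdot\rangle^{1-\eta/2}$ (for $j\in J^c$). The point is that each $\Zb_L^d$-sum over a ball of radius $O(1)$ carries a normalization $L^{-2d}$ relative to the integral, so $L^{2dn}$ times the integral is the natural comparison object; the \emph{error} in replacing one pair's sum by its integral is governed by the equidistribution of the quadratic phase $\langle x_j,y_j\rangle$ at the scale $L^{-2\gamma}\delta^{-1}$ set by the support of $\chi_\infty$ (resp. by the decay of $\Psi_1$). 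This is precisely the counting input \eqref{intro-res}: the values of $\langle x_j,y_j\rangle$ over $\Zb_L^d\times\Zb_L^d\cap\{|x_j|,|y_j|\lesssim 1\}$ are equidistributed at scale $L^{-\gamma_1}$ (up to $L^\eta$ losses and up to the $(\lambda_1\cdots\lambda_n)^C$ box factors), which is exactly where $\gamma_1=\min(2\gamma,1,2(d-1)(1-\gamma))$ enters. Peeling $\prec$-maximal indices one at a time, and using $n\le(\log L)^3$ so that the accumulated constants $C^n$, $(4|\rho|)!$ and $(\lambda_1\cdots\lambda_n)^C$ are all absorbed into $(C^+)^n$ and a single $L^{\eta}$, yields \eqref{conclusion1} with the stated $(C^+L^{2d-2\gamma}\delta^{-1})^n L^{-\gamma_1+\eta}$ bound (note $L^{2dn}\cdot(\text{measure factors})\sim (L^{2d-2\gamma}\delta^{-1})^n$ after accounting for the $\delta L^{2\gamma}$ rescaling inside $\Psi$).

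For the second half, \eqref{intappr}--\eqref{conclusion2}, I would identify $I_{\mathrm{app}}$ as the ``stationary'' piece of the integral in \eqref{conclusion1}: passing to the limit in the oscillatory integral $\int W\cdot\Phi(L^{2\gamma}\delta\langle x_j,y_j\rangle)$, the factor $L^{2\gamma}\delta$ forces the quadratic form $\langle x_j,y_j\rangle$ to its zero set, producing $\dirac(\langle x_j,y_j\rangle_\beta)$ for $j\notin J$ and, for $j\in J$, the Cauchy principal value $1/\langle x_j,y_j\rangle_\beta$ coming from $\chi_\infty(u)/u \to \mathrm{p.v.}(1/u)$ after rescaling $u = L^{2\gamma}\delta\langle x_j,y_j\rangle_\beta$; meanwhile $\Psi_1$ contributes its total mass $\int\Psi_1\,\mathrm d\Omega[J^c]$. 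The bound $|I_{\mathrm{app}}|\le(\lambda_1\cdots\lambda_n)^C(C^+L^{2d-2\gamma}\delta^{-1})^n$ follows from (1), \eqref{propertypsi2}, and the standard estimates for principal-value and $\dirac$-on-a-quadric integrals against Schwartz-class functions (using that $|x_j|,|y_j|\lesssim 1$ cannot both be small simultaneously on the support, so the quadric is non-degenerate up to the $\lambda_j$-box corrections). Then $|I-I_{\mathrm{app}}|$ combines \eqref{conclusion1} with the elementary rate for $\Phi(L^{2\gamma}\delta\,\cdot)\to(\text{its limiting distribution})$ tested against $W$, which is again of size $L^{-\gamma_1+\eta}$ after the rescaling (here the weight gain $\langle\Omega_j\rangle^{1-\eta/2}$ in \eqref{propertypsi2} and the derivative bound $\|\widehat{\partial W}\|_{L^1}\le(C^+)^n$ provide the decay needed to make the quantitative limit effective).

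\textbf{Main obstacle.} The delicate point is the bookkeeping of the error \emph{across the $\prec$-recursion}: at each of the $n$ layers one loses a factor of the box-size power $\lambda_j^C$ and a sub-polynomial $L^{\eta'}$, and one must verify that these do not compound to something worse than a single $L^{-\gamma_1+\eta}$ — this forces the $n\le(\log L)^3$ restriction and careful choice of $\eta'\ll\eta/n$, exactly as in \cite{DH21} but now with the worse exponent $\gamma_1$ instead of the $\gamma\approx 1$ value, and with the added subtlety that the linear shifts $\widetilde{x_j}=x_j\pm x_{j'}$ ($j'\prec j$) couple the boxes, so the equidistribution estimate \eqref{intro-res} must be applied uniformly over the (shifted) centers $a_j,b_j$. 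Making the single-pair equidistribution estimate genuinely uniform in these parameters, and tracking that the $\dirac$/principal-value limiting objects in \eqref{intappr} are the correct ones (signs, the definition of $\langle\cdot,\cdot\rangle_\beta$, and the compatibility of the Cauchy principal value with the product over $J$), is where essentially all the work lies; the rest is a routine, if lengthy, Fourier-analytic reduction.
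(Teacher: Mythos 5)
Your proposal captures the same strategy the paper uses: expand $W$ and $\Psi_1$ by Fourier representation, peel off pairs $(x_j,y_j)$ following the partial order, and for each pair compare the lattice sum to its continuum integral via Poisson summation and quadratic-phase equidistribution, with the three auxiliary Lemmas \ref{NTSP}, \ref{lem:minorarcs}, \ref{NTintlem} playing exactly the role of the ``single-node comparison'' you describe (zero Poisson mode versus nonzero modes, minor-arc Gauss sums, and the $\delta$/principal-value limit), after which the recursion and the identification of $I_{\mathrm{app}}$ proceed as you outline. One small inaccuracy: your remark that $|x_j|$ and $|y_j|$ ``cannot both be small simultaneously on the support'' is not guaranteed by the hypotheses and is not needed — the $\dirac$ and principal-value integrals on the quadric $\langle x_j,y_j\rangle=0$ are finite near the origin for $d\ge2$ by a direct coarea/scaling argument rather than by non-degeneracy of the support.
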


We shall prove this proposition by relying on a series of Lemmas formulated and proved below. In what follows, we assume that $1\leq \lambda \leq (\log L)^4$ and use the notation $e(z)=e^{2\pi i z}$. The first lemma replaces Lemma 6.2 of \cite{DH21}.

\begin{lem}\label{NTSP}
	Suppose $\Phi:\Rb\times \Rb^d \times \Rb^d \to \Cb$ is a function satisfying the bounds
	\begin{equation}\label{NTphibounds}
	\sup_{s, x, y}|\partial_x^\alpha \partial_y^\beta \Phi(s,x, y)|\leq D
	\end{equation}
for all multi-indices $|\alpha|, |\beta|\leq 10d$. Then we have:

(1) The following bound
		\begin{equation}\label{NTSPest0}
		\int_{\Rb}\left|\int_{\Rb^{2d}}	\chi_0\big(\frac{x-a}{\lambda}\big)\chi_0\big(\frac{y-b}{\lambda}\big)\Phi(s,x, y)\cdot e(\xi \cdot x+\zeta\cdot y+s\langle x, y\rangle) \, \mathrm{d}x \mathrm{d}y \right|\,\mathrm{d}s \lesssim D\lambda^{2d}
		\end{equation}
holds uniformly in $(\xi, \zeta, a, b)\in \Rb^{4d}$.	

(2) Suppose, in addition, that $\Phi$ is supported on the set $|s|\lesssim L$. Then, there holds
\begin{equation}\label{NTSPest}
\begin{split}
 \int_{\Rb} \bigg|\sum_{0\neq(g,h)\in \Zb^{2d}}\int_{\Rb^{2d}} \chi_0\big(\frac{x-a}{\lambda}\big)\chi_0\big(\frac{y-b}{\lambda}\big)\Phi(s, x, y)\cdot e[(Lg+\xi) \cdot x +(Lh +\zeta)\cdot y+s\langle x, y\rangle]\bigg|\\
\lesssim D\lambda^{2d} L^{-1+2\eta} (1+|\xi|+|\zeta|).
\end{split}
\end{equation}
uniformly in $(a, b)\in \Rb^{2d}$. In particular, we have
\begin{equation}
		\int_{\Rb}\bigg|\sum_{(g,h) \in \Zb^{2d}}  \int_{\Rb^{2d}} \chi_0\big(\frac{x-a}{\lambda}\big)\chi_0\big(\frac{y-b}{\lambda}\big)\Phi(s, x, y)\cdot e[(Lg+\xi) \cdot x +(Lh +\zeta)\cdot y+s\langle x, y\rangle]\bigg|
		\lesssim D\lambda^{2d}\label{NTSPest2}
\end{equation}
		uniformly in $(\xi, \zeta,a,b)\in \Rb^{4d}$.
\end{lem}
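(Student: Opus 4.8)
\textbf{Proof proposal for Lemma \ref{NTSP}.}

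The plan is to treat part (1) by a non-stationary phase (integration by parts) argument in the $x$ and $y$ variables, and then to deduce part (2) by combining (1) with Poisson summation applied to the lattice sum over $(g,h)\in\Zb^{2d}$.

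For part (1), fix $s$ and write the inner double integral as $\int_{\Rb^{2d}} a(x,y)\,e(\phi(x,y))\,\mathrm{d}x\mathrm{d}y$ with amplitude $a(x,y)=\chi_0\big(\tfrac{x-a}{\lambda}\big)\chi_0\big(\tfrac{y-b}{\lambda}\big)\Phi(s,x,y)$ and phase $\phi(x,y)=\xi\cdot x+\zeta\cdot y+s\langle x,y\rangle$. The gradient of the phase is $(\nabla_x\phi,\nabla_y\phi)=(\xi+sy,\zeta+sx)$, so on the support of $a$ (where $|x-a|,|y-b|\lesssim\lambda$) one has $|\nabla\phi|\gtrsim \max(|\xi+sb|,|\zeta+sa|)-O(|s|\lambda)$. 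Split into the region where this quantity is $\lesssim 1$ and the region where it is large. On the small region the crude bound $\int |a|\lesssim D\lambda^{2d}$ already suffices, but we must be careful because the small region in $s$ could be long; here we use that the $s$-integral of $\mathbf 1[\,|\xi+sb|\lesssim 1\,]$ contributes a measure $\lesssim \lambda\cdot\text{(something)}$, which combined with the decay from the large region gives the clean bound. On the large region we integrate by parts $N$ times in whichever of $x,y$ has large phase-gradient component; each integration by parts produces a factor $|\nabla\phi|^{-1}\lesssim$ (large parameter)$^{-1}$ times derivatives of $a$, which are controlled by $D\lambda^{2d-|\text{multi-index}|}\lesssim D\lambda^{2d}$ using \eqref{NTphibounds} and the fact that differentiating $\chi_0(\tfrac{\cdot}{\lambda})$ gains $\lambda^{-1}\le 1$. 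Choosing $N$ large (but $\le 10d$) makes the large-parameter region integrable in $s$, yielding the bound $D\lambda^{2d}$ uniformly in $(\xi,\zeta,a,b)$. The key point making this work uniformly is that the Hessian of $\phi$ in the cross variables is $s\cdot\mathrm{Id}$, so stationary points are genuinely isolated and the non-stationary region dominates; the truly stationary contribution ($|s|\lesssim$ small) is short.

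For part (2), apply the Poisson summation formula to the sum over $(g,h)\in\Zb^{2d}$. Since the function being summed is (after the $x,y$ integration) a smooth rapidly-decaying function of the continuous parameters $(Lg+\xi, Lh+\zeta)$, Poisson summation converts $\sum_{(g,h)\in\Zb^{2d}} F(Lg+\xi,Lh+\zeta)$ into $L^{-2d}\sum_{(m,n)\in\Zb^{2d}} \widehat F\big(\tfrac{m}{L},\tfrac{n}{L}\big)\,e\big(\tfrac{m\cdot\xi+n\cdot\zeta}{L}\big)$, where $\widehat F$ is (up to the phase) essentially the original amplitude $a(x,y)e(s\langle x,y\rangle)$ evaluated at $x=\tfrac{m}{L}$, $y=\tfrac{n}{L}$ after an inverse Fourier transform — more precisely, the $(m,n)$ term is $L^{-2d}$ times a shift of the integral in part (1) with $(\xi,\zeta)$ replaced by $(\xi+\tfrac{m}{L}\cdot\text{(stuff)},\dots)$, and an extra phase. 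The $(m,n)=(0,0)$ term is exactly the sum including $g=h=0$, so removing it leaves only $(m,n)\neq(0,0)$; for those, the effective frequency shift is of size $\gtrsim 1$ in the rescaled variable, equivalently $\gtrsim L$ after unscaling, which is precisely what drives the non-stationary phase estimate of part (1) to produce the gain. Summing the resulting geometric-type bounds over $(m,n)\neq 0$, using the support condition $|s|\lesssim L$ to control the number of relevant $s$-intervals and the rapid decay of $\widehat{\chi_0}$, gives the factor $L^{-1+2\eta}$ together with the linear growth $(1+|\xi|+|\zeta|)$ coming from the worst-case location of the stationary point; the $L^{2\eta}$ slack absorbs the mild losses from the sum over $(m,n)$ and from the length of the $s$-support. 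Finally \eqref{NTSPest2} follows by adding back the $(g,h)=(0,0)$ term, estimated by part (1), to the bound \eqref{NTSPest}.

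The main obstacle will be part (2): carefully organizing the Poisson summation so that the non-stationary phase gain from part (1) is genuinely inherited by every nonzero frequency $(m,n)$, uniformly in $(a,b)$, while simultaneously keeping track of the $(1+|\xi|+|\zeta|)$ dependence and the restriction to $|s|\lesssim L$. In particular one must verify that the ``stationary'' contribution — the short $s$-window where $|\nabla\phi|$ is small — does not accumulate when summed against the $(m,n)$ lattice, and that differentiating the cutoffs $\chi_0(\tfrac{\cdot}{\lambda})$ only costs powers of $\lambda\le(\log L)^4$, which are harmless against the polynomial saving $L^{-1+2\eta}$. This is the analogue of Lemma 6.2 of \cite{DH21} but with the quantitative $L^{-1+2\eta}$ gain made explicit for all scaling laws, and it is exactly this explicit gain that feeds into the improved remainder bound \eqref{remainderbd}.
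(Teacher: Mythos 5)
Your sketch for part (1) is essentially the right idea, but your handling of the ``small region in $s$'' is not a uniform argument: the measure of $\{s : |\xi+sb|\lesssim 1\}$ can be infinite (take $b=0$ and $|\xi|\lesssim 1$). The clean version, which the paper uses, is precisely the observation you gesture at with the Hessian: the phase has Hessian $\bigl(\begin{smallmatrix}0 & s\,\mathrm{Id}\\ s\,\mathrm{Id} & 0\end{smallmatrix}\bigr)$ in $(x,y)$, so stationary phase gives the pointwise bound $\langle s\rangle^{-d}\lambda^{2d}$ for the inner integral, which is integrable in $s$ for $d\ge 3$. No separate treatment of the near-stationary $s$-interval is needed.

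Your proposal for part (2) does not work — it is circular. The expression $\sum_{(g,h)\neq 0}\int a(x,y)\,e[(Lg+\xi)\cdot x+(Lh+\zeta)\cdot y+s\langle x,y\rangle]\,\mathrm{d}x\,\mathrm{d}y$ is, by construction, the error term that Poisson summation produces when converting the lattice sum $L^{-2d}\sum_{(m,n)\in\Zb^{2d}} a(m/L,n/L)\,e(\cdots)$ into the integral $\int a\,e(\cdots)$. Applying Poisson summation to the $(g,h)$-sum again simply reproduces that lattice sum: the dual terms $\widehat F(m/L,n/L)$ are pointwise evaluations of $a(\cdot)e(s\langle\cdot,\cdot\rangle)$, not further oscillatory integrals, so there is nothing left on which to run nonstationary phase, and the $(m,n)=0$ term is $L^{-2d}a(0,0)$, not the $(g,h)=0$ contribution as you claim. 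In short, you end up trying to bound the Riemann-sum error directly, which is exactly the original problem. The paper's actual route is structurally different: set $Q=L^\eta(\lambda+|s|)$ and split the $(g,h)$-sum into a far-from-stationary Part A (where $\max(|Lg+\xi+sb|,|Lh+\zeta+sa|)\gtrsim Q$; integration by parts in $(x,y)$ after a translation) and a near-stationary Part B, further split by $|s|$. For $L^{5/6}\lesssim|s|\lesssim L$ (Part B2) the stationary-phase decay $|s|^{-d}$ alone suffices. For $|s|\lesssim L^{5/6}$ (Part B1), one uses that $Q\ll L$ forces at most one $(g,h)\neq 0$ per $s$, and then a dyadic analysis in $|s|\sim K$ comparing $K$ to $L/\max(\langle a\rangle,\langle b\rangle)$: in the regime $K\ll L/\max(\langle a\rangle,\langle b\rangle)$ one deduces $|\xi|+|\zeta|\gtrsim L$ (this is the source of the $(1+|\xi|+|\zeta|)$ factor), while in the complementary regime a measure estimate shows $s$ is confined to a set of measure $\lesssim KQ/L$, giving the $L^{-1+2\eta}$ gain after summing. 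These two mechanisms — uniqueness of the near-stationary $(g,h)$ for each $s$, and the measure estimate for $s$ — are the essential content of part (2), and the Poisson-summation route you propose cannot reach them.
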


\begin{proof}
With no loss of generality we assume $D=1$. Part (1) follows easily by translating in $x$ and $y$, and then applying the stationary phase lemma to estimate the $(x, y)$ integral by $\langle s\rangle^{-d}\lambda^{2d}$. To prove part (2), we define $Q=L^{\eta}(\lambda+|s|)$, and split the left-hand side of \eqref{NTSPest} into two parts $A$ and $B$ defined as follows:

$\bullet$ \emph{Part A: where $|Lg+\xi+sb|\gtrsim Q$ or $|Lh+\zeta+sa|\gtrsim Q$}. For this part, we will fix the variables $(g, h, s)$ and integrate by parts in $(x,y)$ as follows: Defining $z=x+\frac{Lh+\zeta}{s}$, $w=y+\frac{Lg+\xi}{s}$, $a'=a+\frac{Lh+\zeta}{s}$, and $b'=b+\frac{Lg+\xi}{s}$, then up to a unimodular constant independent of the variables, we can write the integral in $(x,y)$ as 
\begin{multline*}
\int_{\Rb^{2d}} \chi_0\big(\frac{z-a'}{\lambda}\big)\chi_0\big(\frac{w-b'}{\lambda}\big)\Phi\bigg(s, z-\frac{Lh+\zeta}{s}, w-\frac{Lg+\xi}{s}\bigg)e\big(s (z\cdot w)\big) \, \mathrm{d}z\mathrm{d}w\\
=e(s a'\cdot b')\int_{\Rb^{2d}} \chi_0\big(\frac{u}{\lambda}\big)\chi_0\big(\frac{v}{\lambda}\big)\Phi\bigg(s, u+a'-\frac{Lh+\zeta}{s}, v+b'-\frac{Lg+\xi}{s}\bigg)e\big(s (u\cdot v)\big)e\big(s a'\cdot v+sb'\cdot u\big) \, \mathrm{d}u\mathrm{d}v.
\end{multline*}
Integrating by parts many times, using the last oscillatory factor and the fact that that either $|sa'|$ or $|sb'|\geq Q=L^\eta(\lambda+|s|)$, we obtain that the contribution of Part A is better than acceptable.

$\bullet$ \emph{Part B: where $|Lg+\xi+sb|$ and $|Lh+\zeta+sa|\ll Q$.} Recall that since $|s|\lesssim L$ and $\lambda\lesssim L^\eta$, we have that $Q\lesssim L^{1+\eta}$, and hence for fixed $s$ the number of choices of $(g,h)$ in the sum in this part B is $L^{O(\eta)}$. We further split this part into two: Part B1 in which the $s$ integral is over $|s|\leq L^{\frac56}$ and part B2 for which $L^{\frac56}\lesssim |s|\lesssim L$. For B2, we simply use stationary phase again in the $(x,y)$ integral to estimate it by $\lambda^{2d} |s|^{-d}$, so the contribution of this part is bounded by
$$
|(\textrm{contribution\ of\ B2})|\lesssim \lambda^{2d}L^{O(\eta)}\int_{L^{\frac56}\lesssim |s|\lesssim L}|s|^{-d}\mathrm{d}s\lesssim \lambda^{2d}L^{C\eta -\frac{5}{6}(d-1)}\ll \lambda^{2d} L^{-1}.
$$
Moving back to $B1$, we notice that in this case $Q\leq L^{\eta+\frac{5}{6}}\ll L$, which implies that, for each fixed $s$, there is at most one element in the whole sum over $(g, h)\neq 0$. Now we split the $s$ integral into dyadic pieces, and rewrite the contribution of B1 as
$$
\sum_{K\in 2^\Nb,K\leq L^{\frac56}} \int_{|s|\sim K}\bigg|\sum_{(g, h)\neq 0}\int_{\Rb^{2d}}\chi_0\big(\frac{x-a}{\lambda}\big)\chi_0\big(\frac{y-b}{\lambda}\big)\Phi(s, x, y)\cdot e[(Lg+\xi) \cdot x +(Lh +\zeta)\cdot y+s\langle x, y\rangle]\bigg|.
$$
Let $Y=\max(\langle a\rangle, \langle b\rangle)$. For the part of the dyadic sum with $K\ll LY^{-1}$, notice that since $\max(|Lg+\xi+sb|,|Lh+\zeta+sa|)\ll Q\ll L$ and $(g, h) \neq 0$, we must have that $|\xi|+|\zeta|\gtrsim L$. Therefore, if we just use stationary phase again in the $(x,y)$ integral, we can estimate the sum over $K$ by
$$
\sum_{K\in 2^\Nb,K\leq L^{\frac56}} \int_{|s|\sim K\ll LY^{-1}}\lambda^{2d}\langle s\rangle^{-d} \mathrm{d}s\lesssim \lambda^{2d}\lesssim \lambda^{2d}L^{-1}(|\xi|+|\zeta|),
$$
as needed. This leaves us with the sum over $K\gtrsim LY^{-1}$. Assume with no loss of generality that $|a^1|\sim Y\gg 1$. Since $|Lh+\zeta+sa|\ll Q$, we have that $\{\frac{\zeta^1+s a^1}{L}\} \ll QL^{-1}$ where $\{\cdot\}$ denotes the fractional part. Since we also have that $\frac{\zeta^1+s a^1}{L}$ belongs to an interval of size $\frac{KY}{L}$, it follows that $\frac{\zeta^1+s a^1}{L}$ belongs to a set of measure $\lesssim \frac{KYQ}{L^2}$. Hence, $s$ belongs to a set of measure $\lesssim \frac{KQ}{L}$. As a result, recall that for fixed $s$ the the sum over $(g,h)$ has at most one element and the integral in $(x,y)$ is $\lesssim \lambda^{2d}K^{-d}$ by stationary phase, we can estimate
$$
\int_{|s|\sim K}\bigg|\sum_{(g, h)\neq 0}\int_{\Rb^{2d}}\chi_0\big(\frac{x-a}{\lambda}\big)\chi_0\big(\frac{y-b}{\lambda}\big)\Phi(s, x, y)\cdot e[(Lg+\xi) \cdot x +(Lh +\zeta)\cdot y+s\langle x, y\rangle]\bigg|\lesssim \lambda^{2d}\frac{KQ}{L} K^{-d},
$$
which sums in $K$ to give,
$$
\sum_{K\in 2^\Nb,K\leq L^{\frac56}} \lambda^{2d} L^\eta (\lambda +K){K^{-d+1}}{L}^{-1}\lesssim \lambda^{2d} L^{-1+2\eta}.
$$ This finishes the proof.
\end{proof}

The following lemma replaces Lemma 6.4 of \cite{DH21}.

\begin{lem}\label{lem:minorarcs}
	Suppose that $\Phi(s,x,y): \Rb\times \Rb^d \times \Rb^d\to \Cb$ is a function satisfying (\ref{NTphibounds}).

(1) If $\Phi$ is supported on $|s|< L^{2\gamma}$, then the following bound holds uniformly in $(a, b, \xi, \zeta)\in \Rb^{4d}$:
\begin{equation}\label{NTlem21}
\int_{\Rb}\bigg|\sum_{(x, y)\in \Zb^{2d}_L} \Phi(s, x, y) \chi_0\big(\frac{x-a}{\lambda}\big)\chi_0\big(\frac{y-b}{\lambda}\big)e(x\cdot \xi+y\cdot \zeta+s \langle x, y\rangle)\bigg|\mathrm{d}s \lesssim D \lambda^{4d} L^{2d}.
\end{equation}

(2) If $\Phi(s,x,y)$ is supported on the set $L\lesssim |s|$, then the following improved estimate holds uniformly in $(a, b, \xi,\zeta)\in\Rb^{4d}$: for $P>\eta^{-2}$,
\begin{equation}\label{NTlem22}
\int_{\Rb}\big\langle\frac{s}{\delta L^{2\gamma}}\big\rangle^{-P}\bigg|\sum_{(x, y)\in \Zb^{2d}_L} \Phi(s, x, y) \chi_0\big(\frac{x-a}{\lambda}\big)\chi_0\big(\frac{y-b}{\lambda}\big)e(x\cdot \xi+y\cdot \zeta+s \langle x, y\rangle)\bigg|\mathrm{d}s \lesssim D \lambda^{4d} L^{2d-2(d-1)(1-\gamma)+\eta}.
\end{equation}	
\end{lem}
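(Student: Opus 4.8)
\textbf{Proof proposal for Lemma \ref{lem:minorarcs}.}

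The plan is to deduce both parts from Lemma \ref{NTSP} by Poisson summation in the lattice variables $(x,y) \in \Zb_L^{2d}$. Recall that $\Zb_L^d = (L^{-1}\Zb)^d$, so summing a function of $(x,y)$ over $\Zb_L^{2d}$ and applying Poisson summation produces, up to the normalization $L^{2d}$, a sum over the dual lattice $L\Zb^{2d}$ of the Fourier transform of the summand. Concretely, for the sum appearing in \eqref{NTlem21}–\eqref{NTlem22} we write
\[
\sum_{(x,y)\in\Zb_L^{2d}} G(s,x,y) = L^{2d}\sum_{(g,h)\in\Zb^{2d}} \widehat{G}(s, Lg, Lh),
\]
where $G(s,x,y) = \Phi(s,x,y)\,\chi_0\big(\tfrac{x-a}{\lambda}\big)\chi_0\big(\tfrac{y-b}{\lambda}\big)e(x\cdot\xi + y\cdot\zeta + s\langle x,y\rangle)$ and $\widehat{G}$ denotes the Fourier transform in $(x,y)$ only. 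The key point is that $\widehat{G}(s,Lg,Lh)$ is precisely an oscillatory integral of the type controlled by Lemma \ref{NTSP}: the linear phase $x\cdot\xi + y\cdot\zeta$ combines with the dual-lattice shifts to give $(Lg-\xi')\cdot x$-type exponents (after absorbing the Fourier transform variable), and the quadratic term $s\langle x,y\rangle$ is exactly the quadratic form handled there. Thus after Poisson summation the left-hand side of \eqref{NTlem21} becomes $L^{2d}$ times an integral of the form $\int_\Rb |\sum_{(g,h)} \int_{\Rb^{2d}} \cdots|\,\mathrm{d}s$.

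For part (1), we split the dual-lattice sum into the zero frequency $(g,h)=0$ and the nonzero frequencies. The $(g,h)=0$ term is controlled by \eqref{NTSPest0} (with the roles of $\xi,\zeta$ played by the residual linear phases), giving a contribution $\lesssim D\lambda^{2d}$; multiplying by the Poisson factor $L^{2d}$ gives $D\lambda^{2d}L^{2d}$, well within the claimed bound $D\lambda^{4d}L^{2d}$. For the nonzero frequencies, since $\Phi$ is supported on $|s| < L^{2\gamma} \lesssim L^2$ (indeed $\gamma < 1$), and since $\lambda \lesssim (\log L)^4 \lesssim L^\eta$, the hypotheses of part (2) of Lemma \ref{NTSP} are not literally met (that requires $|s| \lesssim L$), but one can crudely use part (1) of Lemma \ref{NTSP} on each nonzero frequency block together with the fact that for $|s| \lesssim L^{2\gamma}$ there are at most $O((L^{2\gamma-1})^{2d}) \lesssim L^{(2\gamma-1)2d}$ relevant lattice shifts $(g,h)$ with $|Lg|, |Lh| \lesssim L^{2\gamma}$ (frequencies beyond this are killed by integration by parts in $(x,y)$ using the quadratic phase, exactly as in Part A of the proof of Lemma \ref{NTSP}). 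Summing these up gives a contribution $\lesssim D\lambda^{2d} \cdot L^{O(\gamma)} $, and after multiplying by $L^{2d}$ this is absorbed into $D\lambda^{4d}L^{2d}$ for $L$ large, since $\lambda \geq 1$. (The factor $\lambda^{4d}$ rather than $\lambda^{2d}$ gives ample room.)

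For part (2), where $\Phi$ is supported on $L \lesssim |s|$, the decay weight $\langle s/(\delta L^{2\gamma})\rangle^{-P}$ restricts $|s| \lesssim \delta L^{2\gamma} \cdot (\text{slowly growing})$, effectively $|s| \lesssim L^{2\gamma+o(1)}$, but more importantly on each dyadic block $|s| \sim K$ with $L \lesssim K \lesssim L^{2\gamma}$ stationary phase in the $(x,y)$-integral gains $K^{-d}$ per Fourier block. We apply Lemma \ref{NTSP} part (2) after rescaling time so that the support sits in $|s'| \lesssim L$: substituting allows us to harvest the $L^{-1+2\eta}$ gain there. Combining the $K^{-d}$ stationary phase gain, the count $\lesssim (K/L)^{2d}$ of lattice shifts on the block $|s|\sim K$, the measure $K$ of the block, the weight $\langle K/(\delta L^{2\gamma})\rangle^{-P}$ with $P > \eta^{-2}$ summing the tail, and the Poisson factor $L^{2d}$, a dyadic summation in $K$ over $L \lesssim K \lesssim L^{2\gamma}$ yields the bound $D\lambda^{4d} L^{2d - 2(d-1)(1-\gamma) + \eta}$; here the exponent $2d - 2(d-1)(1-\gamma)$ arises because the worst block is $K \sim L$ where one loses $K^{2d}/L^{2d} = 1$ but gains $K^{-d} \cdot K = K^{-(d-1)} = L^{-(d-1)}$ per dimension pair — one should track this bookkeeping carefully, as the precise power $2(d-1)(1-\gamma)$ is exactly what feeds into $\gamma_1$ in \eqref{othergamma} and ultimately into the remainder bound \eqref{remainderbd}.

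\textbf{Main obstacle.} The delicate step is the dyadic analysis in $|s| \sim K$ for part (2): one must carefully combine the number-theoretic counting of dual-lattice points $(Lg, Lh)$ that survive the non-stationary-phase integration by parts (this is the analogue of Part B of the proof of Lemma \ref{NTSP}, where one exploits that for fixed $s$ at most $O(1)$ or a controlled number of $(g,h)$ contribute, using the distribution of $\{(\zeta^1 + sa^1)/L\}$) with the stationary-phase gain and the measure of the relevant $s$-set. Getting the exponent $2(d-1)(1-\gamma)$ sharp — rather than something weaker — requires that the gain be genuinely $d-1$ dimensional per coordinate pair, which is why the argument must go through the full $(x,y) \in \Rb^{2d}$ stationary phase and not a cruder bound; this is the heart of why the scaling-law dependence enters here and is the step most likely to require care.
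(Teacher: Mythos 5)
There is a genuine gap — indeed two — and the approach is fundamentally different from the paper's.

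For part (1), your Poisson-summation bookkeeping for the nonzero dual-lattice frequencies does not close. You claim the nonzero-frequency contribution $\lesssim D\lambda^{2d}L^{O(\gamma)}$ can be ``absorbed into $D\lambda^{4d}L^{2d}$ for $L$ large, since $\lambda \geq 1$,'' but $\lambda \lesssim (\log L)^4$, so $\lambda^{4d}/\lambda^{2d} \lesssim (\log L)^{8d}$ is only a polylogarithmic cushion and cannot swallow a polynomial loss $L^{O(\gamma)}$. The paper does not reprove part (1) here — it simply cites Lemma 6.4 of \cite{DH21}, which was proved for the larger support $|s| < L^2$; if you want a self-contained proof you would have to carry out the full ``Part A/Part B'' analysis of Lemma \ref{NTSP} adapted to $|s| \lesssim L^{2\gamma}$, not just count frequencies crudely.

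For part (2), your plan (Poisson summation plus stationary phase plus ``rescale time so the support fits $|s'| \lesssim L$ and apply Lemma \ref{NTSP}(2)'') is both technically and conceptually off. Rescaling $s$ changes the phase to $L^{2\gamma-1}s'\langle x,y\rangle$; matching this to the phase of Lemma \ref{NTSP} forces a rescaling of $(x,y)$ that distorts the lattice $\Zb_L^d$, so the lemma does not apply after the substitution. More importantly, the exponent $2(d-1)(1-\gamma)$ is genuinely arithmetic and cannot be reached by stationary phase and frequency counting alone: the paper's proof symmetrizes via $L^{-1}p = x+y$, $L^{-1}q = x-y$ to diagonalize $\langle x,y\rangle \mapsto \frac{1}{4L^2}(|p|^2-|q|^2)$, expresses the lattice sum through one-dimensional quadratic Gauss sums $G_h(s,r,n)$ via Abel summation, and then proves the refined Hua-type moment bound \eqref{refinedHua} for $\int |G_g(s,\xi,u)|^{2d}\,ds$ using Dirichlet's approximation theorem and the Gauss lemma $|G_g| \lesssim K\log L / (\sqrt q(1 + K|s - a/q|^{1/2}))$, with a careful sum over $B, 2^n, q, a$ exploiting that $q \gtrsim L^{2-2\gamma-\eta/(10d)}$ on the integration range. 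Your stationary-phase count $\langle s\rangle^{-d}$ is blind to the arithmetic of the quadratic form (it would give the same answer for a random non-arithmetic phase), and hence cannot produce the Hua-type gain. The final ``bookkeeping'' paragraph you give does not actually derive $2(d-1)(1-\gamma)$; it gestures at $K^{-(d-1)}$ at $K\sim L$ without reconciling the frequency count $(K/L)^{2d}$ and the $s$-measure consistently, and as written the dyadic sum does not give the target power. You are right that this estimate is ``the heart of why the scaling-law dependence enters,'' but the way it enters is through Gauss-sum moments, not through stationary phase.
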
	

\begin{proof}
Part (1) is implied by Lemma 6.4 of \cite{DH21} (which even extends to $\Phi$ supported on $|s|< L^2$. So we focus on part (2); if $\gamma<1/2$ then (\ref{NTlem22}) is trivial due to the factor $\langle s\delta^{-1}L^{-2\gamma}\rangle^{-P}$, so we will assume $\gamma\geq 1/2$. Recall that, $\langle x,y\rangle=\sum_{j=1}^d x^j y^j$ where $x^j, y^j\in \Zb_L$. We make the change of variables 
	$$
	L^{-1}p^j={x^j+y^j}, \qquad L^{-1}q^j={x^j-y^j}, \qquad p^j\equiv q^j \pmod 2.
	$$
The sum in $(x^j,y^j)\in \Zb_L^{2}$ then becomes the linear combination of four sums, which are taken over $(p^j, q^j)\in \Zb^{2}$, or $(p^j, q^j)\in 2\Zb\times\Zb$, or $(p^j, q^j)\in \Zb\times 2\Zb$, or $(p^j, q^j)\in (2\Zb)^2$. We will only consider the first sum, and it will be obvious from the proof that the other sums are estimated similarly. Define \[ \Upsilon (s,z, w)=\Phi\big(s, \frac{z+w}{2},\frac{z-w}{2}\big)\chi_0\big(\frac{z+w-2a}{2\lambda}\big)\chi_0\big(\frac{z-w-2b}{2\lambda}\big),\] which has all derivatives in $(z,w)$ up to order $10d$ uniformly bounded, and is supported in the set $\{g^j\leq Lz^j\leq g^j+2\lambda L,\,h^j\leq Lw^j\leq h^j+2\lambda L\}$, where $(g^j,h^j)\in\Zb^2$ are determined by $(a,b)$.

Now, by possibly redefining $(s,\xi,\zeta)$, we need to show that the function 
	\begin{align*}
	 B(\xi, \zeta)&=\int_{\Rb}\langle s/\delta L^{2\gamma}\rangle^{-P} \bigg|\sum_{(p, q)\in \Zb^{2d}} \Upsilon\big(s, pL^{-1}, qL^{-1}\big)e\big[sL^{-2}(|p|^2-|q|^2)+p\cdot  \xi+y\cdot \zeta\big]\bigg| \, \mathrm{d}s\\
	&=\int_{\Rb} \langle s/\delta L^{2\gamma}\rangle^{-P}\bigg|\sum_{(p, q)\in \Zb^{2d}} \Upsilon\big(s,pL^{-1}, qL^{-1}\big)\prod_{j=1}^d e\big[sL^{-2} (p^j)^2 +p^j \xi^j]\cdot e[-sL^{-2} (q^j)^2 +q^j \zeta^j\big]\bigg| \, \mathrm{d}s
	\end{align*}
satisfies the bounds in \eqref{NTlem22} when $\Upsilon$ is supported on $|s|\gtrsim L$, noting that in the above sum we must have $p^j\in[g^j,g^j+20\lambda L]$ and $q^j\in[h^j,h^j+20\lambda L]$. Now, recall the Gauss sums $G_h(s,r,n)$ and $G_h(s,r,x)$ defined by 
\begin{equation}\label{huagausssum}G_h(s, r,n)=\sum_{p=h}^{h+n}e(s p^2+rp), n \in \Nb; \qquad \mathrm{and} \qquad G_h(s, r,x)=G_h(s, r, \lfloor x\rfloor), x\in \Rb_+,\end{equation} where $\lfloor x\rfloor$ is the floor function, and notice that since $\partial_x G_h(s, r; x)=\sum_{p\in \Nb}e(s (h+p)^2+r(h+p)) \dirac(x-p)$, we can write
	\begin{align*}
	 B(\xi, \zeta)&=\int_{\Rb} \langle s/\delta L^{2\gamma}\rangle^{-P}\bigg|\int_{(u, v)\in \Rb_+^{2d}} \Upsilon\big(s, (u+g)L^{-1}, (v+h)L^{-1}\big)\prod_{j=1}^d \partial_{u^j}G_{g^j}( sL^{-2}, \xi^j , u^j) \\&\qquad\qquad\qquad\qquad\qquad\qquad\qquad\qquad\qquad\qquad\qquad\qquad\,\,\times\partial_{v^j}G_{h^j}(- sL^{-2}, \zeta^j, v^j) \, \mathrm{d}u \mathrm{d}v\bigg| \, \mathrm{d}s\\
	&\leq L^{-2d}\int_{\Rb}\langle s/\delta L^{2\gamma}\rangle^{-P}\int_{(u, v)\in \Rb_+^{2d}} \big| D^\alpha\Upsilon\big(s,(u+g)L^{-1}, (v+h)L^{-1}\big)\big|\\&\qquad\qquad\qquad\qquad\qquad\qquad\qquad\qquad\times\prod_{j=1}^d\big|G_{g^j}( sL^{-2}, \xi^j , u^j) G_{h^j}(- sL^{-2}, \zeta^j, v^j)\big| \, \mathrm{d}u\mathrm{d}v\mathrm{d}s,
	\end{align*}
	where $g=(g^1,\cdots,g^d)$ etc., and $D^\alpha \Upsilon$ is obtained from $ \Upsilon$ by taking one derivative in each of the variables $u_j, v_j$ (and hence has the same support properties). By rescaling in $s$, we obtain that 
	
\begin{align*}
	 B(\xi, \zeta)&\lesssim L^{-2d+2}\int_{(u,v) \in \Omega}\int_{|s|\geq L^{-1}} \langle s\delta^{-1} L^{2-2\gamma}\rangle^{-P}\prod_{j=1}^d \bigg|G_{g^j}( s, \xi^j , u^j) G_{h^j}(- s, \zeta^j, v^j) \bigg| \, ds \mathrm{d}u \mathrm{d}v\\
	\end{align*}
where $\Omega$ is a set in $\Rb^{2d}$ of volume $\lesssim (\lambda L)^{2d}$. We will be done then once we show that, uniformly in $|u|, |v|\lesssim \lambda L$, there holds
$$
 \int_{|s|\geq L^{-1}} \langle s\delta^{-1} L^{2-2\gamma}\rangle^{-P}\prod_{j=1}^d \bigg|G_{g^j}( s, \xi^j , u^j) G_{h^j}(- s, \zeta^j, v^j) \bigg| \, ds \lesssim 
\lambda^{2d} L^{2d-2} L^{-(d-1)(2-2\gamma)+\eta}. 
$$
First note that using the trivial bound $|G_{h_j}|\lesssim L$, and by our choice of $P\geq \eta^{-2}$, it is enough to show that
$$
 \int_{L^{-1} \leq |s|\lesssim  L^{2\gamma-2+\eta/(10d)}} \prod_{j=1}^d \bigg|G_{g^j}( s, \xi^j , u^j) G_{h^j}(- s, \zeta^j, v^j) \bigg| \, ds \lesssim\lambda^{2d} L^{2d-2} L^{-(d-1)(2-2\gamma)+\eta}, 
$$
which is implied by proving that, uniformly in $(g, \xi, u)$ satisfying $0<u\lesssim \lambda L$, there holds
\begin{equation}\label{refinedHua}
 \int_{L^{-1} \leq |s|\lesssim   L^{2\gamma-2+\eta/(10d)}} \bigg|G_{g}( s, \xi , u) \bigg|^{2d} \, ds \lesssim \lambda^{2d} L^{2d-2} L^{-(d-1)(2-2\gamma)+\eta}.
\end{equation} 

For this, let $I$ be the interval of integration above, and let us assume that $u \in [K, 2K)$ for some dyadic integer $K\lesssim \lambda L$. We will rely on the Gauss lemma for Gauss sums in \eqref{huagausssum} which gives that if $|s-\frac{a}{q}|\leq \frac{1}{qK}$ for some $0\leq a <q\leq K$ with $\gcd(a, q)=1$, then
 $$
|G_g(s, \xi, u)|\lesssim \frac{K\log L}{\sqrt q \left(1+K|s-\frac{a}{q}|^{1/2}\right)}. 
$$
Here we recall that $s\in [0,1]$ can be approximated by such rational number in the above fashion by Dirichlet's Approximation Theorem. Let us assume that $q\in [B, 2B)$ for some dyadic integer $B$ with $1\leq B \leq K$ and that $s\in I_{a/q, n, B}:=\frac{a}{q}\pm [\frac{1}{2^{n}BK}, \frac{1}{2^{n-1}BK})$ with $1\leq 2^n\lesssim \frac{K}{B}$ (with the obvious modification for $2^n\sim\frac{K}{B}$), so that 
$$
I=\bigcup_{B\leq K} \bigcup_{1\leq 2^n \lesssim \frac{K}{B}} \bigcup_{q\in [B, 2B)} \bigcup_{0\leq a < q,\gcd(a, q)=1}I_{a/q, n, B}.
$$
First note that if $a=0$, then $|G_g|\lesssim |s|^{-1/2}\log L\leq L^{\frac{1}{2}}\log L$ on $I_{0, n, B}$, which gives a bound that is much better than \eqref{refinedHua}. Otherwise, we have $|G_g(s, \xi, u)|\lesssim K^{\frac{1}{2}}2^{n/2}\log L$ on $I_{a/q, n, B}$. Also, note that since $s\lesssim L^{2\gamma-2+\eta/(10d)}$ on $I$, we must have that $B\sim q \gtrsim L^{2-2\gamma-\eta/(10d)}$ for any $I_{a/q, n, B}$ with $a\neq 0$, as well as $a\leq qL^{2\gamma-2+\eta/(10d)}$. As a result, we obtain that
\begin{align*}
\int_I \bigg|G_{g}( s, \xi , u) \bigg|^{2d} &\lesssim L^{d-1+\eta}+\sum_{L^{2-2\gamma-\eta/(10d)}\leq B \leq K}\sum_{1\leq 2^n \lesssim \frac{K}{B}} \sum_{q\in [B, 2B)} \sum_{1\leq a < q L^{2\gamma-2+\eta/(10d)},\gcd(a, q)=1}\frac{(K2^n\log L)^d}{2^n BK}\\
&\lesssim L^{d-1+\eta}+L^{2\gamma-2+\eta/(10d)}\sum_{L^{2-2\gamma-\eta/(10d)}\leq B \leq K}\sum_{1\leq 2^n \lesssim KB^{-1}} B K^{d-1}2^{n(d-1)}(\log L)^d
\\&\lesssim L^{d-1+\eta}+K^{2(d-1)}L^{(d-1)(2\gamma-2)+\eta},
\end{align*}
which gives the result since $K \leq \lambda L$.
\end{proof}
\begin{lem}\label{NTintlem}
Suppose $\Phi(x,y)$ satisfyies \eqref{NTphibounds} without $s$. Let $\Omega(x,y)=\langle x, y\rangle$ and $\mu:=\delta L^{2\gamma}$.

(1) Suppose $\psi$ is a function such that $\|\psi\|_{L^1(\Rb)}\leq D$, then
	\begin{equation}\label{NTintlem1}
	\left|\mu^{-1}\int_{\Rb^{2d}}\psi(\mu \Omega) \chi_0\big(\frac{x-a}{\lambda}\big)\chi_0\big(\frac{y-b}{\lambda}\big)\Phi(x,y) e(x\cdot\xi+y\cdot \zeta) \, \mathrm{d}x\mathrm{d}y\right|\lesssim D \lambda^{2d}
	\end{equation}
	uniformly in $(a, b, \xi, \zeta)\in \Rb^{4d}$. The same holds if $\psi(\mu\Omega)\Phi(x,y)$ is replaced by $\Psi(\mu\Omega,x,y)$ where $\Psi=\Psi(u,x,y)$ satisfies $\big\|\sup_{x,y}|\partial_x^\alpha\partial_y^\beta\Psi|\big\|_{L_u^1}\leq D$ for all multi-indices $|\alpha|,|\beta|\leq 10d$.
	
(2) Suppose further that $\| \langle y\rangle^{1-\eta} \psi\|_{L^1(\Rb)}\leq D$, then 
	\begin{align}
	&\left|\mu^{-1}\int_{\Rb^{2d}}\psi(\mu \Omega) \chi_0\big(\frac{x-a}{\lambda}\big)\chi_0\big(\frac{y-b}{\lambda}\big) \Phi(x,y)e(x\cdot\xi+y\cdot \zeta) \, \mathrm{d}x\mathrm{d}y-\right.\label{NTintlem2}\\
	&\qquad \qquad \left.\left(\int \psi\right)\int_{\Rb^{2d}}\dirac(\Omega) \chi_0\big(\frac{x-a}{\lambda}\big)\chi_0\big(\frac{y-b}{\lambda}\big)\Phi(x,y) e(x\cdot\xi+y\cdot \zeta) \,\mathrm{d}x\mathrm{d}y\right|\lesssim D \lambda^{2d}\mu^{1-2\eta}(1+|\xi|+|\zeta|),\nonumber
	\end{align}
		uniformly in $(a, b)\in \Rb^{2d}$.
		\end{lem}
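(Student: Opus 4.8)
\textbf{Proof proposal for Lemma \ref{NTintlem}.}

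The plan is to prove both parts by Poisson summation combined with the single-scale stationary phase and Gauss-sum estimates already established in Lemmas \ref{NTSP} and \ref{lem:minorarcs}. First, for part (1), I would write $\psi(\mu\Omega)$ as a Fourier integral $\psi(\mu\Omega)=\int_\Rb \widehat\psi(s)\,e(s\mu\langle x,y\rangle/(2\pi))\,\mathrm{d}s$ (adjusting constants to the $e(\cdot)$ convention), insert it into the left-hand side of \eqref{NTintlem1}, and exchange the order of integration. After rescaling $s\mapsto \mu s/(2\pi)$ (so that $\mu^{-1}$ cancels the Jacobian), the inner $(x,y)$-integral becomes exactly of the form bounded by \eqref{NTSPest0} in Lemma \ref{NTSP}(1), with $\Phi$ there equal to $\Phi$ here (no $s$-dependence) and $D\leftrightarrow D_\Phi$; integrating the resulting bound against $\|\widehat\psi\|_{\cdots}$ — or, more robustly, rewriting so the $s$-integral is the one appearing in \eqref{NTSPest0} and then using $\|\psi\|_{L^1}\le D$ — yields the $D\lambda^{2d}$ bound uniformly in $(a,b,\xi,\zeta)$. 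The generalization to $\Psi(\mu\Omega,x,y)$ is identical: one takes the partial Fourier transform of $\Psi$ in its first slot, and the hypothesis $\big\|\sup_{x,y}|\partial_x^\alpha\partial_y^\beta\Psi|\big\|_{L^1_u}\le D$ is precisely what is needed to control the resulting $u$- (i.e. $s$-) integral after applying \eqref{NTSPest0}.

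For part (2), the point is to extract the main term $(\int\psi)\int\dirac(\Omega)(\cdots)$ and show the remainder gains a power $\mu^{1-2\eta}$. Here I would not integrate over all of $\Rb^{2d}$ directly but instead use Poisson summation to pass between the continuum integral and a lattice sum, so that the Gauss-sum machinery of Lemma \ref{lem:minorarcs} can be brought to bear. Concretely: the continuum integral $\int_{\Rb^{2d}}\psi(\mu\Omega)\chi_0\chi_0\,\Phi\,e(x\cdot\xi+y\cdot\zeta)$, after the Fourier expansion of $\psi$ and the rescaling $s\mapsto \mu s/(2\pi)$, becomes $\mu^{-1}$ times an integral in $s$ of the $(x,y)$-oscillatory integral with phase $s\langle x,y\rangle + x\cdot\xi+y\cdot\zeta$. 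Splitting $\widehat\psi$ (equivalently, the $s$-variable) into $|s|\lesssim 1$ and $|s|\gtrsim 1$ — which after unscaling corresponds to $|s|\lesssim \mu = \delta L^{2\gamma}$ and $|s|\gtrsim \mu$ — the large-$|s|$ piece is negligible by the rapid decay of $\widehat\psi$ together with stationary phase in $(x,y)$ (exactly as in the $B2$ analysis inside Lemma \ref{NTSP}). On the small-$|s|$ piece, I would Taylor-expand: writing $t^{1-\eta}$-weighted control $\|\langle y\rangle^{1-\eta}\psi\|_{L^1}\le D$ translates into $\int |\widehat\psi(s)|\,|s|^{-(1-\eta)}\,\mathrm{d}s$-type control, which lets me replace $\psi(\mu\Omega)$ by $(\int\psi)\dirac(\Omega)$ up to an error whose $s$-integral is $\lesssim \mu^{-(1-\eta)}$ relative to the main term; combined with the trivial $O(\lambda^{2d})$ bound on the $(x,y)$-integral and an extra $\mu^{-\eta}$ to spare, this produces the $D\lambda^{2d}\mu^{1-2\eta}(1+|\xi|+|\zeta|)$ bound. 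The factor $(1+|\xi|+|\zeta|)$ enters exactly because, to justify replacing $\psi(\mu\Omega)$ by its zeroth moment against $\dirac(\Omega)$ in an oscillatory $(x,y)$-integral, one pays a derivative of the amplitude $\chi_0\chi_0\Phi\,e(x\cdot\xi+y\cdot\zeta)$, and differentiating the exponential costs $|\xi|+|\zeta|$.

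I expect the main obstacle to be \textbf{the bookkeeping in part (2)}: cleanly isolating the Dirac-mass main term while keeping all errors uniform in $(a,b)$ and losing only $(1+|\xi|+|\zeta|)$ — rather than a larger power — requires being careful about which variable the $1-\eta$ weight on $\psi$ acts on after the change of variables $\langle x,y\rangle\mapsto$ (new coordinate), and about the interplay between the $\lambda$-scale cutoffs $\chi_0((x-a)/\lambda)$ and the oscillation $e(s\langle x,y\rangle)$ on the region where $\langle x,y\rangle$ is small. The honest way to do this is to change variables so that one coordinate is $\Omega=\langle x,y\rangle$ itself (valid away from the degenerate locus $x=0$ or $y=0$, which contributes negligibly since $|x|,|y|\lesssim\lambda$ forces only an $O(\lambda^{2d-1})$-measure set there, easily absorbed), reducing \eqref{NTintlem2} to a one-dimensional statement: $|\mu^{-1}\int \psi(\mu\omega) G(\omega)\,\mathrm{d}\omega - (\int\psi) G(0)| \lesssim \|\langle\cdot\rangle^{1-\eta}\psi\|_{L^1}\,\mu^{1-2\eta}\,\|G\|_{C^1}$ for a suitable smooth $G$ encoding the $(x,y)$-integral over the level set $\{\langle x,y\rangle=\omega\}$, which is elementary. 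Part (1) should be routine given Lemma \ref{NTSP}(1).
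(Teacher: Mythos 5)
The paper's own ``proof'' of this lemma is a single line deferring to Lemma 6.5 of \cite{DH21}, so you are actually supplying content here rather than reproducing an argument. Your treatment of part~(1) — Fourier-expand $\psi$, bound $\widehat\psi$ in $L^\infty$ by $\|\psi\|_{L^1}$, rescale $s\mapsto s/\mu$, then apply \eqref{NTSPest0} — is correct (it even over-delivers by an extra $\mu^{-2}$, which is harmless since $\mu\geq 1$), and the extension to $\Psi(\mu\Omega,x,y)$ via a partial Fourier transform in the first slot works for the same reason. The generalization to $\Psi$ needs the uniformity in $(x,y)$ of $\|\Psi(\cdot,x,y)\|_{L^1_u}$ and its derivatives, which your hypothesis provides.

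Your part~(2), however, takes a wrong turn in the middle. Three specific points:
\begin{itemize}
\item \emph{Poisson summation and Gauss sums are not in play here.} Lemma \ref{NTintlem} concerns a purely continuum integral over $\Rb^{2d}$; there is no lattice sum to pass to. The Gauss-sum machinery of Lemma \ref{lem:minorarcs} is used elsewhere in the proof of Proposition \ref{approxnt} to control the Poisson-summation error between the \emph{discrete} sum $I$ and its continuum approximant — a different step of the argument. Attempting to ``pass to a lattice sum'' inside this lemma is a detour that doesn't lead anywhere.
\item \emph{$\widehat\psi$ does not decay rapidly.} The hypothesis $\|\langle\cdot\rangle^{1-\eta}\psi\|_{L^1}\leq D$ gives H\"older-$(1-\eta)$ regularity of $\widehat\psi$, namely $|\widehat\psi(s)-\widehat\psi(0)|\lesssim D|s|^{1-\eta}$ via $\min(1,|s||u|)\leq(|s||u|)^{1-\eta}$; it gives no decay of $\widehat\psi$ at all. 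The decay that makes the $s$-integral converge comes entirely from stationary phase applied to the inner $(x,y)$-integral $K(s)=\int\chi_0\chi_0\Phi\,e(x\cdot\xi+y\cdot\zeta+s\langle x,y\rangle)\,dx\,dy$ (which is precisely what \eqref{NTSPest0} integrates in $s$), not from $\widehat\psi$.
\item \emph{The ``$\int|\widehat\psi(s)||s|^{-(1-\eta)}\,ds$-type control'' is not what the hypothesis gives.} That would be a negative-weighted $L^1$ bound on $\widehat\psi$, which does not follow. The correct dual of the weight $\langle u\rangle^{1-\eta}$ on $\psi$ is the modulus of continuity of $\widehat\psi$, as above.
\end{itemize}

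Your closing paragraph, though, does land on the correct mechanism: use the coarea formula to write $\int\psi(\mu\Omega)\chi_0\chi_0\Phi\,e(\cdots)=\int\psi(\mu\omega)\widetilde G(\omega)\,d\omega$ with $\widetilde G$ the level-set integral, then estimate $\int\psi(u)[\widetilde G(u/\mu)-\widetilde G(0)]\,du$ by interpolating $|\widetilde G(u/\mu)-\widetilde G(0)|\lesssim\min\bigl(2\|\widetilde G\|_\infty,\|\widetilde G'\|_\infty|u|/\mu\bigr)\lesssim\|\widetilde G\|_{C^1}^{1-\eta}\|\widetilde G\|_\infty^\eta(|u|/\mu)^{1-\eta}$, which pairs with the moment $\int|u|^{1-\eta}|\psi(u)|\,du\leq D$ to produce the $\mu$-power gain; the $(1+|\xi|+|\zeta|)$ factor enters because $\partial_\omega\widetilde G$ contains derivatives of the amplitude, in particular of $e(x\cdot\xi+y\cdot\zeta)$. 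The degenerate locus $\{|x|\,|y|\approx0\}$ is handled by the co-area Jacobian $|\nabla\Omega|^{-1}=(|x|^2+|y|^2)^{-1/2}$, which is integrable against $d\mathcal H^{2d-1}$ in dimension $d\geq 2$ (your phrasing about an ``$O(\lambda^{2d-1})$-measure set'' is a bit off — the issue is the singular Jacobian, not the measure of the bad set — but the conclusion that it is absorbable is correct). So: part~(1) and the final paragraph of part~(2) are sound; the Poisson/Gauss-sum and $\widehat\psi$-decay detours in the middle should be dropped, as they are not just unnecessary but would fail if pursued.
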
	
\begin{proof}
The proof is the same as that of Lemma 6.5 of \cite{DH21} (with the weight in (2) replaced by $\langle y\rangle^{1-\eta}$ which does not affect the proof). We omit the details. \end{proof}
\begin{proof}[Proof of Proposition \ref{approxnt}] The proof follows now exactly as the proof of Proposition 6.1 in \cite{DH21} but using Lemma \ref{NTSP}, \ref{lem:minorarcs}, and \ref{NTintlem} to replace Lemma 6.2, 6.4, and 6.5 respectively in \cite{DH21}. 
\end{proof}
With Proposition \ref{approxnt} in hand, we can apply it to the sum in \eqref{KQZdef} exactly as is done in the proof of Proposition 6.7 of \cite{DH21}: First, for any $\nf\in\Nc^{ch}$ we define $x_\nf=k_{\nf_1}-k_\nf$ and $y_{\nf}=k_\nf-k_{\nf_3}$, so we have $\Omega_\nf=2( x_\nf \cdot y_\nf)$ by (\ref{res}). As explained in \cite{DH21}, the linear mapping
\begin{equation*}(x_\nf,y_\nf)_{\nf\in\Nc^{ch}}\leftrightarrow (k_{\lf_1},\cdots, k_{\lf_{2n}})\end{equation*} is volume preserving and preserves the lattice $(\Zb_L^d)^{2n}$, where $(k_{\lf_j})$ are the decorations of some $2n$ leaf pairs (out of the $2n+1$ pairs in total) in the $k$-decoration $\Es$. We can then rewrite the sum in \eqref{KQZdef} as 
\begin{equation}\label{newsum1}\sum_{(x_\nf,y_\nf):\nf\in\Nc^{ch}}\epsilon\cdot\prod_{\nf\in Z}\frac{\chi_\infty(2\delta L^{2\gamma}( x_\nf\cdot y_\nf))}{2\delta L^{2\gamma}( x_\nf \cdot y_\nf)}\cdot\widetilde{\Bc}_{\Qc,Z}(t,s,2\delta L^{2\gamma}( x_\nf \cdot y_\nf):\nf\in\Nc^{ch}\backslash Z)\cdot W(x[\Nc^{ch}],y[\Nc^{ch}]),\end{equation} where $\epsilon=\epsilon_\Es$ (which depends only on $(x_\nf,y_\nf)$) and
$W(x[\Nc^{ch}],y[\Nc^{ch}])=\prod_{j=1}^{2n+1}n_{\mathrm{in}}(k_{\lf_j})$, with each $k_{\lf_j}$ and $k_{2n+1}:=\pm k\pm k_{\lf_{2m+1}}\cdots\pm k_{\lf_{2n}}$ equaling $k$ plus some linear combination of $(x_\nf,y_\nf)$.

The first thing to notice from \eqref{newsum1} is that $\Kc_{\Qc, Z}$ is clearly smooth in $k$, as any derivative in $k$ falls on the $W$ function. This allows to write \eqref{newsum1}, as well as its derivatives in $k$ up to order $40d$, in the form \eqref{defi} as explained in details in the proof of Proposition 6.7 of \cite{DH21}, which allows us to apply Proposition \ref{approxnt}. Consequently, we obtain that if $\Qc$ be a regular couple of order $2n$ where $n\leq N^3$, then we have $\Kc_\Qc(t,s,k)=\sum_{Z\subset \Nc^{ch}}\Kc_{\Qc,Z}(t,s,k)$, where 
\begin{equation}\label{KQZapprox}
\begin{split}
\Kc_{\Qc,Z}(t,s,k)&=\left(\Kc_{\Qc,Z}\right)_{\textrm{app}}(t,s,k)+\Rs_{\Qc, Z},\\
\left(\Kc_{\Qc,Z}\right)_{\textrm{app}}(t,s,k)&=2^{-2n}\delta^n\zeta(\Qc)\prod_{\nf\in Z}\frac{1}{\zeta_\nf \pi i}\cdot\Jc \widetilde B_{\Qc, Z}(t, s)\cdot\Mc_{\Qc,Z}(k),\qquad \textrm{where}\\
\Jc \widetilde B_{\Qc, Z}(t, s)&=\int\widetilde{\Bc}_{\Qc,Z}\big(t,s,\alpha[\Nc^{ch}\backslash Z]\big)\,\mathrm{d}\alpha[\Nc^{ch}\backslash Z].
\end{split}
\end{equation} Here the error term $\Rs_{\Qc, Z}$ satisfies \eqref{remainderbd} for each $(\Qc, Z)$, by examining the gain of power in each error term occurring in Lemmas \ref{NTSP}, \ref{lem:minorarcs}, and \ref{NTintlem}, and noticing that $\gamma_1:=\min(2\gamma,1,2(d-1)(1-\gamma))$. The exact expression $\Mc_{\Qc,Z}(k)$, which is provided in Proposition 6.7 of \cite{DH21}, is not needed here, but we shall need some of its properties which are recalled below. We refer the reader to \cite{DH21} for the complete details, which are the same in our case here. This finishes the proof of Proposition \ref{regcpltreeasymp} (for couples, but the results for regular trees $\Tc$ are proved similarly using the fact that a regular tree forms a regular couple with the trivial tree, see Proposition 6.10 of \cite{DH21}).
\subsubsection{Proof of Proposition \ref{regcpltreecancel}} We start by recalling some facts concerning the structure of regular and dominant couples from Section 4 of \cite{DH21}.  A \emph{regular chain} is a saturated paired tree, obtained by repeatedly applying operation $B$ (attaching one of the mini trees in Figure \ref{fig:minitree} at either a branching node or the lone leaf, as described in Definition \ref{defmini}), starting from the trivial tree $\bullet$. A \emph{regular double chain} is a couple consisting of two regular chains (where, of course, the lone leaves of the two regular chains are paired). It can also be obtained by repeatedly applying operation $B$ at either a branching node or a lone leaf, starting from the trivial couple $\times$. The order of a regular chain $\Tc$ is always an even number $2m$. The $2m$ branching nodes are naturally ordered by parent-child relation; denote them by $\nf_j\,(1\leq j\leq 2m)$ from top to bottom. A \emph{dominant chain} is a special case of regular chain in which the application of operation $B$ is only done at the lone leaves. In this case, we can group the branching nodes $\nf \in \Nc$ from top to bottom as pairs $(\nf_{2j-1}, \nf_{2j})$, $1\leq j \leq m$, which are exactly the branching nodes of the mini tree attached in the $j$-th application of operation $B$. Note that the signs of the nodes $\nf_{2j-1}$ are all the same. 
\begin{figure}[h!]
  \includegraphics[scale=.4]{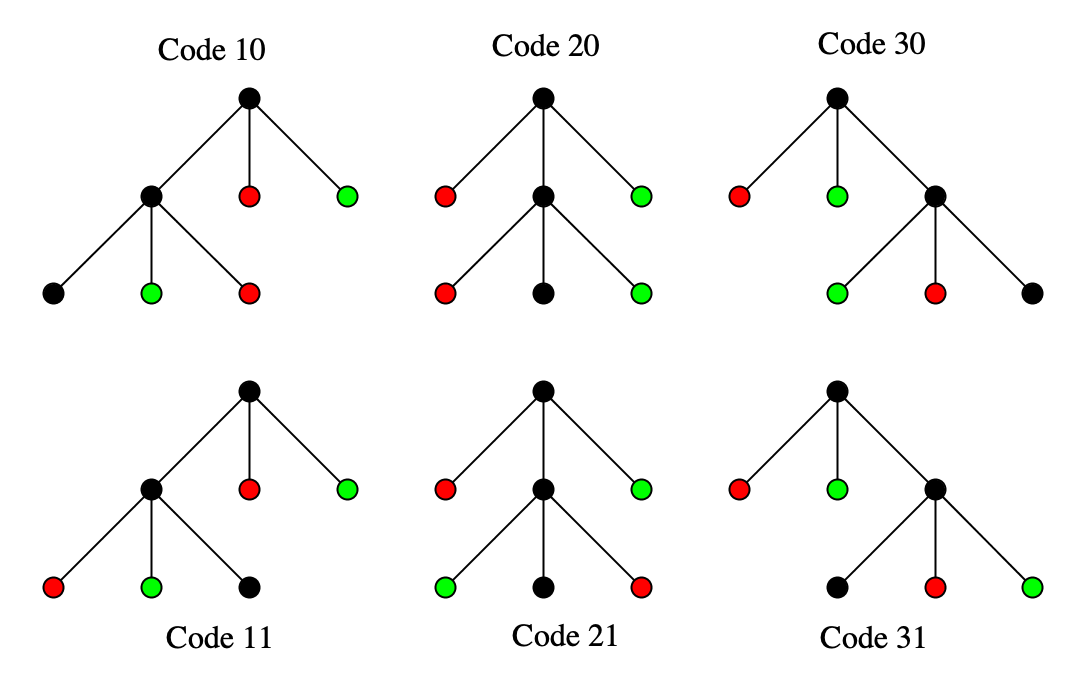}
  \caption{Six possibilities of mini trees (Definition \ref{defmini}).}
  \label{fig:minitree}
\end{figure}

The structure theorem of regular couples states that, for any nontrivial regular couple $\Qc\neq \times$, there exists a regular couple $\Qc_0\neq\times$ which is either a $(1,1)$-mini couple or a regular double chain, such that $\Qc$ is formed by replacing each leaf pair in $\Qc_0$ with a regular couple (cf. Proposition 4.8 of \cite{DH21}). In the first case, we call the couple type 1 and in the second we call it type 2. For type 2 regular couples $\Qc$, if we we require that the couple replacing the lone leaf pair of $\Qc_0$ is trivial or has type 1, then this $\Qc_0$ is unique (cf. Proposition 4.10 of \cite{DH21}). 

\emph{Dominant couples} are a special class of regular couples defined inductively as follows. First the trivial couple $\times$ is dominant. Suppose $\Qc\neq\times$, let $\Qc_0$ be uniquely determined as explained above, and let $\Qc_k\,(k\geq 1)$ be the regular couples in $\Qc$ replacing leaf pairs in $\Qc_0$. Then we define $\Qc$ to be dominant, if (i) $\Qc_0$ is either a $(1,1)$-mini couple or a regular double chain formed by two \emph{dominant} chains, and (ii) each regular couple $\Qc_k$ is dominant. For a dominant couple of type 2, we enumerate the couples $\Qc_k$ replacing leaf pairs as follows: $\Qc_0$ is formed of two dominant chains $\Tc_0^+$ and $\Tc_0^-$; each of $\Tc_0^\pm$ is formed by attaching $m^\pm$ mini trees (from root to lone-leaf) at the nodes $\nf_{2j-1}$ ($1\leq j \leq m^\pm$). Look at the $j$-th mini tree in $\Tc_0^\pm$, we define the dominant couple replacing the pair of \emph{red} leaves in Figure \ref{fig:minitree} by $\Qc_{j,+,1}$, and define the regular couple in $\Qc$ replacing the pair of \emph{green} leaves in Figure \ref{fig:minitree} by $\Qc_{j,+,2}$. Then, for the dominant couple $\Qc$, we have
\begin{equation}\label{newn*}\Nc=\bigg(\bigcup_{j,\epsilon,\iota}\Nc_{j,\epsilon,\iota}\bigg)\cup\Nc_{lp}\cup \big\{\nf_1^+,\cdots,\nf_{2m^+}^+\big\}\cup\big\{\nf_1^-,\cdots,\nf_{2m^-}^-\big\}
\end{equation} and we define
\begin{equation}\label{newnch}\Nc^{ch}=\bigg(\bigcup_{j,\epsilon,\iota}\Nc_{j,\epsilon,\iota}^{ch}\bigg)\cup\Nc_{lp}^{ch}\cup\big\{\nf_{2j-1}^+:1\leq j \leq m^+\big\}\cup\big\{\nf_{2j-1}^-:1\leq j \leq m^-\big\}.
\end{equation} Here in (\ref{newn*}) and (\ref{newnch}), the couples $\Qc_{j,\epsilon,\iota}$, where $\epsilon\in\{\pm\}$ and $\iota\in\{1,2\}$, are the ones described above, and $\Nc_{j,\epsilon,\iota}$ (and $\Nc_{j,\epsilon,\iota}^{ch}$) are defined correspondingly; similarly for $\Qc_{lp}$, $\Nc_{lp}$ and $\Nc_{lp}^{ch}$. The notation for a dominant couple of type 1 is much simpler as we denote by $\Qc_1, \Qc_2, \Qc_3$ the three couples replacing the three leaf pairs (from left to right) of a $(1,1)$-mini couple. Here, we set $\Nc^{ch}=\Nc_1^{ch}\cup\Nc_2^{ch}\cup\Nc_3^{ch}\cup\{\rf\}$ where $\rf$ is the root with $+$ sign.
  
Motivated by \eqref{KQZdef} and still recalling \cite{DH21}, we define below the notion of \emph{special subsets} $Z$ of $\Nc^{ch}$ for a dominant couple $\Qc$. The pair $\Qs:=(\Qc,Z)$ will be called an \emph{enhanced dominant couple}, on which which we also define an equivalence relation $\sim$ between two enhanced dominant couples $\Qs=(\Qc, Z)$ and $\Qs'=(\Qc', Z')$, as follows. First $\varnothing$ is a special subset and the enhanced trivial couple $(\times,\varnothing)$ is only equivalent to itself.

Next, if $\Qs$ is a dominant couple of type 1, then $Z$ is special if and only if $Z=Z_1\cup Z_2\cup Z_3$ (i.e. $\rf$ is \emph{not} in $Z$) where $Z_j\subset \Nc_j^{ch}$ is special. If we denote by $\Qs_j=(\Qc_j,Z_j)$ the three enhanced dominant couples defining $\Qs$, and similarly for $\Qs'$, we say $\Qs\sim\Qs'$ if and only if $\Qs_j\sim\Qs_j'$ for $1\leq j\leq 3$.

Now let $\Qs$ and $\Qs'$ be as before, but suppose $\Qc$ and $\Qc'$ have type $2$. Let $\Qc_0$ be associated with $\Qc$ as explained above, and similarly for $\Qc'$ (same for the other objects appearing below). Suppose the two regular chains of $\Qc_0$ have order $2m^+$ and $2m^-$ respectively, and let the branching nodes in $\Qc_0$ be $\nf_a^\pm(1\leq a\leq 2m^\pm)$. In the construction of $\Qc_0$, at each node $\nf^{\pm}_{2j-1}$ ($1\leq j \leq n$) one of the six mini trees (Figure \ref{fig:minitree}) is attached, and $\nf_{2j}^\pm$ is the other branching node of this mini tree. We define a set $Z\subset \Nc^{ch}$ to be special if and only if
\begin{equation}\label{unionz}Z=\bigg(\bigcup_{j,\epsilon,\iota}Z_{j,\epsilon,\iota}\bigg)\cup Z_{lp}\cup\big\{\nf_{2j-1}^+:j\in Z^+\big\}\cup\big\{\nf_{2j-1}^-:j\in Z^-\big\}
\end{equation} for some special subsets $Z_{j,\epsilon,\iota}\subset \Nc_{j,\epsilon,\iota}^{ch}$ and $Z_{lp}\subset\Nc_{lp}^{ch}$, and some subsets $Z^\pm\subset \{1,\cdots,m^\pm\}$. Similar representations are defined for $\Qs'$. For $\epsilon\in\{\pm\}$ and each $1\leq j\leq m^\epsilon$, consider the tuple $(\mathtt{I}_{j,\epsilon},\mathtt{c}_{j,\epsilon},\Xs_{j,\epsilon,1},\Xs_{j,\epsilon,2})$. Here $\mathtt{I}_{j,\epsilon}=1$ if $j\in Z^\epsilon$ and $\mathtt{I}_{j,\epsilon}=0$ otherwise, $\mathtt{c}_{j,\epsilon}\in\{1,2,3\}$ is the \emph{first digit of} the code of the mini tree attached at the node $\nf_{2j-1}$. Moreover $\Xs_{j,\epsilon,\iota}$ is the equivalence class of the enhanced dominant couple $\Qs_{j,\epsilon,\iota}=(\Qc_{j,\epsilon,\iota},Z_{j,\epsilon,\iota})$ for $\iota\in\{1,2\}$, and let $\Ys$ be the equivalence class of the enhanced dominant couple $\Qs_{lp}=(\Qc_{lp},Z_{lp})$.

We now define $\Qs\sim\Qs'$, if and only if (i) $m^++m^-=(m^+)'+(m^-)'$, and (ii) the tuples coming from $\Qc_0$ (there are total $m^++m^-$ of them) form \emph{a permutation of} the corresponding tuples coming from $\Qc_0'$ (there are total $(m^+)'+(m^-)'$ of them), and (iii) $\Ys=\Ys'$. Finally, note that if $\Qs=(\Qc,Z)$ and $\Qs'=(\Qc',Z')$ are equivalent then $n(\Qc)=n(\Qc')$ and $|Z|=|Z'|$. When $\Qs\sim\Qs'$ with $Z=Z'=\varnothing$, we also say that $\Qc\sim\Qc'$.

Similarly, we can define the notions of dominant trees $\Tc$, special subsets $Z$, enhanced dominant trees $\Ts:=(\Tc, Z)$, and equivalence relations among them, similar to type 2 dominant couples above, except that there there is only one dominant chain, and so there is no lone pair couple $\Qc_{lp}$. In particular, if we denote by $\Tc_0$ the dominant chain of order $2m$ such that $\Tc$ is obtained by replacing leaf pairs of $\Tc_0$ by dominant couples, then the equivalence class of the enhanced dominant tree $\Ts$ is determined by specifying $m$ tuples $(\mathtt{I}_{j},\mathtt{c}_{j},\Xs_{j,1},\Xs_{j,2})$ exactly as defined above.

With this notation in hand, we can recount the main results in Section 7 of \cite{DH21}, which carry over verbatum to our setting here:
\begin{itemize}
\item If $\Qc$ is a regular, but not dominant couple, then $\Jc \widetilde \Bc_{\Qc, Z}(t, s)=0$. This means that the sum in \eqref{realcancel} is only over dominant couples. This is Proposition 7.4 in \cite{DH21}. The same holds for the sum over the regular trees $\Tc$ in \eqref{realcancel}, which is only over dominant trees.
\item Let $\Qc$ be a dominant couple. Then $\Kc_{\Qc}(t, s,k)=\sum_{Z}\Kc_{\Qc, Z}(t,s,k)$ where the sum is over special subsets $Z\subset\Qc^{ch}$ as defined above and $\Kc_{\Qc, Z}(t,s,k)$ is defined in \eqref{KQZapprox}. The function $\Jc\widetilde{\Bc}_{\Qc,Z}(t,s)$ is independent of $Z$ and may be denoted $\Jc\widetilde{\Bc}_{\Qc}(t,s)$. Moreover, these functions satisfy some explicit recurrence relation, described as follows. First $\Jc\widetilde{\Bc}_\Qc(t,s)\equiv 1$ for the trivial couple (Proposition 7.5 of \cite{DH21}). If $\Qc$ has type $1$, then it is formed from the $(1,1)$-mini couple by replacing its three leaf pairs by dominant couples $\Qc_j\,(1\leq j\leq 3)$. In this case, we have
\begin{equation}
\label{recurtype1}
\Jc\widetilde{\Bc}_{\Qc}(t,s)=2\int_0^{\min(t,s)}\prod_{j=1}^3\Jc\widetilde{\Bc}_{\Qc_j}(\tau,\tau)\,\mathrm{d}\tau.
\end{equation} In particular $\Jc\widetilde{\Bc}_{\Qc}=\Jc\widetilde{\Bc}_{\Qc}(\min(t,s))$ is a function of $\min(t,s)$ for type $1$ dominant couples $\Qc$. Finally, if $\Qc$ has type $2$, then $\Qc$ is formed from a regular double chain $\Qc_0$, which consists of two dominant chains, by replacing each leaf pair in $\Qc_0$ with a dominant couple. Using the notations described above for the structure of $\Qc$ and $\Qc_0$ in this case, we have
\begin{multline}
\label{recurtype2}
\Jc\widetilde{\Bc}_{\Qc}(t,s)=\int_{t>t_1>\cdots>t_{m^+}>0}\int_{s>s_1>\cdots >s_{m^-}>0}\prod_{j=1}^{m^+}\Jc\widetilde{\Bc}_{\Qc_{j,+,1}}(t_j,t_j)\Jc\widetilde{\Bc}_{\Qc_{j,+,2}}(t_j,t_j)\\\times\prod_{j=1}^{m^-}\Jc\widetilde{\Bc}_{\Qc_{j,-,1}}(s_j,s_j)\Jc\widetilde{\Bc}_{\Qc_{j,-,2}}(s_j,s_j)\cdot\Jc\widetilde{\Bc}_{\Qc_{lp}}(\min(t_{m^+},s_{m^-}))\prod_{j=1}^{m^+}\mathrm{d}t_j\prod_{j=1}^{m^-}\mathrm{d}s_j.
\end{multline} Here we understand that $t_0=t$ and $s_0=s$.

\item Let $\Qs=(\Qc,Z)$ be an enhanced dominant couple. Let $\Mc_{\Qs}(k)=\Mc_{\Qc,Z}(k)$ be defined as in (\ref{KQZapprox}). Then, the expression $\Mc_\Qs(k)$ is real-valued and depends only on the equivalence class $\Xs$ of $\Qs$, so we can denote it by $\Mc_\Xs(k)$, (cf. Proposition 7.7 of \cite{DH21}).

\item Similarly, for a dominant tree $\Tc$, and with the notation described above, we have
\begin{align}
(\Kc^*_{\Tc})_{\mathrm{app}}(t,s,k)&=\sum_{Z\textrm{\ special}}(\Kc^*_{\Tc,Z})_{\mathrm{app}}(t,s,k) \nonumber \\
(\Kc^*_{\Ts})_{\mathrm{app}}(t,s,k)&=(\Kc^*_{\Tc,Z})_{\mathrm{app}}(t,s,k)=2^{-2n}\delta^n\zeta(\Tc)\prod_{\nf\in Z}\frac{1}{\zeta_\nf \pi i}\cdot\Jc \widetilde B^*_{\Tc}(t, s)\cdot\Mc_{\Ts}^*(k),\nonumber\\
\Jc \widetilde{\Bc}^*_{\Tc}(t,s)&=\int_{t>t_1>\cdots>t_{m^*}>s}\prod_{j=1}^{m^*}\Jc\widetilde{\Bc}_{\Qc_{j,+,1}}(t_j,t_j)\Jc\widetilde{\Bc}_{\Qc_{j,+,2}}(t_j,t_j)\prod_{j=1}^{m^*}\mathrm{d}t_j, \label{recurtypetree}
\end{align}
and $\Mc_{\Ts}^*(k)$ is real-valued and depends only on the equivalence class of the enhanced dominant tree $(\Tc, Z)$. 
\end{itemize}

We are now finally ready to give the proof of \eqref{realcancel}. In fact, we shall split this sum into subsets and show that the sum of each subset is real. To define these subsets, we first notice that 
\begin{equation}\label{realcancel2}
\sum_{n(\Qc)+n(\Tc)=2n}(\Kc_\Qc)_{\mathrm{app}}(t,s,k)\cdot\overline{(\Kc^*_\Tc)_{\mathrm{app}}(t,s,k)}=\sum_{n(\Qs)+n(\Ts)=2n}
(\Kc_{\Qs})_{\mathrm{app}}(t,s,k)\cdot\overline{(\Kc^*_{\Ts})_{\mathrm{app}}(t,s,k)}
\end{equation}
 where the sum is now over enhanced dominant couples $\Qs$ of order $n(\Qs):=n(\Qc)$ and enhanced dominant trees $\Ts$ of order $n(\Ts):=n(\Tc)$. Recall that each $\Qs$ belongs to an equivalence class that is completely determined by specifying $m^++m^-$ tuples $(\mathtt{I}_{j,\epsilon},\mathtt{c}_{j,\epsilon},\Xs_{j,\epsilon,1},\Xs_{j,\epsilon,2})$ ($\epsilon=\pm$) and an equivalence class $\Ys$ for the enhanced dominant couple $\Qs_{lp}=(\Qc_{lp}, Z_{lp})$. Note that $\Qs$ is of type 1 if and only if $m^++m^-=0$, in which case $\Ys$ is the equivalence class of the enhanced dominant couple $\Qs$. Similarly, $\Ts$ belongs to an equivalence class of enhanced dominant trees that is completely determined by specifying $m^*$ tuples $(\mathtt{I}_{j,*},\mathtt{c}_{j,*},\Xs_{j,*,1},\Xs_{j,*,2})$.

Suppose we fix the value $n=m^++m^-+m^*$, fix a collection of $n$ tuples $(\mathtt{I}_{j},\mathtt{c}_{j},\Xs_{j,1},\Xs_{j,2})$, and fix an equivalence class $\Ys$, and then sum in \eqref{realcancel2} \emph{only for $(\Qs, \Ts)$ that belong to equivalence classes formed from this collection}. We will show that this sum is real valued, which then completes the proof. Denote by $\boldsymbol{\Af}$ all possible (enhanced dominant) couple-tree pairs $(\Qs, \Ts)$ that belong to equivalence classes formed from this given collection. Note that for any $(\Qs, \Ts)\in \boldsymbol{\Af}$, the total order $n(\Qc)+n(\Tc)$, the sum $|Z|+|Z^*|$ of the cardinalities of the special subsets $Z$ for $\Qc$ and $Z^*$ for $\Tc$, and the product $\zeta(\Qc) \zeta(\Tc)$ are all the same. Moreover, the product $\Mc_{\Qs}(k) \Mc_{\Ts}^*(k)$ is also the same for all enhanced dominant $(\Qs, \Ts) \in \boldsymbol{\Af}$ since it is a product of factors determined by the tuples $(\mathtt{I}_{j},\mathtt{c}_{j},\Xs_{j,1},\Xs_{j,2})$ and the equivalence class $\Ys$ (see Proposition 7.9 of \cite{DH21}). Hence, for some real-valued function $\Cf(k)$ we have
\begin{equation}\label{realcancel4}
\sum_{(\Qs, \Ts) \in \boldsymbol{\Af}} (\Kc_{\Qs})_{\mathrm{app}}(t,s,k)\cdot\overline{(\Kc^*_{\Ts})_{\mathrm{app}}(t,s,k)}=\Cf(k)\sum_{(\Qs, \Ts) \in \boldsymbol{\Af}}\prod_{\nf \in Z}\frac{1}{\zeta_\nf \pi i}\prod_{\nf \in Z^*}\Jc \widetilde B_{\Qc}(t,s)\frac{-1}{\zeta_\nf \pi i}  \Jc \widetilde B^*_{\Tc}(t,s).
\end{equation}
We will show that this quantity is real, by showing that the sum vanishes unless $Z\cup Z^*$ is empty, in which case reality follows from that the evident reality of $\Jc \widetilde B_{\Qc}(t,s)$ and $\Jc \widetilde B^*_{\Tc}(t,s)$. To see this, let us assume that $|Z|+ |Z^*|\neq 0$ for $(\Qs, \Ts) \in \boldsymbol{\Af}$ (recall that this value is the same for all $(\Qs, \Ts) \in \boldsymbol{\Af}$), and consider the sum on the right hand side of \eqref{realcancel4}. We first note that from Proposition 7.8 of \cite{DH21}, if $\Xs$ is an equivalence class of enhanced dominant couples $(\widetilde \Qc, \widetilde Z)$, with $\widetilde{Z}\neq \varnothing$, then 
\begin{equation}
\Gc_{\Xs}:=\sum_{\widetilde \Qs=(\widetilde \Qc,\widetilde Z)\in\Xs}\bigg(\prod_{\nf\in \widetilde Z}\frac{1}{\zeta_\nf\pi i}\bigg)\cdot \Jc\widetilde{\Bc}_{\widetilde \Qc}(t,t)=0.
\end{equation}
As a result, by \eqref{recurtype1}--\eqref{recurtypetree}, the sum in \eqref{realcancel4} is a linear combination of factors of the form $\Gc_{\Xs_{j,1}}\Gc_{\Xs_{j, 2}}\Gc_{\Ys}$ (possibly with different arguments/variables), hence it vanishes unless all the equivalence classes $\Xs_{j, 1}, \Xs_{j_2}$, and $\Ys$ have empty special subsets (recall that $|Z|$ is constant on an equivalence class). Moreover, since $|Z|+|Z^*|$ is constant over $\boldsymbol{\Af}$, we may replace the factors $\frac{1}{\zeta_\nf \pi i}$ in \eqref{realcancel4} by $\zeta_n$ and prove the vanishing of the resulting expression 
\begin{equation}\label{realcancel3}
\Gc_{\boldsymbol{\Af}}(t,s)=\sum_{(\Qs, \Ts) \in \boldsymbol{\Af}}\prod_{\nf \in Z}\zeta_\nf \Jc \widetilde B_{\Qc}(t,s) \prod_{\nf \in Z^*}(-\zeta_\nf) \Jc \widetilde B^*_{\Tc}(t,s).
\end{equation}

If $(\Qs, \Ts) \in \boldsymbol{\Af}$, then we have a decomposition $n=m^++m^-+m^*$ where $2m^+, 2m^-, 2m^*$ are the orders of three dominant chains $\Tc_0^+, \Tc_0^-, \Tc_0^*$ associated to the dominant couple $\Qc$ and dominant tree $\Tc$, as well as a division of the $m$ tuples $(\mathtt{I}_j,\mathtt{c}_j,\Xs_{j,1},\Xs_{j,2})$ into three groups: one with $m^+$ elements denoted by $(\mathtt{I}_{j,+},\mathtt{c}_{j,+},\Xs_{j,+,1},\Xs_{j,+,2})$ ($1\leq j \leq m^+$), one with $m^-$ elements $(\mathtt{I}_{j,-},\mathtt{c}_{j,-},\Xs_{j,-,1},\Xs_{j,-,2})$ ($1\leq j\leq m^-)$, and one with $m^*$ elements $(\mathtt{I}_{j,*},\mathtt{c}_{j,*},\Xs_{j,*,1},\Xs_{j,*,2})$ ($1\leq j \leq m^*$). Moreover, since $\mathtt{c}_{j,\epsilon}$ are just the \emph{first digits} of the codes of the mini trees appearing in the structure of $\Qc$ and $\Tc$, the corresponding \emph{second digits} can be arbitrary (and $\widetilde{\Bc}_\Qc$ and $\Bc_\Tc$ do not depend on this second digit) which results in a $2^n$ factor. Putting together, if we sum over all possible $(\Qs, \Ts) \in \boldsymbol{\Af}$---which means summing over all possible decompositions of $n=m^++m^-+m^*$ and permutations of the tuples, and then summing over all possible $\Qc_{j,\epsilon,\iota}$ and $\Qc_{lp}$---we would get
\begin{equation} \label{splitformula0}
\begin{aligned}
\Gc_{\boldsymbol \Af}(t)&=2^n\sum_{m^++m^-+m^*=n}\sum_{(\As_1,\cdots, \As_{m^+},\Bs_1,\cdots,\Bs_{m^-}, \Cs_1, \cdots, \Cs_{m^*})}\int_{t>t_1>\cdots >t_{m^+}>0}\int_{s>s_1>\cdots>s_{m^-}>0}\int_{t>u_1>\cdots >u_{m^*}>s}\\&\times \prod_{j=1}^{m^-}(-1)^{\mathtt{I}_{j,-}'} \prod_{j=1}^{m^*}(-1)^{\mathtt{I}_{j,*}'}\prod_{j=1}^{m^+}\Ms(\As_j)(t_j)\prod_{j=1}^{m^-}\Ms(\Bs_j)(s_j)\prod_{j=1}^{m^*}\Ms(\Cs_j)(u_j)\cdot \Gc_{\Ys}(\min(t_{m^+},s_{m^-}))\\
&\qquad\,\, \qquad\qquad \qquad\qquad \qquad\qquad \qquad \qquad \qquad\qquad \qquad\times\mathrm{d}t_1\cdots\mathrm{d}t_{m^+}\mathrm{d}s_1\cdots\mathrm{d}s_{m^-}\mathrm{d}u_1\cdots\mathrm{d}u_{m^*}.
\end{aligned}
\end{equation} Here in (\ref{splitformula0}) the sum is taken over all permutations $(\As_1,\cdots, \As_{m^+},\Bs_1,\cdots,\Bs_{m^-}, \Cs_1, \ldots, \Cs_{m^*})$ of the tuples $(\mathtt{I}_j,\mathtt{c}_j,\Xs_{j,1},\Xs_{j,2})$. Moreover $\mathtt{I}_{j,-}'$ and $\mathtt{I}_{j, *}'$ represent the first (``$\mathtt{I}$") component of $\Bs_j$ and $\Cs_j$, the function $\Ms(\As_j)$ equals $\Gc_{\Xs_{j,+,1}'}\cdot\Gc_{\Xs_{j,+,2}'}$ where $(\Xs_{j,+,1}',\Xs_{j,+,2}')$ represents the last two  (``$\Xs$") components of $\As_j$, and $\Ms(\Bs_j)$ and $\Ms(\Cs_j)$ are defined similarly. Arranging all the time variables in decreasing order, and renaming them as $t>v_1>v_2 >\ldots> v_n>0$, we can write 
\begin{align*}
\Gc_{\boldsymbol \Af}(t)&=2^n\sum_{(\Ds_1, \ldots, \Ds_n)}\int_{t>v_1>\ldots>v_n>0} \prod_{j=1}^n\Ms(\Ds_j)(v_j) \Gc_{\Ys}\left(\min(v_n, s) \right)\\
&\qquad \sum_{A_1\cup A_2\cup A_3=[1,n]} \prod_{j\in A_2}(-1)^{\mathtt{I}_{j}'}\mathbf{1}_{v_j<s} \prod_{j\in A_3}(-1)^{\mathtt{I}_{j}'}\mathbf{1}_{v_j>s} \quad \mathrm{d}v_1\cdots\mathrm{d}v_{n},
\end{align*}
where the first sum is over all permutations $(\Ds_1, \ldots, \Ds_n)$ of the tuples $(\mathtt{I}_j,\mathtt{c}_j,\Xs_{j,1},\Xs_{j,2})$, and the second sum is over all partitions of the set $[1,n]=\{1, 2, \ldots, n\}$ into three subsets $A_1, A_2,A_3$, and $\mathtt{I}_j'$ is the first (``$\mathtt{I}$") component of $\Ds_j$. Now, notice that
$$
\sum_{(A_1,A_2,A_3)} \prod_{j\in A_2}(-1)^{\mathtt{I}_{j}'}\mathbf{1}_{v_j<s} \prod_{j\in A_3}(-1)^{\mathtt{I}_{j}'}\mathbf{1}_{v_j>s}=\prod_{j=1}^n\left(1+(-1)^{\mathtt{I}_{j}'}\mathbf{1}_{v_j<s}+(-1)^{\mathtt{I}_{j}'}\mathbf{1}_{v_j>s}\right)=\prod_{j=1}^n\left(1+(-1)^{\mathtt{I}_{j}'}\right),
$$ if $Z\cup Z^*\neq\varnothing$, then at least one of the $\mathtt{I}_{j}'=1$, so the above product vanishes and hence $\Gc_{\boldsymbol \Af}(t)$ also vanishes. This finishes the proof.
\subsection{Reduction to prime couples}\label{primered} Using Proposition \ref{regcpltreeasymp}, we can reduce the expression $\Kc_\Qc$ for any couple $\Qc$, defined in (\ref{defkq}), to a similar expression associated with the skeleton $\Qc_{\mathrm{sk}}$ of $\Qc$, in the same way as in Section 8.1 of \cite{DH21}.

Recall from (\ref{defkq}) that
\begin{equation}\label{bigformula}\Kc_\Qc(t,t,k)=\bigg(\frac{\delta}{2L^{d-\gamma}}\bigg)^n\zeta(\Qc)\sum_{\Es}\int_\Ec\epsilon_\Es\prod_{\nf\in \Nc} e^{\zeta_\nf\pi i\cdot\delta L^{2\gamma}\Omega_\nf t_\nf}\,\mathrm{d}t_\nf{\prod_{\lf\in\Lc}^{(+)}n_{\mathrm{in}}(k_\lf)},
\end{equation}  where $n$ is the order of $\Qc$, $\Ec$ is the domain defined in (\ref{defdomaine}), $\Es$ is a $k$-decoration and other objects are defined as before, all associated to the couple $\Qc$. By repeating the arguments in Subsection 8.1 of \cite{DH21}, using also the notation $\As$ in Proposition \ref{skeleton}, we get
\begin{multline}\label{bigformula2}\Kc_\Qc(t,t,k)=\Kc_{(\Qc_{\mathrm{sk}},\As)}(t,t,k):=\bigg(\frac{\delta}{2L^{d-\gamma}}\bigg)^{n_0}\zeta(\Qc_{\mathrm{sk}})\sum_{\Es_{\mathrm{sk}}}\int_{\Ec_{\mathrm{sk}}}\epsilon_{\Es_{\mathrm{sk}}}\prod_{\nf\in \Nc_{\mathrm{sk}}} e^{\zeta_\nf\pi i\cdot\delta L^{2\gamma}\Omega_\nf t_\nf}\,\mathrm{d}t_\nf\\\times{\prod_{\lf\in\Lc_{\mathrm{sk}}}^{(+)}\Kc_{\Qc^{(\lf,\lf')}}(t_{\lf^p},t_{(\lf')^p},k_\lf)}\prod_{\mf\in\Nc_{\mathrm{sk}}}\Kc_{\Tc^{(\mf)}}^*(t_{\mf^p},t_{\mf},k_\mf),
\end{multline} where $n_0$ is the order of $\Qc_{\mathrm{sk}}$, $\Ec_{\mathrm{sk}}$ is the domain defined in (\ref{defdomaine}) but with $s=t$, and $\Es_{\mathrm{sk}}$ is a $k$-decoration, the other objects are as before but associated to the couple $\Qc_{\mathrm{sk}}$. Here in (\ref{bigformula2}), the first product is taken over all leaf pairs $(\lf,\lf')$ where $\lf$ has sign $+$, the second product is taken over all branching nodes $\mf$, and $\mf^p$ represents the parent of $\mf$ (if $\mf$ is the root of a tree then $t_{\mf^p}$ should be replaced by $t$).

Note that, using Proposition \ref{regcpltreeasymp}, we can decompose
\begin{equation}\label{inputdecomp}\Kc_{\Qc^{(\lf,\lf')}}=(\Kc_{\Qc^{(\lf,\lf')}})_{\mathrm{app}}+\Rs_{\Qc^{(\lf,\lf')}},\quad \Kc_{\Tc^{(\mf)}}^*=(\Kc_{\Tc^{(\mf)}}^*)_{\mathrm{app}}+\Rs_{\Tc^{(\mf)}}^*,
\end{equation} where the leading term $(\Kc_{\Qc^{(\lf,\lf')}})_{\mathrm{app}}$ and the remainder $\Rs_{\Qc^{(\lf,\lf')}}$ satisfy the bounds (\ref{remainderbd})--(\ref{kqbd}), and similarly $(\Kc_{\Tc^{(\mf)}}^*)_{\mathrm{app}}$ and $\Rs_{\Tc^{(\mf)}}^*$ satisfy the slightly modified bounds as in Proposition \ref{regcpltreeasymp}. The leading terms also satisfy the exact equalities (\ref{conjugatekq})--(\ref{realcancel}).

\section{$\Kc_\Qc$ estimates for vines}\label{funcgroup}
\subsection{Two general estimates for sum-integrals} Before getting to the needed estimates for vines, we first state two general results about expressions that are sums of time-oscillatory integrals.
\begin{lem}\label{sumintest1} Fix $\gamma\leq \frac45-\eta$. For $m\geq 1$, consider the output variables $e,f,g,h,x_0\in\Zb_L^d$ and $t\in[0,1]$, and parameters $\lambda_0,\lambda_j,\mu_j\in\Rb\,(1\leq j\leq m)$. Assume each of $(e,f,g,h,x_0)$ is restricted to a fixed unit ball, and $e-f=h-g:=r\neq 0$ and $|r|\lesssim L^{-\gamma-\eta}$. Let the input variables be $x_j,y_j\in\Zb_L^d\,(1\leq j\leq m)$ and $t_0,t_j,s_j\in[0,1]\,(1\leq j\leq m)$; denote ${\textbf{x}}=(x_0,\cdots,x_m,y_1,\cdots,y_m)$.

Suppose there exist alternative variables $k_j\,(1\leq j\leq 2m+1)$ and $\ell_j\,(1\leq j\leq 2m+1)$, where we write ${\textbf{k}}=(k_1,\cdots,k_{2m+1})$ and $\boldsymbol{\ell}=(\ell_1,\cdots,\ell_{2m+1})$, such that: (i) we have ${\textbf{k}}=T_1{\textbf{x}}+{\textbf{h}}_1$ and $\boldsymbol{\ell}=T_2{\textbf{x}}+{\textbf{h}}_2$ for some matrices $T_j$ and some constant vectors ${\textbf{h}}_j$ depending only on $(e,f,g,h)$, such that all coefficients of $T_1,\,T_1^{-1},\,T_2$ are integers $\lesssim 1$; (ii) for each $1\leq p\leq 2m+1$ there exists $q<p$ such that $\ell_{p}\pm\ell_{q}$ is an integer linear combination of $(k_j)$ and $(e,f,g,h)$ with absolute value sum $\lesssim \Lambda_p$, such that $\Lambda_1\cdots \Lambda_{2m+1}\lesssim C^m$; (iii) any component of the vector ${\textbf{x}}$ (such as $x_j$ or $y_j$) is the sum or difference of two variables, each which is a component of the vector ${\textbf{k}}$ or $\boldsymbol{\ell}$ or $(e,f,g,h)$. Now define\begin{multline}\label{sumintI1}\Ic=\Ic(x_0,e,f,g,h,t):=\sum_{(x_j,y_j):1\leq j\leq m}\prod_{j=1}^{2m+1}\Kc_j(k_j)\prod_{j=1}^{2m+1}\Kc_j^*(\ell_j)\\\times \int_{\Dc} \prod_{j=1}^m e^{\pi i \cdot\delta L^{2\gamma}(t_j-s_j)(x_j\cdot y_j)}\cdot\prod_{j=0}^m e^{\pi i\cdot\delta L^{2\gamma}t_j(r\cdot \zeta_j)}\cdot\prod_{j=0}^me^{\pi i\lambda_j t_j}\prod_{j=1}^m e^{\pi i\mu_js_j}\prod_{j=1}^m\mathrm{d}t_j\mathrm{d}s_j \cdot\mathrm{d}t_0.
\end{multline} Here in (\ref{sumintI1}), $\Dc$ is the domain of $(t_0,t_j,s_j)$ defined by the following conditions: (i) $t_j,s_j>0$ for any $j$, and $0<t_0<t$; (ii) any fixed collection of inequalities of form $s_i<s_j$, $t_i<t_j$, $s_i<t_j$ or $t_i<s_j$. For each $j$, we allow $\Kc_j$ and $\Kc_j^*$ to depend on the parameters $(\lambda_0, \lambda_j, \mu_j)_{1\leq j \leq m}$ and assume
\begin{equation}
\label{propertyKj}\sup_{|\alpha|\leq 40d}\sup_{k}\langle k\rangle^{40d}|\partial^\alpha\Kc_j(k)|\lesssim 1,\qquad \sup_{|\alpha|\leq 40d}\sup_{\ell}|\partial^\alpha\Kc_j^*(\ell)|\lesssim 1.
\end{equation} 
Moreover $\zeta_j=a_jx_j+b_jy_j+c_jr$ for $a_j,b_j,c_j\in\{0,\pm1\}$, and if $j=0$, then $y_0$ should be replaced by one of $(e,f,g,h)$. Finally, assume one of the followings:
\begin{enumerate}[{(a)}]
\item The conditions in the definition of $\Dc$ includes either $s_1<t_0<t_1$ or $t_1<t_0<s_1$,
\item The summand-integrand in $\Ic$ contains an extra factor  of $r$, in addition to (\ref{sumintI1}),
\item The summand-integrand in $\Ic$ contains an extra factor 
 of $|t_1-s_1|^{1-\eta}$.
\item For at least one of the $1\leq j\leq 2m+1$, one of the two bounds in \eqref{propertyKj} is replaced by $L^{-\gamma}$
\end{enumerate}
Then, uniformly in $(\lambda_0,\lambda_j,\mu_j)$ and in the choice of the unit balls containing $(e,f,g,h,x_0)$ described above, and with the norm taken in $(x_0,e,f,g,h,t)$, we have that
\begin{equation}\label{FourierboundI}\|\Ic\|_{X_{\mathrm{loc}}^{2\eta^2,0}}\lesssim (C^+\delta^{-1})^{m}L^{2m(d-\gamma)-\gamma-\eta^2}.
\end{equation}
\end{lem}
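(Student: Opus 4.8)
The strategy is to reduce the sum-integral $\Ic$ to the model situation analyzed in Proposition \ref{approxnt}, and then extract the extra gain of $L^{-\gamma-\eta^2}$ over the naive bound $(C^+\delta^{-1})^m L^{2m(d-\gamma)}$ from one of the four hypotheses (a)--(d). First I would fix all parameters $(\lambda_0,\lambda_j,\mu_j)$ and all the output variables, and work on a fixed unit-ball configuration for $(e,f,g,h,x_0)$. The time integral $\int_\Dc$ factorizes variable by variable (the domain $\Dc$ is a chain of inequalities), so after integrating out the $(t_0,t_j,s_j)$ one is left with a sum over $(x_j,y_j)_{1\le j\le m}\in(\Zb_L^d)^{2m}$ of a product of the kernel factors $\Kc_j,\Kc_j^*$ against a ``$\Psi$-type'' function of the phases $L^{2\gamma}\delta(x_j\cdot y_j)$ and $L^{2\gamma}\delta(r\cdot\zeta_j)$; the key point to verify here is that, because $|r|\lesssim L^{-\gamma-\eta}$, the factors $e^{\pi i\delta L^{2\gamma}t_j(r\cdot\zeta_j)}$ are harmless perturbations (they cost at most $L^{O(\eta)}$ and can be absorbed), so the resonant structure is entirely governed by the $m$ bilinear forms $x_j\cdot y_j$. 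One then changes variables from $(x_j,y_j)$ to $({\textbf{k}},\boldsymbol\ell)$ using hypotheses (i)--(iii): the map $T_1$ is unimodular over $\Zb_L^d$ (integer coefficients $\lesssim 1$ with integer inverse), $T_2$ expresses the $\ell$-variables, and condition (iii) guarantees that each bilinear form $x_j\cdot y_j$ can be written as a difference of squares of linear combinations of $({\textbf{k}},\boldsymbol\ell,e,f,g,h)$, exactly as required to put $\Ic$ in the form \eqref{defi} with the weight function $W$ built from $\prod\Kc_j(k_j)\prod\Kc_j^*(\ell_j)$ — which satisfies \eqref{propertyw1} by \eqref{propertyKj} — and $\Psi$ built from the time integral, with the partial order $\prec$ coming from condition (ii).

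\textbf{Extracting the gain.} The heart of the matter is that the baseline application of Proposition \ref{approxnt} only gives $|\Ic|\lesssim (C^+\delta^{-1})^m L^{2m(d-\gamma)}$ (times $(\log L)^C$), so the improvement by $L^{-\gamma-\eta^2}$ must come from the additional structure in (a)--(d). In case (d) this is immediate: replacing one of $\|\Kc_j\|$ or $\|\Kc_j^*\|$ by $L^{-\gamma}$ just multiplies the whole estimate by $L^{-\gamma}$, and $\gamma\geq\gamma+\eta^2$ once we absorb the $(\log L)^C$ and $L^{O(\eta)}$ losses (using $\gamma_0\le\gamma$ and $\eta$ small); I would note that here one does not even need the partial-ordering machinery, a crude counting bound suffices. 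In case (b) the extra factor of $r$ with $|r|\lesssim L^{-\gamma-\eta}$ directly supplies $L^{-\gamma-\eta}\le L^{-\gamma-\eta^2}$ after the other losses. Cases (a) and (c) are the substantive ones and require a cancellation in the time integral: the plan is to isolate the single block of variables $(t_0,t_1,s_1)$ (and the conjugate pair of vector variables $(x_1,y_1)$) and show that the $(t_0,t_1,s_1)$-integral, as a function of the scalar $\xi:=\delta L^{2\gamma}(x_1\cdot y_1)$, has \emph{mean zero} when the domain includes $s_1<t_0<t_1$ or $t_1<t_0<s_1$ (case (a)) — this is precisely the one-variable cancellation lemma the paper will invoke, and it is the analogue of the vanishing identity already used for regular couples — or, in case (c), that the weight $|t_1-s_1|^{1-\eta}$ forces an extra decay of $\langle\xi\rangle^{-(1-\eta)}$ in the corresponding $\Psi_1$-factor, improving \eqref{propertypsi2} enough that the $x_1\cdot y_1$ sum over a lattice at spacing $L^{-1}$ in a unit ball gains a power of $L^{-\gamma}$ relative to the trivial count. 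In both cases, after fixing $(x_j,y_j)$ for $j\ge 2$ and performing the $(x_1,y_1)$ sum, one invokes the sharp version of the lattice-point/equidistribution estimate (the $q=2$-type count, or rather its mean-zero refinement, which is where the restriction $\gamma\le\frac45-\eta$ and the constant $\gamma_1$ enter) to get the gain of $L^{-\gamma-\eta^2}$; the remaining $2(m-1)$ sums are then estimated by the baseline Proposition \ref{approxnt} applied to the ``rest'' of the diagram.

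\textbf{Assembling the bound.} Having fixed the configuration of unit balls and the parameters, the pointwise bound on $\Ic$ reads $(C^+\delta^{-1})^{m-1}L^{2(m-1)(d-\gamma)}$ from the $j\ge2$ factors times $(C^+\delta^{-1})L^{2(d-\gamma)-\gamma-\eta}$ from the special block, which multiplies out to the claimed power $(C^+\delta^{-1})^m L^{2m(d-\gamma)-\gamma-\eta}$, with room to spare against the target exponent $-\gamma-\eta^2$. Finally I would upgrade this pointwise-in-$(x_0,e,f,g,h,t)$ bound to the $X^{2\eta^2,0}_{\mathrm{loc}}$ norm: since all dependence on $t$ and on the output vectors enters through smooth factors (the kernels $\Kc_j,\Kc_j^*$ of $W$, and the phases which are at most quadratic), differentiating or taking the time-Fourier transform costs only $L^{O(\eta^2)}$ and Gevrey-$2$ bounded derivative factors, which are absorbed into the $C^+$ constants; the $\langle\lambda\rangle^{2\eta^2}$ weight in the $X^{2\eta^2,0}$ norm is then controlled by the extra decay already present in the time integrals (each $t$-integral over $[0,1]$ with oscillation of size $\le L^{2\gamma}$ contributes an integrable tail once we insert the $\langle s/\delta L^{2\gamma}\rangle^{-P}$-type weights from Lemma \ref{lem:minorarcs}). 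The main obstacle, as flagged above, is case (a): establishing the exact mean-zero cancellation of the three-fold time integral over the nested domain and checking that this cancellation survives the presence of the other $\lambda,\mu$ phases and the $r$-dependent phase factors — this is the delicate point that mirrors, at the level of a single building block, the global vine cancellation that the paper is built around.
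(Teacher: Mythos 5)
Your plan is on the right track and identifies the correct sources of gain in each of the four cases, and the broad outline (set up the weight $W$, factorize, estimate each block, then upgrade to the $X^{2\eta^2,0}_{\mathrm{loc}}$ norm) matches the paper. A few implementation points differ from the paper's proof, and one of your assertions is inaccurate in a way that would matter if you tried to carry it out literally.

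The paper does not reduce to Proposition~\ref{approxnt}. After localizing $(k_j,\ell_j)$ to unit balls and expanding $W$ through its Fourier transform, the paper takes absolute values, enlarges the time domain, and factorizes $\Jc$ into a product of $m$ single-block pieces $\Mc_j$ (one per $(x_j,y_j)$), each of which is estimated from scratch: rescale to $u_j=\delta L^{2\gamma}(t_j-s_j)$, split $|u_j|>L$ (handled by Lemma~\ref{lem:minorarcs}, which is where $\gamma\le\frac45-\eta$ enters) from $|u_j|\le L$ (Poisson-summed into a $B_1$ term controlled by Lemma~\ref{NTSP} and a stationary-phase term $B_2$). This is the machinery behind Proposition~\ref{approxnt}, not an application of it. For the gain in case (a), the paper uses a direct volume argument rather than a mean-zero identity: with $u_1$ fixed, $s_1$ is determined by $t_1$, and the condition $s_1<t_0<t_1$ (or its reverse) confines $t_1$ to an interval of length $(\delta L^{2\gamma})^{-1}|u_1|$ around $t_0$; integrating $\langle u_1\rangle^{-d}\cdot(\delta L^{2\gamma})^{-1}|u_1|$ gives the extra $(\delta L^{2\gamma})^{-1}$. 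Your ``zero average in $\xi$'' phrasing is the same cancellation in different language (the paper's overview in Section~\ref{introcancel} uses that very phrase), but the actual proof never invokes a mean-zero statement. For case (c) the paper simply writes $|t_1-s_1|^{1-\eta}=(\delta L^{2\gamma})^{-1+\eta}|u_1|^{1-\eta}$ and observes that $\langle u_1\rangle^{-d}|u_1|^{1-\eta}$ is still integrable; no modification of (\ref{propertypsi2}) is involved.

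The one point that is genuinely wrong as stated is your claim that the phases $e^{\pi i\,\delta L^{2\gamma}t_j(r\cdot\zeta_j)}$ ``cost at most $L^{O(\eta)}$ and can be absorbed.'' Those phases can be as large as $\sim\delta L^{\gamma-\eta}$, so one cannot Taylor-expand or treat them as lower order. They are harmless for a different reason: after the Poisson summation in Lemma~\ref{NTSP}, they shift the dual variables $\xi_1,\rho_1$ by $\epsilon\,\delta L^{2\gamma}r$ with $|\delta L^{2\gamma}r|\lesssim\delta L^{\gamma-\eta}\ll L$, so the bad case $|\xi_1+\epsilon\,\delta L^{2\gamma}r|+|\rho_1+\epsilon'\,\delta L^{2\gamma}r|\gtrsim L$ forces $|\xi_1|+|\rho_1|\gtrsim L$, which is then killed by the $\langle\xi_1\rangle^{-1},\langle\rho_1\rangle^{-1}$ weights from $\|\widehat{\partial W}\|_{L^1}$. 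You would need this more precise mechanism. Finally, the $X^{2\eta^2,0}_{\mathrm{loc}}$ upgrade in Step 3 of the paper is done by integration by parts in $t_0$ (and, for $|\xi|>L^{50d}$, a second IBP using the fixed value of $\alpha=\pi(\delta L^{2\gamma}(r\cdot\zeta_0)+\lambda_0)$ after passing to an $L^1$ norm in $(e,f,g,h,x_0)$ and sacrificing one $\Mc_j$); your closing paragraph under-specifies this step, though it would resolve the same way.
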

\begin{proof}
\underline{\emph{Step 1: Preparations.}} Using the $\langle k\rangle^{-40d}$ decay of $\Kc_j(k)$, we can localize $k_j$ into sets of the form $|k_j-a_j^*|\leq 1$, where $a_j^*\in \Zb^d$ and sum the obtained estimate in $a_j^*$ at the end. Once this set of $a_j^*$ is fixed, we notice that there is $O(\Lambda_1^d)$ choices for the integer part of the coordinates of $\ell_1$. Similarly, once we fix all those integer parts, there are $O(\Lambda_2^d)$ choices for the integer part of the coordinates of $\ell_2$, and so on. As a result, at the expense of a multiplicative factor of $(\Lambda_1 \ldots \Lambda_{2m+1})^d\lesssim C^m$, we may fix points $b_j^*\in \Zb^d$ so that $|\ell_j-b_j^*|\leq 1$. Consequently, since each $x_j,y_j\,(j\geq 1)$ is the difference of some $k_j$ and $\ell_j$, we may assume that for some fixed $a_j, b_j$ we have $|x_j-a_j|\leq 1$ and $|y_j-b_j|\leq 1$ (where $a_j$ and $b_j$ may depend on the unit balls containing $(e,f,g,h,x_0)$). Finally, if $(e,f,g,h,x_0)$ is fixed, we may express one of the $k_j$ (for example $k_{2m+1}$) as an affine linear combination of the others. This allows us to define the change of variables $
(x_1,\cdots,x_m, y_1,\cdots,y_m) \leftrightarrow (k_1, \ldots, k_{2m})$ which is affine and volume preserving, and preserves the lattice $\Zb^d$ (at most up to an absolute constant).

Now set $W(x_1,\cdots,x_m, y_1,\cdots,y_m):=\prod_{j=1}^{2m+1}(\Kc_j(k_j)\Kc_j^*(\ell_j))$, which is a function that satisfies the same conditions \eqref{propertyw1} and \eqref{propertyw2} (with $\widetilde x_j=x_j$ and $\widetilde y_j=y_j$) of Proposition \ref{approxnt} in the variables $(x_j, y_j)_{1\leq j\leq m}$. In fact, \eqref{propertyw1} holds if $W$ is regarded as a function of $(k_j)_{1\leq j \leq 2m}$, and thus it also holds for $(x_j, y_j)_{1\leq j\leq m}$ due to the properties of the affine linear transform. As such, by expanding $W$ in terms of its Fourier transform, we can replace this function by $\prod_{j=1}^m e^{\pi i \xi_j \cdot x_j} e^{\pi i \rho_j \cdot y_j}$, with one extra weight $\langle\xi_j\rangle^{-1}$ or $\langle \rho_j\rangle^{-1}$ if needed. We thus reduce to
\begin{equation}\label{IexpProof1}
\begin{aligned}
\Ic&=\int_0^t\Jc\,\mathrm{d}t_0,\qquad\mathrm{where}\\
\Jc&=\int_{\Dc_1}e^{\pi i\cdot\delta L^{2\gamma}t_0(r\cdot\zeta_0)}\cdot\prod_{j=0}^me^{\pi i\lambda_j t_j}\prod_{j=1}^m e^{\pi i\mu_js_j}\prod_{j=1}^m\mathrm{d}t_j\mathrm{d}s_j\\
&\times\prod_{j=1}^m\sum_{(x_j,y_j)}\chi_0(x_j-a_j)\chi_0(y_j-b_j)e^{\pi i\cdot\delta L^{2\gamma}(t_j-s_j)(x_j\cdot y_j)}\cdot e^{\pi i\cdot\delta L^{2\gamma}t_j(r\cdot\zeta_j)}\cdot e^{\pi i(\xi_j\cdot x_j+\rho_j\cdot y_j)}
\end{aligned}
\end{equation} 
 for some set $\Dc_1$ depending on $t_0$. Below we first study $\Jc$, which we will bound uniformly in all variables and parameters $(\lambda_0,\lambda_j,\mu_j,e,f,g,h,x_0,t_0)$.

\medskip 

\underline{\emph{Step 2: Continuous approximation.}} We shall neglect all the exponential factors in $(t_j,s_j)$ outside the sums in $(x_j, y_j)$ in (\ref{IexpProof1}) and take absolute value outside this sum. This allows to enlarge the domain $\Dc_1$ (but still keeping the restriction in (a) if applicable) and factorize the whole expression $\Jc$ into a product of $m$ terms $\Mc_j$ for $1\leq j\leq m$. We will focus on the $\Mc_1$ factor, as others are similar and easier. This factor then reads
\begin{equation*}
\Mc_1:=\int_{\Dc_2}\bigg|\sum_{(x_1, y_1) \in \Zb^{2d}_L}\chi_0({x_1-a_1})\chi_0({y_1-b_1})e^{\pi i \left[x_1\cdot\xi_1+y_1\cdot \rho_1+ \delta L^{2\gamma}(t_1-s_1)(x_1\cdot y_1)+\delta L^{2\gamma}t_1r\cdot \zeta_1\right]}\bigg|\,\mathrm{d}t_1\mathrm{d}s_1
\end{equation*}
where $\Dc_2=[0,1]^2$ with the extra restriction $s_1<t_0<t_1$ or $t_1<t_0<s_1$ in (a) if applicable. We may change the variables $(t_1, s_1)\to(t_1, u_1)$ where $u_1=\delta L^{2\gamma}(t_1-s_1)$ and split the integral into two regions: where $|u_1|> L$ and where $|u_1|\leq L$. Denote the contributions of those two regions by $A$ and $B$ respectively. 

For term $A$, since $L\leq |u_1| \lesssim L^{2\gamma}$, we have
\begin{align*}
|A|\lesssim& (\delta L^{2\gamma})^{-1}\int_0^1\int_{L}^{\delta L^{2\gamma}}\left|\sum_{(x_1, y_1) \in \Zb^{2d}_L}\chi_0({x_1-a_1})\chi_0({y_1-b_1})e^{\pi i \left[x_1\cdot\xi_1+y_1\cdot \rho_1+ u_1(x_1\cdot y_1)+\delta L^{2\gamma}t_1r\cdot \zeta_1\right]}\right|\mathrm{d}u_1\mathrm{d}t_1\\
\lesssim& (\delta L^{2\gamma})^{-1} L^{2d-2(d-1)(1-\gamma)+\eta}\lesssim (\delta L^{2\gamma})^{-1} L^{2d-\gamma-\eta},
\end{align*}
where we have used Lemma \ref{lem:minorarcs} and the fact that $\gamma<\frac{4}{5}-\eta$.

As for $B$, we do Poisson summation in the variables $(x_1, y_1)$ and obtain that $B\leq B_1+B_2$, where
\begin{align*}
B_1&=L^{2d}(\delta L^{2\gamma})^{-1}\int_{\Rb^2}\bigg|\sum_{(g, h)\in \Zb^{2d}\setminus \{0\}}\int_{\Rb^d\times \Rb^{d}}  \Phi(t_1, u_1, x-a_1, y-b_1) \\
&\qquad \qquad \qquad \qquad \qquad \qquad \times e^{\pi i \left[x_1\cdot(\xi_1-Lg)+y_1\cdot (\rho_1-Lh)+ u_1(x_1\cdot y_1)+\delta L^{2\gamma}t_1r\cdot \zeta_1\right]}\,\mathrm{d}x\mathrm{d}y\bigg|\,\mathrm{d}t_1\mathrm{d}u_1,\\
B_2&=L^{2d}(\delta L^{2\gamma})^{-1}\int_{\Rb^2} \bigg|\int_{\Rb^d\times \Rb^{d}}\Phi(t_1, u_1, x-a_1, y-b_1) e^{\pi i \left[x_1\cdot\xi_1+y_1\cdot \rho_1+ u_1(x_1\cdot y_1)+\delta L^{2\gamma}t_1r\cdot \zeta_1\right]} \,\mathrm{d}x\mathrm{d}y\bigg|\,\mathrm{d}t_1\mathrm{d}u_1.
\end{align*}
Here $\Phi(t_1, u_1, x-a_1, y-b_1)= \mathbf 1_{\Dc_2}\left(t_1, t_1- (\delta L^{2\gamma})^{-1}u_1\right)\mathbf 1_{|u_1|\leq L}\cdot \chi_0({x_1-a_1})\chi_0({y_1-b_1})$ is a function satisfying \eqref{NTphibounds} uniformly in $t_1$. Note that $B_1$ is bounded using Lemma \ref{NTSP} by
$$
L^{2d}(\delta L^{2\gamma})^{-1} (1+|\xi_1+\epsilon\cdot\delta L^{2\gamma}r|+|\rho_1+\epsilon'\cdot\delta L^{2\gamma}r|) \cdot L^{-1+2\eta}
$$
where we integrate in $t_1$ trivially. Moreover, due to the proof of Lemma \ref{NTSP}, the bound can be improved to $L^{2d}(\delta L^{2\gamma})^{-1}\cdot L^{-1+2\eta}$, \emph{unless} $|\xi_1+\epsilon\cdot\delta L^{2\gamma}r|+|\rho_1+\epsilon'\cdot\delta L^{2\gamma}r|\gtrsim L$. In the latter case, since $|r|\lesssim L^{-\gamma-\eta}$, we must have $|\xi_1|+|\rho_1|\gtrsim L$. Therefore we can gain the power $L^{-1+2\eta}$ using the $\langle\xi_1\rangle^{-1}$ or $\langle \rho_1\rangle^{-1}$ as above.

Now we are left with $B_2$. By stationary phase, the integral in $(x,y)$ is bounded by $\langle u_1\rangle^{-d}$, so trivially $|B_2|\lesssim \delta^{-1}L^{2(d-\gamma)}$ if without extra restrictions (this applies for example to $\Mc_j$ for $j>1$). For $j=1$ we have one of the assumptions (a)--(c) available, which will lead to further power gains. In fact, since $|r|\lesssim L^{-\gamma-\eta}$, case (b) already implies the gain $L^{-\gamma-\eta}$ which suffices for (\ref{FourierboundI}); for (c), the extra factor $|t_1-s_1|^{1-\eta}= (\delta L^{2\gamma})^{-1+\eta} |u_1|^{1-\eta}$ can be transformed to a gain of $(\delta L^{2\gamma})^{-1+\eta}\ll L^{-\gamma-\eta}$ since $\langle u_1\rangle^{-d}|u_1|^{1-\eta}$ is still integrable in $u_1$. Finally, in case (a), we have that $s_1\leq t_0\leq t_1$ or $t_1\leq t_0 \leq s_1$, which means that $|t_1-t_0|\leq |t_1-s_1|=(\delta L^{2\gamma})^{-1} |u_1|$. Thus, integrating in $t_1$ with fixed $u_1$ gives an extra factor of $(\delta L^{2\gamma})^{-1} |u_1|$ in estimating $B_2$, which also gives the gain $(\delta L^{2\gamma})^{-1}$.

Summing up, in all cases we have proved that $|\Mc_1|\lesssim \delta^{-1}L^{2(d-\gamma)-\gamma-\eta}$ and $|\Mc_j|\lesssim \delta^{-1}L^{2(d-\gamma)}$ for $j>1$, which then implies $|\Jc|\lesssim (C^+\delta^{-1})^m L^{2m(d-\gamma)-\gamma-\eta}$ uniformly in $(e,f,g,h,x_0)$ and $t_0$.

\medskip

\underline{\emph{Step 3: Going from $\Jc$ to $\Ic$.}} Recall that $\Ic$ is defined as in (\ref{IexpProof1}) and $\Jc=\Jc(t_0)$ is function valued in a Banach space $\Xf:=L_{(e,f,g,h,x_0)}^\infty$ with $\|\Jc\|_{L^\infty}\lesssim (C^+\delta^{-1})^m L^{2m(d-\gamma)-\gamma-\eta}:=\Ac$. Now it suffices to prove that $\|\Ic\|_{X_{\mathrm{loc}}^{2\eta^2,0}}\lesssim L^{100d\eta^2}\Ac$. Note that, if we insert suitable time cutoff functions to the definition of $\Ic$ in (\ref{IexpProof1}), then a simple integration by parts with (\ref{IexpProof1}) gives
\begin{equation}\label{JtoIbound}\|\widehat{I}(\xi)\|_{\Xf}\lesssim \langle \xi\rangle^{-1}\bigg\|\int_\Rb\chi(t_0) \Jc(t_0)e^{i\xi t_0}\,\mathrm{d}t_0\bigg\|_{\Xf}+\langle \xi\rangle^{-2}\|\Jc\|_{L^\infty}\lesssim \langle \xi\rangle^{-1}\|\Jc\|_{L^\infty},\end{equation} which immediately implies (\ref{FourierboundI}) if we restrict to $|\xi|\leq L^{50d}$.

Now if instead $|\xi|\geq L^{50d}$, then we may replace the $L_{(e,f,g,h,x_0)}^\infty$ norm by the $L_{(e,f,g,h,x_0)}^1$ norm, and then sum over $(e,f,g,h,x_0)$ (which has $\lesssim L^{5d}$ choices due to the support assumption). Then we are dealing with the expression $\Ic$ (or $\Jc$) with the values of $(e,f,g,h,x_0)$ fixed, so the value of $\alpha:=\pi (\delta L^{2\gamma}(r\cdot\zeta_0)+\lambda_0)$ is also fixed. Integrating by parts in $t_0$ in (\ref{JtoIbound}) again, we get
\begin{equation}\label{JtoIbound2}\|\widehat{I}(\xi)\|_{\Xf}\lesssim\langle \xi\rangle^{-1}(\langle \xi-\alpha\rangle^{-1}+\langle \xi\rangle^{-1})\cdot\|\widetilde{\Jc}\|_{L^\infty},\end{equation} where $\widetilde{\Jc}=(\partial_{t_0}-i\alpha)\Jc$. By (\ref{IexpProof1}), this $\widetilde{\Jc}$ is defined similar to $\Jc$, but without the $e^{\pi i\cdot\delta L^{2\gamma}t_0(r\cdot\zeta_0)}$ and $e^{\pi i\lambda_0t_0}$ factors and with one of the $t_j$ or $s_j$ variables fixed. Going back to the argument in Step 2, we can then estimate at most one $\Mc_j$ trivially (say $|\Mc_j|\lesssim L^{2d}$) while the other $\Mc_j$ are still bounded as above, and gain a power $\langle \xi\rangle^{-1/2}\lesssim L^{-25d}$ from (\ref{JtoIbound2}), which completes the proof.
\end{proof}
\begin{lem}\label{sumintest2} Fix $\gamma\leq \frac12$ and $m\geq 0$, consider the same setting as in Lemma \ref{sumintest1}, but with the following differences. First, there are three more vector variables which we call $(u_1,u_2,u_3)$ in addition to $(x_0,x_j,y_j)$; accordingly, there are three more time variables $(\tau_1,\tau_2,\tau_3)$ in addition to $(t_0,t_j,s_j)$ and three more parameters $(\sigma_1,\sigma_2,\sigma_3)$ in addition to $(\lambda_0,\lambda_j,\mu_j)$. There are also three more alternative variables $(k_{2m+2},k_{2m+3},k_{2m+4})$ in additional to $(k_j)$ and $(\ell_{2m+2},\ell_{2m+3},\ell_{2m+4})$ in addition to $(\ell_j)$. Let the definitions of ${\textbf{x}}$, ${\textbf{k}}$ and $\boldsymbol{\ell}$ include these extra variables, then they satisfy the same assumptions as in Lemma \ref{sumintest1}.

Then $\Ic$ is defined as in (\ref{sumintI1}), but we also sum over the new $u_j$ variables and integrate over the new $\tau_j$ variables. The domain $\Dc$ defined in the same way as in Lemma \ref{sumintest1} (except we include the new $\tau_j$ variables). The summand-integrand in $\Ic$ is the same as in (\ref{sumintI1}), and the bounds in (\ref{propertyKj}) are also the same, except that (i) we also include the new $k_j$ and $\ell_j$ variables and the corresponding $\Kc_j$ and $\Kc_j^*$ functions that also satisfy (\ref{propertyKj}), and (ii) we also include the extra factors \[e^{\pi i\cdot \delta L^{2\gamma}(\tau_1-\tau_3)(u_1\cdot u_2)},\quad e^{\pi i\cdot \delta L^{2\gamma}(\tau_2-\tau_3)\Lambda},\quad e^{\pi i\cdot \delta L^{2\gamma}\tau_3(r\cdot\xi)},\quad e^{\pi i\sigma_j\tau_j}\,(1\leq j\leq 3),\] where $\Lambda\in\{u_1\cdot u_3,u_3\cdot (u_1+u_2-u_3)\}$, and $\xi=au_1+bu_2+cu_3+dr$ with $a,b,c,d\in\{0,\pm1\}$. Finally, we do not require one of the scenarios (a)--(c) to happen as in Lemma \ref{sumintest1}. 

Then, uniformly in $(\lambda_0,\lambda_j,\mu_j,\sigma_j)$ and in the choice of the unit balls containing $(e,f,g,h,x_0)$ described above, and with the norm taken in $(x_0,e,f,g,h,t)$, we have that
\begin{equation}\label{FourierboundI2}\|\Ic\|_{X_{\mathrm{loc}}^{2\eta^5,0}}\lesssim (C^+\delta^{-1})^{m+2}L^{(2m+4)(d-\gamma)-d+\eta^4}.
\end{equation}
\end{lem}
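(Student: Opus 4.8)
\textbf{Proof proposal for Lemma \ref{sumintest2}.}

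The plan is to reduce Lemma \ref{sumintest2} to the mechanism already developed in the proof of Lemma \ref{sumintest1}, with the main difference being that we exploit the restriction $\gamma\le \frac12$ instead of $\gamma\le\frac45-\eta$, and that the extra block of three vector variables $(u_1,u_2,u_3)$ with the companion oscillatory factors $e^{\pi i\delta L^{2\gamma}(\tau_1-\tau_3)(u_1\cdot u_2)}$, $e^{\pi i\delta L^{2\gamma}(\tau_2-\tau_3)\Lambda}$ behaves exactly like a ``detached'' copy of the basic $m=2$ situation. Concretely, I would first run Steps 1--2 of the proof of Lemma \ref{sumintest1} verbatim on the $(x_j,y_j)$-part: localize each $k_j$, $\ell_j$ into unit balls (paying $(\Lambda_1\cdots\Lambda_{2m+4})^d\lesssim C^{m}$), perform the affine volume-preserving change of variables to $(k_1,\dots,k_{2m})$, expand the weight $W$ in Fourier series, and factorize the time integral into a product of factors $\Mc_j$. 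Since here we do not assume any of the scenarios (a)--(c), each $\Mc_j$ with $j\ge 1$ is only bounded by the trivial stationary-phase estimate $|\Mc_j|\lesssim \delta^{-1}L^{2(d-\gamma)}$ — that is fine, because the target exponent $(2m+4)(d-\gamma)-d+\eta^4$ in \eqref{FourierboundI2} is weaker than the target in \eqref{FourierboundI} by exactly the room needed: we are allowed to lose the gains on the $m$ factors and instead must \emph{gain} $d-\eta^4$ from the remaining structure.

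The heart of the matter is the $(u_1,u_2,u_3)$-block. After localizing $u_1,u_2,u_3$ into unit balls (via the $k_{2m+2},k_{2m+3},k_{2m+4}$ decay in \eqref{propertyKj}) and expanding the corresponding part of $W$ in Fourier series, the relevant sub-integral has the form
\[
\sum_{(u_1,u_2,u_3)}\Kc^*(\cdots)\int_{\Dc}e^{\pi i\delta L^{2\gamma}(\tau_1-\tau_3)(u_1\cdot u_2)}\,e^{\pi i\delta L^{2\gamma}(\tau_2-\tau_3)\Lambda}\,e^{\pi i\delta L^{2\gamma}\tau_3(r\cdot\xi)}\prod_{j=1}^3 e^{\pi i\sigma_j\tau_j}\,\mathrm{d}\tau_1\mathrm{d}\tau_2\mathrm{d}\tau_3 ,
\]
with $\Lambda$ one of $u_1\cdot u_3$ or $u_3\cdot(u_1+u_2-u_3)$. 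The key point is that in \emph{both} cases the two phases $(u_1\cdot u_2)$ and $\Lambda$ are linearly independent quadratic forms in $(u_1,u_2,u_3)$ whose Hessian is nondegenerate in all three vector variables (this is a direct linear-algebra check: the off-diagonal pairing $u_1\!\cdot\! u_2$ plus either $u_1\!\cdot\! u_3$ or $u_3\!\cdot\!(u_1+u_2-u_3)$ couples all of $u_1,u_2,u_3$). So after Poisson summation in $(u_1,u_2,u_3)$ and an application of Lemma \ref{NTSP} (the $g=h=0$ term being handled by stationary phase, the nonzero terms by the minor-arc estimate \eqref{NTSPest} or integration by parts exactly as in the proof of Lemma \ref{sumintest1}), this block contributes $L^{3d}$ times a time-integral bounded, via stationary phase in the rescaled time variable $u=\delta L^{2\gamma}(\tau_1-\tau_3)$ together with the analogous substitution for $\tau_2-\tau_3$, by $\delta^{-2}L^{-4\gamma}\cdot L^{O(\eta)}$ — that is, the three-vector block is of size $\lesssim\delta^{-2}L^{3d-4\gamma+O(\eta)}$. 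Combined with $\prod_j|\Mc_j|\lesssim(C^+\delta^{-1})^m L^{2m(d-\gamma)}$ and noting $3d-4\gamma\le(2m+4)(d-\gamma)-d+2m(d-\gamma)^{-\text{type bookkeeping}}$... more precisely, $2m(d-\gamma)+3d-4\gamma = (2m+4)(d-\gamma)+(3d-4\gamma-4d+4\gamma)=(2m+4)(d-\gamma)-d$, which is exactly the exponent claimed in \eqref{FourierboundI2}, so the budget closes with $\eta^4$ to spare.

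The last step, going from the pointwise-in-$(t_0,e,f,g,h,x_0)$ bound to the $X_{\mathrm{loc}}^{2\eta^5,0}$ bound in \eqref{FourierboundI2}, is identical to Step 3 of the proof of Lemma \ref{sumintest1}: one integrates by parts in $t_0$ to gain $\langle\xi\rangle^{-1}$ for $|\xi|\le L^{50d}$, and for $|\xi|\ge L^{50d}$ one passes to the $L^1$ norm in $(e,f,g,h,x_0)$, fixes those variables, and integrates by parts twice in $t_0$ to gain $\langle\xi\rangle^{-1/2}$, estimating one of the remaining $\Mc_j$ (or one of the $\tau_j$-integrals) trivially. The main obstacle I anticipate is \emph{not} the analysis — which is a recombination of Lemmas \ref{NTSP}, \ref{lem:minorarcs}, \ref{NTintlem} exactly as in Lemma \ref{sumintest1} — but the careful verification that the two quadratic phases in the $(u_1,u_2,u_3)$-block are jointly nondegenerate \emph{in each of the three vectors separately} (so that the stationary-phase/Poisson machinery of Lemma \ref{NTSP}, which is stated for a single bilinear phase $s\langle x,y\rangle$, can be applied iteratively: first eliminate $u_3$ using the $\tau_2-\tau_3$ phase, then $(u_1,u_2)$ using the $\tau_1-\tau_3$ phase), together with the bookkeeping showing that the restriction $\gamma\le\frac12$ — rather than $\gamma\le\frac45-\eta$ — is precisely what is needed to apply the minor-arc bound \eqref{NTlem22} to the new time variables without loss. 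I would organize the write-up so that the $(x_j,y_j)$ reduction is quoted from Lemma \ref{sumintest1} and only the three-vector computation is carried out in detail.
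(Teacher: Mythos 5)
Your architecture is right and matches the paper's: keep the $(x_j,y_j)$ reduction from Lemma~\ref{sumintest1}, bound each of the $m$ factors $\Mc_j$ trivially by $\delta^{-1}L^{2(d-\gamma)}$, and separately bound the new three-vector block by $\lesssim\delta^{-2}L^{3d-4\gamma}$; your identity $2m(d-\gamma)+3d-4\gamma=(2m+4)(d-\gamma)-d$ is exactly the exponent bookkeeping the paper uses, and the Step 3 transfer to the $X^{2\eta^5,0}_{\mathrm{loc}}$ norm is quoted correctly.

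You do, however, misidentify where the restriction $\gamma\le\frac12$ enters and consequently propose a more involved mechanism than the paper uses. The paper's treatment of $\Mc_*$ does \emph{not} invoke the minor-arc estimate \eqref{NTlem22}, nor does it iterate Lemma~\ref{NTSP} pairwise across the three $u_j$'s. After integrating trivially in $\tau_3$ and setting $\theta_j=\delta L^{2\gamma}(\tau_j-\tau_3)$ for $j=1,2$, the hypothesis $\gamma\le\frac12$ forces $|\theta_j|\lesssim L$; after Poisson summation in $(u_1,u_2,u_3)$, the phase then satisfies $|\nabla_{u_j}\Phi|\ge |Lf_j-C_j|-O(L)$, so integration by parts is trivial except for $O(1)$ triples $(f_1,f_2,f_3)$. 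There is no minor-arc regime to control, precisely because $\theta_j$ never reaches scales where the Gauss-sum machinery would be needed; this is the whole reason the lemma is stated only for $\gamma\le\frac12$. For the surviving $O(1)$ Poisson terms, a single stationary-phase estimate gives the joint bound $\min(\langle\theta_1\rangle^{-d},\langle\theta_2\rangle^{-d})$ (fix one $u_j$ and do a bilinear stationary phase in the other two, once using $\theta_1$ and once using $\theta_2$ as the large parameter), which is $(\theta_1,\theta_2)$-integrable for $d\ge 3$. So your concern about verifying nondegeneracy of the two quadratic phases ``in each of the three vectors separately,'' and your plan to apply Lemma~\ref{NTSP} iteratively, would work but is doing more than necessary; the paper's argument is more elementary once the bound $|\theta_j|\lesssim L$ is in hand, and this bound is the actual content of the hypothesis $\gamma\le\frac12$.
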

\begin{proof}We perform the same reduction procedure as in the proof of Lemma \ref{sumintest1}. By restricting each $x_j,y_j$ and $u_j$ to a unit ball, expanding $W$ (which is the product of all the $\Kc_j$ and $\Kc_j^*$ factors) using Fourier integral, writing $\Ic$ as an integral of $\Jc$ in $t_0$, and enlarging the time integration domain, we can reduce to $m+1$ expressions, which we denote by $\Mc_j\,(1\leq j\leq m)$ and $\Mc_*$. In fact, the expressions $\Mc_j$ are exactly the same as in the proof of Lemma \ref{sumintest1}, and satisfy the same bounds $|\Mc_j|\lesssim \delta^{-1}L^{2(d-\gamma)}$; therefore it suffices to study $\Mc_*$, which has the expression
\begin{multline}\Mc_*:=\int_{[0,1]^3}\bigg|\sum_{(u_1,u_2,u_3)\in\Zb_L^{3d}}\chi_0(u_1-c_1)\chi_0(u_2-c_2)\chi_0(u_3-c_3)\\
\times e^{\pi i\left[u_1\cdot\nu_1+u_2\cdot\nu_2+u_3\cdot\nu_3+\delta L^{2\gamma}(\tau_1-\tau_3)(u_1\cdot u_2)+\delta L^{2\gamma}(\tau_2-\tau_3)\Lambda+\delta L^{2\gamma}\tau_3(r\cdot\xi)\right]}\bigg|\,\mathrm{d}\tau_1\mathrm{d}\tau_2\mathrm{d}\tau_3,
\end{multline} with fixed vectors $\nu_j$. By integrating trivially in $\tau_3$, defining $\theta_j=\delta L^{2\gamma}(\tau_j-\tau_3)$ for $1\leq j\leq 2$ with fixed $\tau_3$, and applying Poisson summation as in the proof of Lemma \ref{sumintest1} above, we can reduce to
\begin{multline}\label{IexpProof3}
\Mc_*\leq(\delta L^{2\gamma})^{-2}L^{3d}\sup_{\tau_3}\int_{|\theta_j|\lesssim \delta L^{2\gamma}}\bigg|\sum_{(f_1, f_2, f_3)\in \Zb^{3d}} \int_{\Rb^{3d}}\prod_{j=1}^3\chi(u_j-c_j)e^{\pi i u_j \cdot(\nu_j-Lf_j)}\\
\times e^{\pi i\cdot \left[\theta_1(u_1\cdot u_2)+\theta_2\Lambda+\rho\cdot\xi\right]}\,\mathrm{d}u_1\mathrm{d}u_2\mathrm{d}u_3\bigg|\,\mathrm{d}\theta_1\mathrm{d}\theta_2,\end{multline}
where $\rho:=\delta L^{2\gamma}\tau_3 \cdot r$. To control this last expression, note that $|\theta_1|, |\theta_2|\lesssim L$ due to the assumption $\gamma\leq \frac12$. For fixed $(\theta_1,\theta_2)$, the phase function
$$
\Phi(u_1, u_2, u_3)=\sum_{j=1}^3 u_j \cdot(\nu_j-Lf_j) + \theta_1(u_1\cdot u_2)+\theta_2\Lambda+\rho\cdot\xi
$$
satisfies $|\nabla_{u_j}\Phi|\geq |Lf_j-C_j|-O(L)$ for some fixed $C_j$ (which may depend on $\theta_j$), therefore for all but $O(1)$ values of $f_j$, the integral in $(u_1,u_2,u_3)$ can be controlled trivially by integrating by parts. For these $O(1)$ values of $f_j$, a simple stationary phase argument yields that the integral in $(u_1,u_2,u_3)$ integral is bounded by $\min(\langle \theta_1\rangle^{-d},\langle \theta_2\rangle^{-d})$, which is integrable in $(\theta_1,\theta_2)$ for $d\geq 3$. This implies that $\Mc_*\lesssim \delta^{-2}L^{3d-4\gamma}$, and combining with the bounds for other $\Mc_j$ implies that
\[|\Jc|\lesssim (C^+\delta^{-1}L^{2(d-\gamma)})^m\cdot C^+\delta^{-2}L^{3m-4\gamma}.\] Then, repeating the last part of the proof in Lemma \ref{sumintest1} and noticing that we are allowed to lose $L^{\eta^4}$ here, we can easily deduce (\ref{FourierboundI2}).
\end{proof}
\begin{cor}\label{sumintest3} Fix $\gamma\leq 1/2$. Consider the following setting, which is basically a ``concatenation" of Lemma \ref{sumintest2}: the output variables are denoted $(e,f,g,h,x_0^0,t)$, with $(e,f,g,h,x_0^0)$ each in a unit fixed ball. The input variables are $(x_0^q,x_j^q,y_j^q,u_1^q,u_2^q,u_3^q)$ and $(t_0^q,t_j^q,s_j^q,\tau_1^q,\tau_2^q,\tau_3^q)$ where $0\leq q<Q$ and $1\leq j\leq m_q$ (but excluding $x_0^0$), with $m_0+\cdots +m_Q=m$. The parameters are $(\lambda_0^q,\lambda_j^q,\mu_j^q,\sigma_1^q,\sigma_2^q,\sigma_3^q)$ as above. The alternative variables are ${\textbf{k}}=(k_1,\cdots,k_{2m+4Q})$ and $\boldsymbol{\ell}=(\ell_1,\cdots,\ell_{2m+4Q})$, and ${\textbf{x}}=(x_0^0,x_0^q, x_j^q, y_j^q, u_1^q, u_2^q, u_3^q)_{0\leq q<Q,1\leq j\leq m_q}$ (including $x_0^0$) satisfy the same properties as in Lemma \ref{sumintest2} and Lemma \ref{sumintest1}), with $\Lambda_1\cdots\Lambda_{2m+4Q}\lesssim C^{m+Q}$ in assumption (ii) of Lemma \ref{sumintest1}. The expression $\Ic$ is defined as in (\ref{sumintI1}) with summation and integration in all input variables (vector and time, see above). The domain $\Dc$ is defined in the same way but involves all the time variables, and the summand-integrand in $\Ic$ includes the following factors:
\begin{itemize}
\item Functions $\Kc_j(k_j)$ and $\Kc_j^*(\ell_j)$ for $1\leq j\leq 2m+4Q$ that each satisfies (\ref{propertyKj});
\item All the factors $e^{\pi i(\cdots)}$ occurring in Lemma \ref{sumintest1} and \ref{sumintest2} \emph{except $e^{\pi i\cdot\delta L^{2\gamma}t_0(r\cdot \zeta_0)}$}, for all $0\leq q<Q$ (such as $e^{\pi i\cdot\delta L^{2\gamma}(t_j^q-s_j^q)(x_j^q\cdot y_j^q)}$, $e^{\pi i(\lambda_j^qt_j^q+\mu_j^qs_j^q)}$, $e^{\pi i\cdot \delta L^{2\gamma}(\tau_1^q-\tau_3^q)(u_1^q\cdot u_2^q)}$, $e^{\pi i\cdot\delta L^{2\gamma}t_j^q(r\cdot \zeta_j^q)}$ for $j\neq 0$);
\item Extra factors of $e^{\pi i\cdot\delta L^{2\gamma}t_0^q(r\cdot\zeta_0^q)}$ for $0\leq q<Q$, where $\zeta_0^0$ equals $x_0^0$ plus or minus a vector in $(e,f,g,h)$, and $\zeta_0^q\,(1\leq q<Q)$ is an arbitrary linear combination of the $(x,y,u)$ variables and $(e,f,g,h)$.
\end{itemize} Then, uniformly in $(\lambda,\mu,\sigma)$ parameters and in the choice of the unit balls containing $(e,f,g,h,x_0^0)$ described above, and with the norm taken in $(x_0^0,e,f,g,h,t)$, we have that
\begin{equation}\label{FourierboundI3}\|\Ic\|_{X_{\mathrm{loc}}^{2\eta^5,0}}\lesssim (C^+\delta^{-1})^{m+2Q}L^{(2m+4Q)(d-\gamma)-d+\eta^4}.
\end{equation}
\end{cor}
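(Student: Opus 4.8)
The plan is to establish Corollary \ref{sumintest3} by running the arguments from the proofs of Lemmas \ref{sumintest1} and \ref{sumintest2} \emph{globally}, over all $Q$ blocks at once, and then observing that the product of the resulting block-by-block bounds already gives the stated estimate. First I would carry out the reduction of Step 1 of the proof of Lemma \ref{sumintest1} at the global level: use the $\langle k\rangle^{-40d}$ decay of each $\Kc_j$ to localize every $k_j$ to a unit ball (summing over the $O(1)$-worth of centers at the end); use hypothesis (ii), with $\Lambda_1\cdots\Lambda_{2m+4Q}\lesssim C^{m+Q}$, to localize every $\ell_j$ to a unit ball at total cost $C^{m+Q}$; and use hypothesis (iii) to conclude that every component of $\textbf{x}$ lies in a unit ball. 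Freezing the output variables $(e,f,g,h,x_0^0)$, trade one $k_j$ for an affine combination of the others and perform a volume- and lattice-preserving change of variables identifying the $2m+4Q-1$ remaining free vector variables with the remaining $k_j$. The product $W$ of all the $\Kc_j(k_j)\Kc_j^*(\ell_j)$ then satisfies $\|\widehat W\|_{L^1}+\|\widehat{\partial W}\|_{L^1}\lesssim(C^+)^{m+Q}$, so it may be expanded in its Fourier transform and replaced by a product of single-variable linear exponential factors (with at most one extra weight $\langle\xi\rangle^{-1}$); all these costs are absorbed into the prefactor $(C^+\delta^{-1})^{m+2Q}$.

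The second step is the factorization together with the model bounds, exactly as in Step 2 of the proofs of Lemmas \ref{sumintest1} and \ref{sumintest2}. Single out the time variable $t_0^0$ conjugate to the output $t$ and write $\Ic=\int_0^t\Jc\,\mathrm{d}t_0^0$; in $\Jc$ discard, via absolute values, every exponential factor sitting outside the vector sums — all the $e^{\pi i\lambda t},e^{\pi i\mu s},e^{\pi i\sigma\tau}$ factors and, crucially, all the cross-block factors $e^{\pi i\cdot\delta L^{2\gamma}t_0^q(r\cdot\zeta_0^q)}$ with $q\ge1$ — and enlarge the time domain. The only bilinear phases that remain are the intra-piece ones ($x_j^q\cdot y_j^q$ for each pair, and $u_1^q\cdot u_2^q$ together with $\Lambda^q$ for each triple), so $\Jc$ factorizes into one factor $\Mc_j^q$ per pair $(x_j^q,y_j^q)$, one factor $\Mc_*^q$ per triple $(u_1^q,u_2^q,u_3^q)$, and one trivial sum–integral in $(x_0^q,t_0^q)$ per block with $q\ge1$. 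The $\Mc_j^q$ are precisely the expressions $\Mc_j$ ($j>1$) estimated in Step 2 of the proof of Lemma \ref{sumintest1}, with no scenario (a)--(c) available, hence $|\Mc_j^q|\lesssim\delta^{-1}L^{2(d-\gamma)}$; the $\Mc_*^q$ are precisely the expression $\Mc_*$ estimated in the proof of Lemma \ref{sumintest2} (here $\gamma\le1/2$ is used to keep the rescaled time variables of size $\lesssim L$, and $d\ge3$ for the integrability of $\min(\langle\theta_1^q\rangle^{-d},\langle\theta_2^q\rangle^{-d})$ after Poisson summation and stationary phase in the $u$'s), hence $|\Mc_*^q|\lesssim\delta^{-2}L^{3d-4\gamma}$; and each trivial $(x_0^q,t_0^q)$ piece is $O(L^d)$ (sum $x_0^q$ over its unit ball, integrate $t_0^q$ over $[0,1]$), which already saturates the target, so no finer oscillation bound is needed there. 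Multiplying,
\[|\Jc|\lesssim(C^+\delta^{-1})^{m+2Q}L^{2m(d-\gamma)+Q(3d-4\gamma)+(Q-1)d}=(C^+\delta^{-1})^{m+2Q}L^{(2m+4Q)(d-\gamma)-d},\]
uniformly in all parameters and in the choice of unit balls for $(e,f,g,h,x_0^0)$.

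Passing from $\Jc$ to $\Ic$ is then the Step 3 argument of the proof of Lemma \ref{sumintest1} verbatim: $\Jc(t_0^0)$ is valued in $L^\infty_{(e,f,g,h,x_0^0)}$ with norm $\lesssim\Ac:=(C^+\delta^{-1})^{m+2Q}L^{(2m+4Q)(d-\gamma)-d}$; integration by parts in $t_0^0$ gives $\|\widehat\Ic(\xi)\|\lesssim\langle\xi\rangle^{-1}\Ac$, which handles $|\xi|\le L^{50d}$, and for $|\xi|\ge L^{50d}$ one replaces the $L^\infty$ norm by $L^1$, sums over the $\lesssim L^{5d}$ centers of $(e,f,g,h,x_0^0)$ — which freezes $\alpha=\pi(\delta L^{2\gamma}(r\cdot\zeta_0^0)+\lambda_0^0)$ — and integrates by parts twice more to gain $\langle\xi\rangle^{-1/2}\lesssim L^{-25d}$ at the cost of one trivially-estimated time variable. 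The only net loss along the way is the $L^{O(\eta^5)}$ from the $X^{2\eta^5,0}$ weight on the remaining Fourier variable, which is absorbed into the allowed $L^{\eta^4}$, and this yields \eqref{FourierboundI3}.

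The main obstacle is the bookkeeping in the factorization step: one must verify that after the global Fourier expansion of $W$ the sum–integral genuinely decouples into the advertised $\Mc_j^q$, $\Mc_*^q$, and trivial factors — i.e. that no residual bilinear coupling survives \emph{between} blocks (this is exactly why the cross-block phases $e^{\pi i\cdot\delta L^{2\gamma}t_0^q(r\cdot\zeta_0^q)}$ must be killed by absolute values, and why the $\ell$-chain of hypothesis (ii) must be absorbed into the $C^{m+Q}$ localization cost rather than into the volume count) nor \emph{within} a block between its $(x,y)$-part and its $(u)$-part. Once this decoupling is in place, the estimate is just a product of the two model bounds already proved in Lemmas \ref{sumintest1}--\ref{sumintest2}, and the arithmetic identity $2m(d-\gamma)+Q(3d-4\gamma)+(Q-1)d=(2m+4Q)(d-\gamma)-d$ makes the stated bound follow directly.
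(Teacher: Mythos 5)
Your proof follows the same route as the paper's (write $\Ic=\int_0^t\Jc\,\mathrm{d}t_0^0$, reduce the $X^{2\eta^5,0}$ norm of $\Ic$ to the $L^\infty$ norm of $\Jc$ with a small loss, factorize $\Jc$ into the per-block $\Mc_j^q$, $\Mc_*^q$ and trivial $(x_0^q,t_0^q)$ pieces, multiply the bounds) and the prefactor arithmetic $2m(d-\gamma)+Q(3d-4\gamma)+(Q-1)d=(2m+4Q)(d-\gamma)-d$ is correct.

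One step is stated in a way that would fail if read literally. You claim the cross-block phases $e^{\pi i\cdot\delta L^{2\gamma}t_0^q(r\cdot\zeta_0^q)}$ with $q\geq 1$ "sit outside the vector sums" and hence can be discarded by taking absolute values. But the hypothesis explicitly allows $\zeta_0^q$ to be an arbitrary linear combination of \emph{all} the $(x,y,u)$ variables, including those of other blocks $q'\neq q$; in that case the phase does sit inside the sum over $(x_j^{q'},y_j^{q'})$ or $(u_1^{q'},u_2^{q'},u_3^{q'})$, and an absolute value placed outside that sum does not delete it. What actually makes these phases harmless — and this is what the paper's terse "the linear phases $\ldots$ do not affect any part of the proof" encodes — is that they are \emph{linear} in the vector variables, so after the Fourier expansion of $W$ they only shift the parameters $\xi_j,\rho_j$ in the corresponding $\Mc_j^{q'}$ or $\Mc_*^{q'}$ factors by $O(\delta L^{2\gamma}|r|)\lesssim \delta L^{\gamma-\eta}$; the bounds on $\Mc_j$ and $\Mc_*$ are uniform in those parameters (and the one place Step 2 of Lemma \ref{sumintest1} is sensitive to the size of $\xi,\rho$, namely the $B_1$ term, only engages when $|\xi|+|\rho|\gtrsim L$, so a sub-$L$ shift is invisible there). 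With this correction — absorbing the cross-block phases into uniform parameter shifts rather than discarding them — your argument is the paper's argument with the details filled in.
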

\begin{proof} The proof is almost the same as Lemma \ref{sumintest2}. We write $\Ic$ as an integral of $\Jc$ in $t_0^0$, and use the same arguments as in the proof of Lemma \ref{sumintest1} and \ref{sumintest2} to reduce the $X_{\mathrm{loc}^{2\eta^5,0}}$ norm of $\Ic$ to the $L^\infty$ norm of $\Jc$ with $L^{\eta^4}$ loss. Then, in estimating $\Jc$ we may make the same reductions, sum in $x_0^q$ variables an integrate in $t_0^q$ variables trivially, and then reduce to the same $\Mc_j^q$ and $\Mc_*^q$ quantities as occurred in the proof of Lemma \ref{sumintest2}. These quantities are then estimated in the same way, noticing that the linear phases $e^{\pi i\cdot\delta L^{2\gamma}t_0^q(r\cdot\zeta_0^q)}$ do not affect any part of the proof. Putting together, and noticing that the summation in each $x_0^q$ variables leads to a factor of $L^d$, this proves (\ref{FourierboundI3}).
\end{proof}
\subsection{Application to vines} Using Lemmas \ref{sumintest1}--\ref{sumintest2}, Definition \ref{twistgen} and Remark \ref{fulltwistrep}, we can prove the estimates regarding the part of the expression $\Kc_{(\Qc_{\mathrm{sk}},\As)}$ in (\ref{bigformula2}) where we only sum and integrate over the variables corresponding to a bad or normal vine $\Vb$ in $\Qc_{\mathrm{sk}}$.
\subsubsection{Estimates for vines}\label{vinesubset} Given a couple $\Qc_0$ (not necessarily prime) and a collection $\As$ of regular trees and regular couples as in Proposition \ref{skeleton}, let $\Qc\sim(\Qc_0,\As)$. Fix also a (CL) vine $\Vb\subset\Mb(\Qc_0)$, then we can write  $(\Qc_0,\As)\leftrightarrow(\Qc^{\mathrm{sp}}, \texttt{cod},\mathfrak{n},\texttt{ind},\Bs,\As^{\mathrm{sp}})$ by Remark \ref{fulltwistrep}. we shall fix $(\Qc^{\mathrm{sp}}, \texttt{cod},\mathfrak{n},\As^{\mathrm{sp}})$ and let $\mathtt{ind}$ and $\Bs$ vary as in Remark \ref{fulltwistrep}; in particular $\mathtt{sgn}$ is fixed as in the notion of Definition \ref{twistrep}.

Define $n_1$ to be the number of branching nodes in $\Qc_0[\Vb]\backslash\{\uf_1\}$, and $n_2=n_1+n(\Bs)$. Consider the formula (\ref{bigformula2}) but with $\Qc_{\mathrm{sk}}$ replaced by $\Qc_0$ (and associated notations changed accordingly, like $\Es_{\mathrm{sk}}$ replaced by $\Es_0$ etc.). For later purposes, we also fix a set $W\subset\Qc_0[\Vb]$ containing $\uf_1$, and a Banach space valued function $Z=Z(x_0,k_{\uf_1},k_{\uf_{11}},k_{\uf_{21}},k_{\uf_{22}},t_{\uf_1},t_{\uf_{21}},t_{\uf_{22}},t_{\uf_2})$. Define the expression $\Kc_{(\mathtt{sgn},\mathtt{ind},\Bs)}^{(\Vb,Z,W)}$ similar to $\Kc_{(\Qc_0,\As)}$ in (\ref{bigformula2}), but that:
\begin{enumerate}[{(a)}]
\item We replace the power $(\delta/(2L^{d-\gamma}))^{n_0}$ (where $n_0$ is the order of $\Qc_0$) by $(\delta/(2L^{d-\gamma}))^{n_1}$;
\item We replace the factor $\zeta(\Qc_0)$ by the product of $i\zeta_\nf$ where $\nf$ runs over all branching nodes in $\Qc_0[\Vb]\backslash\{\uf_1\}$;
\item In the summation $\sum_{\Es_0}(\cdots)$, we only sum over the variables $k_\nf$ for $\nf\in\Qc_0[\Vb]\backslash\{\uf_1\}$ (including branching nodes and leaves), and treat the other $k_\nf$ variables as fixed. We also replace $\epsilon_{\Es_0}$ by the product of factors on the right hand side of (\ref{defcoef}), but only for $\nf\in W$. In particular, we have that $k_{\uf_{21}}\neq k_{\uf_{22}}$.
\item In the integral $\int_{\Ec_0}(\cdots)$, we only integrate over the variables $t_\nf$ for all branching nodes $\nf\in\Qc_0[\Vb]\backslash\{\uf_1\}$, and treat the other $t_\nf$ variables as parameters.
\item In the first product $\prod_{\nf\in\Nc_0}(\cdots)$, we only include those factors where $\nf\in\Qc_0[\Vb]$; in the products $\prod_{\mf\in\Lc_0}^{(+)}(\cdots)$ and $\prod_{\mf\in\Nc_0}(\cdots)$, we only include those factors where $\mf\in\Qc_0[\Vb]\backslash\{\uf_1\}$.
\item We include the function $Z$ as a factor in the summand-integrand, where in the place of $x_0$ we plug in $k_{\uf_2}$ if $\mathtt{ind}=-$, and plug in $k_{\uf_{23}}$ otherwise.
\end{enumerate}

Note that $\Kc_{(\mathtt{sgn},\mathtt{ind},\Bs)}^{(\Vb,Z,W)}$ depends on $(\Vb,Z,W)$ and $(\mathtt{ind},\Bs)$, and depends on $(\Qc^{\mathrm{sp}},\nf)$ \emph{only via $\mathtt{sgn}$}. Its output variables are $(k_{\uf_1},k_{\uf_{11}},k_{\uf_{21}},k_{\uf_{22}})$ and time variables $(t_{\uf_1},t_{\uf_{21}},t_{\uf_{22}})$. Since $k_{\uf_1}-k_{\uf_{11}}=\pm(k_{\uf_{21}}-k_{\uf_{22}})$, we may write this function as \begin{equation}\label{eqnx0'}\Kc_{(\mathtt{sgn},\mathtt{ind},\Bs)}^{(\Vb,Z,W)}=\Kc_{(\mathtt{sgn},\mathtt{ind},\Bs)}^{(\Vb,Z,W)}(x_0',k_{\uf_{21}},k_{\uf_{22}},t_{\uf_1},t_{\uf_{21}},t_{\uf_{22}}),\end{equation} where $x_0'$ is replaced by $k_{\uf_1}$ if $\mathtt{sgn}=-$, and by $k_{\uf_{11}}$ otherwise.
\begin{rem}\label{remkv} Suppose the function $Z$ \emph{does not depend on $(k_{\uf_1},k_{\uf_{11}})$.} If we fix $(\Vb,Z,W,\mathtt{ind},\Bs)$ and flip $\mathtt{sgn}$ (which corresponds to flipping $\Qc_0[\Vb]$), then the function $\Kc_{(\mathtt{sgn},\mathtt{ind},\Bs)}^{(\Vb,Z,W)}$, \emph{written exactly as in (\ref{eqnx0'}), does not change.} In fact, consider the decorations of $\Qc_0[\Vb]$ and its flipping, see Figure \ref{fig:flip}, where \emph{the values of $k_{\uf_1}$ and $k_{\uf_{11}}$ are switched}. If we write down the whole expression $\Kc_{(\mathtt{sgn},\mathtt{ind},\Bs)}^{(\Vb,Z,W)}$, which is (\ref{bigformula2}) modified by (a)--(f) above, in these two cases, we can verify that they match exactly term by term, hence the results are the same.
\end{rem}
\begin{prop}\label{estbadvine} Suppose $\Vb$ is a \emph{bad} (CL) vine. In the above setting, let $(\Qc^{\mathrm{sp}}, \mathtt{cod},\mathfrak{n},\As^{\mathrm{sp}})$ and $(Z,W)$ be fixed, then for $\theta\in\{\eta^5,0\}$ we have
\begin{equation}\label{vinebound1}
\bigg\|e^{-\pi i\cdot\delta L^{2\gamma}t_{\uf_1}\Gamma}\sum_{\mathtt{ind},\Bs}\Kc_{(\mathtt{sgn},\mathtt{ind},\Bs)}^{(\Vb,Z,W)}(x_0',k_{\uf_{21}},k_{\uf_{22}},t_{\uf_1},t_{\uf_{21}},t_{\uf_{22}})\bigg\|_{Y_{\mathrm{loc}}^{\theta}}\lesssim (C^+\sqrt{\delta})^{n_2}L^{-\eta^2}\|Z\|_{Y_{\mathrm{loc}}^{\theta}}.
\end{equation} Here $\Gamma:=\zeta_{\uf_{11}}|k_{\uf_{11}}|^2+\zeta_{\uf_{21}}|k_{\uf_{21}}|^2+\zeta_{\uf_{22}}|k_{\uf_{22}}|^2-\zeta_{\uf_1}|k_{\uf_1}|^2$, and the summation in (\ref{vinebound1}) is taken over all choices of $(\mathtt{ind},\Bs)$ as in Remark \ref{fulltwistrep}. More precisely, if $\Vb$ is vine (II-e) then $\mathtt{ind}$ and $\Bs$ are fixed; if $\Vb$ is not vine (II-e) then $\mathtt{ind}\in\{\pm\}$ and $\Bs$ is such that $\Qc^{(\lf,\lf')}$ and $\Tc^{(\mf)}$ are fixed when $\mf\neq\uf_2$ and $(\lf,\lf')\neq (\uf_{23},\uf_0)$, and the value of $n(\Qc^{(\uf_{23},\uf_0)})+n(\Tc^{(\uf_2)})$ is also fixed.
\end{prop}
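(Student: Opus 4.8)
The plan is to exploit the cancellation between the two couples (or families of couples) related by a full unit twist at the core vine $\Vb$, which is precisely the structure encoded by summing over $\mathtt{ind}$ (and over $\Bs$). First I would set up the bookkeeping: by Remark~\ref{fulltwistrep}, fixing $(\Qc^{\mathrm{sp}},\mathtt{cod},\mathfrak n,\As^{\mathrm{sp}})$ and letting $(\mathtt{ind},\Bs)$ vary runs over exactly the full unit twists (together with the freedom in the regular couple/tree $\Qc^{(\uf_{23},\uf_0)}$ and $\Tc^{(\uf_2)}$ of fixed total order). For each such choice, I would write out $\Kc^{(\Vb,Z,W)}_{(\mathtt{sgn},\mathtt{ind},\Bs)}$ using the formula obtained from \eqref{bigformula2} modified by (a)--(f). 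Using Proposition~\ref{regcpltreeasymp}, each regular couple and regular tree occurring inside (those in $\Bs$, i.e.\ at nodes of $\Qc_0[\Vb]\setminus\{\uf_1\}$) is decomposed as $(\Kc_{\Qc^{(\lf,\lf')}})_{\mathrm{app}}+\Rs$ and $(\Kc^*_{\Tc^{(\mf)}})_{\mathrm{app}}+\Rs^*$, and the remainder pieces are handled by absolute-value bounds: they come with a gain $L^{-\gamma_1+2\eta}$ per factor which is already sufficient for \eqref{vinebound1} (after crude $L^1$ time-integration bounds, absorbing the price of counting decorations into $(C^+\sqrt\delta)^{n_2}$ via the counting estimate \eqref{atomiccount} for $q=3,4,5$, which is valid for all $\gamma<1$). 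So the heart of the matter is the main term, where each internal regular structure contributes a product $\delta^{\#}\Jc(t,s)\Mc(k)$ as in \eqref{kqterms}.

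With the main terms substituted, the sum over $(\mathtt{ind},\Bs)$ reduces to a sum over the decorations of $\Vb$ (more precisely of $\Qc_0[\Vb]$), and here is where the twist pairing enters. By Remark~\ref{dectwist} and Remark~\ref{twistexplain}, changing $\mathtt{ind}$ corresponds to switching the labels of two bonds at the atom $v_2$, which by Remark~\ref{twistexplain}(d) flips only $\zeta_{\uf_2}$; by Remark~\ref{dectwist} the corresponding $k$-decorations are in bijection with $k_{\uf_2}\leftrightarrow k_{\uf_{23}}$ swapped, so the $\epsilon$ factors and the oscillatory integrand $\prod e^{\zeta_\nf\pi i\delta L^{2\gamma}\Omega_\nf t_\nf}$ for $\nf\neq\uf_2$ are unchanged, the $n_{\mathrm{in}}$ factors differ only at one leaf (and those two leaf-decorations differ by $O(L^{-\gamma-\eta})$ because $\Vb$ is a vine with SG joints, so $n_{\mathrm{in}}$ there agree up to negligible error), while $\zeta(\cdot)$ changes sign and the time-integration domain $\Ec$ changes as in \eqref{diffint}. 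The difference of the two domains is a ``sandwiched'' region $t_2>t_4>t_3$ (in the notation of Section~\ref{introcancel}), and the key algebraic fact $\Gamma_2+\Gamma_3=O(L^{-\gamma-\eta})$ for the two resonance factors near $v_2$ lets me write the relevant part of the integrand in the form $e^{\pi i L^{2\gamma}(t_2-t_3)(x\cdot y)}$ with $(x,y)$ a pair of summation vectors. At this point I would invoke Lemma~\ref{sumintest1} in scenario (a) (the sandwiched time restriction $s_1<t_0<t_1$), together with the reduction of the remaining regular-structure factors to the $\Kc_j,\Kc_j^*$ inputs satisfying \eqref{propertyKj} (these are precisely the $\Mc$-functions and $\Jc$-functions from Proposition~\ref{regcpltreeasymp}), to obtain the gain $L^{2m(d-\gamma)-\gamma-\eta^2}$, which after dividing by the generic-count normalization $L^{2m(d-\gamma)}$ gives the net gain $L^{-\gamma-\eta^2}\ll L^{-\eta^2}$ demanded in \eqref{vinebound1}. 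For vine (II-e), where $\mathtt{ind}$ and $\Bs$ are fixed, the internal cancellation of the vine itself (the structure built into vine (II-e), as remarked after Proposition~\ref{molecpl}) plays the role of the twist and Lemma~\ref{sumintest1} applies in the same way. The extra factor $e^{-\pi i\delta L^{2\gamma}t_{\uf_1}\Gamma}$ on the left side of \eqref{vinebound1} is exactly what removes the phase of the top node $\uf_1$ so that the output is a genuine function of $(x_0',k_{\uf_{21}},k_{\uf_{22}},t_{\uf_1},t_{\uf_{21}},t_{\uf_{22}})$ compatible with the $Y^\theta_{\mathrm{loc}}$ norm, and its presence is harmless since $\uf_1\notin\Qc_0[\Vb]\setminus\{\uf_1\}$ is not summed.

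Two points require care. First, I must verify that the various vine sub-cases (II-a)/(II-b), (II-c)/(II-d), (II-e), and (I-a)/(I-b) — as enumerated in Proposition~\ref{molecpl} — all fit the hypotheses of Lemma~\ref{sumintest1}: that the alternative variables $\mathbf k,\boldsymbol\ell$ can be chosen with the integrality and ``$\ell_p\pm\ell_q$ in terms of earlier $k_j$'' property, and that the relevant ladder (if present, the red bonds in Figure~\ref{fig:vinesintro}) is absorbed into the $m$-fold product structure of Lemma~\ref{sumintest1}; this is where the precise labelings from Definition~\ref{defmole} and Proposition~\ref{block_clcn} are used, and it is somewhat tedious but routine. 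Second, and this is the main obstacle I anticipate: the twist only matches couples of the \emph{same} $\Qc^{\mathrm{sp}}$ and same total internal order, and one has to be sure that summing over $\Bs$ (the regular-structure reshuffling) does not destroy the pairing — i.e.\ that the map ``twist'' commutes with the decomposition into $(\Kc)_{\mathrm{app}}+\Rs$ and respects Proposition~\ref{regcpltreecancel}-type reality/cancellation. The clean way to handle this is to perform the $\Bs$-sum \emph{first}, using the real-valuedness/cancellation identities of Proposition~\ref{regcpltreecancel} (and Proposition~\ref{regcpltreesum} to collapse the regular sub-sums to the $\Mc_n$'s), so that the surviving object is a genuine function of the vine decoration alone with $\Kc_j,\Kc_j^*$-type coefficients, and only then invoke Lemma~\ref{sumintest1}. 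Handling $\theta=0$ versus $\theta=\eta^5$ is uniform since Lemma~\ref{sumintest1} already controls the $X^{2\eta^2,0}_{\mathrm{loc}}$ norm, which dominates both after the $Y_{\mathrm{loc}}$ localization.
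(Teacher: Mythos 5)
Your core idea---that summing over $\mathtt{ind}$ pairs up couples related by twisting, that the resulting difference exposes a sandwiched time region and an $O(|r|)$ discrepancy in the $n_{\mathrm{in}}$ factors, and that this is fed into Lemma~\ref{sumintest1} (with Proposition~\ref{regcpltreecancel} killing the leading $\Bs$-summed piece)---is exactly the cancellation mechanism in the paper's proof (Steps~2 and 4). The identification of scenarios (a)--(d) of Lemma~\ref{sumintest1} as the receptacle for the four residual pieces of $\Mc^{(a)}-\Mc^{(b)}$ is also correct.

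However, there is a genuine gap in coverage. Lemma~\ref{sumintest1} carries two standing hypotheses that you quote but then leave implicit: $\gamma\leq\tfrac45-\eta$, and $|r|\lesssim L^{-\gamma-\eta}$. Proposition~\ref{estbadvine} is a $Y^\theta_{\mathrm{loc}}$ bound, and by definition the $Y^\theta_{\mathrm{loc}}$ norm takes a supremum over all unit balls containing the output variables $(x_0',k_{\uf_{21}},k_{\uf_{22}})$; in particular the fixed gap $r=k_{\uf_{21}}-k_{\uf_{22}}$ may be of size up to $O(1)$, so the regime $|r|>L^{-\gamma-\eta}$ is certainly in play, and so is $\gamma>\tfrac45-\eta$. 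Your remark that ``those two leaf-decorations differ by $O(L^{-\gamma-\eta})$ because $\Vb$ is a vine with SG joints'' is precisely the SG hypothesis you have no right to assume here. In those regimes the cancellation argument is unavailable, and the paper switches to a pure counting argument (its Step~3), estimating each $\widetilde{\Kc}_{(\mathtt{ind},\Bs)}$ separately: for $\gamma\geq\tfrac45-\eta$ one uses the improved $(x_j,y_j)$-count of Lemma~\ref{basiccount0}(2), and for $|r|\gtrsim L^{-\gamma-\eta}$ the improved $x_0$-count from Lemma~\ref{basiccount}(1), in both cases gaining $L^{-5\eta}$ without any twist pairing. Without this case split your argument simply does not cover the full range of the statement.

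A second, smaller, issue: Lemma~\ref{sumintest1} is stated for $m\geq 1$, so it does not apply to vine (I), which has $m=0$. The paper handles vine (I) by a separate short argument (Step~5): the $\Bs$-sum with Proposition~\ref{regcpltreecancel} produces a factor $|r|$ directly, and one then closes with a single application of Lemma~\ref{basiccount}(1) on the lone variable $x_0$, without invoking Lemma~\ref{sumintest1} at all. Your plan to ``check that (I-a)/(I-b) fit the hypotheses of Lemma~\ref{sumintest1}'' would therefore come up empty and needs to be replaced by this simpler direct estimate.
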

\begin{proof} The proof is elaborate due to the many cases and arguments involved, so we divide it into several steps. We assume throughout that $\|Z\|_{Y_{\mathrm{loc}}^\theta}=1$.

\medskip
\underline{\it Step 1: Reductions.} By definition, for each fixed $(\mathtt{ind},\Bs)$, the summand on the left hand side of \eqref{vinebound1} takes the form (we omit dependence on $(\Vb,Z,W,\mathtt{sgn})$ for convenience, same below)
\begin{multline}\label{vineboundBfix}
\widetilde \Kc_{(\mathtt{ind},\Bs)}= \bigg(\frac{\delta}{2L^{d-\gamma}}\bigg)^{n_1}\zeta[\Vb] \sum_{\Es [\Vb]}e^{\pi i \widetilde{\Gamma}t_{\uf_1}} \epsilon_{\Es[\Vb]}\cdot\int_{ \Ec[\Vb]} Z(x_0,k_{\uf_1},k_{\uf_{11}},k_{\uf_{21}},k_{\uf_{22}},t_{\uf_1},t_{\uf_{21}},t_{\uf_{22}},t_{\uf_2})\\\times \prod_{\nf\in \Nc[\Vb]} e^{\zeta_\nf\pi i\cdot\delta L^{2\gamma}\Omega_\nf t_\nf}\,{\prod_{\lf\in
\Lc[\Vb]}^{(+)}\Kc_{\Qc^{(\lf,\lf')}}(t_{\lf^p},t_{(\lf')^p},k_\lf)}\prod_{\mf\in\Nc[\Vb]}\Kc_{\Tc^{(\mf)}}^*(t_{\mf^p},t_{\mf},k_\mf)\,\mathrm{d}t_{\mf}.
\end{multline} Here, we define $\zeta [\Vb]$ as the product of $i\zeta_\nf$ where $\nf$ runs over all branching nodes in $\Qc_0[\Vb]\backslash\{\uf_1\}$; similarly define $\Es[\Vb], \epsilon_{ \Es[\Vb]},\Ec[\Vb]$ and $ \Nc[\Vb],\Lc[\Vb]$ as in the definition of $\Kc_{(\mathtt{sgn},\mathtt{ind},\Bs)}^{(\Vb,Z,W)}$ above (In particular $\uf_1\not\in\Nc[\Vb]$). Let $\uf_{12}$ and $\uf_{13}$ be the two children of $\uf_{1}$ other than $\uf_{11}$, note that $$\widetilde \Gamma:=\delta L^{2\gamma}\left(\zeta_{\uf_{12}}|k_{\uf_{12}}|^2+\zeta_{\uf_{13}}|k_{\uf_{13}}|^2-\zeta_{\uf_{21}}|k_{\uf_{21}}|^2-\zeta_{\uf_{22}}|k_{\uf_{22}}|^2\right)=-\delta L^{2\gamma}\sum_{\nf\in\Nc[\Vb]} \zeta_\nf \Omega_\nf,
$$
since each factor $\zeta_\nf |k_{\nf}|^2$ with $\nf\in\Qc_0[\Vb] \setminus\{\uf_1\}$ appears twice in the sum with opposite signs except for $\uf_{12}$ and $\uf_{13}$. Thus, we can rewrite \eqref{vineboundBfix} as 
\begin{multline}\label{vineboundBfix2}
\widetilde \Kc_{(\mathtt{ind},\Bs)}= \bigg(\frac{\delta}{2L^{d-\gamma}}\bigg)^{n_1}\zeta[\Vb] \sum_{\Es [\Vb]}\epsilon_{\Es[\Vb]}\cdot \int_{\Ec[\Vb]} Z(x_0,k_{\uf_1},k_{\uf_{11}},k_{\uf_{21}},k_{\uf_{22}},t_{\uf_1},t_{\uf_{21}},t_{\uf_{22}},t_{\uf_2})\\\times \prod_{\nf\in \Nc[\Vb]} e^{\zeta_\nf\pi i\cdot\delta L^{2\gamma}\Omega_\nf (t_\nf-t_{\uf_1})}\,{\prod_{\lf\in
\Lc[\Vb]}^{(+)}\Kc_{\Qc^{(\lf,\lf')}}(t_{\lf^p},t_{(\lf')^p},k_\lf)}\prod_{\mf\in \Nc[\Vb]}\Kc_{\Tc^{(\mf)}}^*(t_{\mf^p},t_{\mf},k_\mf)\,\mathrm{d}t_{\mf}.
\end{multline}

\underline{\it Step 2: Parametrization for vines (II).} We start by reparametrizing the expression of $\widetilde \Kc$ in \eqref{vineboundBfix2}. For this, we will need the information contained in Figures \ref{fig:couples_cl} and Figure \ref{fig:vineAnnotated} (A) for vine (II). From the latter figure, we denote by $v_1, v_2, v_{2j+1}, v_{2j+2}\,(1\leq j\leq m)$ the $2m+2$ atoms forming the vine with joints at $v_1, v_2$, such that $v_{2j+1}$ is connected by a double bond to $v_{2j+2}$. We denote by $\uf_{1}, \uf_2, \ldots, \uf_{2m+2}$ the corresponding branching nodes in the couple, which are the branching nodes of $\Qc_0[\Vb]$. Given a decoration $(k_{\nf})$ for $\nf\in \Qc_0[\Vb]$, we get a decoration of the bonds of the vine $\Vb$ as explained in Definition \ref{decmole}. Note that there is a total of $n_1=2m+1$ branching nodes in $\Qc_0[\Vb] \setminus \{\uf_1\}$, and as a result a total of $2m+1$ leaf pairs (recall that $\uf_{11}, \uf_{21}, \uf_{22}\not\in\Qc_0[\Vb]$). We will denote the decoration of leaf pairs by $\{k_j: 1\leq j \leq 2m+1\}$, and that of the branching nodes $\uf_2, \ldots, \uf_{2m+2}$ by $\{\ell_j: 1\leq j \leq 2m+1\}$. Those two sets decorate the bonds in vine $\Vb$ as is shown in Figure \ref{fig:vineAnnotated} (A), where we denote by $a_j, b_j$ the decoration of double bonds between $v_{2j+1}$ and $v_{2j+2}$, by $c_j$ that between $v_{2j+1}$ and $v_{2j+3}$, and by $d_{j}$ that between $c_{2j+2}$ and $c_{2j+4}$ for $1\leq j \leq m-1$ (with $c_0$ and $d_0$ decorating the two bonds connecting $(v_2, v_3)$ and $(v_2, v_4)$ respectively, and similarly for $c_m$ and $d_m$ decorating the bonds connected to $v_1$ from $v_{2m+1}$ and $v_{2m+2}$). Finally, we also define $\zeta_j=+1$ if bond decorated by $c_j$ is outgoing from $v_{2j+1}$ for $1\leq j \leq m$ or from $v_2$ for $j=0$, and $\zeta_j=-1$ otherwise. Note that specifying $\zeta_j$ completely specifies the directions of all the bonds in $\Vb$ except for the horizontal double bonds in Figure \ref{fig:vineAnnotated} (A).

\begin{figure}[h!]
  \includegraphics[scale=.4]{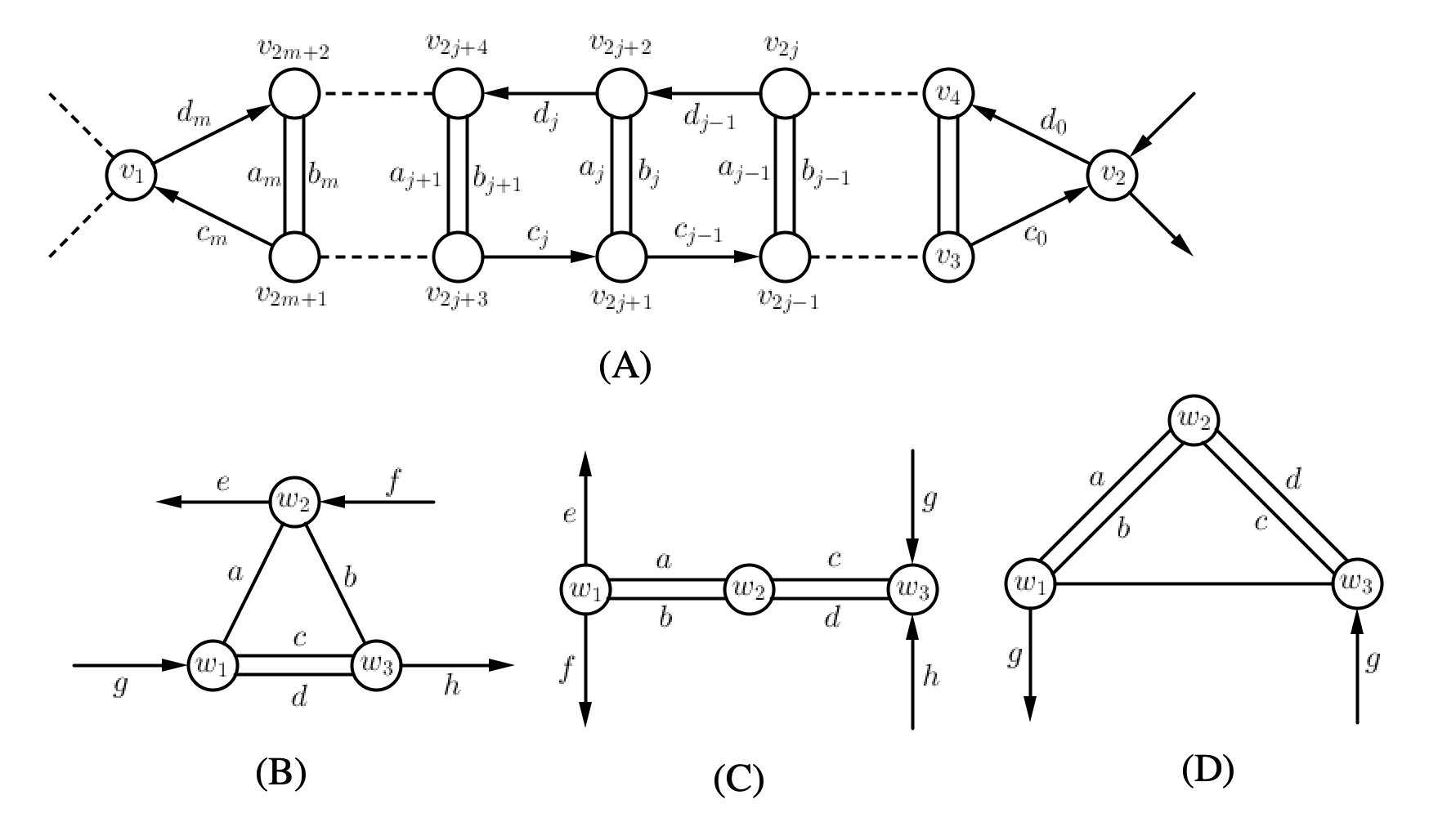}
  \caption{(A) is the annotation of the decoration of a Vine (II) as referred to in Step 3 of the proof of Proposition \ref{estbadvine}. (B)--(D) are the annotations of parts of Vine (III), (IV) and (V)--(VIII) respectively as referred to in Step 1 of the proof of Proposition \ref{estnormalvine}.}
  \label{fig:vineAnnotated}
\end{figure}

From this decoration of the bonds and using Definition \ref{decmole}, we have that 
$$
\zeta_j (c_j -d_j)=\zeta_{j-1}(c_{j-1}-d_{j-1}), \quad 1\leq j \leq m, \qquad 
$$
and we call this common value $r$. Note that $r=k_{\uf_{21}}-k_{\uf_{22}}$ is fixed and is nonzero due to Section \ref{vinesubset} (c). Moreover, we have from \eqref{molegammav} that 
\begin{equation}\label{zetaOmegasum}
\zeta_{\uf_{2j+1}}\Omega_{\uf_{2j+1}}+\zeta_{\uf_{2j+2}}\Omega_{\uf_{2j+2}}=-\Gamma_{v_{2j+1}}-\Gamma_{v_{2j+2}}=\pm 2r\cdot(c_{j-1}-c_j\mathrm{\ or\ }c_{j-1}-d_j).
\end{equation}
As a result of this, we can define new variables $x_0, x_j , y_j$ ($1\leq j \leq m$) such that (a) we have $x_0\in \{c_0, d_0\}$ which is specified as in part (f) of Section \ref{vinesubset}, (b) each of $(x_j, y_j)$ is the difference of two vectors among $(a_j, b_j, c_j, c_{j-1})$ for $1\leq j \leq m$, (c) for $1\leq j\leq m$ we have \[\zeta_{\uf_{2j+2}}\Omega_{\uf_{2j+2}}=-\Gamma_{v_{2j+2}}= 2x_j \cdot y_j,\quad\zeta_{\uf_{2j+1}}\Omega_{\uf_{2j+1}}=-2x_j\cdot y_j +2r\cdot \mu_j\] where $\mu_j=\alpha_j x_j+\beta_j y_j+\theta_j r$ for some $\alpha_j, \beta_j, \theta_j \in \{0, \pm 1\}$ with $\alpha_j^2+\beta_j^2\neq 0$, and (d) for $j=0$ we have that \[\zeta_{\uf_{2}}\Omega_{\uf_{2}}=r\cdot \mu_0\] where $\mu_0=\alpha_0 x_0+\beta_j y_0+\theta_0 r$ for some $\alpha_j, \beta_j, \theta_j \in \{0, \pm 1\}$ and with $y_0 \in \{k_{\uf_{21}}, k_{\uf_{22}}\}$.

 As a result of all this, we get a set of variables $(x_0, x_j, y_j)_{1\leq j \leq m}$ to replace the variables $k_{\uf_j}$ for $\uf_j\in \Qc_0[\Vb]$ (of which there are $2m+1$ corresponding to leaf pairs $(k_\lf)_{\lf\in \Lc[\Vb]}$, and another $2m+1$ corresponding to branching nodes $(k_\mf)_{\mf\in \Nc[\Vb]}$). If we write $\boldsymbol{x}=(x_0, x_j, y_j)_{1\leq j \leq m}$, ${\boldsymbol{k}}=(k_\lf)_{\lf \in  \Lc[\Vb]}$ and $\boldsymbol{\ell}=(k_{\nf})_{\nf \in \Nc[\Vb]}$, then we have that: (i) ${\boldsymbol{k}}=T_1{\boldsymbol{x}}+{\boldsymbol{h}}_1$ and $\boldsymbol{\ell}=T_2{\boldsymbol{x}}+{\boldsymbol{h}}_2$ for some matrices $T_j$ and some constant vectors ${\boldsymbol{h}}_j$ depending only on $(k_{\uf_1},k_{\uf_{11}}, k_{\uf_{21}}, k_{\uf_{22}})$, such that all coefficients of $T_1,\,T_1^{-1},\,T_2$ are integers $\lesssim 1$; (ii) for each component $k_\mf$ ($\mf\in \Nc[\Vb]$) of $\boldsymbol{\ell}$, there exists $k_{\mf'}$, where $\mf'$ is a descendent of $\mf$, such that $k_{\mf}\pm k_{\mf'}$ is an integer linear combination of $(k_\lf)$ and $(k_{\uf_1},k_{\uf_{11}}, k_{\uf_{21}}, k_{\uf_{22}})$ with absolute value sum $\lesssim \Lambda_p$, such that $\Lambda_1\cdots \Lambda_{2m+1}\lesssim C^m$; (iii) any component of ${\boldsymbol{x}}$ is the sum or difference of two variables, each of which is a component of either ${\boldsymbol{k}}$ or $\boldsymbol{\ell}$. Here the proof of (i) and (iii) and obvious, and (ii) follows from Lemma 6.6 of \cite{DH21}.

With the above reparametrization, \eqref{vineboundBfix2} can also be written as 
\begin{equation}\label{vineboundBfix3}
\begin{aligned}
\widetilde \Kc_{(\mathtt{ind},\Bs)}&= \bigg(\frac{\delta}{2L^{d-\gamma}}\bigg)^{2m+1}\zeta[\Vb] \sum_{(x_0,x_j,y_j):1\leq j\leq m}\epsilon_{\Es[\Vb]}\int_{\Ec[\Vb]} e^{-\pi i\cdot\delta L^{2\gamma} (t_{\uf_1}-t_{\uf_2})(r\cdot \mu_0)}\\&\times \prod_{j=1}^m e^{\pi i\cdot\delta L^{2\gamma} [(t_{\uf_{2j+2}}-t_{\uf_{2j+1}})x_j \cdot y_j-(t_{\uf_1}-t_{\uf_{2j+1}})(r\cdot \mu_j)]}\cdot \prod_{\lf\in
\Lc[\Vb]}^{(+)}\Kc_{\Qc^{(\lf,\lf')}}(t_{\lf^p},t_{(\lf')^p},k_\lf)\\
&\times Z(x_0,k_{\uf_1},k_{\uf_{11}},k_{\uf_{21}},k_{\uf_{22}},t_{\uf_1},t_{\uf_{21}},t_{\uf_{22}},t_{\uf_2})\prod_{\mf\in \Nc[\Vb]}\Kc_{\Tc^{(\mf)}}^*(t_{\mf^p},t_{\mf},k_\mf)\,\mathrm{d}t_{\mf}.
\end{aligned}
\end{equation} This expression will not be used in Step 3, but will be used in Step 4 below. Moreover, once we confirm $r\neq 0$, we will replace $\epsilon_{\Es[\Vb]}$ factor by $1$, since $\epsilon_{\Es[\Vb]}\neq 1$ corresponds to the case where some $x_j$ or $y_j$ equals $0$ or some other fixed constant, which means we can sum over them trivially, gain a power using $L^d\ll L^{2(d-\gamma)}$, and treat the rest of the variables using the same arguments below.

\medskip
\underline{\it Step 3: A counting argument.} We now treat the case where $\Vb$ is vine (II), and $\gamma>\frac45-\eta$ or $|r|>L^{-\gamma-\eta}$. In this case we do not need the cancellation structure, and as such we shall estimate each $\widetilde \Kc_{(\mathtt{ind},\Bs)}$ by itself, and use that there or $O(C^{n_2})$ elements in the sum over $(\Bs, \mathtt{ind})$.

Start with the expression of $\widetilde \Kc_{(\mathtt{ind},\Bs)}$ in \eqref{vineboundBfix2}. At this point, suppose we restrict each of $(k_{\uf_1},k_{\uf_{11}},k_{\uf_{21}},k_{\uf_{22}})$ to a unit ball as in the $Y_{\mathrm{loc}}^\theta$ in (\ref{vinebound1}). We may repeat the arguments in the proof of Lemma \ref{sumintest1}, using (i)--(iii) above, to also restrict $x_0$ and each $(x_j,y_j)$, as well as $k_\nf$ for all nodes $\nf\in\Qc_0[\Vb]$, to a unit ball; all these unit balls will be fixed throughout the rest of the proof. As such we may assume $Z\in X^{\theta, 0}$, and henceforth expand it as a Fourier integral, so that the $Z$ function in \eqref{vinebound1} is replaced by the product \begin{equation}\label{defztilde}e^{\pi i \lambda_{\uf_1}t_{\uf_1}}e^{\pi i \lambda_{\uf_{21}}t_{\uf_{21}}}e^{\pi i \lambda_{\uf_{22}} t_{\uf_{22}}}e^{\pi i \lambda_{\uf_2} t_{\uf_2}}\cdot \widetilde{Z}(x_0,k_{\uf_1},k_{\uf_{11}},k_{\uf_{21}},k_{\uf_{22}})\end{equation} where $\widetilde{Z}$ is a bounded function. Note that by doing this we can also exploit the weights of form $\max(\langle \lambda_{\uf_1}\rangle,\langle\lambda_{\uf_{21}}\rangle,\langle\lambda_{\uf_{22}}\rangle,\langle\lambda_{\uf_2}\rangle)^{-\theta}$ whenever needed. In the same way, we can expand all the $\Kc_\Qc$ and $\Kc_{\Tc}^*$ functions as time Fourier integrals to obtain a linear combination of functions\
\[(C^+\delta)^{n(\Bs)/2}\cdot \prod_{\nf\in \Nc[\Vb]}e^{\pi i\vartheta_\nf t_\nf}\cdot\prod_{\lf\in \Lc[\Vb]}^{(+)}\langle k_\lf\rangle^{-40d}\cdot\Xc(k[\Qc_{0}[\Vb]])\] 
for different choices of $\vartheta[\Nc[\Vb]]$, with the coefficient being a weighted $L^1$ integrable function with a suitable weight, and where $\Xc$ is a bounded function of all the $k_\mf$ variables. Below we may fix one choice of $(\vartheta[\Nc[\Vb]])$; by doing so we may also exploit the weight $(\max_{\nf\in \Nc[\Vb]}\langle \vartheta_\nf \rangle)^{-\eta}$ whenever needed. As a result of, we can write $\widetilde \Kc_{(\mathtt{ind},\Bs)}$ in \eqref{vineboundBfix2} as a linear combination of  
\begin{equation}\label{newbexpr}
(C^+\delta)^{n(\Bs)/2} \bigg(\frac{\delta}{2L^{d-\gamma}}\bigg)^{n_1}\sum_{\Es} e^{\pi i (\lambda_{\uf_{21}}t_{\uf_{21}}+\lambda_{\uf_{22}}t_{\uf_{22}}+\widetilde \lambda_{\uf_{1}} t_{\uf_{1}})}\Bc\big(t_{\uf_1},t_{\uf_{21}}, t_{\uf_{22}},\alpha[\Nc[\Vb]]\big)\cdot\widetilde \Xc(k[\Qc_{0}[\Vb]])
\end{equation} where $\alpha_{\mf}=\delta L^{2\gamma}\zeta_m\Omega_{\mf}+\vartheta_\mf$ for $\mf\in  \Nc[\Vb]$, and 
\[
\Bc\big(t_{\uf_1},t_{\uf_{21}}, t_{\uf_{22}},\alpha[\Nc[\Vb]]\big)=\int_{\Ec[\Vb]} \prod_{\nf\in \Nc[\Vb]} e^{\pi i \alpha_\nf (t_\nf-t_{\uf_1})} \, \mathrm{d}t_{\nf}, 
\]
where $\widetilde \lambda_{\uf_1}$ is a shift of $\lambda_{\uf_1}$ by elements of $\vartheta[ \Nc[\Vb]]$, and we also allow a shift of $\alpha_{\uf_2}$ by $\lambda_{\uf_{2}}$. Also, $\widetilde \Xc(k[\Qc_{0}[\Vb]])$ is a bounded function that is localized in a fixed unit size box for each of its variables. Fixing the integer parts of $\alpha_\mf$ as $\sigma_{\mf}$ (each of which having at most $L^{10d}$ possibilities), by summing over all these integer parts, we can bound the $(t_{\uf_1}, t_{\uf_{21}}, t_{\uf_{22}})$ Fourier transform of \eqref{newbexpr} by
\begin{multline}\label{afterloc7.3}
(C^+\delta)^{n(\Bs)/2} \bigg(\frac{\delta}{2L^{d-\gamma}}\bigg)^{n_1} \sum_{\sigma[ \Nc[\Vb]]}\sup_{|\alpha_\mf-\sigma_\mf|\leq 1}|\widehat \Bc(\tau_{\uf_1}+\widetilde{\lambda}_{\uf_1},\tau_{\uf_{21}}+\lambda_{\uf_{21}}, \tau_{\uf_{22}}+\lambda_{\uf_{22}},\alpha[\Nc[\Vb]])| \\\times\sup_{\sigma[\Nc[\Vb]]}\sum_{\Es}^*\widetilde \Xc(k[\Qc_{0}[\Vb]])
\end{multline}
where the sum $\sum^*$ satisfies the additional localization that $|\alpha_{\mf}-\sigma_{\mf}|\leq 1$ for each $\mf\in  \Nc[\Vb]$. The result \eqref{vinebound1} will follow from the following two estimates:
\begin{align}
&\bigg\|\max(\langle\tau_{\uf_1}\rangle,\langle\tau_{\uf_{21}}\rangle,\langle\tau_{\uf_{22}}\rangle)^{\eta^5}\sum_{\sigma[\Nc[\Vb]]}\sup_{|\alpha_\mf-\sigma_\mf|\leq 1}|\widehat \Bc(\tau_{\uf_1},\tau_{\uf_{21}}, \tau_{\uf_{22}},\alpha[\Nc[\Vb]])|\bigg\|_{L_{\tau,\alpha}^1}\lesssim C^m (\log L)^{2m+1} L^{4\eta^5} ,\label{twoestbadv2}\\
&\sup_{(k_{\uf_1},k_{\uf_{11}},k_{\uf_{21}},k_{\uf_{22}})}\sup_{\sigma[ \Nc[\Vb]]}\sum_{\Es}^*\widetilde \Xc(k[\Qc_{0}[\Vb]])\leq(C^+\delta^{-1})^{m+1}L^{(2m+1)(d-\gamma)-5\eta}.\label{twoestbadv1}
\end{align}
Those two estimates are enough to give \eqref{vinebound1}, since shifts of $\widetilde{\lambda}_{\uf_1}$, $\lambda_{\uf_{21}}$ and $\lambda_{\uf_{22}}$ can be absorbed by the weights $\max(\langle \lambda_{\uf_1}\rangle,\langle\lambda_{\uf_{21}}\rangle,\langle\lambda_{\uf_{22}}\rangle,\langle\lambda_{\uf_2}\rangle)^{-\theta}$ and $(\max_{\nf\in \Nc[\Vb]}\langle \vartheta_\nf \rangle)^{-\eta}$.

To prove \eqref{twoestbadv2}, we note that $\Bc(t_{\uf_1},t_{\uf_{21}},t_{\uf_{22}})=\Cc(t_{\uf_1}-\max(\tau_{\uf_{21}},\tau_{\uf_{22}}))$ for some function $\Cc$ by changing time variables, and by Lemma \ref{timeFourierlem} (1) we can bound $\widehat{\Cc}(\tau,\alpha[\Nc[\Vb]])$ by 
\[|\widehat{\Cc}(\tau,\alpha[ \Nc[\Vb]])|\lesssim\langle \tau-\gamma\rangle^{-10}
 \prod_{\mf\in  \Nc[\Vb]} \langle q_\mf\rangle^{-1},
\]
where $(q_\mf)_{\mf \in  \Nc[\Vb]}$ is obtained from $\alpha[\Nc[\Vb]]$ by an invertible (actually lower triangular) linear transformation with integer coefficients, and $\gamma$ is the sum of at most two $q_\mf$ variables. Since each $\sigma_\mf$ has at most $L^{10d}$ choices, we can easily sum over all $(\sigma_\mf)$, equivalently all $(q_\mf)$, and apply Lemma \ref{timeFourierlem} (2) to transform from $\Cc$ to $\Bc$, and get \eqref{twoestbadv2}.

To prove \eqref{twoestbadv1}, we upper bound $\widetilde \Xc$ by 1 which reduces the sum into a counting problem for $k[\Qc_0[\Vb]]$ satisfying all the decoration and localization assumptions as before, and in addition the condition that $\delta L^{2\gamma}\Omega_\nf$ for each $\nf\in\Nc[\Vb]$ belongs to an interval of length $1$. By the reparametrization in Step 2 above, this is a equivalent to counting the number of choices for the variables $(x_0, x_j, y_j)_{1\leq j \leq m}$, each restricted to a fixed unit ball, such that each of $(r\cdot x_0, x_j\cdot y_j, r\cdot \mu_j)$ $(1\leq j \leq m$) is restricted to an interval of length $O(\delta^{-1}L^{-2\gamma})$.

Now if $\gamma\geq\frac45-\eta$, we can bound the number of choices of $x_0$ by $O(\delta^{-1}L^{d-\gamma+(1-\gamma)})$ using Lemma \ref{basiccount} (1), and the number of choices for each $(x_j,y_j)$ by $\delta^{-2} L^{2(d-\gamma)-(1-\gamma)-20\eta}$ using Lemma \ref{basiccount0} (2). This implies \eqref{twoestbadv1}, noticing that $m\geq 1$ for vines (II). If $\gamma<\frac45-\eta$ and $|r|\gtrsim L^{-\gamma-\eta}$, then we can bound the number of choices of $x_0$ by $O(\delta^{-1}L^{d-\gamma+2\eta})$ using Lemma \ref{basiccount} (1), and the number of choices for each $(x_j,y_j)$ by $\delta^{-2} L^{2(d-\gamma)-20\eta}$ using Lemma \ref{basiccount0} (2), which again implies \eqref{twoestbadv1}.

\medskip
\underline{\it Step 4: The cancellation argument.} We now treat the case where $\Vb$ is vine (II), $\gamma\leq \frac45-\eta$ and $|r|\leq L^{-\gamma-\eta}$. In this case we need to rely on the cancellation happening in the sum over $(\Bs, \mathtt{ind})$ of the terms $\widetilde \Kc_{(\mathtt{ind},\Bs)}$ defined in \eqref{vineboundBfix3}. For this cancellation to manifest itself, we need to utilize the couple structure near $\uf_2$, which is depicted in Figure \ref{fig:block_mole}.

We start with the easiest case, which is that of vine (II-e). In this case, we will estimate each element of the sum over $(\Bs, \mathtt{ind})$ separately and use that there are $O(C^{n_2})$ elements in this sum. At this point we can repeat the arguments in Step 3 to localize each of the $(x_0,x_j,y_j)$ and $k_\nf$ variables to a fixed unit ball, and consequently expand $Z$ into (\ref{defztilde}) and each $\Kc_{\Qc^{(\lf,\lf')}}$ and $\Kc_{\Tc^{(\mf)}}^*$ as time Fourier integrals, to reduce (\ref{vineboundBfix3}) to at most $O(C^m)$ expressions of form
\begin{multline*}
\bigg(\frac{\delta}{2L^{d-\gamma}}\bigg)^{2m+1}(C^+\delta)^{n(\Bs)/2}\sum_{|x_0-a_0|\leq 1} e^{\pi i (\lambda_{\uf_{21}}t_{\uf_{21}}+ \lambda_{\uf_{22}} t_{\uf_{22}}+(\lambda_{\uf_1}+\gamma_1)t_{\uf_1})}\\\times \widetilde{Z}(x_0,k_{\uf_1},k_{\uf_{11}},k_{\uf_{21}},k_{\uf_{22}})\cdot\Ic\left(x_0, k_{\uf_1}, k_{\uf_{11}}, k_{\uf_{21}}, k_{\uf_{22}}, t_*\right)
\end{multline*}
Here $a_0$ is a fixed vector, $\gamma_1$ is a linear combination of $\lambda_{\uf_2}$ and the Fourier variables occurring in the expansions of $\Kc_\Qc^{(\lf, \lf')}$ and $\Kc^*_{\Tc^{(\mf)}}$ as above, and $\Ic$ is an expression in the form \eqref{sumintI1}, which can be obtained after defining the new time variables 
$t_j=t_{\uf_1}-t_{\uf_{2j+1}}$, $s_j=t_{\uf_1}-t_{\uf_{2j+2}}$ for $1\leq j \leq m$, and $t_0=t_{\uf_1}-t_{\uf_2}$. The domain of integration after the change of variables is the same as described in Lemma \ref{sumintest1}, with $t$ replaced by $t_*:=t_{\uf_1}-\max(t_{\uf_{21}},t_{\uf_{22}})\}$. Moreover, due to the structure of the couple in vine (II-e) (see Figure \ref{fig:block_mole}), namely the fact that $\uf_2$ is a parent of $\uf_4$ and child of $\uf_3$, we have the additional condition (a) in Lemma \ref{sumintest1}. As a result, by \eqref{FourierboundI}, we have that
$$
\|\Ic\|_{X_{\mathrm{loc}}^{2\eta^5, 0}(t_*)}\lesssim  (C^+\delta^{-1})^{m}L^{2m(d-\gamma)-\gamma-\eta^2}.
$$
After summing over $x_0$, applying Lemma \ref{timeFourierlem} (2), and including the factors $(\delta/(2L^{d-\gamma}))^{2m+1}$ and $(C^+\delta)^{n(\Bs)/2}$ etc., this implies (\ref{vinebound1}). Note that the shifts $(\lambda_{\uf_{21}},\lambda_{\uf_{22}},\lambda_{\uf_1}+\gamma_1)$ is again absorbed by the weights.

Now we turn to vines (II-a)--(II-d). We will consider the case of a pair of couples with vines (II-a) and (II-b) that are twists of each other; the case of vines (II-c) and (II-d) are similar. If $\Bs$ is fixed and we sum over $\mathtt{ind}\in\{0,1\}$, then starting from (\ref{vineboundBfix3}), and recalling Figure \ref{fig:block_mole} (assuming without loss of generality that $\uf_4$ has positive sign), we get the expression
\begin{equation}\label{step4sumind}
\begin{aligned}
\sum_{\mathtt{ind}}\widetilde \Kc_{(\mathtt{ind},\Bs)}&=  \bigg(\frac{\delta}{2L^{d-\gamma}}\bigg)^{2m+1}\zeta[\Vb^{(a)}]\sum_{(x_0,x_j,y_j):1\leq j\leq m} \int_{ \Ec^*[\Vb]}\prod_{j=2}^{2m+2} \mathrm{d}t_{\uf_j}\\
&\times  \prod_{j=1}^m e^{\pi i\cdot\delta L^{2\gamma} (t_{\uf_{2j+2}} -t_{\uf_{2j+1}})x_j \cdot y_j}e^{\pi i\cdot\delta L^{2\gamma} (t_{\uf_{2j+1}}-t_{\uf_1})(r\cdot \mu_j)} \cdot e^{\pi i\cdot\delta L^{2\gamma} (t_{\uf_2}-t_{\uf_1})(r\cdot (k_{\uf_{22}}-x_0))}\\&\times Z(x_0,k_{\uf_1},k_{\uf_{11}},k_{\uf_{21}},k_{\uf_{22}},t_{\uf_1},t_{\uf_{21}},t_{\uf_{22}},t_{\uf_2})\cdot \prod_{\lf\in\Lc[\Vb]\setminus \{\lf^*\}}^{(+)}\Kc_{\Qc^{(\lf,\lf')}}(t_{\lf^p},t_{(\lf')^p},k_\lf)\\
&\times \prod_{\mf\in \Nc[\Vb]\setminus \{\uf_2\}}\Kc_{\Tc^{(\mf)}}^*(t_{\mf^p},t_{\mf},k_\mf)\left[\Mc^{(a)}(t_{\uf_2}, t_{\uf_3}, t_{\uf_4},x_0, y_0)-\Mc^{(b)}(t_{\uf_2},t_{\uf_3}, t_{\uf_4}, x_0,y_0)\right].
\end{aligned}
\end{equation}
Here, we denote by $\zeta[\Vb^{(a)}]$ the $\zeta[\Vb]$ for the couple with vine (II-a), which is the negative of the (II-b) couple, and by $\Ec^*[\Vb]$ the domain of integration derived from $\Ec[\Vb]$ by removing the condition $t_{\uf_4}>t_{\uf_2}$ in case (II-a) and $t_{\uf_3}>t_{\uf_2}$ in case (II-b). Moreover $x_0$ equals $k_{\uf_{23}}$ in case (II-a) and equals $k_{\uf_2}$ in case (II-b), and $y_0$ is defined to be the other element. We also denoted by $\lf^*$ the positive leaf in the leaf pair $(\uf_{23}, \uf_0)$ (using the notation in Figure \ref{fig:block_mole}), and introduced 
\begin{align*}
&\Mc^{(a)}(t_{\uf_2}, t_{\uf_3},t_{\uf_4}, x_0, y_0)=\mathbf{1}_{t_{\uf_2}<t_{\uf_4}}(t_{\uf_4})\overline{\Kc_{\overline{\Qc_a}}}(t_{\uf_3},t_{\uf_2},x_{0})\cdot\Kc_{\Tc_a}^*(t_{\uf_4},t_{\uf_2},y_0)\\
&\Mc^{(b)}(t_{\uf_2}, t_{\uf_3},t_{\uf_4}, x_0, y_0)=\mathbf{1}_{t_{\uf_2}<t_{\uf_3}}(t_{\uf_3})\cdot\Kc_{\Qc_b}(t_{\uf_4},t_{\uf_2},y_0)\cdot\overline{\Kc_{\overline{\Tc_b}}^*}(t_{\uf_3},t_{\uf_2},x_{0}),
\end{align*}
where we denoted by $\Qc^a$ and $\Tc^a$ (resp. $\Qc^b, \Tc^b$) the couples $\Qc^{(\uf_{23}, \uf_0)}$ and $\Tc^{(\uf_2)}$ (resp. $\Qc^{(\uf_0, \uf_{23})}$ and ${\Tc^{(\uf_2)}}$). Recall also that $y_0=x_0\pm r$ which holds for both cases (II-a) and (II-b). 
This puts us in the position to apply Lemma \ref{sumintest1} to conclude. In fact, the difference $\Mc^{(a)}-\Mc^{(b)}$ leads to a sum involving one of the following assumptions or terms:
\begin{enumerate}
\item The assumption $t_{\uf_4}< t_{\uf_2}<t_{\uf_3}$ or $t_{\uf_3}< t_{\uf_2}<t_{\uf_4}$.
\item Factors $\Rs_{\Qc}=\Kc_{\Qc}-(\Kc_{\Qc})_{\textrm{app}}$ or $\Rs^*_{\Tc}=\Kc^*_{\Tc}-(\Kc^*_{\Tc})_{\textrm{app}}$ replacing at least one of the $\Kc_{\Qc}$ or the $\Kc^*_{\Tc}$ in \eqref{step4sumind}, for $\Qc\in\{\overline{\Qc_a},\Qc_b\}$ and $\Tc\in\{\Tc_a,\overline{\Tc_b}\}$; here we use Proposition \ref{regcpltreeasymp}.
\item Factors \[\overline{\Jc(t_{\uf_3}, t_{\uf_2})} \big(\Jc^*(t_{\uf_4}, t_{\uf_2})- \Jc^*(t_{\uf_3}, t_{\uf_2})\big)\quad \textrm{or}\quad\big(\Jc(t_{\uf_4}, t_{\uf_2})-\Jc(t_{\uf_3}, t_{\uf_2})\big)\overline{\Jc^*(t_{\uf_3}, t_{\uf_2})},\] which equals $|t_{\uf_3}-t_{\uf_4}|^{1-\eta}$ multiplies a weighted Fourier $L^1$ function; this comes from expanding $(\Kc_\Qc)_{\mathrm{app}}$ and $(\Kc_\Tc^*)_{\mathrm{app}}$ as in \eqref{kqterms}.
\item Factors $(\Kc_{\Qc})_{\mathrm{app}}(\cdot,\cdot,y_0)-(\Kc_{\Qc})_{\mathrm{app}}(\cdot,\cdot,x_0)$ and $(\Kc_{\Tc}^*)_{\mathrm{app}}(\cdot,\cdot,x_{0})-(\Kc_{\Tc}^*)_{\mathrm{app}}(\cdot,\cdot,y_{0})$, for some $\Qc$ and $\Tc$, with the time variables being the same in both functions, which is bounded by $|r|$ using that $|x_0-y_0|=r$.
\item The leading factor of the form
$$
\left[ \overline{(\Kc_{\Qc_0}})_{\mathrm{app}}(t_{\uf_3},t_{\uf_2},x_0)(\Kc_{\Tc_0}^*)_{\mathrm{app}}(t_{\uf_3},t_{\uf_2},x_{0})-(\Kc_{\Qc_0'})_{\mathrm{app}}(t_{\uf_3},t_{\uf_2},x_0)\overline{(\Kc_{\Tc_0'}^*)_{\mathrm{app}}}(t_{\uf_3},t_{\uf_2},x_{0})\right].
$$
\end{enumerate}

Note that the contribution of the last term (5) vanishes after summing over $\Bs$, due to Lemma \ref{regcpltreecancel}; therefore we are left with terms (1)--(4). For each term, we may argue as in the case of vine (II-e) above, where we insert them back into \eqref{step4sumind}, then localize each vector to a fixed unit ball, expand $Z$ into (\ref{defztilde}), and expand all the $\Kc_{\Qc^{(\lf, \lf')}}$ and $\Kc_{\Tc^{(\mf)}}$ (and $\Rs_{\Qc}$ and $\Rs_{\Tc}^*$ if present) as time Fourier integrals using Proposition \ref{regcpltreeasymp}, to reduce to at most $O(C^m)$ expressions of form
\begin{multline}
\bigg(\frac{\delta}{2L^{d-\gamma}}\bigg)^{2m+1}(C\delta)^{n(\Bs)/2}\sum_{|k_0-a_0|\leq 2} e^{\pi i( \lambda_{\uf_{21}}t_{\uf_{21}}+\pi i \lambda_{\uf_{22}} t_{\uf_{22}}+(\lambda_{\uf_1}+\gamma_1)t_{\uf_1})} \\\times \widetilde{Z}(x_0,k_{\uf_1},k_{\uf_{11}},k_{\uf_{21}},k_{\uf_{22}})\cdot\Ic\left(k_0, k_{\uf_1}, k_{\uf_{11}}, k_{\uf_{21}}, k_{\uf_{22}}, t_*\right).
\end{multline}
Here $a_0$ is a fixed vector, $\gamma_1$ is a linear combination of $\lambda_{\uf_2}$ and the Fourier variables occurring in the expansions of $\Kc_\Qc^{(\lf, \lf')}$ and $\Kc^*_{\Tc^{(\mf)}}$ as above, and $\Ic$ is an expression in the form \eqref{sumintI1}, which can be obtained after defining the new time variables 
$t_j=t_{\uf_1}-t_{\uf_{2j+1}}$, $s_j=t_{\uf_1}-t_{\uf_{2j+2}}$ for $1\leq j \leq m$, and $t_0=t_{\uf_1}-t_{\uf_2}$. The domain of integration after the change of variables is the same as described in Lemma \ref{sumintest1}, with $t$ replaced by $t_*:=t_{\uf_1}-\max(t_{\uf_{21}},t_{\uf_{22}})\}$. Clearly, for each of the terms (1)--(4), one of the conditions (a)--(d) in Lemma \ref{sumintest1} will be satisfied, so we can apply \eqref{FourierboundI} to get that

$$
\|\Ic\|_{X_{\mathrm{loc}}^{2\eta^5, 0}(t_*)}\lesssim  (C^+\delta^{-1})^{m}L^{2m(d-\gamma)-\gamma-\eta^2}.
$$
After summing over $x_0$, applying Lemma \ref{timeFourierlem} (2), and including the factors $(\delta/(2L^{d-\gamma}))^{2m+1}$ and $(C^+\delta)^{n(\Bs)/2}$ etc., this implies (\ref{vinebound1}); note that the shifts $(\lambda_{\uf_{21}},\lambda_{\uf_{22}},\lambda_{\uf_1}+\gamma_1)$ is again absorbed by the weights.

\medskip
\underline{\it Step 5: The vine (I) case.} We now treat the case where $\Vb$ is a (CL) vine (I), i.e. one double bond. Here we will apply the cancellation structure in Step 4, combined with the counting arguments in Step 3, but both will be in this extremely simple setting of one double bond. In fact, in this case we have $m=0$, so there is only one variable $x_0$ which we sum in. Moreover, an easy examination of Figure \ref{fig:block_mole} similar to Step 4 implies that, instead of the $\Mc^{(a)}-\Mc^{(b)}$ factor in (\ref{step4sumind}), we have the factor
\[\overline{\Kc_{\overline{\Qc_a}}}(t_{\uf_1},t_{\uf_2},x_0)\Kc_{\Tc_a}^*(t_{\uf_1},t_{\uf_2},y_0)-\Kc_{\Qc_b}(t_{\uf_1},t_{\uf_2},y_0)\overline{\Kc_{\overline{\Tc_b}}}(t_{\uf_1},t_{\uf_2},x_0),\]assuming $\uf_1$ has sign $+$ in Figure \ref{fig:block_mole}. Upon summing in $\Bs$ and applying Lemma \ref{regcpltreecancel}, this factor is bounded in $X_{\mathrm{loc}}^{\eta,0}$ by $|r|$ since $|x_0-y_0|=r$.

Then, we reduce to the counting problem as in Step 3. Note that the result corresponding to (\ref{twoestbadv2}) is proved in the same way; as for the one corresponding to (\ref{twoestbadv1}), we are reduced to a counting problem with only one variable $x_0$ involved, which is restricted to a unit ball and satisfies that $r\cdot x_0$ belongs to a fixed interval of length $\delta^{-1}L^{-2\gamma}$. By Lemma \ref{basiccount} (1) and using the extra factor $|r|$ exhibited above, we get the total contribution factor
\[(L^{d-1}+\delta^{-1}|r|^{-1}L^{d-2\gamma})\cdot |r|\lesssim L^{d-\gamma-\gamma_0+O(\eta)},\] which suffices for the proof of (\ref{vinebound1}) just as in Step 3.

\medskip
The proof of Proposition \ref{estbadvine} is now finished.
\end{proof}

\medskip
\begin{prop}\label{estnormalvine} Suppose $\Vb$ is a \emph{normal} (CL) vine. Let $(\Qc^{\mathrm{sp}}, \mathtt{cod},\mathfrak{n},\As^{\mathrm{sp}})$ and $(Z,W)$ be fixed as in Section \ref{vinesubset} and Proposition \ref{estbadvine}, and let $(\mathtt{ind},\Bs)$ also be fixed. Then, for any choice of $\theta\in\{\eta^5,0\}$ and in the same notations as in Proposition \ref{estbadvine}, we have
\begin{equation}\label{vinebound2}
\big\|e^{-\pi i\cdot\delta L^{2\gamma}t_{\uf_1}\Gamma}\cdot\Kc_{(\mathtt{sgn},\mathtt{ind},\Bs)}^{(\Vb,Z,W)}(x_0',k_{\uf_{21}},k_{\uf_{22}},t_{\uf_1},t_{\uf_{21}},t_{\uf_{22}})\big\|_{Y_{\mathrm{loc}}^{\theta}}\lesssim(C^+\delta^{1/4})^{n_2}L^{\eta^4}\|Z\|_{Y_{\mathrm{loc}}^{\theta}}.
\end{equation}
Moreover, if $\Vb$ is replaced by a normal (CL) vine chain $\Vb\Cb$, define $\Qc_0[\Vb\Cb]$ as in Proposition \ref{block_clcn} for the block $\Vb\Cb$ and let $\Qc^{\mathrm{sp}}$ be the couple obtained by splicing the whole vine chain $\Vb\Cb$. Fix $(\Qc^{\mathrm{sp}}, \mathfrak{n},\As^{\mathrm{sp}})$ and $(\mathtt{cod},\mathtt{ind}, \Bs)$ for each ingredient vine in the chain, and define the expression $\Kc_{(\mathtt{ind},\Bs)}^{(\Vb\Cb, Z, W)}$ in the same way as in Section \ref{vinesubset}. Then, for $\theta\in\{\eta^5,0\}$ we have 
\begin{equation}\label{vinebound3}
\big\|e^{-\pi i\cdot\delta L^{2\gamma}t_{\uf_1}\Gamma}\cdot\Kc_{(\mathtt{sgn},\mathtt{ind},\Bs)}^{(\Vb\Cb,Z,W)}(x_0',k_{\uf_{21}},k_{\uf_{22}},t_{\uf_1},t_{\uf_{21}},t_{\uf_{22}})\big\|_{Y_{\mathrm{loc}}^{\theta}}\lesssim(C^+\delta^{1/4})^{n_2}L^{\eta^4}\|Z\|_{Y_{\mathrm{loc}}^{\theta}},
\end{equation}
where $n_2$ is the total number of branching nodes in $\Qc_0[\Vb\Cb]\setminus \{\uf_1\}$ plus the sum of all the $n(\Bs)$, as defined in Section \ref{vinesubset}.
\end{prop}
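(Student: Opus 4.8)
\textbf{Proof proposal for Proposition \ref{estnormalvine}.}

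The plan is to run essentially the same machinery as in the proof of Proposition \ref{estbadvine}, but without ever invoking the cancellation structure, since normal vines are merely neutral (not bad) for counting. Concretely, for \eqref{vinebound2} I would first fix all the data $(\Qc^{\mathrm{sp}},\mathtt{cod},\mathfrak{n},\As^{\mathrm{sp}},\mathtt{ind},\Bs)$ so that $\Kc_{(\mathtt{sgn},\mathtt{ind},\Bs)}^{(\Vb,Z,W)}$ is a single expression of the form \eqref{vineboundBfix2}, and then reduce as in Step 1--2 of that proof: absorb the $e^{-\pi i\delta L^{2\gamma}t_{\uf_1}\Gamma}$ factor by rewriting the phases in terms of the differences $t_\nf-t_{\uf_1}$, localize each of $(k_{\uf_1},k_{\uf_{11}},k_{\uf_{21}},k_{\uf_{22}})$, each $(x_j,y_j)$, and each interior $k_\nf$ to a fixed unit ball (using the linear-algebra facts (i)--(iii) established for the decoration of $\Vb$, which hold verbatim for normal vines by Lemma~6.6 of \cite{DH21}), expand $Z$ and all the $\Kc_{\Qc^{(\lf,\lf')}}$, $\Kc^*_{\Tc^{(\mf)}}$ factors as time Fourier integrals, and fix the integer parts $\sigma_\nf$ of the $\delta L^{2\gamma}\Omega_\nf$. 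This splits the bound into a time-integral estimate like \eqref{twoestbadv2} (which is dispersion-independent and identical to the bad-vine case, via Lemma \ref{timeFourierlem}) and a pure counting estimate analogous to \eqref{twoestbadv1} for the decorations of $\Qc_0[\Vb]$ subject to the $O(\delta^{-1}L^{-2\gamma})$ resonance localizations at each interior atom.

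The heart of the argument is this counting step, which I would organize by the type of vine using the annotations in Figure \ref{fig:vineAnnotated} (B)--(D). For each of vines (III)--(VIII) the relevant system of bilinear equations (one equation $\langle x_j,y_j\rangle=O(\delta^{-1}L^{-2\gamma})$ per interior atom, plus the linear decoration constraints) should be solvable variable-block by variable-block using the two- and three-vector counting estimates of Lemma \ref{basiccount} and Lemma \ref{basiccount0}: each ``interior'' $(x_j,y_j)$ pair contributes $\lesssim \delta^{-2}L^{2(d-\gamma)-(1-\gamma)}$ (or $\delta^{-2}L^{2(d-\gamma)}$ in the worst case), each free vector contributes $\lesssim \delta^{-1}L^{d-\gamma}$ (again with a possible $(1-\gamma)$ gain), and I would check that the product of all these, against the normalization $(\delta/(2L^{d-\gamma}))^{n_1}$ and the factor $(C^+\delta)^{n(\Bs)/2}$ from the inputs, comes out to $\lesssim (C^+\delta^{1/4})^{n_2}L^{\eta^4}$ — with the point being that, unlike bad vines (I)--(II), the combinatorics here never forces an overall $L^{1/2}$ loss; it is at worst neutral, and the loss of an integer power of $\delta$ is converted into the stated $\delta^{1/4}$ gain because each vine has at least two interior atoms or a compensating structure. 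The bookkeeping that a normal vine really is neutral is exactly the content of the local rigidity analysis sketched in the introduction (the two bad graphs in Figure \ref{fig:badmol} are the only obstructions), so I would invoke that classification rather than re-derive it.

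For the vine-chain estimate \eqref{vinebound3}, the plan is to induct on the number of ingredient vines. Using Corollary \ref{blockchainprop}, at most one ingredient of the chain $\Vb\Cb$ is a (CN) vine, and since we are in the (CL) setting all ingredients are (CL); after splicing all but the top ingredient $\Vb_1$ and applying the single-vine bound \eqref{vinebound2} to $\Vb_1$ with $Z$ replaced by the (spliced) expression coming from the rest of the chain, the result telescopes — each application contributes a factor $(C^+\delta^{1/4})^{(\textrm{local }n_2)}L^{\eta^4}$, and summing the exponents over the chain gives the claimed global exponent $n_2$ (the accumulated $L^{\eta^4}$ factors multiply to at most $L^{(\textrm{length})\eta^4}$, which is absorbed since the chain length is $\le n_2$ and we can shrink $\eta^4$ to $\eta^5$ in the final bound, or simply note $\eta\ll 1$ makes this harmless). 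The main obstacle I anticipate is purely the casework in the counting step: one must verify for each of the six normal vine families, \emph{including} the possibility of arbitrarily long ladders inserted between the dashed parallel bonds (Remark \ref{remilia}), that inserting a ladder is genuinely neutral — this is where the ``(Y) sequence'' reduction and the fact that ladders neither gain nor lose powers must be used carefully, and it is the one place where a naive application of Lemma \ref{basiccount} could be off by the ladder length if one is not careful to pair the parallel single bonds with opposite directions as required.
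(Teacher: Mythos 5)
Your high-level plan captures the right ingredients for the range $\gamma>\tfrac12$, but there is a genuine gap for $\gamma\le\tfrac12$, which is precisely the regime this proposition is designed to handle. Your ``heart of the argument'' is a pure counting estimate for the block $(u_1,u_2,u_3)$ together with $x_0$ and the ladder pairs $(x_j,y_j)$, run separately from the almost-$L^1$ time-integral estimate. For $\gamma\le\tfrac12$ this scheme at best lands exactly on neutral in the $L$-power (each $x_0$ costs up to $L^d$ in the worst case $R\sim L^{-1}$, and Lemma~\ref{basiccount0}~(3) gives $L^{3d-4\gamma}$ for $(u_1,u_2,u_3)$, which together match $L^{4(d-\gamma)}$); and, more seriously, the $\delta$-powers do not close: the counting costs $\delta^{-(m+3)}$ (one per ladder pair, one for $x_0$, two for $(u_1,u_2,u_3)$), the separate $L^1$ time estimate costs a further $\delta^{-(m+2)/2}$, and against the normalization $\delta^{n_1}=\delta^{2m+4}$ plus the $(C^+\delta)^{n(\Bs)/2}$ input gain this leaves $\delta^{m/2+n(\Bs)/2}$, which falls short of the required $\delta^{(m+2)/2+n(\Bs)/4}$ whenever $n(\Bs)<4$. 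The paper avoids this by \emph{not} decoupling the sum over $(u_1,u_2,u_3)$ from its time integrals: for $\gamma\le\tfrac12$ it invokes Lemma~\ref{sumintest2} (and Corollary~\ref{sumintest3} for the chain case), where Poisson summation and stationary phase in the $(u_1,u_2,u_3)$-block, using that $|\theta_j|\lesssim L$ when $\gamma\le\tfrac12$ and $d\ge3$, produce a genuinely better joint bound $(C^+\delta^{-1})^{m+2}L^{(2m+4)(d-\gamma)-d+\eta^4}$ for the full sum-integral, i.e., an overall cost of only $\delta^{-(m+2)}$. Your proposal never invokes this sum-integral machinery and so does not recover the $\delta$-gain for small~$\gamma$.

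Two further issues. First, you invoke ``the local rigidity analysis sketched in the introduction (the two bad graphs in Figure~\ref{fig:badmol} are the only obstructions)'' as a justification that normal vines are neutral. That argument is circular: the rigidity results (Propositions~\ref{moleprop1}--\ref{moleprop4} and \ref{lgmolect}) are proved in Sections~\ref{reduct2}--\ref{lgmole} \emph{after} the vine estimates, and the reduction in Section~\ref{reduct1} that leads to them already uses Propositions~\ref{estbadvine}--\ref{estnormalvine} as input. You would need to derive the per-vine counting bound from Lemmas~\ref{basiccount} and~\ref{basiccount0} directly, exactly as the paper does. Second, your induction-over-ingredients plan for the vine chain \eqref{vinebound3} is different from the paper's one-shot reparametrization of the whole chain followed by Corollary~\ref{sumintest3}; the inductive route is plausible but requires verifying that the intermediate ``inner'' functions $Z$ produced at each splice stay in the right class ($Y_{\mathrm{loc}}^\theta$ with the correct localization properties, cf.~Remark~\ref{extravar}) and that the $L^{\eta^4}$ loss per ingredient vine accumulates to $L^{O(\text{chain length})\eta^4}$ rather than something worse — this is not automatic and would need the same sum-integral input for $\gamma\le\tfrac12$ at every step, so it does not route around the main gap above.
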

\begin{proof} The proof of this proposition goes along the same lines as that of Proposition \ref{estbadvine}, so we will just sketch the similar arguments and only elaborate on the differences, which are mainly in the reparametrization in Step 1 below. We start with the case of a single normal (CL) vine. We replace $Y_{\mathrm{loc}}^\theta$ by $Y^\theta$ and assume throughout $\|Z\|_{Y^\theta}=1$.

\medskip
\underline{\it Step 1: Reparametrization.} Define the notations $\Es[\Vb]$, $\Nc[\Vb]$ and $\Lc[\Vb]$ etc. as in the proof of Proposition \ref{estbadvine}. We first perform the reparametrization. By examining the form of the normal vine $\Vb$ in Figure \ref{fig:vines}, we see that it contains $n_1=2m+4$ atoms (excluding the joint atom $v_1$) for some $m$, which is equal to the cardinality of $\Qc_0[\Vb]\setminus \{\uf_0\}$. These atoms are split into three groups: (a) the joint $v_2$, (b) the $m$ pairs of atoms connected by double bounds that belong to the ladders represented colored dashed lines in Figure \ref{fig:vines} (for Vine (VII) we also include here the pair of atoms where all three ladders intersect), and (c) the three remaining atoms $v_1', v_2', v_3'$. At the level of the couple, $\Qc_0[\Vb]\backslash\{\uf_1\}$ has $2m+4$ leaf pairs $(\lf, \lf')$, as well as $2m+4$ branching nodes $\mf$. We will replace the variables $k_\lf$ and $k_\mf$ (where $\lf\in\Lc[\Vb]$ and $\mf\in\Nc[\Vb]$) occurring in the decoration $\Es$ by a new set of variables $(x_0, x_j, y_j, u_1, u_2, u_3)_{1\leq j \leq m}$ as follows.

By the same argument and notation as in Step 2 of the proof of Proposition \ref{estbadvine}, we have that $\Omega_{\uf_2}=2r\cdot\mu_0$ where $r=k_{\uf_{21}}-k_{\uf_{22}}$ is fixed and nonzero, and $\mu_0=\alpha_0x_0+\beta_0 y_0+\theta_0 r$ for some $\alpha_0, \beta_0, \theta_0 \in \{0,\pm 1\}$ with $y_0 \in \{k_{\uf_{21}}, k_{\uf_{22}}\}$, $x_0$ equals $k_{\uf_{23}}$ if $\uf_2$ has sign $+$ and $k_{\uf_2}$ otherwise, and $\alpha_0^2+\beta_0^2\neq 0$. Next, for each pair of atoms connected by a double bond in a ladder, the same argument as in Step 3 of the proof of Proposition \ref{estbadvine} shows that if $\uf$ and $\widetilde \uf$ are the branching nodes of $\Qc_0[\Vb]$ corresponding to those two atoms, then 
$$
\zeta_{\uf} \Omega_{\uf}+\zeta_{\widetilde \uf}\Omega_{\widetilde \uf}=2\widetilde r \cdot \mu.
$$
Here $\mu$ is the difference of two $k_\nf$ vectors corresponding to two of the four single bonds at these pair of atoms, and $\widetilde r\in \{0, r\}$ is the the same for all pairs of atoms in the same ladder and is equal to $r$ for the ladders attached to the joints and zero otherwise. In Figure \ref{fig:vines} the ladders whose $\widetilde r$ value is 0 are colored in light blue, and we call such ladders \emph{zero-gap ladders} (note that for the pair of atoms where all three ladders intersect in vine (VII), the argument needs to be slightly adjusted but the result remains the same, with $\widetilde{r}=r$ in this case). As such, we can define $(x_j, y_j)\in \Zb^{2d}$ such that $\zeta_{\uf} \Omega_{\widetilde \uf}=2x_j \cdot y_j$, so that $\zeta_{\widetilde \uf}\Omega_{\widetilde \uf}=-2x_j\cdot y_j+2\widetilde r\cdot \mu$, and $\mu$ can be written as $a_jx_j+b_jy_j+c_j\widetilde r$ for $a_j,b_j,c_j\in\{0,\pm1\}$ and $a_j^2+b_j^2\neq 0$. It now remains to define the variables $(u_1, u_2, u_3)$. Here we only discuss vine (III) in detail below, as arguments in other cases are similar.

(1) For vine (III), using the notation in Figure \ref{fig:vineAnnotated} (B), we may denote $\omega_j=\zeta_{\wf_j}\Omega_{\wf_j}$ where $\wf_j$ is the node in $\Qc_0[\Vb]$ corresponding to the atom $w_j$. In Figure \ref{fig:vineAnnotated} (B) and Figure \ref{fig:vines}, note that $e-g=f-h=\pm r$, and $(e,g)$ is determined by $x_0$ and some of the $(x_j,y_j)$ variables. Now, if the bonds decorated by $c$ and $d$ have opposite directions (say $c$ goes from $w_1$ to $w_3$, and $d$ goes from $w_3$ to $w_1$), we may define $(u_1,u_2,u_3)=(c-d,d-a,d-b)$. If the bonds decorated by $c$ and $d$ have the same direction (which has to go from $w_1$ to $w_3$), then we may define $(u_1,u_2,u_3)=(c-a,a-d,c-b)$. Then we have $\omega_1=2u_1\cdot u_2$, and $\omega_2$ equals $2u_1\cdot u_3$ or $2u_3\cdot (u_1+u_2-u_3)$ in the first and second case. Moreover we have $\omega_1+\omega_2+\omega_3=\pm(|e|^2-|f|^2-|g|^2+|h|^2)=2r\cdot\xi$ where $\xi=au_1+bu_2+cu_3+dr$ with $a,b,c,d\in\{0,\pm1\}$. In any case, the variables $(u_1,u_2,u_3)$ determines $(a,b,c,d,f,h)$ and allows one to proceed with parametrizing the next ladder starting from $(f,h)$ by the rest of $(x_j,y_j)$ variables. The argument for Vine (IV) is similar, see Figure \ref{fig:vineAnnotated} (C).

(2) For Vines (V)--(VIII), the argument is again similar, and in fact much easier. In Figure \ref{fig:vineAnnotated} (D) and Figure \ref{fig:vines}, note that the two bonds going in and out the triangle are both decorated by $g$ (which is determined by the $(x_0,x_j,y_j)$ variables), which means that the vector $r$ for vine (III) above is replaced by $\widetilde{r}=0$. In particular we have $\omega_1+\omega_2+\omega_3=0$ where $\omega_j=\zeta_{\wf_j}\Omega_{\wf_j}$. Then we argue as above, with $(u_1,u_2,u_3)=(a-e,b-e,a-c)$ if bonds decorated by $a$ and $b$ have the same direction, and $(u_1,u_2,u_3)=(e-g,b-g,d-g)$ if they have opposite directions, then the same results will hold.

As a result, in all cases we can define $(u_1, u_2, u_3)$ so that the full decoration of the vine $\Vb$ and hence that of $\Qc_0[\Vb]$ is completely determined by $(x_0, x_j, y_j, u_1, u_2, u_3)_{1\leq j \leq m}$. The factors $\zeta_{\uf_{2j+1}}\Omega_{\uf_{2j+1}}$ and $\zeta_{\uf_{2j+2}}\Omega_{\uf_{2j+2}}$ for $1\leq j \leq m$ are given by $2x_j \cdot y_j$ and $-2x_j \cdot y_j +2\widetilde {r}_j \cdot \mu_j$ where $\mu_j=a_jx_j+b_jy_j+c_jr$ with $a_j,b_j,c_j\in\{0,\pm1\}$ and $a_j^2+b_j^2\neq 0$, and $\widetilde {r}_j=r$ if $j\leq m_1$ and $\widetilde{r}_j=0$ otherwise. The remaining three branching nodes in $\Qc_0[\Vb]$ have their resonance factors given by $2u_1 \cdot u_2$ and $\Lambda\in\{2u_1\cdot u_3,2u_3\cdot (u_1+u_2-u_3)\}$, and $-2u_1\cdot u_2-\Lambda+\widetilde r \cdot\xi$ where $\xi=au_1+bu_2+cu_3+dr$ with $a,b,c,d\in\{0,\pm1\}$ and $\widetilde{r}\in\{r,0\}$. In particular, the change of variables from $(k_\nf)_{\nf\in \Qc_0[\Vb]}$ into $(x_0, x_j, y_j , u_1, u_2, u_3)$ satisfies the conditions stated in Lemma \ref{sumintest2}.

\smallskip
Now, with the reparametrization, we can argue in the same way as in Step 2 of the proof of Proposition \ref{estbadvine} to restrict each of the $(x_0,x_j,y_j,u_j)$ and $k_\nf$ variables to a fixed unit ball, and consequently expand $Z$ into (\ref{defztilde}). Moreover, once we confirm $r\neq 0$, we can get rid of the $\epsilon_{\Es[\Vb]}$ factor in the same way as in Step 2 of the proof of Proposition \ref{estbadvine}. Next, by using the $(x_0,x_j,y_j,u_j)$ variables, we can reduce (\ref{vinebound2}) to estimating the expression
\begin{align}
\widetilde \Kc_{(\mathtt{ind},\Bs)}&= e^{\pi i (\lambda_{\uf_{21}}t_{\uf_{21}}+\lambda_{\uf_{22}} t_{\uf_{22}}+\lambda_{\uf_1}t_{\uf_1})} \bigg(\frac{\delta}{2L^{d-\gamma}}\bigg)^{n_1}\zeta[\Vb] \sum_{(x_0,x_j,y_j):1\leq j\leq m}\sum_{(u_1, u_2, u_3)}\int_{ \Ec[\Vb]}\nonumber\\
&\times e^{2\pi i \lambda_{\uf_2} t_{\uf_2}}e^{\pi i\cdot\delta L^{2\gamma} (t_{\uf_2}-t_{\uf_1})(r\cdot \mu_0)} \prod_{j=1}^{m} e^{\pi i\cdot\delta L^{2\gamma} (t_{\uf_{2j+2}} -t_{\uf_{2j+1}})x_j \cdot y_j}\prod_{j=1}^{m_1}e^{\pi i\cdot\delta L^{2\gamma} (t_{\uf_{2j+1}}-t_{\uf_1})(r\cdot \mu_j)} \nonumber\\
&\times e^{\pi i\cdot \delta L^{2\gamma}(t_{\wf_1}-t_{\wf_3})(u_1\cdot u_2)}e^{\pi i\cdot \delta L^{2\gamma}(t_{\wf_2}-t_{\wf_3})\Lambda} e^{\pi i\cdot \delta L^{2\gamma}t_{\wf_3}(\widetilde{r}\cdot\xi)} {\prod_{\lf\in
\Lc[\Vb]}^{(+)}\Kc_{\Qc^{(\lf,\lf')}}(t_{\lf^p},t_{(\lf')^p},k_\lf)}\nonumber\\
&\times\prod_{\mf\in \Nc[\Vb]}\Kc_{\Tc^{(\mf)}}^*(t_{\mf^p},t_{\mf},k_\mf)\cdot \widetilde{Z}(x_0,k_{\uf_1},k_{\uf_{11}},k_{\uf_{21}},k_{\uf_{22}})\prod_{j=1}^m \mathrm{d}t_{j}\mathrm{d}s_{j}\cdot\mathrm{d}t_{0}\mathrm{d}\tau_{1}\mathrm{d}\tau_{2}\mathrm{d}\tau_{3}.\label{gvineboundBfix2}
\end{align}

\underline{\it Step 2: Case $\gamma>\frac12$: counting argument.} The argument here is very similar to Step 3 of the proof of Proposition \ref{estbadvine}, but with one additional ingredient. After expanding all the $\Kc_{\Qc}$ and $\Kc^*_\Tc$ as time Fourier integrals, and localizing each $k_\nf\,(\nf \in \Qc_0[\Vb])$ to a unit ball as in Step 3 of the proof of Proposition \ref{estbadvine}, we can reduce the estimate of (\ref{gvineboundBfix2}) to that of \eqref{afterloc7.3}. Recall here that $m=m_1+m_2$ where $m_2$ is the length of the zero-gap ladder in $\Vb$, and $m_1$ is the total length of other ladders. Now it suffices to prove the following two estimates (in fact they will allow us to gain a small power of $L$ in (\ref{vinebound2})):
\begin{multline}\label{twoestgoodv2}
\bigg\|\max(\langle \tau_{\uf_1}\rangle,\langle \tau_{\uf_{21}}\rangle,\langle \tau_{\uf_{22}}\rangle)^{\eta^5}\sum_{\sigma[\Nc[\Vb]]}\sup_{|\alpha_\mf-\sigma_\mf|\leq 1}|\widehat \Bc(\tau_{\uf_1},\tau_{\uf_{21}}, \tau_{\uf_{22}},\alpha[ \Nc[\Vb]])|\bigg\|_{L_{\tau,\alpha}^1}\\\lesssim C^m \delta^{-(m+2)/2}L^{Cm_1\sqrt \delta} (\log L)^{C m_1}L^{4\eta^5},
\end{multline}
\begin{equation}\label{twoestgoodv1}
\sup_{(k_{\uf_1},k_{\uf_{11}},k_{\uf_{21}},k_{\uf_{22}})}\sup_{\sigma[\Nc[\Vb]]}\sum_{\Es}^*\Xc(k[\Qc_{0}[\Vb]])\leq (C^+\delta^{-1})^{m+2}L^{(2m+4)(d-\gamma)-5\eta (1+m_1)}.
\end{equation}
These two estimates are clearly enough to give \eqref{vinebound2} since the shifts can be absorbed by the weights as in the proof of Proposition \ref{estbadvine}. Moreover, \eqref{twoestgoodv2} follows by combining the arguments in Step 3 of the proof of Proposition \ref{estbadvine} with the ladder $L^1$ estimate proved in Proposition 10.1 of \cite{DH21} (but only to the zero-gap ladder, so the number of atoms not in this ladder is $O(m_1)$); we remark that while Proposition 10.1 of \cite{DH21} is proved for $L^\infty_t$ rather than $X^\theta$, but it extends directly to the space $X^\theta$ by simply relying on Lemma \ref{timeFourierlem} instead of Lemma 10.2 in \cite{DH21}.

To prove \eqref{twoestgoodv1}, we reduce it to a counting problem as in the proof of Proposition \ref{estbadvine}. Here we are counting the number of choices for the variables $(x_0, x_j, y_j, u_1, u_2,u_3)_{1\leq j \leq m}$, each of which is in a fixed unit ball, such that each of $(r\cdot \mu_0, x_j\cdot y_j,u_1\cdot u_2,\Lambda)$ where $1\leq j \leq m$, and each of $r\cdot\mu_j$ where $1\leq j\leq m_1$, belongs to a fixed interval of length $O(\delta^{-1}L^{-2\gamma})$. Since $\gamma>1/2$, we know that the number of choices for $x_0$ is $O(\delta^{-1}L^{d-\gamma-(1-\gamma)}$ by Lemma \ref{basiccount} (1), that the number of choices for each $(x_j,y_j)\,(1\leq j\leq m_1)$ is $O(\delta^{-2}L^{2(d-\gamma)-10\eta})$ by Lemma \ref{basiccount0} (1), the number of choices for each $(x_j,y_j)\,(j>m_1)$ is $O(\delta^{-1}L^{2(d-\gamma)})$ by Lemma \ref{basiccount0} (1), and the number of choices for $(u_1,u_2,u_3)$ is $O(\delta^{-2}L^{3(d-\gamma)-(1-\gamma)-10\eta})$ by Lemma \ref{basiccount0} (3). Putting together, this proves \eqref{twoestgoodv1}.

\medskip
\underline{\it Step 3: Case $\gamma\leq \frac12$: Lemma \ref{sumintest2}.} Here, the argument is basically the same as in Step 4 of the proof of Proposition \ref{estbadvine}, except that we rely on Lemma \ref{sumintest2} instead of \ref{sumintest1}.  
We expand $\Kc_{\Qc}$ and $\Kc^*_{\Tc}$ as time Fourier integrals to obtain an expression of the form
\begin{multline}
\bigg(\frac{\delta}{2L^{d-\gamma}}\bigg)^{2m+4}(C\delta)^{n(\Bs)/2}\sum_{|x_0-a_0|\leq 1} e^{\pi i (\lambda_{\uf_{21}}t_{\uf_{21}}+\pi i \lambda_{\uf_{22}} t_{\uf_{22}}+(\lambda_{\uf_1}+\gamma_1)t_{\uf_1})}\\\times\widetilde{Z}(x_0,k_{\uf_1},k_{\uf_{11}},k_{\uf_{21}},k_{\uf_{22}})\Ic\left(x_0, k_{\uf_1}, k_{\uf_{11}}, k_{\uf_{21}}, k_{\uf_{22}}, t_*\right).
\end{multline}

Here $a_0$ is a fixed vector, $\gamma_1$ is a linear combination of $\lambda_{\uf_2}$ and the Fourier variables occurring in the expansions of $\Kc_\Qc^{(\lf, \lf')}$ and $\Kc^*_{\Tc^{(\mf)}}$ as above, and $\Ic$ is an expression in the form \eqref{sumintI1} but modified as in Lemma \ref{sumintest2}, which can be obtained after defining the new time variables 
$t_j=t_{\uf_1}-t_{\uf_{2j+1}}$, $s_j=t_{\uf_1}-t_{\uf_{2j+2}}$ for $1\leq j \leq m$,  $t_0=t_{\uf_1}-t_{\uf_2}$, and $\tau_j=t_{\uf_1}-t_{\wf_j}$. The domain of integration after the change of variables is the same as described in Lemma \ref{sumintest2}, with $t$ replaced by $t_*:=t_{\uf_1}-\max(t_{\uf_{21}},t_{\uf_{22}})\}$. As a result, by \eqref{FourierboundI2}, we have that
$$
\|\Ic\|_{X_{\mathrm{loc}}^{2\eta^5, 0}(t_*)}\lesssim  (C^+\delta^{-1})^{m+2}L^{(2m+4)(d-\gamma)-d+\eta^4}.
$$
After summing over $x_0$, applying Lemma \ref{timeFourierlem} (2), and including the factors $(\delta/(2L^{d-\gamma}))^{2m+4}$ and $(C^+\delta)^{n(\Bs)/2}$ etc., this implies (\ref{vinebound2}); note that the shifts $(\lambda_{\uf_{21}},\lambda_{\uf_{22}},\lambda_{\uf_1}+\gamma_1)$ is again absorbed by the weights.

\medskip
\underline{\it Step 4: The Vine Chain case.} The proof for the vine chain $\Vb\Cb$ runs exactly as above, except that we apply Corollary \ref{sumintest3} instead of Lemma \ref{sumintest2} in Step 3. Indeed, by reparametrizing the whole vine chain $\Vb\Cb$ by going from bottom to top and using the same reparametrization in Step 1 for each ingredient vine, we can define the variables $(x_0^q, x_j^q, y_j^q, u_1^q, u_2^q, u_3^q)$ where $0\leq q<Q$ and $Q$ is the number of ingredient vines in $\Vb\Cb$. We then get an expression for $\Kc_{(\mathtt{sgn},\mathtt{ind},\Bs)}^{(\Vb\Cb, W, Z)}$ generalizing that in \eqref{gvineboundBfix2}. We treat the case when $\gamma>\frac12$ by reducing to a counting estimate as in Step 2 above, and treat the case $\gamma\leq \frac12$ using Corollary \ref{sumintest3} as in Step 3 above. This then completes the proof.
\end{proof}
\begin{rem}\label{extravar} In the setting of Propositions \ref{estbadvine} and \ref{estnormalvine} (described in Section \ref{vinesubset}) we have assumed that $Z$ is a function of $(x_0,k_{\uf_1},k_{\uf_{11}},k_{\uf_{21}},k_{\uf_{22}},t_{\uf_1},t_{\uf_{21}},t_{\uf_{22}},t_{\uf_2})$. In fact, we may allow $Z$ to depend on other variables (say denoted by $k[\Uc]$ and $t[\Vc]$) provided that they do not appear in the rest of the expression for $\Kc_{(\mathtt{sgn},\mathtt{ind},\Bs)}^{(\Vb,Z,W)}$; in this case, if we consider the norm $Y_{\mathrm{loc}}^\theta$ in all variables including $k[\Uc]$ and $t[\Vc]$, then (\ref{vinebound1}), (\ref{vinebound2}) and (\ref{vinebound3}) still hold with the same implicit constants. 

This is because, in the process of the proof of Propositions \ref{estbadvine} and \ref{estnormalvine}, we have restricted each of the variables $(x_0,k_{\uf_1},k_{\uf_{11}},k_{\uf_{21}},k_{\uf_{22}})$ to a fixed unit ball. If $Z$ depends on $k[\Uc]$, then we may also restrict each variable in $k[\Uc]$ to a unit ball, which reduces the $Y_{\mathrm{loc}}^\theta$ bound to the $X_{\mathrm{loc}}^{\theta,0}$ bound. Then we simply view $Z$ as a function with value in the Banach space $L_{k[\Uc]}^\infty$, and apply Propositions \ref{estbadvine} and \ref{estnormalvine} to handle these extra $k[\Uc]$ variables. As for the extra time variables $t[\Vc]$, note that $\Kc_{(\mathtt{sgn},\mathtt{ind},\Bs)}^{(\Vb,Z,W)}$ is linear in $Z$ and thus commutes with taking time Fourier transforms in $t[\Vc]$. Let the Fourier dual of $t[\Vc]$ be $\xi[\Vc]$, then
\begin{equation}\label{sepvar}\|\Zc\|_{X^{\eta^5,0}}\sim\int\big(\max_{\nf\in\Vc}\langle \xi_\nf\rangle\big)^{\eta^5}\|\Fc_{t[\Vc]}\Zc(\cdot,\xi[\Vc])\|_{X^{0,0}}\,\mathrm{d}\xi[\Vc]+\int\|\Fc_{t[\Vc]}\Zc(\cdot,\xi[\Vc])\|_{X^{\eta^5,0}}\,\mathrm{d}\xi[\Vc]\end{equation} for any $\Zc=\Zc(\cdot,t[\Vc])$, where the $\cdot$ represents variables other than $k[\Uc]$ and $t[\Vc]$, and the norm on the left hand side of (\ref{sepvar}) is the norm in all variables, while the norms on the right hand side are in the $\cdot$ variables only. Therefore, we can apply Propositions \ref{estbadvine} and \ref{estnormalvine} for each fixed $\xi[\Vc]$, and then integrate in these variables, to get the same results as in (\ref{vinebound1}), (\ref{vinebound2}) and (\ref{vinebound3}).
\end{rem}
\section{Reduction to counting estimates}\label{reduct1}
\subsection{Preliminary setup} Recall the notions of vines and vine-chains (VC), hyper-vines (HV) and hyper-vine-chains (HVC), and ladders in Definition \ref{defvine}.
\begin{lem}\label{vinechainlem} Define a \emph{double-vine (V)}, or DV for short, to be the union of two vines (V), see Figure \ref{fig:vines}, that share two common joints and no other common atoms\footnote{This is not a vine-like object; see Definition \ref{defvine}.}. Then, for any molecule $\Mb$, there is a unique collection $\Cs$ of disjoint atomic groups, such that each atomic group in $\Cs$ is an HV, VC, HVC or DV, and any vine-like object in $\Vb$ is a subset of some atomic group in $\Cs$.
\end{lem}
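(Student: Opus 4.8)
The plan is to construct the collection $\Cs$ by a greedy "maximal packing" argument and then establish its uniqueness by a combinatorial analysis of how two overlapping vine-like objects interact, using Lemma \ref{disjointlem} as the main tool.

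First I would set up the existence of $\Cs$. Consider the partial order on atomic groups given by inclusion, and look at all vine-like objects (HV, VC, HVC) in $\Mb$ together with all DV's. I claim that among these, the \emph{maximal} ones are pairwise either disjoint or share exactly two common joints (the latter happening only for the two halves of a DV). To see this, take two distinct maximal such objects $\Ab$ and $\Bb$ with $\Ab\cap\Bb\neq\varnothing$; since assumptions (i), (ii) of Lemma \ref{disjointlem} hold for any HV, VC, HVC (as noted in Definition \ref{defvine}), and DV's are built from vines (V) so the same holds there, Lemma \ref{disjointlem} applies (after checking $\Ab\not\subset\Bb$, $\Bb\not\subset\Ab$, which follows from maximality unless they coincide). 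The three scenarios of Lemma \ref{disjointlem} are: (a) they share two joints and $\sigma(\Ab)=\sigma(\Bb)=1$; (b) they share one joint and concatenate; (c) they overlap in a middle block. In cases (b) and (c), the concatenation $\Ab\cup\Bb$ (or the larger object in case (c)) is again a VC or HVC — here I would use Definition \ref{defblock} (concatenation of blocks) together with the observation that concatenating vine-like objects yields a vine-like object — contradicting maximality of $\Ab$ (or $\Bb$). So only case (a) survives, and there $\sigma=1$ forces each of $\Ab,\Bb$ to be a single vine (V) (the only vine with $\sigma=1$ is vine (V), and a VC/HVC has $\sigma\in\{0,1\}$ with $\sigma=1$ only for a single vine (V) as an ingredient — one must check a genuine chain of $\geq 2$ ingredients cannot have $\sigma=1$ while sharing both joints with another such object; this is where a short case-check is needed), so $\Ab\cup\Bb$ is exactly a DV. Then I define $\Cs$ to be the set of maximal vine-like objects that are \emph{not} one of the two halves of some DV, together with all maximal DV's. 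By construction the members of $\Cs$ are pairwise disjoint, and every vine-like object, being contained in some maximal one, is contained in a member of $\Cs$ (a half-vine-(V) inside a DV is contained in that DV).

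For uniqueness, suppose $\Cs'$ is another collection with the stated properties. Every member $\Ab\in\Cs$ is a vine-like object or a DV; if it is vine-like it sits inside some $\Bb'\in\Cs'$, and by maximality of $\Ab$ and the concatenation argument above $\Ab=\Bb'$ unless $\Bb'$ is strictly larger, in which case $\Bb'$ being vine-like and $\Ab$ maximal gives a contradiction — so vine-like members of $\Cs$ are exactly the non-DV members of $\Cs'$; the DV members match because a DV is the union of two vines (V) sharing two joints, and the requirement that every vine-like object lies inside a single member of $\Cs'$ forces both halves into one member, which can then only be the DV itself (it cannot be larger since a DV is not properly contained in any vine-like object, by a degree count at the shared joints). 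Hence $\Cs=\Cs'$.

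The main obstacle I anticipate is the bookkeeping in the concatenation step: verifying rigorously that the union of two overlapping vine-like objects (in scenarios (b) and (c) of Lemma \ref{disjointlem}) is itself a VC or HVC — i.e. that the overlap/concatenation does not produce something outside the eight vine families extended by ladders — and correctly handling the borderline $\sigma=1$ case that isolates vine (V) and forces the DV. This requires a careful inspection of Figure \ref{fig:vines} and of how ladders inside the vines interact when two vines are glued at a joint or share a middle block, but it is a finite check rather than a deep argument.
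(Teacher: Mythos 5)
Your construction mirrors the paper's: take the maximal vine-like objects, rule out scenarios (b) and (c) of Lemma \ref{disjointlem} by maximality, and fuse each scenario-(a) pair into a DV. However, one assertion needs a genuine argument: you write that ``by construction the members of $\Cs$ are pairwise disjoint,'' but nothing you establish prevents a single vine (V) from appearing as a half of two different maximal DVs. If three vines (V) $\Ab,\Bb,\Cb$ pairwise shared the same two joints $v_1,v_2$ (and the same $\sigma$-bond, which scenario (a) of Lemma \ref{disjointlem} forces), then $\Ab\cup\Bb$, $\Ab\cup\Cb$, $\Bb\cup\Cb$ would be three overlapping maximal DVs, and your $\Cs$ would not be disjoint. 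The paper closes exactly this loophole by noting that $\Ab\cup\Bb\cup\Cb$ would then be saturated: each shared joint acquires degree $4$ (the common $\sigma$-bond plus one bond into the interior of each of the three vines), and all interior atoms already have degree $4$, so the union would be a $4$-regular connected component, which is forbidden by Definition \ref{defmole0}. This degree count is what guarantees that scenario-(a) pairs are unambiguous and pairwise disjoint, and without it your construction of $\Cs$ is not clearly well defined.

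Your flagged worry about scenarios (b) and (c) is fair, but it is not the obstacle you think: the paper treats it just as tersely and the justification is structural rather than a graphical case-check. In scenario (c), the joints of the overlap block $\Cb_1=\Ab\cap\Bb$ must be concatenation-joints of the original VC/HVC, because an interior atom of an ingredient vine has degree $4$ within that vine and hence cannot be a degree-$2$ joint of a proper sub-block. It follows that each $\Cb_i$ is a concatenation of consecutive vine-ingredients, so $\Cb_0\cup\Cb_1\cup\Cb_2$ is again a concatenation of vines, i.e.\ a VC or HVC. Scenario (b) is immediate from the concatenation clause of Definition \ref{defblock}.
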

\begin{proof} Consider all the maximal vine-like objects in $\Mb$, where maximal is in the sense that it is not a subset of any other vine-like object; let this collection be $\Cs_1$. We know that Lemma \ref{disjointlem} applies to any two of these objects. If $\Ab,\Bb\in\Cs_1$ and $\Ab\cap\Bb\neq\varnothing$, then we are in one of scenarios (a)--(c) of Lemma \ref{disjointlem}. However scenarios (b) and (c) are impossible, because $\Db=\Ab\cup\Bb$ (in case (b)) or $\Db=\Cb_0\cup\Cb_1\cup\Cb_2$ (in case (c)) would be a larger vine-like object that contains $\Ab$ and $\Bb$. We are then left with case (a), where $\sigma(\Ab)=\sigma(\Bb)=1$. This means that each of $\Ab$ and $\Bb$ must be one vine (V), and they share two common joints and no other common atoms, so their union is a DV. Moreover, in this case, neither $\Ab$ nor $\Bb$ can intersect with any other maximal vine-like object $\Cb$ (otherwise $\Ab$ (for example) would form another DV with $\Cb$, which leads to a $4$-regular component). Therefore, let $\Cs$ be obtained from $\Cs_1$ by replacing the two intersecting vines (V) with one DV, then it satisfies the requirement and clearly is unique.
\end{proof}
\begin{df}\label{defcong}Let $\Qc$ be a couple with skeleton $\Qc_{\mathrm{sk}}$,  and let $\Cs$ be defined for the molecule $\Mb(\Qc_{\mathrm{sk}})$ by Lemma \ref{vinechainlem}. Define a collection $\Vs$ of (CL) vines as follows: for each VC in $\Cs$ whose joints do not both have degree $3$, we include into $\Vs$ all its vine ingredients that are (CL) vines. For each HVC in $\Cs$ and each VC in $\Cs$ whose joints both have degree $3$, we include into $\Vs$ all but one of its vine ingredients, such that (a) if there is a (CN) vine then we only exclude this one, and (b) if all vines are (CL) vines then we only exclude the ``top" vine whose $\uf_1$ node is the ancestor of all other $\uf_1$ nodes (as branching nodes of $\Qc_{\mathrm{sk}}$). We do not include anything in $\Cs$ from any HV or DV.

Since $\Vs$ satisfies the assumptions in Definition \ref{twistgen}, we shall define any couple $\Qc'$ to be \emph{congruent} to $\Qc$, if $\Qc'$ is a full twist of $\Qc$ with respect to $\Vs$. Clearly, performing a full twist does not affect the molecule $\Mb(\Qc_{\mathrm{sk}})$ nor the choice of vines in $\Vs$, and congruence is an equivalence relation and preserves the order of \emph{each tree} in a couple.
\end{df}
\begin{df}\label{defdiff} Given any molecule $\Mb$ and block $\Bb\subset\Mb$, let the four bonds in $\Bb$ at the two joints be $\ell_1,\ell_2\in\Bb$ at one joint, and $\ell_3,\ell_4\in\Bb$ at the other. Then for any decoration $(k_\ell)$ we have $k_{\ell_1}-k_{\ell_2}=\pm(k_{\ell_3}-k_{\ell_4}):=r$. We call this vector the \emph{gap} of $\Bb$ relative to this decoration. Note that once the parameters $(c_v)$ of a decoration are fixed as in Definition \ref{decmole}, then this $r$ can be expressed as a function of the vectors $k_{\ell_j^*}$, where $\ell_j^*$ runs over all bonds connecting a given joint of $\Bb$ to atoms \emph{not} in $\Bb$. If $\Bb$ is concatenated by blocks $\Bb_j$, then all $\Bb_j$ must have the same gap as $\Bb$. For the hyper-block which is adjoint of $\Bb$, we define its gap to be the gap of $\Bb$. Note that the gap of a block can never be $0$ due to Remark \ref{nonresrem}.

More generally, if $v$ is an atom and $\ell_1,\ell_2\sim v$ are two bonds with opposite directions, then we define the \emph{gap} of the triple $(v,\ell_1,\ell_2)$ relative to a given decoration as $r:=k_{\ell_1}-k_{\ell_2}$. In particular the gap of any block or hyper-block equals a suitable gap at either of its joints. Next, for any ladder of length $\geq 1$ (see Definition \ref{defvine} and Figure \ref{fig:vines}), the difference $k_\ell-k_{\ell'}$ for any pair of parallel single bonds $(\ell,\ell')$ must be equal (up to a sign change), which we also define to be the \emph{gap} of the ladder. In particular, if $\Vb$ is a vine (or VC) with gap $r$, then for any ladder contained in $\Vb$ that is inserted between parallel dashed bonds of the same color in Figure \ref{fig:vines}, the gap of this ladder is either $\pm r$ or $0$. Finally, for all the gaps defined above, we say it is \emph{small gap} (or SG for short) if $|r|\leq L^{-\gamma+\eta}$ (including $r=0$), and \emph{large gap} (or LG) if $|r|> L^{-\gamma+\eta}$.
\end{df}
With the above preparations, we can reduce Propositions \ref{mainprop1}--\ref{mainprop4} to the following
\begin{prop}
\label{kqmainest1} We can define a value $\rho=\rho(\Qc)$ associated to a non-regular couple $\Qc$, which is an integer and $1\leq\rho\leq n$ (where $n$ is the order of $\Qc$), such that (i) it takes the same value for $\Qc$ in the same congruence class, (ii) the number of couples $\Qc$ of order $n$ such that $\rho(\Qc)=\rho$ is at most $(C\rho)!C^n$, and (iii) for any couple $\Qc$ of order $n$, we have
\begin{equation}\label{kqmainest1-1}\bigg|\sum_{\Qc'}\Kc_{\Qc'}(t,t,k)\bigg|\lesssim\langle k\rangle^{-20d}(C^+\delta^{1/4})^n\cdot L^{-\eta^7\cdot\rho(\Qc)},
\end{equation} where $\Qc'$ runs over all couples congruent to $\Qc$.
\end{prop}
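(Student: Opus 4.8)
\textbf{Proof proposal for Proposition \ref{kqmainest1}.}

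The plan is to organize couples $\Qc'$ congruent to $\Qc$ by first reducing to the skeleton $\Qc_{\mathrm{sk}}$ via \eqref{bigformula2}, then systematically stripping off all the vine-like objects identified by Lemma \ref{vinechainlem} using Propositions \ref{estbadvine} and \ref{estnormalvine}, and finally reducing what remains to the counting Proposition \ref{kqmainest2} (which we are allowed to assume is being set up after this). First I would fix a couple $\Qc$, pass to its skeleton $\Qc_{\mathrm{sk}}$ and the collection $\Cs$ of disjoint HV, VC, HVC and DV atomic groups, and the collection $\Vs$ of (CL) vines from Definition \ref{defcong}. Summing $\Kc_{\Qc'}$ over the congruence class of $\Qc$ exactly amounts to the sum \emph{in the interior of each (CL) vine $\Vb\in\Vs$} over the full-twist data $(\mathtt{ind},\Bs)$ attached to $\Vb$, keeping $(\Qc^{\mathrm{sp}},\mathtt{cod},\mathfrak n,\As^{\mathrm{sp}})$ fixed as in Remark \ref{fulltwistrep}; here is where the cancellation between twists is invoked.

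The key steps, in order: (1) Decompose each input $\Kc_{\Qc^{(\lf,\lf')}}$ and $\Kc_{\Tc^{(\mf)}}^*$ as in \eqref{inputdecomp} into main term plus remainder; the remainders carry the gain $L^{-\gamma_1+2\eta}$ from \eqref{remainderbd} and can be absorbed harmlessly into the final power. (2) Process the vines in $\Vs$ one at a time, from the ``innermost'' outward with respect to the partial order induced by ancestry in $\Qc_{\mathrm{sk}}$. For each bad (CL) vine in $\Vs$ one applies Proposition \ref{estbadvine} (with the extra-variable flexibility of Remark \ref{extravar}), which after summing the full-twist data gives the crucial factor $L^{-\eta^2}$ per bad vine; for each normal (CL) vine, or normal (CL) vine chain, one applies Proposition \ref{estnormalvine}, which gives only $(C^+\delta^{1/4})^{n_2}L^{\eta^4}$ — neutral up to the harmless $\delta^{1/4}$ power and a tiny $L^{\eta^4}$ loss. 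Iterating, the $Z$ function fed into each application is the partially-processed expression from the previous steps, which is why Remark \ref{extravar} is needed: $Z$ depends on decoration/time variables outside the current vine. (3) After all of $\Vs$ has been removed, the molecule has been spliced down: every HVC/VC in $\Cs$ has collapsed to a single remaining vine, and HV's and DV's survive. What is left is a counting problem for a reduced couple with no nontrivial vine-chains, only isolated vines/hyper-vines, and with all LG atoms (by Definition \ref{defdiff}); this is handed to Proposition \ref{kqmainest2}. (4) Define $\rho(\Qc)$: it should count (roughly) the number of bad (CL) vines in $\Vs$ plus the ``good-operation count'' that Proposition \ref{kqmainest2}/the local rigidity theorem will extract, normalized so that it is a congruence-class invariant, lies in $[1,n]$, and the number of couples of order $n$ with $\rho(\Qc)=\rho$ is $\leq (C\rho)! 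C^n$ (this last bound coming from Proposition \ref{skeleton} and Proposition \ref{recover} together with the combinatorics of how many skeletons produce a given $\rho$). The final estimate \eqref{kqmainest1-1} then multiplies the per-vine gains $L^{-\eta^2}$, the rigidity gain $L^{-c\rho}$, and the remainder gains, all of which dominate the accumulated $L^{\eta^4}$ and $L^{\eta^7}$ losses once $\eta$ is small enough.

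The main obstacle I anticipate is the bookkeeping that makes $\rho(\Qc)$ simultaneously (a) a genuine congruence invariant, (b) bounded below by $1$ for \emph{every} non-regular couple — even one whose molecule has no bad vines and only a minimal combinatorial defect — and (c) compatible with the factorial counting bound (ii). Getting a uniform lower bound of $1$ forces one to show that a non-regular prime couple always admits at least one source of gain (either a bad vine, or a non-ladder good operation in the cutting algorithm), and that this gain survives the splicing/processing; the two bad graphs of Figure \ref{fig:badmol} (quadruple bond, triangle of double bonds) are precisely the obstructions one must check cannot occur as the \emph{entire} reduced couple of a non-regular $\Qc$. The second delicate point is ensuring the iterated application of Propositions \ref{estbadvine}–\ref{estnormalvine} composes without the implicit constants blowing up super-exponentially: each application contributes a $(C^+)^{n_2}$-type factor in its own local order $n_2$, and since the vines in $\Vs$ are disjoint these local orders sum to at most $n$, so the product is $(C^+)^{O(n)}$ — but one must be careful that the ``$Z$-norm'' bookkeeping ($Y_{\mathrm{loc}}^\theta$ with $\theta\in\{\eta^5,0\}$) is propagated consistently through every step, which is exactly what the norm choices in those propositions are designed for.
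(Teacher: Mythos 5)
Your high-level architecture is right — reduce to the skeleton via \eqref{bigformula2}, iterate Propositions \ref{estbadvine}/\ref{estnormalvine} (with Remark \ref{extravar} to handle the extra variables) to process (CL) vines, then hand the residue to the counting Proposition \ref{kqmainest2} — and the concern about the implicit constants accumulating is correctly resolved by disjointness. But there are two substantive discrepancies with the actual proof.

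First, the paper does \emph{not} process all of $\Vs$: it first fixes, for each vine-like object in $\Cs$, whether it is SG or LG in the decoration (a $C^n$-fold splitting), and then applies Propositions \ref{estbadvine}/\ref{estnormalvine} \emph{only} to the SG subcollection $\Vs_0\subset\Vs$. The LG (CL) vines remain in $\Qc_{\mathrm{sub}}$ and are handled by the large-gap counting estimates in Proposition \ref{kqmainest2}; one certainly does not want to splice them away, since the LG hypothesis at their joints is exactly what makes the cutting algorithm close in Sections~\ref{reduct2}--\ref{lgmole}. Processing all of $\Vs$, as you propose, would destroy the hinge-atom/LG structure that Proposition \ref{subpro} extracts, and it is not needed: the cancellation structure is only required where the gap is small.

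Second, and more importantly, your proposed $\rho(\Qc)$ (``number of bad (CL) vines plus the good-operation count'') is not what the paper uses, and it creates exactly the difficulties you then worry about, which the paper simply does not have. The paper sets $\rho(\Qc)=\rho_{\mathrm{sub}}=q_{\mathrm{sub}}+m_{\mathrm{sub}}'$, i.e., the number of maximal ladders in $\Mb(\Qc_{\mathrm{sub}})$ plus the number of atoms not in any of these ladders. With this \emph{structural} definition, properties (i)--(ii) become immediate: (i) $\Qc_{\mathrm{sub}}$ is invariant under congruence, so $\rho$ is; (ii) a molecule with $\leq\rho$ ladders and $\leq\rho$ extra atoms has at most $(C\rho)!C^n$ possibilities, and one propagates this count to $\Qc$ via Propositions \ref{recover} and \ref{skeleton}; and the lower bound $\rho\geq 1$ follows trivially because a non-regular couple has non-trivial $\Qc_{\mathrm{sk}}$ and hence non-trivial $\Qc_{\mathrm{sub}}$. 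The gain $L^{-\eta^7\rho}$ then comes not from tallying ``gain sources'' directly, but from the factor $L^{-\eta^6\rho_{\mathrm{sub}}}$ on the right of \eqref{kqmainest2-1}, together with the $L^1$ time-integral bound \eqref{1stsumbound} (whose logarithmic losses are controlled by precisely this same $\rho_{\mathrm{sub}}$). The two bad graphs of Figure~\ref{fig:badmol} are indeed relevant later, but only at the level of Proposition \ref{lgmolect}/Lemma \ref{ylem1} inside the stage-2 counting; they play no role in defining $\rho$ or establishing $\rho\geq 1$.

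One smaller point: you invoke the decomposition \eqref{inputdecomp} at the outset, but the proof of this proposition does not split $\Kc_{\Qc^{(\lf,\lf')}}$ and $\Kc_{\Tc^{(\mf)}}^*$ into main plus remainder at the outer level; those factors enter whole via \eqref{bigformula2new}, and the main/remainder split is used \emph{inside} Propositions \ref{estbadvine}/\ref{estnormalvine} where the cancellation is actually exhibited.
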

\begin{proof}[Proof of Propositions \ref{mainprop1}--\ref{mainprop4} assuming Proposition \ref{kqmainest1}] Consider the sum on the left hand side of (\ref{mainest1}) and (\ref{mainest1.5}). The sum over all regular couples $\Qc$ is taken care of by Propositions \ref{regcpltreeasymp} and \ref{regcpltreesum} (in particular by the bound (\ref{kqbd}) and the equality (\ref{matchn})), so we only need to consider the sum over non-regular couples $\Qc$.

Note that by Definition \ref{twistgen}, if $\Qc=(\Tc^+,\Tc^-)$ and $\Qc'=((\Tc')^+,(\Tc')^-)$ are two congruent couples, then $n(\Tc^\pm)=n((\Tc')^\pm)$. Therefore both sums on the left hand side of (\ref{mainest1}) and (\ref{mainest1.5}), over non-regular couples $\Qc$, can be written as a sum of subset sums, such that each subset sum has the form of the left hand side of (\ref{kqmainest1-1}). We then classify these subset sums according to the value $\rho(\Qc)$, and apply Proposition \ref{kqmainest1} to get that
\begin{equation}\label{suminrho1}\bigg|\sum_{\Qc}\Kc_\Qc(t,t,k)\bigg|\lesssim (C^+\delta^{1/4})^m\sum_{\rho=1}^m(C\rho)!C^m L^{-\eta^7\rho}.
\end{equation} Here in (\ref{suminrho1}) the value $m\in\{2n,2n+1\}$ is fixed, and the sum is either taken over all non-regular couples $\Qc$ of order $m$, or taken over all non-regular couples $\Qc=(\Tc^+,\Tc-)$ with $n(\Tc^+)=n(\Tc^-)=m/2=n$. In either case, since $\rho\leq m$, we have \[(C\rho)^C\leq (Cm)^C\lesssim (\log L)^C\ll L^{\eta^9},\] so the $\rho$-sum on the right hand side of (\ref{suminrho1}) is easly bounded by $L^{-\eta^8}$, which then proves both (\ref{mainest1}) and (\ref{mainest1.5}).
\end{proof}
\subsection{Stage 1 reduction: Small gap (CL) vines}\label{stage1red} We start the proof of Proposition \ref{kqmainest1}. First, using (\ref{bigformula2}), we can rewrite the left hand side of (\ref{kqmainest1-1}) as
\begin{multline}\label{bigformula2new}\mathrm{LHS\ of\ }(\ref{kqmainest1-1})=\sum_{(\Qc_{\mathrm{sk}},\As)}\bigg(\frac{\delta}{2L^{d-\gamma}}\bigg)^{n_0}\zeta(\Qc_{\mathrm{sk}})\sum_{\Es_{\mathrm{sk}}}\int_{\Ec_{\mathrm{sk}}}\epsilon_{\Es_{\mathrm{sk}}}\prod_{\nf\in \Nc_{\mathrm{sk}}} e^{\zeta_\nf\pi i\cdot\delta L^{2\gamma}\Omega_\nf t_\nf}\,\mathrm{d}t_\nf\\\times{\prod_{\lf\in\Lc_{\mathrm{sk}}}^{(+)}\Kc_{\Qc^{(\lf,\lf')}}(t_{\lf^p},t_{(\lf')^p},k_\lf)}\prod_{\mf\in\Nc_{\mathrm{sk}}}\Kc_{\Tc^{(\mf)}}^*(t_{\mf^p},t_{\mf},k_\mf).
\end{multline}
Here the $\Qc_{\mathrm{sk}}$ runs over all twists of a given prime couple, $\As$ runs over collections of regular trees and regular couples that satisfy a certain set of assumptions (see Definition \ref{twistgen}), and $\Es_{\mathrm{sk}}$ runs over all $k$-decorations of $\Qc_{\mathrm{sk}}$ (note that, different choices of $\Qc_{sk}$ are twists of each other, so their decorations are in one-to-one correspondence as in Remark \ref{dectwist}). In particular, if $\Cs$ is defined by Lemma \ref{vinechainlem} for $\Mb(\Qc_{\mathrm{sk}})$, then for each vine-like object in $\Cs$, we can specify whether it has SG or LG under the decoration (each such specification imposes a set of restrictions on the decoration). This reduces the left hand side of (\ref{kqmainest1-1}) to a superposition of at most $C^n$ terms, such that each vine-like object is specified to be either SG or LG in each term. Let $\Vs_0$ be the collection of (CL) vines in $\Vs$ (as Definition \ref{defcong}) that are SG. Note that $\Qc$ runs over a congruence class, which is defined by full twists at all vines in $\Vs$; if we strengthen the equivalence relation by allowing only full twists at all vines in $\Vs_0$, then the sum in $\Qc$ can again be written as a superposition of at most $C^n$ terms, each of which is a sum over the new equivalence class. As such, we only need to consider one of these new terms, which we shall refer to as $\Ks$ for below. 

We define a new ordered collection $\Us_0$ from $\Vs_0$, whose elements are \emph{bad (CL) vines and normal (CL) vine chains,} as follows. First organize vines in $\Vs_0$ into disjoint VC, then order these VC arbitrarily; for each VC, we divide it into \emph{units} and order them from bottom to top, where each unit is either a bad (CL) vine or a (longest) sub-VC formed by consecutive normal (CL) vines.

Now let $\Qc_0=\Qc_{\mathrm{sk}}$ and $\As_0=\As$ as in (\ref{bigformula2new}), then $\Qc\sim(\Qc_0,\As_0)$. Denote $\Us_0$, with elements ordered as above, by $\Us_0=\{\Ub_0,\cdots,\Ub_{q-1}\}$, and denote the $\uf_1,\uf_{23}$ nodes for $\Ub_j$ by $\uf_1^j,\uf_{23}^j$ etc. Define $\Qc_{j+1}$ to be the result of splicing $\Qc_j$ at $\Ub_j$ (so $\Mb(\Qc_{j+1})$ is the result of merging the $\Ub_j$ in $\Mb(\Qc_j)$ into a single atom), and let $\As_{j}$ be the $\As$ collection corresponding to $\Qc_j$, and $\Us_j:=\{\Ub_j,\cdots,\Ub_{q-1}\}$. As $\Qc$ runs over all full twists of a given couple at vines in $\Vs_0$ (or $\Us_0$), we know that $\Qc_j$ runs over all twists of a given couple at vines in $\Us_j$. Moreover, $\As_j$ runs over all collections of regular couples and regular trees, such that the value of $n(\Qc^{(\uf_{23}^i,\uf_0^i)})+n(\Tc^{(\uf_2^i)})$ is fixed for all $i\geq j$ such that $\Ub_i$ is a core vine, and all other regular couples and regular trees are uniquely fixed.

By Remark \ref{fulltwistrep}, for each $j$ we have $(\Qc_j,\As_j)\leftrightarrow(\Qc_{j+1}, \texttt{cod}_j,\mathfrak{n}_j,\texttt{ind}_j,\Bs_j,\As_{j+1})$, with $(\texttt{cod}_j,\mathfrak{n}_j)$ etc. corresponding to $\Ub_j$ in $\Qc_j$ (these matter only when $\Ub_j$ is a bad vine rather than a normal VC). By losing at most $C^n$ we may fix the codes $\texttt{cod}_j$ for each $j$. Moreover, the branching node $\mathfrak{n}_j$ for each $j$ is also uniquely fixed because it corresponds to the atom formed by merging $\Ub_i\,(0\leq i\leq j)$ in $\Mb(\Qc_{\mathrm{sk}})$, and the molecule $\Mb(\Qc_{\mathrm{sk}})$ (as a directed graph) does not vary when $\Qc_{\mathrm{sk}}$ is twisted (note however that $\zeta_{\nf_j}$ may change if we twist at $\Ub_{j+1}$). As such, we know that $(\Qc_j,\As_j)$ uniquely corresponds to the quadruple $(\Qc_{j+1}, \As_{j+1},\texttt{ind}_j,\Bs_j)$. Let $\Oc_j$ be the set of all nodes in $\Qc_{j}$ that do \emph{not} belong to any $\Qc_{j}[\Ub_i]\backslash\{\uf_1^i\}$ for $i\geq j$. Now we prove, by induction in $j$, the following result:
\begin{prop}\label{red1step} For each $j$, we have
\begin{multline}\label{vinered}
\Ks=(C^+\delta^{1/4})^{M_j}\sum_{(\Qc_{j},\As_{j})}\bigg(\frac{\delta}{2L^{d-\gamma}}\bigg)^{n_{j}}\zeta(\Qc_{j})\sum_{\Es_{j}}\int_{\Ec_{j}}\epsilon_{\Es_{j}}^*\prod_{\nf\in \Nc_{j}} e^{\zeta_\nf\pi i\cdot\delta L^{2\gamma}\Omega_\nf t_\nf}\,\mathrm{d}t_\nf\\\times\Zc_{j}\cdot{\prod_{\mf\in\Lc_{j}}^{(+)}\Kc_{\Qc^{(\mf,\mf_-)}}(t_{\mf^p},t_{\mf_-^p},k_\mf)}\prod_{\mf\in\Nc_{j}}\Kc_{\Tc^{(\mf)}}^*(t_{\mf^p},t_{\mf},k_\mf).
\end{multline} Here $n_{j}$ is the order of $\Qc_{j}$, $M_j=(n_0-n_j)+n(\As)-n(\As_j)$, and all symbols with subscript or superscript $j$ are associated with the couple $\Qc_{j}$. The summation in $(\Qc_j,\As_j)$ is taken as above: $\Qc_j$ runs over all twists of a given couple at vines in $\Us_j$. Moreover, $\As_j$ runs over all collections of regular couples and regular trees, such that the value of $n(\Qc^{(\uf_{23}^i,\uf_0^i)})+n(\Tc^{(\uf_2^i)})$ is fixed for all $i\geq j$ such that $\Ub_i$ is a core vine, and all other regular couples and regular trees are uniquely fixed.

In (\ref{vinered}), the factor $\epsilon_{\Es_j}^*$ is defined as in (\ref{defcoef}) but with the product containing only $\nf\in \Wc_j$, where $\Wc_j$ is a subset of $\Nc_j$; the function $\Zc_j=\Zc_j(x_0^j,k[\Xc_j],t[\Yc_j],t_{\uf_2^j})$ where $\Xc_j$ and $\Yc_j$ are two subsets of $\Oc_j$ (when $j=q$ there is no $x_0^j$ and $t_{\uf_2^j}$), and in the summation one replaces $x_0^j$ by $k_{\uf_2^j}$ if $\zeta_{\uf_2^j}=-$, and by $k_{\uf_{23}^j}$ otherwise. The objects $\Wc_j,\Xc_j,\Yc_j,\Zc_j$ etc. do not depend on the choice of $\Qc_j$ or $\As_j$, and the function $\Zc_j$ satisfies $\|\Zc_j\|_{Y_{\mathrm{loc}}^{\eta^5}}\lesssim (C^+)^jL^{\Delta_j}$, where $\Delta_0=0$ and $\Delta_{j+1}-\Delta_{j}$ equals $-\eta^2$ or $\eta^4$ depending on whether $\Us_j$ is a bad vine or a normal vine chain. Finally, in the support of $\Zc_j\cdot\epsilon_{\Es_j}^*$, the values of $k_\nf$ variables must be inherited from a $k$-decoration of $\Qc_{\mathrm{sk}}$ that satisfies all the SG and LG assumptions specified above, as well as the non-degeneracy condition $\epsilon_{\Es_{\mathrm{sk}}}\neq 0$.
\end{prop}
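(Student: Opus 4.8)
\textbf{Proof strategy for Proposition \ref{red1step}.} The plan is to prove \eqref{vinered} by induction on $j$, peeling off one element $\Ub_j$ of the ordered collection $\Us_0$ at a time. The base case $j=0$ is essentially the identity \eqref{bigformula2new} itself: we take $\Qc_0=\Qc_{\mathrm{sk}}$, $\As_0=\As$, $M_0=0$, $\Wc_0=\Nc_{\mathrm{sk}}$, $\Zc_0\equiv 1$, and $\Delta_0=0$, and the sum over $(\Qc_0,\As_0)$ restricted as stated is exactly the sum defining $\Ks$ (after the reductions of Section \ref{stage1red} that fixed all the SG/LG specifications, the codes $\mathtt{cod}_j$, and the branching nodes $\mathfrak n_j$). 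One checks that all the stated properties hold trivially in this case, in particular $\|\Zc_0\|_{Y_{\mathrm{loc}}^{\eta^5}}\lesssim 1$ and the non-degeneracy/decoration-inheritance condition reduces to $\epsilon_{\Es_{\mathrm{sk}}}\neq 0$.

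For the inductive step, assume \eqref{vinered} holds at level $j$ and that $\Us_j\neq\varnothing$, so $\Ub_j$ is either a bad (CL) vine or a normal (CL) vine chain. The idea is to isolate, inside the expression \eqref{vinered}, exactly the variables (vector and time) attached to $\Qc_j[\Ub_j]\backslash\{\uf_1^j\}$, treating everything else as parameters; this is precisely the setup of Section \ref{vinesubset}, since by construction all nodes outside the vines $\Ub_i$ ($i\geq j$) lie in $\Oc_j$, and $\Zc_j$ depends only on variables indexed by subsets of $\Oc_j$ together with $x_0^j,t_{\uf_2^j}$. Using the one-to-one correspondence $(\Qc_j,\As_j)\leftrightarrow(\Qc_{j+1},\As_{j+1},\mathtt{ind}_j,\Bs_j)$ from Remark \ref{fulltwistrep}, we first carry out the sum over $(\mathtt{ind}_j,\Bs_j)$ with everything else frozen; this produces exactly the quantity $\sum_{\mathtt{ind},\Bs}\Kc^{(\Vb,Z,W)}_{(\mathtt{sgn},\mathtt{ind},\Bs)}$ (with $Z$ being the current $\Zc_j$ together with the contribution from the frozen factors of \eqref{vinered} restricted to $\Qc_j[\Ub_j]$'s boundary, and $W=\Wc_j\cap\Qc_j[\Ub_j]$) when $\Ub_j$ is a core bad vine, and the analogous single-term object $\Kc^{(\Vb,Z,W)}_{(\mathtt{sgn},\mathtt{ind},\Bs)}$ or $\Kc^{(\Vb\Cb,Z,W)}_{(\mathtt{sgn},\mathtt{ind},\Bs)}$ when $\Ub_j$ is vine (II-e) or a normal vine chain. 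We then apply Proposition \ref{estbadvine} (in the bad-vine case) or Proposition \ref{estnormalvine} (in the normal-VC case), together with Remark \ref{extravar} to accommodate the extra $k[\Xc_j]$, $t[\Yc_j]$ dependence of $Z$. This replaces the peeled-off block by a new function $\Zc_{j+1}$ of the appropriate variables attached to the atom of $\Mb(\Qc_{j+1})$ obtained by merging $\Ub_j$; the bound \eqref{vinebound1} (resp. \eqref{vinebound2}, \eqref{vinebound3}) gives $\|\Zc_{j+1}\|_{Y_{\mathrm{loc}}^{\eta^5}}\lesssim C^+\cdot L^{-\eta^2}\|\Zc_j\|_{Y_{\mathrm{loc}}^{\eta^5}}$ in the bad case and $\lesssim C^+\cdot L^{\eta^4}\|\Zc_j\|_{Y_{\mathrm{loc}}^{\eta^5}}$ in the normal case, which is exactly the claimed increment $\Delta_{j+1}-\Delta_j\in\{-\eta^2,\eta^4\}$; the extra factor $(C^+\sqrt\delta)^{n_2}=(C^+\sqrt\delta)^{(n_j-n_{j+1})+n(\As_j)-n(\As_{j+1})}$ (resp. $(C^+\delta^{1/4})^{n_2}$) accounts for the updated prefactor $(C^+\delta^{1/4})^{M_{j+1}}$ with $M_{j+1}=(n_0-n_{j+1})+n(\As)-n(\As_{j+1})$, after absorbing the change in the explicit power $(\delta/(2L^{d-\gamma}))^{n_j}$ and in $\zeta(\Qc_j)$ into the bookkeeping (note $\zeta[\Vb]$ and the $\epsilon$-factors dropped are precisely the ones removed when passing from $\Nc_j$ to $\Nc_{j+1}$, and from $\Wc_j$ to $\Wc_{j+1}:=\Wc_j\setminus(\Qc_j[\Ub_j]\setminus\{\uf_1^j\})$). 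Finally, the decoration-inheritance property is preserved because splicing a vine is a well-defined operation on decorations (Remark \ref{dectwist}), and the $k_\nf$-variables of $\Qc_{j+1}$ are literally a subset of those of $\Qc_j$.

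The main obstacle is the clean isolation of the block-local data in the inductive step: one must verify that, in the expression \eqref{vinered}, \emph{none} of the $\Kc_{\Qc^{(\mf,\mf_-)}}$, $\Kc^*_{\Tc^{(\mf)}}$, $\epsilon^*_{\Es_j}$, $e^{\zeta_\nf\pi i\delta L^{2\gamma}\Omega_\nf t_\nf}$ factors belonging to nodes \emph{outside} $\Qc_j[\Ub_j]$ secretly depend on the internal variables of $\Ub_j$, so that they can be genuinely frozen as the "$Z$" input to the vine estimates; this is where the precise structure of (CL) vines in a couple (Proposition \ref{block_clcn}, Proposition \ref{molecpl}) and the careful definition of $\Oc_j$, $\Xc_j$, $\Yc_j$ are needed — in particular, that the only time variables of $\Qc_j$ outside $\Qc_j[\Ub_j]$ that couple to the block are $t_{\uf_1^j}$ (the parent time, which appears inside the block integrals via the domain $\Ec_j$ and via the normalization $e^{-\pi i\delta L^{2\gamma}t_{\uf_1^j}\Gamma}$ built into Propositions \ref{estbadvine}--\ref{estnormalvine}) and the two times $t_{\uf_{21}^j},t_{\uf_{22}^j}$ at the free children, exactly matching the output variables of $\Kc^{(\Vb,Z,W)}$. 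A secondary subtlety is that for a core bad vine the sum over $\Bs_j$ (with $n(\Qc^{(\uf_{23},\uf_0)})+n(\Tc^{(\uf_2)})$ held fixed) must be performed \emph{together} with the sum over $\mathtt{ind}_j$, since the cancellation exploited in Proposition \ref{estbadvine} (via Proposition \ref{regcpltreecancel}) only manifests after both sums are taken — hence the quadruple correspondence of Remark \ref{fulltwistrep} must be invoked in exactly the right order, and the bound must be stated uniformly over the frozen data, which is guaranteed by the uniformity in $(\lambda,\mu,\sigma)$ and in the unit balls in those propositions.
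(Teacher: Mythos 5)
Your overall strategy is correct and matches the paper's: induct on $j$, at each step freeze $(\Qc_{j+1},\As_{j+1})$ via the correspondence $(\Qc_j,\As_j)\leftrightarrow(\Qc_{j+1},\As_{j+1},\mathtt{ind}_j,\Bs_j)$, recognize the inner sum/integral over the variables of $\Qc_j[\Ub_j]\backslash\{\uf_1^j\}$ as $\sum_{(\mathtt{ind}_j,\Bs_j)}\Kc^{(\Ub_j,Z,W)}_{(\mathtt{sgn}_j,\mathtt{ind}_j,\Bs_j)}$, and invoke Propositions~\ref{estbadvine}--\ref{estnormalvine} together with Remark~\ref{extravar}. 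You also correctly flag the two genuine subtleties (locality of the block data; summing over $\mathtt{ind}_j$ and $\Bs_j$ jointly so the cancellation can take effect). However, there are two concrete gaps.

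First, your base case $\Zc_0\equiv 1$ is wrong. The proposition demands that in the support of $\Zc_j\cdot\epsilon^*_{\Es_j}$ the decoration be inherited from a $k$-decoration of $\Qc_{\mathrm{sk}}$ satisfying the fixed SG/LG specifications; $\epsilon^*_{\Es_0}$ alone does not encode those constraints. In the paper, $\Zc_0$ is precisely the product of the SG/LG indicator functions attached to the vine-like objects in $\Cs$ (these were introduced when $\Ks$ was isolated at the start of Section~\ref{stage1red}), which are functions of $k_\mf$ for $\mf\in\Oc_0$ only and depend on no time variables. Taking $\Zc_0\equiv 1$ would make \eqref{vinered} sum over more decorations than $\Ks$ actually involves and would violate the stated support property from the very first step.

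Second, and more substantively, you do not address the well-definedness of $\Zc_{j+1}$ as a function independent of the choice of $\Qc_{j+1}$ (hence of $\mathtt{ind}_{j+1}$). The issue arises exactly when $\Ub_j$ is concatenated with $\Ub_{j+1}$: the branching node $\mathfrak n_j$ is fixed as an atom of the molecule, but its sign $\mathtt{sgn}_j=\zeta_{\uf_1^j}$ flips when one performs a twist at $\Ub_{j+1}$. Since $\Kc^{(\Ub_j,Z,W)}_{(\mathtt{sgn}_j,\mathtt{ind}_j,\Bs_j)}$ is a priori $\mathtt{sgn}_j$-dependent, one needs to show that the function $\Zc_{j+1}$ obtained from \eqref{inductzj} is actually the same for both values of $\mathtt{sgn}_j$ once written in the output variables $(x_0',k_{\uf_{21}^j},k_{\uf_{22}^j},\dots)$. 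This is exactly the content of Remark~\ref{remkv}, whose hypothesis (that $Z$ does not depend on $k_{\uf_1^j},k_{\uf_{11}^j}$) must itself be verified by tracking the recursive construction of $\Xc_j$: when $\Ub_j$ and $\Ub_{j+1}$ share a joint, $\uf_1^j=\uf_2^{j+1}$ and $\uf_{11}^j=\uf_{23}^{j+1}$, so these nodes lie inside $\Qc_j[\Ub_{j+1}]$ and hence cannot be in $\Xc_j$. Without this step, ``the objects $\Wc_j,\Xc_j,\Yc_j,\Zc_j$ do not depend on the choice of $\Qc_j$ or $\As_j$'' would not be established, and the induction would not close.
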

\begin{proof} For $j=0$, recall that we have fixed the term $\Ks$ by specifying whether each vine-like object in $\Cs$ has LG or SG; this is done by attaching factors to the expression in (\ref{bigformula2new}) which are indicator functions of differences of various $k_\mf$ in the decoration. Let the product of these functions be $\Zc_{0}$, then it only depends on the $k_\mf$ variables for $\mf\in\Oc_0$ (see Definition \ref{defdiff}), and does not depend on any time variables $t_\mf$. Let $\Wc_0=\Nc_{\mathrm{sk}}$, then (\ref{vinered}) is true for $j=0$.

Suppose (\ref{vinered}) is true for $j$, we will prove it for $j+1$. Start with the expression $\Ks$ as in (\ref{vinered}), note that the summation
 \[\sum_{(\Qc_j,\As_j)}=\sum_{(\Qc_{j+1},\As_{j+1})}\sum_{(\mathtt{ind}_j,\Bs_j)}\] as described above (recall that $\mathtt{cod}$ has been fixed). We now fix $(\Qc_{j+1},\As_{j+1})$, and consider the part of the sum and integral in (\ref{vinered}) that involves only $(\mathtt{ind}_j,\Bs_j)$ and the $k_\nf$ and $t_\nf$ variables for $\nf\in\Qc_j[\Ub_j]\backslash\{\uf_1^j\}$. Note also that $\Zc_j=\Zc_j(x_0^j,k[\Xc_j],t[\Yc_j],t_{\uf_2^j})$; by  inserting finitely many smooth time cutoff functions, we may formally write \[\Zc_j=\Zc_j(x_0^j,k_{\uf_1^j},k_{\uf_{11}^j},k_{\uf_{21}^j},k_{\uf_{22}^j},t_{\uf_1^j},t_{\uf_{21}^j},t_{\uf_{22}^j},t_{\uf_2^j},k[\Uc],t[\Vc]),\] for some $\Uc\subset\Xc_j\subset\Oc_j,\,\Vc\subset\Yc_j\subset\Oc_j$ where $\{\uf_1^j,\uf_{11}^j,\uf_{21}^j,\uf_{22}^j\}\cap\Uc=\varnothing$ and $\{\uf_1^j,\uf_{21}^j,\uf_{22}^j\}\cap\Vc=\varnothing$.

Then, this part of summation and integration is exactly of the form
\begin{equation}\label{subsummation}\sum_{(\mathtt{ind}_j,\Bs_j)}\Kc_{(\mathtt{ind}_j,\Bs_j)}^{(\Ub_j,Z,W)}\end{equation} defined as in (\ref{vinebound1}) in Section \ref{vinesubset}, where the set $W=\Wc_j\cap\Qc_j[\Ub_j]$, the function \[\qquad Z(x_0^j,k_{\uf_1^j},k_{\uf_{11}^j},k_{\uf_{21}^j},k_{\uf_{22}^j},t_{\uf_1^j},t_{\uf_{21}^j},t_{\uf_{22}^j},t_{\uf_2^j})=\Zc_j(x_0^j,k_{\uf_1^j},k_{\uf_{11}^j},k_{\uf_{21}^j},k_{\uf_{22}^j},t_{\uf_1^j},t_{\uf_{21}^j},t_{\uf_{22}^j},t_{\uf_2^j},k[\Uc],t[\Vc])\] with $k[\Uc]$ and $t[\Vc]$ viewed as parameters, and the sum over $(\mathtt{ind}_j,\Bs_j)$ is exactly as in Propositions \ref{estbadvine} and \ref{estnormalvine}. Here again we plug in $x_0^j=k_{\uf_{2}^j}$ if $\mathtt{ind}_j=\zeta_{\uf_2^j}=-$, and $x_0^j=k_{\uf_{23}^j}$ otherwise.

Now define $\Wc_{j+1}=\Wc_j\backslash\Qc_j[\Ub_j]$ (where we identify branching nodes in $\Qc_{j+1}$ with the corresponding ones in $\Qc_j$), and
\begin{equation}\label{inductzj}
\begin{aligned}\Zc_{j+1}&=\Zc_{j+1}(k_{\uf_1^j},k_{\uf_{11}^j},k_{\uf_{21}^j},k_{\uf_{22}^j},t_{\uf_1^j},t_{\uf_{21}^j},t_{\uf_{22}^j},k[\Uc],t[\Vc])\\&:=(C^+\delta^{1/4})^{-n_*}\exp(-\pi i\cdot\delta L^{2\gamma}t_{\uf_1^j}\Gamma)\sum_{(\mathtt{ind}_j,\Bs_j)}\Kc_{(\mathtt{sgn}_j,\mathtt{ind}_j,\Bs_j)}^{(\Ub_j,Z,W)}(k_{\uf_1^j},k_{\uf_{11}^j},k_{\uf_{21}^j},k_{\uf_{22}^j},t_{\uf_1^j},t_{\uf_{21}^j},t_{\uf_{22}^j})
\end{aligned}\end{equation} as in (\ref{vinebound1}), where $\mathtt{sgn}_j=\zeta_{\uf_1^j}$, and $n_*$ equals the number of branching nodes in $\Qc_j[\Ub_j]\backslash\{\uf_1^j\}$ plus $n(\Bs_j)$. Then, the expression $\Ks$ in (\ref{vinered}) can be reduced to the same expression (\ref{vinered}) with $j$ replaced by $j+1$ (note that $M_{j+1}=M_j+n_*$, and similarly $n(\Qc_{j})-n(\Qc_{j+1})$ equals the number of branching nodes in $\Qc_j[\Ub_j]\backslash\{\uf_1^j\}$), with the new quantities $\Wc_{j+1}$ and $\Zc_{j+1}$.

It remains to prove that the new expression $\Zc_{j+1}$ verifies our assumptions. In addition to $k[\Uc]$ and $t[\Vc]$, the function $\Zc_{j+1}$ also depends on the new variables $(k_{\uf_1^j},k_{\uf_{11}^j},k_{\uf_{21}^j},k_{\uf_{22}^j},t_{\uf_1^j},t_{\uf_{21}^j},t_{\uf_{22}^j})$; therefore we may define $\Xc_{j+1}=(\Xc_j\cup\{\uf_1^j,\uf_{11}^j,\uf_{21}^j,\uf_{22}^j\})\cap\Oc_{j+1}$ and $\Yc_{j+1}=(\Yc_j\cup\{\uf_1^j,\uf_{21}^j,\uf_{22}^j\})\cap\Oc_{j+1}$. Note that with such recursive definition of $\Xc_j$ and $\Yc_j$, it is easy to verify that, if $\Ub_j$ is concatenated with $\Ub_{j+1}$, then $\{\uf_1^j,\uf_{11}^j\}\cap \Xc_j=\varnothing$; this is because $\uf_1^j=\uf_2^{j+1}$ and $\uf_{11}^j=\uf_{23}^{j+1}$, so these nodes belong to $\Ub_{j+1}$, and are thus not involved in any vine merged \emph{before} $\Ub_j$.

Now, if any of these new variables $(k_{\uf_1^j},k_{\uf_{11}^j},k_{\uf_{21}^j},k_{\uf_{22}^j},t_{\uf_1^j},t_{\uf_{21}^j},t_{\uf_{22}^j})$ coincides with $k_\nf$ or $t_\nf$ for any $\Ub\in\Us_{j+1}$ and $\nf\in\Qc_{j+1}[\Ub]\backslash\{\uf_1\}$ (where $\uf_1$ is associated with $\Ub$ as in Proposition \ref{block_clcn})--or otherwise the assumptions for $\Zc_{j+1}$ is already verified--then $\Ub_j$ and $\Ub$ must share a common joint in $\Mb(\Qc_j)$, so $\Ub=\Ub_{j+1}$ and it is concatenated with $\Ub_j$ as above. Consequently the function $\Zc_j$, as well as $Z$ defined above, does not depend on the variables $k_{\uf_1^j}=k_{\uf_2^{j+1}}$ and $k_{\uf_{11}^j}=k_{\uf_{23}^{j+1}}$. By applying Remark \ref{remkv} to (\ref{inductzj}), we see that $\Zc_{j+1}$ depends only on the vector variables $k[\Xc_{j+1}]$ and $x_0^{j+1}$ (which is $k_{\uf_1^j}=k_{\uf_2^{j+1}}$ if $\mathtt{ind}_{j+1}=\zeta_{\uf_2^{j+1}}=-$ and is $k_{\uf_{11}^j}=k_{\uf_{23}^{j+1}}$ otherwise), and that it does not depend on the choice of $\mathtt{ind}_{j+1}$ when regarded as a function. The $t_\nf$ variables are similar, as in this case $t_{\uf_1^j}=t_{\uf_2^{j+1}}$, so $\Zc_{j+1}$ is also allowed to depend on $t_{\uf_2^{j+1}}$ and $t[\Yc_{j+1}]$, as desired.
 
Finally, it is also clear by definition that the support of $\Zc_{j+1}\cdot\epsilon_{\Es_{j+1}}^*$ contains only those decorations that are inherited from decorations in the support of $\Zc_j\cdot\epsilon_{\Es_j}^*$, so it remains to prove that
\begin{equation}\label{inductineqZ}\|\Zc_{j+1}\|_{Y_{\mathrm{loc}}^{\eta^5}}\lesssim L^{\Delta_{j+1}-\Delta_j} \|\Zc_{j}\|_{Y_{\mathrm{loc}}^{\eta^5}},\end{equation} but this follows from Propositions \ref{estbadvine}, \ref{estnormalvine} and Remark \ref{extravar}.
\end{proof}
Define the couple $\Qc_{\mathrm{sub}}$ to be the result of doing splicing at all vines in $\Vs_0$ (equivalently, all ingredient vines of all vine-like objects in $\Us_0$) from $\Qc_{\mathrm{sk}}$, so the molecule $\Mb(\Qc_{\mathrm{sub}})$ is obtained by merging all the vines in $\Vs_0$ from $\Mb(\Qc_{\mathrm{sk}})$. This whole splicing process will be called \emph{stage 1 reduction}. Using Proposition \ref{red1step}, we can reduce Proposition \ref{kqmainest1} to the following counting estimate:
\begin{prop}\label{kqmainest2} Suppose we fix $k\in\Zb_L^d$ and $k_\ell^0\in\Zb_L^d$ for each bond $\ell$ of $\Mb(\Qc_{\mathrm{sub}})$, and $\beta_v\in\Rb$ for each atom $v$ of $\Mb(\Qc_{\mathrm{sub}})$. Consider all the maximal ladders in $\Mb(\Qc_{\mathrm{sub}})$; assume they are $\Lc_j\,(1\leq j\leq q_{\mathrm{sub}})$ with length $z_j$, and fix a dyadic number $P_j\in[L^{-1},1]\cup\{0\}$ for each $j$. Define $m_{\mathrm{sub}}'$ as the number of atoms not in any of these maximal ladders, and $\rho_{\mathrm{sub}}=q_{\mathrm{sub}}+m_{\mathrm{sub}}'$.

Now consider all the $k$-decorations $(k_\ell)$ of $\Mb(\Qc_{\mathrm{sub}})$, with the following additional requirements:
\begin{enumerate}[{(i)}]
\item This $k$-decoration $(k_\ell)$ is inherited from a $k$-decoration $\Es_{\mathrm{sk}}$ of $\Qc_{\mathrm{sk}}$ (and $\Mb(\Qc_{\mathrm{sk}})$) that satisfies all the SG and LG assumptions specified in the proof above, as well as the non-degeneracy condition $\epsilon_{\Es_{\mathrm{sk}}}\neq 0$.
\item The decoration is restricted by $(\beta_v)$ and $(k_\ell^0)$, i.e. $|\Gamma_v-\beta_v|\leq\delta^{-1}L^{-2\gamma}$ and $|k_\ell-k_\ell^0|\leq 1$ (see Definition \ref{decmole}), and the gap $r_j$ of each ladder $\Lc_j$ satisfies that $|r_j|\sim P_j$ (or $|r_j|\gtrsim P_j$ if $P_j=1$).
\end{enumerate}
Let $n_{\mathrm{sub}}$ be the order of $\Qc_{\mathrm{sub}}$, $\Delta_{\mathrm{sub}}$ is the end value $\Delta_j\,(j=q)$ in Proposition \ref{red1step}, and define $\Xf_j=\min((\log L)^2,1+\delta L^{2\gamma}P_j)$ (so $1\leq \Xf_j\lesssim (\log L)^2$). Then, the number $\Cf$ of such restricted $k$-decorations is bounded by
\begin{equation}\label{kqmainest2-1}\Cf\leq (C^+\delta^{-1/2})^{n_{\mathrm{sub}}}L^{(d-\gamma)n_{\mathrm{sub}}}\cdot L^{-\Delta_{\mathrm{sub}}}L^{-\eta^6\rho_{\mathrm{sub}}}\prod_{j=1}^{q_{\mathrm{sub}}}\Xf_j^{-z_j}.
\end{equation}
\end{prop}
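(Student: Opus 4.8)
\textbf{Proof proposal for Proposition \ref{kqmainest2}.}

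The plan is to reduce the counting of decorations of $\Mb(\Qc_{\mathrm{sub}})$ to a sequence of local atomic counting estimates, carried out along the lines of the reduction algorithm of Section 9.4 of \cite{DH21} but augmented by the new \emph{cutting} operation and the \emph{local rigidity theorem} referred to in Section \ref{introcut}. First I would separate out the maximal ladders: for each ladder $\Lc_j$ of length $z_j$ and gap $|r_j|\sim P_j$, the contribution of the $z_j$ extra double bonds is governed precisely by the ladder $L^1$ estimate of Proposition 10.1 of \cite{DH21}, which yields (for the decoration-counting analog) a factor of roughly $(C^+\delta^{-1/2}L^{d-\gamma})^{z_j}\cdot\Xf_j^{-z_j}$ per ladder — the $\Xf_j^{-z_j}$ saving being exactly the mechanism that compensates for the lost power in SG situations. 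After peeling off the ladders we are left with a reduced molecule on the $m_{\mathrm{sub}}'$ atoms not lying in any maximal ladder, and it suffices to prove the bound $\Cf'\leq (C^+\delta^{-1/2})^{m_{\mathrm{sub}}'}L^{(d-\gamma)m_{\mathrm{sub}}'}\cdot L^{-\Delta_{\mathrm{sub}}}L^{-\eta^6(q_{\mathrm{sub}}+m_{\mathrm{sub}}')}$ for this reduced picture, using also that stage-1 reduction has already removed all SG (CL) vines (this is what produces the gain $L^{-\Delta_{\mathrm{sub}}}$, recorded via the functions $\Zc_j$).

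Next I would perform the cutting operation (Definition \ref{defcut}, Figure \ref{fig:cutintro}): repeatedly split every degree-$4$ SG atom $v$ of the (reduced) molecule into two degree-$2$ atoms. Because stage-1 reduction has already killed the SG (CL) vines, the SG atoms that remain are exactly those which either generate only $\alpha$-cuttings (favorable, producing power gains) or, after all $\beta$-cuttings, decompose the molecule into finitely many connected components each of which has all its degree-$4$ atoms of \emph{large gap}. On each such component $\Mb_0$ I would apply the (Y) sequence of Figure \ref{fig:yoperintro} to remove degree-$2$ atoms together with their attached ladder remnants, arriving at a molecule $\Mb_1$ with only degree-$4$ atoms (apart from the unique root component), and then invoke the local rigidity theorem (Proposition \ref{lgmolect}, proved in Section \ref{lgmole}): unless $\Mb_1$ is one of the two exceptional graphs of Figure \ref{fig:badmol} (the quadruple bond or the triangle of three double bonds), one has $\Af = \Cf\cdot L^{-(d-\gamma)\chi}\lesssim L^{-c\cdot V}$, while the two exceptional graphs give $\Af\le L^{\gamma_0}$ and $\Af\lesssim 1$ respectively. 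The crucial combinatorial bookkeeping is then: (a) the Euler-characteristic identity $q = 2p-2$ relating the number $q$ of degree-$2$ atoms created by $\beta$-cuttings to the number $p$ of components, which forces a typical component to contain exactly two degree-$2$ atoms and thus reduces (after two (Y) sequences) to one of the vines (I)--(VIII); and (b) that the two exceptional graphs can occur only for the bad vines (I), (II), which were already excluded by stage-1 reduction. Summing the per-component bounds and reassembling (using that the number of couples and hence molecules with given skeleton is $\le C^n$, and that the number of components times the per-component gain outgrows any bounded loss for $V > 100$ by repeating verbatim the argument of Section 9.5 of \cite{DH21}) yields \eqref{kqmainest2-1} with the stated $\rho_{\mathrm{sub}} = q_{\mathrm{sub}} + m_{\mathrm{sub}}'$ power gain.

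The main obstacle I expect is precisely the interface between the cutting/local-rigidity machinery and the gain $L^{-\Delta_{\mathrm{sub}}}$ coming from stage-1 reduction: one must verify that \emph{every} SG configuration contributing a potential loss has either been absorbed into a $\Zc_j$ factor during stage 1 (the SG (CL) vines and normal (CL) vine chains) or is handled by an $\alpha$-cutting/good operation in the algorithm, with no double-counting and no configuration slipping through. In particular, one needs the classification guaranteeing that after cutting, the only post-surgery components that fail to give a strict power gain are the quadruple bond and the double-bond triangle, and that these correspond exactly to the bad vines already eliminated; a careful enumeration (as in the proof of Lemma \ref{ylem1} and the discussion around Figure \ref{fig:vinesintro2}) is required here. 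The ladder accounting of step one is then essentially routine given Proposition 10.1 of \cite{DH21}, and the final assembly is bookkeeping; but the structural claim that cutting plus stage-1 reduction between them exhaust all the bad scenarios — so that the local rigidity theorem applies uniformly across all surviving components — is the delicate point on which the whole estimate turns.
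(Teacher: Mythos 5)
Your proposal captures the broad architecture correctly---cut at SG degree-$4$ atoms, fall back on a local rigidity estimate per component, reassemble---but it conflates the proof of Proposition~\ref{kqmainest2} with the separate proof of Proposition~\ref{lgmolect}, and in doing so it mis-attributes the mechanism of several key gains.

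First, the factor $\prod_j\Xf_j^{-z_j}$ in \eqref{kqmainest2-1} does \emph{not} come from (a counting analog of) Proposition 10.1 of \cite{DH21}; that proposition is a time-integral $L^1$ estimate, and it is applied only in establishing \eqref{1stsumbound}, which \emph{loses} a factor $\Xf_j^{z_j}$ per ladder. The ladder gain in the decoration count is realized through the elementary counting estimates Lemma~\ref{basiccount}(3) / Lemma~\ref{basiccount0}(1), via the $\Xf^{-1}$-type factor in \eqref{countc1-1}. Moreover, the paper does not peel ladders off at the start; they remain in the molecule throughout stage~2 and are tracked via the $\Pf$ factor built into the bookkeeping quantity $\Af$ defined in \eqref{defequan}, with each operation's deviation $\Df$ in \eqref{defdev} absorbing the change in $\Pf$.

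Second, the actual engine of the proof of Proposition~\ref{kqmainest2} is the deviation estimate \eqref{excesseqn2} of Proposition~\ref{excessprop1}, controlling how $\Af$ changes under each cut in Steps 1--4 of Section~\ref{reductcut}. This is what produces, after combining with the per-component bounds from Proposition~\ref{lgmolect}, the inequality $\Af_{\mathrm{sub}}\lesssim (C^+)^{n_{\mathrm{sub}}}L^{-\eta^5(\sigma+\rho_{\mathrm{fin}})-\eta^3\sigma/2}$ with $\sigma$ the number of cuts, and then \eqref{kqmainest2-1} follows from $\rho_{\mathrm{sub}}\lesssim\sigma+\rho_{\mathrm{fin}}$ and $\Delta_{\mathrm{sub}}\leq C\eta^4\sigma$. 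Your proposal omits this deviation accounting entirely, and misreads $L^{-\Delta_{\mathrm{sub}}}$ as a gain already secured by stage~1: but $\Delta_{\mathrm{sub}}$ can be \emph{positive} (normal vine chains contribute $+\eta^4$ each in Proposition~\ref{red1step}), so $L^{-\Delta_{\mathrm{sub}}}$ is a target the counting must meet, paid for by the cutting gain $L^{-\eta^3\sigma/2}$ --- a cancellation your sketch does not address. Finally, the (Y) sequences, the enumeration of vines (I)--(VIII), the quadruple-bond/triangle dichotomy, and the Section~9.5-style ``good operations outgrow bounded losses'' argument all live inside the proof of Proposition~\ref{lgmolect} (through Propositions~\ref{moleprop2}--\ref{moleprop4}), not in the deduction of \eqref{kqmainest2-1} from it; and the ``number of couples with a given skeleton is $\leq C^n$'' count belongs to the proof of Proposition~\ref{kqmainest1}, one level up. These are not merely presentational: as written, your proposal would not produce the $L^{-\Delta_{\mathrm{sub}}}$ and $L^{-\eta^6\rho_{\mathrm{sub}}}$ factors, because you have no ledger (no $\Af$, no $\Df$) to propagate through the reduction.
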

\begin{proof}[Proof of Proposition \ref{kqmainest1} assuming Proposition \ref{kqmainest2}] We start by defining $\rho(\Qc)=\rho_{\mathrm{sub}}=q_{\mathrm{sub}}+m_{\mathrm{sub}}'$ where $q_{\mathrm{sub}}$ and $m_{\mathrm{sub}}'$ are defined as above. Clearly its value does not depend on the congruence class of $\Qc$ (as $\Qc_{\mathrm{sub}}$ does not), and for non-regular couples $\Qc$, the couple $\Qc_{\mathrm{sk}}$ is nontrivial, and so is $\Qc_{\mathrm{sub}}$. This means that $1\leq\rho(\Qc)\leq n$ (the latter inequality is trivial as $\rho_{\mathrm{sub}}=q_{\mathrm{sub}}+m_{\mathrm{sub}}'$ does not exceed $n_{\mathrm{sub}}\leq n$).

Next, once $\rho_{\mathrm{sub}}=\rho$ is fixed, the number of choices for $\Mb(\Qc_{\mathrm{sub}})$ is clearly $\lesssim (C\rho)!C^n$ because this molecule must equal at most $\rho$ ladders of total length $\leq n$, plus at most $\rho$ extra atoms. Then, going back from $\Mb(\Qc_{\mathrm{sub}})$ to $\Mb(\Qc_{\mathrm{sk}})$ again involves inserting finitely many VC with total number of atoms $\leq n$, which again has $\lesssim C^n$ choices, as a VC of given length $m$ has $\lesssim C^m$ possibilities, thus the number of choices for $\Mb(\Qc_{\mathrm{sk}})$ is also $\lesssim (C\rho)!C^n$. By Propositions \ref{recover} and \ref{skeleton}, we know that the same bound holds for $\Qc_{\mathrm{sk}}$ and $\Qc$, and certainly also for congruence classed of $\Qc$.

It suffices to prove (\ref{kqmainest1-1}). We only need to control a single term $\Ks$ as defined above, and using Proposition \ref{red1step}, we can start with the expression $\Ks$ in (\ref{vinered}) with $\Qc_j$ replaced by $\Qc_{\mathrm{sub}}$ (which is the final result of all the splicing described above), and all subscripts $(\cdots)_j$ replaced by $(\cdots)_{\mathrm{sub}}$. There is now no summation in $(\Qc_{\mathrm{sub}}, \As_{\mathrm{sub}})$, as the $\Us_0$ is reduced to empty set in the end, and the choice of $\Qc_{\mathrm{sub}}$ and $\As_{\mathrm{sub}}$ are fixed. For simplicity we will denote $(\rho_{\mathrm{sub}},q_{\mathrm{sub}},m_{\mathrm{sub}}')$ by $(\rho,q,m')$.

Due to the $\langle k_\lf\rangle^{-40d}$ decay for each leaf $\lf$, we may also restrict $k_\lf$ for each leaf $\lf$ to a unit ball in $\Rb^d$, say $|k_\lf-k_\lf^0|\leq 1$. By Lemma \ref{explem}, we can decompose the full expression into $\lesssim C^n$ terms, such that in each term, the variable $k_\mf$ belongs to a fixed unit ball not only for \emph{leaves} $\mf=\lf$, but also for \emph{all nodes} $\mf$. With this localization, we may use the $X_{\mathrm{loc}}^{\eta,40d}$, $X_{\mathrm{loc}}^{\eta,0}$ and $Y_{\mathrm{loc}}^{\eta^5}$ bounds for the factors $\Kc_{\Qc^{(\lf,\lf')}}$, $\Kc_{\Tc^{(\mf)}}$ and $\Zc$, see Propositions \ref{regcpltreeasymp} and \ref{red1step}, to reduce the product
\[\Zc_{\mathrm{sub}}\cdot{\prod_{\lf\in\Lc_{\mathrm{sub}}}^{(+)}\Kc_{\Qc^{(\lf,\lf')}}(t_{\lf^p},t_{(\lf')^p},k_\lf)}\prod_{\mf\in\Nc_{\mathrm{sub}}}\Kc_{\Tc^{(\mf)}}^*(t_{\mf^p},t_{\mf},k_\mf)\] in (\ref{vinered}) to a linear combination of functions
\[L^{\Delta_{\mathrm{sub}}}(C^+\delta)^{n(\As_{\mathrm{sub}})/2}\cdot e^{\pi i\mu t}\cdot\prod_{\nf\in\Nc_{\mathrm{sub}}}e^{\pi i\lambda_\nf t_\nf}\cdot\prod_{\lf\in\Lc_{\mathrm{sub}}}^{(+)}\langle k_\lf\rangle^{-40d}\cdot\Xc(k[\Qc_{\mathrm{sub}}])\] for different choices of $(\mu,\lambda[\Nc_{\mathrm{sub}}])$, with the coefficient being an $L^1$ integrable function of $(\mu,\lambda[\Nc_{\mathrm{sub}}])$, where $\Xc$ is a bounded function of all the $k_\mf$ variables. Below we may fix one choice of $(\mu,\lambda[\Nc_{\mathrm{sub}}])$ (in which our estimates will be uniform). To match Proposition \ref{kqmainest2}, we also identify the $q$ maximal ladders in $\Mb(\Qc_{\mathrm{sub}})$ and fix a dyadic number $P_j\in[L^{-1},1]\cup\{0\}$ such that the gap of the $j$-th ladder $\Lc_j$ is $|r_j|\sim P_j$ (or $|r_j|\gtrsim P_j$ if $P_j=1$). This causes a loss of $(\log L)^q$ by summing over all choices of $P_j$, which will be ignored in view of the $L^{-\eta^6\rho}$ gain expected in (\ref{kqmainest2-1}).

\smallskip
Now consider such a term (we call it $\Ms$), rewrite it as
\begin{multline}\label{expressionM}\Ms=e^{\pi i\mu t}\prod_{\lf\in\Lc_{\mathrm{sub}}}^{(+)}\langle k_\lf^0\rangle^{-40d}\cdot(C^+\delta^{1/4})^{M_{\mathrm{sub}}}(C^+\delta)^{n(\As_{\mathrm{sub}})/2}\bigg(\frac{\delta}{2L^{d-\gamma}}\bigg)^{n_{\mathrm{sub}}}\\\times L^{\Delta_{\mathrm{sub}}}\sum_{\Es_{\mathrm{sub}}}\Bc(t,t,\alpha[\Nc_{\mathrm{sub}}])\Yc(k[\Qc_{\mathrm{sub}}]),
\end{multline} where in (\ref{expressionM}), the sum is taken over all $k$-decorations $\Es_{\mathrm{sub}}$ of $\Qc_{\mathrm{sub}}$ restricted by some fixed $(k_\ell^0)$, $n_{\mathrm{sub}}$ is the order of $\Qc_{\mathrm{sub}}$ and $M_{\mathrm{sub}}=(n_0-n_{\mathrm{sub}})+n(\As)-n(\As_{\mathrm{sub}})$ where $n_0$ is the order of $\Qc_{\mathrm{sk}}$ and $\As$ and $\As_{\mathrm{sub}}$ are as in (\ref{bigformula2new}) and (\ref{vinered}). The function $\Yc$ is uniformly bounded, and is supported on the $k$-decorations of $\Qc_{\mathrm{sub}}$ that satisfy the above gap assumptions for ladders, and are inherited from $k$-decorations of $\Qc_{\mathrm{sk}}$ that satisfy all the SG, LG and non-degeneracy assumptions specified above, due to Proposition \ref{red1step}. Finally, the $\Bc$ expression is defined as
\begin{equation}\label{defintB}\Bc(t,t,\alpha[\Nc_{\mathrm{sub}}]):=\int_{\Ec_{\mathrm{sub}}}\prod_{\nf\in\Nc_{\mathrm{sub}}}e^{\pi i \alpha_\nf t_\nf}\,\mathrm{d}t_\nf
\end{equation} and $\alpha_\nf:=\zeta_\nf\delta L^{2\gamma}\Omega_\nf+\lambda_\nf$. From (\ref{expressionM}), by first fixing the values of $\lfloor\alpha_\nf\rfloor:=\sigma_\nf\in\Zb$ (where each $\sigma_\nf$ belongs to a fixed set of $\leq L^{10d}$ elements) and summing over all choices of $\sigma_\nf$, we can bound
\begin{multline}\label{omegasum}|\Ms|\lesssim\langle k\rangle^{-20d}\prod_{\lf\in\Lc_{\mathrm{sub}}}^{(+)}\langle k_\lf^0\rangle^{-20d}\cdot(C^+\delta^{1/4})^{M_{\mathrm{sub}}}(C^+\delta)^{n(\As_{\mathrm{sub}})/2}\bigg(\frac{\delta}{2L^{d-\gamma}}\bigg)^{n_{\mathrm{sub}}}\\\times L^{\Delta_{\mathrm{sub}}}\sum_{\sigma[\Nc_{\mathrm{sub}}]}\sup_{\alpha[\Nc_{\mathrm{sub}}]:|\alpha_\nf-\sigma_\nf|\leq 1}|\Bc(t,t,\alpha[\Nc_{\mathrm{sub}}])|\cdot\sup_{\sigma[\Nc_{\mathrm{sub}}]}\sum_{\Es_{\mathrm{sub}}}1,
\end{multline} where the first summation in (\ref{omegasum}) is taken over all $\sigma[\Nc_{\mathrm{sub}}]$, and the second summation is taken over all $k$-decorations $\Es_{\mathrm{sub}}$ that satisfies all the above assumptions, as well as $|\delta L^{2\gamma}\Omega_\nf\pm(\sigma_\nf-\lambda_\nf)|\leq 1$ for each node $\nf$.

Now, with fixed $\sigma[\Nc_{\mathrm{sub}}]$, consider the $k$-decoration $\Es_{\mathrm{sub}}$ in the second summation in (\ref{omegasum}), and the corresponding $k$-decoration of $\Mb(\Qc_{\mathrm{sub}})$ defined by Definition \ref{decmole}, then the latter decoration will have to satisfy all the requirements made in the statement of Proposition \ref{kqmainest2} (for some choice of $k_\ell^0$ and $\beta_v$). Thus, by (\ref{kqmainest2-1}), the second summation in (\ref{omegasum}) is bounded by
\begin{equation}\label{2ndsumbound}\sup_{\sigma[\Nc_{\mathrm{sub}}]}\sum_{\Es_{\mathrm{sub}}}1\lesssim(C^+\delta^{-1/2})^{n_{\mathrm{sub}}}L^{(d-\gamma)n_{\mathrm{sub}}}\cdot L^{-\Delta_{\mathrm{sub}}}L^{-\eta^6\rho}\prod_{j=1}^q\Xf_j^{-z_j}.\end{equation} Then, if we can prove that the first summation is bounded by
\begin{equation}\label{1stsumbound}
\sum_{\sigma[\Nc_{\mathrm{sub}}]}\sup_{\alpha[\Nc_{\mathrm{sub}}]:|\alpha_\nf-\sigma_\nf|\leq 1}|\Bc(t,t,\alpha[\Nc_{\mathrm{sub}}])|\lesssim (C^+\delta^{-1/4})^{n_{\mathrm{sub}}}\cdot L^{C\rho\sqrt{\delta}}\prod_{j=1}^q\Xf_j^{z_j}\prod_{\lf\in\Lc_{\mathrm{sub}}}^{(+)}\langle k_\lf^0\rangle^{20d},
\end{equation} then putting together (\ref{omegasum}), (\ref{2ndsumbound}) and (\ref{1stsumbound}) (noticing also equalities like $n=n_0+n(\As)$ etc.). we can get \[|\Ms|\lesssim (C^+\delta^{1/4})^{n}\langle k\rangle^{-20d}L^{-\eta^7\rho},\] which then proves Proposition \ref{kqmainest1}.

Finally let us prove (\ref{1stsumbound}); of course we only need to prove it under the restriction on $(\sigma_\nf)$ that, there exists some $k$-decoration $\Es_{\mathrm{sub}}$ satisfying all the previously specified assumptions, such that the corresponding quantities $\alpha_\nf=\zeta_\nf\delta L^{2\gamma}\Omega_\nf+\lambda_\nf$ satisfies $|\alpha_\nf-\sigma_\nf|\leq 1$. Then, this is basically a consequence of Proposition 10.1 of \cite{DH21}, but with some additional twists. Consider all the ladders $\Lc_j$ such that $1+\delta L^{2\gamma}P_j\leq (\log L)^2$, so $\Xf_j=1+\delta L^{2\gamma}P_j$; assume these correspond to $1\leq j\leq p$ for some $1\leq p\leq q$. For each ladder $\Lc_j\,(j\leq p)$ and each pair of atoms $(v_1,v_2)$ connected by a double bond, we consider them together, as well as the corresponding branching nodes $\nf_j=\nf(v_j)$ in $\Qc_{\mathrm{sub}}$. If the gap of this ladder is $|r_j|\sim P_j$, then it is easy to see that for some choice of $\pm$, we have that $\Omega_{\nf_1}\pm\Omega_{\nf_2}=r_j\cdot(k_{\mf}\pm k_{\mf'})$ for some fixed nodes $\mf$ and $\mf'$ depending on $v_1$ and $v_2$.

By Lemma \ref{explem}, we can decompose the set of all $k$-decorations into at most $C^{n_{\mathrm{sub}}}\prod_{\lf\in\Lc_{\mathrm{sub}}}^{(+)}\langle k_\lf^0\rangle^{10d}$ subsets such that whenever a $k$-decoration belongs to a fixed subset, and whenever $|r_j|\sim P_j$, the value of $r_j\cdot(k_{\mf}\pm k_{\mf'})$ must belong to an fixed interval of length $P_j$, which does not depend on the choice of $r_j$ or the decoration. This implies that $\sigma_{\nf_1}\pm\sigma_{\nf_2}$ belongs to a fixed interval of length $1+\delta L^{2\gamma}P_j$, and thus it has at most $O(1+\delta L^{2\gamma}P_j)$ choices. This means that, after losing a factor
\[C^{n_{\mathrm{sub}}}\prod_{j=1}^p(1+\delta L^{2\gamma}P_j)^{z_j+1}=C^{n_{\mathrm{sub}}}\prod_{j=1}^p\Xf_j^{z_j+1},\] we can assume the value of $\sigma_{\nf_1}\pm\sigma_{\nf_2}$ is fixed for each pair $(\nf_1,\nf_2)$ as above. Note that we may replace the above power $z_j+1$ of $\Xf_j$ by $z_j$, as $\Xf_j\leq (\log L)^2$ and $p\leq \rho_{\mathrm{sub}}$, so the loss $(\log L)^{Cp}$ here can be covered by the gain $L^{-\eta^6\rho_{\mathrm{sub}}}$ in (\ref{kqmainest2-1}).

Notice that the ladders are the same as type II chains defined in \cite{DH21}, we can now run the same arguments in the proof of Proposition 10.1 in \cite{DH21}, to get that
\begin{multline}\label{1stsumbound2}
\sum_{\sigma[\Nc_{\mathrm{sub}}]}\sup_{\alpha[\Nc_{\mathrm{sub}}]:|\alpha_\nf-\sigma_\nf|\leq 1}|\Bc(t,t,\alpha[\Nc_{\mathrm{sub}}])|\\\lesssim C^{n_{\mathrm{sub}}}\prod_{j=1}^p\Xf_j^{z_j}\prod_{\lf\in\Lc_{\mathrm{sub}}}^{(+)}\langle k_\lf^0\rangle^{10d}\cdot(C^+)^{n_{\mathrm{sub}}}\delta^{-n_{\mathrm{sub}}/4}L^{Cp\sqrt{\delta}}(\log L)^{Cp+m''},
\end{multline} where $m''$ is the number of atoms not in the ladders $\Lc_j\,(1\leq j\leq p)$, so in particular $m''=m'+2(z_{p+1}+\cdots +z_q)$; namely each ladder $\Lc_j\,(1\leq j\leq p)$ causes a loss of $L^{C\sqrt{\delta}}(\log L)^C$ and each remaining atom causes a loss of $\log L$, see for example (10.10) in Section 10.2 of \cite{DH21}. It is easily seen that (\ref{1stsumbound2}) is stronger than (\ref{1stsumbound}), using the fact that $\rho=q+m'\geq p+m'$. This completes the proof.
\end{proof}
\section{Reduction to large gap molecules}\label{reduct2}
\subsection{Preliminary setup} We now start the proof of Proposition \ref{kqmainest2}. The idea is to perform operations to further reduce the molecule $\Mb(\Qc_{\mathrm{sub}})$ to simpler molecules (this reduction will be called stage 2, compared to stage 1 reduction in Section \ref{stage1red}). In this process we will keep track of the corresponding counting estimate, as well as certain parameters of the molecule, especially the characteristics $\chi=E-V+F$, where $E$, $V$ and $F$ are the number of bonds, atoms, and components (Definition \ref{defmole0}).

Before introducing the stage 2 reduction operations, we first need to setup the properties of the molecule $\Mb(\Qc_{\mathrm{sub}})$, compared to the original molecule $\Mb(\Qc_{\mathrm{sk}})$ before stage 1 reduction, as well as the corresponding decorations. These are summarized in Proposition \ref{subpro} below.
\begin{prop}\label{subpro} Suppose the molecule $\Mb(\Qc_{\mathrm{sk}})$ is reduced to $\Mb(\Qc_{\mathrm{sub}})$ via stage 1 reduction, as described in Section \ref{stage1red}. Let $\Cs$ be defined for $\Qc_{\mathrm{sk}}$ as in Lemma \ref{vinechainlem}. Consider $k$-decoration $(k_\ell)$ of $\Mb(\Qc_{\mathrm{sub}})$ that is inherited from a $k$-decoration $\Es_{\mathrm{sk}}$ of $\Qc_{\mathrm{sk}}$ (and $\Mb(\Qc_{\mathrm{sk}})$), as in Proposition \ref{kqmainest2}. Then we have the following properties:
\begin{enumerate}[{(a)}]
\item Each SGHVC in $\Cs$ is reduced to an SGHV, and each SGHV remains unchanged. Each SGVC in $\Cs$ is reduced to an SG vine that is also a (CN) vine, or an SG vine that is also a root (CL) vine (see Proposition \ref{block_clcn}), or a single atom. Each DV in $\Cs$ remains unchanged, and still contains two vines (V).
\item The molecule $\Mb(\Qc_{\mathrm{sub}})$ is connected, there are only two atoms of degree $3$, and all other atoms have degree $4$. We define an atom $v$ in $\Mb(\Qc_{\mathrm{sub}})$ to be a \emph{hinge atom}, if it is the atom that results from merging a SGVC in $\Mb(\Qc_{\mathrm{sk}})$. These include the single atoms defined in (a), as well as some of the joints of the SGHV and SG vines defined in (a).
\item Any degenerate atom (where there are two bonds $(\ell_1,\ell_2)$ at $v$ of opposite directions and $k_{\ell_1}=k_{\ell_2}$ in the decoration) must be a hinge atom, and any triple bond must have an endpoint that is a hinge atom.
\item For any hinge atom $v$, we can find two bonds $(\ell_1,\ell_2)$ at $v$, so that in the original molecule $\Mb(\Qc_{\mathrm{sk}})$ they are the two bonds at one joint of an SGVC in $\Cs$ that do \emph{not} belong to this VC (this VC is merged in Stage 1). Also the triple $(v,\ell_1,\ell_2)$ has SG and $k_{\ell_1}\neq k_{\ell_2}$. Moreover, if $v$ has degree $4$, then the other two bonds $(\ell_3,\ell_4)$ at $v$ satisfy the same properties as $(\ell_1,\ell_2)$ above.
\item Any SG vine-like object that is not a subset of an object in (a), which also does \emph{not} contain a hinge atom, must be a subset of a LG vine-like object in $\Cs$. Note that it must be in one of the cases defined in Proposition \ref{subsetvc}.
\item Any SG vine-like object (say $\Ub$) that is not a subset of an object in (a), which also \emph{contains} a hinge atom, \emph{must} contain a hinge atom $v$, such that either $v$ is an interior atom of $\Ub$, or $v$ is a joint of $\Ub$ and exactly one bond in $(\ell_1,\ell_2)$ belongs to $\Ub$, where $(\ell_1,\ell_2)$ is defined as in (d).
\end{enumerate}
\end{prop}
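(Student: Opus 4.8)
\textbf{Proof proposal for Proposition \ref{subpro}.}

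The plan is to prove each of (a)--(f) by unwinding the definitions of the stage 1 reduction (Section \ref{stage1red}), splicing (Proposition \ref{block_clcn}), the collection $\Cs$ (Lemma \ref{vinechainlem}), and the congruence/twist classes (Definitions \ref{defcong}, \ref{twistgen}), together with the structural facts about blocks already established (Lemma \ref{disjointlem}, Corollary \ref{blockchainprop}, Proposition \ref{block_clcn}). The parts (a)--(b) are essentially bookkeeping: stage 1 reduction splices precisely the vines in $\Vs_0$, which by Definition \ref{defcong} are exactly the (CL) ingredient vines of the SG vine-like objects in $\Cs$, with one ingredient left unspliced in each SGHVC and in each SGVC whose joints are both degree $3$ (the ``root block'' case of Proposition \ref{block_clcn}(3)). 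So the first step is to trace, object by object in $\Cs$, what survives: an SGHVC loses all but one ingredient and the remaining ingredient plus the extra adjoint bond is an SGHV; an SGHV has no (CL) ingredients spliced (it is not an HVC) so is unchanged; an SGVC whose joints are not both degree $3$ has all its (CL) ingredients spliced, leaving either a single (CN) vine or nothing (a single atom) depending on whether the unit had a (CN) ingredient; an SGVC with both joints degree $3$ keeps the top (CL) vine, which is a root (CL) vine; a DV contains no elements of $\Vs_0$ at all so is untouched. Connectivity of $\Mb(\Qc_{\mathrm{sub}})$ and the degree count follow because splicing merges a block into one atom (Proposition \ref{block_clcn}(2)), which preserves connectivity and the degree-4/degree-3 structure; the two degree-$3$ atoms persist since they are the roots and are never inside a spliced (CL) vine (by the choice in Definition \ref{defcong}(b), a root block is kept unspliced). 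This also \emph{defines} the hinge atoms, proving (b).

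For (c) and (d): a degenerate atom or a triple-bond endpoint in $\Mb(\Qc_{\mathrm{sub}})$ cannot come from $\Mb(\Qc_{\mathrm{sk}})$ directly (there are no degenerate atoms or triple bonds there, by Remark \ref{moleremark} since we assume $\epsilon_{\Es_{\mathrm{sk}}}\neq 0$), so it must be created by splicing; but splicing a block with gap $r\neq 0$ (the gap is never zero, Definition \ref{defdiff}) merges a block into one atom whose four surviving bonds are the bonds from the two joints to atoms outside the block, and the only way a coincidence $k_{\ell_1}=k_{\ell_2}$ or a triple bond appears at this merged atom is if the ambient SGVC had its ``outer'' bonds with small gap---precisely the hinge-atom situation. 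So the first move is to identify, for a hinge atom $v$, the two bonds $(\ell_1,\ell_2)$ at $v$ that in $\Mb(\Qc_{\mathrm{sk}})$ are the outer bonds at a joint of the spliced SGVC; since that VC is SG, the triple $(v,\ell_1,\ell_2)$ has the same gap (Definition \ref{defdiff}, gaps of concatenated blocks agree), which is SG and nonzero; and if $v$ has degree $4$ the other joint contributes the symmetric pair $(\ell_3,\ell_4)$. That is (d), and (c) follows by noting these are the only possible sources of degeneracy/triple bonds.

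Parts (e) and (f) are the substantive ones and where I expect the main obstacle. The claim is a dichotomy for any SG vine-like object $\Ub$ in the \emph{reduced} molecule $\Mb(\Qc_{\mathrm{sub}})$ that is not contained in one of the surviving objects from (a): either $\Ub$ avoids all hinge atoms, in which case it lifts to an SG vine-like object inside $\Mb(\Qc_{\mathrm{sk}})$ not already captured by $\Cs$, forcing (by maximality in Lemma \ref{vinechainlem} and the intersection trichotomy of Lemma \ref{disjointlem}) that it be a subset of an LG object of $\Cs$ in one of the configurations of Proposition \ref{subsetvc}; or $\Ub$ meets a hinge atom, and then one must argue that $\Ub$ must in fact contain a hinge atom in the constrained position described---interior, or a joint with exactly one of $(\ell_1,\ell_2)$ inside $\Ub$. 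The hard part is the second alternative: one has to rule out the configuration where a hinge atom sits at a joint of $\Ub$ with \emph{both} or \emph{neither} of its distinguished bonds $(\ell_1,\ell_2)$ inside $\Ub$, by using that a hinge atom carries (via (d)) an SG-gap constraint on a pair of opposite-direction bonds that would, combined with the block structure of $\Ub$, either create a forbidden saturated component or collapse $\Ub$ into a vine-like object that genuinely lies in $\Mb(\Qc_{\mathrm{sk}})$ and hence was already accounted for in $\Cs$---contradicting the hypothesis that $\Ub$ is not a subset of a surviving object. So my plan for (e)--(f) is: first set up the lift of $\Ub$ to $\Mb(\Qc_{\mathrm{sk}})$ by un-splicing (each hinge atom of $\Mb(\Qc_{\mathrm{sub}})$ expands back to an SGVC), carefully tracking how the bonds of $\Ub$ interact with those SGVCs; then case-split on how $\Ub$ meets each hinge atom, using Lemma \ref{disjointlem} and Corollary \ref{blockchainprop} to control the possible intersection patterns; and finally invoke maximality of the objects in $\Cs$ to force the stated conclusions. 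I anticipate that the bulk of the writing will be in this enumeration of intersection patterns with hinge atoms, and that the cleanest route is to phrase it as: a hinge atom that is an interior atom of $\Ub$ is harmless; a hinge atom at a joint of $\Ub$ with exactly one distinguished bond inside is the ``good'' case of (f); and any other pattern either contradicts $\Ub$ being a molecule (saturated component) or shows $\Ub$ already lifts into $\Cs$, contradicting the standing hypothesis.
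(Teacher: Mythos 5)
Your plan for (a)--(e) matches the paper's proof in structure and in the ingredients invoked: (a)--(b) are bookkeeping from the definitions of $\Vs_0$, the (Y)-type survivor choices, and Corollary \ref{blockchainprop}; (c) reduces to the observation that non-hinge atoms and their bonds are literally unchanged by splicing and $\Mb(\Qc_{\mathrm{sk}})$ has no degenerate atoms or triple bonds, so any new degeneracy or triple-bond endpoint must be a hinge atom (the ``small gap of the outer bonds'' detour you sketch is not needed for (c), only for (d)); and (d)--(e) are exactly as you describe, with the SG gap of the spliced block transferring to the outer-bond triple at the joint.

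For (f), the general strategy is right but the closing mechanism is not correctly identified, and one of your two alternatives is spurious. The ``contradicts $\Ub$ being a molecule (saturated component)'' branch does not occur and is not used in the paper. What actually happens: if (f) fails, then every hinge atom of $\Ub$ is a joint of $\Ub$ with both matched bonds $(\ell_1,\ell_2)$ (or, after relabeling, both of $(\ell_3,\ell_4)$) inside $\Ub$; hence all interior atoms of $\Ub$ are non-hinge, and $\Ub$ lifts unchanged to $\Mb(\Qc_{\mathrm{sk}})$. The key step your plan omits is that at each such hinge joint $v_j$, the lifted $\Ub$ shares exactly $v_j$ (and no bonds) with the merged SGVC $\Vb\Cb_j$, so one can \emph{concatenate} $\Ub$ with $\Vb\Cb_1$ (and with $\Vb\Cb_2$ if the other joint is also a hinge atom) as in Definition \ref{defblock} to obtain a strictly larger SG vine-like object in $\Mb(\Qc_{\mathrm{sk}})$. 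By Lemma \ref{vinechainlem} this is contained in a single object of $\Cs$, which is SG since it contains $\Vb\Cb_1$; hence $\Ub$ is a subset of an SG object of $\Cs$ and thus (after stage 1) of an object in (a), the desired contradiction. Your ``$\Ub$ already lifts into $\Cs$'' phrasing does not by itself supply a contradiction: being contained in \emph{some} object of $\Cs$ is automatic from Lemma \ref{vinechainlem}; the content is that the containing object is \emph{SG}, and the concatenation with $\Vb\Cb_j$ is what forces that. Without this step, the hypothesis ``not a subset of an object in (a)'' is never contradicted.
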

\begin{proof} First, (a) directly follows from the definition of $\Vs_0$ and $\Us_0$ in Section \ref{stage1red} and the definition of splicing operation in Proposition \ref{block_clcn}, using also Corollary \ref{blockchainprop}. Also (b) follows from Proposition \ref{moleproperty} and Remark \ref{moleremark} (note that degree $2$ atoms cannot be generated by splicing due to our treatment for root (CL) vines), and (c) follows from the fact that there is no degenerate atoms nor triple bonds in $\Mb(\Qc_{\mathrm{sk}})$. As for (d), if $v$ is a hinge atom, since $v$ has degree $3$ or $4$, we must have in $\Mb(\Qc_{\mathrm{sk}})$ an SGVC (say $\Vb\Cb$) with one of its joints, say $v_1$, having degree $4$. Let the two bonds $(\ell_1',\ell_2')$ at $v_1$ belong to $\Vb\Cb$ and the other two bonds $(\ell_1,\ell_2)$ at $v_1$ not belong to $\Vb\Cb$, then the triple $(v,\ell_1,\ell_2)$ in $\Mb(\Qc_{\mathrm{sub}})$ is just the triple $(v_1,\ell_1,\ell_2)$ in $\Mb(\Qc_{\mathrm{sk}})$ and must have SG as $\Vb\Cb$ has SG. If $d(v)=4$, then the other joint $v_2$ of $\Vb\Cb$ also has degree $4$ and we can repeat the above argument to show that $(\ell_3,\ell_4)$ satisfy the same properties as $(\ell_1,\ell_2)$.

Now let an SG vine-like object, say $\Ub$, be in $\Mb(\Qc_{\mathrm{sub}})$ which is not a subset of an object in (a). If $\Ub$ does not contain a hinge atom, then the same $\Ub$ must exist in the original molecule $\Mb(\Qc_{\mathrm{sk}})$ as a vine-like object, and is not changed in the process. By Lemma \ref{vinechainlem}, $\Ub$ must be the subset of an object in $\Cs$, and this object must be an LG vine-like object, since otherwise it will be involved in the splicing process and then $\Ub$ would be a subset of an object in (a)--(b). This proves (e).

Next, suppose $\Ub$ contains a hinge atom. If (f) does not hold, then $\Ub$ must contain one hinge atom as a joint, such that the bonds $(\ell_1,\ell_2)$ defined in (d) both belong to $\Ub$ (if not, then replace $(\ell_1,\ell_2)$ by $(\ell_3,\ell_4)$ defined in (d)). The other joint $v_2$ of $\Vb$ may also be a hinge atom; if so then again the bonds $(\ell_1',\ell_2')$ defined in (d) (corresponding to $v_2$) both belong to $\Ub$. Then the same $\Ub$ must exist in the original molecule $\Mb(\Qc_{\mathrm{sk}})$ as an SGVC, and is not changed in the process. Suppose $v_1$ is formed by merging an SGVC called $\Vb\Cb_1$, and $v_2$ (if applicable) is formed by merging an SGVC called $\Vb\Cb_2$. Then in $\Mb(\Qc_{\mathrm{sk}})$ one can concatenate $\Ub$ with $\Vb\Cb_1$ (and $\Vb\Cb_2$ if applicable) to form a larger SG vine-like object, which must be a subset of an SG vine-like object in $\Cs$. This implies that $\Ub$ must be a subset of an object in (a), which is a contradiction and proves (f).
\end{proof}
\begin{rem}\label{subcondition} Consider any $k$-decoration $(k_\ell)$ of $\Mb(\Qc_{\mathrm{sub}})$, restricted by $(\beta_v)$ and $(k_\ell^0)$, that is inherited from a $k$-decoration $\Es_{\mathrm{sk}}$ of $\Qc_{\mathrm{sk}}$ (and $\Mb(\Qc_{\mathrm{sk}})$), as in Proposition \ref{kqmainest2}. Then, the following conditions hold for this decoration: 
\begin{enumerate}
\item[(i)] Those imposed by Proposition \ref{subpro},
\item[(ii)] The gaps $|r_j|\sim P_j$ for each maximal ladder $\Lc_j$, where $P_j$ are fixed as in Proposition \ref{kqmainest2}. 
\item[(iii)] By losing at most $C^{n_{\mathrm{sub}}}$, we may assume that each atom that is not an interior atom of an SG vine-like object in (a) and (e) of Proposition \ref{subpro} is fixed to be either SG or LG. For any SG atom $v$ we fix the corresponding bonds $(\ell_1,\ell_2)$ according to the following rules: for any hinge atom we must fix the bonds as in (d) of Proposition \ref{subpro}, for any joint of any SG vine-like object defined in (a) and (e) of Proposition \ref{subpro} we must fix the bonds $(\ell_1,\ell_2)$ to belong to the corresponding vine-like object (if both vines (V) in one DV are SG, then $(\ell_1, \ell_2)$ is chosen from only one them), and for any endpoint of any triple bond in (c) we must fix the bonds using the endpoint that is a hinge atom. For all other SG atoms we may fix the bonds $(\ell_1, \ell_2)$ arbitrarily. 
\item[(iv)] Finally, for each SG atom $v$, we fix its gap as $|r|\sim R_v$, where $R_v\in[L^{-1},L^{-\gamma+\eta}]$ is a dyadic number. Note that this condition leads to a loss of $(\log L)^p$, where $p$ is the number of SG atoms, which we will also treat in the proof below.
\end{enumerate}
\end{rem}
\subsection{The cutting operation}\label{reductcut0}
Now we define the basic operation in the stage 2 reduction, namely the \emph{cutting} operation, which involves dividing an atom of degree $3$ or $4$ into an atom of degree $2$ and another atom of degree $1$ or $2$.
\begin{df}\label{defcut} Given a molecule $\Mb$ and an atom $v$ of degree $3$ or $4$. Suppose $v$ has two bonds $\ell_1$ and $\ell_2$ of opposite directions, then we may \emph{cut} the atom $v$ along  the bonds $\ell_1$ and $\ell_2$, by replacing $v$ with two atoms $v_1$ and $v_2$, such that the endpoint $v$ for the bonds $\ell_1$ and $\ell_2$ is moved to $v_1$, and the endpoint $v$ for the other bond(s) is moved to $v_2$, see Figure \ref{fig:cut}. We also call this cut an $\alpha$- (resp. $\beta$-) cut, if it does not (resp. does) generate a new connected component, and accordingly we call the resulting atoms $\alpha$- or $\beta$- atoms. If we are also given a decoration, then we may define the gap of this cut to be $r:=k_{\ell_1}-k_{\ell_2}$.
\begin{figure}[h!]
\includegraphics[scale=0.45]{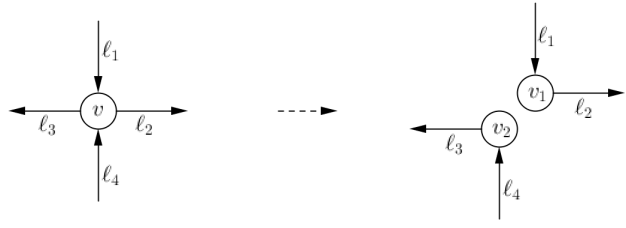}
\caption{A cutting operation executed at a degree $4$ atom $v$, see Definition \ref{defcut}.}
\label{fig:cut}
\end{figure}
\end{df}
The stage 2 reduction, which will be applied to $\Mb(\Qc_{\mathrm{sub}})$ involves cutting various atoms as in Definition \ref{defcut}, and possibly removing some connected components created in this process, until reaching a final molecule $\Mb_{\mathrm{fin}}$. At each step, let the molecule before and after the operation be $\Mb_{\mathrm{pre}}$ and $\Mb_{\mathrm{pos}}$, then a decoration $(k_\ell)$ of $\Mb_{\mathrm{pre}}$ naturally leads to a decoration of $\Mb_{\mathrm{pos}}$. 

For any molecule $\Mb$ (which could be $\Mb_{\mathrm{pre}}$ or $\Mb_{\mathrm{pos}}$), consider the possible $(c_v)$-decorations of $\Mb$, also restricted by $(\beta_v)$ and $(k_\ell^0)$; we also assume that this decoration is inherited from a $k$-decoration of $\Mb(\Qc_{\mathrm{sub}})$ that satisfies all assumptions in Remark \ref{subcondition}. Then we consider the number of such restricted decorations, take supremum over the parameters $(c_v,\beta_v,k_\ell^0)$, and define it to be $\Cf$. In view of the right hand side of (\ref{kqmainest2-1}) and the logarithmic loss in Remark \ref{subcondition}, we also define
\begin{equation}\label{defequan}\Af:=\Cf\cdot L^{-(d-\gamma)\chi(\Mb)}(C^+\delta^{-1/2})^{-\chi(\Mb)}\prod_{j=1}^q\Xf_j^{z_j}\cdot (\log L)^{Cp},
\end{equation} where $\chi(\Mb)$ is the characteristic of $\Mb$, $p$ is the number of remaining SG atoms in $\Mb$, and the other notations are under the setting of Proposition \ref{kqmainest2} but adapted to $\Mb$ instead of $\Mb(\Qc_{\mathrm{sub}})$, for example $\Lc_j$ are the maximal ladders in $\Mb$. Define also $m'$ to be the number of atoms not in the maximal ladders, and $\rho=q+m'$ as in Proposition \ref{kqmainest2} before, and denote the product $\prod_{j=1}^q\Xf_j^{z_j}$ in (\ref{defequan}) as $\Pf$. The notations $(\Af,\Cf,\Pf)$ and $(\rho,q,m')$ will apply to all the molecules appearing in the rest of this paper, with possible subscripts matching those of $\Mb$ (so $\Af_{\mathrm{pre}}$ is defined for the molecule $\Mb_{\mathrm{pre}}$ etc.; sometimes for emphasis we may also write $\Af(\Mb)$ or $\Af(\Mb_{\mathrm{pre}})$ instead of $\Af$ and $\Af_{\mathrm{pre}}$).

We will prove, for each operation, an inequality of form \begin{equation}\label{defdev}\Af_{\mathrm{pre}}\lesssim \Df\cdot \Af_{\mathrm{pos}}\end{equation} for some quantity $\Df$, which we define to be the \emph{deviation} of this operation. Clearly, if we know the deviation in each operation step, and an upper bound for the counting problem associated with the final molecule $\Mb_{\mathrm{fin}}$, then we can deduce from this information an upper bound for the counting problem associated with $\Mb(\Qc_{\mathrm{sub}})$ using (\ref{defequan}) and (\ref{defdev}).

In Section \ref{reductcut} we define the stage 2 reduction process, and study the deviation in each step; in Section \ref{largegapmole} and Section \ref{lgmole} we solve the counting problem associated with the final molecule $\Mb_{\mathrm{fin}}$, and completes the proof of Proposition \ref{kqmainest2}.
\subsection{Stage 2 reduction: Cutting degree $3$ and $4$ atoms}\label{reductcut} Start with the molecule $\Mb(\Qc_{\mathrm{sub}})$ and a restricted decoration $(k_\ell)$ satisfying the assumptions in Remark \ref{subcondition}. Recall the notion of $V,E,F$ and $\chi$ as in Definition \ref{defmole0}; define also $V_\alpha$ and $V_\beta$ to be the number of $\alpha$- and $\beta$-atoms, and use $\Delta$ to denote increments.

\emph{\underline{Step 1: removing SG vine-like objects.}} In Step 1, we collect all the SG vine-like objects defined in (a) and (e) of Proposition \ref{subpro}, including also the triple bonds in (c). Note that if both vines (V) in one DV in (a) are SG then we only collect one of them. For each of these objects $\Ub$, we cut the molecule at each joint, along the two bonds $(\ell_1,\ell_2)$ fixed as in Remark \ref{subcondition} (i.e. along the two bonds that belong to $\Ub$). This disconnects a VC, say $\Vb\Cb$ (which is $\Ub$ or $\Ub$ minus a bond in case of HV or HVC), from the rest of the molecule, and then we remove $\Vb\Cb$. The two joints then become degree $1$ or $2$ atoms, and we \emph{define them as $\alpha$-atoms}; we also label each of them by the dyadic number $R\in[L^{-1},L^{-\gamma+\eta}]$ such that $|r|\sim R$ for the gap $r$ of $\Vb\Cb$ (note that we must have $r\neq 0$).

\emph{\underline{Step 2: removing triangles.}} In Step 2, we consider the possible triangles $v_1v_2v_3$ in the molecule, such that there are bonds $\ell_j$ connecting $v_{j+1}$ and $v_{j+2}$ (where $v_4=v_1$ and $\ell_4=\ell_1$ etc.), and $(v_j,\ell_{j+1},\ell_{j+2})$ is an SG triple as fixed in Remark \ref{subcondition} for $j\in\{1,2\}$. Let $|k_{\ell_{j+1}}-k_{\ell_{j+2}}|\sim R_{j}$ for $j\in\{1,2\}$ with $R_j\in[L^{-1},L^{-\gamma+\eta}]$. If $\mathrm{deg }\,v_j \geq 3$, then we cut the molecule at each $v_j$ along the bonds $(\ell_{j+1},\ell_{j+2})$, which disconnects the triangle formed by $v_j$ and $\ell_j$ from the rest of the molecule, and remove the triangle. This leaves $3$ atoms $v_j$ of degree $1$ or $2$. We call $v_3$ a $\beta$-atom, and call $v_j\,(1\leq j\leq 2)$ an $\alpha$- (resp. $\beta$-) atom if it belongs to the same (resp. different) component with $v_3$, except when $v_1$ and $v_2$ are in the same component different from $v_3$, in which case we call $v_1$ an $\alpha$-atom and $v_2$ a $\beta$-atom. For any $\alpha$-atom $v_j$ we label it by the corresponding $R_j$.

\emph{\underline{Step 3: remaining SG cuts.}} In Step 3, we select each of the remaining SG atoms $v$ of degree $\geq 3$, and cut them along the designated bonds $(\ell_1,\ell_2)$ in Remark \ref{subcondition}, in arbitrary order. Note that each cut may be $\alpha$- or $\beta$-cut; we call the resulting atoms $\alpha$- or $\beta$-atoms, and label any $\alpha$-atom by the dyadic number $R\in[L^{-1},L^{-\gamma+\eta}]$, as in Definition \ref{defcut} (again $R\neq 0$ due to our choice of $(\ell_1,\ell_2)$).

\emph{\underline{Step 4: remaining $\beta$-cuts.}} After Step 3, there is now no more SG atoms left in the molecule. In Step 4, we look for all the possible degree $3$ or $4$ atoms where a $\beta$-cut is possible, and perform the corresponding $\beta$-cut, until this can no longer be done. 

\smallskip
After all the cutting operations, the resulting graph will contain some $\alpha$-atoms. For each $\alpha$-atom $v$ and a given decoration we define an auxiliary number $\iota_v\in\{0,1\}$, such that $\iota_v=1$ if a cutting operation happened before the cutting at the atom $v$, such that the gaps $r,r'$ of these cuttings satisfy $0<|r\pm r'|\leq L^{-50\eta}R_v$; if no such cutting operation exists then define $\iota_v=0$. Note that the number of choices for all $(\iota_v)$ is at most $C^p$, which can be absorbed into the last factor on the right hand side of (\ref{defequan}). Therefore we may assume a choice of $(\iota_v)$ is fixed in the proof below.
\begin{prop}\label{excessprop1} After each operation in Steps 1--4, we have that \begin{equation}\label{excesseqn2}\Df\lesssim L^{-\eta^3}\cdot\prod_{v}^{(\alpha)}L^{(\gamma+3\eta)/2-\kappa_v\eta}R_v\cdot L^{(2\gamma_0+5\eta^2)\Delta F-(\gamma_0+2\eta^2)\Delta V_\beta}
\end{equation} for any $R_v\in[L^{-1},L^{-\gamma+\eta}]$, where the product is taken over all the newly created $\alpha$-atoms $v$, and $R_v$ denotes the label of $v$. The parameter $\kappa_v$ equals $0$ if $\iota_v=0$, and equals $50$ if $\iota_v\neq 0$.
\end{prop}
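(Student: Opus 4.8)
\textbf{Proof proposal for Proposition \ref{excessprop1}.}

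The plan is to analyze each of the four steps of the stage 2 reduction separately, tracking how the counting quantity $\Cf$, the characteristic $\chi$, the ladder factor $\Pf=\prod_j \Xf_j^{z_j}$ and the count $p$ of remaining SG atoms change, and then combining these to extract the deviation $\Df$ via its definition $\Af_{\mathrm{pre}}\lesssim \Df\cdot \Af_{\mathrm{pos}}$ together with \eqref{defequan}. The bookkeeping is linear: each elementary cut at a degree-$3$ or $4$ atom, followed possibly by the removal of a disconnected component, changes $(V,E,F)$ in a controlled way, so $\Delta\chi=\Delta E-\Delta V+\Delta F$ can be read off combinatorially. An $\alpha$-cut at an atom $v$ increases $V$ by $1$ while keeping $F$ fixed and increasing $E$ appropriately (the two bonds $(\ell_1,\ell_2)$ get reattached to the new degree-$2$ atom); a $\beta$-cut increases both $V$ and $F$ by $1$. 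Removing a disconnected component (an SGVC, a triangle, etc.) removes atoms and bonds together, and since such components have their own small characteristic, the net effect on $\chi$ is again explicitly computable. The factor $L^{(2\gamma_0+5\eta^2)\Delta F-(\gamma_0+2\eta^2)\Delta V_\beta}$ in \eqref{excesseqn2} is exactly the ``overhead'' one pays per new component and per new $\beta$-atom: the idea is that each $\beta$-cut gives a genuine gain (because it disconnects and the decoration of the two pieces is more constrained than that of the original connected piece), but we deliberately only keep a fraction $\gamma_0+2\eta^2$ of that gain here and bank the rest as the $\Delta F$ and $\Delta V_\beta$ terms, which will be reinvested later in Section \ref{largegapmole}--\ref{lgmole}.

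The heart of the matter, and the main obstacle, is the factor $\prod_v^{(\alpha)}L^{(\gamma+3\eta)/2-\kappa_v\eta}R_v$ coming from the $\alpha$-atoms. For each newly created $\alpha$-atom $v$ with label $R_v$, the relevant extra counting cost is the number of choices of the decoration vector at $v$ subject to the small-gap constraint $|k_{\ell_1}-k_{\ell_2}|\sim R_v$ together with the quadratic resonance restriction $|\Gamma_v-\beta_v|\leq \delta^{-1}L^{-2\gamma}$. This is precisely a two-vector counting estimate of the type recorded in Lemma \ref{basiccount} (1): the number of lattice points in a unit ball with $r\cdot x$ confined to an interval of length $\delta^{-1}L^{-2\gamma}$ and $|r|\sim R_v$ is $O(L^{d-1}+\delta^{-1}R_v^{-1}L^{d-2\gamma})$, which after dividing by the ``expected'' $L^{d-\gamma}$ per bond yields the factor $\sim \max(L^{-\gamma+3\eta}, \delta^{-1}R_v^{-1}L^{-2\gamma})$; multiplying by $R_v$ and using $R_v\le L^{-\gamma+\eta}$ gives a quantity bounded by $L^{(\gamma+3\eta)/2}R_v$ in the worst balanced case $R_v\sim L^{-\gamma}$ (this is exactly where the borderline ballistic scaling $\gamma\le 1/2$ and the failure of the $q=2$ counting estimate manifest). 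When $\iota_v\ne 0$, there is an \emph{additional} constraint: some earlier cut has gap $r'$ with $0<|r\pm r'|\le L^{-50\eta}R_v$, which forces the pair $(r,r')$ into a much smaller configuration and saves an extra power $L^{-50\eta}$ — hence the $\kappa_v=50$ improvement. The subtlety is to make sure this ``extra constraint'' is genuinely available and not already used up in counting $r'$ itself; this requires examining the order in which cuts were performed and using that the bonds $(\ell_1,\ell_2)$ at $v$ were \emph{not} involved in any previously removed component (guaranteed by (d)--(f) of Proposition \ref{subpro} and the choices fixed in Remark \ref{subcondition}).

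The remaining ingredient is to verify the uniform gain $L^{-\eta^3}$, i.e.\ that \emph{every} operation in Steps 1--4 is at worst neutral and in fact strictly good by a fixed small power. For Steps 1 and 2 this is clear: we remove a whole SGVC, SGHV, DV-half or triangle, and the characteristic drop combined with the two-vector savings at the (now $\alpha$-) joints more than compensates; a vine-like object of size $m$ has characteristic $\le 1$ (a VC has $\chi=0$, an HV has $\chi=1$), so removing it and cutting at two joints produces a net power gain. For Step 3 (remaining SG cuts) the gain is the two-vector savings at the cut atom minus the $\chi$-overhead, which is positive because $R_v\le L^{-\gamma+\eta}$ makes $L^{(\gamma+3\eta)/2}R_v\le L^{-(\gamma-5\eta)/2}\ll L^{-\eta^3}$ times the $\chi$-factor. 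For Step 4 ($\beta$-cuts with no SG atoms present) the gain is purely from disconnecting: a $\beta$-cut at a large-gap atom is strictly favorable because the decoration of the two resulting pieces satisfies one more independent linear relation than the original, which is the standard ``$\beta$-cut is good'' phenomenon and is captured by the $L^{(2\gamma_0+5\eta^2)\Delta F}$ term dominating any loss. I would assemble these case-by-case bounds into \eqref{excesseqn2} by noting $\Delta F\ge 0$, $\Delta V_\beta\ge 0$ in each step and that at least one of the strict-gain mechanisms above is active, so the product of all the elementary deviations telescopes to the stated form. The only place where real care is needed is the interplay between the $R_v$ labels and the $\iota_v$ mechanism in Step 3 combined with the requirement that the estimate hold uniformly for \emph{all} $R_v\in[L^{-1},L^{-\gamma+\eta}]$ — this is the calculation I expect to occupy the bulk of the actual proof.
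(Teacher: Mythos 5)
The central issue is a misidentification of what the deviation $\Df$ measures, which makes your main computation for the $\alpha$-cut both conceptually and numerically wrong. You interpret the factor $\prod_v^{(\alpha)}L^{(\gamma+3\eta)/2-\kappa_v\eta}R_v$ as arising from a direct two-vector counting estimate at the newly created degree-$2$ atom, invoking Lemma~\ref{basiccount}(1) to get $O(L^{d-1}+\delta^{-1}R_v^{-1}L^{d-2\gamma})$ choices. But that count is already absorbed in $\Cf_{\mathrm{pos}}$: the degree-$2$ atom with bonds $(\ell_1,\ell_2)$ is still present in $\Mb_{\mathrm{pos}}$, and its decoration is enumerated there. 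The quantity $\Df$ must bound the ratio $\Af_{\mathrm{pre}}/\Af_{\mathrm{pos}}$ after adjusting for $\Delta\chi=-1$, and the only additional freedom a decoration of $\Mb_{\mathrm{pre}}$ enjoys over a decoration of $\Mb_{\mathrm{pos}}$ with \emph{fixed} parameters is the choice of the separating parameters $(c_{v_1},\beta_{v_1})$, since $\Cf_{\mathrm{pos}}$ sups over these. Here $c_{v_1}=k_{\ell_1}-k_{\ell_2}$ is a lattice vector of size $\sim R$, so it has $\lesssim (RL)^d$ choices, and once $c_{v_1}$ is fixed, $\Gamma_{v_1}=\langle k_{\ell_1}+k_{\ell_2},k_{\ell_1}-k_{\ell_2}\rangle$ ranges over an interval of length $\sim R$ (using the unit-ball restriction), giving $\lesssim 1+\delta RL^{2\gamma}$ choices modulo the $\delta^{-1}L^{-2\gamma}$ resolution. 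Thus $\Df\lesssim (RL)^d(1+\delta RL^{2\gamma})L^{-(d-\gamma)}(\log L)^C$, which is smaller than the two-vector count you use by roughly $L^{\gamma(d-1)}$, and this is exactly what is needed to close against the target $L^{-\eta^3}L^{\gamma+3\eta}R^2$ (note the target is $R^2$, not $R$: a single $\alpha$-cut produces \emph{two} $\alpha$-atoms, both labeled $R$, a fact your write-up never acknowledges). With your version of the per-atom cost $\max(L^{-\gamma+3\eta},\delta^{-1}R_v^{-1}L^{-2\gamma})$ the inequality $\lesssim L^{(\gamma+3\eta)/2-\eta^3}R_v$ fails for $R_v$ near $L^{-1}$ once $\gamma<2/3$ --- precisely the regime the paper is designed to cover --- so the argument does not close.

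Two further gaps: your treatment of Steps 1 and 2 is not a proof sketch so much as an invocation of "the characteristic drop plus two-vector savings," whereas the paper's Step 1 requires a substantive calculation: the removed vine-like object $\Vb\Cb$ is reparametrized by $(x_j,y_j)$ and $(u_1,u_2,u_3)$ variables as in Propositions~\ref{estbadvine}--\ref{estnormalvine} and counted via Lemma~\ref{basiccount0}, with careful accounting of the $\Pf$ factor for ladders embedded in $\Vb\Cb$; and Step 2 has three separate cases depending on $\Delta F\in\{0,1,2\}$ requiring different counting lemmas. Also, your picture of Step 4 as "a $\beta$-cut is strictly favorable" is misleading: the paper shows the $\beta$-cut merely has $\Df\lesssim(\log L)^C$ (neutral, not gaining), and \eqref{excesseqn2} is satisfied there only because $L^{(2\gamma_0+5\eta^2)\Delta F-(\gamma_0+2\eta^2)\Delta V_\beta}=L^{\eta^2}$ with $(\Delta F,\Delta V_\beta)=(1,2)$ absorbs the log loss; the real power gain for $\beta$-cuts is deferred to Propositions~\ref{moleprop1}--\ref{moleprop4}.
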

\begin{proof} First assume $\iota_v=0$ for all $\alpha$-atoms $v$. We start with the simplest case (in Steps 3--4) where we cut at a single atom $v$. Note that, if this is an $\alpha$- (resp. $\beta$-) cut then we have $(\Delta\chi,\Delta F,\Delta V_\beta)=(-1,0,0)$ (resp. $(0,1,2)$). In the case of $\beta$-cut at $v$ along $(\ell_1,\ell_2)$, the value of $k_{\ell_1}-k_{\ell_2}$ is uniquely fixed (by summing the equation (\ref{decmole1}) over all atoms $v'\neq v$ that belongs to one component after making the cut). Similarly, using (\ref{defomegadec}), we know that the number of choices of $|k_{\ell_1}|^2-|k_{\ell_2}|^2$, up to distance $\delta^{-1}L^{-2\gamma}$, is $\lesssim n_{\mathrm{sub}}\leq (\log L)^{C}$. This means that $\Cf_{\mathrm{pre}}\lesssim (\log L)^C\Cf_{\mathrm{pos}}$, hence $\Df\lesssim(\log L)^{C}$ using (\ref{defequan}) and (\ref{defdev}), and noticing that the cut only changes the length of at most one maximal ladder by $O(1)$, using also $1\leq \Xf_j\leq (\log L)^2$. Therefore (\ref{excesseqn2}) is true in this case. As for $\alpha$-cuts, let the corresponding gap $R$ be fixed, then the number of choices for $k_{\ell_1}-k_{\ell_2}$ is $\lesssim (RL)^d$; once $k_{\ell_1}-k_{\ell_2}$ is fixed (and $|k_{\ell_1}-k_{\ell_2}|\sim R$), the number of choices of $|k_{\ell_1}|^2-|k_{\ell_2}|^2$, up to distance $\delta^{-1}L^{-2\gamma}$, is $\lesssim 1+\delta RL^{2\gamma}$ because $k_{\ell_1}+k_{\ell_2}$ belongs to a fixed ball of radius $\sim 1$. This implies that
\[\Df\lesssim (RL)^d(1+RL^{2\gamma})L^{-(d-\gamma)}(\log L)^C\lesssim L^{-\eta^3}\cdot L^{\gamma+3\eta}R^2,\] where the $(\log L)^C$ factor is similar to above, and the last inequality can be verified using $R\in[L^{-1},L^{-\gamma+\eta}]$ and $d\geq 3$. Since both new $\alpha$-atoms are labeled by $R$, this proves (\ref{excesseqn2}).

Next consider Step 2, for which we know $\Delta F\in\{0,1,2\}$, $\Delta V_\beta=\Delta F+1$, and $\Delta\chi=\Delta F-3$; moreover $\Df\lesssim L^{(d-\gamma)\Delta\chi}(\log L)^C\Ff$, where $\Ff$ is the number of choices for $(k_{\ell_1},k_{\ell_2},k_{\ell_3})$. If $\Delta F=2$, then similar to above, for \emph{each} $j$ we know that 
\begin{multline}\label{triangle1}k_{\ell_j}-k_{\ell_{j+1}}\textrm{\ is uniquely fixed,\ and\ the\ number\ of\ choices}\\\textrm{for\ }|k_{\ell_j}|^2-|k_{\ell_{j+1}}|^2,\textrm{\ up\ to\ distance\ }\delta^{-1}L^{-2\gamma},\textrm{\ is\ }\lesssim(\log L)^{C}.\end{multline} By Lemma \ref{basiccount} (1) (where we also use that $R\geq L^{-1}$), we have
\[\Df\lesssim \delta^{-1}\min(L^{d-2\gamma+1},L^d)\cdot L^{-(d-\gamma)}(\log L)^C\lesssim L^{-\eta^3}\cdot L^{\gamma_0+4\eta^2}\] using that $\gamma_0=\min(\gamma,1-\gamma)$, which proves (\ref{excesseqn2}). If $\Delta F=1$, then (\ref{triangle1}) still holds for \emph{some} $j$. Moreover, once $(k_{\ell_j},k_{\ell_{j+1}})$ is fixed, the number of choices for $k_{\ell_{j+2}}$ is at most $(RL)^d$ where $R$ is the label of the unique $\alpha$-atom. This implies that
\[\Df\lesssim\delta^{-1} \min(L^{d-2\gamma+1},L^d)\cdot L^{-2(d-\gamma)}(RL)^d(\log L)^C\lesssim L^{-\eta^3}\cdot L^{\eta^2}L^{(\gamma+3\eta)/2}R\] using that $R\in[L^{-1},L^{-\gamma+\eta}]$ and $d\geq 3$, so (\ref{excesseqn2}) is again true. Now if $\Delta F=0$, then simply by using the gap assumptions we have $\Ff\lesssim L^d(R_1L)^d(R_2L)^d$, hence
\[\Df\lesssim L^{3d}(R_1R_2)^dL^{-3(d-\gamma)}(\log L)^C\lesssim L^{-\eta^3}\cdot L^{-\gamma_0-2\eta^2}L^{\gamma+3\eta}R_1R_2,\] again using that $R_1,R_2\in[L^{-1},L^{-\gamma+\eta}]$ and $d\geq 3$, which proves (\ref{excesseqn2}).

Now consider Step 1. By Proposition \ref{subsetvc}, we know that $\Vb\Cb$ is either a single vine, or is formed by two double bonds. First assume it is a single vine. For the operation we have $\Delta F=\Delta V_\beta=0$ by combining Proposition \ref{block_clcn}, Proposition \ref{subpro} (a) (e), and Proposition \ref{subsetvc} (and an easy verification for DV). If $\Vb\Cb$ contains $m$ atoms (including the joints) then it is easy to verify that $\Delta\chi=-m$; let the the number of choices for $(k_\ell)$ for all bonds $\ell\in\Vb\Cb$ be $\Ff$, then
\begin{equation}\label{fbyg}\Df\lesssim \Ff\cdot L^{-m(d-\gamma)}\cdot (\log L)^C\delta^{m/2}\prod_{1\leq j\leq q:\Lc_j\subset\Vb\Cb}\Xf_j^{z_j},\end{equation} where the product is taken over all $j$ such that the maximal ladder $\Lc_j$ (see Proposition \ref{kqmainest2}) is a subset of $\Vb\Cb$ (so there are at most three such $j$, see Figure \ref{fig:vines}), and $z_j$ is the length of $\Lc_j$.

To calculate $\Ff$, first fix $x_0=k_{\ell_1}$ and $y_0=k_{\ell_2}$, where $\ell_1$ and $\ell_2$ are the two bonds at one joint of $\Vb\Cb$ that belong to $\Vb\Cb$. These have $L^d(RL)^d$ choices since the gap of $\Vb\Cb$ is $|r|\sim R$. Then, we invoke the reparametrization introduced in the proof of Propositions \ref{estbadvine}--\ref{estnormalvine}, and define the new variables $(x_j,y_j)\,(1\leq j\leq m_1)$ if $\Vb\Cb$ is bad vine, or $(x_j,y_j)\,(1\leq j\leq \widetilde{m})$ and $(u_1,u_2,u_3)$ if $\Vb\Cb$ is normal vine, where $m=2\widetilde{m}+2$ for bad vine and $m=2\widetilde{m}+5$ for normal vine. In either case, since each $\Gamma_v$ belongs to a fixed interval of length $\delta^{-1}L^{-2\gamma}$, we know that each $(x_j,y_j)$ satisfies a system of form (\ref{basiccount01}), and $(u_1,u_2,u_3)$ satisfies a system of form (\ref{basiccount02}) with $r=x_0-y_0$ or $r=0$ (so $P=R$ or $P=0$) and $v=0$. Therefore, by applying Lemma \ref{basiccount0}, we get in either case that
\[\Ff\lesssim L^d(RL)^dL^{(m-2)(d-\gamma)}\delta^{-m/2}(\log L)^C\prod_{1\leq j\leq q:\Lc_j\subset\Vb\Cb}\Xf_j^{-z_j},\] and hence $\Df\lesssim R^dL^{2\gamma}(\log L)^C\lesssim L^{-\eta^3}\cdot R^2L^{\gamma+3\eta}$, so (\ref{excesseqn2}) is true. Finally, if $\Vb\Cb$ is formed by two double bonds, then $\Delta\chi=-3$. We again first fix $(x_0,y_0)$, and then apply Lemma \ref{basiccount} (1) to get \[\Df\lesssim L^{-3(d-\gamma)}L^d(RL)^d(L^{d-1}+\delta^{-1}\min(R^{-1}L^{d-2\gamma},L^d))(\log L)^C\lesssim L^{-\eta^3}\cdot R^2L^{\gamma+3\eta},\] using that $R\in[L^{-1},L^{-\gamma+\eta}]$ and $d\geq 3$.

In the case where $\iota_v\neq 0$ for some $\alpha$-atom $v$, it is clear that the corresponding gap $r$ must belong to a fixed ball of radius $L^{-50\eta}R$ where $|r|\sim R$, and the number of possible choices of such balls is bounded by $(\log L)^C$. As such, the factor $(RL)^d$ in the above proof, which indicates the number of choices for this gap $r$, is replaced by $(\log L )^C(RL^{1-50\eta})^d$, and the other factors remain the same, therefore (\ref{excesseqn2}) is true with the improved $\kappa_v$.
\end{proof}
\subsection{The molecule $\Mb_{\mathrm{fin}}$}\label{largegapmole} Let the result of stage 2 reduction be $\Mb_{\mathrm{fin}}$. If it is not connected, let $\Mb_1$ be any of its components. Consider also a decoration of $\Mb_{\mathrm{fin}}$ inherited from a decoration $(k_\ell)$ of $\Mb(\Qc_{\mathrm{sub}})$ as in Remark \ref{subcondition}. Then they have the following properties:
\begin{prop}\label{finalmole} There is only one component, which we call the \emph{odd component}, that contain two atoms of degree $1$ and $3$. All the other atoms have degree $2$ and $4$. The atoms of degree $1$ and $2$ are classified as $\alpha$-, or $\beta$-atoms. Each $\alpha$-atom $v$ is labeled by a dyadic number $R_v\in [L^{-1},L^{-\gamma+\eta}]$, such that if $v$ has two bonds $(\ell_1,\ell_2)$ then $|k_{\ell_1}-k_{\ell_2}|\sim R_v$ in the decoration; recall also $\iota_v$ and $\kappa_v$ introduced in Section \ref{reductcut} and Proposition \ref{excessprop1}. Atoms that are neither $\alpha$- nor $\beta$-atoms are called \emph{$\varepsilon$-atoms}; any $\varepsilon$-atom in $\Mb_{\mathrm{fin}}$ \emph{must} be $LG$ in the decoration. Moreover $\Mb_{\mathrm{fin}}$ contains no triple bond.

Any even (i.e. non-odd) component has at least one $\beta$-atom. If an even component $\Mb_0$ is a cycle, then it is either a double bond (which is also vine (I)), or a cycle of length at least $4$, or a triangle with at most one $\alpha$-atom. If $\Mb_0$ is not a cycle, then all its $\alpha$- and $\beta$-atoms form several disjoint chains, such that each chain has two distinct $\varepsilon$-atom at both ends. Finally, if any component $\Mb_0$ is a \emph{vine} (with two joints having degree $2$), then it \emph{must} be LG in the decoration.
\label{proplg}
\end{prop}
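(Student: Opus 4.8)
\textbf{Proof plan for Proposition \ref{finalmole}.}

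The plan is to verify each structural claim about $\Mb_{\mathrm{fin}}$ by tracking, operation-by-operation through Steps 1--4 of the stage 2 reduction, how the degree sequence, connectivity, and labelings evolve, using Proposition \ref{subpro} and Remark \ref{subcondition} as the input characterization of $\Mb(\Qc_{\mathrm{sub}})$. First I would handle the global degree structure: $\Mb(\Qc_{\mathrm{sub}})$ has exactly two atoms of degree $3$ and all others of degree $4$ by Proposition \ref{subpro}(b). Each cutting operation (Definition \ref{defcut}) replaces one atom of degree $3$ or $4$ by two atoms whose degrees are $(2,1)$ or $(2,2)$; an $\alpha$-cut preserves the number of components while a $\beta$-cut increases it by one. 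So after all cuts, the only atoms of odd degree are the two original degree-$3$ atoms (each either untouched, or cut once producing a degree-$1$ atom and a degree-$2$ atom), and parity forces each such degree-$1$/degree-$3$ pair to lie in a single component --- the \emph{odd component}. All other atoms have degree $2$ or $4$. The classification of degree $1$ and $2$ atoms as $\alpha$- or $\beta$-atoms, together with the labels $R_v$ and the auxiliary quantities $\iota_v,\kappa_v$, is exactly as assigned in Steps 1--4 and Proposition \ref{excessprop1}; an atom that was never touched by a cut keeps degree $4$ (or $3$) and is by definition an $\varepsilon$-atom. The claim that every $\varepsilon$-atom is LG is immediate: Step 3 cuts \emph{every} remaining SG atom of degree $\geq 3$, so after Step 3 no SG atom of degree $\geq 3$ survives, and Step 4 only performs $\beta$-cuts at LG atoms; hence any surviving degree $3$ or $4$ atom is LG. The absence of triple bonds follows from Proposition \ref{subpro}(c): every triple bond in $\Mb(\Qc_{\mathrm{sub}})$ has an endpoint that is a hinge atom, and all hinge atoms, being SG, are removed in Step 1 (they lie inside the SGVC-merged atoms collected there) or cut in Steps 1--3, which destroys the triple bond.

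Next I would establish the per-component structure. Any even component contains at least one $\beta$-atom: an even component is created only by a $\beta$-cut (Steps 1--4 all produce even components precisely through $\beta$-cuts, $\beta$-atoms of triangles, or the removal of disconnected VCs/triangles, which themselves carry $\beta$-atoms at their severed ends), so by construction it inherits at least one atom of degree $\leq 2$ labeled $\beta$. For the cycle case, note that after Step 4 no further $\beta$-cut is possible, which (by the same combinatorial reasoning as in Section 9.4 of \cite{DH21}, where only good and normal operations remain) forces a component that is a cycle to have a very rigid form; combined with the exclusion of SG atoms after Step 3 and the exclusion of triple bonds, the only cycles that can occur are a double bond (which is exactly vine (I)), a cycle of length $\geq 4$, or a triangle. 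A triangle with two or more $\alpha$-atoms would have been dismantled in Step 2 (which removes precisely the SG triangles, leaving at most one $\alpha$-atom per surviving triangle-type configuration), so a surviving triangle has at most one $\alpha$-atom. For a non-cycle even component, the $\alpha$- and $\beta$-atoms all have degree $\leq 2$, so maximal runs of them form disjoint paths (chains); each such chain must terminate at atoms of degree $\geq 3$, i.e. $\varepsilon$-atoms, and these two endpoints are distinct because otherwise the chain plus the single $\varepsilon$-atom would close up into a cycle, contradicting that the component is not a cycle.

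Finally, for the statement that any component $\Mb_0$ which is a \emph{vine} (with both joints of degree $2$) must be LG: suppose for contradiction that $\Mb_0$ is a vine and is SG in the decoration. A vine whose joints have degree $2$ is a vine-like object; if it does not contain a hinge atom, then by Proposition \ref{subpro}(e) it would have been a subset of an LG vine-like object in $\Cs$, contradicting that it is SG as a whole. If it does contain a hinge atom, then by Proposition \ref{subpro}(f) it contains a hinge atom $v$ that is either interior to $\Mb_0$ or a joint with only one of the designated bonds $(\ell_1,\ell_2)$ inside $\Mb_0$. But hinge atoms are exactly the atoms that arose from merging SGVCs, and in Step 1 we cut along the designated bonds $(\ell_1,\ell_2)$ at every such atom and removed the severed VC; an interior hinge atom would have been cut (disconnecting $\Mb_0$), and a joint hinge atom with a dangling designated bond is incompatible with $\Mb_0$ being a closed vine component after Step 1. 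Either way we reach a contradiction, so $\Mb_0$ is LG.

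\textbf{Main obstacle.} I expect the delicate point to be the cycle classification: ruling out, cleanly, all cycles other than double bonds, $4^+$-cycles, and triangles-with-$\le 1$-$\alpha$-atom. This requires knowing that after Step 4 the molecule is, locally in each cycle, ``fully reduced'' in the sense of \cite{DH21}'s algorithm, and then matching the short residual configurations against the enumeration of bad graphs (the quadruple bond, the triangle of three double bonds, cf. Figure \ref{fig:badmol}) and against what Step 2 has already stripped away. Getting the bookkeeping of which $\alpha$/$\beta$ labelings can survive on a triangle --- especially the asymmetric labeling rule in Step 2 for the case where $v_1,v_2$ lie in one component and $v_3$ in another --- right is the part most prone to error and will need the most careful case analysis.
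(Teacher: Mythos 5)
Your proof is correct and follows essentially the same route as the paper's own argument: parity conservation of odd-degree atoms through cutting to locate the two odd-degree atoms in one component; classification of degree-$1$/$2$ atoms by the rules of Steps 1--4; destruction of triple bonds in Step 1 via Proposition \ref{subpro}(c); exhaustion of SG atoms by Step 3 so that surviving degree $\geq 3$ atoms are LG; the $\beta$-cut origin of every even component; chain endpoints distinct because otherwise a further $\beta$-cut would have been available in Step 4; and, for the SG vine exclusion, a case split according to Proposition \ref{subpro}(e)/(f) that forces a hinge atom to have been severed in Step 1. All of this matches the paper.

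One correction of emphasis: your ``main obstacle'' is overstated. The cycle classification is \emph{not} where the delicacy lies, and it does not require invoking the Section 9.4 combinatorics of \cite{DH21}. The paper's argument here is a one-line enumeration: a cycle is trivially either a double bond, a triangle, or has length $\geq 4$; and a triangle with two SG ($\alpha$-) atoms would already have been removed by the Step 2 triangle removal (which precisely targets triangles with two designated SG atoms), so at most one $\alpha$-atom can survive on a triangle. There is no residual ``rigidity'' argument needed --- the only content is that Step 2 ran first. The part of your sketch that genuinely needs care (and which you treat correctly but tersely) is the SG-vine-component exclusion, where you must check, via Proposition \ref{subpro}(e)/(f) and the definition of the designated bonds $(\ell_1,\ell_2)$, that an intact SG vine cannot survive all four steps. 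Your contradiction argument there is sound; you could tighten the no-hinge-atom branch by noting explicitly that a subblock of an LG vine-like object inherits that object's gap (Definition \ref{defdiff}), which is what forces the contradiction.

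A small phrasing slip worth fixing: you write ``the only atoms of odd degree are the two original degree-$3$ atoms (each either untouched, or cut once producing a degree-$1$ atom and a degree-$2$ atom).'' After a cut, the odd-degree atom is not the ``original'' atom but one of its two offspring, and the pair may end up as degree $1$/degree $3$ or degree $1$/degree $1$. The clean statement is the paper's: cutting preserves the total number of odd-degree atoms, which is $2$ initially, and a component always has an even number of odd-degree atoms, so both lie in a single component.
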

\begin{proof} The total number of odd degree atoms is not changed under the cutting operation and removing of isolated components (which only contain even degree atoms). This value is $2$ initially (Proposition \ref{subpro}), so it will remain $2$. The two odd degree atoms have to be in the same component as any component must have an even number of odd degree atoms. The degree $1$ and $2$ atoms are classified as $\alpha$-, or $\beta$-atoms, and $(R_v,\iota_v,\kappa_v)$ for $\alpha$-atoms $v$ are defined as before. $\Mb_{\mathrm{fin}}$ contains no triple bond because any triple bond is destroyed in Step 1.

Now consider an even component $\Mb_0$ with only degree $2$ and $4$ atoms. Such a component can only be formed after a $\beta$-cut, so it will contain at least one $\beta$-atom. If it is a cycle, then it is either a double bond, or a triangle, or has at least length $4$. If it is a triangle, then it cannot have at least two $\alpha$-atoms, since any $\alpha$-atom must have SG in the original molecule $\Mb(\Qc_{\mathrm{sub}})$, and any triangle with at least two SG atoms will be removed in Step 2. If $\Mb_0$ is not a cycle, then it has at least one degree $4$ (i.e. $\varepsilon$-) atom. For any degree $2$ atom, consider the longest chain of degree $2$ atoms containing it, which ends at two degree $4$ atoms $v_1$ and $v_2$ in both directions; they cannot coincide, otherwise we can perform a $\beta$-cut at this common atom according to Step 4 above.

Finally, each $\varepsilon$-atom must have LG by definition, and $|k_{\ell_1}-k_{\ell_2}|\sim R_v$ for any $\alpha$-atom $v$ of degree $2$ labeled by $R_v$. If any component $\Mb_1$ is an SG vine $\Vb$, then this vine $\Vb$ must exist in the original molecule $\Mb(\Qc_{\mathrm{sub}})$, and is not changed in the process. But the SG vine $\Vb$ cannot be any vine occurring in (a) or (e) of Proposition \ref{subpro} (or any triple bond in (c)), because then it would be removed in Step 1. Therefore, $\Vb$ has to be a vine occurring in (f) of Proposition \ref{subpro}. However, since our reduction involves cutting the molecule at the hinge atom along the two bonds $(\ell_1,\ell_2)$, by (f) of Proposition \ref{subpro}, the vine $\Vb$ cannot remain intact after this cutting operation. This completes the proof.
\end{proof}
With all the above preparations, we can now reduce Proposition \ref{kqmainest2} to the following
\begin{prop}
\label{lgmolect} For each component $\Mb_0$ of the final molecule $\Mb_{\mathrm{fin}}$, define $(\Cf_{0},\Af_0,\Pf_0)$ and $\rho_0$ as in Section \ref{reductcut0} and (\ref{defequan}), but associated with $\Mb_0$ (here $p$ should be replaced by $0$ in (\ref{defequan}) as there is no more SG atoms in $\Mb_0$). Let also each $\alpha$-atom $v$ be labeled by the dyadic number $R_v$, then we have
\begin{equation}\label{finalcount}\Af_0\lesssim \prod_v^{(\alpha)}L^{-(\gamma+3\eta)/2+\kappa_v\eta}R_v^{-1}\cdot L^{(\gamma_0+2\eta^2)V_\beta-(2\gamma_0+5\eta^2)G}\cdot L^{-\eta^5\rho_0},
\end{equation} where the product is taken over all $\alpha$-atoms $v$, and $V_\beta$ is the number of $\beta$-atoms in $\Mb_0$; moreover $G$ is $0$ or $1$ depending on whether $\Mb_0$ is odd or even component.
\end{prop}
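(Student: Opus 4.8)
\textbf{Proof proposal for Proposition \ref{lgmolect}.}

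The plan is to prove \eqref{finalcount} by induction on the size of $\Mb_0$, running the \emph{exact} combinatorial reduction algorithm from Section 9.4 of \cite{DH21} (removing one or two atoms at a time), but now starting from a molecule $\Mb_0$ which, crucially, contains no SG atoms (all $\varepsilon$-atoms are LG by Proposition \ref{finalmole}). The key point is that each step of that algorithm is \emph{good} or \emph{normal} for counting, except for one single unavoidable \emph{bad} operation per component --- namely the very first removal of a degree $4$ atom, which corresponds to \eqref{atomiccount} with $q=4$ and loses a factor $L^{\gamma_0}$ (roughly $L^{1/2}$ in the ballistic case). First I would set up the ``local'' counting quantity $\Af_0 = \Cf_0 \cdot L^{-(d-\gamma)\chi(\Mb_0)}(C^+\delta^{-1/2})^{-\chi(\Mb_0)}\Pf_0$ as in \eqref{defequan} and verify how $\Af$ transforms under the two elementary operations: removing a degree $2$ atom (which changes $\chi$ by $0$, and is controlled by the $2$- and $3$-vector counting estimates of Lemma \ref{basiccount} together with the gap label $R_v$ when the atom is an $\alpha$-atom), and removing a degree $4$ atom (which changes $\chi$ by $-1$ and is controlled by the $4$- and $5$-vector counting estimates of Lemma \ref{basiccount}). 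The treatment of $\alpha$-atoms with labels $R_v$ must be tracked carefully: removing an $\alpha$-atom of degree $2$ labeled $R_v$ contributes exactly the factor $L^{-(\gamma+3\eta)/2+\kappa_v\eta}R_v^{-1}$ that appears on the right of \eqref{finalcount}, matching (in reverse) the factor produced in Proposition \ref{excessprop1}, and the role of $\iota_v$/$\kappa_v$ is precisely to record when two nearby gaps conspire so that one gets an extra $L^{-50\eta}$ saving.

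The second ingredient is the quantitative ``at least two good operations'' statement. Following Section 9.5 of \cite{DH21}, I would show that for any even component $\Mb_0$ other than the two explicitly bad graphs of Figure \ref{fig:badmol} --- the quadruple bond and the triangle of three double bonds --- the algorithm produces at least two good operations after the initial bad one, each gaining a power $L^{-\gamma_0}$ (up to $\eta$-losses), so that $\Af_0 \lesssim L^{-\gamma_0+\text{(\(\eta\)-errors)}}$; when $|V(\Mb_0)| > 100$ one gets a gain proportional to $|V(\Mb_0)|$, which dominates the single $L^{\gamma_0}$ loss and yields the $L^{-\eta^5\rho_0}$ factor. The two excluded bad graphs must then be checked by hand: for the quadruple bond $\Af_0 = L^{\gamma_0}$ but this is vine (I), already handled by the Stage 1 cancellation (so it does not arise here as a standalone component, or if it does, one of its atoms is a $\beta$-atom created by cutting and the bookkeeping in $V_\beta$ absorbs the loss); for the triangle of three double bonds $\Af_0 = O(1)$, and one checks directly that $V_\beta \geq 1$ (since any even component has a $\beta$-atom by Proposition \ref{finalmole}) and that the triangle has at most one $\alpha$-atom, so the right side of \eqref{finalcount} is $\gtrsim 1$ too. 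The odd component is handled by the same algorithm with $G=1$ (it has two extra odd-degree atoms which shift $\chi$ by the relevant amount); here the algorithm starts by removing the two degree $3$/degree $1$ atoms and no initial bad operation is needed, which is why $G=1$ appears with a favorable sign in the exponent. The handling of ladders (type II chains) proceeds as in Section 10 of \cite{DH21}: a maximal ladder of length $z_j$ is neutral, and the factor $\Xf_j^{z_j}$ in $\Pf_0$ exactly compensates the counting, so ladders do not affect \eqref{finalcount} beyond the definition of $\Af_0$.

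The main obstacle I expect is verifying that, \emph{after} the cutting operations of Stage 2, the resulting components $\Mb_0$ really satisfy the hypotheses needed to rerun the \cite{DH21} algorithm cleanly --- in particular, that no component is one of the truly problematic structures (an untreated vine) and that the monotonicity/invariant arguments of Section 9.5 of \cite{DH21} survive the presence of the chains of $\alpha$- and $\beta$-atoms described in Proposition \ref{finalmole}. This is where Proposition \ref{finalmole} does the heavy lifting: it guarantees that any component that is a vine must be LG, that non-cycle components have their degree-$2$ atoms organized into chains ending at distinct $\varepsilon$-atoms, and that cycles are either long, or double bonds, or triangles with $\leq 1$ $\alpha$-atom --- exactly the list of cases one must check by hand. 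The remaining work is then the careful case analysis of small components ($|V(\Mb_0)| \leq 100$) to confirm the two-good-operations bound, which is tedious but follows the template already established in \cite{DH21}; I would organize it by the degree sequence and the number of $\alpha$-/$\beta$-atoms, using the explicit counting Lemmas \ref{basiccount} and \ref{basiccount0} at each step and tracking the exponents of $L$, $R_v$, and $\delta$ to match \eqref{finalcount} term by term.
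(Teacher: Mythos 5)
Your high-level picture is right --- run the \cite{DH21} algorithm on the (now all-LG) components and exploit the absence of SG atoms so that there is only one unavoidable loss per even component --- but the paper organizes the argument quite differently, and the gaps in your proposal are not merely organizational.

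\textbf{The missing interpolation step.} The paper does \emph{not} try to obtain \eqref{finalcount} in one pass of the algorithm. Instead it first establishes a ``rough bound'' from Proposition~\ref{moleprop2},
$\Af_0\lesssim L^{-\eta^2\rho_0+C(V_\alpha+V_\beta+1)}$,
which already gives the $L^{-\eta^5\rho_0}$ factor for large components, but is useless for the $R_v$, $V_\beta$, and $G$ factors. Separately, it uses the operations (c), (d), (e) (removing or contracting the chains of $\alpha$- and $\beta$-atoms guaranteed to exist by Proposition~\ref{finalmole}) to reduce to an all-$\varepsilon$ (pseudo)molecule controlled by Propositions~\ref{moleprop1}, \ref{moleprop3}, \ref{moleprop4}; this pass produces the per-atom factors $\delta^{-1}R_v^{-1}L^{-\gamma}$ and the $L^{-\gamma_0}$-type gains, but carries a loss $L^{C(V_\alpha+V_\beta+1)}$-free only because it is then \emph{interpolated} with the rough bound. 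Your proposal collapses the two passes into one: you want each removal of a degree-$2$ $\alpha$-atom to ``contribute exactly'' the factor $L^{-(\gamma+3\eta)/2+\kappa_v\eta}R_v^{-1}$, but this factor does not appear exactly from a single $2$-vector count --- it only emerges after the case analysis by $(V_\alpha,V_\beta)$ and the interpolation between the rough and sharp estimates (see cases (2)--(6) of the paper's proof). Without the interpolation you cannot get the $L^{-\eta^5\rho_0}$ factor and the $R_v$-dependent factors simultaneously for small components.

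\textbf{The sign of $G$ is backwards.} You write that the odd component has $G=1$ and that ``$G=1$ appears with a favorable sign in the exponent''. Actually $G=0$ for the odd component and $G=1$ for even components, and since the factor is $L^{-(2\gamma_0+5\eta^2)G}$, the even case is the \emph{harder} bound. The odd component is easier precisely because it never incurs the initial $4$-vector-counting loss and it already has $E=2V-1$ so $r_2\geq 1$; the paper handles it in case (1) almost for free. The even components need to produce the extra $L^{-(2\gamma_0+5\eta^2)}$, and this comes from the $\beta$-atoms (each $\beta$-atom buys $L^{\gamma_0+2\eta^2}$ on the right) plus the algorithmic gains routed through Propositions \ref{moleprop3}--\ref{moleprop4}.

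\textbf{The ``bad graphs'' do not arise as components $\Mb_0$.} The quadruple bond and the triangle of three double bonds are saturated, $4$-regular pseudomolecules; they cannot be components of $\Mb_{\mathrm{fin}}$. They only appear as intermediates \emph{inside} the proof of Proposition~\ref{moleprop4}, after applying the (Y) sequences (Lemma~\ref{ylem1}), and the resolution is not ``Stage 1 cancellation'' but rather: if $\Mb_1$ after operation (d) reduces to one of these, then $\Mb_0$ itself had to be a vine, and Proposition~\ref{finalmole} forces that vine to be LG, whereupon operation (c) (instead of (d)) gives the needed gain. Your gloss ``already handled by the Stage 1 cancellation'' misattributes the mechanism; the cancellation structure of Section~\ref{funcgroup} is exhausted before $\Mb_{\mathrm{fin}}$ is even formed, and the safety net here is the LG dichotomy in Proposition~\ref{finalmole}, not the twist cancellation. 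To make your proposal correct you would need to explicitly route the small even components through the (Y)/(Z) structure of Lemma~\ref{ylem1} and the case analysis of Proposition~\ref{moleprop4} (2).
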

\begin{proof}[Proof of Proposition \ref{kqmainest2} assuming Proposition \ref{lgmolect}] Since (\ref{finalcount}) holds for each component $\Mb_0$, clearly it also holds for the union $\Mb_{\mathrm{fin}}$, if all the expressions and quantities on the right hand side are replaced by the ones corresponding to $\Mb_{\mathrm{fin}}$ (and $G$ replaced by $F-1$ where $F$ is the number of components of $\Mb_{\mathrm{fin}}$). Using (\ref{finalcount}) and the deviation bound (\ref{excesseqn2}) for each operation, we get that
\begin{equation}\label{finalcount2}\Af_{\mathrm{sub}}\leq (C^+)^{n_{\mathrm{sub}}}\cdot L^{-\eta^5(\sigma+\rho_{\mathrm{fin}})-\eta^3\sigma/2},\end{equation} where $\Af_{\mathrm{sub}}$ and $\rho_{\mathrm{fin}}$ are defined as before, and $\sigma$ is the total number of operations that are done in Steps 1--4. It is easy to see that $\Delta_{\mathrm{sub}}\leq C\eta^4\sigma$ for the value $\Delta_{\mathrm{sub}}$ in Proposition \ref{kqmainest2}. This is because each SGVC described in Proposition \ref{subpro} (a) leads to a hinge atom as in Proposition \ref{subpro} (b) at which we perform a cut, so the number of these SGVC is at most $\sigma$, while by the definition of $\Delta_j$ in Proposition \ref{red1step} the contribution of each such SGVC to $\Delta_{\mathrm{sub}}$ is at most $\eta^4$ (or negative). Therefore, the term $L^{\eta^3\sigma/2}$ in (\ref{finalcount2}) takes care of the $L^{-\Delta_{\mathrm{sub}}}$ term in (\ref{kqmainest2-1}).

Comparing (\ref{finalcount2}) with (\ref{kqmainest2-1}), using also the definition (\ref{defequan}) and noticing the logarithmic loss in Remark \ref{subcondition}, it now suffices to prove that $\rho_{\mathrm{sub}}\leq C(\sigma+\rho_{\mathrm{fin}})$ for the quantity $\rho$ defined in Proposition \ref{kqmainest2}. However, the value of $\rho_{\mathrm{sub}}$ becomes $\rho_{\mathrm{fin}}$ after all the operations, and each operation changes the value of $\rho$ by at most $O(1)$ because each vine contains at most $O(1)$ maximal ladders and at most $O(1)$ atoms apart from these maximal ladders (which is clear from Figure \ref{fig:vines}), so it is clear that $|\rho_{\mathrm{sub}}-\rho_{\mathrm{fin}}|\leq C\sigma$, as desired.
\end{proof}
\section{Counting problem for large gap molecules}\label{lgmole}
\subsection{Preliminary setup}\label{lgmole1} We now start the proof of Proposition \ref{lgmolect}. We will first get rid of the expression on the right hand side of (\ref{finalcount}), and reduce Proposition \ref{lgmolect} to the following Propositions \ref{moleprop1}--\ref{moleprop4}. In these propositions, we always consider a connected molecule or pseudomolecule $\Mb$ (cf. Definition \ref{defmole0}), and a $(c_v)$-decoration $(k_\ell)$ of $\Mb$ which is also restricted by $(\beta_v)$ and $(k_\ell^0)$; however, \emph{in Propositions \ref{moleprop1}--\ref{moleprop4} only}, we will relax the definition of decorations by \emph{not} requiring $c_v=0$ for degree $4$ atoms $v$ as in Definition \ref{decmole}. For each atom $v\in\Mb$ we also assume the decoration is LG at $v$ (i.e. for any bonds $(\ell_1,\ell_2)$ of opposite directions at $v$ we have $|k_{\ell_1}-k_{\ell_2}|\geq L^{-\gamma+\eta}$).

Let $(V,E,\chi)$ etc. be associated with $\Mb$, and let $\Cf$ and $(\rho,q,m')$ be defined for $\Mb$ as before. Note that, for a given ladder, the differences $k_{\ell}-k_{\ell'}$ for different pairs $(\ell,\ell')$ of parallel single bonds may not be the same as in Definition \ref{defdiff}, due to the relaxation of the assumption $c_v=0$ for degree $4$ atoms $v$. Therefore a ladder of length $z_j\geq 1$ will have $z_j$ different gaps $|r_{ji}|\sim P_{ji}\,(1\leq i\leq z_j)$; consequently we define the quantity $\Af$ as in (\ref{defequan}) but with $p$ replaced by $0$ and the $\Xf_j^{z_j}$ factor in $\Pf$ replaced by $\prod_{i=1}^{z_j}\min((\log L)^2,1+\delta L^{2\gamma}P_{ji})$ where $|r_{ji}|\sim P_{ji}$ for the $i$-th gap of the ladder $\Lc_j$ with the notations of Proposition \ref{kqmainest2}. Note that we may drop any $C^n$ factor below, since they can be absorbed into the definition of $\Af$.
\begin{prop}\label{moleprop1} If $\Mb$ is a molecule as above, then we always have $\Af\lesssim 1$.
\end{prop}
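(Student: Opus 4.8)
\textbf{Proof proposal for Proposition \ref{moleprop1}.}

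The plan is to set up a reduction algorithm on the molecule $\Mb$, analogous to the one in Section 9.4 of \cite{DH21}, that repeatedly removes one or two atoms together with their bonds and reduces the counting problem for $\Mb$ to a counting problem for a strictly smaller molecule. At each step the quantity $\Af$ should be controlled by the corresponding quantity $\Af'$ for the smaller molecule, up to a factor that is either $\lesssim 1$ (a \emph{normal} operation, where the desired estimate $\Af'\lesssim 1$ transfers to $\Af\lesssim 1$) or $\lesssim L^{-c}$ (a \emph{good} operation, which only helps). The crucial point is that, because every atom of $\Mb$ is assumed to be LG, the troublesome $q=2$ case of (\ref{atomiccount}) — the only bad operation in the $\gamma\le 1/2$ analysis — can never occur when we remove a degree $2$ atom: the gap $|k_{\ell_1}-k_{\ell_2}|\geq L^{-\gamma+\eta}$ forces a bound of $O(L^{d-\gamma+\eta}\cdot(1+\delta L^{2\gamma}\cdot L^{-\gamma+\eta}))=O(L^{d-\gamma}\cdot L^{O(\eta)})$ via Lemma \ref{basiccount} (1), which (after one checks the $\Xf$ bookkeeping) is normal rather than bad. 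This is precisely the gain from having performed Stage 2 cutting first.

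First I would reduce to the case where $\Mb$ has no triple bonds and then run the algorithm: at each stage pick an atom of smallest degree and remove it (or, in the ladder/double-bond situation, remove a pair of atoms forming the extremal rung of a maximal ladder, treated via the $L^1$-type ladder estimate of Proposition 10.1 of \cite{DH21} as imported in the proof of Proposition \ref{kqmainest1}). Removing a degree $1$ atom fixes one vector and is good; removing a degree $2$ $\varepsilon$-atom (which by hypothesis is LG) uses the $4$- and $5$-vector counting estimates of Lemma \ref{basiccount} and is normal or good; removing a degree $3$ or $4$ atom uses the $3$-, $4$-, or $5$-vector estimates, which as noted in Section \ref{twoscaling} remain favorable for the full range $\gamma\in(0,1)$ (this is the $q\in\{3,4,5\}$ case, never $q=2$). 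The ladders contribute the factors $\prod_{i}\min((\log L)^2,1+\delta L^{2\gamma}P_{ji})$ built into $\Pf$, and these are exactly matched by the $\min$-type counting bounds in Lemma \ref{basiccount0}, so ladders are neutral. The algorithm terminates with the trivial molecule, for which $\Cf = 1$ and $\chi=0$, so $\Af=1$ there.

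The main obstacle is the bookkeeping that shows \emph{no} operation is bad — i.e. that the $\Xf$ and $\Pf$ factors in the definition of $\Af$ are always respected. Concretely, when removing a degree $2$ atom $v$ lying on a maximal ladder versus one not on a ladder, one must argue that the loss $1+\delta L^{2\gamma}|r|$ incurred in the count is never larger than the corresponding $\Xf$-factor already credited to $\Af$; for $\varepsilon$-atoms the LG hypothesis $|r|\geq L^{-\gamma+\eta}$ is what makes $\delta L^{2\gamma}|r|$ comparable to (rather than exceeding) the available budget only because $|r|\lesssim 1$ and the counting bound also caps at $L^d$. A secondary subtlety is that, because Proposition \ref{moleprop1} relaxes the requirement $c_v=0$ at degree $4$ atoms (so the $\Gamma_v$ equations are genuine constraints with arbitrary right-hand sides), one must verify that the counting lemmas of Lemma \ref{basiccount}/\ref{basiccount0} still apply in this generality — they do, since those lemmas are stated for fixed right-hand sides. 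Once monotonicity of the appropriate invariant (as in Section 9.4--9.5 of \cite{DH21}) is in place and every operation is seen to be normal or good, the conclusion $\Af\lesssim 1$ follows by induction on $V$; I would carry this out essentially verbatim from \cite{DH21}, the only genuinely new input being the LG hypothesis that eliminates the $q=2$ bad case.
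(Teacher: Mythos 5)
Your proposal is correct and follows essentially the same route as the paper: a reduction algorithm that removes one atom of minimal degree (or a pair joined by a double bond, to handle ladders) at each step, with the LG hypothesis on every atom supplying the crucial bound $\Cf_1\lesssim L^{d-1}+\delta^{-1}\min(R^{-1}L^{d-2\gamma},L^d)$ of Lemma \ref{basiccount}~(1) so that no operation is bad, and the $\Pf$ (ladder) factor matched against Lemma \ref{basiccount}~(3). One small slip: the gain credited to ladders in $\Af$ is matched by the $\min$-type bounds in Lemmas \ref{basiccount}/\ref{basiccount0} themselves (the counting side), not by Proposition~10.1 of \cite{DH21} (the time-integral side, already consumed in building $\Pf$); also there is no need to pre-process triple bonds, since removing a degree-$3$ endpoint of a triple bond is already covered by Operation~(a).
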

\begin{prop}\label{moleprop2}  If $\Mb$ is a molecule that contains no triple bond and $E=2V-1$, then $\Af\lesssim L^{-3\gamma_0/5}$; if $\Mb$ contains no triple bond and $E=2V-2>0$, then $\Af\lesssim L^{-\eta/3}$. Furthermore, if the number of atoms in $\Mb$ not of degree $4$ is $w$, \emph{and we also allow at most $w$ atoms to be SG in the decoration}, then we have $\Af\lesssim L^{-\eta^2\cdot\rho+C(w+1)}$.
\end{prop}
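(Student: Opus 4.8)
\textbf{Proof proposal for Proposition \ref{moleprop2}.}

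The plan is to prove all three assertions by induction on $V$, running the reduction algorithm of Section 9.4 of \cite{DH21} (now equipped with the \emph{cutting} operation, though here we will not need cutting since there are no SG atoms in the main statements). The idea is identical in spirit to the proof of Proposition \ref{moleprop1} (which handles the baseline $\Af\lesssim 1$ bound): at each step we remove one or two atoms and their incident bonds, reducing $\Mb$ to a smaller molecule $\Mb'$, and we classify each such operation as \emph{good} (the desired bound for $\Mb'$ yields a strictly better than needed bound for $\Mb$, with a power gain $L^{-c}$ for some $c>0$), \emph{normal} (neutral, no gain or loss), or \emph{bad} (a loss of $L^{c}$, which here can only happen via the $q=2$ two-vector counting estimate, i.e. at a degree $2$ atom that is SG --- hence excluded from the first two assertions, and appearing only in a controlled way in the third). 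The key bookkeeping observation is that $E-V+F$ (with $F=1$ here) is exactly the exponent $\chi$ normalizing $\Af$, so the condition $E=2V-1$ (resp. $E=2V-2$) is equivalent to the molecule having exactly two degree $3$ atoms and the rest degree $4$ (resp. one degree $2$ atom and the rest degree $4$, or two degree $3$ and the rest degree $4$ with one fewer bond), matching the structure theorem Proposition \ref{moleproperty}.

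For the first two assertions I would proceed as follows. First dispose of the small cases $V\leq 100$ by hand: when $\Mb$ has no triple bond and $E=2V-1$, the first removed atom has degree $3$ or $4$; removing a degree $4$ atom triggers a $q=4$ counting problem (one bad-type operation costing at most $L^{1/2+O(\eta)}$), but the absence of triple bonds and SG atoms means every \emph{subsequent} operation is good or normal, and a case analysis --- parallel to Section 9.5 of \cite{DH21}, and to the enumeration of bad graphs in Figure \ref{fig:badmol} --- shows that apart from ladders there are at least two good operations unless $\Mb$ is one of finitely many exceptional shapes, each of which can be checked directly to satisfy $\Af\lesssim L^{-3\gamma_0/5}$. (The exponent $3\gamma_0/5$ rather than a larger constant is the slack we get from at least one clean good operation: a good operation gains $L^{-\gamma_0+O(\eta)}$ from an improved two- or three-vector count via Lemma \ref{basiccount}, Lemma \ref{basiccount0}, while $3\gamma_0/5 < \gamma_0$ absorbs the $O(\eta)$ errors.) When $E=2V-2$, the extra "missing" bond relative to the $E=2V-1$ case gives one additional unit of freedom in the counting, i.e. an extra gain of roughly $L^{\gamma-2\gamma}=L^{-\gamma}$ or $L^{-(1-\gamma)}$ depending on which counting lemma applies, and the weaker target $L^{-\eta/3}$ is then immediate from the $\Af\lesssim 1$ bound of Proposition \ref{moleprop1} plus one guaranteed good operation. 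For $V>100$, I would repeat the monotonicity/invariant argument of Section 9.5 of \cite{DH21} essentially verbatim to show that the number of good operations is at least a fixed fraction of $V$, so the accumulated gain is $\geq L^{-cV}$ with $c>0$, which dominates any fixed target and any bounded number of bad-operation losses.

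The third assertion is the quantitative refinement that feeds back into Proposition \ref{lgmolect}: allowing up to $w$ of the non-degree-$4$ atoms to be SG introduces at most $w$ potential bad operations, each costing $L^{(\gamma+O(\eta))/2}$ --- but crucially, by the discussion in Section \ref{introcut}, cutting each such SG atom \emph{before} entering the algorithm localizes the difficulty, and each bad operation is compensated (up to $L^{C}$) because an SG cut simultaneously pins down the gap $r$ to a ball of radius $\sim R$, contributing a factor $(RL)^d$ that is more than offset by the reduced count. The net effect is that the main gain $L^{-\eta^2\rho}$ survives --- here I would track $\rho=q+m'$ through the algorithm as in the proof of Proposition \ref{kqmainest2}, noting each operation changes $\rho$ by $O(1)$ and each step of the algorithm away from ladders gives a genuine $L^{-\eta^2}$ --- while the price of the $w$ SG atoms and the finitely many exceptional/ladder configurations is absorbed into the harmless factor $L^{C(w+1)}$.

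The main obstacle, as in \cite{DH21}, is \emph{not} any single counting estimate but the global combinatorial accounting: proving that good operations genuinely constitute a fixed proportion of all operations, so that the $L^{-cV}$ gain is real, while simultaneously showing that the only obstructions to this --- ladders (type II chains, neutral) and the two bad graphs of Figure \ref{fig:badmol} (quadruple bond and triangle of three double bonds) --- either do not arise under the hypothesis $E=2V-1$ with no triple bond, or arise only in the explicitly excluded exceptional cases. This requires carefully re-running the invariant-and-monotonic-quantity analysis of Section 9.5 of \cite{DH21} and checking that the cutting operation does not disturb it; I expect this to be the longest and most delicate part of the argument, though it is structurally parallel to the cited reference.
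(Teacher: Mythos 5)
Your proposal correctly identifies the overall framework (run the algorithm of Section 9.4--9.5 of \cite{DH21}, track invariants to bound the number of good/fine operations from below), but it conflates the structure of Proposition \ref{moleprop2} with that of Proposition \ref{moleprop4} and, especially, with the post-cutting components handled by Proposition \ref{lgmolect}, and as a result the claimed shape of the argument is not right in several specific ways.

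First, under $E=2V-1$ the molecule has \emph{two degree-$3$ atoms}, not all degree $4$ (cf. Proposition \ref{moleproperty}). The algorithm therefore always has a degree-$\leq 3$ atom to remove (as in the proof of Proposition \ref{moleprop1}); there is no forced first removal of a degree-$4$ atom and no $q=4$-vector counting step, and hence \emph{no bad operation at all} in this proof under the LG assumption. The "single unavoidable bad operation costing $L^{1/2}$" is a phenomenon for the $4$-regular post-cutting components of Proposition \ref{lgmolect}, not here. Relatedly, the quadruple-bond and triangle-of-double-bonds exceptions of Figure \ref{fig:badmol} are the obstructions in Propositions \ref{moleprop4} and \ref{lgmolect}, not in Proposition \ref{moleprop2}: the paper's proof of Proposition \ref{moleprop2} has no exceptional graphs to exclude; it simply runs the invariant bookkeeping to show $r_2\geq 1$ (at least one good operation, each giving $L^{-5\gamma_0/8}$, whence $L^{-3\gamma_0/5}$ with $(\log L)^C$ slack).

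Second, for $E=2V-2>0$ you claim "one guaranteed good operation," but the actual invariant count only guarantees $r_1+r_2\geq 1$, i.e.\ at least one good \emph{or fine} operation (fine $\Rightarrow L^{-\eta/2}$); this is exactly why the stated bound is the weak $L^{-\eta/3}$ rather than a power of $\gamma_0$. Third, for the last assertion your suggestion to "cut SG atoms before entering the algorithm" and argue compensation via gap localization is overcomplicated and not how the paper proceeds: the paper simply removes each of the $\leq w$ SG atoms with a crude $\Df\lesssim L^C$, absorbs the loss into the $L^{C(w+1)}$ factor, and then observes that all the invariant bounds ($\nu_0$, $V_{30}$, the special-bond count $\xi$, etc.) degrade only by $C(w+1)$, so the same accounting still yields $\rho\leq C(r_1+r_2)+C(w+1)$ and $\Af\lesssim L^{C(w+1)}L^{-(r_1+r_2)\eta/3}$. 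In short, the heart of the proof is not dispensing with a bad first operation and then counting goods; it is the pure invariant/monotonicity argument (increments of $\nu$, $V_3$, $\xi$, counts $z_1,\dots,z_7$ of specific operation types, and the resulting bound $\rho\leq C(r_1+r_2)$ from the ladder analysis), which your proposal alludes to only at the end and does not actually carry out.
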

\begin{prop}\label{moleprop3} If $\Mb$ is a molecule that contains only single bonds, and has two degree $3$ atoms with all other atoms having degree $4$, then $\Af\lesssim L^{-\gamma_0-\eta/2}$.
\end{prop}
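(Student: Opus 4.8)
\textbf{Proof proposal for Proposition \ref{moleprop3}.}

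The plan is to reduce the counting problem for this ``all single bonds, two degree-$3$ atoms'' molecule to the atomic counting estimates of Lemma \ref{basiccount}, via a reduction algorithm in the same spirit as Section 9.4 of \cite{DH21}, but run entirely in the large-gap regime. First I would set up the invariants: since $\Mb$ has only single bonds, has exactly two atoms of degree $3$ and all others of degree $4$, and is connected, we have $E = 2V-1$ and $\chi = E-V+F = V$ (with $F=1$). The target $\Af \lesssim L^{-\gamma_0 - \eta/2}$ is a statement about the ratio $\Cf \cdot L^{-(d-\gamma)V}$ (there are no ladders here, so $\Pf \lesssim 1$, and no SG atoms, so the log-loss factor is harmless). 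I would proceed by induction on $V$: at each step remove one atom together with one or two of its bonds, obtaining a smaller molecule (or pseudomolecule) $\Mb'$ of the same general shape, and control the number of choices for the removed bonds' decorations using Lemma \ref{basiccount}. The key point is that removing a degree-$3$ atom $v$ along two of its bonds $(\ell_1,\ell_2)$ of opposite direction (and replacing them by a single bond $\ell$, the third bond at $v$ becoming pendant or merging) corresponds to a $2$- or $3$-vector counting problem of the form \eqref{atomiccount}: the decoration must satisfy $k_{\ell_1} - k_{\ell_2}$ fixed (by summing \eqref{decmole1} over one side) and $\Gamma_v = |k_{\ell_1}|^2 - \cdots$ restricted to an interval of length $\delta^{-1}L^{-2\gamma}$. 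Since $v$ is LG, $|k_{\ell_1}-k_{\ell_2}|\geq L^{-\gamma+\eta}$, which is exactly what makes Lemma \ref{basiccount} give a gain over the trivial bound.

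The main structural difficulty is that the first removal must necessarily be at a degree-$3$ atom (there are only two of them), and after that first step the molecule either still has two degree-$3$ atoms — if the two original degree-$3$ atoms were not adjacent and the removed bonds reconnected things appropriately — or temporarily has a lower-degree atom that must be cleaned up. I would therefore first argue that $\Mb$ always admits a degree-$3$ atom $v$ such that cutting/removing it along a suitable pair $(\ell_1,\ell_2)$ either produces a connected molecule with two degree-$3$ atoms and one fewer atom, or disconnects $\Mb$ into pieces each of which is handled separately (here I would invoke Lemma \ref{disjointlem}-type connectivity reasoning, or more simply a spanning-tree argument since $E = 2V-1$ forces exactly $V$ independent cycles... wait, $E-V+1 = V$, so there are $V$ independent cycles, meaning the molecule is quite far from a tree). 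Actually the cleanest route: pick a degree-$3$ atom $v$; it has either out-degree $2$/in-degree $1$ or vice versa, so it has a pair of opposite-direction bonds. Cut along them. This is an $\alpha$- or $\beta$-type operation in the sense of Section \ref{reductcut}. In the $\alpha$ case (connectivity preserved) the deviation bound of Proposition \ref{excessprop1} applies and gives, because $v$ is LG (so the ``gap'' $R_v \gtrsim L^{-\gamma+\eta}$ — here it is genuinely large), a power gain; in the $\beta$ case we split into components and induct on each.

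Concretely, I expect the bookkeeping to work out so that the \emph{very first} degree-$3$ removal already yields the bulk of the gain: removing a degree-$3$ LG atom along two bonds whose difference is a large vector $r$ with $|r| \gtrsim L^{-\gamma+\eta}$ costs, by Lemma \ref{basiccount} (1), at most $\min(L^{d-2\gamma}|r|^{-1}, L^d)\cdot(\text{interval count}) \lesssim L^{d-\gamma-\gamma_0+O(\eta)}$ choices for that pair of bonds, against a ``budget'' of $L^{d-\gamma}$ (one bond's worth of volume) — i.e.\ a net gain of $L^{-\gamma_0 + O(\eta)}$, which after absorbing $O(\eta)$ losses from the remaining normal steps gives $\Af \lesssim L^{-\gamma_0 - \eta/2}$ provided $\eta$ is small enough relative to the implied constants. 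All subsequent steps are \emph{normal} (neither gain nor loss beyond logarithmic factors): they remove degree-$4$ atoms along a pair of bonds, which is a $2$-vector estimate where the difference is again LG, OR degree-$3$ atoms which are neutral in the budget accounting. The hard part will be verifying that the reduction always terminates at a molecule for which the counting is trivial (e.g.\ a single or double bond between two atoms, or the empty molecule) \emph{without} ever being forced into a bad operation — in particular ensuring no triple bond is created along the way that would require the $q=2$ case of \eqref{atomiccount}; but since we are in the strictly LG regime, even a double bond between two atoms gives a $2$-vector count with $|r|\geq L^{-\gamma+\eta}$, which Lemma \ref{basiccount} handles with room to spare. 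I would also need to check the edge cases where $\Mb$ has very few atoms (say $V \leq 3$) directly by hand, which is routine. The induction closes because at every step $\Af_{\mathrm{pre}} \lesssim \Df \cdot \Af_{\mathrm{pos}}$ with $\Df \lesssim L^{O(\eta)}$ for normal steps and $\Df \lesssim L^{-\gamma_0 + O(\eta)}$ for the initial step, while $\Af$ of the terminal molecule is $\lesssim 1$ (or better).
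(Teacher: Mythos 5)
Your proposal misidentifies where the $L^{-\gamma_0}$ gain comes from, and the step you rely on for that gain is in fact either neutral or a loss. Removing a degree-$3$ atom $v$ together with its three bonds has $\Delta\chi=-2$, so the counting budget for that step is $L^{2(d-\gamma)}$; the $3$-vector count from Lemma~\ref{basiccount}~(2) gives $\lesssim \delta^{-1}L^{2(d-\gamma)}$, so this step is \emph{exactly neutral} ($\Df\lesssim 1$), as the paper's Proposition~\ref{moleprop1} already records. Your accounting of ``a budget of $L^{d-\gamma}$ (one bond's worth of volume)'' against a $2$-vector count from Lemma~\ref{basiccount}~(1) only matches a $\Delta\chi=-1$ operation, i.e.\ a cut rather than a removal — but an $\alpha$-cut at an LG atom does not give a gain either: when the gap $R$ is \emph{large} rather than small, the new parameters $(c_{v_1},\beta_{v_1})$ attached to the freshly created degree-$2$ atom range over $\sim (RL)^d(1+\delta L^{2\gamma}R)\gg L^{d-\gamma}$ values, so the deviation $\Df$ in the sense of~\eqref{defdev} is a \emph{loss}, not a gain. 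This is exactly why the stage-$2$ reduction in Section~\ref{reductcut} only ever cuts at SG atoms and never at LG ones. Also note that the LG assumption constrains the gap between bonds of \emph{opposite} direction at an atom; for a degree-$3$ atom with out-degree $2$, the two outgoing bonds have a difference that is not controlled at all by the LG hypothesis, so the ``2-vector count on the two bonds at $v$'' is not even the right configuration.

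The paper's actual proof obtains the $L^{-\gamma_0}$ gain through a qualitatively different mechanism that your proposal does not identify. After the preliminary removal of bridges and possible $\beta$-cuts (reducing to a molecule with no bridge, no $\beta$-cut opportunity, and $\Mb\rightarrow(3,3)[4]$), it removes one degree-$3$ atom $v_1$, and then performs a case analysis on the structure of the \emph{remaining} molecule: it locates a second atom $v_2$ which is either (i) degree-$3$ with exactly one bridge, (ii) degree-$4$ admitting a $\beta$-cut, or (iii) degree-$4$ with exactly two bridges. In each case Lemma 9.14 of \cite{DH21} produces an \emph{extra linear-plus-quadratic constraint} on the bonds at $v_2$ (because a bridge, once the rest of the decoration is fixed, forces both $k_{\ell}\pm k_{\ell'}$ and $|k_\ell|^2\pm|k_{\ell'}|^2$ to constants), which converts what would otherwise be a neutral $3$-vector count at $v_2$ into the favorable $5$-vector count of Lemma~\ref{basiccount}~(4), with $\Df\lesssim L^{-\gamma_0}(\log L)^C$ at $\Delta\chi=-3$ (or $L^{-\gamma_0-\eta}$ at $\Delta\chi=-4$). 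A separate argument — essentially an Euler-characteristic count $\chi\geq V/2+1\geq 3$ for a connected $\rightarrow[3,4]$ molecule with only single bonds — shows that one of these cases must eventually occur before the molecule becomes trivial, with the $K_4$ graph handled directly by Lemma~\ref{basiccount}~(5). Your proposal contains none of this structure, and as written the induction you describe would close at $\Af\lesssim 1$ rather than $\Af\lesssim L^{-\gamma_0-\eta/2}$.
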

\begin{prop}\label{moleprop4} (1) If $\Mb$ is molecule that has at most one triple bond and $E=2V-1$, and one cannot make a $\beta$-cut in $\Mb$ such that one of the new components has all atoms of degree $4$ except the newly formed $\beta$-atom which has degree $2$, and $\Mb$ is not formed by removing the two joints (with their bonds) of a vine (II) and adding one new bond between the two atoms connected to one of the joints, then $\Af\lesssim L^{-4\gamma_0/7}$.

(2) Suppose $\Mb$ is $4$-regular pseudomolecule that contains at most two triple bonds, and is \emph{not} formed by removing the two joints of a vine and adding one new bond between the pair of atoms connected to each joint. Fix any bond $\ell$ in $\Mb$, then either $\Af(\Mb)\lesssim L^{-\eta/4}$ for $\Mb$, or $\Af(\Mb\backslash\{\ell\})\lesssim L^{-\gamma_0-\eta/4}$ for the molecule after removing $\ell$ (note that the exact case may depend on assumptions we impose on the decoration, which will be made clear in the proof).
\end{prop}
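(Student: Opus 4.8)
The plan is to prove both parts of Proposition \ref{moleprop4} by running the reduction algorithm of Section 9.4 of \cite{DH21} on $\Mb$, tracking the deviation $\Df$ accumulated at each step and then invoking a lower bound, in the spirit of Section 9.5 of \cite{DH21}, for the number of \emph{good} operations performed. The decisive simplification over \cite{DH21} is that the molecules here are \emph{large gap}: at every atom and every pair of oppositely directed bonds the gap is $\geq L^{-\gamma+\eta}$, so the two-vector counting bound (\ref{atomiccount}) with $q=2$ --- whose breakdown for $\gamma<2/3$ was the source of all the difficulty --- now holds with the favourable exponent and, crucially, with a genuine power gain $L^{-c}$, by Lemma \ref{basiccount} (1) with the gap bounded below. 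Consequently every operation in the algorithm (removing a degree-$2$ atom and merging its two bonds, removing a degree-$3$ atom, peeling off a double bond, etc.) is either \emph{normal} (neutral for $\Af$) or \emph{good} (gaining a fixed power, at least $L^{-\gamma_0/2}$ or $L^{-\eta}$), with the sole exception that removing a degree-$4$ atom is a $q=4$ counting problem and is the only \emph{bad} operation, losing at most $L^{\gamma_0}$ and occurring at most once per component. Triple bonds, of which there are at most one in part (1) and at most two in part (2), are removed at the outset via the degenerate-atom argument of Section 9.3.1 of \cite{DH21} at the cost of an absolute-constant deviation, nontrivial ladders are peeled off using the ladder estimate of Proposition 10.1 of \cite{DH21}, and Propositions \ref{moleprop1}--\ref{moleprop3} supply the base estimates that terminate the recursion.

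For part (1), since $E=2V-1$ the molecule is not $4$-regular, so it has either one degree-$2$ atom or exactly two degree-$3$ atoms, all others of degree $4$; starting the algorithm from such a low-degree atom forces no bad operation. The content of the statement is then that, under the two hypotheses --- (i) $\Mb$ admits no $\beta$-cut isolating a component whose only non-degree-$4$ atom is the newly created degree-$2$ one, and (ii) $\Mb$ is not obtained from a vine (II) by deleting its two joints and reconnecting --- the algorithm encounters at least two good operations, whence $\Af\lesssim L^{-\gamma_0}\leq L^{-4\gamma_0/7}$, the slack in the exponent absorbing the at-most-one degenerate loss and the logarithmic factors. Hypothesis (i) is precisely what rules out the configurations (a component that is essentially a single block) where only one good operation is available, and hypothesis (ii) excludes the ``triangle of three double bonds''-type terminal molecule and its vine-(II) descriptions (cf.\ Figure \ref{fig:badmol}). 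I would carry this out by first treating all $\Mb$ with $V\leq V_0$ for an absolute constant $V_0$ by direct enumeration of the small non-$4$-regular molecules modulo ladders, explicitly checking which of them violate (i) or (ii), and then, for $V>V_0$, repeating essentially verbatim the monotonicity-and-invariant argument of Section 9.5 of \cite{DH21}, which shows that once the two excluded families are forbidden the number of good operations grows proportionally to $V$.

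For part (2), $\Mb$ is a $4$-regular pseudomolecule with $E=2V$, so the algorithm must begin with the bad operation or with deleting a bond, and the conclusion is a dichotomy. Running the algorithm on $\Mb$ itself, if the single bad step is followed by enough good steps --- which, using the structural constraints ($4$-regular, at most two triple bonds, not the excluded vine-reconnection form) together with the part (1)/Section 9.5 of \cite{DH21} analysis, is the generic situation --- then the good gains beat the one $L^{\gamma_0}$ loss and $\Af(\Mb)\lesssim L^{-\eta/4}$. Otherwise $\Mb$ is close to an excluded configuration, and I would instead delete the designated bond $\ell$: the resulting $\Mb\setminus\{\ell\}$ has $E=2V-1$, at most two degree-$3$ atoms and at most one triple bond, it is not of the vine-(II) form excluded in part (1) (because $\Mb$ is not of the excluded vine form --- one checks the correspondence by tracking how the excluded families behave when a bond between two degree-$3$ atoms is added or removed), and the failure of the ``enough good steps'' branch forces $\Mb\setminus\{\ell\}$ to admit no bad $\beta$-cut either; moreover the two fresh degree-$3$ atoms created by deleting $\ell$ are themselves cheap to eliminate and guarantee good operations beyond those of a generic $E=2V-1$ molecule, so combining part (1) with the sharper inputs of Propositions \ref{moleprop2}--\ref{moleprop3} yields $\Af(\Mb\setminus\{\ell\})\lesssim L^{-\gamma_0-\eta/4}$. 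Which arm of the ``or'' is used can depend on whether particular atoms or gaps are forced to be SG or LG by the decoration, as the statement anticipates.

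The step I expect to be the main obstacle is not the analytic counting --- that is routine once the large-gap hypothesis is in force and gives the needed per-operation gains --- but the combinatorial bookkeeping: precisely enumerating the finitely many small non-$4$-regular and $4$-regular molecules (modulo ladders) for which the number of good operations falls short, and verifying that these are \emph{exactly} the ones captured by hypotheses (i)--(ii) and by the excluded vine-reconnection families, including the interplay with the at-most-one (respectively at-most-two) triple bonds and with the decoration-dependent SG/LG dichotomy that selects the correct alternative in part (2).
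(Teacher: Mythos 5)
Your proposal follows the heuristic sketched in the paper's overview (one bad degree-$4$ removal per $4$-regular component, offset by good operations, with finitely many exceptional small molecules), but the actual proof in Section \ref{lgmole4} works differently, and your version has concrete gaps at the points where the content lies. First, triple bonds: in part (2) both endpoints of a triple bond have degree $4$, so neither the degenerate-atom discussion of Section 9.3.1 of \cite{DH21} (which concerns $k_1=k_2=k_3$, an unrelated notion) nor the appendix operations (TB-1N)--(TB-2N) (which require a degree-$3$ endpoint) applies, and removal is certainly not a ``constant deviation'' step. The paper introduces the (Y)/(Y2) sequence precisely for this, with Lemma \ref{ylem2} giving the $(\log L)^C$ deviation and, crucially, Lemma \ref{ylem1} identifying the excluded vine-reconstruction families as exactly those $\Mb$ whose (Y) reduction terminates in a quadruple bond or a triangle of double bonds. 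In part (1) the hypotheses (i)--(ii) are used to rule out the (Y1) terminations that leave nothing behind, so that $\Mb'$ has $E=2V-1$ and no triple bond and a single application of Proposition \ref{moleprop2} gives $L^{-3\gamma_0/5}(\log L)^C\lesssim L^{-4\gamma_0/7}$; your claim that they instead force \emph{two} good operations is asserted rather than proved and is not needed.

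Second, and more seriously, your mechanism for the bound $\Af(\Mb\backslash\{\ell\})\lesssim L^{-\gamma_0-\eta/4}$ in part (2) does not close. Deleting $\ell$ from a $4$-regular molecule gives an $E=2V-1$ molecule, for which Proposition \ref{moleprop2} only yields $L^{-3\gamma_0/5}$; the assertion that the two fresh degree-$3$ atoms ``guarantee good operations beyond those of a generic $E=2V-1$ molecule'' is unsubstantiated. The paper obtains the stronger exponent by a structural case analysis that never performs the bad degree-$4$ counting at all: it either splits $\Mb'$ or $\Mb'\backslash\{\ell'\}$ into \emph{two} components each with $E=2V-1$ (via a $\beta$-cut, a bi-bridge, or a bridge), giving $(L^{-3\gamma_0/5})^2\lesssim L^{-\gamma_0-\eta/4}$, or reduces to a single-bond molecule and invokes Proposition \ref{moleprop3}, after iteratively merging double bonds (each merge gaining $L^{-\eta/2}$ from the LG two-vector count). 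The decoration-dependence of the dichotomy — the part the statement defers to the proof — is the bi-bridge case: one proves $\Af(\Mb')\lesssim L^{-\eta/4}$ when the two bi-bridge decorations differ and $\Af(\Mb'\backslash\{\ell'\})\lesssim L^{-\gamma_0-\eta/4}$ when they coincide; your ``close to an excluded configuration'' criterion does not capture this. Without these ingredients the proposal does not constitute a proof.
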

We start with Proposition \ref{moleprop1} as it is the simplest, and some ingredients in its proof will be reused later.
\begin{proof}[Proof of Proposition \ref{moleprop1}] We will reduce the molecule $\Mb$ to the empty set by the following operations: \underline{Operation (a)} which consists of removing one atom (and all the bonds), and \underline{Operation (b)} which consists of removing two atoms $v_1$ of degree 3, and $v_2$ of degree 3 or 4, that are connected by a double bond, and all the bonds attached to them. Note that $\Mb$ is a molecule which does not have components of only degree $4$ atoms, so we can always assume that the removed atom has degree $\leq 3$ in any operation (a); assume also that in the whole process, we perform operation (a) only when (b) is not possible.

For each operation we will consider the deviation, i.e. the quantity $\Df$ such as $\Af_{\mathrm{pre}}\lesssim\Df\cdot\Af_{\mathrm{pos}}$ as in (\ref{defdev}), where the meaning of the quantities should be obvious. Note that each operation (a) does not affect any ladder of length at least one (since the removed atom cannot belong to such a ladder, or otherwise we should perform operation (b)), and hence does not affect the factor $\Pf$. Moreover, if the removed atom $v$ has degree $r\in\{1,2,3\}$, then $0\leq \Delta F\leq r-1$ and $\Delta\chi=\Delta F-r+1$. By Lemma \ref{basiccount} (1)--(2), we see that
\[\Df\lesssim\delta^{-1}L^{2(d-\gamma)}\cdot L^{-2(d-\gamma)}(C^+\delta^{-1/2})^{-2}\lesssim1\] if $\Delta\chi=-2$ (so $(r,\Delta F)=(3,0)$), and that
\[\Df\lesssim\delta^{-1}L^{d-\gamma-\eta}\cdot L^{-(d-\gamma)}\lesssim L^{-\eta/2}\] if $\Delta\chi=-1$ (so $(r,\Delta F)\in\{(2,0),(3,1)\}$), and $\Df\lesssim1$ if $\Delta\chi=0$ (so $(r,\Delta F)\in\{(1,0),(2,1),(3,2)\}$). Here we have also used the fact that $R\gtrsim L^{-\gamma+\eta}$ in Lemma \ref{basiccount} (1) due to LG assumption; moreover if $\Delta F\geq 1$ then one of the values $k_\ell$ for bonds $\ell$ at $v$ will be uniquely fixed (if $\Delta F\geq 2$ then all $k_\ell$ will be uniquely fixed).

Now consider operation (b). Here, we have two cases. Either the operation does not affect any ladder of length at least one, or it can reduce the length of one such ladder by one. Now, in the former case we clearly has $\Df\lesssim 1$ as the operation (b) can be split into two operations (a). In the latter case, the operation removes a factor $\min((\log L)^2,1+\delta L^{2\gamma}P)$ from the product $\Pf$, where $P\sim|k_{\ell}-k_{\ell'}|$ for the two single bonds $(\ell,\ell')$ at the two removed atoms. If both atoms have degree 3, then we have $\Delta\chi=-2$, and by Lemma \ref{basiccount} (3) we have
\begin{multline}\label{laddersharp}\Df\lesssim \delta^{-1}L^{2(d-\gamma)}\cdot\max(\delta^{-1}L^{-\gamma_0},(\log L)^{-2},(1+\delta L^{2\gamma}P)^{-1})\\\times L^{-2(d-\gamma)}(C^+\delta^{-1/2})^{-2}\cdot \min((\log L)^2,1+\delta L^{2\gamma}P)\lesssim 1.\end{multline}
If $v_2$ has degree 4, then either $\Delta\chi=-2$ and we have the same bound as in (\ref{laddersharp}), or $\Delta \chi=-3$ and a better bound holds using Lemma \ref{basiccount} (4). Therefore, in all cases we will have $\Af_{\mathrm{pre}}\lesssim\Af_{\mathrm{pos}}$. By choosing the constant $C^+$ in (\ref{defequan}) large enough we can assume $\Af_{\mathrm{pre}}\leq\Af_{\mathrm{pos}}$; this implies that we must have $\Af\leq 1$ in the beginning, since it trivially holds in the end when $\Mb$ has no bonds. This completes the proof.
\end{proof}
In the rest of this subsection we prove Proposition \ref{lgmolect} assuming Propositions \ref{moleprop2}--\ref{moleprop4}.
\begin{proof}[Proof of Proposition \ref{lgmolect} assuming Propositions \ref{moleprop2}--\ref{moleprop4}] Let $\Mb_0$ be a component of $\Mb_{\mathrm{fin}}$ as in Proposition \ref{lgmolect}. If $\Mb_0$ is a double bond, then it must be LG and both atoms must be $\beta$-atoms, due to Proposition \ref{finalmole}. In this case we have $\Af_0\lesssim \delta^{-1}L^{d-\gamma-\eta}\cdot L^{-(d-\gamma)}\lesssim L^{-\eta/2}\lesssim L^{-\eta^5}\cdot L^{2(\gamma_0+2\eta^2)-(2\gamma_0+5\eta^2)}$ using Lemma \ref{basiccount} (1) and the LG condition, so (\ref{finalcount}) is true. If $\Mb_0$ is a triangle, then there are at least two $\beta$-atoms due to Proposition \ref{finalmole}. Moreover by Lemma \ref{basiccount} (1) we see that \[\Af_0\lesssim \delta^{-1}L^{d-\gamma+\gamma_0}\cdot L^{-(d-\gamma)}\lesssim L^{-3\eta^5}\cdot L^{3(\gamma_0+2\eta^2)-(2\gamma_0+5\eta^2)}\] if there is no $\alpha$-atom, and that \[\Af_1\lesssim (L^{d-1}+\delta^{-1}R^{-1}L^{d-2\gamma})L^{-(d-\gamma)}\lesssim L^{-3\eta^5}\cdot L^{-(\gamma+3\eta)/2}R^{-1}\cdot L^{2(\gamma_0+2\eta^2)-(2\gamma_0+5\eta^2)}\] if there is one $\alpha$-atom labeled by $R\in [L^{-1},L^{-\gamma+\eta}]$. In either case (\ref{finalcount}) is true. If $\Mb_1$ is a cycle of length $s\geq4$, then there is at least one $\beta$-atom. If there is no $\alpha$-atom the proof is same as the triangle case; if there is at least one $\alpha$-atom labeled by $R$, then
\[\Af_1\lesssim (L^{d-1}+\delta^{-1}R^{-1}L^{d-2\gamma})L^{-(d-\gamma)}\lesssim L^{-s\cdot \eta^5}L^{-(\gamma+3\eta)/2}R^{-1}\cdot (L^{(\gamma-5\eta)/2})^{s-2}L^{\gamma_0+2\eta^2}\cdot L^{-(2\gamma_0+5\eta^2)},\] which proves (\ref{finalcount}) since $L^{-(\gamma+3\eta)/2}R^{-1}\gtrsim L^{(\gamma-5\eta)/2}$ and $L^{\gamma_0+2\eta^2}\gtrsim L^{(\gamma-5\eta)/2}$  when $R\leq L^{-\gamma+\eta}$. This completes case when $\Mb_0$ is a cycle.

Now assume $\Mb_0$ has at least one $\varepsilon$-atom. Thanks to Proposition \ref{moleprop2} we always have
\begin{equation}\label{roughbd}\Af_0\lesssim L^{-\eta^2\rho_0+C(V_\alpha+V_\beta+1)}.
\end{equation} Then we need to prove some other estimates to interpolate with (\ref{roughbd}). To achieve this we need to perform some operations on $\Mb_0$; these operations include Operation (a) and Operation (b) defined in the proof of Proposition \ref{moleprop1} above, as well as the new ones defined below.

\underline{Operation (c):} Remove a chain of $\alpha$- and $\beta$-atoms (as in Proposition \ref{finalmole}), which has two distinct $\varepsilon$-atoms at both its ends, and all bonds. By Lemma \ref{basiccount} (1) we have
\begin{equation}\label{operc}\Df\lesssim1+\delta^{-1}R^{-1}L^{-\gamma}
\end{equation} for both $\alpha$- and $\beta$-atoms, where $R$ is such that $\max|r|\sim R$ for all the gaps $r$ at the $\alpha$- and $\beta$-atoms in this chain.

\underline{Operation (d):} Remove a chain of $\alpha$- and $\beta$-atoms, which has two distinct $\varepsilon$-atoms $v_1$ and $v_2$ at both its ends, and all bonds, and then add a new bond between $v_1$ and $v_2$ in the same direction as the chain. This operation does not change $\chi$, and we will show that $\Df\lesssim (\log L)^C$.

To see this, fix a decoration of the molecule $\Mb_{\mathrm{pre}}$ before operation. Let the two bonds in the chain at the two $\varepsilon$-atoms be $\ell_1$ and $\ell_2$ respectively, and the new bond added be $\ell_3$. Then $k_{\ell_1}-k_{\ell_2}$ and $|k_{\ell_1}|^2-|k_{\ell_2}|^2$ are fixed (the latter up to distance $\delta^{-1}L^{-2\gamma}$) in the decoration. For any integer $|g|\leq 3$, the value is also fixed of $|k_{\ell_1}|^2-|k_{\ell_2}+g(k_{\ell_1}-k_{\ell_2})|^2$ up to distance $O(1)\delta^{-1}L^{-2\gamma}$, thus we obtain a decoration for the molecule $\Mb_{\mathrm{pos}}$ after operation, by setting $k_{\ell_3}=k_{\ell_2}+g(k_{\ell_1}-k_{\ell_2})$. The LG condition for the new decoration is still satisfied, if we choose a suitable $g$, and weaken LG condition to $|r|\geq L^{-\gamma+\eta}/10$ (if not, then by pigeonhole principle we must have $|k_{\ell_1}-k_{\ell_2}|\leq L^{-\gamma+\eta}/10$, and thus the LG condition for $\Mb_{\mathrm{pre}}$ implies the weakened LG condition for $\Mb_{\mathrm{pos}}$). This implies that $\Cf_{\mathrm{pre}}\lesssim\Cf_{\mathrm{pos}}$ and hence $\Df\lesssim (\log L)^C$, because this operation affects at most $O(1)$ nodes in $O(1)$ ladders, and modifies $\Pf$ by at most a $(\log L)^C$ factor.

\underline{Operation (e):} Suppose after operation (d), a triple bond forms between $v_1$ and $v_2$. If these two atoms have at most one external bond then we remove  them and all bonds; otherwise, if they are connected to two $\varepsilon$-atoms $v_3\neq v_4$ by two single bonds, then we remove $(v_1,v_2)$ and all bonds, then add a new bond between $v_3$ and $v_4$, matching the directions of the removed single bonds.

By using the same arguments above (assigning a suitable $k_{\ell_2}+g(k_{\ell_1}-k_{\ell_2})$ to the new bond $\ell_3$, where $\ell_1$ and $\ell_2$ are the two single bonds connecting to $v_3$ and $v_4$ to $v_1$ and $v_2$) and also using Lemma \ref{basiccount} (3) we can show that as a result of applying Operations (d) and (e) consecutively,  \begin{equation}\label{opere}\Df\lesssim L^{-\gamma_0+200\eta},\end{equation} provided that $\max|r|\gtrsim L^{-\gamma-100\eta}$ for all the gaps $r$ at the $\alpha$- and $\beta$-atoms in this chain. Note that, after operations (d) and (e), we no longer require $c_v=0$ for degree $4$ atoms $v$ as in Definition \ref{decmole}.

\smallskip
We treat the remaining cases of Proposition \ref{lgmolect}. For even component, note that $V_\beta\geq 1$ because the last cut that separate $\Mb_0$ from the other components must be $\beta$-cut; moreover if $V_\beta=1$ then we must have $V_\alpha\geq 1$, since otherwise the only $\beta$-atom will have gap $r=0$ which is not possible.

(1) If $\Mb_0$ is the odd component, then we perform the same operations (a) and (b) as in the proof of Proposition \ref{moleprop1} above, but we only remove $\varepsilon$-atoms (even if they may become degree $1$ or $2$ in the process). Note that the value of $\Af$ becomes $1$ after removing all $\varepsilon$-atoms (because the remaining $\alpha$- and $\beta$-atoms can only form finitely many chains for which $\chi=0$), and $\Df\lesssim 1$ for each step in the same way as in the proof of Proposition \ref{moleprop1}, we conclude that $\Af_0\lesssim 1$. Moreover, let $V_\alpha$ and $V_\beta$ be the number of $\alpha$- and $\beta$-atoms respectively; if $w:=V_\alpha+V_\beta=0$ then $E=2V-1$ and $\Af_0\lesssim L^{-3\gamma_0/5}$ by Proposition \ref{moleprop2}, and if $w>0$ then $\Af_0\lesssim 1$ and the right hand side of (\ref{finalcount}) is at least $L^{(\gamma_0-5\eta)w/2-\eta^5\rho_0}$ (note also that $G=0$ for the odd component). In either case (\ref{finalcount}) follows from an interpolation with (\ref{roughbd}).

(2) From now on we assume $\Mb_0$ is an even component. If $V_\beta\geq 3$, then we first choose any chain of $\alpha$- and $\beta$-atoms and perform operation (c). After this, the molecule will no longer have any component such that all $\varepsilon$-atoms have degree $4$, so we can perform operations (a) and (b) as in (1) above, to prove that $\Af_0\lesssim\delta^{-1}L^{\gamma_0}$ using also (\ref{operc}). As the right hand side of (\ref{finalcount}) is at least \[L^{\eta^2V_\alpha}\cdot L^{V_\beta(\gamma_0+2\eta^2)-2\gamma_0-5\eta^2-\eta^5\rho_0}\gtrsim L^{\gamma_0+\eta^2/2-\eta^5\rho_0}\cdot L^{(V_\alpha+V_\beta)\eta^2/10},\] we can interpolate (\ref{roughbd}) with the bound $\Af_0\lesssim\delta^{-1}L^{\gamma_0}$ to prove (\ref{finalcount}).

(3) If $V_\beta=2$ and $V_\alpha\geq 1$, or if $V_\beta=1$ and $V_\alpha\geq 3$, then we choose any chain containing at least one $\alpha$-atom and perform operation (c), and proceed as in (2) above. Let the label of the $\alpha$-atom in this chain be $R$, then by (\ref{operc}) we have $\Af_0\lesssim1+\delta^{-1}R^{-1}L^{-\gamma}$. Interpolating with (\ref{roughbd}) we get
\begin{multline}\label{case3est}\Af_0\lesssim(1+\delta^{-1}R^{-1}L^{-\gamma})\cdot L^{-\eta^5\rho_0+C\eta^3(V_\alpha+1)}\\\lesssim L^{-\eta^5\rho_0}\cdot L^{-(\gamma+3\eta)/2}R^{-1}\cdot L^{V_\beta(\gamma_0+2\eta^2)-2\gamma_0-5\eta^2}\cdot L^{(V_\alpha-1)(\gamma-5\eta)/2}\end{multline} which is better than (\ref{finalcount}) and is easily verified when $V_\beta=2$ and $V_\alpha\geq 1$ or $V_\beta=1$ and $V_\alpha\geq 3$.

(4) If $V_\alpha=0$ and $V_\beta=2$, then choose all the chains of $\beta$-atoms and perform operation (d) to them. The resulting molecule $\Mb_1$ is $4$-regular and has at most two triple bonds. Therefore, by Proposition \ref{moleprop4} (2), we know that either $\Af_1\lesssim L^{-\eta/5}$ or $\Mb_1$ is formed by removing the two joints of a vine and adding one new bond between the pair of atoms connected to each joint. (note that, the number of choices for vector $k_\ell$ for a newly added bond $\ell$ is bounded by $\delta^{-1}L^{d-\gamma+\gamma_0}$ due to definition of operation (d) and Lemma \ref{basiccount} (1), so if $\Af\lesssim L^{-\gamma_0-\eta/4}$ for the molecule $\Mb_1\backslash\{\ell\}$ we also have $\Af_1\lesssim L^{-\eta/5}$). In the former case interpolating with (\ref{roughbd}) yields $\Af_0\lesssim L^{-\eta/5-\eta^5\rho_0}\lesssim L^{\eta^2-\eta^5\rho_0}$ which implies (\ref{finalcount}).

In the latter case, an enumeration shows that $\Mb_0$ must be a vine and is thus LG due to Proposition \ref{finalmole}, so we may now perform operation (c) to the chains of $\beta$-atoms in $\Mb_0$. For the resulting molecule $\Mb_1$ we have $\Af_1\lesssim 1$ by Proposition \ref{moleprop1}, and using also (\ref{operc}) and the LG assumption we get that $\Af_0\lesssim L^{-\eta/2}$, so (\ref{finalcount}) again follows by interpolation.

(5) If $V_\alpha=V_\beta=1$, then we choose the chain containing the $\alpha$-atom and perform operation (c),  and choose any other possible chain and perform operation (d), to get a molecule $\Mb_1$ with only $\varepsilon$-atoms. Note that $\Mb_1$ has at most one triple bond, satisfies the $\beta$-cut assumption in Proposition \ref{moleprop4} (1) (which follows because one cannot make any $\beta$-cuts in $\Mb_0$) as well as $E=2V-1$, so by Proposition \ref{moleprop4} (1) we know either $\Af_1\lesssim L^{-4\gamma_0/7}$, or $\Mb_1$ is formed by removing the two joints of a vine (II) and adding one new bond between the two atoms connected to one of the joints. But the latter case is impossible, because then $\Mb_0$ has to be a vine (II) and thus has LG due to Proposition \ref{finalmole}, which is impossible as one of its joints is an $\alpha$-atom.

 Let the $\alpha$-atom be labeled by $R$, then using (\ref{operc}) we get $\Af_0\lesssim (1+\delta^{-1}R^{-1}L^{-\gamma})L^{-4\gamma_0/7}$, and interpolating with (\ref{roughbd}) we get
 \[\Af_0\lesssim(1+\delta^{-1}R^{-1}L^{-\gamma})L^{-4\gamma_0/7}\cdot L^{-\eta^5\rho_0+C\eta^3}\lesssim L^{-\eta^5\rho_0}\cdot L^{(\gamma_0+2\eta^2)-2\gamma_0-5\eta^2}\cdot L^{-(\gamma+3\eta)/2}R^{-1},\] which is easily proved using $R\leq L^{-\gamma+\eta}$. This proves (\ref{finalcount}).

(6) Finally suppose $V_\alpha=2$ and $V_\beta=1$. Here we will use the $\iota_v$ and $\kappa_v$ parameters defined in Section \ref{reductcut}. First, let the labels of the two $\alpha$-atoms $(v_1,v_2)$ be $R_1\geq R_2$, we choose the chain containing $v_1$ and perform operation (c), then choose the other chains and perform operation (d), to reduce to $\Mb_1$. By (\ref{operc}) and interpolation, we know that (\ref{finalcount}) is true as long as the inequality
\begin{multline}\label{case6est} L^{-\eta^5\rho_0+C\eta^3}\cdot(1+\delta^{-1}R_1^{-1}L^{-\gamma})(\log L)^C\cdot \Af_1\\\lesssim L^{-\eta^5\rho_0}\cdot L^{-(\gamma+3\eta)+(\kappa_{v_1}+\kappa_{v_2})\eta}(R_1R_2)^{-1}\cdot L^{-(\gamma_0+3\eta^2)}\end{multline} holds. Note that $\Af_1\lesssim 1$ and $R_j\lesssim L^{-\gamma+\eta}$, an easy calculation shows that (\ref{case6est}) is true if $\kappa_{v_1}+\kappa_{v_2}>0$, or if $\Mb_1$ has no triple bond (so Proposition \ref{moleprop2} implies that $\Af_1\lesssim L^{-3\gamma_0/5}$), or if $R_2\leq L^{-\gamma-50\eta}$.

Now if $\kappa_{v_1}+\kappa_{v_2}=0$, $\Mb_1$ has a triple bond, and $R_1\geq R_2\geq L^{-\gamma-50\eta}$. Note that an algebraic sum of the three gaps at the three $\alpha$- and $\beta$-atoms equal to $0$, and none of the three gaps is $0$ itself, so by the definition of $(\iota_v,\kappa_v)$, we must have $|r|\gtrsim L^{-50\eta}R_2\geq L^{-\gamma-100\eta}$ for the gap $r$ at the $\beta$-atom. Then we may perform operation (e) instead of (d) in the last step at one of the chains not containing $v_1$, and reduce to a molecule $\Mb_2$. Clearly $\Af_2\lesssim 1$, and using (\ref{opere}) and interpolation gives that
\[\Af_0\lesssim L^{-\eta^5\rho_0+C\eta^3}\cdot(1+\delta^{-1}R_1^{-1}L^{-\gamma})L^{-\gamma_0+200\eta},\] and this extra $L^{-\gamma_0}$ gain (with $L^{O(\eta)}$ loss) is more than enough to imply (\ref{finalcount}) as above. This finishes the proof of Proposition \ref{lgmolect}.
\end{proof}
\subsection{Proof of Proposition \ref{moleprop2}}\label{mole2proof} In this subsection we prove Proposition \ref{moleprop2}. Under the \emph{large gap} assumption, this proof relies on the steps and the algorithm that are almost identical to those defined in the proof of Proposition 9.10 in \cite{DH21} (see \cite{DH21}, Sections 9.3--9.4). For completeness, we have included the definitions and properties of these steps and algorithm, with suitable modifications adapted to the current scaling law, in Appendix \ref{appalg}.

With these preparations we can prove Proposition \ref{moleprop2}, by adopting the same arguments as in Section 9.5 of \cite{DH21}, which we present below. We start with the case when $E\in\{2V-1,2V-2\}$ (so $\Mb$ is a molecule), and $\Mb$ has no triple bond, and apply the algorithm described in Section \ref{alg}. The algorithm contains $O(n)$ operations, where $n$ is the size of the molecule, and in some cases we are making binary choices depending on properties of the decoration, leading to at most $C^n$ possibilities. Such $C^n$ factors are always negligible by choosing the constant $C^+$ in (\ref{defequan}); Below we will fix one such possibility (and hence an operation sequence). Let $r_1$ be the total number of \emph{fine} operations, and $r_2$ be the total number of \emph{good} operations. Note that the change of any of the quantities we will study below, caused by any single operation we defined above, is at most $O(1)$.
\subsubsection{Increments of $\eta$ and $V_3$}\label{incre1} First, note that operations (TB-1N) and (TB-2N) only occur once after (3S3-3G) or (3D3-3G) which are good operations, the number of those is at most $Cr_2$. Let the number of (BR-N) where $d(v_1)=d(v_2)=3$ (see Proposition \ref{brprop}) be $z_1$, the number of other (BR-N) be $z_1'$. Let the number of (3S3-1N) be $z_2$, the number of (3R-1N) be $z_3$, the numbers of (2R-2F)--(2R-4F) be $z_4$, $z_5$ and $z_6$, and the number of (2R-1F) be $z_7$. By Propositions \ref{brprop}--\ref{2rprop}, we can examine the increment of $\nu$ in the whole process and get
\begin{equation}\label{increeta}-2z_1-2z_1'-2z_2+2z_3-2z_5\geq -\nu_0-Cr_2,
\end{equation} where $\nu_0\in\{0,-2\}$ is the initial value of $\nu$, and in the end $\nu=0$. In the same way, by examining the increment of $V_3$ we get
\begin{equation}\label{increv3}-2z_1-z_1'+2z_2+2z_3+z_4\leq  -V_{30}-Cr_2,
\end{equation} where $V_{30}\geq 0$ is the initial value of $V_3$ and in the end $V_3=0$. Subtracting these two inequalities yields $z_1'+z_2+z_4+z_5\leq \nu_0-V_{30}+Cr_2$. In particular we have $z_1'+z_2+z_4+z_5\leq Cr_2$, and if $E=2V-1$ then $r_2\geq 1$. Note also that $z_6+z_7\leq r_1$ because (2R-1F) and (2R-4F) are fine.
\subsubsection{The other operations} Next we will prove that $z_1+z_3\leq Cr_2$. By (\ref{increv3}) we have $z_3\leq z_1+Cr_2$, so we only need to prove $z_1\leq Cr_2$. Let $V_2^*$ be the number of degree 2 atoms with two single bonds. It is clear that $\Delta V_2^*=0$ for (3D3-1N), (3R-1N) and (2R-4F), and $\Delta V_2^*\geq 0$ for (2R-1F), and $\Delta V_2^*\geq 0$ for (BR-N) assuming $d(v_1)=d(v_2)=3$. Moreover, equality holds for (BR-N) if and only if the bridge removed is special. Therefore, with at most $Cr_2$ exceptions, all the bridges removed in (BR-N) are special.

Consider the increment of the number of special bonds, denoted by $\xi$. Clearly $\Delta\xi=0$ for (2R-1F) and (2R-4F); for (BR-N) which removes a special bridge, we can check that this operation cannot make any existing non-special bond special, so $\Delta\xi=-1$. Moreover, by  our algorithm, whenever we perform (3R-1N), it is always assumed that the component contains no special bond after this step, so $\Delta\xi\leq 0$. Similarly, whenever we perform (3D3-1N) we are always in (3-b) or (3-c-iii) (or in (3-c-i) but then the next operation will be good). For (3-c-iii), $v_3$ and $v_4$ are the only two degree 3 atom in the component after performing (3D3-1N), and they are not connected by a special bond (otherwise we are in (3-c-i)), so this step also does not create any special bond, hence $\Delta\xi\leq 0$.

Now let us consider operations (3D3-1N) occurring in (3-b). By our algorithm, if we also include the possible (3D3-2G), then such steps occur in the form of sequences which follow the type II chains in the molecule. For any operation in this sequence \emph{except} the last one, we must have $\Delta\xi=0$ (because in this case, after (3D3-1N), neither $v_3$ nor $v_4$ is connected to a degree 3 atom by a single bond). Moreover, if for the last one in the sequence we do have $\Delta\xi>0$, then immediately after this sequence we must have a good operation (because in this case, after we finish the sequence and move to (3-c), either $v_3$ or $v_4$ will have degree 3 instead of 4, so we must be in (3-c-ii)). Since the number of good operations is at most $r_2$, we know that the number of operations for which $\Delta\xi>0$ is at most $Cr_2$. Thus, considering the increment of $\xi$, we see that $z_1\leq Cr_2$.
\subsubsection{Ladders}\label{lad1} Now we see that the number of steps \emph{different from} (3D3-1N) is at most $C(r_1+r_2)$. In particular steps (3D3-1N) occurring in (3-c-i) and (3-c-iii) is also at most $C(r_1+r_2)$ because each of them must be followed by an operation different from (3D3-1N). As for the sequences of (3D3-1N) or (3D3-2G) occurring in (3-b), each sequence corresponds to a ladder, and each chain can be as long as $Cn$, but the number of chains must be at most $C(r_1+r_2)$ for the same reason. Note that some of the bonds in the ladders may not exist in the original base molecule, but the number of those bonds is again at most $C(r_1+r_2)$ because (3S3-3G) and (3D3-3G) are both good steps. Upon further dividing, we can find these (at most $C(r_1+r_2)$) ladders in the original molecule $\Mb$, such that the number of atoms not belonging to one of these ladders is at most $C(r_1+r_2)$. By the definition of $\rho$ (see Proposition \ref{kqmainest2}), we know that $\rho\leq C(r_1+r_2)$.
\subsubsection{Conslusion} Now we can prove Proposition \ref{moleprop2}. First, if $E=2V-1$, then as shown in Section \ref{incre1} we must have $r_2\geq 1$, hence $\Af\lesssim L^{-3\gamma_0/5}$ by definition of good operations; here note that, for operation (3D3-1N), which is the only operation whose number is not controlled by $C(r_1+r_2)$, we do not have any logarithmic loss due to Proposition \ref{3d3prop}, so th possible logarithmic losses can be easily accommodated. In the same way, if $E=2V-2>0$, then the total number of operations is $\leq C(r_1+r_2)$ as shown above, and this total number must be positive if $E>0$, so we know $r_1+r_2\geq 1$ and hence $\Af\lesssim L^{-\eta/3}$ (again considering possible log losses).

Finally, suppose $\Mb$ has at most $w$ atoms not of degree $4$ and is allowed to have at most $w$ SG atoms. Then we may first remove each of the $w$ SG atoms, where for each operation we trivially have $\Df\lesssim L^C$. The resulting molecule still has at most $C(w+1)$ atoms not of degree $4$, which allows us to apply the algorithm described in Section \ref{alg}. All the arguments in Sections \ref{incre1}--\ref{lad1} still apply, if one allows remainders of size $C(w+1)$ (for example the values of $\nu_0$ and $V_{30}$ in (\ref{increeta}) and (\ref{increv3}) will both be $\leq C(w+1)$, etc.). In the end, using the definition of good and fine operations, we get that
\[\Af\lesssim L^{C(w+1)}\cdot L^{-(r_1+r_2)\eta/3}\qquad\mathrm{and}\qquad \rho\leq C(r_1+r_2)+C(w+1),\] which clearly implies $\Af\lesssim L^{-\eta^2\rho+C(w+1)}$, as desired.
\subsection{Proof of Proposition \ref{moleprop3}} In this subsection we prove Proposition \ref{moleprop3}. The proof involves a different procedure with the following operations, which also occur in other sections, but for simplicity we shall give them specific names that are used \emph{only in this subsection}. These include: (R), where we remove a degree $2$ or $3$ atom, (B), where we remove a bridge (in the sense of Section \ref{secbr}), and ($\beta$), where we perform a $\beta$-cut at a degree $4$ atom such that none of its bonds is a bridge. Note that removing a bridge does not affect whether or not any other bond is a bridge, and also does not create any new possibility of $\beta$-cut as in ($\beta$).

Start with the molecule $\Mb$ described in Proposition \ref{moleprop3}; note in particular that $\Mb$ has no ladders. We use $\Mb\rightarrow (3,3)[4]$ to indicate that there $\Mb$ has two degree $3$ atoms and the other atoms have degree $4$ (similarly $\Mb\rightarrow (2)[3,4]$ means that $\Mb$ has one degree $2$ atom and the other atoms have degree $3$ and $4$, etc.). If $\Mb$ has a bridge, then we remove it by (B) to get two components $\Mb_1$ and $\Mb_2$. It is easy to check that either $\Mb_j\rightarrow (3,3)[4]$ or $\Mb_j\rightarrow (2)[4]$ for each $j$ (since the sum of degrees is always even), but if $\Mb_j\rightarrow (2)[4]$ then removing the degree $2$ atom by (R) also yields a molecule $\widetilde{\Mb_j}\rightarrow(3,3)[4]$. If $\Mb$ admits a $\beta$-cut as specified above, then we perform operation ($\beta$), then one of the resulting components will be $\rightarrow (2)[4]$, and the other satisfies $\Af\lesssim1$ by Proposition \ref{moleprop1}. As all the above operations satisfy $\Df\lesssim 1$ due to the LG assumption, we may always reduce to the case where $\Mb\rightarrow(3,3)[4]$ has no bridge and admits no $\beta$-cuts as above (so in particular removing any atom in $\Mb$ does not create any new component).

Now we remove one degree $3$ atom $v_1$ in $\Mb$ by (R), and denote the bonds by $(\ell_1,\ell_2,\ell_3)$. Consider the following possibilities after this operation:
\begin{enumerate}[{(1)}]
\item If there is a degree $3$ atom $v_2$ with bonds $(\ell_4,\ell_5,\ell_6)$ such that only $\ell_4$ is a bridge, then by Lemma 9.14 of \cite{DH21}, for some $j\in\{1,2,3\}$ we must have $k_{\ell_4}\pm k_{\ell_j}$ equals constant and $|k_{\ell_4}|^2\pm|k_{\ell_j}|^2$ equals constant up to distance $n\delta^{-1}L^{-2\gamma}$ for some choice of $\pm$, where $n$ is the size of $\Mb$. Then we remove $(v_1,v_2)$ and all bonds; since this results in $\Delta F=1$ (recall $F$ is the number of components) and $\Delta\chi=-3$, using Lemma \ref{basiccount} (4), we see that this composition operation has $\Df\lesssim L^{-\gamma_0}(\log L)^C$. The same result holds if there is a degree $2$ atom $v_2$ with two bonds that are not bridge.
\item If there is a degree $4$ atom $v_2$ with no bridge at which a $\beta$-cut is possible, say along the bonds $(\ell_4,\ell_5)$ and $(\ell_6,\ell_7)$. Then by Lemma 9.14 of \cite{DH21}, for some choice of $\pm$ and $j\in\{1,2,3\}$ we have $k_{\ell_j}\pm k_{\ell_4}\pm k_{\ell_5}$ equals constant and $|k_{\ell_j}|^2\pm |k_{\ell_4}|^2\pm |k_{\ell_5}|^2$ equals constant up to distance $n\delta^{-1}L^{-2\gamma}$. Then we remove $(v_1,v_2)$ and all bonds; since $\Delta\chi=-4$, by using Lemma \ref{basiccount} (4) for $(k_{\ell_1},\cdots,k_{\ell_5})$ and Lemma \ref{basiccount} (1) plus LG assumption for $(k_{\ell_6},k_{\ell_7})$, we see that this composition operation has $\Df\lesssim L^{-\gamma_0-\eta}(\log L)^C$.
\item If there is a degree $4$ atom $v_2$ with bonds $(\ell_4,\cdots,\ell_7)$ such that exactly two bonds (say $\ell_4$ and $\ell_5$) are bridges (we may assume $\ell_4$ and $\ell_5$ are in opposite directions or the proof will be much easier using the better bounds in Lemma \ref{basiccount} (1)), then there are three components after removing $v_1$ (and all bonds) and $(\ell_4,\ell_5)$, denote them by $X_j\,(1\leq j\leq 3)$ with $v_2\in X_1$. Then, since $v_4$ and $v_5$ are not bridges before removing $v_1$, and one cannot make a $\beta$-cut before removing $v_1$, we see that the three other endpoint for $\ell_j\,(1\leq j\leq 3)$ must be in $X_j$ respectively. By Lemma 9.14 of \cite{DH21}, this implies that $k_{\ell_1}\pm k_{\ell_6}\pm k_{\ell_7}$ equals constant and $|k_{\ell_1}|^2\pm |k_{\ell_6}|^2\pm |k_{\ell_7}|^2$ equals constant up to distance $n\delta^{-1}L^{-2\gamma}$, for some choice of $\pm$. Then we remove $(v_1,v_2)$ and all bonds; since $\Delta\chi=-3$, by using Lemma \ref{basiccount} (4) we see that this composition operation has $\Df\lesssim L^{-\gamma_0}(\log L)^C$.
\item If none of (1)--(3) holds, then we remove $v_1$ and all bonds, and then remove all the subsequent bridges. In each resulting component, there will be no degree $2$ atom (which would correspond to one of (1)--(3)) nor degree $1$ atom (which would correspond to bridges), so each (nontrivial) component will be $\rightarrow[3,4]$, and there is no bridge nor $\beta$ cut possibilities. We then remove another degree $3$ atom and repeat the above procedure, until one of (1)--(3) happens or the molecule becomes trivial (i.e. with no bonds). But the latter case is impossible, since removing one degree $3$ atom has $\Delta\chi=-2$, removing one bridge has $\Delta\chi=0$, and for any molecule $\Mb\rightarrow [3,4]$ with only single bonds we have $\chi\geq V/2+1\geq 3$.
\end{enumerate}

By the above argument, we have shown that we will be able to perform at least one good operation in (1)--(3) such that either $\Df\lesssim L^{-\gamma_0}(\log L)^C$ and $\Delta\chi=-3$, or $\Df\lesssim L^{-\gamma_0-\eta}(\log L)^C$ and $\Delta\chi=-4$. In the latter case we are already done since all other operations in the sequence trivially have $\Df\lesssim 1$; in the former case, we further remove all bridges after the good operation. If the resulting molecule is nontrivial, then either repeating the above argument or exploiting a degree $2$ atom using Lemma \ref{basiccount} (1) and LG assumption will gain another power $L^{-\eta}$ which allows us to close. Finally, if the resulting molecule is trivial, then we have $\chi=0$ after the good operation which has $\Delta \chi=-3$. Since the molecule $\Mb'$ before the good operation has only single bonds, and $\Mb'\rightarrow[3,4]$ and $\chi=3$, the only possibility for $\Mb'$ is $K_4$ (i.e. a complete graph of $4$ atoms with only single bonds). But in this last case, we can work directly with $\Mb'$ and apply Lemma \ref{basiccount} (5) to get $\Af'\lesssim L^{-\gamma_0-\eta}$ for the molecule $\Mb'$. This completes the proof of Proposition \ref{moleprop3}.
\subsection{Proof of Proposition \ref{moleprop4}}\label{lgmole4} In this subsection we prove Proposition \ref{moleprop4}. For this purpose we need to introduce one more operation (and operation sequence), which we denote by (Y), as follows. Suppose $\Mb$ contains a triple bond between two atoms $(v_1,v_2)$. If these two atoms have at most one extra bond, or if they have two extra single bonds connecting to the same third atom, then we remove them and all the bonds, and call this (Y1); otherwise there are two extra single bonds connecting $v_1$ to $v_3$ and $v_2$ to $v_4$ (with $v_3\neq v_4$), then we remove $(v_1,v_2)$ and all the bonds, and add one new bond between $v_3$ and $v_4$ matching the directions of the removed single bonds, and call this (Y2). If after (Y2) a new triple bond forms between $v_3$ and $v_4$, we then apply another operation (Y) to them, and so on, until there is no more triple bonds, and call this (Y) sequence. Note that (Y) sequence may involve a ladder, as illustrated in Figure \ref{fig:yoper}.
  \begin{figure}[h!]
  \includegraphics[scale=.42]{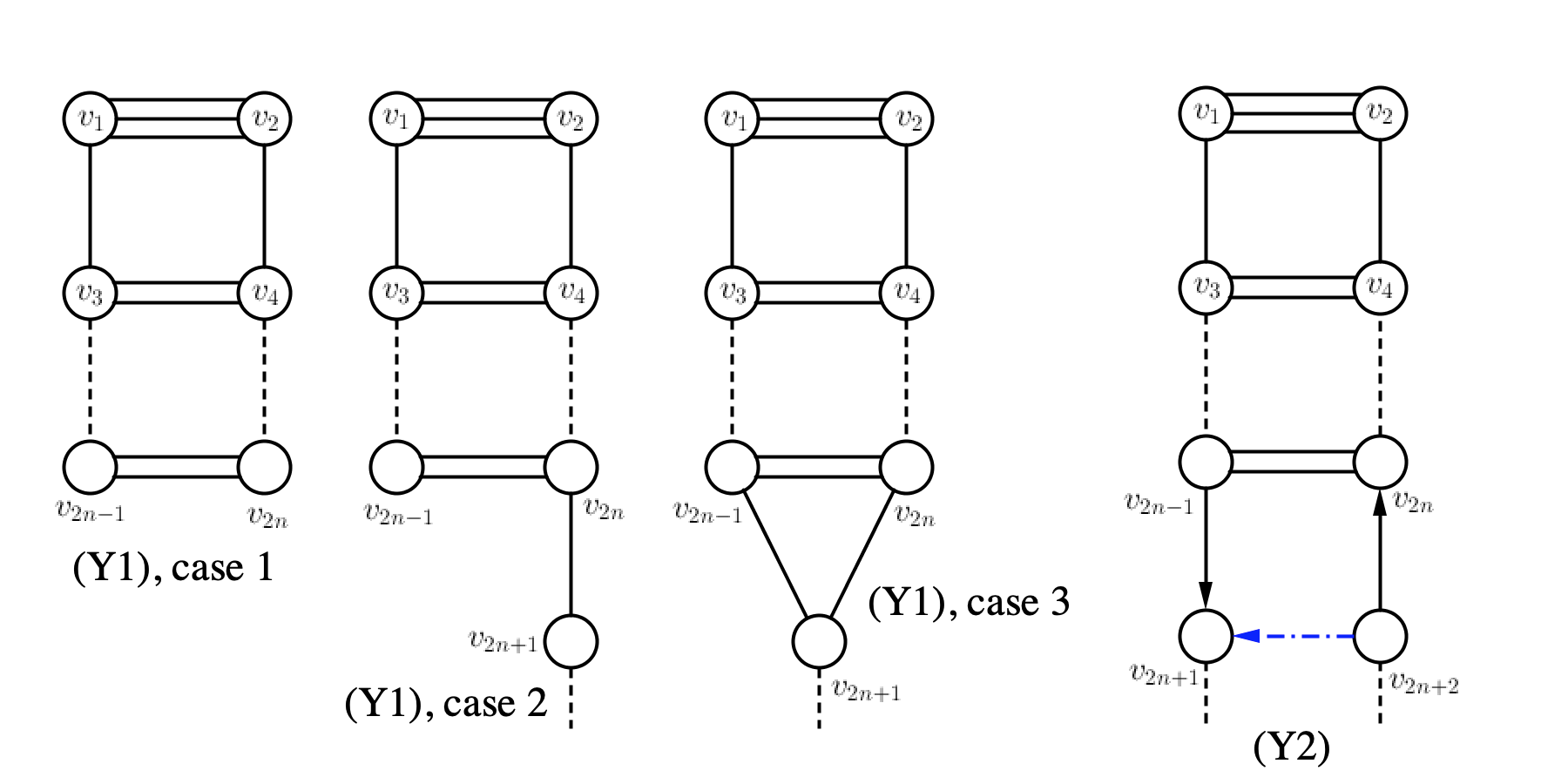}
  \caption{The structure involved in (Y) sequence, which may contain a ladder. In the sequence we remove all atoms up to $v_{2n}$ and all bonds; if the last operation is (Y2), we also add a new bond (the blue one) between $v_{2n+1}$ and $v_{2n+2}$.}
  \label{fig:yoper}
\end{figure} 

We need two lemmas concerning (Y) sequences before proving Proposition \ref{moleprop4}.
\begin{lem}\label{ylem1} Suppose $\Mb$ is $4$-regular with \emph{at most two triple bonds} and $\Mb'$ is formed from $\Mb$ by (Y) sequences. If $\Mb'$ is \emph{either a quadruple bond or a triangle formed by $3$ double bonds}, then $\Mb$ is formed by removing the two joints of a vine and adding one new bond between the pair of atoms connected to each joint.
\end{lem}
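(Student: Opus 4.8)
\textbf{Proof proposal for Lemma \ref{ylem1}.}

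The plan is to run the (Y) sequences \emph{in reverse}, reconstructing $\Mb$ from the terminal molecule $\Mb'$ by reversing each (Y1) or (Y2) operation one at a time. Reversing a (Y1) means reintroducing two atoms joined by a triple bond, with at most one external bond (or two external single bonds to a common third atom); reversing a (Y2) means deleting a bond $\ell$ between two atoms $v_3,v_4$ and inserting in its place a triple-bonded pair $(v_1,v_2)$ with $v_1$ joined to $v_3$ and $v_2$ joined to $v_4$ by single bonds matching the direction of $\ell$. Since $\Mb$ has at most two triple bonds, the entire forward process consists of at most two (Y) sequences, each of which is a chain of (Y2)'s terminating in one (Y1); so the reverse reconstruction has a bounded, explicitly enumerable structure. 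First I would set up this reverse dictionary carefully and record, for each reversal step, how the set of triple bonds and the degree sequence change, so that the ``at most two triple bonds in $\Mb$'' hypothesis can be used to bound the number of reversal steps.

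Next I would treat the two terminal cases separately. If $\Mb'$ is a quadruple bond (two atoms joined by four bonds), then a single reverse-(Y) sequence applied at this quadruple bond — which one checks is forced, since $\Mb'$ has no single bonds to delete, so the last forward operation must have been a (Y1) producing the quadruple structure, and the reverse of a chain of (Y2)'s preceding it walks a ladder — reconstructs precisely a block with two joints whose internal structure, upon comparison with Figure \ref{fig:vines}, is one of vines (I), (V), or whichever vines whose ``core'' collapses to a quadruple bond; here I would invoke the enumeration already carried out in Proposition \ref{molecpl} and the discussion around Figure \ref{fig:vinesintro2}. If $\Mb'$ is a triangle of three double bonds, the analogous reverse reconstruction at (one or two of) its double-bonded edges, combined with the ladder-walking reverse-(Y2) chains, reproduces exactly the remaining vine families; the case illustrated in Figure \ref{fig:vinesintro2} is the prototype, and the other cases are obtained by the obvious variants of which edge(s) of the triangle one reverses along and how the single bonds attach. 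In each case the reconstructed $\Mb$ is, by inspection, the molecule obtained from a vine $\Vb$ by removing its two joints $v_1,v_2$ together with their incident bonds and adding one bond between the two atoms that were connected to each joint — which is exactly the asserted description.

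The main obstacle I expect is the \emph{bookkeeping of directions and of the ladder insertions} in the reverse-(Y2) chains: a (Y) sequence can drag a ladder of arbitrary length (Figure \ref{fig:yoper}), so one must verify that reversing such a chain always reproduces a genuine ladder inside the vine (with the parallel single bonds carrying opposite directions, as required in Definition \ref{defvine}) rather than some spurious configuration, and that the bond directions inherited from the forward (Y2) steps are consistent with a molecule. A secondary point requiring care is ensuring the enumeration is exhaustive: one must check that \emph{every} way of distributing the (at most two) triple bonds of $\Mb$ across (at most two) (Y) sequences, and every choice of where the single bonds attach at each (Y2), leads — after running forward — to one of the two terminal molecules only when $\Mb$ has the claimed vine-derived form, and conversely. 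Both of these are finite checks once the reverse dictionary is fixed, so the lemma reduces to a (somewhat tedious but mechanical) case analysis whose skeleton is already visible in Figures \ref{fig:vines}, \ref{fig:vinesintro2}, and \ref{fig:yoper}.
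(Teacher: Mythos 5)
Your overall strategy---reversing the (Y) sequences one step at a time and enumerating the possibilities for $\Mb$---is exactly what the paper does (the paper names the reverse of a single (Y2) step the operation (Z)). However, the details of the quadruple bond case contain genuine errors that matter for the enumeration.

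You claim that in the quadruple bond case, ``$\Mb'$ has no single bonds to delete, so the last forward operation must have been a (Y1).'' Both halves of this are wrong. The operation (Z) deletes a \emph{non-triple} bond, and in a quadruple bond any of the four bonds qualifies; there is no obstruction to applying (Z) there. More importantly, the last forward operation in each (Y) sequence \emph{must} be (Y2), not (Y1): since $\Mb'$ is $4$-regular, a terminal (Y1) would either have removed an isolated triple-bonded pair (so the sequence was vacuous) or have decreased the degree of some surviving atom, contradicting $4$-regularity. The paper records precisely this fact (``going from $\Mb$ to $\Mb'$ involves at most two (Y) sequences with last operation (Y2)'') and therefore only ever needs to invert (Y2). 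Building the reverse dictionary on a supposed terminal (Y1) would give the wrong reconstruction.

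This then propagates to a wrong conclusion: you suggest the quadruple bond case yields ``vines (I), (V), or whichever vines whose core collapses to a quadruple bond.'' In fact applying (Z) once to the quadruple bond already produces a pseudomolecule with \emph{two} triple bonds (the one formed from vine (II)), and the ``at most two triple bonds'' hypothesis then forbids any further (Z). So the quadruple bond case corresponds to vine (II) only. Vine (I) is a single double bond, which contains no triple bond and hence admits no nontrivial (Y) sequence at all, so it cannot reach a quadruple bond; vine (V) likewise does not arise here. The triangle case of your proposal is closer in spirit (it does produce the remaining families), but the case split over where the second (Z) is applied---which is what distinguishes vines (III), (IV), (VI), (VII)---is not actually carried out, only gestured at. In short: right framework, but the quadruple-bond branch rests on two incorrect assertions, and the resulting vine list is wrong.
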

\begin{proof} The proof is an enumeration of all possibilities. Clearly going from $\Mb$ to $\Mb'$ involves at most two (Y) sequences with last operation (Y2). To invert one such sequence, one simply selects a non-triple bond from $\Mb'$ (see e.g. the blue one in Figure \ref{fig:yoper}), remove it, then insert the new structure shown in case (Y2) of Figure \ref{fig:yoper}; we call this (Z). If $\Mb'$ is a quadruple bond, then by applying (Z) once, we get a pseudomolecule formed from vine (II) (by removing the two joints of a vine and adding one new bond between the pair of atoms connected to each joint; same below). This already has two triple bonds, so one cannot further apply (Z) at any other bond (or one would produce a third triple bond), and hence $\Mb$ is formed by vine (II).

Now suppose $\Mb'$ is a triangle formed by $3$ double bonds. We may choose any bond $\ell\in \Mb$ and apply (Z) to get an intermediate pseudomolecule $\widetilde{\Mb}$, which can be formed from vines (III), (IV), (V), (VII) or (VIII), and contains only one triple bond. Then we may choose one non-triple bond $\ell'$ in $\widetilde{\Mb}$ and apply (Z) again to get $\Mb$. The structure of $\Mb$ depends on which bond we choose:
\begin{itemize}
\item If $\ell'$ is the other bond in the double bond containing $\ell$, then $\Mb$ is formed from vine (IV);
\item If $\ell'$ is from another double bond in the triangle $\Mb'$, then $\Mb$ is formed from vine (III);
\item If $\ell'$ is a double bond inserted in the first (Z) operation, then $\Mb$ is formed from vine (VII);
\item If $\ell'$ is a single bond inserted in the first (Z) operation, then $\Mb$ is formed from (VI).
\end{itemize}
In any case, this proves  Lemma \ref{ylem1}.
\end{proof}
\begin{lem}\label{ylem2} Suppose $\Mb'$ is formed from $\Mb$ by one (Y) sequence. Fix a bond $\ell\in\Mb$, and if the last operation is (Y2), let the newly added bond be $\ell'$. Then we have $\Af(\Mb)\lesssim (\log L)^C\cdot\Af(\Mb')$. Moreover, if the last operation is (Y2), and $\ell$ is removed in the (Y) sequence, then $\Af(\Mb\backslash\{\ell\})\lesssim (\log L)^C\cdot\Af(\Mb'\backslash\{\ell'\})$.
\end{lem}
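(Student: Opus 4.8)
The plan is to analyze a single (Y) sequence, which by definition consists of a string of operations (Y1) or (Y2) applied to successive triple bonds along the structure shown in Figure \ref{fig:yoper}, terminating either when atoms are exhausted (last operation (Y1)) or when the ladder is fully contracted (last operation (Y2)). First I would reduce to the case of a single operation (Y1) or (Y2): since the whole sequence is a composition of such operations and the deviation bounds multiply, it suffices to prove that each individual operation satisfies $\Af(\Mb_{\mathrm{pre}})\lesssim(\log L)^C\Af(\Mb_{\mathrm{pos}})$, and (for (Y2)) that the corresponding bound holds when a fixed bond removed in that operation is also deleted from both sides. Here I must be slightly careful that the bond $\ell$ fixed in the statement may or may not be the bond contracted in a given (Y2) step; if it is not, it simply survives the operation and plays no role, while if it is, the next operation in the sequence continues from the newly added bond $\ell'$, so the bookkeeping of ``which fixed bond'' is inherited correctly down the sequence.

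For operation (Y1): we remove two atoms $v_1,v_2$ joined by a triple bond, where $v_1,v_2$ together have at most one external bond (or two external single bonds to a common third atom). Removing a triple bond between two atoms forces, via the decoration equations \eqref{decmole1} at $v_1$ (or $v_2$), that the three vectors on the triple bond are pairwise equal plus fixed shifts, and the resonance conditions \eqref{defomegadec} at these atoms are then automatically satisfied up to $O(1)$ choices of a scalar congruence class of size $\lesssim\delta^{-1}L^{-2\gamma}$ — exactly as in the Step 1 / triple-bond analysis already used in Proposition \ref{excessprop1} and in the proof of Proposition \ref{moleprop1}. One computes $\Delta\chi$ from $\Delta V$ and $\Delta E$ (removing a triple bond plus the two atoms gives $\Delta V=-2$, $\Delta E=-3$ or $-4$ depending on the external bonds, so $\Delta\chi\le -1$), and checks that the resulting power of $L$ in the definition \eqref{defequan} of $\Af$, together with the at most $(\log L)^C$ loss from fixing the scalar congruence class and from the effect on at most $O(1)$ ladder factors $\Xf_j$, yields $\Df\lesssim(\log L)^C$. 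The case of two external single bonds to a common atom is handled identically, using in addition the large-gap assumption to absorb any extra gap factor.

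For operation (Y2): we remove $v_1,v_2$ (and the triple bond and the two external single bonds to $v_3\ne v_4$) and add one new bond $\ell'$ between $v_3$ and $v_4$ matching directions. The key point is that, given any decoration of $\Mb_{\mathrm{pos}}$ restricted by $(\beta_v)$ and $(k_\ell^0)$, one recovers a decoration of $\Mb_{\mathrm{pre}}$ by setting the three triple-bond vectors equal to $k_{\ell'}$ plus the appropriate fixed shifts; the resonance equations at $v_1,v_2$ are then satisfied up to $O(1)$ choices of a scalar congruence class of size $\lesssim\delta^{-1}L^{-2\gamma}$. Since $\Delta V=-2$, $\Delta E=-3$, we have $\Delta\chi=-1$ and the net effect on $\Af$ is a factor $\lesssim(\log L)^C$ after accounting for the single congruence-class loss and the $O(1)$ ladder factors affected (the ladder that the (Y) sequence runs along contributes $\prod\min((\log L)^2,1+\delta L^{2\gamma}P_{ji})$ factors, each of which changes by a bounded amount or is removed — and removing such a factor only helps since it is $\ge 1$). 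The ``moreover'' statement is the same computation carried out for $\Mb\setminus\{\ell\}$ and $\Mb'\setminus\{\ell'\}$ in place of $\Mb$ and $\Mb'$: when $\ell$ is one of the bonds contracted in the last (Y2) step it becomes (after contraction) the new bond $\ell'$, so deleting $\ell$ upstream corresponds to deleting $\ell'$ downstream, and the decoration-recovery argument is unchanged.

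The main obstacle I anticipate is purely combinatorial bookkeeping rather than analytic: one must verify that a (Y) sequence touches only $O(1)$ \emph{distinct} maximal ladders of $\Mb$ and changes each of their lengths/gap-factors by a controlled amount, so that the accumulated loss from the $\Pf$-factor in \eqref{defequan} really is $(\log L)^C$ and not, say, a genuine power of $L$. This follows from the explicit local structure in Figure \ref{fig:yoper} — the sequence follows a single type-II chain (ladder), contracting it pair-by-pair — but it should be stated carefully, mirroring the treatment of ladders in Step 1 of Proposition \ref{excessprop1} and in Proposition \ref{moleprop1}. Once that is in place, chaining the single-step estimates over the whole (Y) sequence gives $\Af(\Mb)\lesssim(\log L)^C\Af(\Mb')$ and its $\ell$-deleted counterpart, completing the proof.
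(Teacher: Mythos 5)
There is a genuine gap, and it is precisely at the point you flag as your ``main obstacle'' but then dismiss.

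The core issue is that the per-step reduction you propose does not give the stated bound. If you estimate each intermediate (Y1)/(Y2) contraction by $\Df\lesssim(\log L)^C$ and chain, you accumulate $(\log L)^{Cn}$, where $n$ is the ladder length, which can be comparable to the molecule size $\sim(\log L)^4$. This is not $(\log L)^C$. The paper avoids this by treating the whole (Y) sequence as a single operation and proving the \emph{sharp} bound $\Df\lesssim 1$ for all but $O(1)$ of the intermediate double-bond removals, exactly as in \eqref{laddersharp}: the $\Xf^{-1}$ factor from Lemma \ref{basiccount} (3) cancels the factor $\Xf$ lost from $\Pf$ when a pair of ladder atoms is removed. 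Your parenthetical remark that ``removing such a [$\Pf$] factor only helps since it is $\geq 1$'' has the inequality going the \emph{wrong} way: $\Pf$ is a multiplicative factor in the definition \eqref{defequan} of $\Af$, so shrinking $\Pf$ shrinks $\Af(\Mb')$, which makes the target inequality $\Af(\Mb)\lesssim(\log L)^C\Af(\Mb')$ \emph{harder}, not easier. The whole point is that the shrinkage of $\Pf$ is exactly compensated by the $\Xf^{-1}$ gain in Lemma \ref{basiccount} (3). Without invoking this cancellation, the proof does not close.

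Two smaller issues. First, your $\Delta\chi$ computation for a single (Y2) step is off: removing a triple bond ($-3$), the two external singles ($-2$), and adding one bond ($+1$) gives $\Delta E=-4$ and $\Delta V=-2$, hence $\Delta\chi=-2$, not $-1$. Second, the decoration correspondence in (Y2) requires care that you skip: the paper defines a map from decorations of $\Mb$ to decorations of $\Mb'$ by assigning $k_{\ell'}=k_{\ell_2}+g(k_{\ell_1}-k_{\ell_2})$ for a suitably chosen $|g|\lesssim 3$ precisely to guarantee that the resulting decoration of $\Mb'$ still satisfies (a slightly weakened) LG assumption, so that it is actually counted in $\Cf(\Mb')$; once that decoration of $\Mb'$ is fixed, the bottom-to-top counting via Lemmas \ref{basiccount} (2), (3) bounds the multiplicity. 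Simply ``setting the triple-bond vectors equal to $k_{\ell'}$ plus fixed shifts'' without this LG check does not produce a valid injection into the restricted decoration set. The ``moreover'' clause likewise needs the specific starting point: one fixes a decoration of $\Mb'\backslash\{\ell'\}$, starts from the atom $v_j$ incident to $\ell$, applies Lemma \ref{basiccount} (2) there, then Lemma 9.14 of \cite{DH21} at the adjacent ladder atom, and only then propagates outward using Lemma \ref{basiccount} (3).
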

\begin{proof} We use the notation of the different cases in Figure \ref{fig:yoper}. First note that, in any case, the (Y) sequence removes a ladder of length $n-2$ as in Figure \ref{fig:yoper}. If the last operation is (Y1) in case 1 or case 2, then we have $\Delta\chi=-2n$; once a decoration of $\Mb'$ is fixed, we may examine the remaining part of decoration, going from bottom to top, using Lemma \ref{basiccount} (3) for each step and Lemma \ref{basiccount} (2) for the last step, to bound $\Df\lesssim (\log L)^C$ for this (Y) sequence. Note that for all but $O(1)$ operations in this (Y) sequence we have the sharp bound $\Df\lesssim 1$, in the same way as (\ref{laddersharp}), in view of the $\Xf^{-1}$ factor in Lemma \ref{basiccount} (3) and the definition of the $\Pf$ product. Next, if the last operation is (Y1) in case 3, then we have $\Delta\chi=-(2n+1)$; once a decoration of $\Mb'$ is fixed, we again go from bottom to top in exactly the same way as above, the only difference being that we consider the two bonds at $v_{2n+1}$ in the first step, using Lemma \ref{basiccount} (1) and the LG assumption at $v_{2n+1}$, to get $\Df\lesssim L^{-\eta/2}$.

Now suppose the last operation is (Y2) with $\Delta\chi=-2n$. Given a decoration of $\Mb$, let the two bonds connecting $v_{2n-1}$ and $v_{2n}$ to $v_{2n-3}$ and $v_{2n-2}$ (see Figure \ref{fig:yoper}) be $\ell_1$ and $\ell_2$, and let the new bond be $\ell'$, then $k_{\ell_1}-k_{\ell_2}$ is fixed and $|k_{\ell_1}|^2-|k_{\ell_2}|^2$ is fixed up to distance $\delta^{-1}L^{-2\gamma}$ due to Lemma 9.14 of \cite{DH21}. We then define a decoration of $\Mb'$, as in operation (d) and (e) in the proof of Proposition \ref{kqmainest2} in Section \ref{lgmole1}, by assigning $k_{\ell'}=k_{\ell_2}+g(k_{\ell_1}-k_{\ell_2})$ for some $|g|\les 3$. This will keep the LG assumption for $\Mb'$ (which is weakened by a constant multiple, but this does not matter since we will only ever perform (Y) sequence $O(1)$ times). Once a decoration of $\Mb'$ is fixed, then $k_{\ell_1}$ and $k_{\ell_2}$ are also fixed, and we can go from bottom to top just as above to show that $\Df\lesssim (\log L)^C$ for this (Y) sequence.

Finally, assume the last operation is (Y2) and $\ell$ is a bond removed in this sequence, then $\Mb'\backslash\{\ell'\}$ is formed from $\Mb\backslash\{\ell\}$ by removing all atoms up to $v_{2n}$ and all bonds other than $\ell$, an operation with $\Delta\chi=-2n$. Once a decoration of $\Mb'\backslash\{\ell'\}$ is fixed, we simply start from the atom $v_j$ containing $\ell$ and apply Lemma \ref{basiccount} (2), then apply Lemma 9.14 of \cite{DH21} to fix the values of $k_{\ell_i}$ for bonds $\ell_i$ at the atom $v_{j\pm 1}$ connected to $v_j$ by a double or triple bond. Next, we simply go from $(v_j,v_{j\pm1})$ both upwards and downwards, using Lemma \ref{basiccount} (2) and (3) exactly as above, to show that $\Df\lesssim (\log L)^C$ for this sequence of operation. This completes the proof.
\end{proof}
With Lemmas \ref{ylem1} and \ref{ylem2}, we can now prove Proposition \ref{moleprop4}.
\begin{proof}[Proof of Proposition \ref{moleprop4}] Start with (1). Since $\Mb$ has at most one triple bond, we can apply (Y) sequence once to get $\Mb'$ which has no triple bond. The last operation cannot be (Y1) case 3 due to the $\beta$-cut assumption for $\Mb$, and cannot be (Y1) case 1 because then $\Mb$ would be formed from vine (II) by removing the two joints of a vine (II) and adding one new bond between the two atoms connected to one of the joints. Now, if the last operation is either (Y1) case 2 or (Y2), then we have $E=2V-1$ for $\Mb'$, so by Proposition \ref{moleprop2} and Lemma \ref{ylem2} we get \[\Af\lesssim (\log L)^C\Af'\lesssim L^{-3\gamma_0/5}(\log L)^C\lesssim L^{-4\gamma_0/7}.\]

Now consider (2). Since $\Mb$ has at most two triple bonds and is not formed from a vine, using Lemma \ref{ylem1}, we can apply at most two (Y) sequences to reduce it to $\Mb'$, which does not contain triple or quadruple bond, and is not a triangle formed by $3$ double bonds. By Lemma \ref{ylem2} it suffices to prove the same result (with slightly better powers) for $\Mb'$ (say with a fixed bond $\ell'$).

If the last operation is (Y1), which must be case 3 (since $\Mb$ is 4-regular), then we have $E=2V-1$ for $\Mb'$, so the result for $\Af(\Mb')$ follows from Proposition \ref{moleprop2}; now we assume the last operation is (Y2), so $\Mb'$ is $4$-regular (hence cannot contain any bridge, because otherwise we get a component with odd total degree after removing the bridge). If either $\Mb'$ admits a $\beta$-cut or $\Mb'\backslash\{\ell'\}$ has a bridge, then by preforming this cut or removing this bridge we can divide $\Mb'$ or $\Mb'\backslash\{\ell'\}$ into two molecules with $E=2V-1$ and no triple bonds, so the result for $\Af(\Mb')$ or $\Af(\Mb'\backslash\{\ell'\})$ follows from Proposition \ref{moleprop2} (note also that $(L^{-3\gamma_0/5})^2\lesssim L^{-\gamma_0-\eta_0/2}$). Finally, if $\Mb'$ has only single bonds, then the result for $\Af(\Mb'\backslash\{\ell'\})$ follows from Proposition \ref{moleprop3}.

From now on, assume that $\Mb'$ does not admit any $\beta$-cut, has at least one double bond but no triple bond, and $\Mb'\backslash\{\ell'\}$ has no bridge. Consider the following cases, where in each case we also assume that no earlier cases happen:

(I) Suppose a double bond, say between two atoms $v_1$ and $v_2$, shares a common atom with the fixed bond $\ell'$, then we choose to prove $\Af(\Mb'\backslash\{\ell'\})\lesssim L^{-\gamma_0-\eta/4}$. In fact since $\Mb'$ does not admit any bridge or $\beta$-cut, we see that removing the two atoms $(v_1,v_2)$ and all the bonds from $\Mb'\backslash\{\ell'\}$ has either $\Delta\chi=-3$ or $\Delta\chi=-2$. In the former case we have $\Df\lesssim L^{-\gamma_0}(\log L)^C$ by Lemma \ref{basiccount} (4), and the resulting molecule has $E=2V-2>0$ without triple bond, so by Proposition \ref{moleprop2} we have $\Af(\Mb'')\lesssim L^{-\eta/3}$, and hence $\Af(\Mb'\backslash\{\ell'\})\lesssim L^{-\gamma_0-\eta/4}$. In the latter case we have $\Df\lesssim (\log L)^C$ by Lemma \ref{basiccount} (3), and the resulting molecule $\Mb''$ has two components each satisfying $E=2V-1$ and having no triple bond. By Proposition \ref{moleprop2} we have $\Af(\Mb'')\lesssim (L^{-3\gamma_0/5})^2$, which again implies $\Af(\Mb'\backslash\{\ell'\})\lesssim L^{-\gamma_0-\eta/4}$.

(II) Suppose two bonds $(\ell_1,\ell_2)$ form a \emph{bi-bridge}, such that $\Mb'$ becomes disconnected after removing both bonds; by assumption we know $\ell'\not\in\{\ell_1,\ell_2\}$. By Lemma 9.14 of \cite{DH21} we know that $k_{\ell_1}- k_{\ell_2}$ is fixed and $|k_{\ell_1}|^2- |k_{\ell_2}|^2$ is fixed up to distance $O(n\delta^{-1}L^{-2\gamma})$, where $n\lesssim(\log L)^C$ is the size of $\Mb'$. If $k_{\ell_1}\neq k_{\ell_2}$, then we choose to prove $\Af(\Mb')\lesssim L^{-\eta/4}$, and remove the bonds $(\ell_1,\ell_2)$ to get a new molecule $\Mb''$. By Lemma \ref{basiccount} (1) we have $\Df\lesssim L^{\gamma_0}(\log L)^C$ for this operation, and $\Mb''$ has two components with $E=2V-1$ and no triple bonds, so Proposition \ref{moleprop2} implies that \[\Af(\Mb')\lesssim L^{\gamma_0}(\log L)^C\Af(\Mb'')\lesssim L^{\gamma_0}(\log L)^C(L^{-3\gamma_0/5})^2\lesssim L^{-\eta/4}.\]

Now if $k_{\ell_1}=k_{\ell_2}$, then we choose to prove $\Af(\Mb'\backslash\{\ell'\})\lesssim L^{-\gamma_0-\eta/4}$. We remove the bonds $(\ell_1,\ell_2)$ from $\Mb'\backslash\{\ell'\}$, but add one new bond $\ell_3$ between the two endpoints of $\ell_1$ and $\ell_2$ that belong to the component containing $\ell'$, matching the directions of $(\ell_1,\ell_2)$. This generates a new molecule $\Mb''$, and any decoration of $\Mb'\backslash\{\ell'\}$ leads to a decoration of $\Mb''$ by defining $k_{\ell_3}=k_{\ell_1}=k_{\ell_2}$, so the operation going from $\Mb'\backslash\{\ell'\}$ to $\Mb''$ has $\Df\lesssim(\log L)^C$. By our choice, $\Mb''$ has two components with $E=2V-1$ and at most one triple bond. Moreover the component with (possibly) one triple bond cannot be the exceptional case described in part (1) above, because then it would have a double bond between the only two degree $3$ atoms, which is impossible because $\Mb'$ has no triple bond and the two endpoints of $\ell'$ cannot be connected by a double bond in $\Mb'\backslash\{\ell'\}$. Therefore, by using Propositions \ref{moleprop2} and part (1) just proved, first considering the component of $\Mb''$ containing $\ell_3$ and then the one not containing $\ell_3$, we get
\[\Af(\Mb'\backslash\{\ell'\})\lesssim(\log L)^C\Af(\Mb'')\lesssim (\log L)^C(L^{-3\gamma_0/5})^2\lesssim L^{-\gamma_0-\eta/4}.\]

(III) Suppose $\Mb'$ has a double bond $(\ell_1,\ell_2)$ between two atoms $v_1$ and $v_2$, and each $v_j$ has two extra bonds $(\ell_{2j+1},\ell_{2j+2})$, such that $|k_{\ell_i}\pm k_{\ell_j}|\leq L^{-\gamma+\eta}$ for some $i\in\{3,4\}$ and $j\in\{5,6\}$. We then choose to prove $\Af(\Mb')\lesssim L^{-\eta/4}$, and remove $(v_1,v_2)$ and all the bonds. This does not disconnect $\Mb'$ (otherwise $\Mb'$ would admit a $\beta$-cut or bi-bridge), so $\Delta\chi=-4$. The number of choices for $(k_{\ell_1},\cdots,k_{\ell_6})$ is bounded by first fixing $(k_{\ell_i},k_{\ell_j})$ and applying Lemma \ref{basiccount} (3), which results in
\[L^d(L^{1-\gamma+\eta})^dL^{2(d-\gamma)}\lesssim L^{4(d-\gamma)-\gamma_0+d\eta},\] so $\Df\lesssim L^{-\gamma_0/2}$ for this operation, and the new molecule $\Mb''$ satisfies $\Af(\Mb'')\lesssim 1$ by Proposition \ref{moleprop1}, which then implies $\Af(\Mb')\lesssim L^{-\eta/4}$.

(IV) Finally, suppose $\Mb'$ has a double bond as in (III), but no inequality $|k_{\ell_i}\pm k_{\ell_j}|\leq L^{-\gamma+\eta}$ holds (and $\ell'\not\in\{\ell_1,\cdots,\ell_6\}$). Then we merge the two atoms $(v_1,v_2)$ into one atom which has bonds $(\ell_3,\cdots,\ell_6)$, to get a new pseudomolecule $\Mb''$. A decoration of $\Mb'$ naturally leads to a decoration of $\Mb''$, which is also LG by our assumptions; the operation going from $\Mb'$ to $\Mb''$ has $\Delta\chi=-1$ and $\Df\lesssim L^{-\eta/2}$ by Lemma \ref{basiccount} (1) and the LG assumption at $v_1$. Therefore the bound for $\Af(\Mb')$ (or $\Af(\Mb'\backslash\{\ell'\}$) follows from the same bound for $\Af(\Mb'')$ (or $\Af(\Mb''\backslash\{\ell'\}$). Note that $\Mb''$ has no quadruple bond (otherwise $\Mb'$ would be a triangle of double bonds) and no triple bond (otherwise $\Mb'$ would contain a triangle with one single bond and two double bonds, and the two outgoing bonds of this triangle would form a bi-bridge), so if $\Mb''$ is not a triangle of double bonds, we can repeat the same arguments above for $\Mb''$ until it either becomes a triangle of double bonds or runs out of double bonds (in this latter case we prove the bound for $\Af(\Mb''\backslash\{\ell'\})$ using Proposition \ref{moleprop3}). If $\Mb''$ is a triangle of double bonds, then $\Mb'$ must be a quadrilateral of double bonds, in which case we can prove the bounds for $\Af(\Mb'\backslash\{\ell'\})$ by first using Lemma \ref{basiccount} (4) and then using Lemma \ref{basiccount} (1) plus the LG assumption. This completes the proof of Proposition \ref{moleprop4}.
\end{proof}
\subsection{Addressing degenerate cases}\label{extradegen} In this subsection we discuss the possibility of degenerated cases, defined by $k_2\in\{k_1,k_3\}$ (and hence $k_1=k_2=k_3$) in (\ref{defset}), see Remark \ref{nonresrem}. Such degeneracies may occur at various stages in the main proof above, but due to the strong restriction $k_1=k_2=k_3$, they enjoy much better summation and counting estimates etc. than non-degenerate cases $k_2\not\in\{k_1,k_3\}$, and are easily addressed. We briefly demonstrate this below.

(1) Regular couples, trees and vines: Consider the regular couples $\Qc^{(\lf,\lf')}$ and regular trees $\Tc^{(\mf)}$ during the reduction process from $\Qc$ to $\Qc_{\mathrm{sk}}$ in Section \ref{primered}, or in the vine-like objects $\Ub_j$ during the stage 1 reduction from $\Qc_{\mathrm{sk}}$ to $\Qc_{\mathrm{sub}}$ in Section \ref{stage1red}. As is clear from the proofs of Proposition \ref{regcpltreeasymp} and Lemmas \ref{sumintest1}--\ref{sumintest2}, any degeneracies occurring in these expressions only produce lower order remainder terms (for example, they correspond to $x_j=y_j=0$ for some $j$ in Lemma \ref{sumintest1}), so they do not affect the proof.

(2) Molecule structure: As shown in Remark \ref{moleremark}, the molecule $\Qc_{\mathrm{sub}}$ may have a degree $2$ atom instead of two degree $3$ atoms if degeneracy is allowed. However in this case the degree $2$ atom $v$ must be degenerate, and the values $k_{\ell_1}=k_{\ell_2}$ for its two bonds must be fixed, so we simply remove this atom (and more atoms connected to it if needed) to reduce to the case of two degree $3$ atoms. This operation will have a huge gain $\Df\lesssim L^{-d+\gamma+\eta}$, which is enough to cover all possible losses that may occur later, so we just proceed normally thereafter. As for self connecting bonds, they are left for now and will be treated in later steps.

(3) The cutting process: Consider the cutting operations during stage 2 reduction in Section \ref{reductcut0}. Note that in selecting the collection $\Vs_0$ of SG vines in Section \ref{stage1red} we shall exclude those vines of \emph{zero gap} (i.e. with degenerate joints), so all the hinge atoms (see Proposition \ref{subpro}) produced in stage 1 reduction will not be degenerate once they are cut according to the rules in Section \ref{reductcut0}. Moreover, we shall not make any cut at non-hinge degenerate atoms, so there will be no degenerate atoms involved in the cutting process, and any $\alpha$- or $\beta$- atom will not be degenerate; however, we also allow degenerate $\varepsilon$-atoms in the resulting molecule $\Mb_{\mathrm{fin}}$.

(4) The final molecule: Suppose there is a degenerate $\varepsilon$-atom $v$ in a component $\Mb_0$ of the final molecule $\Mb_{\mathrm{fin}}$. Assume $v$ has $4$ bonds $\ell_j\,(1\leq j\leq 4)$, as other cases are similar and easier, then $k_{\ell_1}=\cdots =k_{\ell_4}:=k_0$. We shall remove this atom $v$; if $\Delta\chi\leq -2$ then this operation has a big gain $\Df\lesssim L^{-d+2\gamma+\eta}$ which is enough to cover all possible losses and we just proceed normally (using Proposition \ref{moleprop1}) thereafter. If $\Delta\chi\geq -1$, then one of $k_j$ must be a bridge, so in the specific counting problem for $\Mb_0$, the value of $k$ will be uniquely fixed due to Lemma 9.14 of \cite{DH21}; in this case the exact value of $k_{\ell_j}$ is not important, so we may replace them by some arbitrary non-degenerate configurations and proceed normally as in Sections \ref{lgmole1}--\ref{lgmole4} above.

Combining cases (1)--(4) above, this finishes the discussion of degenerate cases and concludes the proof of Propositions \ref{mainprop1}--\ref{mainprop4}.
\section{Linearization and the end of the proof}\label{linoper}
\subsection{Proof of Proposition \ref{mainprop2}}\label{linoper1} In this subsection we prove Proposition \ref{mainprop2}. The proof is a slight modification of the proofs of Propositions \ref{mainprop1} and \ref{mainprop4}, following the same arguments in Section 11 of \cite{DH21}, but with a few differences specific to this paper. 
\subsubsection{Construction of $\Xs$} Recall the notion of \emph{flower trees} and \emph{flower couples} defined in Definition 11.1 of \cite{DH21}: a flower tree is a ternary tree with one leaf specified (called \emph{flower}), and a flower couple is formed two flower trees with their leaves paired, such that the two flowers are paired to each other. The \emph{stem} of a flower tree is the unique path from its root to flower.

For any flower tree $\Tc$ and flower couple $\Qc$, define the quantities
\begin{equation}\label{jtflower}\widetilde{\Jc}_\Tc(t,s,k,k')=\bigg(\frac{\delta}{2L^{d-1}}\bigg)^m\zeta(\Tc)\sum_\Ds\epsilon_\Ds\int_{\Dc}\prod_{\nf\in\Nc}e^{\zeta_\nf\pi i \delta L^2\Omega_\nf t_\nf}\mathrm{d}t_\nf\cdot\dirac(t_{\ff^p}-s)\prod_{\ff\neq\lf\in\Lc}\sqrt{n_{\mathrm{in}}(k_\lf)}\eta_{k_{\lf}}^{\zeta_{\lf}}(\omega)\mathbf{1}_{k_\ff=k'},\end{equation}
\begin{equation}\label{kqflower}\widetilde{\Kc}_\Qc(t,s,k,k')=\bigg(\frac{\delta}{2L^{d-1}}\bigg)^{2m}\zeta(\Qc)\sum_\Es\epsilon_\Es\int_{\Ec}\prod_{\nf\in\Nc}e^{\zeta_\nf\pi i \delta L^2\Omega_\nf t_\nf}\mathrm{d}t_\nf\prod_{\ff}\dirac(t_{\ff^p}-s){\prod_{\ff\neq\lf\in\Lc^*}^{(+)}n_{\mathrm{in}}(k_\lf)}\mathbf{1}_{k_\ff=k'},\end{equation} which are slight modifications of (\ref{defjt}) and (\ref{defkq}) in the same way as (11.2) and (11.3) of \cite{DH21}. Here in (\ref{jtflower}), $\Ds$ is a $k$-decoration of $\Tc$, $\Dc$ is defined as in (\ref{defdomaind}), and the other objects are associated with the tree $\Tc$. In (\ref{kqflower}), $\Es$ is a $k$-decoration of $\Qc$, the other objects are associated with the couple $\Qc$, and the set $\Ec$ is defined as in (\ref{defdomaine}) but with $s$ replaced by $t$; the second product is taken over the two flower leafs $\ff$ and in the last product we assume $\lf$ has sign $+$ and is not one of the two flowers $\ff$ of the flower couple $\Qc$.

We now define the $\Rb$-linear operators $\Xs$ and $\Xs_m$ in Proposition \ref{mainprop2} such that its kernel
\begin{equation}\label{kernelx}(\Xs_m)_{kk'}^\zeta(t,s)=\sum_{\Tc}\widetilde{\Jc}_\Tc(t,s,k,k'),
\end{equation} where the sum is taken over all flower trees $\Tc$ such that $\Tc$ has order $m$, and the root $\rf$ and flower $\ff$ of $\Tc$ has signs $\zeta_\rf=+$ and $\zeta_\ff=\zeta$. Then, by multiplying out the definition (\ref{jtflower}), similar to the proof of Proposition 11.2 of \cite{DH21}, it is easy to see that the operators $\Ys_m$ and $\Ws_m$ defined in Proposition \ref{mainprop2} have the same expression as in (\ref{kernelx}), but with the sum taken over different sets of $\Tc$. Namely, in both cases we still require $\Tc$ has order $m$ and $\zeta_\rf=+$ and $\zeta_\ff=\zeta$, but in $\Ys_m$ we additionally require that (Y-1) the value $m>N$, (Y-2) the subtree rooted at each child node of $\rf$ has order $\leq N$. In $\Ws_m$ we additionally require that (W-1) the value $m>N$, (W-2) the subtree rooted at each of the two sibling nodes of $\ff$ has order $\leq N$, and (W-3) the flower tree obtained by replacing the parent $\ff^p$ of $\ff$ with a new flower has also order $\leq N$, see Figure \ref{fig:flowertree} for illustration. Note that the above requirement imposes that $N+1\leq m\leq 3N+1$ for both $\Ys_m$ and $\Ws_m$.
  \begin{figure}[h!]
  \includegraphics[scale=.45]{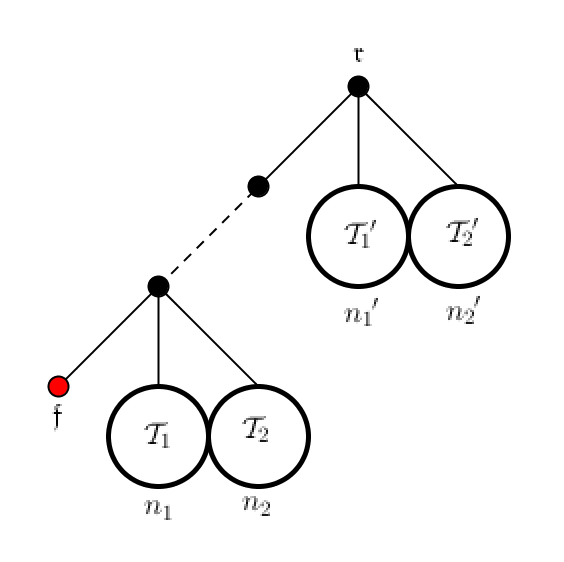}
  \caption{A flower tree $\Tc$ with root $\rf$ and flower $\ff$. Let the order of subtrees $\Tc_j$ be $n_j$ etc., and the order of $\Tc$ be $m$. Then the condition for $\Ys_m$ is that $N<m\leq N+n_1'+n_2'+1$ and $n_j'\leq N$, the condition for $\Ws_m$ is that $N<m\leq N+n_1+n_2+1$ and $n_j\leq N$.}
  \label{fig:flowertree}
\end{figure} 

By definitions (\ref{jtflower}) and (\ref{kqflower}), similar to the proof of Proposition 11.2 of \cite{DH21}, it is easy to show
\begin{equation}\label{flowerexp2}\Eb|(\Xs_m)_{kk'}^\zeta(t,s)|^2=\sum_{\Qc}\widetilde{\Kc}_\Qc(t,s,k,k'),\end{equation} where the sum is taken over all flower couples $\Qc=(\Tc^+,\Tc^-)$ such that both trees have order $m$, and the flower of tree $\Tc^{\pm}$ has sign $\pm\zeta$. The expressions for $\Ys_m$ and $\Ws_m$ are the same, except that both trees in $\Qc$ also have to satisfy the assumptions (Y-1)--(Y-2), or (W-1)--(W-3) above.

We now need to prove that the right hand side of (\ref{flowerexp2}) satisfies (\ref{mainest2}). Since the definition of $\widetilde{\Kc}_\Qc$ is almost the same as $\Kc_\Qc$ in (\ref{defkq}), the proof of (\ref{mainest2}) can also be done in almost the same way as in the proof of (\ref{mainest1}) in Proposition \ref{mainprop1}, with slight modifications due to the few differences between $\widetilde{\Kc}_\Qc$ and $\Kc_\Qc$ that lead to a loss of at most $L^{40d}$. In fact, this modification follows the same arguments in Section 11 of \cite{DH21}, with only two differences which we will discuss below.

The first difference concerns the vine cancellation (Proposition \ref{estbadvine}), which requires to group together couples that are full twists of each other (Definition \ref{twistgen}). Let [X], [Y] and [W] be the set of conditions posed on the flower tree $\Tc$ by $\Xs_m$, $\Ys_m$ and $\Ws_m$ respectively, as described above (see Figure \ref{fig:flowertree}). We only need to prove that, if $\widetilde{\Qc}$ is a full twist of $\Qc$, then both trees of $\widetilde{\Qc}$ satisfy [X] (or [Y] or [W]) if and only if both trees of $\Qc$ satisfy the same set of conditions. Note that each set of conditions only depend on the values of $(n_1,n_2,n_1',n_2')$ and $m$ as in Figure \ref{fig:flowertree}, we just need to show that each full twist (in fact each full unit twist at vine $\Vb$) does not change any of these values $n_j$ or $n_j'$. But this is obviously true, provided that the vine $\Vb$ in $\Qc_{\mathrm{sk}}$ does not contain any of the following ``special" nodes: the root $\rf$, and child of $\rf$, the flower $\ff$ (or a leaf in $\Qc_{\mathrm{sk}}$ such that the regular couple in $\Qc$ attached at it contains $\ff$), or the parent of $\ff$. In fact, in this case, suppose (for example) the vine $\Vb$ is like in Figure \ref{fig:vinescancel}, then the flower $\ff$ must belong to one of parts (A)--(D) in Figure \ref{fig:vinescancel}, so changing $\Qc$ to its full unit twist obviously does not change the values of $n_1$ and $n_2$; similarly it does not change the values of $n_1'$ and $n_2'$. Finally, as for the vines containing any of the special nodes, clearly the number of such vines does not exceed the number of special nodes which is less than $10$. Since each vine without exploiting cancellation only leads to loss of at most $L^{1/2}$, all these exceptional vines will lead to at most $L^5$ loss which is acceptable in view of (\ref{mainest2}).
\subsubsection{An extra argument with ladders} The second difference between the current proof and Section 11 of \cite{DH21} is as follows. Note that the factor $n_{\mathrm{in}}(k_\ff)$ is absent from (\ref{kqflower}) and replaced by $\mathbf{1}_{k_\ff=k'}$, so one does not have a decay factor in $k_\ff$, instead this $k_\ff$ is equal to a fixed vector $k'$. In fact, for any node $\nf$ on the stem, we do not have a decay factor in $k_\nf$, but instead a decay factor in $k_\nf-k'$ (or equivalently $k_\nf-k$). Note that the shift $k'$ is \emph{fixed but may be arbitrarily large}; fortunately most of the proof in the previous sections is translation invariant. In fact, the only part that is not translation invariant is the proof of the $L^1$ bound (\ref{1stsumbound}) in Section \ref{stage1red}. Here, in that proof we are using the fact that $\Omega_{\nf_1}\pm\Omega_{\nf_2}=r_j\cdot (k_\mf\pm k_{\mf'})$ to control the number of possibilities for $\sigma_{\nf_1}\pm\sigma_{\nf_2}$, where $|r_j|\sim P_j$ is the gap of a given ladder, $(\nf_1,\nf_2)$ is a pair of branching nodes corresponding to two atoms in the ladder connected by a double bond, and $\sigma_{\nf_j}=\lfloor\delta L^{2\gamma}\Omega_{\nf_j}\rfloor$. Now if $|r_j|\sim P_j$ and each $k_\mf$ belongs to a unit ball \emph{centered at $0$}, then we always have $|\sigma_{\nf_1}\pm\sigma_{\nf_2}|\leq \delta L^{2\gamma}P_j$; however, if $k_\mf$ belongs to a unit ball \emph{centered at $k'$}, then with $r_j$ also allowed to vary, we can no longer restrict $\sigma_{\nf_1}\pm\sigma_{\nf_2}$ to a fixed interval of length $\delta L^{2\gamma}P_j$, but only have $\sigma_{\nf_1}\pm\sigma_{\nf_2}=\kappa\delta L^{2\gamma}(r_j\cdot k')+O(\delta L^{2\gamma}P_j)$ with $\kappa\in\{-2,-1,0,1,2\}$, which may cause problems in counting the number of possibilities for $\sigma_{\nf_1}\pm\sigma_{\nf_2}$.

This issue is resolved by examining the proof of (\ref{1stsumbound}) in Section \ref{stage1red}, which follows the same arguments as in Sections 10.1--10.2 of \cite{DH21}. By going through the combinatorial arguments in Section 10.1 of \cite{DH21}, we can show that, apart from at most $O(\rho)$ counterexamples which are negligible (where $\rho=\rho_{\mathrm{sub}}$ is defined in Proposition \ref{kqmainest2}), for each double bond connecting two atoms in a given ladder, either (1) both bonds are LP bonds (see Definition \ref{defmole}), or (2) one bond is LP, the other bond is PC, and each of the two pairs of parallel single bonds at these two atoms is also one LP and one PC. Moreover, we may assume the gap $|r_j|\sim P_j\lesssim 1$, otherwise the power gain from (\ref{countc1-1}) easily covers the log loss. Then, in case 2, for each of the four single bonds $\ell$, the decoration $k_\ell$ is \emph{not} shifted by $k'$; indeed, this is true for the LP bond $\ell$ since $k_\ell=k_\lf$ for some leaf $\lf$ which may not belong to the stem apart from at most one counterexample, and is also true for the parallel PC bond due to the assumption $|r_j|\lesssim 1$. Then, we would have $|\sigma_{\nf_1}\pm\sigma_{\nf_2}|\lesssim \delta L^{2\gamma}P_j$ again, so the proof in Section \ref{stage1red} still goes through.

It remains to consider case 1 of an LP-LP double bond. In this case we will assume $\sigma_{\nf_1}\pm\sigma_{\nf_2}=A+O(\delta L^{2\gamma}P_j)$, where $A$ is a quantity that depends only on the gap $r_j$ of the ladder. By repeating the arguments in the proof of (\ref{1stsumbound}) before, we may reduce to the case $\sigma_{\nf_1}\pm\sigma_{\nf_2}=A+\mu$ with at most $C^n$ loss, where $\mu$ is now a fixed integer. We next classify the possibilities of $A$; recall that in the setting of Section 10 of \cite{DH21}, each $\Omega_\nf$ (and hence $\sigma_\nf$) variable belongs to a fixed set which is the union of at most $L^{10d}$ unit intervals, so $A$ has $\lesssim L^{20d}$ choices. For each fixed $A$, the arguments in Section 10.2, case 1 of \cite{DH21} implies that the left hand side of (\ref{1stsumbound}) is bounded by the same expression but without the $(\sigma_{\nf_1},\sigma_{\nf_2})$ variables, multiplied by a factor of
\begin{equation}\label{smallloss}\int_\Rb\frac{1}{\langle \alpha\rangle}\cdot\frac{1}{\langle \alpha+A-\mu\rangle}\,\mathrm{d}\alpha\lesssim\frac{\log (2+|A-\mu|)}{\langle A-\mu\rangle}.\end{equation} Once $A$ is fixed, the whole ladder can be treated in the same way as in \cite{DH21} without further loss; finally we sum in different choices of $A$, and summing up the factor in (\ref{smallloss}) yields a factor $\lesssim (\log L)^2$, which is acceptable. This completes the proof of Proposition \ref{mainprop2}.
\subsection{Proof of Theorem \ref{main}}\label{linoper2} In this subsection we prove Theorem \ref{main}. Note that with Proposition \ref{mainprop1} is proved, by (\ref{mainest1}) and (\ref{correlation}) we have that \[\Eb|(\Jc_n)_k(t)|^2\lesssim\langle k\rangle^{-20d}(C^+\sqrt{\delta})^n\]for any $0\leq n\leq N^3$. This plays the role of Proposition 2.5 in \cite{DH21}, while Propositions \ref{mainprop2} and \ref{mainprop4} play the roles of Propositions 2.6 and 2.7 in \cite{DH21}.

Therefore, we may repeat the arguments in Section 12 of \cite{DH21} to control the remainder term $\textit{\textbf{b}}$. Note that, strictly speaking, we are actually applying the version of this argument in Section 4 of \cite{DH21-2}, because here we have $N=\lfloor(\log L)^4\rfloor$ as in \cite{DH21-2}, but the proof can be easily adapted. Another difference here concerns the invertibility of $1-\Ls$, which follows from Proposition \ref{mainprop2}. In fact, the same proof as in Section 12 of \cite{DH21} (but with $N=\lfloor(\log L)^4\rfloor$ and with a corollary in the form of Corollary 11.3 of \cite{DH21}, which follows from the same proof) yields that
\[\|\Xs_m\|_{Z\to Z}+\|\Ys_m\|_{Z\to Z}+\|\Ws_m\|_{Z\to Z}\lesssim (C^+\sqrt{\delta})^{n/2}L^{60d}\] with probability $\geq 1-e^{-(\log L)^2}$, which plays the role of Proposition 12.2 of \cite{DH21}. This then implies that
\[\|\Xs\|_{Z\to Z}\leq L^{61d},\quad \|\Ys-1\|_{Z\to Z}+\|\Ws-1\|_{Z\to Z}\leq 1/2.\] But $\Ys=(1-\Ls)\Xs$ and $\Ws=\Xs(1-\Ls)$, so the invertibility of \emph{both} $\Ys$ and $\Ws$ by Von Neumann series, implies that $1-\Ls$ has both left and right inverse, hence it is invertible. In particular
\[\|(1-\Ls)^{-1}\|_{Z\to Z}\leq \|\Xs\|_{Z\to Z}\cdot\|\Ys^{-1}\|_{Z\to Z}\leq L^{62d}.\] The rest of the proof in Section 12 of \cite{DH21} then carries along, which allows one to control the remainder term $\textit{\textbf{b}}$, and complete the proof of Theorem \ref{main}.
\appendix
\section{Auxiliary results}\label{aux}
\subsection{Counting estimates}\label{basiccounting} We collect the vector counting estimates used in Sections \ref{reduct2} and \ref{lgmole}.
\begin{lem}\label{basiccount0pre} (1) Let $A\subset\Rb^2$ be the intersection of an annulus with radius $Q$ and width $Q^{-1}\ll\rho\ll Q$, and a disc of radius $L$. Then we have
\begin{equation}\label{disclattice}\#(A\cap\Zb^2)\lesssim\min\big(Q\rho+Q^{7/11},(Q\rho)^{1/2}+L(Q\rho)^{3/2}Q^{-1}+L(Q\rho)^{1/2}Q^{-1/3}\big).
\end{equation}

(2) Fix two dyadic numbers $1\ll\rho\ll L$ and $1\lesssim\theta\ll \rho$, let $A,B\subset \Rb^d$ be two balls of radius $L$ and $\alpha\in\Rb$, then we have
\begin{equation}\label{estcross}
\#\bigg\{x\in A\cap \Zb^d:\sup_{B,\alpha}\# \{y\in B\cap\Zb^d:|x\cdot y-\alpha|\lesssim\rho\}\gtrsim L^{d-1}\theta^{-1}\bigg\}\lesssim \rho^d+L^{d-2}(\log L)^C\rho^2\theta^2.
\end{equation}
\end{lem}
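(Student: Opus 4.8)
The two parts are independent; I would prove Part (1), a planar lattice-point count, and then Part (2), a geometry-of-numbers argument.

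For Part (1), the estimate $Q\rho+Q^{7/11}$ is obtained by \emph{discarding the disc}. After a translation, $A\cap\Zb^2$ lies in an annulus $\{Q-\rho/2\le|z-c|\le Q+\rho/2\}$ for some $c\in\Rb^2$, so its cardinality is at most $N_c(Q+\rho/2)-N_c(Q-\rho/2)$, where $N_c(t)=\#\{z\in\Zb^2:|z-c|\le t\}=\pi t^2+O(t^{7/11})$ is the (possibly shifted) Gauss circle counting function; expanding this difference and using $\rho\ll Q$ gives $\lesssim Q\rho+Q^{7/11}$. The second estimate uses the disc to localise: the outer bounding circle of the annulus meets the disc of radius $L$ in a single arc, and $A$ is contained in the $\rho$-neighbourhood of that arc, up to $O(1)$ near-tangency/corner pieces which I would dispose of crudely. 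After completing the square in the defining quadratic, counting lattice points in this neighbourhood reduces to bounding $\#\{(a,b):|a^2+b^2-\kappa|\lesssim Q\rho,\ (a,b)\in\text{box of side }\lesssim L\}$, which by the circle method one may write as a main term $\lesssim L\rho$ (subsumed in the final bound, cf.\ the role of the $\min$) plus an error expressed as an integral of a squared quadratic Gauss sum against a kernel of width $(Q\rho)^{-1}$. Bounding this error by Dirichlet approximation of the frequency followed by Gauss's evaluation of quadratic Gauss sums — the same mechanism already used for \eqref{refinedHua} and in Lemma~\ref{lem:minorarcs} — produces the three terms, with $Q^{-1/3}$ and the shape $L(Q\rho)^{3/2}Q^{-1}$ coming from the major/minor-arc split and the $q^{-1/2}$ gain in the Gauss-sum bound.

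For Part (2), fix $x\in A\cap\Zb^d$; since $A$ is a ball of radius $L$ we may take $|x|\lesssim L$, and write $x=gx'$ with $g=\gcd(x)$ and $x'$ primitive, so $1\le|x'|\lesssim L$. The lattice $\Lambda_x:=\{z\in\Zb^d:\langle x,z\rangle=0\}$ has rank $d-1$ and covolume $|x'|$; hence all its successive minima lie in $[1,|x'|]\subset[1,O(L)]$, and therefore $\#\big(\Lambda_x\cap B(0,2L)\big)\lesssim L^{d-1}/|x'|$. Since $\langle x,y\rangle\in g\Zb$, for every ball $B$ of radius $L$ and every $\alpha$ the set $\{y\in B\cap\Zb^d:|\langle x,y\rangle-\alpha|\lesssim\rho\}$ is contained in $\lesssim 1+\rho/g$ translates of $\Lambda_x$, each meeting $B$ in $\lesssim L^{d-1}/|x'|$ lattice points; thus the quantity inside the supremum in \eqref{estcross} is $\lesssim (1+\rho/g)\,L^{d-1}/|x'|$. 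An $x$ contributing to the left side of \eqref{estcross} must therefore satisfy $|x'|\lesssim\theta\,(1+\rho/g)$. It then remains to count such $x$: decomposing dyadically in $g$ and in $m:=|x'|$, using $\#\{x'\ \text{primitive}:|x'|\sim m\}\lesssim m^d$ together with $gm\lesssim L$ and $m\lesssim\theta(1+\rho/g)$, the case analysis $g\le\rho$ versus $g>\rho$ bounds the total number of such $x$ by $\lesssim\min(L,\rho\theta)^d+L\theta^{d-1}+\rho^d$, and one checks directly that each of these is $\lesssim\rho^d+L^{d-2}\rho^2\theta^2$ under $\theta\ll\rho\ll L$; the $O(\log L)$ dyadic ranges produce the $(\log L)^C$ factor.

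\textbf{Main obstacle.} Part (2) is essentially bookkeeping once the elementary input $\#(\Lambda_x\cap B(0,2L))\lesssim L^{d-1}/|x'|$ (valid because $|x'|\lesssim L$) and the hyperplane-slicing bound are in place. The genuine difficulty is Part (1), and within it the second estimate: organising the dissection of the annular cap, handling the pieces where the bounding circle is nearly tangent to the disc, and invoking the sharpest convenient Gauss-sum / van der Corput estimates so that the exponents emerge as exactly $7/11$, $3/2$ and $1/3$. This is where all the analytic number-theoretic content of the lemma resides.
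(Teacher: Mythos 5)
Your treatment of the two parts differs sharply in reliability, so I'll address them separately.

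For Part (2), your geometry-of-numbers argument is a genuinely different route from the paper's and appears sound. The paper does not factor out $g=\gcd(x)$ or invoke successive minima: it fixes $|x^1|=\max_j|x^j|$, disposes of the case $|x^1|\lesssim\rho$ trivially, and then applies pigeonhole twice to manufacture integer pairs $(a^1,a^2)$ and $(b^1,b^3)$ with $0<|a^2|,|b^3|\lesssim\theta$ and $|a^1x^1+a^2x^2|,|b^1x^1+b^3x^3|\lesssim\rho$, after which it counts $x$ by dyadic decomposition over $(X_1,X_2,X_3,A_2,B_3)$. Your observation that $\Lambda_x$ contains the vectors $x'_ie_j-x'_je_i$ (hence all successive minima are $\lesssim|x'|\lesssim L$ and $\#(\Lambda_x\cap B(0,2L))\lesssim L^{d-1}/|x'|$), combined with the fact that $\langle x,\cdot\rangle$ takes values in $g\Zb$, cleanly recovers the necessary condition $|x'|\lesssim\theta(1+\rho/g)$ and the dyadic count then goes through; the pigeonhole step of the paper is the ``by hand'' version of extracting your two short vectors. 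Both routes are valid.

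Part (1), second bound, is where you have a genuine gap. The paper's argument here is purely geometric and elementary: the annular cap has angular extent $\alpha\lesssim L/Q$; it is subdivided into $\lesssim 1+\alpha/\theta$ sub-sections of angle $\theta$, with $\theta\ll\min(Q^{-2/3},(Q\rho)^{-1})$ chosen so that each sub-section has \emph{convex hull} of area $\lesssim Q\rho\theta+Q^2\theta^3\ll 1$ (hence contains no three non-collinear lattice points); and any chord of the full annulus has length $\lesssim(Q\rho)^{1/2}$. Multiplying $(1+LQ^{-1}\max(Q^{2/3},Q\rho))\cdot(Q\rho)^{1/2}$ produces exactly the three claimed terms, with $Q^{-1/3}$ arising from the geometric constraint $Q^2\theta^3\lesssim 1$ and $L(Q\rho)^{3/2}Q^{-1}$ from the constraint $Q\rho\theta\lesssim 1$. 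None of this uses Gauss sums or the circle method. Your proposal to run a Dirichlet/Gauss-sum argument (``the same mechanism already used for \eqref{refinedHua}'') is a different approach that you do not carry out, and the exponents you predict do not match what Gauss sums give: the minor-arc gain from $|G(s,\xi,u)|\lesssim K/\sqrt{q}$ naturally produces a power like $Q^{-1/2}$, not $Q^{-1/3}$; and $7/11$ is Huxley's circle-problem exponent, which enters only in the \emph{first} bound (where the paper indeed just discards the disc, as you say). So the second bound is asserted, not proved, in your proposal, and the claimed mechanism would not produce the stated exponents. The missing idea is the area/collinearity (Jarn\'\i k-type) argument for lattice points in a thin convex region combined with the chord-length bound for the annulus.
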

\begin{proof} (1) The first upper bound $Q\rho+Q^{7/11}$ follows from ignoring the disc or radius $L$ and using the error term bound of Huxley \cite{Hux93} for counting the number of lattice points inside a disc (or ellipse). Now let us consider the second upper bound.

Clearly $A$ is contained in an annulus section of radius $\sim Q$, width $\rho$ and angle $\alpha\lesssim LQ^{-1}$. We may divide it into at most $1+\alpha/\theta$ annulus sections of angle $\theta$ where $\theta\ll\min(Q^{-2/3},(Q\rho)^{-1})$. Note that it is easily calculated by elementary geometry, that the area of the convex hull of each smaller annulus section is $\lesssim Q\rho\theta+Q^2\theta^3\ll 1$, so it may not contain any three lattices points that are not collinear. On the other hand, the length of any line segment contained in the whole annulus is clearly $\lesssim(Q\rho)^{1/2}$, thus
\[\#(A\cap \Zb^d)\lesssim (1+LQ^{-1}\max(Q^{2/3},Q\rho))\cdot(Q\rho)^{1/2},\] which implies (\ref{disclattice}).

(2) Assume $|x^1|=\max |x^j|$ where $x_j$ are the coordinates of $x$. If $|x^1|\lesssim \rho$ then we get the trivial upper bound $\rho^d$; suppose now $|x^1|\gg \rho$. In the set of $y$ defined in (\ref{estcross}), we may fix the last $d-2$ coordinates, by pigeonhole principle and translation, up to a constant factor, we will have
\[\#\{y\in B(0,L)\cap \Zb^2:|x^1y^1+x^2y^2|\lesssim\rho\}\gtrsim L\theta^{-1}.\] Note that for any $y^2$ there exists at most one $y^1$ satisfying the above inequality, by pigeonhole principle again we see that there exist $(a^1,a^2)$ such that $0<|a^2|\lesssim \theta$ and $|a^1x^1+a^2x^2|\lesssim\rho$. In the same way, if we first fix the coordinates $(y^2,y^4,\cdots y^d)$ of $y$, we also get that $|b^1x^1+b^3x^3|\lesssim\rho$ for some $(b^1,b^3)$ with $0<|b^3|\lesssim\theta$.

Now let $|x^j|\sim X_j$ etc., then the number of choices for $(x^1,a^2,b^3)$ is at most $LA_2B_3$. When they are fixed, we must have $|a^1x^1|\lesssim |a^2x^2|+\rho$ etc., so the number of choices for $(a^1,b^1)$ is at most $(1+A_2X_2/X_1)(1+B_3X_3/X_1)$. Finally when $x^1$ and $(a^j,b^j)$ are fixed, the number of choices for $(x^2,\cdots,x^d)$ is at most $\rho^2A_2^{-1}B_3^{-1}L^{d-3}$, so the number of choices for $x$ is at most
\[\sum_{A_2,B_3,X_1,X_2,X_3}LA_2B_3\cdot\bigg(1+\frac{A_2X_2}{X_1}\bigg)\bigg(1+\frac{B_3X_3}{X_1}\bigg)\cdot \rho^2A_2^{-1}B_3^{-1}L^{d-3}\lesssim L^{d-2}(\log L)^C\rho^2\theta^2,\] noticing that each dyadic variable has at most $\log L$ choices in the summation. This completes the proof.
\end{proof}
\begin{lem}\label{basiccount0} Fix $\alpha,\beta\in\Rb$ and $r,v\in\Zb_L^d$, such that $|r|\sim P$ with $P\in[L^{-1},1]\cup\{0\}$ (with $|r|\gtrsim 1$ if $P=1$). Let each of $(x,y,z)$ belong to $\Zb_L^d$ intersecting a fixed unit ball, assume $\xi\in\{x,y,x+y\}$ and $\zeta\in\{x-v,x+y-v,x+y-v-z\}$, and define $\Xf:=\min((\log L)^2,1+\delta L^{2\gamma}P)$ as in Proposition \ref{kqmainest2}. Consider the counting problems
\begin{equation}\label{basiccount01}\big\{(x,y):|x\cdot y-\alpha|\leq \delta^{-1}L^{-2\gamma},\,\, |r\cdot\xi-\beta|\leq \delta^{-1}L^{-2\gamma}\big\}:=\Cf_1,\end{equation}
\begin{equation}\label{basiccount02}
\big\{(x,y,z):|x\cdot y-\alpha|\leq \delta^{-1}L^{-2\gamma},\,\, |\zeta\cdot z-\beta|\leq \delta^{-1}L^{-2\gamma}\big\}:=\Cf_2.
\end{equation}

(1) We have 
\begin{equation}\label{countc1-1}\Cf_1\lesssim\left\{
\begin{aligned}&\delta^{-1}L^{2(d-\gamma)}(1+\delta L^{2\gamma}P)^{-1},&&\mathrm{if}\,\,\gamma\leq1/2\,\,\mathrm{or}\,\,P=0\\
& \delta^{-2}L^{2(d-\gamma)}\min((LP)^{-1}+L^{-(2-2\gamma)},L^{-10\eta}),&&\mathrm{if}\,\,\gamma>1/2\,\,\mathrm{and}\,\,P\neq 0
\end{aligned}
\right\}\lesssim\delta^{-1}L^{2(d-\gamma)}\Xf^{-1}.
\end{equation} 

(2) If $\gamma>(4/5)-10\eta$ and $P\neq 0$, then we have $\Cf_1\lesssim\delta^{-2} L^{2(d-\gamma)-(1-\gamma)-20\eta}$.

(3) We have \begin{equation}\label{countc2-1}\Cf_2\lesssim\left\{
\begin{aligned}&\delta^{-2}L^{3d-4\gamma},&&\mathrm{if}\,\, \gamma\leq 1/2\\
&\delta^{-2}L^{3(d-\gamma)-\gamma_0-10\eta},&&\mathrm{if}\,\,\gamma>1/2
\end{aligned}
\right\}\lesssim\delta^{-2}L^{3(d-\gamma)-\gamma_0}.
\end{equation}
\end{lem}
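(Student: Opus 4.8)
The plan is to reduce everything to two basic building blocks: a \emph{one-pair} counting estimate (counting $x$ with $|x\cdot y-\alpha|\le \delta^{-1}L^{-2\gamma}$ for a fixed $y$, plus a linear gap constraint) and a \emph{two-pair} counting estimate of the type already recorded in Lemma~\ref{basiccount0pre}. For part~(1), I would first dispose of the degenerate case $P=0$ (so $r=0$ and the second constraint is either vacuous or contradicts $\xi$ being bounded away from $0$), where one simply fixes $x$ in the unit ball, loses $L^{2(d-\gamma)}$ crudely, and then for fixed $x$ counts $y$ with $|x\cdot y-\alpha|\le \delta^{-1}L^{-2\gamma}$; since $|x|\sim 1$, the hyperplane slab has the right thickness and one gets $\delta^{-1}L^{2(d-\gamma)}$, matching $\Xf=1$. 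When $\gamma\le 1/2$ and $P\neq 0$ the linear constraint $|r\cdot\xi-\beta|\le\delta^{-1}L^{-2\gamma}$, since $|r|\sim P$, confines $\xi$ (and hence confines one of $x,y,x+y$) to a slab of thickness $\sim \delta^{-1}L^{-2\gamma}P^{-1}$ inside the unit ball; counting the confined variable costs $L^{d-1}(1+\delta L^{2\gamma}P)^{-1}$ lattice points, and the remaining variable is then counted by the one-pair slab estimate at cost $\delta^{-1}L^{d-\gamma}$. Multiplying gives $\delta^{-1}L^{2(d-\gamma)}(1+\delta L^{2\gamma}P)^{-1}$, which is $\lesssim \delta^{-1}L^{2(d-\gamma)}\Xf^{-1}$ since $\Xf\le 1+\delta L^{2\gamma}P$.

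For the range $\gamma>1/2$ with $P\neq 0$, the bound $1+\delta L^{2\gamma}P$ can be too weak (it could be $\gg L^{2\gamma-1}$), so here I would instead apply the cross-counting estimate \eqref{estcross} of Lemma~\ref{basiccount0pre}(2), in dimension reduced to the plane spanned by the two relevant coordinates after fixing the complementary ones. The point is that after fixing $\xi$ in its thin slab, the number of $y$ with $|x\cdot y-\alpha|\lesssim \delta^{-1}L^{-2\gamma}$ being large forces $x$ into the exceptional set bounded in \eqref{estcross}, giving the $\min((LP)^{-1}+L^{-(2-2\gamma)},L^{-10\eta})$ saving; the $L^{-10\eta}$ alternative just records that in the LG regime $P\gtrsim L^{-\gamma+\eta}$ one always has at least a small power gain. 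Then part~(2) is the quantitative sharpening when $\gamma$ is close to $4/5$: here one additionally exploits Lemma~\ref{basiccount0pre}(1), the Huxley-type lattice-point bound inside an annulus intersected with a disc, applied to the circle $|x\cdot y-\alpha|$ level set restricted by the gap constraint; choosing the parameters $Q,\rho,\theta,L$ in \eqref{disclattice} and \eqref{estcross} optimally and tracking the exponents yields the gain $L^{-(1-\gamma)-20\eta}$ over the trivial $\delta^{-2}L^{2(d-\gamma)}$. The bookkeeping of which of the several upper bounds in \eqref{disclattice} dominates for $\gamma\approx 4/5$ is the only genuinely delicate arithmetic; I would isolate it as a short sublemma and check the two or three cases $LP\lessgtr$ powers of $L$ separately.

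For part~(3), the strategy is to fix $z$ first: there are $\lesssim \delta^{-1}L^{d-\gamma}$ admissible $z$ when one uses only the first constraint on $(x,y)$ trivially, but it is cleaner to fix $(x,y)$ first, then count $z$. Fixing $x$ in the unit ball and counting $y$ with $|x\cdot y-\alpha|\le\delta^{-1}L^{-2\gamma}$ gives $\delta^{-1}L^{2(d-\gamma)}$ choices for $(x,y)$ (this is the $P=0$ case of part~(1)); then for fixed $(x,y)$, the vector $\zeta\in\{x-v,\,x+y-v,\,x+y-v-z\}$ — note $\zeta$ may itself involve $z$ — and the constraint $|\zeta\cdot z-\beta|\le\delta^{-1}L^{-2\gamma}$ is, after substituting, a constraint of the form $|z\cdot w - \text{(quadratic or linear in }z)|\le\delta^{-1}L^{-2\gamma}$; when $\zeta$ does not involve $z$ it is a genuine slab and gives $\delta^{-1}L^{d-\gamma}$, yielding $\delta^{-2}L^{3(d-\gamma)}$ total, which however is weaker than claimed, so I must do better. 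The improvement comes from \emph{not} fixing $x$ freely but using a two-pair estimate on $(x,y)$ together with the $z$-constraint simultaneously: when $\gamma\le 1/2$ one applies the elementary slab count twice to get $\delta^{-2}L^{3d-4\gamma}$ (losing $L^{-2\gamma}$ relative to the volume on each of two constraints and nothing on the third after noting $\zeta$ is bounded below generically), and when $\gamma>1/2$ one invokes \eqref{estcross} once more to extract the extra $\gamma_0$-power. The main obstacle throughout is keeping the three overlapping roles of each of $x,y,z$ — a variable in a slab, a fixed parameter, and (for $\zeta$) a variable appearing quadratically — consistently organized; I expect the case $\gamma>1/2$ of part~(3), where the gain must come out as exactly $\gamma_0+10\eta=\min(\gamma,1-\gamma)+10\eta$ rather than something larger or smaller, to be the point requiring the most care, and I would handle it by splitting on whether $\gamma\le 2/3$ or $\gamma>2/3$ so that the binding constraint in the interpolation is unambiguous.
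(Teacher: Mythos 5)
The proposal has a genuine gap at the very foundation, which propagates through all three parts. The base case of part~(1), namely $\Cf_1\lesssim \delta^{-1}L^{2(d-\gamma)}$ when $P=0$ (or equivalently, the count of $(x,y)$ subject to the single constraint $|x\cdot y-\alpha|\le\delta^{-1}L^{-2\gamma}$), does \emph{not} follow from the trivial slab estimate you describe. Fixing $x$ in the unit ball gives $\sim L^d$ choices, and the slab constraint on $y$ has thickness $\sim\delta^{-1}L^{-2\gamma}$; when $\gamma>1/2$ this thickness is below the lattice scale $L^{-1}$, so the slab count gives $\sim L^{d-1}$ lattice points, not the desired $\delta^{-1}L^{d-2\gamma}$. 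The total would then be $L^{2d-1}$, which is strictly \emph{larger} than $\delta^{-1}L^{2(d-\gamma)}$ whenever $\gamma>1/2$. The actual bound requires a nontrivial equidistribution input for the level sets of the quadratic form $x\cdot y$, and the paper invokes Proposition~6.1 and Lemma~A.9(2) of \cite{DH21} at this point. Your argument leaves this hole not only for $P=0$ but also everywhere part~(3) uses ``$(x,y)$ counting'' as a black box.

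Part~(3) also diverges in a way that matters. When $\zeta\in\{x+y-v,\,x+y-v-z\}$, your proposed ordering (fix $(x,y)$ first, then count $z$) turns the second constraint into a quadratic condition on $z$ of the form $|(x+y-v)\cdot z-|z|^2-\beta|\le\delta^{-1}L^{-2\gamma}$ — i.e., $z$ confined to a thin spherical annulus — and for $\gamma>1/2$ that annulus is below lattice scale and requires its own number-theoretic input, which you do not supply. The paper avoids this entirely by the opposite ordering: fix $(x+y,z)$ first, using the classical two-vector estimate to bound those choices by $\lesssim L^{2(d-\gamma)}$, then observe that with $x+y$ frozen the first constraint $|x\cdot y-\alpha|\le\delta^{-1}L^{-2\gamma}$ becomes (via $4x\cdot y=|x+y|^2-|x-y|^2$) a thin-annulus constraint on $x-y$, which is precisely what the Huxley bound \eqref{disclattice} controls. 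You acknowledge that the naive $\delta^{-2}L^{3(d-\gamma)}$ bound is too weak and that something smarter is needed, but the remedy you sketch (``two-pair estimate on $(x,y)$ together with the $z$-constraint simultaneously'') does not identify this decoupling. Finally, for the $\gamma>1/2$, $P\neq 0$ case of part~(1) and for part~(2), the paper uses both ingredients of Lemma~\ref{basiccount0pre}: \eqref{estcross} for the case $\xi=x$ (as you propose) but also the Huxley-type annulus bound \eqref{disclattice} for the case $\xi=x+y$, which you omit; this is a smaller gap but should be noted.
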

\begin{proof} (1) First we have $\Cf_1\lesssim \delta^{-1}L^{2(d-\gamma)}$ using only the first inequality $|x\cdot y-\alpha|\leq \delta^{-1}L^{-2\gamma}$, due to the classical counting estimates for $\gamma=1$ proved in \cite{DH21} (see Proposition 6.1 and Lemma A.9 (2) in \cite{DH21}, where the upper bound also holds for square torus. This already implies (\ref{basiccount01}) if $P=0$, so below we will assume $P\neq 0$; moreover by subdividing the conditions in (\ref{basiccount01}) and (\ref{basiccount02}) we can also $\delta=1$ (same for (2) and (3) below).

Next we prove that $\Cf_1\lesssim L^{2d-1-2\gamma}P^{-1}+L^{2d-2}$ when $\gamma>1/2$, and $\Cf_1\lesssim L^{2d-4\gamma}P^{-1}$ when $\gamma\leq 1/2$. In fact, assume either $\xi=x$ and $|x|\sim R$ (with $|x|\gtrsim 1$ if $R=1$, same below) or $\xi=x+y$ and $|x-y|\sim R$, then the number of choices for $\xi$ is $\lesssim (LR)^{d-1}(1+L^{1-2\gamma}P^{-1})$ but with $(LR)^{d-1}$ replaced by $L^{d-1}$ if $\xi=x+y$, and the number of choices for $y$ with $\xi$ fixed is $\lesssim L^{d-1}(1+L^{1-2\gamma}R^{-1})$ but with $L^{d-1}$ replaced by $(LR)^{d-1}$ if $\xi=x+y$. In either case
\begin{equation}\label{4vecnew}\Cf_1\lesssim (L^2R)^{d-1}(1+L^{1-2\gamma}P^{-1})(1+L^{1-2\gamma}R^{-1});\end{equation} by summing over $R$, we get the desired bound both when $\gamma>1/2$ and when $\gamma\leq 1/2$.

We now need to prove that $\Cf_1\lesssim L^{2(d-\gamma)}L^{-10\eta}$ when $\gamma>1/2$. Recall from the choice of $\eta$ that $0<\eta\ll \gamma-1/2$. If $R\leq L^{-30\eta}$ or $LP\geq L^{30\eta}$ then the desired bound already follows from (\ref{4vecnew}), so we will assume $R>L^{-30\eta}$ and $LP<L^{30\eta}$.  If $\xi=x+y$, then the number of choices for $\xi$ is $\lesssim L^{d+1-2\gamma+O(\eta)}$; when $\xi$ is fixed, the number of choices for $x$, using a rescaled version of (\ref{disclattice}), is bounded by
\[L^{d-2}\cdot\min\big(L^{2-2\gamma}+Q^{7/11},L^{1-\gamma}+L^{4-3\gamma}Q^{-1}+L^{2-\gamma}Q^{-1/3}\big)\] for some value $Q$ (with $\rho\sim L^{2-2\gamma}$); an easy calculation implies that the above quantity is always $\lesssim L^{d-1-\min(10^{-3},2\gamma-1)}$ for any $Q$ and any $\gamma>1/2$, which gives the desired result.

Assume now that $\xi=x$. In the arguments leading to (\ref{4vecnew}), we can choose a dyadic variable $\theta\geq 1$ and assume that $x$ is such that the number of choices for $y$ is $\sim L^{d-1}\theta^{-1}$. We may assume $\theta\leq L^{300\eta}$, or the desired bound again follows from the improved version of (\ref{4vecnew}); then, by a rescaled version of (\ref{estcross}) with $\rho\sim L^{2-2\gamma}$, we can bound the number of choices for $x$ by $L^{d-2+2(2-2\gamma)+O(\eta)}$. This, combined with the number of choices for $y$ above, again yields the desired bound.

(2) The proof is similar to the last parts of (1). If $\xi=x+y$, then the number of choices for $\xi$ is $\lesssim L^{d-2\gamma+1+O(\eta)}$. For fixed $\xi$, the number of choices for $x$ is bounded, using (\ref{disclattice}), by
\[L^{d-2+O(\eta)}\cdot\min\big(L^{2/5}+Q^{7/11},L^{1/5}+L^{8/5}Q^{-1},L^{6/5}Q^{-1/3}\big),\] where the $\min(\cdots)$ factor is bounded by $L^{(4/5)-10^{-4}}$ by a simple calculation, hence the desired bound (recall $\gamma>(4/5)-10\eta$).

Now if $\xi=x$, then we may choose a dyadic variable $\theta$ and assume that $x$ is such that the number of choices for $y$ is $\sim L^{d-1}\theta^{-1}$. If $1\lesssim\theta\ll L^{2-2\gamma}$, by the second equation in (\ref{basiccount01}) and (\ref{estcross}), we can bound the number of choices for $x$ by
$\min\big(L^{d-2\gamma+1+O(\eta)},L^{d-4\gamma+2+O(\eta)}\theta^2\big)$, and hence
\[\Cf_1\lesssim\sum_{\theta\geq 1}L^{d-1+O(\eta)}\theta^{-1}\cdot \min\big(L^{d-(2\gamma-1)},L^{d-(4\gamma-2)}\theta^2\big)\lesssim L^{2d-3\gamma+(1/2)+O(\eta)},\] which suffices when $\gamma>(4/5)-10\eta$. If $\theta\gtrsim  L^{2-2\gamma}$ we use only the first bound above which suffices, and if $\theta\ll 1$ then necessarily $|x|\lesssim L^{2-2\gamma}\leq L^{(2/5)+O(\eta)}$ and the estimate is easily proved.

(3) If $\zeta\in\{x+y-v,x+y-v-z\}$, then we first fix $(x+y,z)$ which has $\lesssim L^{2(d-\gamma)}$ choices, and then count $(x,y)$. If $\gamma\leq 1/2$ the number of choices is easily seen to be $L^{d-2\gamma}$ which suffices. If $\gamma>1/2$, then the same argument as in the proof of (1), using (\ref{disclattice}), shows that the number of choices for $(x,y)$ is at most $L^{d-1-\min(10^{-3},2\gamma-1)}$, which suffices due to the choice of $\eta$ so that $0<\eta\ll \gamma-1/2$.

Now suppose $\xi=x-v$, and let $|\xi|\sim R$, then with $(x,y)$ fixed, the number of choices for $z$ is $\lesssim L^{d-1}(1+L^{1-2\gamma}R^{-1})$; moreover a simple variation of Proposition 6.1 and Lemma A.9 (2) of \cite{DH21} allows to control the number of choices for $(x,y)$ by $L^{2-2\gamma}L^{d-1}(LR)^{d-1}$, hence we get
\begin{equation}\label{bdc2new}\Cf_2\lesssim (L^2R)^{d-1}L^{2-2\gamma}L^{d-1}(1+L^{1-2\gamma}R^{-1})\lesssim L^{3(d-\gamma)-\gamma_0}.\end{equation} This is already enough if $\gamma\leq 1/2$. If $\gamma>1/2$ we need to gain extra $L^{-10\eta}$, which is provided by (\ref{bdc2new}), unless $R\geq L^{-300\eta}$ (and similarly also $|x|\geq L^{-300\eta}$). In the latter case, we choose two dyadic variables $1\leq\theta_1,\theta_2\leq L^{300\eta}$ and assume for $x$ that with this fixed $x$, the number of choices for $y$  is $\sim L^{d-1}\theta_1^{-1}$ and the number of choices for $z$ is $\sim L^{d-1}\theta_2^{-1}$. By (\ref{estcross}), the number of choices for $x$ is at most $L^{d-2+O(\eta)}L^{2(2-2\gamma)}\min(\theta_1,\theta_2)^2$, and putting together we get $\Cf_2\lesssim L^{3d-4\gamma+O(\eta)}$.
\end{proof}
Now let $k_j\,(j=1,2,\cdots)$ be vector variables such that $k_j\in\Zb_L^d$ and $|k_j-k_j^0|\leq 1$ for some fixed values $k_j^0$. For any tuple $(j_1^{\epsilon_1},\cdots,j_r^{\epsilon_r})$ with $r\leq 4$ and $\epsilon_i\in\{\pm\}$, we associate with it a system
\begin{equation}\label{counteqn}
\sum_{i=1}^r\epsilon_ik_{j_i}=k,\quad \bigg|\sum_{i=1}^r\epsilon_i|k_{j_i}|^2-\beta\bigg|\leq\delta^{-1}L^{-2\gamma},
\end{equation} where $k\in\Zb_L^d$ and $\beta\in\Rb$ are fixed (we label different tuples differently, and do not require $k=0$ when $r=4$). If some $j_i$ in the tuple does not come with an $\epsilon_i$, we understand that the sign of $k_{j_i}$ in (\ref{counteqn}) can be arbitrary, but subject to the restrictions stated below. Each counting problem is represented by a set of tuples (such as $\{(1^+,2^-)\}$ or $\{(1,2,3),(1,4,5)\}$); we require that no tuple contains $3$ elements of same sign, each $j$ appears in at most two tuples, and the signs of $k_j$ in the two tuples must be opposite. Denote the corresponding number of solutions by $\Cf$.
\begin{lem}\label{basiccount} We have the following bounds for the number of solutions to counting problems.
\begin{enumerate}
\item For $\{(1^+,2^+)\}$ we have $\Cf\lesssim \delta^{-1}L^{d-\gamma-\gamma_0}$.

\noindent For $\{(1^+,2^-)\}$, let $|k_1-k_2|\sim R\in[L^{-1},1]$ (with $|k_1-k_2|\gtrsim R$ when $R=1$), then we have $\Cf\lesssim L^{d-1}+\delta^{-1}\min(R^{-1}L^{d-2\gamma},L^d)$.
\item For $\{(1,2,3)\}$ we have $\Cf\lesssim \delta^{-1}L^{2(d-\gamma)}$.
\item For $\{(1,2,3^+),(1,2,4^+)\}$ or $\{(1,2,3),(3^+,4^+)\}$ we have $\Cf\lesssim \delta^{-2}L^{2(d-\gamma)-\gamma_0}$.

\noindent For $\{(1,2,3^+),(1,2,4^-)\}$ or $\{(1,2,3),(3^+,4^-)\}$, let $|k_3-k_4|\sim P\in[L^{-1},1]\cup\{0\}$ (with $|k_3-k_4|\gtrsim P$ when $P=1$), then $\Cf$ satisfies the same bounds as in (\ref{countc1-1}) and Lemma \ref{basiccount0} (2), where $\Xf:=\min((\log L)^2,1+\delta L^{2\gamma}P)$.
\item For $\{(1,2,3),(1,4,5)\}$ or $\{(1,2,3),(1,2,4,5)\}$, $\Cf$ satisfies the same bounds as in (\ref{countc2-1}).
\item For $\{(1,2,4),(2,3,5),(3,4,6)\}$ we have $\Cf\lesssim \delta^{-3}L^{3(d-\gamma)-1.01\gamma_0}$.
\end{enumerate}
\end{lem}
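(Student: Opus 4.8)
The plan is to reduce each counting problem to the building-block estimates already established in Lemma \ref{basiccount0} and its one- and two-vector precursors from \cite{DH21}, by a sequence of elementary substitutions that eliminate variables one linear relation at a time. First I would normalize: by splitting the conditions $|\sum \epsilon_i|k_{j_i}|^2-\beta|\le \delta^{-1}L^{-2\gamma}$ into $O(\delta^{-1})$ sub-intervals of length $L^{-2\gamma}$ (and the various gap conditions into dyadic scales), I may assume $\delta=1$ throughout, picking up at worst the displayed powers of $\delta^{-1}$. The key reduction is that a resonance constraint $\sum_{i=1}^r\epsilon_i|k_{j_i}|^2=\beta+O(L^{-2\gamma})$, once the first equation $\sum\epsilon_i k_{j_i}=k$ has been used to solve for one variable, becomes a constraint of the form $x\cdot y = \alpha + O(L^{-2\gamma})$ for suitable linear combinations $x,y$ of the remaining variables, exactly the shape appearing in \eqref{basiccount01}, \eqref{basiccount02}; this is the same identity $|a|^2-|b|^2 = \langle a-b,a+b\rangle$ exploited in \eqref{res} and in the proofs of Propositions \ref{estbadvine}--\ref{estnormalvine}.

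The steps in order: (1) For items (1)--(2) (one atom), after using the linear equation to eliminate one vector, the remaining single resonance relation is exactly an estimate of the form treated by the classical $\gamma=1$ counting of \cite{DH21} (Proposition 6.1, Lemma A.9(2) there), adapted to the square torus; for $\{(1^+,2^-)\}$ one keeps track of the gap $|k_1-k_2|\sim R$ and invokes the sharper bound $L^{d-1}+\min(R^{-1}L^{d-2\gamma},L^d)$, which is precisely (\ref{countc1-1}) with $m=0$ extra variables (one vector eliminated, one remaining). (2) For item (3), after eliminating the shared variable in $\{(1,2,3^+),(1,2,4^+)\}$ (resp. the explicit equation in $\{(1,2,3),(3^+,4^+)\}$), the two resonances become a system of two relations $x\cdot y=\alpha+O(L^{-2\gamma})$ and $r\cdot\xi=\beta+O(L^{-2\gamma})$ in the remaining two free vectors with $r$ a fixed shift — this is exactly $\Cf_1$ from \eqref{basiccount01}, so (\ref{countc1-1}) and Lemma \ref{basiccount0}(2) apply verbatim. (3) For item (4), the same elimination turns $\{(1,2,3),(1,4,5)\}$ (resp. $\{(1,2,3),(1,2,4,5)\}$) into $\Cf_2$ of \eqref{basiccount02} in three free vectors, giving (\ref{countc2-1}) directly. (4) For item (5), the triangle system $\{(1,2,4),(2,3,5),(3,4,6)\}$ has three linear equations; eliminating $k_4,k_5,k_6$ leaves the three vectors $k_1,k_2,k_3$ subject to three resonance constraints, and iterating the two-vector counting (Lemma \ref{basiccount0}(1)) twice — once to pin $(k_1,k_2)$ and once to use the remaining relation on $k_3$, with the gap factors $\Xf^{-1}$ summed against the $\Pf$-weights as in \eqref{laddersharp} — yields $L^{3(d-\gamma)-1.01\gamma_0}$, where the slightly-less-than-$\gamma_0$ exponent absorbs the logarithmic losses from dyadic summation over the intermediate gap scales.

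I expect the main obstacle to be item (5): unlike the other cases, after eliminating three variables the three surviving vectors are entangled by three simultaneous bilinear constraints, and a naive two-step application of Lemma \ref{basiccount0}(1) only gives $L^{3(d-\gamma)-\gamma_0}$ with a $(\log L)^C$ loss, which is not quite $L^{3(d-\gamma)-1.01\gamma_0}$. The fix will be to distinguish the regimes $\gamma\le 1/2$ and $\gamma>1/2$ exactly as in the proof of Lemma \ref{basiccount0}(3): in the former, the third relation is genuinely two-dimensional and contributes an extra $L^{-(1-\gamma)}$; in the latter, one chooses a dyadic parameter $\theta$ measuring how many solutions the innermost count produces for a fixed outer vector, applies the cross-estimate \eqref{estcross} of Lemma \ref{basiccount0pre}(2), and checks by a short case analysis in $\theta$ that the total is $\lesssim L^{3(d-\gamma)-\gamma_0-c}$ for an absolute $c>0$, which after the dyadic summations is comfortably better than $L^{3(d-\gamma)-1.01\gamma_0}$. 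The other four items are routine once the substitution reducing them to $\Cf_1$ or $\Cf_2$ is made explicit, and require only bookkeeping of the shift vectors $r$ and the gap scales $P,R$.
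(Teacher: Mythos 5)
Your treatment of items (1)--(4) is essentially the paper's: item (2) is a direct citation to the $\gamma=1$ counting estimates of \cite{DH21}, items (3) and (4) reduce by the factorization $|k_1|^2-|k_2|^2+|k_3|^2-|k|^2 = 2(k_1-k)\cdot(k_3-k)$ to the systems $\Cf_1$ and $\Cf_2$ of Lemma \ref{basiccount0}, and item (1) is an elementary count after a dyadic split in the gap $|k_1-k_2|$. You omit the same-sign versus opposite-sign case distinction in the first half of (3) (the paper handles the same-sign case by applying part (1) twice, and the opposite-sign case by the sharper gap-dependent bound), and for $\{(1^+,2^-)\}$ the paper's argument is a bare coordinate-by-coordinate count rather than a reduction to $\Cf_1$ with $m=0$, but these are cosmetic.

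There is, however, a genuine gap in your treatment of item (5), and it is in exactly the place where you flag the main obstacle. After eliminating $k_4,k_5,k_6$, pinning $(k_1,k_2)$ by the first resonance leaves the single vector $k_3$ subject to \emph{two} remaining resonance constraints, so the second step is not a ``two-vector counting'' and Lemma \ref{basiccount0}(1) does not apply to it. Your proposed patch --- the $\gamma\le\frac12$ / $\gamma>\frac12$ dichotomy plus a dyadic $\theta$ and the cross-estimate \eqref{estcross} --- is borrowed from the proof of Lemma \ref{basiccount0}(3), but the geometric structure there (one bilinear constraint in $(x,y)$ and one in $(\xi,z)$, with $\xi$ depending on $x$) is not the same as what you face here, where the two residual constraints live on the \emph{same} vector and must be shown to be transversal. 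The paper's actual argument is quite different and much more concrete: it first fixes $(k_1,k_2,k_4)$ (at cost $\lesssim L^{2-2\gamma}(L^2R)^{d-1}$ where $R\sim|k_3-k_6|$), notes that this fixes $k_3+k_5$ and $r:=k_3-k_6$ and, via the remaining two resonances, fixes both $|q|^2$ and $r\cdot q$ up to $L^{-2\gamma}$ for $q:=k_3-k_5$; it then fixes $d-2$ coordinates of $q$ and reduces to a two-dimensional lattice count in the intersection of a thin annulus and a thin band, which it controls by an area estimate after a rotation and by the Huxley-type bound \eqref{disclattice} in the regime where $|q|$ is large. The $1.01\gamma_0$ exponent arises precisely from the careful bookkeeping in this 2D step (the split at $M\le L^{1-\gamma_0/10}$ versus $M\ge L^{1-\gamma_0/10}$), not from a summation of $\Xf^{-1}$ against $\Pf$-weights --- those quantities belong to the ladder analysis of Section 7 and do not appear in the statement or proof of this lemma, so invoking them here is a category error. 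If you want to salvage your elimination-based route, you would need to exhibit the transversality of the two bilinear constraints on $k_3$ explicitly (again via a 2D reduction); as written, the step is missing.
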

\begin{proof} In all proofs we may assume $\delta=1$. The first half of (1) follows by assuming $k_1+k_2$ is fixed and $|k_1-k_2|\sim R$, then the number  of choices for $(k_1,k_2)$ is at most $\delta^{-1}(LR)^{d-1}(1+L^{1-2\gamma}R^{-1})\lesssim \delta^{-1}L^{d-\gamma-\gamma_0}$. The second half of (1) follow similarly, by first choosing the first $d-1$ coordinates of $k_1$ (assuming the $d$-th coordinate of $k_1-k_2$ is $\sim R$) and then considering the last coordinate. Next, (2) follows from Proposition 6.1 and Lemma A.9 (2) in \cite{DH21}. For the first part of (3), if the signs associated with $k_1$ and $k_2$ are the same then it follows from applying (1) twice (first for $(k_3,k_4)$ and then for $(k_1,k_2)$); otherwise, we can assume $|k_1-k_2|\sim R$, then once $(k_3,k_4)$ is fixed, the number of choices for $(k_1,k_2)$ is $\lesssim L^{d-1}(1+L^{1-2\gamma}R^{-1})$, while the number of choices for $(k_3,k_4)$ is now $\lesssim (LR)^{d-1}(1+L^{1-2\gamma})$, which gives $\Cf\lesssim L^{2d-2}(1+L^{1-2\gamma})^2\lesssim L^{2(d-\gamma-\gamma_0)}$ which proves the first part of (3). Moreover, the second part of (3) follows from Lemma \ref{basiccount0} (1) and (2), and (4) follows from Lemma \ref{basiccount0} (3), after a suitable reparametrization using the factorization $|k_1|^2-|k_2|^2+|k_3|^2-|k|^2=2(k_1-k)\cdot (k_3-k)$ when $k_1-k_2+k_3=k$ (and the case $\{(1,2,3),(1,2,4,5)\}$ follows as a consequence).

Now let us prove (5). We may assume (say) the signs corresponding to $k_3$ and $k_5$ in the triple $(2,3,5)$ are the same, and the signs corresponding to $k_3$ and $k_6$ in the triple $(3,4,6)$ are the opposite. We will first fix $(k_1,k_2,k_4)$, which has $\lesssim L^{2-2\gamma}(L^2R)^{d-1}$ choices assuming $|k_3-k_6|\sim R$, due to a variation of Proposition 6.1 and Lemma A.9 (2) of \cite{DH21}; then $k_3+k_5$ and $k_3-k_6:=r$ are fixed, and $|k_3|^2+|k_5|^2$ and $k_3\cdot r$ are fixed up to distance $L^{-2\gamma}$. We may assume $r\neq 0$ (otherwise the bound is trivial), and let $q:=k_3-k_5$, then $q\in\Zb_L^d$ belongs to a fixed unit ball, and both $|q|^2$ and $r\cdot q$ are fixed up to distance $L^{-2\gamma}$. We then fix the first $d-2$ coordinates of $q$ (which has $\lesssim L^{d-2}$ choices) and reduce to counting the number of $(x,y)\in \Zb_L$ such that
\[|x-x_0|\leq 1,\,\,|y-y_0|\leq 1;\qquad x^2+y^2=\alpha+O(L^{-2\gamma}),\,\,ax+by=\beta+O(L^{-2\gamma})\] where $x_0,y_0,a,b,\alpha,\beta\in\Rb$ are fixed and $|a|+|b|\sim R$. Assuming also $|x|+|y|\sim M$, we can reduce this lattice point counting estimate to a area counting estimate by considering the $O(L^{-1})$ neighborhood of these lattice points, so that $x^2+y^2=\alpha+O(L^{-2\gamma}+ML^{-1})$ and $ax+by=\beta+O(L^{-2\gamma}+RL^{-1})$. By doing a rotation to calculate the area, we can easily prove that\[\#\mathrm{\ of\ choices\ for\ }(x,y)\lesssim R^{-1}L^2(L^{-1}+L^{-2\gamma})(L^{-\gamma}+M^{1/2}L^{-1/2}),\] which is enough provided $M\leq L^{1-\gamma_0/10}$. Finally, if $M\geq L^{1-\gamma_0/10}$ then the single condition $x^2+y^2=\alpha+O(L^{-2\gamma})$ suffices, using (\ref{disclattice}), to show that the number of choices for $(x,y)$ is $\lesssim L^{\max(1,2-2\gamma)-\gamma_0/10}$, which implies the desired result.
\end{proof}
\subsection{Miscellaneous}\label{misc} We collect some miscellaneous results.
\begin{lem}\label{explem} Given a tree $\Tc$ of order $n$, consider the collection of all decorations $(k_\nf)$, such that $|k_\lf-k_\lf^0|\leq 1$ for each leaf node $\lf$, where $k_\lf^0\in\Zb_L^d$ are fixed vectors for each leaf $\lf$. Then:
\begin{enumerate}
\item This collections of decorations can be divided into at most $C^n$ sub-collections, such that for any decoration $(k_\nf)$ in a given sub-collection, each $k_\nf$ belongs to a fixed unit ball that depends only on $(k_\nf^0)$, $\nf$ and the sub-collection but not $(k_\nf)$ itself.
\item This collections of decorations can be divided into at most \[C^n\prod_{\lf\in\Lc}\langle k_{\lf}^0\rangle^{4d}\] sub-collections, such that for any decoration $(k_\nf)$ in a given sub-collection, and any vector $r\in\Rb^d$ with $|r|\leq 1$, the value $r\cdot k_\nf\in\Rb$ belongs to a fixed unit interval that depends only on $(k_\nf^0)$, $\nf$ and the sub-collection but not $(k_\nf)$ itself and not on $r$.
\end{enumerate}
\end{lem}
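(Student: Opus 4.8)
The plan is to prove both parts by one and the same device: an incremental decomposition of the set of admissible decorations, run over the nodes of $\Tc$ in an order in which every child is treated before its parent, making only $O_d(1)$ choices per node (for (1), at every node) or, for (2), $O(\langle k_\lf^0\rangle^{4d})$ choices at each leaf together with $O_d(1)$ choices at each branching node. Since $|\Nc|=n$ and $|\Lc|=2n+1$, this produces $\le C^n$ pieces in case (1) and $\le C^n\prod_{\lf\in\Lc}\langle k_\lf^0\rangle^{4d}$ pieces in case (2). The only structural input used is the decoration relation $k_\nf=k_{\nf_1}-k_{\nf_2}+k_{\nf_3}$ at each branching node $\nf$ with children $\nf_1,\nf_2,\nf_3$, which, since a decoration is determined by its leaf values, lets information be pushed from the leaves upward.

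For (1) I would argue as follows. At a leaf $\lf$ the hypothesis $|k_\lf-k_\lf^0|\le 1$ already confines $k_\lf$ to the fixed unit ball centred at $k_\lf^0$, so no choice is made there. Inductively, suppose that for a branching node $\nf$ the vectors $k_{\nf_i}$ ($i=1,2,3$) have been confined to fixed unit balls centred at points $a_{\nf_i}$ depending only on $(k_\lf^0)$, the node $\nf_i$ and the current piece. Then on the support of any decoration $k_\nf=k_{\nf_1}-k_{\nf_2}+k_{\nf_3}$ lies in the ball of radius $3$ about $a_{\nf_1}-a_{\nf_2}+a_{\nf_3}$; cover this ball by $O_d(1)$ unit balls, record which one contains $k_\nf$, and pass to the corresponding subfamily. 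Running this through all $n$ branching nodes gives a decomposition into $\le C^n$ pieces with the stated property. For (2) I would run the identical induction, but tracking the scalar quantities $r\cdot k_\nf$ via $r\cdot k_\nf=(r\cdot k_{\nf_1})-(r\cdot k_{\nf_2})+(r\cdot k_{\nf_3})$, covering a length-$3$ interval by three unit intervals at each branching step. The new ingredient is a preliminary refinement at the leaves: one partitions the unit ball $B(k_\lf^0,1)\cap\Zb_L^d$ into $O(\langle k_\lf^0\rangle^{4d})$ sub-balls of radius $\lesssim\langle k_\lf^0\rangle^{-4}$ about fixed centres, which pins $r\cdot k_\lf$ to a prescribed interval uniformly in $|r|\le 1$ and in the choice of decoration in the piece; propagating this up the tree then yields, at every node $\nf$, a fixed interval containing $r\cdot k_\nf$ that depends only on $(k_\nf^0)$, $\nf$ and the piece.

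I expect the leaf refinement in part (2) to be the genuinely delicate point: one has to achieve control that is uniform over all $r$ with $|r|\le 1$, whereas the node values $k_\nf$ may themselves have size comparable to $\langle k_\nf^0\rangle$, so it is not enough to localize vectors to unit balls as in (1). This is precisely why a number of pieces that is polynomial in the $\langle k_\lf^0\rangle$ — rather than merely exponential in $n$ — is forced: one must in effect resolve the directions of the leaf vectors at the fine scale $\langle k_\lf^0\rangle^{-1}$ dictated by the constraint $|r|\le 1$ before the linear functionals $r\cdot k_\nf$ can be pinned down uniformly. Once that refinement is in place the remainder is a routine induction on $\Tc$, identical in form to the argument for (1).
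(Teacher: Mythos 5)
Part (1) of your proposal is correct, and in fact a cleaner route than the paper's: the paper invokes Lemma 6.6 of \cite{DH21}, a tree-inductive statement yielding a chosen child $\nf'$ for each branching node $\nf$ and balls $B_\nf$ with $\prod_\nf\rho(B_\nf)\le 3^n/(2n+1)$ containing $k_\nf\pm k_{\nf'}$; your naive $O_d(1)$-covering per branching node also yields $C^n$ sub-collections, and the conclusion of (1) follows either way.

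Part (2), however, has a fatal gap in the leaf refinement step. Localizing $k_\lf$ to a ball $B(a,\rho)$ with $\rho\lesssim\langle k_\lf^0\rangle^{-4}$ gives only $|r\cdot k_\lf - r\cdot a|\le\rho$; the centre $r\cdot a$ still ranges over an interval of length $2|a|\sim\langle k_\lf^0\rangle$ as $r$ varies over $|r|\le 1$, so no sub-ball, however small, pins $r\cdot k_\lf$ to a unit interval independent of $r$. More fundamentally, for any fixed decoration with $|k_\nf|\gg 1$ there cannot exist a single unit interval containing $r\cdot k_\nf$ for all $|r|\le 1$, so a literal node-by-node pinning of absolute values is impossible. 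What the paper actually proves, and what is used downstream in the bound \eqref{1stsumbound}, is a joint count: it runs the same inductive lemma from \cite{DH21} but with the balls $B_\nf$ centred at the \emph{origin}, so that $|k_\nf\pm k_{\nf'}|\le\rho(B_\nf)$, and hence $|r\cdot(k_\nf\pm k_{\nf'})|\le\rho(B_\nf)$ uniformly in $|r|\le 1$ precisely because the parent-child \emph{difference}, not $k_\nf$ itself, is small. Combined with the trivial leaf bound $|\lfloor r\cdot k_\lf\rfloor|\le 2\langle k_\lf^0\rangle$ and the telescoping structure (the tuple $(\lfloor r\cdot k_\nf\rfloor)_\nf$ is determined by the leaf entries together with the increments $\lfloor r\cdot k_\nf\rfloor\pm\lfloor r\cdot k_{\nf'}\rfloor$), this counts at most $C^n\prod_\lf\langle k_\lf^0\rangle^2$ possible tuples as $r$ and $(k_\nf)$ vary jointly. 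Your induction is missing precisely the idea of localizing the parent-child differences to origin-centred balls rather than trying to pin the node values directly.
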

\begin{proof} (1) The key tool is Lemma 6.6 of \cite{DH21}. It implies that: for any branching node $\nf\in\Tc$, there exists $\nf'$ which is a child of $\nf$, and a ball $B_\nf$ which only depends on $(k_\nf^0)$ and $\nf$, such that
\[\prod_{\nf}\rho(B_\nf)\leq \frac{3^n}{2n+1}\leq C^n\] with $\rho(B_\nf)\geq 1$ being the radius of $B_\nf$, and for any decoration satisfying $|k_\lf-k_\lf^0|\leq 1$ for each leaf $\lf$, we must have that $k_{\nf}\pm k_{\nf'}\in B$. This implies that $\lfloor k_\nf\rfloor\pm\lfloor k_{\nf'}\rfloor$ is an integer vector in a fixed ball of radius $\rho(B_\nf)+1\leq 2\rho(B_\nf)$, where $\lfloor k_\nf\rfloor$ denotes the point whose coordinates are the integer parts of coordinates of $k_\nf$. Since the values of $\lfloor k_\nf\rfloor\pm\lfloor k_{\nf'}\rfloor$ for all branching nodes $\nf$ and the values $\lfloor k_\lf\rfloor$ for all leaves $\lf$ uniquely determine $\lfloor k_\nf\rfloor$ for all nodes $\nf$, we know that the collection $(\lfloor k_\nf \rfloor)_{\nf\in\Tc}$ has at most $C^n$ possible choices, hence the result.

(2) By running a similar (inductive) proof as in Lemma 6.6 of \cite{DH21}, we can choose a child $\nf'$ of each branching node $\nf$ as in (1), and the corresponding ball $B_\nf$, but we require that $B_\nf$ be centered at the origin, and the radii satisfy that $\rho(B_\nf)\geq 1$ and
\[\prod_{\nf}\rho(B_\nf)\leq \frac{3^n\prod_{\lf\in\Lc}(|k_\lf^0|+1)}{\sum_{\lf\in\Lc}(|k_\lf^0|+1)}\leq C^n\prod_{\lf\in\Lc}\langle k_\lf^0\rangle.\] This implies that for any $|r|\leq 1$, we have $|r\cdot k_\nf\pm r\cdot k_{\nf'}|\leq \rho(B_\nf)$, hence $\lfloor r\cdot k_\nf\rfloor\pm \lfloor r\cdot k_{\nf'}\rfloor$ is an integer of absolute value $\leq 2\rho(B_\nf)$. Also for each leaf $\lf$ we have that $\lfloor r\cdot k_\lf\rfloor$ is an integer of absolute value $\leq 2\langle k_\lf^0\rangle$, so in the same way as in (1), the collection $(\lfloor r\cdot k_\nf \rfloor)_{\nf\in\Tc}$ has at most $C^n\prod_{\lf\in\Lc}\langle k_{\lf}^0\rangle^{2}$ choices as $r$ and the decoration $(k_\nf)$ vary. This completes the proof.
\end{proof}
\begin{prop}\label{subsetvc} Suppose a vine-like object $\Ub_1$, is contained in another vine-like object $\Ub_2$, and they do not have any vine as a common ingredient. Then $\Ub_1$ is either one double bond, or the adjoint of vine (V) (which is an HV), or the adjoint of two double bonds (which is an HVC). In any case, $\Ub_2$ still remains connected after removing $\Ub_1$ or the VC which $\Ub_1$ is the adjoint of.
\end{prop}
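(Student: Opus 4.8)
The plan is to locate $\Ub_1$ inside $\Ub_2$ by working with the ingredient–vine decomposition of $\Ub_2$ and then reducing to a finite picture. Write $\Ub_2$ as a concatenation of ingredient vines $\Vb_1,\dots,\Vb_m$ (which is what a VC or HVC is), together with a possible adjoint bond $\ell_0$ joining the two joints of $\Ub_2$ if $\Ub_2$ is an HV or HVC; if $\Ub_2$ is an HV then $m=1$ and $\ell_0$ joins the two joints of the single ingredient vine. Recall that within each $\Vb_j$ every non‑joint atom has degree $4$, while in $\Ub_2$ the only atoms of degree $<4$ are the two joints of $\Ub_2$ and the internal joints shared by consecutive ingredients; also $\Ub_2$, being built from vines, contains no triple bond. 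The first step is then the reduction: since $\Ub_1$ is connected and every interior atom of $\Ub_1$ has all four of its bonds inside $\Ub_1$, such an atom must be interior to a single $\Vb_j$; and since each internal joint of $\Ub_2$ has degree $\le2$ in $\Ub_2$, any internal joint lying in $\Ub_1$ must be one of the two joints of $\Ub_1$. Combining these facts with Lemma \ref{disjointlem} applied to $\Ub_1$ and each ingredient $\Vb_j$ that it meets (scenario (c) there would produce a larger vine‑like object squeezed between $\Ub_1$ and $\Ub_2$, scenario (b) is a concatenation, scenario (a) forces $\Vb_j$ and $\Ub_1$ to be vines (V)), one shows that, up to the adjoint bond $\ell_0$ in the HV case, $\Ub_1$ is contained in a single ingredient vine $\Vb$ of $\Ub_2$; and it is a \emph{proper} vine‑like subobject of $\Vb$ (or of $\Vb\cup\{\ell_0\}$), because $\Ub_1$ and $\Ub_2$ share no common ingredient, which in particular rules out $\Ub_1=\Vb$ and rules out $\Ub_1$ being a union of consecutive ingredients of $\Ub_2$.

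The second step is the enumeration. Reading off the shapes in Figure \ref{fig:vines}, I would list all proper vine‑like subobjects of a single triple‑bond‑free vine $\Vb$ (possibly with one extra adjoint bond attached): the only possibilities are a single double bond (a rung sitting inside one of the ladders of $\Vb$), the adjoint of a vine (V) (a triangle consisting of the two end atoms of a ladder together with a connecting bond, or the triangle using the adjoint bond $\ell_0$), and the adjoint of two double bonds (two adjacent ladder rungs together with a connecting bond). This yields exactly the three alternatives in the statement, and shows $\Ub_1$ is a double bond, an HV equal to the adjoint of a vine (V), or an HVC equal to the adjoint of two double bonds.

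The third step is the connectivity assertion, which is checked case by case on the same pictures: deleting a double bond from the interior of a ladder, or deleting the vine (V) underlying the HV $\Ub_1$, respectively the two double bonds underlying the HVC $\Ub_1$, always leaves an alternative path along the remaining rungs and side bonds of the ambient vine(s), so $\Ub_2$ stays connected. I expect the main obstacle to be the reduction step in the first paragraph: making rigorous the claim that $\Ub_1$ lives inside a single ingredient vine requires carefully tracking how the (at most two) joints of $\Ub_1$ may coincide with internal joints of $\Ub_2$, how the adjoint bonds of $\Ub_1$ and of $\Ub_2$ interact, and ensuring that the no‑common‑ingredient hypothesis is exactly what prevents $\Ub_1$ from absorbing an entire ingredient of $\Ub_2$ and hence from equalling $\Ub_2$ or any of the degenerate near‑equalities; once this is in place, the enumeration and the connectivity check are routine.
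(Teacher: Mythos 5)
Your proposal follows essentially the same approach as the paper: reduce to the case where $\Ub_1$ is contained in a single ingredient vine of $\Ub_2$, then read off the possible proper vine-like subobjects of a single vine from Figure \ref{fig:vines}, and check connectivity by inspection. The paper's own proof is in fact more terse (it simply asserts the single-ingredient reduction and declares the rest ``self-evident by examining the structures of these vines''), so your extra justification via Lemma \ref{disjointlem}, the degree bookkeeping at internal joints, and the handling of the adjoint bond $\ell_0$ is a sensible expansion of the same strategy rather than a different route.
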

\begin{proof} If $\Ub_1$ and $\Ub_2$ do not have any vine as a common ingredient, then $\Ub_1$ has to be contained in a single ingredient of $\Ub_2$, which is a single vine (I)--(VIII) as shown in Figure \ref{fig:vines}. The result is then self-evident by examining the structures of these vines, and we omit the proof.
\end{proof}
\begin{lem}\label{timeFourierlem} (1) Let $\Tc$ be a ternary tree, and denote by $\Nc$ the set of branching nodes. Consider 
	\begin{equation}\label{modifiedtimeint2}
	\Uc_{\Tc}(t,\alpha[\Nc])=\chi_0(t)\int_{\widetilde{\Dc}}\prod_{\nf\in\Nc}e^{\pi i\alpha_\nf t_\nf}\,\mathrm{d}t_\nf,
	\end{equation} where the domain $\widetilde{\Dc}=\big\{t[\Nc]:0<t_{\nf'}<t_\nf<t\mathrm{\ whenever\ }\nf'\mathrm{\ is\ a\ child\ node\ of\ }\nf\big\}$.
	
	For every choice of $d_{\nf}\in \{0, 1\}\,(\nf\in \Nc)$, we define  $q_{\nf}$ for $\nf \in \Nc$ inductively as follows: Set $q_{\nf}=0$ if $\nf$ is a leaf, and otherwise define $q_{\nf}=\alpha_\nf+d_{\nf_1}q_{\nf_1}+d_{\nf_2}q_{\nf_2}+d_{\nf_3}q_{\nf_3}$ where $\nf_1, \nf_2, \nf_3$ are the three children of $\nf$. The following estimate holds:
	\begin{equation}\label{timeintlemma1}
	|\widehat \Uc_\Tc (\tau,\alpha[\Nc])| \leq C^n\sum_{d_\nf\in \{0, 1\}} \langle \tau-d_\rf q_\rf\rangle^{-10}\prod_{\nf \in \Nc}\frac{1}{\langle q_\nf\rangle }.
	\end{equation}

(2) Suppose $F:\Rb\to \Rb$ is in $X^\alpha(\Rb)$ for some $\alpha<1$, then for any $\epsilon>0$, there holds that $\chi_0(t_1)\chi_0(t_2)F(\max(t_1, t_2)) \in X^{\alpha-\epsilon}(\Rb^2)$.
\end{lem}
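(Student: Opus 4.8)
The plan is to reduce the two-variable statement about $G(t_1,t_2) := \chi_0(t_1)\chi_0(t_2) F(\max(t_1,t_2))$ to the one-variable hypothesis $F \in X^\alpha(\Rb)$ by splitting into the two half-planes $\{t_1 > t_2\}$ and $\{t_1 < t_2\}$ (the diagonal $t_1 = t_2$ has measure zero and is harmless). By symmetry it suffices to treat $G_1 := \chi_0(t_1)\chi_0(t_2)F(t_1)\mathbf 1_{t_1 > t_2}$; the other term is identical with the roles of $t_1,t_2$ swapped, and $X^{\alpha-\epsilon}$ is a linear space so the sum of the two estimates gives the claim. The indicator $\mathbf 1_{t_1 > t_2}$ is not smooth, but on the support of $\chi_0(t_1)\chi_0(t_2)$ both variables lie in a fixed compact interval, and one can write $\mathbf 1_{t_1 > t_2} = H(t_1 - t_2)$ where $H$ is the Heaviside function; the key point is that $\widehat H(\xi)$ decays like $\langle\xi\rangle^{-1}$ away from $\xi = 0$ (it is $\tfrac{1}{2}\dirac(\xi) + \mathrm{p.v.}\tfrac{1}{2\pi i \xi}$), and near $\xi = 0$ the smooth cutoffs $\chi_0$ provide all the decay we need.

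The main step is a Fourier-transform computation. Writing $F(t_1) = \int \widehat F(\lambda)\, e^{2\pi i \lambda t_1}\, \mathrm d\lambda$ and expanding $\chi_0(t_1)\chi_0(t_2)\mathbf 1_{t_1>t_2}$ via its own Fourier representation, one gets for the $2$-dimensional Fourier transform of $G_1$ an expression of the schematic form
\begin{equation*}
\widehat{G_1}(\xi_1,\xi_2) = \int \widehat F(\lambda)\, \Phi(\xi_1 - \lambda, \xi_2)\, \mathrm d\lambda,
\end{equation*}
where $\Phi$ is the Fourier transform of $\chi_0(t_1)\chi_0(t_2)\mathbf 1_{t_1>t_2}$, a fixed Schwartz-times-Heaviside object. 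The crucial quantitative input is the bound $|\Phi(\mu_1,\mu_2)| \lesssim_N \langle \mu_1 + \mu_2\rangle^{-N}\langle \mu_2\rangle^{-1}$ for every $N$: rapid decay in the "total frequency" direction $\mu_1 + \mu_2$ (from smoothness of $\chi_0$ along the diagonal, since $\chi_0(t_1)\chi_0(t_2) \mathbf 1_{t_1>t_2}$ restricted to the antidiagonal is smooth and compactly supported), and $\langle\mu_2\rangle^{-1}$ decay from the one surviving Heaviside jump in the $t_1 - t_2$ variable, with rapid decay once $|\mu_2|$ dominates. Then
\begin{equation*}
\int \big(\max(\langle\xi_1\rangle,\langle\xi_2\rangle)\big)^{\alpha-\epsilon}\,\big|\widehat{G_1}(\xi_1,\xi_2)\big|\,\mathrm d\xi_1\mathrm d\xi_2 \lesssim \int |\widehat F(\lambda)| \int \big(\max(\langle\xi_1\rangle,\langle\xi_2\rangle)\big)^{\alpha-\epsilon}\, \langle\xi_1 - \lambda + \xi_2\rangle^{-N}\langle\xi_2\rangle^{-1}\,\mathrm d\xi_1\mathrm d\xi_2\, \mathrm d\lambda,
\end{equation*}
and the inner double integral is dominated, using $\langle\xi_1-\lambda+\xi_2\rangle^{-N}$ to freeze $\xi_1$ near $\lambda - \xi_2$ and $\langle\xi_2\rangle^{-1}$ together with a fraction of the $\max$-weight, by $C_\epsilon\,\langle\lambda\rangle^{\alpha-\epsilon/2}$ (the loss of $\epsilon/2$ is exactly what is needed to make the $\langle\xi_2\rangle^{-1}\langle\xi_2\rangle^{\epsilon/2}$ integral converge; $\langle\xi_1\rangle^{\alpha-\epsilon}$ contributes $\langle\lambda\rangle^{\alpha-\epsilon}$ after the convolution and the remaining powers of $\langle\xi_2\rangle$ are integrable). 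This leaves $\int |\widehat F(\lambda)|\langle\lambda\rangle^{\alpha-\epsilon/2}\,\mathrm d\lambda \le \|F\|_{X^\alpha}$, which is finite by hypothesis, giving $\|G_1\|_{X^{\alpha-\epsilon}} \lesssim_\epsilon \|F\|_{X^\alpha}$.

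The only delicate point — the "hard part" — is making rigorous the claim that $\Phi = \Fc\big(\chi_0\otimes\chi_0 \cdot \mathbf 1_{t_1>t_2}\big)$ genuinely enjoys the mixed bound $\langle\mu_1+\mu_2\rangle^{-N}\langle\mu_2\rangle^{-1}$: one must integrate by parts $N$ times in the $t_1 + t_2$ direction (legitimate because the symbol is smooth and compactly supported along that direction — the jump of the Heaviside lies entirely along $t_1 - t_2$), and then handle the $t_1 - t_2$ integral of a compactly supported smooth function against $e^{-\pi i \mu_2(t_1 - t_2)}$ times $\mathbf 1_{t_1 - t_2 > 0}$, which is a standard computation giving $\langle\mu_2\rangle^{-1}$ with rapid decay in the regime $|\mu_2| \gg |\mu_1 + \mu_2|$. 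Everything else is bookkeeping of one-dimensional weighted convolution inequalities of the type already used throughout Section \ref{regular}, in particular in the manipulations around \eqref{maincoef2}--\eqref{maincoef2.5}. I would also remark that $F \in X^\alpha$ with $\alpha < 1$ is used only to have $\langle\lambda\rangle^\alpha \widehat F \in L^1$, and the restriction $\epsilon > 0$ is unavoidable since the Heaviside jump costs exactly one power of decay and cannot be absorbed at the endpoint.
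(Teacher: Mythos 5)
Your proposal has a genuine gap: the decomposition $G = G_1 + G_2$ with $G_1 := \chi_0(t_1)\chi_0(t_2)F(t_1)\mathbf 1_{t_1>t_2}$ and $G_2 := \chi_0(t_1)\chi_0(t_2)F(t_2)\mathbf 1_{t_2>t_1}$ destroys a crucial cancellation, and neither piece alone belongs to $X^{\alpha-\epsilon}$ when $\alpha>\epsilon$. Observe that $G(t_1,t_2) = \chi_0(t_1)\chi_0(t_2) F(\max(t_1,t_2))$ is continuous across the diagonal $\{t_1=t_2\}$ — the jump of $\mathbf 1_{t_1>t_2}$ is exactly cancelled by that of $\mathbf 1_{t_2>t_1}$ since both pieces approach $\chi_0(t)^2 F(t)$ on the diagonal. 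Each of $G_1, G_2$, however, has a jump of height $\chi_0(t)^2 F(t)$ there. This means your $\Phi = \Fc(\chi_0\otimes\chi_0 \cdot \mathbf 1_{t_1>t_2})$ can only decay like $\langle\mu_1-\mu_2\rangle^{-1}$ (equivalently $\langle\mu_2\rangle^{-1}$ on the diagonal $\mu_1\approx-\mu_2$) transverse to the jump, and no better; your claimed inner-integral bound $\lesssim\langle\lambda\rangle^{\alpha-\epsilon/2}$ is in fact false: after integrating out the sharply-localized $\langle\xi_1+\xi_2-\lambda\rangle^{-N}$ factor, the region $|\xi_2|\gg\max(1,|\lambda|)$ contributes $\int \langle\xi_2\rangle^{\alpha-\epsilon}\,\langle\xi_2\rangle^{-1}\,\mathrm d\xi_2$, which diverges whenever $\alpha>\epsilon$. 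One needs \emph{quadratic} decay transverse to the diagonal, not linear.

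The paper's proof keeps the two half-plane contributions together, expands $F$ via its Fourier transform, and integrates by parts once in $t_1$. The two boundary terms at $t_1=t_2$ combine into the factor $\frac{1}{\lambda_1-\mu}-\frac{1}{\lambda_1}=\frac{\mu}{\lambda_1(\lambda_1-\mu)}$, which is $O(|\lambda_1|^{-2})$ for $|\lambda_1|\gg|\mu|$; this quadratic decay (paired with the Schwartz decay of $\widehat{(\chi_0^2)}(\lambda_1+\lambda_2-\mu)$ along the other direction) is what renders the relevant integral over $\lambda_1$ convergent, giving $|\mu|^\beta\log|\mu|$ for $0\le\beta<1$. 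Your approach has no analogue of this cancellation and therefore cannot close. To repair it one would have to treat $G$ as a whole (or equivalently split only after extracting the boundary terms and exhibiting their difference), precisely as the paper does.
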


\begin{proof}
The proof of (1) is contained in Proposition 2.3 of \cite{DH19}. Note that the result can be extended to the case of multiple trees, which will be used in the proof of Propositions \ref{estbadvine} and \ref{estnormalvine}. For (2), we expand the Fourier transform of $G(t_1, t_2)=\chi_0(t_1)\chi_0(t_2)F(\max(t_1, t_2))$ as 
\[
\int_{\Rb}\widehat F(\mu) \int_{\Rb\times \Rb} e^{-2\pi i \lambda_1 t_1}e^{-2\pi i \lambda_2 t_2}\chi(t_1) \chi(t_2) e^{2\pi i \max(t_1, t_2)} \mathrm{d}t_1\mathrm{d}t_2\mathrm{d}\mu.
\]
Splitting the integral into $t_1\leq t_2$ and $t_1\geq t_2$ and integrating by parts once in $t_1$, we get 
$$
\widehat G(\lambda_1, \lambda_2)=-\int_{|\mu-\lambda_1|\geq 1}\widehat F(\mu) \widehat{(\chi^2)}(\lambda_2+\lambda_1-\mu)\left(\frac{1}{\lambda_1-\mu}-\frac{1}{\lambda_1}\right) \mathrm{d}\mu + \textrm{Rem}
$$
where the remainder term Rem can be easily seen (possibly by one more additional integration by parts) to be in $X^\alpha$. Using the Schwartz decay of $\widehat{(\chi^2)}(\lambda_2+\lambda_1-\mu)$, and the fact that 
$$
\int_{|\lambda_1|, |\lambda_1 -\mu|\geq 1} |\lambda|^\beta \frac{|\mu|}{|(\lambda_1-\mu) \lambda_1|} d\lambda_1 \lesssim |\mu|^\beta \log |\mu|, \qquad 0\leq \beta<1,
$$
gives the result.
\end{proof}

\section{The algorithm for large gap molecules}\label{appalg}
\subsection{The setup} In this appendix we define and analyze the operations and algorithm that occur in the proof of Proposition \ref{moleprop2}, which are almost the same as those in Sections 9.3--9.4 of \cite{DH21}, except for some minor differences (for example we are omitting the degenerate atoms, etc.).

For each operation we will consider the corresponding $\Df$ value. There will be three cases, which we denote by \emph{normal} (N), \emph{fine} (F) and \emph{good} (G) operations, where we have $\Df\lesssim 1$ (or $\Df\lesssim(\log L)^C$), $\Df\lesssim L^{-\eta/2}$ and $\Df\lesssim L^{-5\gamma_0/8}$ respectively. Recall that for any decoration under consideration, each atom is assumed to be LG, apart from at most $w$ atoms; all the arguments below are based on this LG assumption, and the exceptional atoms will be discussed separately in the proof of Proposition \ref{moleprop2} (see Section \ref{mole2proof}). Note also that we do not require $c_v=0$ for degree $4$ atoms $v$ as in Definition \ref{decmole}, but this plays no role in the proof. In consistence with Sections 9.3--9.4 of \cite{DH21}, in the end of the operations we will reduce $\Mb$ to a molecule of isolated atoms only (and no bonds).

For each operation and each decoration of the molecule $\Mb_{\mathrm{pre}}$ before the operation, we will define a corresponding decoration of the molecule $\Mb_{\mathrm{pos}}$ after the operation (either obviously, or in a precise way we will describe below). In some cases the exact operation we perform will depend on some specific assumptions for the decoration (of form $|k_{\ell}-k_{\ell'}|\geq L^{-\gamma+2\gamma_0/3}$ or $|k_{\ell}-k_{\ell'}|<L^{-\gamma+2\gamma_0/3}$, corresponding to the extra conditions $\mathtt{Ext}$ described in Section 9.3 of \cite{DH21}). Finally, define as in \cite{DH21} that $\nu:= V_3+2V_2+3V_1+4V_0-4F=4V-2E-4F$, where $V_j$ is the number of degree $j$ atoms.
\subsection{The operations}\label{oper} We now define all the different operations.
\subsubsection{Triple bonds} Recall that the molecule $\Mb_{\mathrm{pre}}$ does not contain any degenerate atom (i.e. $k_{\ell}\neq k_{\ell'}$ in the decoration for any two bonds $(\ell,\ell')$ of opposite direction at one atom $v$, see Remark \ref{nonresrem}), and does not contain any self-connecting bonds.

In this operation, assume there is a triple bond between two atoms $v_1$ and $v_2$ in $\Mb_{\mathrm{pre}}$, such that $d(v_1)$ and $d(v_2)$ are not both $4$. In (TB-1N) we assume $d(v_1)=d(v_2)=3$, so the triple bond is separated from the rest of the molecule; in (TB-2N) we assume $d(v_1)=3$ and $d(v_2)=4$, so $v_2$ has an extra single bond.
\begin{itemize}
\item Operations (TB-1N)--(TB-2N): we remove atoms $v_1$, $v_2$ and all the bonds.
\end{itemize}
\begin{prop}\label{tbprop} We have $\Df\lesssim (\log L)^C$ for operations (TB-1N)--(TB-2N).
\end{prop}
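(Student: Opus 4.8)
\textbf{Proof proposal for Proposition \ref{tbprop}.}
The plan is to treat operations (TB-1N) and (TB-2N) as simple instances of the counting-estimate bookkeeping established in Lemma \ref{basiccount}, together with the fact that removing atoms along a triple bond barely affects the ladder product $\Pf$. First I would fix a decoration of $\Mb_{\mathrm{pre}}$ and note that the triple bond between $v_1$ and $v_2$ carries three bonds $\ell_1,\ell_2,\ell_3$ with $k_{\ell_1},k_{\ell_2},k_{\ell_3}$ constrained by the two atom equations (\ref{decmole1})--(\ref{defomegadec}) at $v_1$ and $v_2$. In the case (TB-1N), where $d(v_1)=d(v_2)=3$, the triple bond is a separate component: removing $(v_1,v_2)$ and all three bonds gives $\Delta F = 1$ and $\Delta\chi = \Delta E - \Delta V + \Delta F = -3 - 2 + 1 = -4$. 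Since the three bonds are completely internal, once a decoration of $\Mb_{\mathrm{pos}}$ is fixed, the vectors $k_{\ell_j}$ are constrained by the linear relations coming from summing (\ref{decmole1}) over the component, so $k_{\ell_1}-k_{\ell_2}$ (and similarly the other differences) is uniquely fixed, and then by (\ref{defomegadec}) the quantities $|k_{\ell_1}|^2-|k_{\ell_2}|^2$ are fixed up to distance $\delta^{-1}L^{-2\gamma}$, which leaves $\lesssim n_{\mathrm{sub}} \lesssim (\log L)^C$ choices for each, hence $\Cf_{\mathrm{pre}}\lesssim (\log L)^C \Cf_{\mathrm{pos}}$; this together with (\ref{defequan}) and the value $\Delta\chi=-4$ gives a power gain $L^{-(d-\gamma)\cdot 4}(C^+\delta^{-1/2})^{-4}$ that dwarfs any bound one needs, so certainly $\Df\lesssim(\log L)^C$.

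For (TB-2N), where $d(v_1)=3$ and $d(v_2)=4$, so $v_2$ has one extra single bond $\ell_4$, removing $(v_1,v_2)$ and all four bonds no longer necessarily separates a component; one has $\Delta V=-2$, $\Delta E=-4$, and $\Delta F\in\{0,1\}$, so $\Delta\chi\in\{-2,-1\}$. In either case the relevant counting is again controlled: the vectors $k_{\ell_1},k_{\ell_2},k_{\ell_3}$ are tied together by the triple-bond constraints, and $k_{\ell_4}$ is constrained by the equation at $v_2$; applying Lemma \ref{basiccount} (1)--(2) and the LG assumption at the atoms being removed (which forces $|k_{\ell_i}-k_{\ell_j}|\geq L^{-\gamma+\eta}$ for opposite-direction pairs, so the favorable branch of the counting bound applies), one obtains $\Cf_{\mathrm{pre}}\lesssim (\log L)^C \delta^{-1}L^{2(d-\gamma)}\Cf_{\mathrm{pos}}$ when $\Delta\chi=-2$ and a strictly better bound when $\Delta\chi=-1$, so that after dividing by $L^{-(d-\gamma)\Delta\chi}(C^+\delta^{-1/2})^{-\Delta\chi}$ one gets $\Df\lesssim(\log L)^C$. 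Finally, in both operations one must check that removing these atoms changes the length of at most $O(1)$ maximal ladders by $O(1)$: since $1\leq\Xf_j\leq(\log L)^2$, the factor $\Pf=\prod_j\Xf_j^{z_j}$ is modified by at most $(\log L)^C$, which is harmless.

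The only point requiring a little care — and hence the "main obstacle", though it is not a serious one here — is justifying that the newly assigned decoration of $\Mb_{\mathrm{pos}}$ is legitimate, i.e. that every decoration of $\Mb_{\mathrm{pre}}$ under consideration really does restrict to a valid $(c_v)$-decoration of $\Mb_{\mathrm{pos}}$ restricted by the appropriate $(\beta_v,k_\ell^0)$ and inherited from a $k$-decoration of $\Mb(\Qc_{\mathrm{sub}})$ as in Remark \ref{subcondition}; this is immediate because removing atoms and bonds only drops constraints, never adds them, so the restriction map on decorations is well-defined and $\Cf_{\mathrm{pos}}$ counts at least the images. With this in hand, both (TB-1N) and (TB-2N) satisfy $\Df\lesssim(\log L)^C$, completing the proof of Proposition \ref{tbprop}.
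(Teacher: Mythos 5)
You reach the right conclusion, but the reasoning for (TB-1N) has genuine errors. First, $\Delta F=-1$, not $+1$: since $d(v_1)=d(v_2)=3$, the pair $\{v_1,v_2\}$ with its triple bond already constitutes an isolated component, so removing both atoms \emph{decreases} $F$ by one. With $\Delta V=-2$, $\Delta E=-3$, $\Delta F=-1$ one has $\Delta\chi=\Delta E-\Delta V+\Delta F=-3+2-1=-2$, not $-4$ (your arithmetic also mishandles the $-\Delta V$ sign). Second, and more substantively, the claim that ``$k_{\ell_1}-k_{\ell_2}$ (and similarly the other differences) is uniquely fixed'' with only $\lesssim(\log L)^C$ residual choices is false. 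The Kirchhoff relation at $v_1$ is a signed \emph{sum} $\pm k_{\ell_1}\pm k_{\ell_2}\pm k_{\ell_3}=c_{v_1}$ of all three bond decorations (and the relation at $v_2$ is its negation, hence redundant); one vector plus one approximate scalar constraint leaves a family of size $\lesssim\delta^{-1}L^{2(d-\gamma)}$, which is precisely the three-vector count of Lemma \ref{basiccount}~(2). The paper's actual argument, for both (TB-1N) and (TB-2N), is exactly this: $\Delta\chi=-2$, and the count of $(k_{\ell_1},k_{\ell_2},k_{\ell_3})$ is bounded by Lemma \ref{basiccount}~(2); substituting into (\ref{defequan}), the $\delta^{-1}L^{2(d-\gamma)}$ gain in $\Cf_{\mathrm{pre}}/\Cf_{\mathrm{pos}}$ cancels exactly against $L^{-2(d-\gamma)}(C^+\delta^{-1/2})^{-2}$, and the residual $(\log L)^C$ arises purely from the bounded change in the ladder product $\Pf$. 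Your (TB-1N) errors happen to push in the favorable direction so that $\Df\lesssim(\log L)^C$ still trivially follows, but the intermediate statements as written are incorrect. For (TB-2N) your account is essentially right, except that $\Delta F=1$ cannot occur (removing $v_1,v_2$ and the four bonds leaves the component of $v_3$ intact, so $\Delta F=0$ and $\Delta\chi=-2$ is the only case), and Lemma \ref{basiccount}~(2) alone suffices --- neither part (1) of that lemma nor the LG assumption is needed here.
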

\begin{proof} This follows from Lemma \ref{basiccount} (2), noticing that $\Delta\chi=-2$; the factor $(\log L)^C$ is due to the possible change it may cause to the $\Pf$ factor in $\Af$ (same below).
\end{proof}
\subsubsection{Bridge removal}\label{secbr} In all subsequent operations, we assume $\Mb_{\mathrm{pre}}$ has no triple bonds. In this operation, we assume $\Mb_{\mathrm{pre}}$ contains a \emph{bridge} $\ell$, which is a single bond connecting atoms $v_1$ and $v_2$, such that removing this bond will create a new component.
\begin{itemize}
\item Operation (BR-N): we remove the bond $\ell$.
\end{itemize}
\begin{prop}\label{brprop} We have $\Df\lesssim (\log L)^C$ for operation (BR-N). We also have $\Delta\nu=2$ and $\Delta V_3\geq -2$, with equality holding only when $d(v_1)=d(v_2)=3$.
\end{prop}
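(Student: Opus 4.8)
The statement to prove is Proposition~\ref{brprop}: that the bridge removal operation (BR-N) satisfies $\Df\lesssim(\log L)^C$, together with the combinatorial identities $\Delta\nu=2$ and $\Delta V_3\geq -2$ (with equality iff $d(v_1)=d(v_2)=3$).

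The plan is to split the proof into two essentially independent parts: the counting/deviation estimate, and the bookkeeping of the graph invariants. For the deviation bound $\Df\lesssim(\log L)^C$, I would first record the change in the characteristic $\chi=E-V+F$. Removing a bridge $\ell$ decreases $E$ by one and increases $F$ by one while leaving $V$ fixed, so $\Delta\chi=0$; hence in the quotient $\Af_{\mathrm{pre}}/\Af_{\mathrm{pos}}$ the power $L^{(d-\gamma)\chi}(C^+\delta^{-1/2})^{\chi}$ contributes nothing, and we only need to compare the counting quantities $\Cf$ and the ladder products $\Pf$. The key point is that once a decoration of $\Mb_{\mathrm{pos}}$ (the molecule after removing $\ell$) is fixed, the vector $k_\ell$ in $\Mb_{\mathrm{pre}}$ is uniquely determined: summing the vertex equation \eqref{decmole1} over all atoms lying in one of the two new components forces $k_\ell$ to equal a fixed affine combination of the $c_v$'s and the other $k_{\ell'}$'s, because $\ell$ is the only bond joining the two components. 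Thus each decoration of $\Mb_{\mathrm{pos}}$ extends to at most one decoration of $\Mb_{\mathrm{pre}}$, giving $\Cf_{\mathrm{pre}}\leq\Cf_{\mathrm{pos}}$. (One should also note that the extended decoration automatically satisfies the $\beta_v$-restriction up to the allowed tolerance, since $\Gamma_v$ for the atoms on $\ell$ is then a fixed number up to $O(1)\delta^{-1}L^{-2\gamma}$; this costs only a $(\log L)^C$ factor from the discretization of $|k_\ell|^2$.) The only remaining subtlety is the $\Pf=\prod_j\Xf_j^{z_j}$ factor: removing a single bond can change the set of maximal ladders and their lengths by at most $O(1)$, and since $1\leq\Xf_j\leq(\log L)^2$ each such change costs at most $(\log L)^C$. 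Assembling these, $\Df\lesssim(\log L)^C$, which is the ``normal'' regime.

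For the invariant identities I would argue purely combinatorially. Recall $\nu=4V-2E-4F$. Removing the bridge gives $\Delta V=0$, $\Delta E=-1$, $\Delta F=+1$, hence $\Delta\nu=4\cdot0-2\cdot(-1)-4\cdot(+1)=2-4=-2$; wait — here I must be careful with the sign convention: since $\Mb_{\mathrm{pos}}$ is obtained \emph{after} the operation, $\Delta\nu:=\nu(\Mb_{\mathrm{pos}})-\nu(\Mb_{\mathrm{pre}})=-2(\Delta E)-4(\Delta F)=-2(-1)-4(1)=2-4=-2$, so one has to check against the convention used in the surrounding text (where e.g. operation (BR-N) in Proposition~\ref{brprop} is stated to have $\Delta\nu=2$). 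Matching signs: with the paper's convention $\Delta(\cdot)=(\cdot)_{\mathrm{pre}}\to(\cdot)_{\mathrm{pos}}$ read so that $\Delta\chi=\chi_{\mathrm{pos}}-\chi_{\mathrm{pre}}$, we indeed get $\Delta\nu = -2\Delta E - 4\Delta F = 2 - 4 = -2$; the claimed ``$\Delta\nu=2$'' therefore must be read with the opposite sign convention $\Delta=\mathrm{pre}-\mathrm{pos}$, i.e. $\nu$ \emph{drops} by $2$ when read forward, equivalently \emph{increases} by $2$ when the remaining component is later reduced. I will present the computation and state it in whichever of the two sign conventions is consistent with Propositions~\ref{tbprop}--\ref{2rprop} earlier, noting $4V-2E-4F$ is linear so the computation is immediate either way.

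For $\Delta V_3$: removing the bond $\ell=\{v_1,v_2\}$ lowers the degree of $v_1$ and of $v_2$ each by one and does not touch any other atom. The number of degree-$3$ atoms $V_3$ can change only through $v_1$ and $v_2$: each of them, if it had degree $3$ before, leaves the degree-$3$ class (degree drops to $2$), contributing $-1$; if it had degree $4$ before, it enters the degree-$3$ class, contributing $+1$; if it had degree $\le 2$ it cannot be an endpoint of a bridge in a molecule with the standing degree constraints in the relevant range, or contributes $0$. Hence $\Delta V_3\in\{-2,-1,0,+1,+2\}$ and the minimum value $-2$ is attained exactly when both $v_1$ and $v_2$ had degree $3$, i.e. $d(v_1)=d(v_2)=3$; in every other case $\Delta V_3>-2$. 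This gives $\Delta V_3\geq -2$ with equality iff $d(v_1)=d(v_2)=3$, as claimed. I expect the main (though still mild) obstacle to be getting the sign conventions for $\Delta\nu$, $\Delta\chi$ and $\Delta V_3$ consistent with the rest of Appendix~\ref{appalg} and with the deviation inequality \eqref{defdev}; the genuine mathematical content — that a bridge's decoration value is rigidly determined and that this costs only logarithmic factors — is straightforward from \eqref{decmole1} and the definition \eqref{defequan} of $\Af$.
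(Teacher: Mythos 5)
Your proof is correct and follows essentially the same route as the paper: the $\Df\lesssim(\log L)^C$ bound rests on bridge rigidity (the paper cites Lemma~9.14 of \cite{DH21}; your derivation by summing \eqref{decmole1} over one of the two post-removal components is the same fact, written out), the $\Pf$-factor is controlled because removing one bond changes the ladder structure by $O(1)$ with each $\Xf_j\leq(\log L)^2$, and the $\Delta V_3$ bookkeeping is as you wrote.

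On the sign of $\Delta\nu$: your computation $\Delta\nu=4\Delta V-2\Delta E-4\Delta F=0+2-4=-2$ is correct, and there is no hidden sign-convention flip to find. The paper's own proof computes exactly this ($\Delta\nu=2-4=-2$), and the downstream use in \eqref{increeta} — where the $z_1,z_1'$ operations (BR-N) contribute $-2z_1-2z_1'$ on the left-hand side of an inequality that tracks $\sum\Delta\nu=-\nu_0$ — confirms that $\Delta\nu=-2$ is the value actually used. The ``$\Delta\nu=2$'' in the proposition statement is a typo; it is inconsistent with the proof three lines below it and with \eqref{increeta}. So you should state $\Delta\nu=-2$ rather than speculating about a reversed $\Delta$ convention (indeed Propositions~\ref{3s3prop} and \ref{3rprop} use the same convention $\Delta=(\cdot)_{\mathrm{pos}}-(\cdot)_{\mathrm{pre}}$ throughout, giving $\Delta\nu=-2$ for (3S3-1N) and $+2$ for (3R-1N), both of which match \eqref{increeta}).

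One cosmetic point: your sentence about a degree-$\le2$ endpoint of a bridge is vague. It is cleaner to simply observe that each endpoint $v_i$ of the removed bond contributes $-1$, $0$, or $+1$ to $\Delta V_3$ according as $d(v_i)$ equals $3$, is $\le2$, or equals $4$; no other atom changes degree, so $\Delta V_3\geq-2$ with equality exactly when $d(v_1)=d(v_2)=3$.
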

\begin{proof} The bound $\Df\lesssim (\log L)^C$ follows from Lemma 9.14 of \cite{DH21}. The effect of (BR-N) reduces the degrees of two atoms each by $1$, and adds one new component. by definition of $\nu$ we have $\Delta\nu=2-4=-2$, because the contribution to $V_3 +2V_2 +3V_1 +4V_0$ of each of the two atoms connected by $\ell$ increased by $1$ after the removal of $\ell$. The other statements are obvious.
\end{proof}
\subsubsection{Degree 3 atoms connected by a single bond}\label{3s3} In all subsequent operations, we assume there is no bridge in $\Mb_{\mathrm{pre}}$. In this operation, we assume that there are two degree 3 atoms $v_1$ and $v_2$, connected by a single bond $\ell_1$. Then $\Mb_{\mathrm{pre}}$ must contain one of the atomic groups shown in Figures \ref{fig:3s3}--\ref{fig:3s3new}.

In operations (3S3-1N)--(3S3-4G) we assume that $v_1$ and $v_2$ each has two more single bonds $\ell_2,\ell_3$ and $\ell_4,\ell_5$, connecting to four different atoms $v_3,v_4$ and $v_5,v_6$, see Figure \ref{fig:3s3}; in operation (3S3-5G) we assume this does not hold, see Figure \ref{fig:3s3new}. In (3S3-1N)--(3S3-3G) we assume that (i) after removing $(v_1,v_2)$ and all the bonds, $(v_3,v_5)$ is in one new component, and $(v_4,v_6)$ is in the other new component, and that (ii) the bonds $\ell_2$ and $\ell_4$ have opposite directions (viewing from $(v_1,v_2)$), and the bonds $\ell_3$ and $\ell_5$ also have opposite directions. In (3S3-4G) we assume either (i) or (ii) is false. Moreover, in (3S3-1N) we assume that $d(v_3)=\cdots=d(v_6)=4$, and in (3S3-3G) we assume that $d(v_3)$ and $d(v_5)$ are not both 4.
  \begin{figure}[h!]
  \includegraphics[scale=.5]{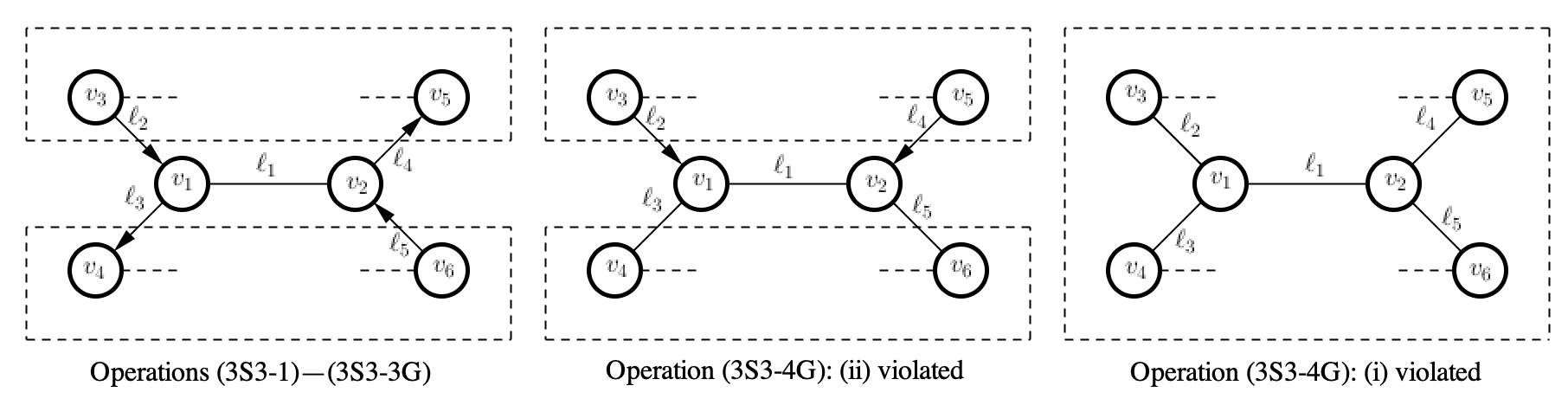}
  \caption{The atomic group involved in operations (3S3-1N)--(3S3-4G). In the first two pictures $(v_3,v_5)$ are $(v_4,v_6)$ are not in the same component after removing $v_1$ and $v_2$, while in the third picture they are.}
  \label{fig:3s3}
\end{figure} 
  \begin{figure}[h!]
  \includegraphics[scale=.5]{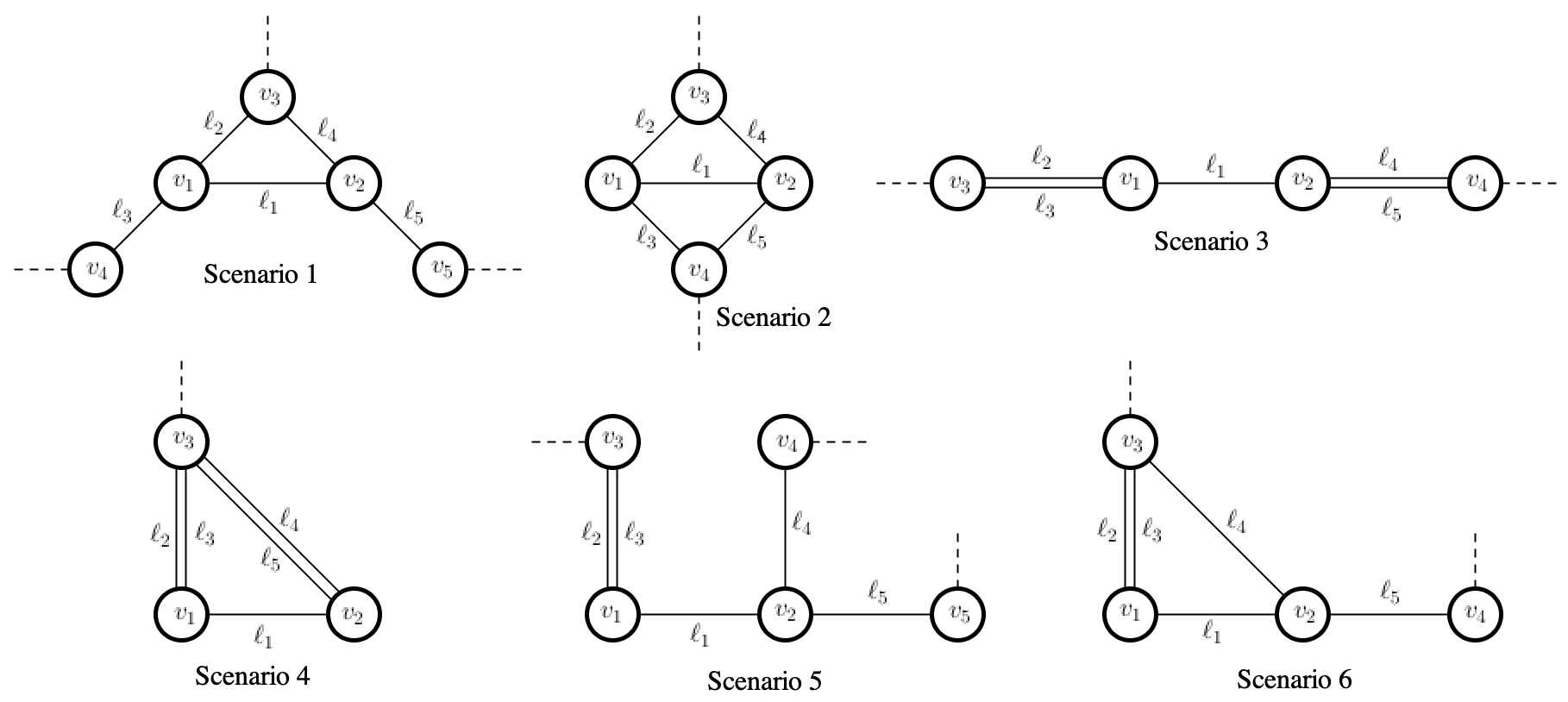}
  \caption{The atomic groups involved in operation (3S3-5G). In total there are 6 scenarios.}
  \label{fig:3s3new}
\end{figure} 
\begin{itemize}
\item Operation (3S3-1N): assuming that $|k_{\ell_2}-k_{\ell_4}|+|k_{\ell_3}-k_{\ell_5}|\leq L^{-\gamma}$, and $|k_{\ell_1}-k_{\ell_3}|\geq L^{-\gamma+2\gamma_0/3}$ if $(\ell_1,\ell_3)$ have opposite directions viewing from $v_1$, we remove the atoms $(v_1,v_2)$ and all the bonds.
\item Operation (3S3-2G): assuming the negation of the conditions in (3S3-1N), we remove $(v_1,v_2)$ and all the bonds.
\item Operation (3S3-3G): assuming the conditions in (3S3-1N), we remove $(v_1,v_2)$ and all the bonds, but add a new bond $\ell_6$ between $v_3$ and $v_5$ (not drawn in Figure \ref{fig:3s3}), which goes from $v_3$ to $v_5$ if $\ell_2$ goes from $v_3$ to $v_1$ and vice versa.
\item Operations (3S3-4G)--(3S3-5G): we remove $(v_1,v_2)$ and all the bonds.
\end{itemize}
\begin{prop}\label{3s3prop} We have $\Df\lesssim (\log L)^C$ for operation (3S3-1N), and $\Df\lesssim L^{-5\gamma_0/8}$ for operations (3S3-2G)--(3S3-5G). For (3S3-1N) we also have $\Delta\nu=-2$ and $\Delta V_3=2$.
\end{prop}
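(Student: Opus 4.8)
\textbf{Proof proposal for Proposition \ref{3s3prop}.}

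The plan is to reduce each of the operations (3S3-1N)--(3S3-5G) to an application of the vector counting estimates of Lemma \ref{basiccount}, after carefully bookkeeping the change $\Delta\chi$ in the characteristic and the change in the ladder product $\Pf$. Recall that $\Df$ is governed by the identity $\Af_{\mathrm{pre}}\lesssim\Df\cdot\Af_{\mathrm{pos}}$ together with the definition \eqref{defequan}: if we remove $r$ bonds more than we remove atoms (after accounting for the new components), then $\Delta\chi$ takes a specific value, and $\Df$ is essentially $\Ff\cdot L^{(d-\gamma)\Delta\chi}(C^+\delta^{-1/2})^{\Delta\chi}$ times a $(\log L)^C$ factor coming from the $\Pf$ product, where $\Ff$ is the number of choices for the vectors $k_\ell$ on the removed bonds once a decoration of $\Mb_{\mathrm{pos}}$ is fixed. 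So the whole proof is: (i) identify the topological change for each operation, and (ii) bound $\Ff$ by the appropriate case of Lemma \ref{basiccount}, using the $\mathtt{Ext}$-type hypotheses on the decoration where they are imposed.

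First I would handle (3S3-1N). Removing $(v_1,v_2)$ and the five bonds $\ell_1,\dots,\ell_5$ while the four endpoints $v_3,\dots,v_6$ all have degree $4$ and split into two new components gives $\Delta F=1$, hence $\Delta\chi=\Delta F-(E\text{-drop}-V\text{-drop})=-2$; also one checks $\Delta\nu=-2$ and $\Delta V_3=2$ directly from the degree changes (two degree-$3$ atoms are deleted, so $V_3$ drops by $2$ unless the operation lowers degrees of the $v_j$, which it does not here since $d(v_j)=4$ become $3$—so in fact $V_3$ changes by $-2+4=+2$, matching the claim). For the counting: once a decoration of $\Mb_{\mathrm{pos}}$ is fixed, summing the node equations over one new component shows $k_{\ell_2}-k_{\ell_4}$ (and $k_{\ell_3}-k_{\ell_5}$, $k_{\ell_1}$ up to a combination) are essentially determined; the hypothesis $|k_{\ell_2}-k_{\ell_4}|+|k_{\ell_3}-k_{\ell_5}|\le L^{-\gamma}$ together with the clause $|k_{\ell_1}-k_{\ell_3}|\ge L^{-\gamma+2\gamma_0/3}$ (when those bonds are anti-parallel) puts us exactly in the "two single bonds of a double bond, large gap" regime, i.e. the case $\{(1,2,3),(3^+,4^-)\}$ of Lemma \ref{basiccount} (3) with the gap $P\gtrsim L^{-\gamma+2\gamma_0/3}$. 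That bound, combined with $\Delta\chi=-2$ and the $\Xf^{-1}$ gain matching the $\Pf$ loss, yields $\Df\lesssim(\log L)^C$. The "normal" (rather than good) status is precisely because this double-bond gap, though large, need not be $\gtrsim 1$, so there is no extra power gain.

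For the good operations I would argue as follows. In (3S3-2G) we are in the negation of the (3S3-1N) hypotheses: either $|k_{\ell_2}-k_{\ell_4}|+|k_{\ell_3}-k_{\ell_5}|> L^{-\gamma}$ — in which case one of the two single bonds of the resulting configuration has a genuinely large gap and Lemma \ref{basiccount} (1) (the $\{(1^+,2^-)\}$ case with $R>L^{-\gamma}$) gives the improvement $\Df\lesssim L^{-\gamma_0}$ (more than the required $L^{-5\gamma_0/8}$) — or $|k_{\ell_1}-k_{\ell_3}|< L^{-\gamma+2\gamma_0/3}$ with $(\ell_1,\ell_3)$ anti-parallel, in which case this small gap at $v_1$ is itself exploitable: the triangle / degree-$3$ counting with a gap $\le L^{-\gamma+2\gamma_0/3}$ gives a factor $(L^{-\gamma+2\gamma_0/3}L)^d = L^{(1-\gamma+2\gamma_0/3)d}$ instead of the generic one, and after comparing with $\Delta\chi$ one extracts a power $L^{-c\gamma_0}$ with $c\ge 5/8$ for $d\ge 3$ (this is the same elementary optimization as in Proposition \ref{excessprop1}). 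In (3S3-3G) the operation is net $\Delta\chi=-3$ (we delete two atoms and five bonds but add one bond and create no new component is wrong — rather, the split does occur but the added bond $\ell_6$ reconnects, so in fact $\Delta F=0$ and $\Delta\chi=-3$); the extra removed bond relative to (3S3-1N) is "for free" in terms of the counting because $k_{\ell_6}$ is determined up to distance $\delta^{-1}L^{-2\gamma}$ by $|k_{\ell_2}|^2-|k_{\ell_4}|^2$, and one gains the full $L^{-(d-\gamma)}\cdot L^{d}$-type factor; invoking Lemma \ref{basiccount} (3)/(4) as above produces $\Df\lesssim L^{-5\gamma_0/8}$ (indeed better). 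Operations (3S3-4G) and (3S3-5G) are where either condition (i) or (ii) fails (so the split is "one component" and one can use Lemma \ref{basiccount} (4) with $\Delta\chi\le-3$, which is manifestly good), or the atoms $v_3,\dots,v_6$ are not all distinct (Figure \ref{fig:3s3new}, the six scenarios), forcing extra coincidences among the $k_{\ell_j}$ that again improve the count by a power of $L^{\gamma_0}$; in every one of these finitely many configurations a direct application of Lemma \ref{basiccount} (1)--(5), together with the LG assumption at the relevant atom, gives $\Df\lesssim L^{-5\gamma_0/8}$.

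The main obstacle I anticipate is the case analysis in (3S3-4G)--(3S3-5G): one must enumerate all the ways in which the endpoints can fail to be four distinct degree-$4$ atoms, or the directions of $\ell_2,\ell_4$ (resp. $\ell_3,\ell_5$) can agree, and in each case verify that the topological gain $L^{(d-\gamma)\Delta\chi}(C^+\delta^{-1/2})^{\Delta\chi}$ is not overwhelmed by the count $\Ff$ — in particular one needs to be careful that when a bond becomes a bridge or a self-loop after the deletion, one uses Lemma 9.14 of \cite{DH21} (the bridge-determination lemma) rather than the generic estimate, and that the $\Pf$ factor is handled via the $\Xf^{\pm z_j}$ bookkeeping exactly as in the proof of Proposition \ref{moleprop1}. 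The $\Delta\nu=-2,\ \Delta V_3=2$ assertions for (3S3-1N) are immediate from counting degrees and using $\nu=4V-2E-4F$ with $\Delta V=-2,\ \Delta E=-5,\ \Delta F=1$.
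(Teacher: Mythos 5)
Your overall plan — reduce each operation to a counting bound via Lemma \ref{basiccount} after tracking $\Delta\chi$, $\Delta F$ and $\Pf$ — is the right one, and your treatment of (3S3-1N) (including the $\Delta\nu=-2$, $\Delta V_3=2$ computation) and the broad strokes of (3S3-4G), (3S3-5G) match the paper. However, two pieces of the argument go wrong in concrete ways.

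For (3S3-3G), you assert that the added bond $\ell_6$ ``reconnects'' the two post-removal components, so $\Delta F=0$ and $\Delta\chi=-3$. This is backwards: by hypothesis (i) of Section~\ref{3s3}, $v_3$ and $v_5$ lie in \emph{the same} new component, and $\ell_6$ is added precisely between $v_3$ and $v_5$, so it changes neither the component count nor $\Delta F$. The correct bookkeeping is $\Delta V=-2$, $\Delta E=-4$, $\Delta F=+1$, hence $\Delta\chi=-1$, not $-3$ (note also that even $\Delta F=0$ would have given $-2$, not $-3$). With $\Delta\chi=-1$ the topological factor is actually a \emph{loss} of $L^{d-\gamma}$, so the claim that the extra bond is ``for free'' cannot stand on its own; the paper instead produces a decoration of $\Mb_{\mathrm{pos}}$ by declaring $k_{\ell_6}=k_{\ell_2}$ (admissible because $|k_{\ell_2}-k_{\ell_4}|\le L^{-\gamma}$, which only perturbs gaps by $\le L^{-\gamma}\ll L^{-\gamma+\eta}$), and then exploits the hypothesis $|k_{\ell_1}-k_{\ell_3}|\ge L^{-\gamma+2\gamma_0/3}$ to bound the count of $(k_{\ell_1},k_{\ell_3})$ by $\delta^{-1}L^{d-\gamma-2\gamma_0/3}$ via Lemma~\ref{basiccount}~(1) with $R\gtrsim L^{-\gamma+2\gamma_0/3}$. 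That $L^{-2\gamma_0/3}$ factor, together with the $L^{-(d-\gamma)}(C^+\delta^{-1/2})^{-1}$ from $\Delta\chi=-1$, is exactly what yields $L^{-5\gamma_0/8}$; the extra $\mathtt{Ext}$-type hypothesis is essential, not a nicety.

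For (3S3-2G), the sub-case $|k_{\ell_2}-k_{\ell_4}|+|k_{\ell_3}-k_{\ell_5}|>L^{-\gamma}$, you say Lemma~\ref{basiccount}~(1) with $R>L^{-\gamma}$ ``gives the improvement $\Df\lesssim L^{-\gamma_0}$.'' It does not: with $R\sim L^{-\gamma}$ that lemma gives $\Cf\lesssim L^{d-1}+\delta^{-1}R^{-1}L^{d-2\gamma}\sim\delta^{-1}L^{d-\gamma}$, which is the generic bound for $\Delta\chi=-1$ and contains no gain. The $L^{-\gamma_0}$ improvement from (1) requires $R\gtrsim 1$. What the paper actually does is keep all four single-bond vectors together and invoke Lemma~\ref{basiccount}~(3), the joint four-vector estimate, where the $\Xf^{-1}$ factor \emph{does} become a genuine power gain once $P\gtrsim L^{-\gamma}$ (see \eqref{countc1-1} and Lemma~\ref{basiccount0}~(2)). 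So the gain is a joint $4$-vector phenomenon, not a $2$-vector one, and citing only (1) is not sufficient here.

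A smaller issue: in (3S3-4G) you only discuss the case where condition (i) fails (hence $\Delta\chi\le -3$), but (3S3-4G) also includes the case where (i) holds and (ii) fails; there $\Delta\chi=-2$ and the gain comes from the same-sign variant of Lemma~\ref{basiccount}~(3) (the tuple $\{(1,2,3^+),(1,2,4^+)\}$), which gives $\delta^{-2}L^{2(d-\gamma)-\gamma_0}$ directly. Similarly, Scenarios 1--2 of (3S3-5G) have $\Delta\chi=-2$ (not $-3$) and need the LG assumption at $v_3$; your blanket statement covers this only implicitly.
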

\begin{proof} First consider the four operations other than (3S3-3G). In each scenario in Figures \ref{fig:3s3}--\ref{fig:3s3new}. If $\Delta\chi=-3$ then we have $\Df\lesssim L^{-5\gamma_0/8}$ using Lemma \ref{basiccount} (4) (the possible $(\log L)^C$ due to the $\Pf$ factor is also absorbed; same below). If $\Delta\chi=-2$, then Lemma 9.14 of \cite{DH21} implies that the values of $k_{\ell_2}-k_{\ell_4}$ and $k_{\ell_3}-k_{\ell_5}$ must be fixed. In this case Lemma \ref{basiccount} (3) implies $\Df\lesssim (\log L)^C$. Moreover, this bound can be improved to $\Df\lesssim L^{-5\gamma_0/8}$ if the bonds $(\ell_2,\ell_4)$ and $(\ell_3,\ell_5)$ do not both have opposite directions viewing from $(v_1,v_2)$, or if $|k_{\ell_2}-k_{\ell_4}|+|k_{\ell_3}-k_{\ell_5}|\geq L^{-\gamma}$ (if the latter happens the we have $|P|\gtrsim L^{-\gamma}$ in Lemma \ref{basiccount} (3)). The same improvement works if $|k_{\ell_1}-k_{\ell_3}|\leq L^{-\gamma+2\gamma_0/3}$ and $(\ell_1,\ell_3)$ have opposite directions, because in this case $k_{\ell_2}$ belongs to a fixed ball of radius $L^{-\gamma+2\gamma_0/3}$, so a simple variation of Lemma \ref{basiccount} (2) gives that the number of choices for $(k_{\ell_1},k_{\ell_2},k_{\ell_3})$ is at most
\[\delta^{-1}L^{2-2\gamma}L^{d-1}(L^{1-\gamma+2\gamma_0/3})^{d-1}\lesssim \delta^{-1}L^{2(d-\gamma)}L^{-2\gamma_0/3}.\] These observations are sufficient to prove the bounds for the four operations other than (3S3-3G) (the fact about $\Delta\nu$ and $\Delta V_3$ for (3S3-1N) is also clear), noticing also the LG assumption for $v_3$ in Scenarios 1--2 in Figure \ref{fig:3s3new}, and that we must have $\Delta\chi=-3$ in Scenarios 3--6 in Figure \ref{fig:3s3new}.

Now consider (3S3-3G), where $\Delta\chi=-1$. To get a decoration of $\Mb_{\mathrm{pos}}$ from that of $\Mb_{\mathrm{pre}}$, we simply define $k_{\ell_6}=k_{\ell_2}$ for the newly added bond $\ell_6$. Note that if the decoration for $\Mb_{\mathrm{pre}}$ is LG at each atom, the the gap at each atom for the resulting decoration for $\Mb_{\mathrm{pos}}$ is still at least $L^{-\gamma+\eta}-L^{-\gamma}$ since $|k_{\ell_2}-k_{\ell_4}|\leq L^{-\gamma}$. Therefore, even after at most $n\leq (\log L)^C$ iterations (where $n$ is the size of the molecule $\Mb$) this gap is still $\geq L^{-\gamma+\eta}/2$ which does not affect any estimate later. Moreover, we have $\Cf_{\mathrm{pre}}\lesssim \delta^{-1}L^{d-\gamma-2\gamma_0/3}\Cf_{\mathrm{pos}}$ by first looking at the decoration $(k_\ell)$ for $\ell$ in the component of $\Mb_{\mathrm{pos}}$ containing $\ell_6$, then looking at the two vectors $(k_{\ell_1},k_{\ell_3})$, and then looking at the decoration $(k_\ell)$ for $\ell$ in the component of $\Mb_{\mathrm{pos}}$ not containing $\ell_6$. Here note that, once $k_{\ell_6}=k_{\ell_2}$ is fixed, the number of choices for $(k_{\ell_1},k_{\ell_3})$ is at most
\[\delta^{-1}L^{d-\gamma-\gamma_0}+\delta^{-1}L^{d-2\gamma}(L^{-\gamma+2\gamma_0/3})^{-1}\lesssim \delta^{-1}L^{d-\gamma}L^{-2\gamma_0/3}\] due to Lemma \ref{basiccount} (1), where we have $R\gtrsim L^{-\gamma+2\gamma_0/3}$ in Lemma \ref{basiccount} (1) when applicable. Taking into account of the possible changes to the $\Pf$ factor, we get that $\Df\lesssim L^{-5\gamma_0/8}$, as desired.
\end{proof}
\subsubsection{Degree 3 atoms connected by a double bond} In this operation, we assume there are two degree $3$ atoms $v_1$ and $v_2$, connected by a double bond $(\ell_1,\ell_2)$, which are also connected to two other atoms $v_3$ and $v_4$ by two single bonds $\ell_3$ and $\ell_4$, see Figures \ref{fig:3d3} and \ref{fig:3d3new}. In (3D3-1N)--(3D3-3G) and (3D3-6G) we assume $v_3\neq v_4$ and $\ell_3$ and $\ell_4$ are in opposite directions (viewing from $(v_1,v_2)$); in (3D3-1N) we assume $d(v_3)=d(v_4)=4$, and in (3D3-3G) we assume that \emph{not} all atoms in the current component other than $(v_1,v_2)$ have degree 4. In (3D3-4G) we assume $v_3\neq v_4$ and $\ell_3$ and $\ell_4$ are in the same direction, and in (3D3-5G) we assume $v_3=v_4$. Finally, in (3D3-6G) we assume that $v_3$ is connected to $v_4$ via a single bond $\ell_5$, and $v_3$ and $v_4$ are each connected to different atoms $v_5$ and $v_6$ via double bonds $(\ell_6,\ell_7)$ and $(\ell_8,\ell_9)$, see Figure \ref{fig:3d3new}.
  \begin{figure}[h!]
  \includegraphics[scale=.5]{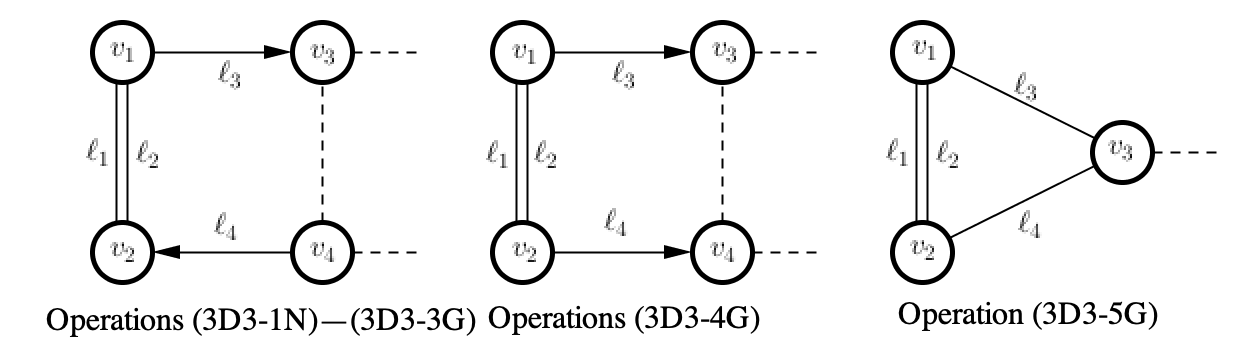}
  \caption{The atomic groups involved in operations (3D3-1N)--(3D3-5G).}
  \label{fig:3d3}
\end{figure} 
  \begin{figure}[h!]
  \includegraphics[scale=.5]{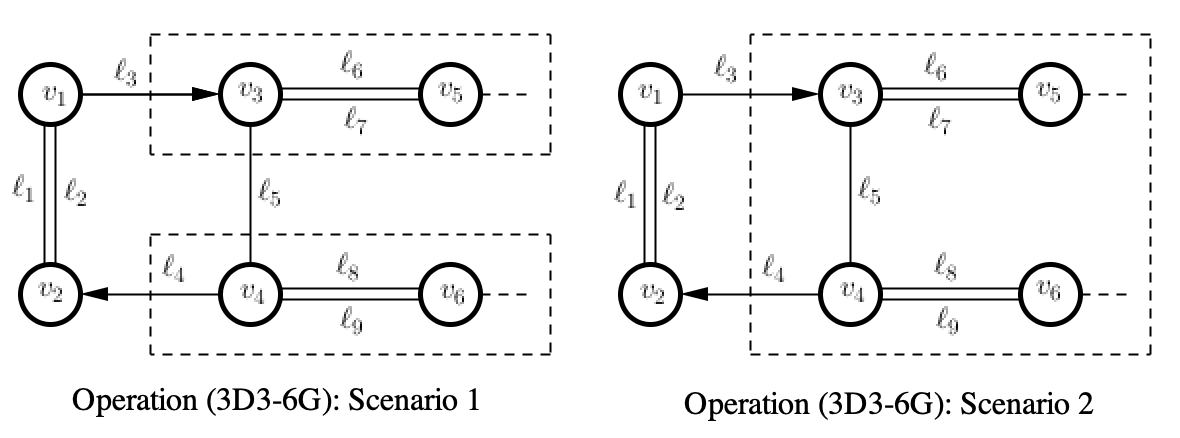}
  \caption{The atomic group involved in operation (3D3-6G). In the left picture $\ell_5$ becomes a bridge after removing $\{v_1,v_2\}$, while in the right picture it does not.}
  \label{fig:3d3new}
\end{figure} 
\begin{itemize}
\item Operation (3D3-1N): assuming that $|k_{\ell_3}-k_{\ell_4}|\leq L^{-\gamma}$, and $|k_{\ell_1}-k_{\ell_2}|\geq L^{-\gamma+2\gamma_0/3}$ if $(\ell_1,\ell_2)$ have opposite directions, we remove the atoms $(v_1,v_2)$ and all bonds.
\item Step (3D3-2G): assuming the negation of the conditions in (3D3-1N), we remove $(v_1,v_2)$ and all the bonds.
\item Step (3D3-3G): assuming the conditions in (3D3-1N), we remove $(v_1,v_2)$ and all the bonds, but add a new bond $\ell_5$ between $v_3$ and $v_4$ (not drawn in Figure \ref{fig:3d3}), which goes from $v_4$ to $v_3$ if $\ell_3$ goes from $v_1$ to $v_3$ and vice versa.
\item Steps (3D3-4G)--(3D3-5G): we remove the atoms $(v_1,v_2)$ and all the bonds.
\item Step (3D3-6G): we remove the atoms $(v_1,\cdots,v_4)$ and all the bonds.
\end{itemize}
\begin{prop}\label{3d3prop} For operation (3D3-1N) we have $\Df\lesssim 1$ and $\Delta\nu=\Delta V_3=0$. For operations (3D3-2G)--(3D3-6G) we have $\Df\lesssim L^{-5\gamma_0/8}$. Note the absence of $(\log L)^C$ loss for (3D3-1N).
\end{prop}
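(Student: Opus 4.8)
The proof follows the template of Propositions \ref{tbprop}--\ref{3s3prop}: in each case we fix a decoration of the post-operation molecule $\Mb_{\mathrm{pos}}$, bound the number of choices for the vectors on the bonds removed by the operation, and then read off $\Df=\Af_{\mathrm{pre}}/\Af_{\mathrm{pos}}$ from this count together with the change in $\chi$ and in the ladder product $\Pf$. In (3D3-1N)--(3D3-5G) we remove $v_1,v_2$ together with the double bond $\ell_1,\ell_2$ and the single bonds $\ell_3,\ell_4$, so $\Delta V=-2$, $\Delta E=-4$, and since the double bond is internal no new component is created (this is part of the context in which these operations are applied), giving $\chi_{\mathrm{pre}}-\chi_{\mathrm{pos}}=2$. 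Summing the atom equations (\ref{decmole1}) over $v_1$ and $v_2$ makes the double-bond contributions cancel, so the gap $k_{\ell_3}-k_{\ell_4}$ is forced; combining this with the atom equations at $v_3,v_4$, the resonance factors $\Gamma_{v_1},\Gamma_{v_2}$, and Lemma 9.14 of \cite{DH21}, the remaining freedom reduces to that of the pair $(k_{\ell_1},k_{\ell_2})$ subject to one linear equation and one $\delta^{-1}L^{-2\gamma}$-slack quadratic equation --- of difference type (so $k_{\ell_1}-k_{\ell_2}$ is pinned) when $\ell_1,\ell_2$ point oppositely at $v_1$, and of sum type (so $k_{\ell_1}+k_{\ell_2}$ is pinned) otherwise.

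For the normal step (3D3-1N) the degree bookkeeping gives $\Delta\nu=4\Delta V-2\Delta E-4\Delta F=0$ and $\Delta V_3=0$ (the degree-$3$ atoms $v_1,v_2$ disappear while $v_3,v_4$ drop from degree $4$ to degree $3$). In the sum-type case the $\{(1^+,2^+)\}$ bound of Lemma \ref{basiccount} (1) already produces a power gain, so only the difference-type case is tight; there the hypotheses $|k_{\ell_3}-k_{\ell_4}|\le L^{-\gamma}$, $|k_{\ell_1}-k_{\ell_2}|\ge L^{-\gamma+2\gamma_0/3}$ together with the LG assumption place us inside the valid range of the $\{(1^+,2^-)\}$ bound of Lemma \ref{basiccount} (1) (equivalently the $q=2$ estimate behind (\ref{countc1-1}) in Lemma \ref{basiccount0} (1)), which counts $(k_{\ell_1},k_{\ell_2})$ by $\delta^{-1}L^{2(d-\gamma)}\Xf^{-1}$ with $\Xf=\min((\log L)^2,\,1+\delta L^{2\gamma}|k_{\ell_1}-k_{\ell_2}|)$. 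Since removing $(v_1,v_2)$ shortens the maximal ladder through them by exactly one rung, $\Pf_{\mathrm{pre}}/\Pf_{\mathrm{pos}}=\Xf$, so the $\Xf^{-1}$ and $\Xf$ cancel and we get $\Df\lesssim\delta^{-1}(C^+\delta^{-1/2})^{-2}\lesssim 1$ with \emph{no} logarithmic loss --- this is the mechanism of (\ref{laddersharp}).

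For the good steps (3D3-2G)--(3D3-6G) there is a genuine power gain of size $L^{-\gamma_0+O(\eta)}$ or better. In (3D3-2G) the negation of the (3D3-1N) hypotheses forces either $|k_{\ell_3}-k_{\ell_4}|>L^{-\gamma}$, in which case the $\Xf^{-1}$ in (\ref{countc1-1}) beats the ladder factor by $L^{-\gamma_0}$, or $|k_{\ell_1}-k_{\ell_2}|<L^{-\gamma+2\gamma_0/3}$, in which case $k_{\ell_1}$ is confined to a ball of radius $L^{-\gamma+2\gamma_0/3}$ and a variant of Lemma \ref{basiccount} (2) (used exactly as in the proof of Proposition \ref{3s3prop}) saves $L^{-2\gamma_0/3}$. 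Step (3D3-3G) has $\chi_{\mathrm{pre}}-\chi_{\mathrm{pos}}=1$: one constructs the decoration of $\Mb_{\mathrm{pos}}$ by setting $k_{\ell_5}:=k_{\ell_3}$ --- the LG property survives the $\lesssim(\log L)^C$ iterations because $|k_{\ell_3}-k_{\ell_4}|\le L^{-\gamma}$ --- and Lemma \ref{basiccount} (1) then bounds the removed vectors by $\delta^{-1}L^{d-\gamma}L^{-2\gamma_0/3}$, exactly as in the (3S3-3G) argument. In (3D3-4G) and (3D3-5G) (Figure \ref{fig:3d3}) the equidirectedness of $\ell_3,\ell_4$, respectively the coincidence $v_3=v_4$, turns the removed-pair constraint into sum type, so the $\{(1^+,2^+)\}$ case of Lemma \ref{basiccount} (1) gives an $L^{-\gamma_0}$ gain with $\chi_{\mathrm{pre}}-\chi_{\mathrm{pos}}=2$; finally (3D3-6G) (Figure \ref{fig:3d3new}) removes four atoms and nine bonds with $\chi_{\mathrm{pre}}-\chi_{\mathrm{pos}}\ge 4$, so the bound is immediate by iterating Lemma \ref{basiccount} (1)--(4).

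The main obstacle, as in \cite{DH21}, is the no-logarithmic-loss claim for (3D3-1N): one must check rung by rung that the $\Xf^{-1}$ supplied by the $q=2$ counting estimate exactly matches the $\Xf$ removed from $\Pf$, so that the loss accumulated along an entire type II chain --- whose length can be of order $n$ --- remains $O(1)$; this is the point at which the now-available large-gap assumption is used, to know that $\Xf=(\log L)^2$. The remaining work is bookkeeping: distinguishing the sum- and difference-type subcases of the double bond, tracking $\Delta F$ (hence $\Delta\chi$) correctly for each configuration in Figures \ref{fig:3d3} and \ref{fig:3d3new}, and checking that the pinned gap or sum lies in the range where the relevant clause of Lemma \ref{basiccount} applies, via the LG and $\mathtt{Ext}$ hypotheses.
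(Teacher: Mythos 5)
The overall structure of your argument — normalize by $\chi$ and $\Pf$, count the removed bonds, check that the $\Xf$ removed from the ladder product cancels the $\Xf^{-1}$ supplied by the counting lemma — matches the paper's strategy, and your treatment of (3D3-2G)--(3D3-5G) and (3D3-3G) is essentially right. However, your handling of (3D3-1N) contains a genuine gap in the counting step.

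You assert that, given the decoration of $\Mb_{\mathrm{pos}}$, the vectors $k_{\ell_3},k_{\ell_4}$ are ``pinned'' by the atom equations at $v_3,v_4$, so that the only remaining freedom is in $(k_{\ell_1},k_{\ell_2})$, which you then count via the $\{(1^+,2^-)\}$ clause of Lemma \ref{basiccount}(1). This is incorrect: the quantity $\Cf(\Mb_{\mathrm{pos}})$ in \eqref{defequan} is a \emph{supremum} over the parameters $(c_v,\beta_v,k_\ell^0)$, and in passing from a decoration of $\Mb_{\mathrm{pre}}$ to its restriction on $\Mb_{\mathrm{pos}}$ the induced parameter $c_{v_3}'=c_{v_3}-\zeta_{v_3,\ell_3}k_{\ell_3}$ \emph{varies with the decoration}. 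Summing over the resulting choices of $(c_{v_3}',c_{v_4}')$ exactly reproduces the freedom in $(k_{\ell_3},k_{\ell_4})$, so one cannot discard it: the map $\Cf_{\mathrm{pre}}\lesssim\Ff\cdot\Cf_{\mathrm{pos}}$ requires $\Ff$ to be the \emph{four}-vector count $\{(1,2,3),(1,2,4)\}$, which is precisely Lemma \ref{basiccount}(3), not a two-vector count. Two related errors follow from this. First, the bound $\delta^{-1}L^{2(d-\gamma)}\Xf^{-1}$ you quote is the bound of Lemma \ref{basiccount}(3) (via \eqref{countc1-1}), not that of Lemma \ref{basiccount}(1), which reads $L^{d-1}+\delta^{-1}\min(R^{-1}L^{d-2\gamma},L^d)$ and is much smaller. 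Second, your $\Xf$ is written in terms of $|k_{\ell_1}-k_{\ell_2}|$, but the gap of the removed ladder rung, and hence the factor removed from $\Pf$, is $P\sim|k_{\ell_3}-k_{\ell_4}|$; these are distinct quantities ($k_{\ell_1}-k_{\ell_2}$ is determined by $k_{\ell_3}$ via the $v_1$-equation, not by $k_{\ell_3}-k_{\ell_4}$). In fact the $\Xf^{-1}$ gain in Lemma \ref{basiccount}(3) comes from the degree of freedom in $k_{\ell_3}$ constrained by the sum of the two resonance factors, which reduces to a one-vector count with gap $|k_{\ell_3}-k_{\ell_4}|$; the $(k_{\ell_1},k_{\ell_2})$ contribution on top of that is essentially a plain $\delta^{-1}L^{d-2\gamma}$. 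Your proposed two-vector count would yield a much stronger gain and contradict the fact that (3D3-1N) is a \emph{normal}, not good, step.

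A smaller issue: your treatment of (3D3-6G) as ``immediate by iterating Lemma \ref{basiccount}(1)--(4)'' glosses over a real case distinction. The paper separates $\Delta\chi=-5$ (handled as you describe) from $\Delta\chi=-4$, which happens when $\ell_5$ becomes a bridge after removing $\{v_1,v_2\}$; that case requires invoking Lemma 9.14 of \cite{DH21} to pin $k_{\ell_3}\pm k_{\ell_5}$ and $|k_{\ell_3}|^2\pm|k_{\ell_5}|^2$ before the lemma bounds can close. This is the delicate case and needs to be addressed explicitly.
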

\begin{proof} First consider the four operations other than (3D3-1N) and (3D3-3G). In each scenario in Figures \ref{fig:3d3}--\ref{fig:3d3new}, except for (3D3-6G), we always have $\Delta\chi=-2$, so Lemma \ref{basiccount} (3) implies $\Df\lesssim L^{-5\gamma_0/8}$ if the bonds $(\ell_3,\ell_4)$ have the same direction viewing from $(v_1,v_2)$, or if $|k_{\ell_3}-k_{\ell_4}|\geq L^{-\gamma}$, or if $|k_{\ell_1}-k_{\ell_2}|\leq L^{-\gamma+2\gamma_0/3}$ and $(\ell_1,\ell_2)$ have opposite directions, in the same way as in the proof of Proposition \ref{3s3prop} above. These observations are sufficient to prove $\Df\lesssim L^{-5\gamma_0/8}$ for (3D3-2G), (3D3-4G) and (3D3-5G), using also the LG assumption for $v_3$ in (3D3-5G). As for (3D3-6G) we have $\Delta\chi\in\{-4,-5\}$; if $\Delta\chi=-5$ the result follows from applying Lemma \ref{basiccount} (3) for $(k_{\ell_1},\cdots k_{\ell_4})$ and then applying Lemma \ref{basiccount} (4) for $(k_{\ell_5},\cdots k_{\ell_9})$. If $\Delta\chi=-4$, then by Lemma 9.14 of \cite{DH21} we know that  $k_{\ell_3}\pm k_{\ell_5}$ is fixed and $|k_{\ell_3}|^2\pm|k_{\ell_5}|^2$ is fixed up to distance $O(n\delta^{-1}L^{-2\gamma})$, with a suitable choice of $\pm$ (note also that $n\lesssim (\log L)^C$). We then apply Lemma \ref{basiccount} (3) for $(k_{\ell_1},k_{\ell_2},k_{\ell_3},k_{\ell_5})$ using also the LG assumption for $v_3$, and then apply Lemma \ref{basiccount} (1) for $(k_{\ell_6},k_{\ell_7})$ and $(k_{\ell_8},k_{\ell_9)}$ to get $\Df\lesssim L^{-5\gamma_0/8}$.

Next consider (3D3-1N). The fact $\Delta\nu=\Delta V_3=0$ is clear. Moreover this operation may not affect any ladder of length $\geq1$, or it may reduce the length of one such ladder by one; in the latter case it removes a factor $\min((\log L)^2,1+\delta L^{2\gamma}P)$ from the product $\Pf$, where $P\sim |k_{\ell_3}-k_{\ell_4}|$. In either case, since $\Delta\chi=-2$, by Lemma \ref{basiccount} (3) we have $\Df\lesssim 1$ in the same way as (\ref{laddersharp}).

Finally consider (3D3-3G), where $\Delta\chi=-1$. To get a decoration of $\Mb_{\mathrm{pos}}$ from that of $\Mb_{\mathrm{pre}}$, we simply define $k_{\ell_5}=k_{\ell_3}$ for the newly added bond $\ell_5$. This does not affect the LG assumptions, in the same way as in the proof of Proposition \ref{3s3prop} above. Moreover, we have $\Cf_{\mathrm{pre}}\lesssim\delta^{-1}L^{d-\gamma-2\gamma_0/3}$ and consequently $\Df\lesssim L^{-5\gamma_0/8}$, also in the same way as in the proof of Proposition \ref{3s3prop}, using the assumption $|k_{\ell_1}-k_{\ell_2}|\geq L^{-\gamma+2\gamma_0/3}$ if $(\ell_1,\ell_2)$ have the same direction.
\end{proof}
\subsubsection{Degree 3 and 4 atoms connected by a double bond} In this operation, we assume there is an atom $v_1$ of degree 3, and another atom $v_2$ of degree 4, that are connected by a double bond $(\ell_1,\ell_2)$. Then $\Mb_{\mathrm{pre}}$ must contain one of the atomic groups shown in Figure \ref{fig:3d4}.
  \begin{figure}[h!]
  \includegraphics[scale=.5]{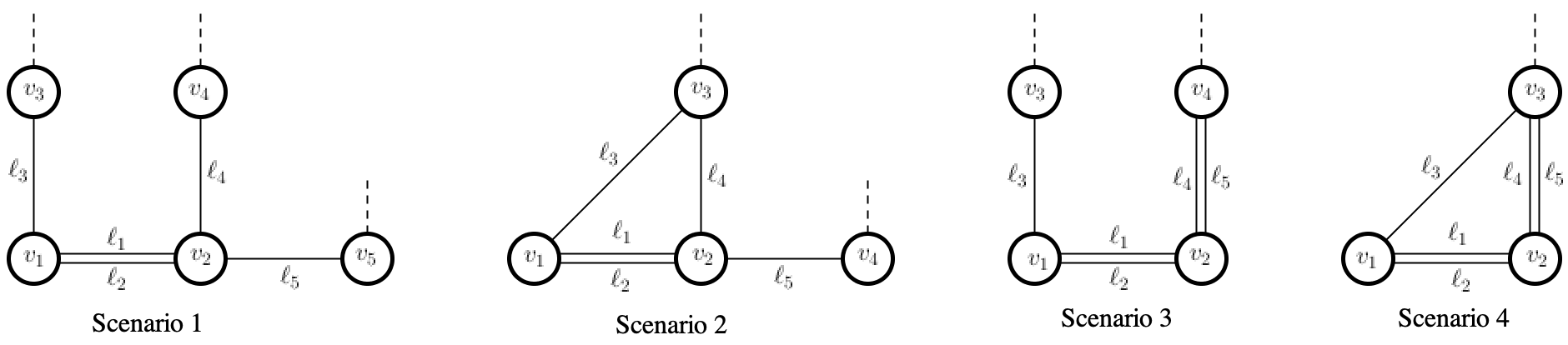}
  \caption{The atomic groups involved in step (3D4G). In total there are 4 scenarios.}
  \label{fig:3d4}
\end{figure} 
\begin{itemize}
\item Operation (3D4-G): we remove the atoms $(v_1,v_2)$ and all the bonds.
\end{itemize}
\begin{prop}\label{3d4prop} We have $\Df\lesssim L^{-5\gamma_0/8}$ for operation (3D4-G).
\end{prop}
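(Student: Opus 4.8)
The plan is to follow the same scheme as in the proofs of Propositions \ref{3s3prop} and \ref{3d3prop}: enumerate the four scenarios of Figure \ref{fig:3d4} and, in each, bound the number of choices for the decoration of the five bonds removed by the operation — the double bond $(\ell_1,\ell_2)$ joining $v_1$ to $v_2$, the single bond $\ell_3$ leaving the degree-$3$ atom $v_1$, and the two single bonds $\ell_4,\ell_5$ leaving the degree-$4$ atom $v_2$ — against the factor $L^{(d-\gamma)\Delta\chi}(C^+\delta^{-1/2})^{\Delta\chi}$ dictated by the change in characteristics, absorbing any change to the ladder product $\Pf$ into a harmless $(\log L)^C$ loss. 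First I would record that removing $(v_1,v_2)$ together with all five incident bonds gives $\Delta V=-2$, $\Delta E=-5$, hence $\Delta\chi=-3+\Delta F$ with $\Delta F\ge 0$ the number of new components created; by inspection of the four pictures $\Delta\chi$ takes only the values $-3$ and $-2$, the value being determined by how the endpoints $v_3,v_4,v_5$ of $\ell_3,\ell_4,\ell_5$ are distributed (distinct, two coinciding so that a double bond leaves $v_2$, or an external atom reached by two of the bonds). As always, $\Cf_{\mathrm{pre}}/\Cf_{\mathrm{pos}}$ is controlled by the number of admissible tuples $(k_{\ell_1},\dots,k_{\ell_5})$, since a decoration of $\Mb_{\mathrm{pos}}$ extends to $\Mb_{\mathrm{pre}}$ only by such a choice.

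In the generic scenario $\Delta\chi=-3$ (the rest of $\Mb$ staying connected) the two momentum equations \eqref{decmole1} at $v_1$ and $v_2$ present the five removed vectors as a counting problem of the form $\{(1,2,3),(1,2,4,5)\}$ in the notation preceding Lemma \ref{basiccount}, the vectors $k_{\ell_1},k_{\ell_2}$ of the double bond being the ones shared between the two tuples; Lemma \ref{basiccount} (4) (via \eqref{countc2-1}) bounds the number of solutions by $\lesssim\delta^{-2}L^{3(d-\gamma)-\gamma_0}$, so that after comparison with $L^{-3(d-\gamma)}(C^+\delta^{-1/2})^{-3}$ one gets $\Df\lesssim\delta^{-1/2}(C^+)^{-3}L^{-\gamma_0}(\log L)^C$, which is $\lesssim L^{-5\gamma_0/8}$ once $L$ is large enough (the gap between $\gamma_0$ and $5\gamma_0/8$ absorbs the fixed constants $\delta^{-1/2},(C^+)^{-3},(\log L)^C$, just as in Proposition \ref{3d3prop}). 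In the scenarios with $\Delta\chi=-2$ one component splits off; summing \eqref{decmole1} over it pins down a single external vector (or an affine combination of external vectors), whence by Lemma 9.14 of \cite{DH21} the corresponding quantity $|k_{\ell_i}|^2\pm|k_{\ell_j}|^2$ is fixed up to distance $O(n\delta^{-1}L^{-2\gamma})$ with $n\lesssim(\log L)^C$, and Lemma \ref{basiccount} (3) applied to the four remaining vectors (via \eqref{countc1-1}) gives a count $\lesssim\delta^{-1}L^{2(d-\gamma)}(\log L)^C$, again producing the required power gain. Whenever a degree-$2$ atom occurs among $v_3,v_4,v_5$, or the two bonds of $(\ell_1,\ell_2)$ point in opposite directions so that the LG hypothesis forces $|k_{\ell_1}-k_{\ell_2}|\gtrsim L^{-\gamma+\eta}$, an extra $L^{-\eta}$ (or even $L^{-\gamma_0}$) factor is available from Lemma \ref{basiccount} (1), which more than covers any logarithmic loss.

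I expect the only mildly delicate point to be the scenario-by-scenario bookkeeping: one must check from Figure \ref{fig:3d4} that the two atom equations together with the connectivity of the residual molecule really do reduce the five-bond counting problem to the five-vector configuration of Lemma \ref{basiccount} (4), and not to a weaker three- or four-vector one, and that the possible modification of $\Pf$ is only by a $(\log L)^C$ factor — which holds because a double bond incident to a degree-$3$ atom cannot be an interior double bond of a ladder of length $\ge1$, so the operation shortens at most $O(1)$ maximal ladders and only by $O(1)$. Everything else is a routine application of Lemma \ref{basiccount} and Lemma 9.14 of \cite{DH21}, exactly as in the preceding propositions of this appendix.
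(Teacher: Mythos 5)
Your treatment of the generic case agrees with the paper's one-line proof: recognize the two atom equations at $v_1,v_2$ as a system of the form $\{(1,2,3),(1,2,4,5)\}$ and invoke Lemma~\ref{basiccount}~(4) (i.e.~\eqref{countc2-1}) to get $\Df\lesssim L^{-\gamma_0}(\log L)^C\lesssim L^{-5\gamma_0/8}$. That part is fine.

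The discrepancy is your assertion that $\Delta\chi$ ranges over $\{-3,-2\}$; the paper's proof states flatly that $\Delta\chi=-3$ in \emph{every} one of the four scenarios of Figure~\ref{fig:3d4}. This is a consequence of the algorithm state: when step (3D4-G) fires we are at stage~(4) of the loop in Section~\ref{loop}, hence past stage~(1), so $\Mb_{\mathrm{pre}}$ contains no bridge. There are exactly three external bonds leaving $\{v_1,v_2\}$, namely $\ell_3$ from $v_1$ and the two non-double-bond edges from $v_2$; if removing $\{v_1,v_2\}$ were to split off a new component, that component would be attached through a proper subset of these three bonds, and since $v_1$ contributes only the single edge $\ell_3$, at least one of the three external bonds would have to be a bridge of $\Mb_{\mathrm{pre}}$ --- contradiction. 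So $\Delta F=0$, $\Delta\chi=-3$ always, and the $\Delta\chi=-2$ branch you wrote is vacuous. Beyond being vacuous, the argument you sketched for it would not deliver the conclusion: you quote a count $\lesssim\delta^{-1}L^{2(d-\gamma)}(\log L)^C$ via \eqref{countc1-1}, but dividing by the normalization $L^{2(d-\gamma)}(C^+\delta^{-1/2})^{2}$ only yields $\Df\lesssim(\log L)^C$, which is the ``normal'' rather than the ``good'' threshold. Had such a scenario genuinely occurred, you would have needed the stronger $\{(1,2,3^+),(1,2,4^+)\}$ bound from Lemma~\ref{basiccount}~(3) ($\lesssim\delta^{-2}L^{2(d-\gamma)-\gamma_0}$), not the $\Xf^{-1}$-gain case. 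As written, your proof reaches the correct conclusion only because the problematic branch never arises, not because it handles that branch correctly.
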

\begin{proof} In each case in Figure \ref{fig:3d4} we have $\Delta\chi=-3$, so the result follows from Lemma \ref{basiccount} (4).
\end{proof}
\subsubsection{Degree 3 and 2 atoms connected} In this operation, we assume there is an atom $v_1$ of degree 3, and another atom $v_2$ of degree 2, that are connected. Note that they must be connected by a single bond $\ell_1$, otherwise there would be a bridge. Then, $\Mb_{\mathrm{pre}}$ must contain one of the atomic groups shown in Figure \ref{fig:3s2}.
  \begin{figure}[h!]
  \includegraphics[scale=.5]{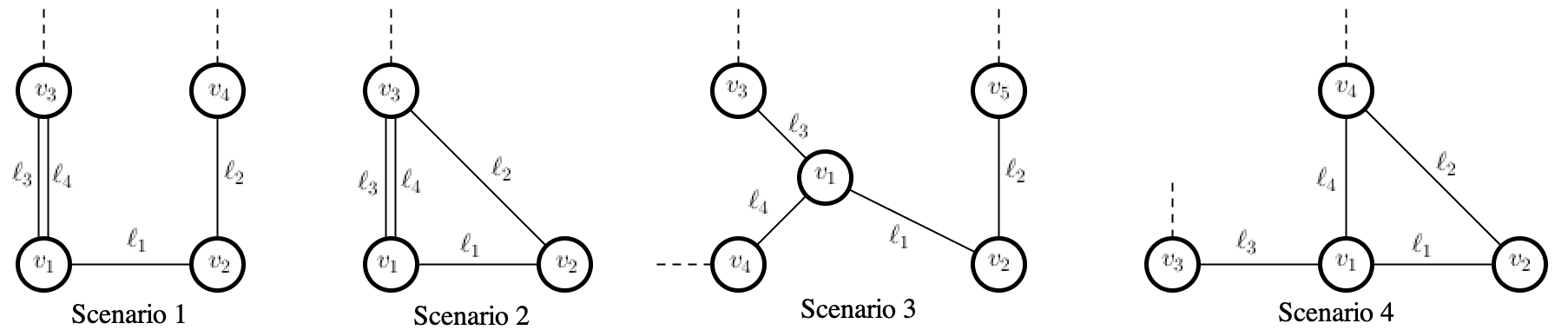}
  \caption{The atomic groups involved in step (3S2-G). In total there are 4 scenarios.}
  \label{fig:3s2}
\end{figure} 
\begin{itemize}
\item Step (3S2-G): we remove the atoms $(v_1,v_2)$ and all the bonds.
\end{itemize}
\begin{prop}\label{3s2prop} We have $\Df\lesssim L^{-5\gamma_0/8}$ for operation (3S2-G).
\end{prop}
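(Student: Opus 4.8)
The plan is to run the standard deviation analysis, with the large-gap hypothesis providing the required gain. First, since $\Mb_{\mathrm{pre}}$ has no bridge and $v_1$ has degree $3$, deleting $v_1$ cannot disconnect $\Mb_{\mathrm{pre}}$ — a degree-$3$ vertex whose removal split the graph would force one of its three incident bonds to be a bridge; after $v_1$ is gone the atom $v_2$ has degree $1$, so deleting $v_2$ as well preserves connectivity. Hence in all four scenarios of Figure \ref{fig:3s2} we have $\Delta F=0$, and since we delete two atoms and four pairwise distinct bonds (the single bond $\ell_1$ between $v_1,v_2$, the remaining two bonds $\ell_2,\ell_3$ at $v_1$, and the remaining bond $\ell_4$ at $v_2$ — none of which can coincide without producing a bridge or raising $d(v_1)$ above $3$), we get $\Delta V=-2$, $\Delta E=-4$, hence $\Delta\chi=-2$; this operation changes the ladder factor $\Pf$ by at most a $(\log L)^{C}$ amount.

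Next I would reduce the local count $\Cf^{\mathrm{loc}}$ of decorations of the deleted bonds. Solving the decoration equation (\ref{decmole1}) at $v_1$ for $k_{\ell_1}$, and at $v_2$ for $k_{\ell_4}$, expresses both as affine-linear functions of the two free vectors $k_{\ell_2},k_{\ell_3}$. Substituting into the two restricted-resonance conditions $|\Gamma_{v_j}-\beta_{v_j}|\le\delta^{-1}L^{-2\gamma}$ (recall (\ref{defomegadec})) gives two constraints in $(k_{\ell_2},k_{\ell_3})$: the one at $v_1$ retains a nonzero cross term $\pm 2k_{\ell_2}\!\cdot\! k_{\ell_3}$ coming from $|k_{\ell_1}|^2$, so after an affine, volume- and lattice-preserving change of variables it reads $|x\cdot y-\alpha|\le\delta^{-1}L^{-2\gamma}$; the one at $v_2$, using that the two bonds of the degree-$2$ atom $v_2$ are of opposite directions, takes the form $\Gamma_{v_2}=c_{v_2}\cdot(k_{\ell_1}+k_{\ell_4})$ with $c_{v_2}=\zeta_{v_2,\ell_1}(k_{\ell_1}-k_{\ell_4})$ a \emph{fixed} vector, i.e. it is linear, of the shape $|r\cdot\xi-\beta|\le\delta^{-1}L^{-2\gamma}$ with $r=c_{v_2}$ and $\xi$ linear in $(x,y)$. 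Thus $\Cf^{\mathrm{loc}}$ is precisely the counting problem $\Cf_1$ of (\ref{basiccount01}), equivalently the pattern $\{(1,2,3),(3^{+},4^{-})\}$ of Lemma \ref{basiccount}(3), with $P=|c_{v_2}|$. (In the degenerate variant where $v_2$'s two bonds are co-directed — should that case arise — the $v_2$-constraint is instead a genuine definite quadratic in $(k_{\ell_2},k_{\ell_3})$, so one lands in the pattern $\{(1,2,3),(3^{+},4^{+})\}$, whose bound $\delta^{-2}L^{2(d-\gamma)-\gamma_0}$ already suffices with no further input.)

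The large-gap hypothesis at $v_2$ gives $P=|k_{\ell_1}-k_{\ell_4}|\ge L^{-\gamma+\eta}$, so by the first line of (\ref{countc1-1}) when $\gamma\le\tfrac12$, and by its second line together with Lemma \ref{basiccount0}(2) when $\gamma>\tfrac12$, we obtain $\Cf^{\mathrm{loc}}\lesssim\delta^{-2}L^{2(d-\gamma)-\gamma_0-\eta}$ in all cases. Feeding this and $\Delta\chi=-2$ into $\Df\lesssim\Cf^{\mathrm{loc}}\cdot L^{-2(d-\gamma)}(C^{+}\delta^{-1/2})^{-2}(\log L)^{C}$ (cf. (\ref{defequan})--(\ref{defdev})) and absorbing the harmless $\delta^{-1}$ and $(\log L)^{C}$ factors into $L^{\eta/2}$ (since $L$ is taken large depending on $\delta$) yields $\Df\lesssim L^{-\gamma_0}\lesssim L^{-5\gamma_0/8}$. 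I expect the only genuine work to be the scenario-by-scenario check in Figure \ref{fig:3s2}: verifying that in each of the four configurations (which differ according to whether $\ell_2,\ell_3$ form a double bond, whether the outer atoms $v_3,v_4$ coincide, and the relative bond directions) the reduction above really lands in the $\{(1,2,3),(3^{\pm},4^{\mp})\}$ pattern with a genuine cross term at $v_1$ and a nondegenerate linear form $c_{v_2}\cdot\xi$ at $v_2$. This is routine and parallels the verifications already carried out for operations (3S3-1N)/(3S3-3G) and (3D3-1N)/(3D3-3G).
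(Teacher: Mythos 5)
Your proof is correct and follows the same route as the paper: you verify $\Delta\chi=-2$ (via the no-bridge argument that the paper leaves implicit) and then reduce the local count to the $\{(1,2,3),(3^{+},4^{\pm})\}$ pattern of Lemma~\ref{basiccount}(3), using the LG assumption at the degree-$2$ atom to take $P\gtrsim L^{-\gamma+\eta}$. One small imprecision: you cite Lemma~\ref{basiccount0}(2), which only covers $\gamma>\tfrac45-10\eta$; in fact the second line of (\ref{countc1-1}) alone already gives $\Cf^{\mathrm{loc}}\lesssim\delta^{-2}L^{2(d-\gamma)-\gamma_0-\eta}$ for all $\gamma>\tfrac12$, so that extra citation is not needed.
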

\begin{proof} In each case we have $\Delta\chi=-2$, so the result follows from Lemma \ref{basiccount} (3), noting also that $|P|\gtrsim L^{-\gamma+\eta}$ if $\ell_1$ and $\ell_2$ have opposite directions, due to the LG assumption.
\end{proof}
\subsubsection{Degree 3 atom removal}\label{3r} In this operation, we assume there is an atom $v$ of degree 3, which is connected to three atoms $v_j\,(1\leq j\leq 3)$ of degree 4, by three single bonds $\ell_j\,(1\leq j\leq 3)$. In step (3R-2G) we further assume that, there is a \emph{special} bond $\ell_1'$ (i.e. a single bond connecting two degree $3$ atoms, such that they have two double bonds connecting to two different atoms) in the molecule (or component) after removing the atom $v$ and the bonds $\ell_j$. In this case, suppose $\ell_1'$ connects atoms $v_1'$ and $v_2'$, $v_1'$ is connected to $v_3'$ by a double bond $(\ell_2',\ell_3')$, and $v_2'$ is connected to $v_4'$ by a double bond $(\ell_4',\ell_5')$, see Figure \ref{fig:3rproof}.
  \begin{figure}[h!]
  \includegraphics[scale=.5]{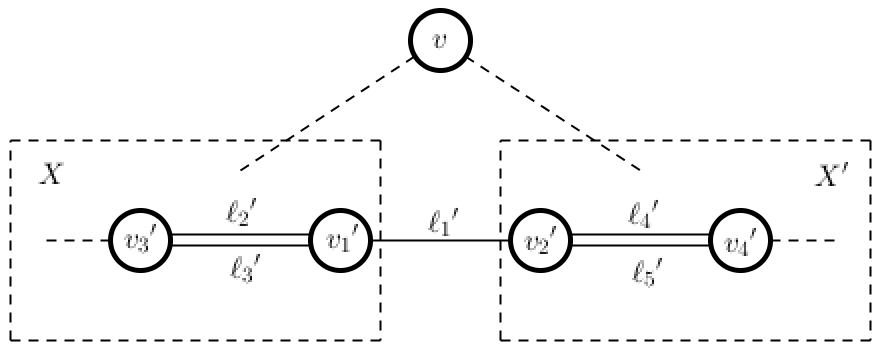}
  \caption{The functional group involved in operation (3R-2G). Here $v_1,v_2,v_3$ are not drawn; some of them may coincide with some $v_j'$. Also we only draw the scenario where $\ell_1'$ becomes a bridge after removing $v$, but the other scenario is also possible.}
  \label{fig:3rproof}
\end{figure} 
\begin{itemize}
\item Operation (3R-1N): we remove the atom $v$ and all the bonds.
\item Operation (3R-2G): we we remove the atoms $(v,v_1',v_2')$ and all the bonds.
\end{itemize}
\begin{prop}\label{3rprop} We have $\Ds\lesssim(\log L)^C$ for operation (3R-1N), and also $\Delta\nu=\Delta V_3=2$. For operation (3R-2G) we have $\Df\lesssim L^{-5\gamma_0/8}$.
\end{prop}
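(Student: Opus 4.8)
The statement to prove is Proposition \ref{3rprop}, which asserts $\Df\lesssim(\log L)^C$ with $\Delta\nu=\Delta V_3=2$ for operation (3R-1N), and $\Df\lesssim L^{-5\gamma_0/8}$ for operation (3R-2G).

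\medskip

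The plan is to handle the two operations separately, in each case relating the counting quantity $\Af$ before and after the operation through the basic vector counting estimates of Lemma \ref{basiccount} applied to the bonds being removed. For (3R-1N): removing a single degree $3$ atom $v$ together with its three single bonds $\ell_1,\ell_2,\ell_3$ does not create a new component (since $v$ has degree $3$ with all neighbors of degree $4$, so none of $\ell_j$ is a bridge — otherwise we would be in an earlier operation), hence $\Delta F=0$ and $\Delta\chi=\Delta E-\Delta V=-3-(-1)=-2$. The three vectors $k_{\ell_1},k_{\ell_2},k_{\ell_3}$ satisfy a single system of the form $\{(1,2,3)\}$ in the notation preceding Lemma \ref{basiccount}, namely $k_{\ell_1}-k_{\ell_2}+k_{\ell_3}=c_v$ together with the resonance restriction $|\,|k_{\ell_1}|^2-|k_{\ell_2}|^2+|k_{\ell_3}|^2-\beta_v\,|\le\delta^{-1}L^{-2\gamma}$. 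By Lemma \ref{basiccount} (2) the number of choices is $\lesssim\delta^{-1}L^{2(d-\gamma)}$, so $\Cf_{\mathrm{pre}}\lesssim\delta^{-1}L^{2(d-\gamma)}\Cf_{\mathrm{pos}}$; plugging into the definition \eqref{defequan} of $\Af$ (using $\Delta\chi=-2$ to cancel the $L^{(d-\gamma)\Delta\chi}(C^+\delta^{-1/2})^{\Delta\chi}$ factor, which absorbs the $\delta^{-1}L^{2(d-\gamma)}$ up to the harmless $(C^+)^{O(1)}$), and noting that removing a degree $3$ atom with single bonds cannot shorten any ladder of length $\ge 1$ (such a ladder consists of double bonds and their parallel single bonds, and $v$ does not fit this pattern), we get $\Df\lesssim(\log L)^C$ — the logarithmic loss only coming from the possible change to the $\Pf$ factor, exactly as in the proofs of Propositions \ref{tbprop}, \ref{brprop}, \ref{3s3prop}. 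Finally, $\Delta V_3=2$ because removing $v$ turns its three degree $4$ neighbors into degree $3$ atoms while eliminating one degree $3$ atom, a net change of $+2$; and $\Delta\nu=\Delta(4V-2E-4F)=4(-1)-2(-3)-0=2$.

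\medskip

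For (3R-2G): here we remove $v$ together with the special bond $\ell_1'$ and its two endpoints $v_1',v_2'$, along with all attached bonds. The structure near $v_1',v_2'$ is that of Figure \ref{fig:3rproof}: $\ell_1'$ is single, $(\ell_2',\ell_3')$ and $(\ell_4',\ell_5')$ are double bonds to distinct atoms $v_3',v_4'$. Removing all of $v,v_1',v_2'$ and their bonds removes $3$ atoms and $3+1+1+1+2+2=$ the bonds at $v$ (three of them, namely $\ell_1,\ell_2,\ell_3$, but note $v_1'$ or $v_2'$ may coincide with some $v_j$, in which case $\ell_1'$-incidence is counted differently) plus $\ell_1',\ell_2',\ell_3',\ell_4',\ell_5'$; a careful bond count combined with the two possible scenarios (whether or not $\ell_1'$ becomes a bridge after removing $v$) gives $\Delta\chi\le-4$, or in the worst case $\Delta\chi=-3$ together with an extra gain. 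The strategy is: first use Lemma 9.14 of \cite{DH21} (as invoked repeatedly above, e.g. in Proposition \ref{3d3prop} and the proof of Proposition \ref{moleprop3}) to deduce that, in the bridge scenario, one of the vectors $k_{\ell_j}$ ($j\in\{1,2,3\}$) has $k_{\ell_j}\pm k_{\ell_1'}$ fixed and $|k_{\ell_j}|^2\pm|k_{\ell_1'}|^2$ fixed up to distance $O(n\delta^{-1}L^{-2\gamma})$ for a suitable choice of sign, which couples the $v$-system to the $(v_1',v_2')$-system; then apply Lemma \ref{basiccount} (4) to the combined system $\{(j,\cdot,\cdot),(\cdot,\cdot,\cdot)\}$ on $(k_{\ell_1},\dots,k_{\ell_3},k_{\ell_1'},\dots)$ and Lemma \ref{basiccount} (1) to the remaining double bonds $(\ell_2',\ell_3')$ and $(\ell_4',\ell_5')$, using the LG assumption at $v_3',v_4'$ to get an $L^{-\gamma_0}$-type gain from each. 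Combining the counting bounds with $\Delta\chi$ via \eqref{defequan} yields $\Df\lesssim L^{-\gamma_0-O(\eta)}(\log L)^C\lesssim L^{-5\gamma_0/8}$.

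\medskip

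The main obstacle is the bookkeeping in (3R-2G): one must correctly enumerate the subcases according to whether $\ell_1'$ becomes a bridge upon removing $v$, whether the single bonds $\ell_j$ share endpoints with the $v_j'$, and the directions of the double bonds, and verify that in each subcase $\Delta\chi$ and the available counting lemmas combine to give at least the claimed $L^{-5\gamma_0/8}$ gain. Once the combinatorial structure is pinned down, each subcase reduces to a direct application of Lemmas \ref{basiccount} and 9.14 of \cite{DH21}, exactly parallel to the proofs of Propositions \ref{3d3prop} and \ref{3d4prop}. The $(\log L)^C$ factors arising from the $\Pf$ product and from the $O(n\delta^{-1}L^{-2\gamma})$ widening in Lemma 9.14 are negligible against the polynomial gain $L^{-5\gamma_0/8}$ and cause no trouble.
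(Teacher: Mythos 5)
For (3R-1N) your argument is correct and matches the paper: no bond at $v$ can be a bridge (bridges are removed in step (1) of the loop), hence $\Delta F=0$ and $\Delta\chi=-2$; Lemma \ref{basiccount} (2) gives $\Cf_{\mathrm{pre}}\lesssim\delta^{-1}L^{2(d-\gamma)}\Cf_{\mathrm{pos}}$; and the increments $\Delta V_3=3-1=2$ and $\Delta\nu=4(-1)-2(-3)-0=2$ are computed correctly.

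For (3R-2G) your outline (invoke Lemma 9.14 of \cite{DH21}, then combine Lemmas \ref{basiccount} (4) and (1)) is the right idea, but two concrete steps are off. First, the hedge ``$\Delta\chi\leq-4$, or in the worst case $\Delta\chi=-3$ together with an extra gain'' is a genuine gap: the paper shows $\Delta\chi\in\{-4,-5\}$, with $\Delta F=1$ occurring precisely when $\ell_1'$ becomes a bridge after removing $v$, and $\Delta F=0$ otherwise; there is no $\Delta\chi=-3$ case, and you supply no argument for what the ``extra gain'' would be in a case that in fact never arises. Second, the counting is double-counted. You propose Lemma \ref{basiccount} (4) for $(k_{\ell_1},\dots,k_{\ell_3},k_{\ell_1'},\dots)$ \emph{and} Lemma \ref{basiccount} (1) for both double bonds $(\ell_2',\ell_3')$, $(\ell_4',\ell_5')$ — that overcounts the vectors relative to the normalization $L^{(d-\gamma)|\Delta\chi|}(C^+\delta^{-1/2})^{|\Delta\chi|}$ with $|\Delta\chi|=4$, and the systems do not factor this way. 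The correct split (used by the paper in the $\Delta\chi=-4$ case, where WLOG $v$ has exactly one bond $\ell_1$ into the component containing $v_1',v_3'$) is: Lemma 9.14 fixes $k_{\ell_1}\pm k_{\ell_1'}$, which converts the $v_1'$ equation into a constraint on $(k_{\ell_1},k_{\ell_2'},k_{\ell_3'})$, so Lemma \ref{basiccount} (4) applies to the five vectors $(k_{\ell_1},k_{\ell_2},k_{\ell_3},k_{\ell_2'},k_{\ell_3'})$; \emph{only then} does Lemma \ref{basiccount} (1) with the LG assumption apply to the remaining pair $(k_{\ell_4'},k_{\ell_5'})$. Relatedly, the claimed ``$L^{-\gamma_0}$-type gain from each'' double bond via Lemma \ref{basiccount} (1) is not correct in the $\{(1^+,2^-)\}$ direction, where LG only yields $L^{-\eta}$; the $L^{-\gamma_0}$ gain here comes from Lemma \ref{basiccount} (4), not from the leftover double bond.
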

\begin{proof} For (3R-1N) we have $\Delta\chi=-2$, so the result follows from Lemma \ref{basiccount} (2) and simple calculations. As for (3R-2G), we have $\Delta\chi\in\{-4,-5\}$. If $\Delta\chi=-5$, then the result follows from applying Lemma \ref{basiccount} (2) for $(k_{\ell_1},k_{\ell_2},k_{\ell_3})$ and then applying Lemma \ref{basiccount} (4) for $(k_{\ell_1'},\cdots,k_{\ell_5'})$.

If $\Delta\chi=-4$, this means that $\ell_1'$ becomes a bridge after removing $v$, see Figure \ref{fig:3rproof}. Since $\ell_1'$ is not a bridge in $\Mb_{\mathrm{pre}}$, we know $v$ must have at least one bond connecting to each of the two components after removing $v$ and $\ell_1'$. Without loss of generality, assume $v$ has only one bond, say $\ell_1$, connecting to an atom $v_1$ in $X$ (the component containing $(v_1',v_3')$), then by Lemma 9.14 of \cite{DH21} we know that  $k_{\ell_1}\pm k_{\ell_1'}$ is fixed and $|k_{\ell_1}|^2\pm|k_{\ell_1'}|^2$ is fixed up to distance $O(n\delta^{-1}L^{-2\gamma})$, with a suitable choice of $\pm$ (note also that $n\lesssim (\log L)^C$). Therefore, we can apply Lemma \ref{basiccount} (4) for $(k_{\ell_1},k_{\ell_2},k_{\ell_3},k_{\ell_2'},k_{\ell_3'})$ and then apply Lemma \ref{basiccount} (1) for $(k_{\ell_4'},k_{\ell_5'})$ (using also the LG assumption) to get $\Df\lesssim L^{-5\gamma_0/8}$.
\end{proof}
\subsubsection{Degree 2 atom removal} In this operation, we assume there is an atom $v$ of degree 2, connected to one or two atom(s) of degree 2 or 4.
\begin{itemize}
\item Operation (2R-1F): assuming $v$ is connected to a degree 4 atom by a double bond, we remove the atom $v$ and all the bonds.
\item Operation (2R-2F): assuming $v$ is connected to a degree 4 atom by a single bond, and also connected to another atom of degree 2 or 4 by a single bond, we remove the atom $v$ and all the bonds.
\item Operation (2R-3F): assuming $v$ is connected to two degree 2 atoms $v_1$ and $v_2$ by two single bonds, such that neither $v_1$ nor $v_2$ is connected to a degree 3 atom, we remove the atoms $(v, v_1,v_2)$ and all the bonds.
\item Operation (2R-4F): assuming $v$ is connected to a degree 2 atom $v'$ by a double bond, we remove the atoms $(v,v')$ and tall bonds.
\end{itemize}
\begin{prop}\label{2rprop} We have $\Df\lesssim L^{-\eta/2}$ for operations (2R-1F)--(2R-4F). For (2R-1F) and (2R-4F) we have $\Delta V_3=\Delta \nu=0$, for (2R-2F) we have $\Delta\nu=0$ and $\Delta V_3\geq 1$, and for (2R-3F) we have $\Delta V_3\geq 0$ and $\Delta\nu\leq -2$.
\end{prop}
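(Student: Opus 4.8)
The plan is to treat each of the four operations (2R-1F)--(2R-4F) by the same two-step recipe used throughout Sections \ref{reductcut}--\ref{lgmole}: first I would read off the change $\Delta\chi$ in the characteristic $\chi=E-V+F$ produced by deleting the prescribed atoms and bonds (with a brief case check to fix $\Delta F$), and then I would bound the ratio $\Cf_{\mathrm{pre}}/\Cf_{\mathrm{pos}}$ of the numbers of restricted decorations using the counting estimates of Lemma \ref{basiccount}; feeding both into the definition \eqref{defequan} of $\Af$ then yields $\Df$. Since only degree $2$ atoms are removed here, I expect the $\Pf$-factor coming from ladders to be untouched: the double bond removed in (2R-1F) or (2R-4F) has a degree $2$ endpoint, so it cannot belong to a ladder of positive length (it is an isolated, length-$0$ ladder, contributing $\Xf^{0}=1$), and the single bonds removed in (2R-2F), (2R-3F) are not rail-bonds of any positive-length ladder either; whatever residual logarithmic loss appears is absorbed into the $(\log L)^{Cp}$ factor, just as for operation (3D3-1N) in Proposition \ref{3d3prop}.

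For (2R-1F) and (2R-4F) I would note $(\Delta V,\Delta E,\Delta F)=(-1,-2,0)$ --- resp.\ $(-2,-2,-1)$ in (2R-4F), where the double bond necessarily forms its own component --- so that $\Delta\chi=-1$ in both. Counting the two vectors on the removed double bond is precisely the two-variable problem $\{(1^{+},2^{+})\}$ or $\{(1^{+},2^{-})\}$ of Lemma \ref{basiccount}(1); in the opposite-direction case the large-gap hypothesis at the degree $2$ atom gives $|k_{\ell_1}-k_{\ell_2}|\gtrsim L^{-\gamma+\eta}$, so in either case $\Cf_{\mathrm{pre}}/\Cf_{\mathrm{pos}}\lesssim \delta^{-1}L^{d-\gamma}\max(L^{-\gamma_0},L^{-\eta})$. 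Multiplying by $L^{(d-\gamma)\Delta\chi}(C^{+}\delta^{-1/2})^{\Delta\chi}=L^{-(d-\gamma)}(C^{+})^{-1}\delta^{1/2}$ gives $\Df\lesssim\delta^{-1/2}\max(L^{-\gamma_0},L^{-\eta})\lesssim L^{-\eta/2}$, using that $\delta$ is a fixed small constant (so $\delta^{-1/2}\ll L^{\eta/100}$) and $0<\eta\ll\gamma_0$. For (2R-2F) the identical computation applies: since $\Mb_{\mathrm{pre}}$ has no bridge, deleting the degree $2$ atom with its two single bonds cannot disconnect anything, so $\Delta F=0$, $\Delta\chi=-1$, and the two removed single bonds are again counted by Lemma \ref{basiccount}(1). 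For (2R-3F) I would instead observe that after removing the central atom $v$ the atoms $v_1,v_2$ become degree $1$, so $v_1u_1$ and $v_2u_2$ are bridges; via the linear relations at $u_1,u_2$ and then at $v_1,v_2$ all four removed vectors are uniquely determined by the decoration of $\Mb_{\mathrm{pos}}$ up to the $O(\delta^{-1}L^{-2\gamma})$ slack in the three $\Gamma$-equations, whence $\Cf_{\mathrm{pre}}/\Cf_{\mathrm{pos}}\lesssim(\log L)^{C}$ and, with $\Delta\chi=-1$, even $\Df\lesssim(\log L)^{C}L^{-(d-\gamma)}\delta^{1/2}$, far better than required.

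The statements about $\Delta V_3$ and $\Delta\nu$ are pure bookkeeping via $\nu=4V-2E-4F$. For (2R-1F) and (2R-4F) one has $4\Delta V-2\Delta E-4\Delta F=0$, and the only atoms whose degrees change do so without affecting $V_3$ (in (2R-1F) the degree $4$ neighbour drops to degree $2$; in (2R-4F) only the two degree $2$ atoms vanish), so $\Delta V_3=\Delta\nu=0$. For (2R-2F) again $\Delta\nu=0$, while the degree $4$ neighbour $w_1$ drops to degree $3$, giving $\Delta V_3\geq1$ (and $\geq2$ if the other neighbour also had degree $4$). For (2R-3F), $\Delta\nu=4(-3)-2(-4)-4\cdot 0=-4\leq-2$ in the generic case (and $4(-3)-2(-3)-4(-1)=-2$ in the degenerate case where $v,v_1,v_2$ form their own triangle component), and since the only atoms whose degrees drop, $u_1,u_2$, are by hypothesis not of degree $3$, the count $V_3$ can only increase or stay fixed, i.e.\ $\Delta V_3\geq0$.

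I expect the only genuinely delicate point to be the connectivity bookkeeping for (2R-3F): one has to verify $\Delta F$, and hence $\Delta\chi=-1$ together with the degree inequalities, in each coincidence pattern among $v_1,v_2,u_1,u_2$ (the case $u_1=u_2$, the case where $v_1u_1$ or $v_2u_2$ closes up with the central atoms into a short cycle, and the case where $v_1$ and $v_2$ are joined so that $v,v_1,v_2$ form their own component), and to confirm the ladder claim so that $\Pf$ is indeed unchanged. These checks are routine but are where the care is needed.
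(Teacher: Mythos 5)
Your proposal is correct and follows essentially the same route as the paper's (very terse) proof: in all four operations $\Delta\chi=-1$, the counting ratio is controlled by Lemma~\ref{basiccount}(1) together with the large-gap hypothesis at the removed degree-$2$ atom(s), and the $\Delta\nu,\Delta V_3$ claims are the straightforward bookkeeping you carry out. Your sharper uniqueness argument for (2R-3F) (via the bridge relations at $u_1,u_2$) and your careful case check of coincidence patterns are more than the paper records but are consistent with it; the paper simply invokes Lemma~\ref{basiccount}(1) there as well, which already gives $\Cf_{\mathrm{pre}}/\Cf_{\mathrm{pos}}\lesssim\delta^{-1}L^{d-\gamma-\eta}$ and hence the stated $\Df\lesssim L^{-\eta/2}$.
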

\begin{proof} In each case we have $\Delta\chi=-1$, so the bond for $\Df$ follows from Lemma \ref{basiccount} (1) using also the LG asumption. The other statements follow from simple calculations.
\end{proof}
\subsection{The algorithm}\label{alg}  The algorithm is described as a big loop. Once we enter the loop, we shall follow a set of rules to choose the next operation depending on the current molecule $\Mb$ and possible assumptions made on the decoration. In some cases we may also choose a sequence of successive operations, again following a specific set of rules, until we are done with this execution of the loop and return to the start of the loop. The loop ends when $\Mb$ contains only isolated atoms.
\subsubsection{Description of the loop}\label{loop} The loop is described as follows. Start with a molecule $\Mb$.
\begin{enumerate}
\item If $\Mb$ contains a bridge, then remove it using (BR-N). Go to (1).
\item Otherwise, if $\Mb$ contains two degree 3 atoms $v_1$ and $v_2$ connected by a single bond $\ell_1$, then:
\begin{enumerate}
\item If $\Mb$ contains one atomic group in Figure \ref{fig:3s3new}, then preform (3S3-5G). Go to (1).
\item Otherwise, $\Mb$ contains the atomic group in Figure \ref{fig:3s3}. If it satisfies (i) and (ii) in Section \ref{3s3}, and $d(v_3)=\cdots =d(v_6)=4$, then we perform (3S3-1N) or (3S3-2G), depending on whether the conditions in (3S3-1N) are met. Go to (1).
\item If it satisfies (i) and (ii) in Section \ref{3s3}, but (say) $d(v_3)$ and $d(v_5)$ are not both 4, then we perform (3S3-3G) or (3S3-2G), depending on whether the conditions in (3S3-1N) are met. If after (3S3-3G) a triple bond forms between $v_3$ and $v_5$, immediately remove it by (TB-1N)--(TB-2N). Go to (1).
\item If either (i) or (ii) in Section \ref{3s3} is violated, then we perform (3S3-4G). Go to (1).
\end{enumerate}
\item Otherwise, if $\Mb$ contains two degree 3 atoms $v_1$ and $v_2$ connected by a double bond $(\ell_1,\ell_2)$, then:
\begin{enumerate}
\item If $\Mb$ contains the atomic group in Figure \ref{fig:3d3} corresponding to (3D3-4G) or (3D3-5G), then we perform the corresponding step. Go to (1).
\item Otherwise, $\Mb$ contains the atomic group in Figure \ref{fig:3d3} corresponding to (3D3-1N)--(3D3-3G). This can be seen as the start of a ladder. Now, if and while this ladder \emph{continues} (i.e. $v_3$ and $v_4$ are connected by a double bond, and they are connected to two different atoms $v_5$ and $v_6$ by two single bonds of opposite directions viewing form $(v_3,v_4)$), we perform (3D3-1N) or (3D3-2G), depending on whether the conditions in (3D3-1N) are met. Proceed with (c) below.
\item Now assume the ladder does not continue. Then:
\begin{enumerate}
\item If $v_3$ and $v_4$ are like in Figure \ref{fig:3d3new}, then perform (3D3-6G); if $v_3$ and $v_4$ are like (3D3-5G) in Figure \ref{fig:3d3} after removing $(v_1,v_2)$, then perform (3D3-1N) followed by (3D3-5G). Go to (1).
\item Otherwise, if not all atoms in the current component other than $(v_1,v_2)$ have degree 4, we perform (3D3-3G) or (3D3-2G), depending on whether the conditions in (3D3-1N) are met. If after (3D3-3G) a triple bond forms between $v_3$ and $v_4$, immediately remove it by (TB-1N)--(TB-2N). Go to (1).
\item Otherwise, we preform (3D3-1N). Go to (1) but scan within this component.
\end{enumerate}
\end{enumerate}
\item Otherwise, if $\Mb$ contains a degree 3 atom $v_1$ connected to a degree 4 atom $v_2$ by a double bond $(\ell_1,\ell_2)$, then we have one atomic group in Figure \ref{fig:3d4}. We perform (3D4G). Go to (1).
\item Otherwise, if $\Mb$ contains a degree 3 atom $v_1$ connected to a degree 2 atom $v_2$, then we have one atomic group in Figure \ref{fig:3s2}. We perform (3S2G). Go to (1).
\item Otherwise, if $\Mb$ contains a degree 3 atom $v$, then $v$ must be connected to three degree 4 atoms $v_j\,(1\leq j\leq 3)$ by three single bonds $\ell_j\,(1\leq j\leq 3)$. Then:
\begin{enumerate}
\item If the component after removing $v$ and $\ell_j$ contains a special bond (see Section \ref{3r}), then we perform (3R-2G). Go to (1).
\item Otherwise, we perform (3R-1N). Go to (1).
\end{enumerate}
\item Otherwise, $\Mb$ only contains atoms of degree (0 and) 2 and 4. Then, we find an atom of degree $2$, which must be in one of the cases corresponding to operations (2R-1F)--(2R-4F); perform the corresponding operation. Go to (1).
\end{enumerate}

We make a few remarks about the algorithm, which is useful in the proof of Proposition \ref{moleprop2}.

\begin{enumerate}[{1.}]
\item There is no triple bond when we perform any operation other than (TB-1N)--(TB-2N). This is because only operations (3S3-3G) and (3D3-3G) may create triple bonds, but they are immediately removed using (TB-1N)--(TB-2N), as in (2-c) and (3-c-i). Similarly there is no bridge when we perform any operation other than (TB-1N)--(TB-2N) or (BR-N), because operation (BR-N) has the top priority; moreover, if we are in (3-b), i.e. the ladder continues, then the operations (3D3-1N) and (3D3-2G) cause the same change on $\Mb$, and this change does not create any bridge.
\item In (3-c-ii), after (3D3-3G), it cannot happen that $v_3$ and $v_4$ are connected by a triple bond, and $d(v_3)=d(v_4)=4$. In fact, in this case the two extra single bonds $\ell_1'$ and $\ell_2'$ from $v_3$ and $v_4$ must have opposite directions, so either the ladder continues, or the bonds $\ell_1'$ and $\ell_2'$ shares a common atom. This means we are in either (3-b) or (3-c-i) before performing (3D3-3G), which is impossible.
\item When executing a ``Go to" sentence, we may proceed to scan the whole molecule for the relevant structures, except in (3-c-iii), where we only scan \emph{the current component}. Note that after performing (3D3-1N) in (3-c-iii), $v_3$ and $v_4$ will have degree 3, and all other atoms in the current component will have degree 4. Therefore the next operation(s) we perform in this component, following our algorithm, may be (BR-N), (3S3-1N)--(3S3-5G), (3D3-4G)--(3D3-5G), (3D4G), (3R-1N)--(3R-2G), possibly accompanied by (TB-1N)--(TB-2N), but \emph{cannot} be (3D3-1N)--(3D3-3G) because the ladder does not continue.
\item In the whole process we never have a saturated component (which can be verified for (3S3-3G) and (3D3-3G) by our algorithm, and is obvious for the other operations), thus in (7) there must be at least one degree 2 atom (unless there are only isolated atoms, in which case the loop ends; note that we are also not considering degree 1 atoms, as those imply the existence of bridges).
\end{enumerate}

\end{document}